\newcommand{\A}{\mathbb{A}}
\newcommand{\C}{\mathbb{C}}
\newcommand{\Q}{\mathbb{Q}}
\newcommand{\R}{\mathbb{R}}
\newcommand{\Z}{\mathbb{Z}}
\newcommand{\F}{\mathbb{F}}
\renewcommand{\AA}{\mathcal{A}}
\newcommand{\FF}{\mathcal{F}}
\newcommand{\GG}{\mathcal{G}}
\newcommand{\HH}{\mathcal{H}}
\newcommand{\II}{\mathcal{I}}
\newcommand{\RR}{\mathcal{R}}
\newcommand{\VV}{\mathcal{V}}
\newcommand{\Sc}{\mathcal{S}}
\renewcommand{\aa}{\mathfrak{a}}
\newcommand{\nn}{\mathfrak{n}}
\newcommand{\mm}{\mathfrak{m}}
\renewcommand{\ss}{\mathfrak{s}}
\renewcommand{\tt}{\mathfrak{t}}
\newcommand{\g}{\mathfrak{g}}
\newcommand{\oo}{\mathfrak{o}}
\newcommand{\pp}{\mathfrak{p}}
\newcommand{\rr}{\mathfrak{r}}
\newcommand{\uu}{\mathfrak{u}}
\newcommand{\ww}{\mathfrak{w}}
\newcommand{\ee}{\mathfrak{e}}
\newcommand{\zz}{\mathfrak{z}}
\renewcommand{\SS}{\mathfrak{S}}
\newcommand{\NN}{\mathfrak{N}}
\newcommand{\GL}{\mathrm{GL}}
\newcommand{\SL}{\mathrm{SL}}
\renewcommand{\O}{\mathrm{O}}
\newcommand{\SO}{\mathrm{SO}}
\newcommand{\Sp}{\mathrm{Sp}}
\newcommand{\U}{\mathrm{U}}
\newcommand{\M}{\mathrm{M}}
\newcommand{\SU}{\mathrm{SU}}
\newcommand{\Speh}{\mathrm{Speh}}
\newcommand{\Rep}{\mathrm{Rep}}
\newcommand{\Irr}{\mathrm{Irr}}
\newcommand{\temp}{\mathrm{temp}}
\newcommand{\tr}{\mathrm{tr}}
\newcommand{\Ind}{\mathrm{Ind}}
\newcommand{\Jac}{\mathrm{Jac}}
\newcommand{\Hom}{\mathrm{Hom}}
\newcommand{\End}{\mathrm{End}}
\newcommand{\unit}{\mathrm{unit}}
\newcommand{\St}{\mathrm{St}}
\newcommand{\good}{\mathrm{good}}
\newcommand{\bad}{\mathrm{bad}}
\newcommand{\im}{\mathrm{Im}}
\newcommand{\Cent}{\mathrm{Cent}}
\newcommand{\Sym}{\mathrm{Sym}}
\newcommand{\Asai}{\mathrm{Asai}}
\newcommand{\Supp}{\mathrm{Supp}}
\newcommand{\ev}{\mathrm{ev}}
\newcommand{\Trans}{\mathrm{Trans}}
\newcommand{\Lie}{\mathrm{Lie}}
\newcommand{\Res}{\mathrm{Res}}
\newcommand{\Gal}{\mathrm{Gal}}
\newcommand{\Rat}{\mathrm{Rat}}
\newcommand{\Ad}{\mathrm{Ad}}
\newcommand{\ad}{\mathrm{ad}}
\newcommand{\Sh}{\mathrm{Sh}}
\newcommand{\diag}{\mathrm{diag}}
\newcommand{\inv}{\mathrm{inv}}
\newcommand{\der}{\mathrm{der}}
\newcommand{\gen}{\mathrm{gen}}
\newcommand{\ab}{\mathrm{ab}}
\newcommand{\re}{\mathrm{Re}}
\newcommand{\sym}{\mathrm{sym}}
\newcommand{\st}{\mathrm{st}}
\renewcommand{\sc}{\mathrm{sc}}
\newcommand{\ari}{\mathrm{ari}}
\newcommand{\geo}{\mathrm{geo}}
\newcommand{\Frob}{\mathrm{Frob}}
\newcommand{\Br}{\mathrm{Br}}
\newcommand{\rec}{\mathrm{rec}}
\newcommand{\cts}{\mathrm{cts}}
\newcommand{\Tate}{\mathrm{Tate}}
\newcommand{\new}{\mathrm{new}}
\newcommand{\disc}{\mathrm{disc}}
\newcommand{\Ksph}{K\text{-}\mathrm{sph}}
\newcommand{\Calg}{\mathbb{C}\text{-}\mathrm{alg}}
\newcommand{\Int}{\mathrm{Int}}
\newcommand{\resp}{resp.~}
\renewcommand{\1}{\mathbf{1}}
\newcommand{\ep}{\varepsilon}
\newcommand{\bs}{\backslash}
\newcommand{\id}{\mathrm{id}}
\newcommand{\iif}{&\quad&\text{if }}
\newcommand{\other}{&\quad&\text{otherwise}}
\newcommand{\I}{\sqrt{-1}}
\newcommand{\spl}{\boldsymbol{spl}}
\newcommand{\half}[1]{\frac{#1}{2}}
\newcommand{\tl}[1]{\widetilde{#1}}
\newcommand{\pair}[1]{\langle #1 \rangle}
\def\<{\langle}
\def\>{\rangle}
\theoremstyle{plain}
\newtheorem{thm}{Theorem}[subsection]
\newtheorem{lem}[thm]{Lemma}
\newtheorem{prop}[thm]{Proposition}
\newtheorem{cor}[thm]{Corollary}
\newtheorem{defi}[thm]{Definition}
\newtheorem{conj}[thm]{Conjecture}
\theoremstyle{definition}
\newtheorem{rem}[thm]{Remark}
\newtheorem{ex}[thm]{Example}
\newtheorem{hyp}[thm]{Hypothesis}
\title[Local Intertwining Relations and Co-tempered $A$-packets]
{Local Intertwining Relations and \\ Co-tempered $A$-packets of Classical Groups}
\author[
H. Atobe, W. T. Gan, A. Ichino, T. Kaletha, A. M{\'i}nguez, S. W. Shin
]
{Hiraku Atobe, 
Wee Teck Gan, 
Atsushi Ichino, 
\\
Tasho Kaletha, 
Alberto M\'inguez
\and
Sug Woo Shin
}
\address{
Department of Mathematics, Kyoto University, 
Kitashirakawa-Oiwake-cho, Sakyo-ku, Kyoto, 606-8502, Japan
}
\email{
atobe@math.kyoto-u.ac.jp
}
\address{
Department of Mathematics, National University of Singapore, 
10 Lower Kent Ridge Road, Singapore 119076
}
\email{
matgwt@nus.edu.sg
}
\address{
Department of Mathematics, Kyoto University, 
Kitashirakawa-Oiwake-cho, Sakyo-ku, Kyoto, 606-8502, Japan
}
\email{
ichino@math.kyoto-u.ac.jp
}
\address{
Mathematics Center, University of Bonn, Endenicher Allee 60, Bonn, Germany
}
\email{
kaletha@math.uni-bonn.de
}
\address{
Departamento de Algebra and Instituto de Matematicas (IMUS), Universidad de Sevilla, C/ Tarfia s/n, 41012 Sevilla, Espa\~{n}a
/
University of Vienna,
Fakult{\"a}t f{\"u}r Mathematik,
Oskar-Morgenstern-Platz 1,
1090 Wien, Austria
}
\email{
aminguez@us.es
}
\address{
Department of Mathematics, UC Berkeley, 
Berkeley, CA 94720, USA 
/ 
Korea Institute for Advanced Study, 
Seoul 02455, Republic of Korea
}
\email{
sug.woo.shin@berkeley.edu
}
\begin{document}
\maketitle

%\red{To Do list:
%\begin{itemize}
%\item
%\end{itemize}
%}

\begin{abstract}
The local intertwining relation is an identity that gives precise information 
about the action of normalized intertwining operators on parabolically induced representations. 
We prove several instances of the local intertwining relation for quasi-split classical groups 
and the twisted general linear group, 
as they are required in the inductive proof of the endoscopic classification 
for quasi-split classical groups due to Arthur and Mok. 
In addition, we construct the co-tempered local $A$-packets by Aubert duality 
and verify their key properties by purely local means, 
thereby providing the seed cases needed as an input to the inductive proof. 
Together with further technical results that we establish, 
these results make the endoscopic classification 
conditional only on the validity of the twisted weighted fundamental lemma.
\end{abstract}

\tableofcontents

%\section*{Introduction}
%\section*{Introduction}
\section*{Introduction}
The theory of automorphic forms is a profound topic 
in number theory with broad applications to other areas of mathematics and theoretical physics. 
A fundamental problem, which is central in the Langlands program, 
is to classify automorphic representations of connected reductive groups $G$ over global fields 
and to obtain the analogous classification over local fields 
in terms of parameters pertaining to the Langlands $L$-group ${}^LG$. 
Such a classification should be consistent with the Langlands functoriality conjecture, 
whose rough form posits that a morphism of $L$-groups ${}^LH \rightarrow {}^LG$ 
should induce a functorial lifting of representations from $H$ to $G$, 
provided that $G$ is quasi-split.
The functoriality conjecture is beyond our current technology in general, 
but some special cases fit in the framework of endoscopy 
\`a la Langlands further developed by Arthur, Clozel, Kottwitz, Labesse, Shelstad, and others. 
\par

When $G$ is a quasi-split classical group, 
one can hope to study the classification problem for $G$ 
by relating it to the general linear groups and to quasi-split classical groups of smaller rank, 
via (ordinary and twisted) endoscopy. 
This was achieved in Arthur's book \cite{Ar}, which represents a crowning
achievement in the endoscopic approach to functoriality, and builds on
tremendous foundational work on the trace formula and related matters
spanning multiple decades. The results of \cite{Ar} were later extended to
quasi-split unitary groups by Mok \cite{Mok}, following the same arguments.
These results were partially extended to non-quasi-split classical groups in \cite{KMSW} and \cite{Ish}. 
Since classical groups are ubiquitous, 
the endoscopic classification for them has played an indispensable role 
in a number of arithmetic applications such as: 
\begin{itemize}
\item
new instances of the global Langlands reciprocity, 
see \cite[Section 5.1]{Sc2}, \cite[Section 1.4.1]{BCGP}, \cite{KS1, KS2};

\item
the $p$-adic Gross-Zagier formula and the Beilinson--Bloch--Kato conjecture, 
see \cite{DL}, \cite{LL}, \cite{LTXZZ}; 

\item
Euler systems, 
see \cite{GS}, \cite{LSk}, \cite{LTX};

\item
the Gan--Gross--Prasad conjecture, the Ichino--Ikeda conjecture
and their local analogues, 
see \cite{W6}, \cite{BP1, BP2, BP3}, \cite[Remark 1.7]{BPLZZ}, \cite{BPCZ}, \cite{BPC}; 

\item
the Sarnak--Xue density conjecture, 
see \cite{DGG}, \cite{EGGG}; 

\item
an extension of the Shimura--Waldspurger correspondence, 
see \cite{GI3}, \cite{Li}; 

\item
classification/counting of irreducible algebraic cuspidal automorphic
representations of $\GL_N$ or classical groups over $\Q$ of level one, 
see \cite{CR}, \cite{Tai}, \cite{CL}, \cite{CT};

\item
Harder's conjecture, 
see \cite{CL}, \cite{ACIKY1, ACIKY2}.
\end{itemize}
\par

The main theorems of \cite{Ar} and \cite{Mok} depend on several results 
which were unproven but expected at the time. 
Some of the results have become available over the past ten years. 
The most notable is the stabilization of the twisted trace formula
by M{\oe}glin--Waldspurger \cite{MW_Stab1, MW_Stab2},  
assuming the validity of the twisted weighted fundamental lemma; 
the latter is as yet not available and is discussed further in Section \ref{sec.TFL} below. 
M{\oe}glin--Waldspurger also established the local twisted trace formula in \cite{MW_LT}, 
which is one of the vital ingredients in \cite[Chapter 6]{Ar}.
The remaining issues were to be resolved in the projected papers by Arthur, 
which are named as \cite{A24, A25, A26, A27} in \cite{Ar}, 
at least in the symplectic and orthogonal cases. 
The problem to be addressed by \cite{A24} has been solved in \cite{MW_Stab1},
whereas the other three references \cite{A25, A26, A27} have not been treated yet.
For \cite{A24}, see also Section \ref{sec.A24} below.
The problems to be covered by \cite{A25} appear to be particularly challenging, 
and the Hecke algebra method mentioned below \cite[Lemma 7.1.2]{Ar} 
leads to rather complicated calculations 
that are delicate even in the (supposedly simplest) case pertaining to the Iwahori Hecke algebras.
\par

The goal of this paper is to prove all unproven assertions that \cite{Ar} and \cite{Mok} rely on, 
apart from the twisted weighted fundamental lemma, 
uniformly for quasi-split symplectic, special orthogonal, and unitary groups. 
Our three main theorems correspond to 
what should be expected from \cite{A27}, \cite{A26}, and \cite{A25}, respectively, 
including their analogues for unitary groups. 
In addition, we justify a few other technical results that are used in \cite{Ar} and \cite{Mok} 
without explicit reference. 
It is worth noting that we develop a novel method for \cite{A25} and \cite{A26} 
that is based on a careful study of local intertwining operators and is quite different from the approaches suggested by Arthur in \cite{Ar}. 
As a consequence of this paper, 
the main endoscopic classification for quasi-split classical groups will become unconditional 
as soon as the twisted weighted fundamental lemma is fully verified. 
The wide range of applications resting on the classification will also become unconditional.
\par

We remark that a weak global Langlands functoriality from split classical groups 
(as well as split general spin groups) to general linear groups 
was established by Cai--Friedberg--Kaplan \cite{CFK2} 
via doubling constructions and the converse theorem, 
extending the work by Cogdell--Kim--Piatetski-Shapiro--Shahidi 
et al.~(see \cite{CPSS} and the references therein) 
in the globally generic case. 
Even though the trace formula method leads to more precise results 
that are suitable for broader applications, 
their approach is unconditional and less demanding in terms of prerequisites 
because it avoids the trace formula. 
On the other hand, it is possible to deduce a weak global Langlands functoriality from 
(not necessarily quasi-split) classical groups to general linear groups 
by the trace formula in a way much softer than \cite{Ar} and \cite{Mok}, 
and conditional only on the twisted weighted fundamental lemma; 
see \cite{Shi}, cf.~\cite[Proposition 9.5.2]{Ar}.

%\subsection{Context}
\subsection{Context}\label{sec.context}
Now we partially review the outline of Arthur's inductive argument in \cite{Ar} to put our work in context. 
(The same applies to \cite{Mok} regarding unitary groups. 
Since the structure of \cite{Mok} closely follows that of \cite{Ar}, our review focuses on \cite{Ar}.)
We may organize the main theorems in \cite{Ar} as follows using the numbering from there.
\begin{itemize}
    \item Local classification theorems: Theorems 1.5.1, 2.2.1, 2.2.4.
    \item Local intertwining relations (LIR): Theorems 2.4.1, 2.4.4.
    \item Global seed theorems: Theorems 1.4.1, 1.4.2.
    \item Global classification theorems: Theorems 1.5.2, 1.5.3.
    \item Global stable multiplicity formulas: Theorems 4.1.2, 4.2.2.
\end{itemize}
The local classification includes the local Langlands correspondence, 
construction and internal parametrization of $A$-packets, 
and the endoscopic character relations. 
The global seed theorems support the formalism of Arthur's global parameters for classical groups; 
the global classification includes Arthur's multiplicity formula 
as well as the dichotomy for self-dual cuspidal automorphic representations of $\GL_N$, 
with $N$ even, into symplectic and orthogonal types.
The above theorems are proven simultaneously by induction on a positive integer $N$ 
for quasi-split symplectic and special orthogonal groups 
which are twisted endoscopic groups for $\GL_N$. 
\par

The most crucial ingredient of the proof is the stabilization of the trace formulas
for quasi-split classical groups and twisted general linear groups. 
Essential for its use is a precise understanding of the intertwining operators 
appearing in the trace formulas. 
Our main theorems corresponding to \cite{A26} and \cite{A27} 
(announced in \cite[Section 2.5]{Ar}) play this role by
clarifying the relationship between normalized intertwining operators and Whittaker models, 
as well as the part of \cite{A25} pertaining to LIR for $A$-parameters.
\par

In addition, the results corresponding to \cite{A25} serve as the cornerstone for Chapter 7 of \cite{Ar}. 
Let us provide more details. 
Arthur obtains the local theorems for tempered representations 
and bounded Langlands parameters, a.k.a., tempered $L$-parameters, 
by the end of Chapter 6 of \cite{Ar}. 
Chapter 7 is devoted to the local classification and LIR for non-tempered $A$-parameters. 
Building on the local results of Chapters 6 and 7, 
Arthur finishes the proof of the global theorems, 
thereby completing the inductive argument, in Chapter 8.
\par

Arthur's strategy in Chapter 7 broadly consists of two steps:
\begin{description}
\item[Step 1] 
Handle a certain class of $A$-parameters over $p$-adic fields by a local method.
\item[Step 2] 
Prove the general case via globalization.
\end{description}
\par

Step 2 propagates the results from Step 1 by carefully globalizing a given local $A$-parameter 
such that the local $A$-parameters at all the other places, 
apart from the place of interest, 
are essentially understood. 
With that said, we concentrate on Step 1 as this is what our main theorem is about.
\par

A key input in Step 1 is Aubert duality, 
which is defined on local $A$-parameters 
as well as on irreducible representations of $p$-adic reductive groups. 
On $A$-parameters, it is induced by simply permuting the two $\SL_2$-factors in the source group.
On representations, 
Aubert duality is defined in terms of parabolic inductions composed with Jacquet modules; 
for example, the trivial representation and the Steinberg representation are Aubert-dual to each other, 
and each supercuspidal representation is its own Aubert dual. 
Aubert duality on the two sides should be compatible with each other through the local classification. 
This is indeed shown to be so by the end of Chapter 7 in \cite{Ar}. 
\par

Arthur's strategy for Step 1 is to turn the tables around. 
Since the local theorems are known for tempered $L$-parameters, 
we can hope to construct $A$-packets and prove the local theorems via Aubert duality 
when the $A$-parameters are co-tempered, 
 i.e., when they are Aubert-dual to tempered $L$-parameters, 
 provided that we understand how Aubert duality interacts with the local classification and LIR. 
This is exactly what our third main theorem achieves.
\par

In \cite{Ar} the counterpart of this theorem is stated as Lemma 7.1.2 
and the penultimate paragraph on p.~428, whose proof was deferred to \cite{A25}. 
In fact Arthur asserted only a weaker statement for certain co-tempered $A$-parameters, 
which nevertheless suffices for Step 2 above. 
On the other hand, our method seems robust and optimal 
in that it allows us to deal with all co-tempered $A$-parameters. 
\par

%\subsection{Results, proofs and organization}
\subsection{Results, proofs and organization}
Let us describe our results. Let $F$ be a local field of characteristic zero with the local Langlands group $L_F$, 
and let either $G = \GL_N(F)$ equipped with a non-trivial pinned outer automorphism $\theta$ 
(which coincides with $g \mapsto {}^tg^{-1}$ up to an inner automorphism), 
or let $G$ be a symplectic group $\Sp_{2n}(F)$ over $F$; for simplicity, in this introduction, we will not discuss orthogonal
%\footnote{
%In this paper, we will use the disconnected group $\O(2n)$,
%but Arthur uses the standard and twisted endoscopies for $\SO(2n)$ in \cite{Ar}. 
%For a comparison, see Remark \ref{packet_SO}.
%} 
or unitary groups, which require more notation. 
See also Section \ref{sec.main}, 
where we review Arthur's results and state our three main theorems in the general setting.
\par

Fix a proper standard parabolic subgroup $P=MN_P$ of $G$, 
and an $A$-parameter 
\[
\psi_M \colon L_F \times \SL_2(\C) \rightarrow {}^LM
\] 
for the Levi component $M$.
As an induction hypothesis, we assume that we have the $A$-packet $\Pi_{\psi_M}$, 
which is a multi-set of irreducible unitary representations of $M$. 
Let $w$ be a (twisted) Weyl element of $G$ that preserves $M$, i.e., an element of $N_G(M)/M$ when $G$ is the symplectic group, or an element of $N_{G \rtimes \theta}(M)/M$ when $G$ is the general linear group. After fixing some auxiliary data, 
for $\pi_M \in \Pi_{\psi_M}$, Arthur defines a normalized intertwining operator 
\[
R_P(w, \pi_M, \psi_M) \colon I_P(\pi_M) \rightarrow I_P(w\pi_M), 
\] 
where $I_P(\pi_M)$ is the normalized parabolic induction of $\pi_M$ and $(w\pi_M)(m) = \pi_M(\tl{w}^{-1}m\tl{w})$ with a carefully chosen representative $\tl{w}$ of $w$ (see Section \ref{sec.NIO} for more details). Our three main theorems concern these normalized intertwining operators.
\par

\subsubsection{Main Theorem 1}
In the first main theorem (Theorem \ref{main1}), 
we consider the tempered and generic case. 
Let $\psi_M = \phi_M$ be a tempered $L$-parameter, which means that $\phi_M|_{\{\1_{L_F}\} \times \SL_2(\C)} = \1$, 
and let $\pi_M$ be the generic representation lying in $\Pi_{\phi_M}$.
In this case, from a fixed Whittaker functional on $\pi_M$, 
one can get a Whittaker functional $\Omega(\pi_M)$ of $I_P(\pi_M)$ by the Jacquet integral 
(see Section \ref{sec.A27}). 
Then Theorem \ref{main1} claims that 
\[
\Omega(w\pi_M) \circ R_P(w, \pi_M, \phi_M) = \Omega(\pi_M) 
\]
if $w\pi_M \cong \pi_M$. 
This is \cite[Theorem 2.5.1 (b)]{Ar}, whose proof was deferred to \cite{A27}.
Note that a similar result is proven by Shahidi \cite{Sh7}, 
but Shahidi's definition of $R_P(w,\pi_M,\phi_M)$ is not the same as Arthur's because it uses different normalizing factors. 
As suggested in the proof of \cite[(2.5.5)]{Ar}, 
Theorem \ref{main1} will be proven by comparing Arthur's normalization factors 
with Shahidi's local coefficients. 
This is done in Section \ref{sec.main1}. 
In particular, if $G$ is a classical group, we need the coincidence of Shahidi's gamma factors (constructed in terms of representations of reductive groups) with those defined by Artin, Deligne, and Langlands (constructed in terms of Galois representations).
This is well-known to experts, but for completeness, 
we give a proof of this fact in Section \ref{sec.gamma}.

Another result whose proof was deferred to \cite{A27} is \cite[Lemma 2.5.5]{Ar}.
This lemma claims that the local intertwining relation 
(see Section \ref{intro.main3} below for more details) for the tempered case 
can be reduced to a slightly weaker statement. 
We will show this lemma in Appendix \ref{sec.255}. 
Theorems \ref{main1} and \ref{2.5.5} together with 
the twisted endoscopic character relations for the archimedean case, which are explained in Appendix \ref{sec.TECR}, make the discussion of \cite[Chapter 6]{Ar}, 
and hence the local classification in the tempered case (under the inductive hypotheses), 
conditional only on the twisted weighted fundamental lemma.
\par

\subsubsection{Main Theorem 2}
The second main theorem concerns $G = \GL_N(F)$ and its automorphism $\theta$. 
In this case, $\Pi_{\psi_M} = \{\pi_M\}$ is a singleton, 
and $I_P(\pi_M)$ is an irreducible unitary representation.
Let $w$ be a $\theta$-twisted Weyl element such that $w\pi_M \cong \pi_M$. 
Then we have a normalized isomorphism $\tl\pi_M(w) \colon w \pi_M \xrightarrow{\sim} \pi_M$, whose composition with the normalized intertwining operator $R_P(w,\pi_M,\psi_M)$ discussed above produces the intertwining operator 
\[
\tl{R}_P(w, \tl\pi_M) \colon I_P(\pi_M) \rightarrow I_P(\pi_M) \circ \theta,  
\]
see Section \ref{sec.main2}. On the other hand, the assumption $w\pi_M \cong \pi_M$ 
implies that $I_P(\pi_M)$ is self-dual, i.e., $I_P(\pi_M) \cong I_P(\pi_M) \circ \theta$. 
Using a Whittaker functional on the standard module of $I_P(\pi_M)$, 
one can define a normalized isomorphism 
$\theta_A \colon I_P(\pi_M) \xrightarrow{\sim} I_P(\pi_M) \circ \theta$
(see Section \ref{sec.thetaA}).
Our second main theorem (Theorem \ref{main2}) asserts that 
\[
\tl{R}_P(w, \tl\pi_M) = \theta_A.
\]
This is \cite[Theorem 2.5.3]{Ar}, whose proof was deferred to  \cite{A26}.
\par

The proof of Theorem \ref{main2} is given in Section \ref{LIR_GL}. 
When $\pi_M$ is tempered (and hence generic), 
the assertion immediately follows from the first main theorem (Theorem \ref{main1}). 
In the non-tempered case, 
Arthur notes below \cite[Theorem 2.5.3]{Ar} that 
it requires further techniques, based on some version of minimal $K$-types. While trying to follow this argument, we were led to heavy calculations. Therefore, in this paper we will prove Theorem \ref{main2} in the non-tempered case with a completely different approach.
One of the challenges is that the isomorphism $\theta_A$ is inexplicit, since it is defined through the Langlands quotient map from the standard module of $I_P(\pi_M)$, 
which is a priori known to exist only abstractly. 
Our novelty is to construct the standard module carefully and 
to realize the Langlands quotient map as a composition of normalized intertwining operators
(Lemma \ref{langlands}). 
This, together with Theorem \ref{main1}, describes $\theta_A$ using intertwining operators, 
and Theorem \ref{main2} is reduced to the commutativity of a certain diagram in Theorem \ref{diagramGL},
which we call the \emph{main diagram}. 
However, since $I_P(\pi_M)$ is a non-tempered unitary induction, 
this commutativity does not follow directly from the previous results, and requires further arguments.
A simple but non-trivial example for Theorem \ref{diagramGL} is given in Example \ref{ex_GL}.
\par

\subsubsection{Main Theorem 3}\label{intro.main3}
Let $F$ be a non-archimedean base field 
so that $L_F = W_F \times \SL_2(\C)$ with $W_F$ the Weil group of $F$.
We now discuss the results whose proofs were deferred to \cite{A25}. 
They pertain to a classical group $G$ over $F$
(in this introduction we are taking the example of $G = \Sp_{2n}(F)$), 
and are formulated as \cite[Lemma 7.1.2]{Ar}. 
This lemma builds on the inductive assumptions that the local theorems have been proved for 
\begin{itemize}
\item
all tempered $L$-parameters for $G$; and 
\item
all $A$-parameters for any classical group $G'$
such that $\dim(\St_{\widehat{G'}}) < \dim(\St_{\widehat{G}})$, 
\end{itemize}
where $\dim(\St_{\widehat{G}})$ is the dimension of the standard representation of the Langlands dual group $\widehat{G}$ of $G$. 
These inductive assumptions hold at the start of \cite[Chapter 7]{Ar}. The statements of this lemma concern (certain) co-tempered parameters. More precisely, for an $A$-parameter $\psi \colon W_F \times \SL_2(\C) \times \SL_2(\C) \rightarrow {}^LG$, we define its \emph{Aubert dual} parameter $\widehat\psi \colon W_F \times \SL_2(\C) \times \SL_2(\C) \rightarrow {}^LG$
by $\widehat\psi(w, g_1,g_2) = \psi(w,g_2,g_1)$. 
We say that $\psi$ is \emph{co-tempered} if its restriction to the first copy of $\SL_2(\C)$ is trivial. In other words, $\phi=\widehat\psi$ is a tempered $L$-parameter.

Under the above inductive hypotheses, 
our third main theorem (Theorem \ref{main3}) asserts that 
for every co-tempered $A$-parameter $\psi = \widehat\phi$ for $G$, 

\begin{enumerate}
\item
we can construct an $A$-packet $\Pi_\psi$ 
together with a character $\pair{\cdot, \pi}_\psi$ of the component group $\Sc_\psi$ 
assigned to each $\pi \in \Pi_\psi$
which satisfies the ordinary endoscopic character relations, as well as the twisted endoscopic character relations with respect to $\GL_N(F)$ (where $N=2n+1$ if $G = \Sp_{2n}(F)$); and

\item
it also satisfies the local intertwining relation, explained further below.
\end{enumerate}
Note that this is a stronger statement than what is claimed in \cite[Lemma 7.1.2]{Ar}, 
because it covers all co-tempered parameters, rather than the special class of tamely ramified quadratic co-tempered parameters.
\par

Let us be a bit more precise about the statement of our theorem. The construction of the $A$-packet $\Pi_\psi$ in (1) is actually very simple. We define
\[
\Pi_{\psi} = \{ \hat\pi \,|\, \pi \in \Pi_\phi\},
\]
where $\phi=\widehat\psi$ is the tempered $L$-parameter that is the Aubert dual of $\psi$, 
$\Pi_\phi$ is the $L$-packet constructed in \cite[Chapter 6]{Ar}, 
and $\hat\pi$ is the Aubert dual of $\pi$ (see \cite{Au}). 
The problem is to prove that this definition satisfies the twisted and ordinary endoscopic character identities. 
\par

For the twisted character identities, 
we need to show that an alternating sum
of the characters of the members of the packet $\Pi_\psi$ defined above 
is the twisted transfer of the twisted character of the representation of $\GL_N(F)$ associated to $\psi$ (now viewed as a parameter for $\GL_N(F)$ via the standard embedding of $^LG$ into $\GL_N(\C)$). 
For this we need to relate Aubert duality for the classical group $G$ 
to twisted Aubert duality for $\GL_N(F)$. 
Building on the work of Hiraga \cite{Hi}, 
this comes down to verifying that certain signs defined in terms of Aubert duality for representations agree with corresponding signs defined in terms of Aubert duality for parameters.
\par

For standard character identities, 
we need to establish a map $\pi \mapsto \pair{\cdot,\pi}_\psi$ 
between $\Pi_\psi$ and the set of characters on $\Sc_\psi$ in such a way that, for each $s \in \Sc_\psi$,  
the virtual character $\sum_{\pi \in \Pi_\psi} \pair{s \cdot s_\psi,\pi}_\psi \Theta_\pi$ matches 
its endoscopic counterpart.  
By construction there is an obvious bijection $\pi \mapsto \hat\pi$ between $\Pi_\psi$ and $\Pi_\phi$, 
while at the same time we have the identity $\Sc_\psi=\Sc_\phi$. 
However, it is \emph{not} true that $\pair{\cdot,\pi}_\psi=\pair{\cdot,\hat\pi}_\phi$, 
in the sense that, if we took the above identity as a definition, 
then the endoscopic character identities would \emph{not} hold\footnote{
It was also pointed out by Liu--Lo--Shahidi \cite{LLS} recently. 
Their ``anti-tempered'' is synonymous with ``co-tempered'' in this paper.
}.
\par

To see what the correct definition would be, 
we assume that the desired character relations hold for an arbitrary $A$-parameter $\psi$ and its dual $\widehat\psi$, 
and investigate the implication of this assumption on the relationship between 
$\pair{\cdot,\pi}_\psi$ and $\pair{\cdot,\hat\pi}_{\widehat\psi}$ in Lemma \ref{beta} (2), 
where we show that the quotient of these characters can be described 
by certain signs $\beta(\psi_+)$, $\beta(\psi_-)$, and $\beta(\psi)$; 
here $\beta(\psi)$ is the sign that occurs in twisted Aubert duality on $\GL_N$ 
for the representation corresponding to $\psi$ (see Section \ref{sec.TAD_GL})
and $\beta(\psi_+)$, $\beta(\psi_-)$ are the analogous signs 
for certain supplementary parameters $\psi_+$ and $\psi_-$ defined in terms of $\psi$ and the element $s$
(see Section \ref{sec.Arthur}). 
While we cannot compute these signs in full generality, 
we do compute them for tempered representations in Proposition \ref{c-temp}, 
and for a certain class of representations in Proposition \ref{psiA}.
Hence we obtain an explicit relation between 
$\pair{\cdot, \hat\pi}_{\widehat\psi}$ and $\pair{\cdot, \pi}_\psi$ 
in Corollaries \ref{+-temp} and \ref{+otimes-}. 
Note that Proposition \ref{psiA} is an application of the second main theorem (Theorem \ref{main2}), 
and Corollary \ref{+otimes-} will be applied in the final step of the proof of Theorem \ref{main3} (2).
\par

Turning the tables around (see Remark \ref{converse}), we now drop the assumption that the desired character identities hold (after all, these are the identities we want to prove) and for a co-tempered $A$-parameter $\psi = \widehat\phi$, 
we define the character $\pair{\cdot,\pi}_\psi$ 
such that the formula obtained in Corollary \ref{+-temp} holds.
In order to see  that this is well-defined, 
one has to check a certain equality \eqref{ast} in Proposition \ref{beta_phi_pi}. As a consequence, we can see that the $A$-packet $\Pi_{\psi}$ satisfies the endoscopic character identities (Theorem \ref{ECR-cotemp}).
\par

Finally, we shall explain the proof of Theorem \ref{main3} (2), 
the local intertwining relation for co-tempered $A$-parameters. 
Our approach is entirely different from the one suggested below the statement of \cite[Lemma 7.1.2]{Ar}.
While the suggestion there was based on the theory of Hecke algebras 
and an expected extension of results by Morris, 
our initial attempts in this direction quickly led to very difficult calculations. 
Therefore, we develop a new approach that is based on the study of certain special representations, 
motivated by a result of Tadi\'c \cite{T2}, 
as well as the study \cite{At} of Jacquet modules for tempered $L$-packets by one of the authors, 
to treat the case of maximal parabolic subgroups 
and parameters $\psi_M$ whose linear part is irreducible and self-dual, 
and then we use an induction procedure to generalize this 
to arbitrary parabolic subgroups and arbitrary co-tempered parameters.
\par

Let $P = MN_P$ be a proper standard parabolic subgroup of $G = \Sp_{2n}(F)$. 
If $\psi_M$ is an $A$-parameter for $M$, 
we denote by $\psi$ the $A$-parameter for $G$ 
given by $\psi_M$ and the embedding ${}^LM \hookrightarrow {}^LG$. 
Then the $A$-packet $\Pi_\psi$ is 
the multi-set of the irreducible components of $I_P(\pi_M)$ for $\pi_M \in \Pi_{\psi_M}$. 
Moreover, if a Weyl element $w$ of $G$ preserves $M$ and $\psi_M$, 
we can normalize an isomorphism $w\pi_M \xrightarrow{\sim} \pi_M$. 
This allows us to define for an element $u \in S_\psi = \Cent(\im(\psi), \widehat{G})$ that normalizes $\widehat{M}$, 
a normalized self-intertwining operator 
\[
\pair{\tl{u}, \tl\pi_M} R_P(w_u, \tl\pi_M, \psi_M) \colon I_P(\pi_M) \rightarrow I_P(\pi_M),
\]
where the Weyl element $w_u$ is determined by $u$. 
If we knew that $I_P(\pi_M)$ is multiplicity-free, 
then this operator, being  $G$-equivariant, 
would act on each irreducible summand $\pi \subset I_P(\pi_M)$ by a scalar. 
The \emph{local intertwining relation \eqref{LIR}} asserts that this scalar 
is equal to $\pair{s_u,\pi}_\psi$, 
where $s_u \in \Sc_\psi = \pi_0(S_\psi)$ is the image of $u \in S_\psi$. 
For a more precise statement, see Section \ref{sec.main3}.
When $\psi_M = \widehat\phi_M$ is co-tempered, 
the multiplicity-free statement follows from the corresponding statement for $\phi_M$, 
which has been established in \cite[Chapter 6]{Ar}, 
and Theorem \ref{main3} (2) claims that \eqref{LIR} holds. 
We prove Theorem \ref{main3} (2) in Sections \ref{sec.LIR} and \ref{computations}.
\par

As mentioned above, 
the proof can be reduced by an inductive argument (Lemma \ref{reduction}) 
to the case where $P=MN_P$ is a maximal parabolic subgroup, 
so that $M \cong \GL_k(F) \times G_0$ with $G_0 = \Sp_{2n_0}(F)$, 
and the $\GL$-part $\psi_\GL$ of $\psi_M = \psi_\GL \oplus \psi_0$ is irreducible and self-dual. 
After that, our approach to attack \eqref{LIR} is to extend our method for Theorem \ref{main2} 
from the case of $\GL_N(F)$ to the case of classical groups. 
The reason why our method works well for $\GL_N(F)$ is that 
the unitary induction $I_P(\pi_M)$ is irreducible, 
which implies that its Langlands data are easily described in terms of those of $\pi_M$.
When $G$ is a classical group, $I_P(\pi_M)$ need not be irreducible.
In Section \ref{high}, we isolate a certain irreducible summand of $I_P(\pi_M)$ on which our method would work; we call it a \emph{highly non-tempered summand}. 
The relevance of this notion is that 
one can infer the Langlands data of a highly non-tempered summand of $I_P(\pi_M)$ from that of $\pi_M$, 
while the Langlands data for general subquotients of parabolically induced representations are mysterious. 
It follows from the definition that for any $\pi_M \in \Pi_{\psi_M}$, 
there exists a highly non-tempered summand of $I_P(\pi_M)$, 
and in many cases it is unique. 
Our method would work only for highly non-tempered summands of $I_P(\pi_M)$. 
Since we are assuming that $P$ is maximal and that $\psi_\GL$ is irreducible, 
the unitary induction $I_P(\pi_M)$ for $\pi_M \in \Pi_{\psi_M}$ 
is a direct sum of at most two irreducible representations. 
In this sense, 
our method has a chance to prove \eqref{LIR} only for ``half'' of the cases.
\par

The good news is that Corollary \ref{NIOvsAD}
tells us that ``half'' is enough!
More precisely, if $I_P(\pi_M) = \pi_1 \oplus \pi_2$ is reducible, 
then this result, 
which says that Aubert duality commutes with the normalized intertwining operator
up to a nonzero scalar, 
implies that \eqref{LIR} for $\pi_1$ is equivalent to \eqref{LIR} for $\pi_2$ (see Lemma \ref{halve}).
Note that Corollary \ref{NIOvsAD} was established in an appendix in an arXiv version of \cite{KMSW}, 
but because it is a key input to our argument, 
we move this appendix to Appendix \ref{sec.AD} in this paper. 
\par

This allows us to focus on \eqref{LIR} for a highly non-tempered summand $\pi \subset I_P(\pi_M)$. 
By the definition of this summand, we have at our disposal the analogue of the \emph{main diagram} from the $\GL_N(F)$ case. 
However, since the intertwining operator on $I_P(\pi_M)$ is normalized using the $A$-parameter $\psi_M$, 
whereas the other operators appearing in the main diagram use $L$-parameters, 
the main diagram is commutative only up to an explicit scalar, 
which is a special value of the quotient of the normalizing factors 
defined using the $A$-parameter $\psi_M$ and the $L$-parameter $\phi_{\pi_M}$ of $\pi_M$.
See Theorem \ref{diagramG}. 
By \eqref{LIR} in the tempered case, 
we can relate this scalar to the eigenvalue of the normalized intertwining operator on $\pi$.
In conclusion, \eqref{LIR} for the highly non-tempered summand $\pi \subset I_P(\pi_M)$
is equivalent to a certain scalar equation \eqref{star} presented in Corollary \ref{summary}.
\par

The proof is therefore reduced to establishing this scalar equation \eqref{star}. 
Note that the left-hand side of this equation involves the $L$-parameter of $\pi_M$. 
In general, it is very difficult 
to completely list the $L$-parameters of representations in a given $A$-packet $\Pi_{\psi_M}$. 
Circumventing this problem, we give an inductive argument to show \eqref{star}. 
The initial step is where 
the classical part $\pi_0$ of $\pi_M = \pi_\GL \boxtimes \pi_0$ is \emph{almost supercuspidal}
(see Definition \ref{a-cusp}). 
In this case, we will compute everything by hand in Section \ref{cuspidal}. 
For the inductive steps (Sections \ref{(a)}, \ref{(b)} and \ref{(c)}), 
we use \cite[Theorem 4.2]{At}, which computes the Jacquet modules of tempered representations. 
According to this theorem, we need to consider three cases separately. 
The first and second cases are amenable, whereas 
in the last case in Section \ref{(c)}, we have to treat an $A$-parameter which is not co-tempered, 
but it fits the artificial assumption in Corollary \ref{+otimes-}. 
\par

In our proof of Theorem \ref{main3}, 
M{\oe}glin's work on the explicit construction of $A$-packets 
(see \cite{X2} and its references) plays a fundamental role. 
In particular, her results on the computation of the cuspidal support of discrete series representations 
and their reinterpretation in terms of enhanced $L$-parameters by Xu (see \cite{X1}), 
as well as the extension \cite{At} of these results to the tempered case by one of the authors, 
are an essential tool in our argument.
\par

However one has to be quite careful at this point. 
As explained in \cite[Section 8]{X2}, 
the original arguments presented in \cite{X2} and \cite{At}
follow a strategy that requires the validity of Arthur's results not only for the group $G$ itself, 
but also for groups of higher rank.
Therefore, such an approach cannot be taken in the middle of a proof by induction on the rank of $G$, 
which is the situation of \cite[Chapter 7]{Ar}.
This requires us to give new proofs of some of these results, 
in particular \cite[Theorem 4.2]{At}, 
which avoid appealing to groups of larger rank. 
We do this in Appendix \ref{xu-atobe}, 
and also extend the results to cover the case of unitary groups.
\par

Note also that, 
if we could use the full extent of these results, 
that would easily lead to a generalization of Corollary \ref{+otimes-} 
that does not use Theorem \ref{main2}. 
But because of the inductive constraints, 
we are forced to prove the stated version of Corollary \ref{+otimes-} 
and appeal to Theorem \ref{main2} in that proof.

\subsubsection{Supplementary results}
In addition to the main theorems, 
we prove a number of supplementary results in the appendices, 
most of which are also required for the inductive argument in \cite{Ar}, 
and some of which may be of independent interest.
\par

In Appendix \ref{sec.LF},
we prove that the local Langlands correspondence for classical groups, 
that is established by the inductive argument of \cite{Ar} and \cite{Mok}, 
matches the Galois-theoretic local factors defined by Artin, Deligne and Langlands 
with the automorphic local factors defined by Shahidi. 
This result is used in the proof of our first main theorem (Theorem \ref{main1}) and it is also of independent interest: 
for example, it is used in \cite{GI1} and \cite{GI2}, where it was taken as an assumption.
\par

In general, 
the matching of such local factors is a basic expectation for the local Langlands correspondence, 
at least in the setting in which automorphic local factors have been defined. 
In some constructions of the correspondence (such as those based on converse theorems), 
this matching is built into the construction. 
In the approach via endoscopy that is the subject of \cite{Ar} and \cite{Mok}, 
this matching does not follow directly from the construction. 
In Appendix \ref{sec.LF}, 
we verify it for groups of the form $\GL_k \times G_0$, 
where $G_0$ is a classical group. 
In this setting, the automorphic local factors were defined by Shahidi, 
and we verify that they match the Galois factors defined by Artin, Deligne and Langlands.
\par

In Appendix \ref{sec.AD}, 
we review the Aubert involution for connected reductive groups 
and prove that it is compatible with intertwining operators up to a scalar. 
This result is used in an essential way in the proof of our third main theorem (Theorem \ref{main3}).
This theorem, roughly speaking, explicitly determines the scalar. We also review the work \cite{CK26} on the Aubert involution for disconnected reductive groups, some aspects of which are used in the proof of the third main theorem (Theorem \ref{main3}). 
\par

In Appendix \ref{xu-atobe}, 
we extend and reprove some results of \cite{Moe11}, \cite{X1}, and \cite{At} to all classical groups. 
These results are essential in the proof of our third main theorem (Theorem \ref{main3}). 
On the one hand, some of these results were originally formulated only for symplectic and orthogonal groups, 
and we take the opportunity here to formulate and prove them also for unitary groups. 
As already mentioned, the earlier proofs of some of these results relied on some arguments 
that presuppose that the endoscopic classification of representations has been established for all groups, 
in particular for groups whose rank is higher than that of the group under consideration. Since such an assumption is problematic in the midst of a proof by induction on the rank, 
we present in this appendix alternative proofs that avoid this assumption.
\par

Appendix \ref{sec.255} is devoted to the proofs of two results which are based on the same argument. 
The first result is the strong form of Shahidi's tempered $L$-packet conjecture 
(a strengthening of \cite[Conjecture 9.4]{Sh7}), 
which states that every tempered $L$-packet contains exactly one member 
that is generic with respect to a given fixed Whittaker datum $\ww$, 
and moreover that the pairing of this $L$-packet with the centralizer group of its parameter 
matches the generic constituent with the trivial character. 
For classical groups, the existence and uniqueness of the generic constituent was proven by Varma \cite{V2}.
We prove this for general connected reductive groups under relevant assumptions, 
which the construction of the local Langlands correspondence in \cite{Ar} and \cite{Mok} 
does indeed satisfy. 
\par

The second result is of a more technical nature. 
It is formulated in \cite{Ar} as Lemma 2.5.5 and is used in the inductive proof. 
This lemma states that a weaker version of the local intertwining relation, 
where a certain unknown scalar is inserted, implies the stronger version, 
in which this scalar equals $1$. 
In \cite{Ar}, this statement is formulated for classical groups and for odd residual characteristic. 
Building on results of Kottwitz \cite{Kot} and Varma \cite{V1}, 
we are able to give a proof that works uniformly for all reductive groups and arbitrary residual characteristic.
\par

In Appendix \ref{sec.TECR}, 
we prove certain twisted character identities over the real numbers 
that are assumed at various places in the inductive argument of \cite{Ar}. 
Such identities are now available in the literature, 
so we simply collect the required references and supply the necessary additional arguments 
to adapt them to the form needed in \cite{Ar}.
\par

Finally, in Appendix \ref{sec.Frob}, 
we shall review two conventions for normalizing class field theory 
depending on whether arithmetic or geometric Frobenius elements 
correspond to uniformizers under the local non-archimedean reciprocity map, and discuss how they influence Arthur's endoscopic classification. 
It is an expanded version of the discussion of \cite[Section 4]{KoSh2}.
This appendix is independent of the other sections in this paper.

\subsubsection{A roadmap}
The following is a roadmap of our paper and Arthur's results:
\[
{\tiny
\xymatrix{
\fbox{Appendix \ref{sec.LF}} \ar@{=>}[r] \ar@{=>}[rrdddd] 
& \fbox{Section \ref{sec.main1}} \ar@{=>}[dd] \ar@{=>}[dr] 
& \fbox{Appendix \ref{sec.255}} \ar@{=>}[d] & \fbox{Appendix \ref{sec.TECR}} \ar@{=>}[dl] \\
&& \doublebox{Local classification for tempered $L$-parameters} 
\ar@{=>}[dddl] \ar@{=>}[dr] \ar@{=>}[ddr] \\
&\fbox{Section \ref{LIR_GL}} \ar@{=>}[dd] \ar@{=>}[dr] && \fbox{Appendix \ref{xu-atobe}} \ar@{=>}[d] \ar@{=>}[ddl]\\ 
\fbox{Appendix \ref{sec.AD}} \ar@{=>}[rr] \ar@{=>}[rd] && \fbox{Section \ref{sec.character}} \ar@{=>}[d] \ar@{=>}[r] 
&\fbox{Section \ref{sec.ECR.cotemp}} \ar@{=>}[lld] \ar@{=>}[ld] \ar@{=>}[ldd]\\
&\fbox{Section \ref{sec.LIR}} \ar@{=>}[rd] 
& \fbox{Section \ref{computations}} \ar@{=>}[d]\\
\fbox{Appendix \ref{sec.Frob}}&& \doublebox{Local classification for co-tempered $A$-parameters} \ar@{=>}[d]^{\text{globalization}}\\
&& \doublebox{Local classification for general $A$-parameters}
}}
\]

\subsection{On \cite{A24}}\label{sec.A24}
As alluded to above, the problem associated with \cite{A24} has been resolved. 
The main role of \cite{A24} is to justify \cite[Proposition 2.1.1]{Ar}, 
which asserts that choosing a test function on the twisted general linear group essentially amounts to 
choosing a family of test functions on (not necessarily elliptic) twisted endoscopic groups 
satisfying a certain compatibility condition. 
The proposition ``represents part of the stabilization of the twisted trace formula'' (see \cite[p.~57, line -8]{Ar}), 
and indeed, it has been proved in general (not only for twisted general linear groups) 
by M{\oe}glin--Waldspurger \cite[I.4.11]{MW_Stab1}. 
Note that \cite{A24} is used again in the proof of \cite[Corollary 8.4.5]{Ar} 
to back up an implicit fact in one of Arthur's papers; 
this fact is covered by \cite[I.4.11]{MW_Stab1} as well.

%\subsection{On the twisted weighted fundamental lemma}
\subsection{On the twisted weighted fundamental lemma}\label{sec.TFL}
As mentioned above, 
the stabilization of the twisted trace formula depends on 
the twisted weighted fundamental lemma stated as a theorem in \cite[II.4.4]{MW_Stab1}, 
which remains conditional to the best of our knowledge; 
cf.~the third-to-last paragraph in the preface of \cite{MW_Stab2}. 
As explained in \cite[II.4.4]{MW_Stab1}, 
this theorem is reduced via \cite[Theorem 3.8]{W4} to 
\begin{enumerate}
\item[(i)] 
the weighted fundamental lemma for Lie algebras; and 
\item[(ii)] 
the non-standard version thereof; 
see \cite[Conjectures 3.6, 3.7]{W4} for the precise statements.
\end{enumerate}
We should point out that (i) is already needed to stabilize the untwisted trace formula.
The proof of (i) was completed for split groups by Chaudouard--Laumon \cite{CL1,CL2}. 
Even though their methods are expected to generalize beyond the split case, 
no written account has appeared on the proof of (i) for non-split groups, or on the proof of (ii). 
Such a generalization is necessary for the stabilized trace formulas considered in \cite{Ar} and \cite{Mok}. The good news is that a full proof of (i) and (ii) is on the horizon. Halleck-Dub{\'e}'s thesis \cite{HD} extends the cohomological version of the weighted fundamental lemma (amounting to \cite{CL1} and the first half of \cite{CL2}) to the general case. He is completing what remains to be done, that is, to carry out fixed-point and local-global arguments to obtain (i) and (ii) following the strategies of \cite{N,CL2}.
\par

It is worth noting that all versions of the unweighted fundamental lemma are theorems 
thanks to Ng\^o, Waldspurger, and others; 
see the introduction of \cite{N} and \cite{LMW} for the explanation and further references, 
and \cite{GWZ} and \cite{Wa} for other proofs. 

%\subsection{Acknowledgments}
\subsection{Acknowledgments}
First and foremost, 
we express our most sincere gratitude to Jim Arthur for sharing with us his approach 
and thoughts on the subject of \cite{A25}
and for having been tremendously influential and inspirational for us. 
We are grateful to Freydoon Shahidi for explaining the historical
context of the issues discussed in Section \ref{sec.shahidi}. 
\par

H. Atobe is deeply grateful to Wen-Wei Li
for bringing the key result, Corollary A.3.3 in an arXiv version of \cite{KMSW} 
(which is Corollary \ref{NIOvsAD} in this paper), to his attention. 
This was the last piece needed to apply our approach for \cite{A26} to the classical groups. 
We would like to thank Yifeng Liu for suggesting several useful references. 
\par

We are grateful for the hospitality of Kyoto University and the support of Hokkaido University (to Atobe) 
during our stay 13.05.2023--27.05.2023, 
which marked the beginning of this project.
H. Atobe, A. Ichino, A. M\'inguez and S. W. Shin
thank 
\begin{itemize}
\item
the Korea Institute for Advanced Study;
\item
the National University of Singapore;
\item
the JSPS, Kyoto University and the University of Michigan; and
\item
the University of Michigan and the Korea Institute for Advanced Study, 
\end{itemize}
respectively, 
for hospitality and excellent working conditions
where parts of this work were done.
\par

H. Atobe, W. T. Gan, A. Ichino, T. Kaletha, A. M\'inguez, and S. W. Shin
were partially supported by
\begin{itemize}
\item
JSPS KAKENHI Grant Number 23K12946; 
\item
a NUS Tan Chin Tuan Centennial Professorship; 
\item
JSPS KAKENHI Grant Number 19H01781; 
\item
NSF grant DMS-2301507 and NSF RTG grant DMS-1840234; 
\item
the Principal Investigator project PAT4832423 of the Austrian Science Fund (FWF), 
the Croatian Science Foundation project (HRZZ-IP-2022-10-4615)
and the project ATR2024-154613 funded by MICIU/AEI/10.13039/501100011033; 
\item
NSF grant DMS-2401353, NSF RTG grant DMS-2342225, 
and a Simons Fellowship no.1013886,
\end{itemize}
respectively. 
\par

%\section{Arthur's theory of the endoscopic classification}
%\section{Arthur's theory of the endoscopic classification}
\section{Arthur's theory of the endoscopic classification}\label{sec.main}
In this section, we review Arthur's theory and state our main theorems in their appropriate generality.
\par

%\subsection{Basic notation}
\subsection{Basic notation}\label{sec.notation}
Fix a local field $F$ of characteristic zero. 
Let $E$ be either $F$ or a quadratic field extension of $F$. 
We denote by $x \mapsto \overline{x}$ the generator of $\Gal(E/F)$. 
Fix a non-trivial unitary character $\psi_F \colon F \rightarrow \C^\times$, 
and put $\psi_E = \psi_F \circ \tr_{E/F}$.
The normalized absolute value of $E$ is denoted by $|\cdot|_E$.
In particular, if $E$ is non-archimedean, and if $\varpi_E$ is a uniformizer of $E$, 
then $|\varpi_E|_E = q_E^{-1}$, 
where $q_E$ is the cardinality of the residue field of $E$.
\par

Fix an algebraic closure $\overline{F}$ of $F$, which contains $E$. 
The absolute Galois group of $F$ is denoted by $\Gamma = \Gamma_F = \Gal(\overline{F}/F)$.
Let $W_E$ be the Weil group of $E$, and let
\[
L_E = \left\{
\begin{aligned}
&W_E \iif \text{$E$ is archimedean}, \\
&W_E \times \SL_2(\C) \iif \text{$E$ is non-archimedean}
\end{aligned}
\right. 
\]
be the local Langlands group of $E$. 
Finally, let $|\cdot|_E$ be the norm map of $L_E$, i.e., 
\[
|\cdot|_E \colon L_E \twoheadrightarrow L_E^{\mathrm{ab}} \cong E^\times \xrightarrow{|\cdot|_E} \R_{>0},
\]
where $L_E^{\mathrm{ab}} \cong E^\times$ is the isomorphism given by the local class field theory. 
Here, we normalize this isomorphism such that
an arithmetic Frobenius map corresponds to a uniformizer in $E^\times$.
\par

%\subsection{Groups}
\subsection{Groups}\label{sec.groups}
In this paper, we often identify a connected reductive group over $F$ 
with the group of its $F$-points. 
\par

Let $G^\circ$ be a general linear group $\GL_N(E)$
or one of the following quasi-split classical groups
\[
\SO_{2n+1}(F), \quad \Sp_{2n}(F), \quad
\SO_{2n}(F), \quad \U_n.
\]
Here, when $G^\circ = \U_n$, 
it is an outer form of $\GL_n$ with respect to a specified quadratic extension $E$ of $F$, 
whereas when $G^\circ = \SO_N(F), \Sp_{2n}(F)$ we simply set $E=F$.
On the other hand, when $G^\circ = \SO_{2n}(F)$, 
we denote by $K$ the splitting field of $G^\circ$, 
which is equal to $F$ or a quadratic extension of $F$. 
Letting $\eta$ be the (possibly trivial) quadratic character of $F^\times$ associated to $K/F$, 
we sometimes write $\SO_{2n}(F) = \SO_{2n}^\eta(F)$.
\par

Fix an $F$-splitting $\spl = (B^\circ, T^\circ, \{X_\alpha\}_\alpha)$ of $G^\circ$.
Namely, $B^\circ = T^\circ U$ is an $F$-rational Borel subgroup, 
$T^\circ$ is a maximal torus, $U$ is the unipotent radical of $B^\circ$, 
and $\{X_\alpha\}_\alpha$ is a $\Gamma$-invariant set of root vectors, 
where $\alpha$ runs over simple roots of $T^\circ$ with respect to $B^\circ$.
Then $\spl$ and $\psi_F$ give rise to a Whittaker datum $\ww = (B^\circ, \chi)$, 
where $\chi$ is a non-degenerate character of $U$.
For any Levi subgroup $M^\circ$ of $G^\circ$ containing $T^\circ$, 
we take the $F$-splitting of $M^\circ$ induced by $\spl$, 
so that the associated Whittaker datum is given by $\ww_M = (B^\circ \cap M^\circ, \chi|_{U \cap M^\circ})$.
\par

For any subgroup $N$ of $U$ stable under the adjoint action of $T^\circ$, 
we take the Haar measure on $N$ determined by $\{X_\alpha\}_\alpha$ 
and the self-dual Haar measure on $F$ with respect to $\psi_F$.
\par

If $G^\circ = \SO_{2n}^\eta(F)$, we also consider the full orthogonal group $G = \O_{2n}^\eta(F)$
such that $G^\circ$ is the connected component of $\1 \in G$. 
We also abbreviate $\O_{2n}^\eta(F)$ as $\O_{2n}(F)$.
Let $T$ be the normalizer of $(T^\circ, B^\circ)$ in $G$. 
Then $T/T^\circ \cong G/G^\circ$.
Fix $\epsilon \in T \setminus T^\circ$ with $\epsilon^2 = \1$ such that $\epsilon$ preserves the splitting $\spl$.
(Cf., Section \ref{secA.groups} below.)
It gives an identification of $\O_{2n}(F)$ with a twisted group $\SO_{2n}(F) \rtimes \pair{\epsilon}$.
If $G^\circ \not= \SO_{2n}(F)$, we set $G = G^\circ$ and $T = T^\circ$. 
\par

Let $P^\circ = M^\circ N$ be a parabolic subgroup of $G^\circ$, 
where $M^\circ$ is a Levi component of $P^\circ$ and $N = N_P$ is the unipotent radical of $P^\circ$.
We say that $P^\circ$ (\resp $M^\circ$) is \emph{standard} (\resp \emph{semi-standard}) 
if it contains $B^\circ$ (\resp $T^\circ$).
If $P^\circ$ is stable under the adjoint action of $T$, 
we set $P = P^\circ \cdot T$ and $M = M^\circ \cdot T$. 
Otherwise, we put $P=P^\circ$ and $M = M^\circ$. 
We call the subgroup of the form $P=MN$ (\resp $M$) 
a \emph{(standard) parabolic subgroup} (\resp a \emph{(semi-standard) Levi subgroup}) of $G$.
\par

%\subsection{Representations}
\subsection{Representations}\label{sec.rep}
Let $G$ be one of the groups $\GL_N(E)$, $\SO_{2n+1}(F)$, $\Sp_{2n}(F)$, $\O_{2n}(F)$ or $\U_n$
as in the previous subsection. 
We denote by $\Rep(G)$ the category of smooth (Fr\'echet) admissible
complex representations of $G$ (of moderate growth) of finite length. 
Here, the notions of Fr\'echet and moderate growth are meaningful only when $F$ is archimedean. 
Let $\Irr(G)$ be the set of equivalence classes of irreducible objects of $\Rep(G)$.
The subset of $\Irr(G)$ consisting of irreducible unitary (\resp tempered) representations 
is denoted by $\Irr_\unit(G)$ (\resp $\Irr_\temp(G)$).
\par

Let $P = MN$ be a standard parabolic subgroup of $G$ with semi-standard Levi component $M$.
Put $\aa_M = \Hom(\Rat(M), \R)$ and $\aa_M^* = \Rat(M) \otimes \R$, 
where $\Rat(M)$ is the group of algebraic characters of $M$ defined over $F$.
Let $\aa_{M,\C}^* = \Rat(M) \otimes \C$ be the complexification of $\aa_M^*$.
Define a homomorphism $H_M \colon M \rightarrow \aa_M$ by 
\[
 |\chi(m)|_F = e^{\pair{H_M(m), \chi}}
\]
for all $\chi \in \Rat(M)$ and $m \in M$, 
where $\pair{\cdot, \cdot} \colon \aa_M \times \aa_M^* \rightarrow \R$ is the natural pairing.
For an irreducible representation $\pi$ of $M$ and $\lambda \in \aa_{M,\C}^*$, 
we define a representation $\pi_\lambda$ of $M$ by $\pi_\lambda(m) = e^{\pair{H_M(m), \lambda}} \pi(m)$ 
realized on the space $\VV_\pi$ of $\pi$.
We denote by 
\[
I_P(\pi_\lambda) = \Ind^G_P(\pi_\lambda)
\]
the associated normalized parabolically induced representation of $G$.
As a $\C$-vector space, 
$I_P(\pi_\lambda)$ is the space of smooth functions $f_\lambda \colon G \rightarrow \VV_\pi$ such that 
\[
f_\lambda(nmg) = \delta_{P}^{\half{1}}(m) \pi_\lambda(m)f_\lambda(g)
\]
for $n \in N$, $m \in M$ and $g \in G$.
When $F$ is archimedean, 
this space is a Fr\'echet space with some natural semi-norms.
For details, see \cite[Section 4]{Cas}.
\par

On the other hand, if $F$ is non-archimedean and 
if $(\pi, \VV_\pi)$ is a smooth representation of $G$, 
set
$\VV_\pi(N)$ to be the subspace of $\VV_\pi$ generated by 
vectors of the form $v - \pi(n)v$ for $v \in \VV_\pi$ and $n \in N$.
Define an action $\overline{\pi}$ of $M$ on $\VV_\pi/\VV_\pi(N)$ by 
\[
\overline{\pi}(m) (v \bmod \VV_\pi(N)) = \delta_P^{-\half{1}}(m) \pi(m)v \bmod \VV_\pi(N).
\]
The representation $(\overline{\pi}, \VV_\pi/\VV_\pi(N))$ of $M$ is 
called the \emph{normalized Jacquet module} of $\pi$ along $P$, 
and is denoted by $\Jac_P(\pi)$.

%\subsection{Arthur's extension of conjugate-self-dual representations}
\subsection{Arthur's extension of conjugate-self-dual representations}\label{sec.thetaA}
We define an involution $\theta$ on $\GL_N(E)$ by 
\[
\theta \colon x \mapsto 
\begin{pmatrix}
&&1\\
&\iddots&\\
(-1)^{N-1}&&
\end{pmatrix}
{}^t\overline{x}^{-1}
\begin{pmatrix}
&&1\\
&\iddots&\\
(-1)^{N-1}&&
\end{pmatrix}^{-1}.
\]
Set $\tl\GL_N(E) = \GL_N(E) \rtimes \pair{\theta}$.
\par

Let $\pi$ be an irreducible representation of $\GL_N(E)$. 
Suppose that $\pi$ is \emph{conjugate-self-dual}, i.e., $\pi \cong \pi \circ \theta$.
Then there is a linear isomorphism 
$T \colon \pi \xrightarrow{\sim} \pi$
such that 
\[
T \circ \pi(g) = \pi(\theta(g)) \circ T, \quad g \in \GL_N(E). 
\]
Note that $T$ is unique up to a nonzero scalar. 
We can normalize $T$ as follows. 
Let $\II_\pi$ be the standard module of $\GL_N(E)$ whose Langlands quotient is $\pi$. 
Since $\II_\pi$ is $\ww$-generic, we can fix a nonzero $\ww$-Whittaker functional $\Omega$ on $\II_\pi$. 
By $\pi \cong \pi \circ \theta$, 
there is a unique linear isomorphism $\theta_W \colon \II_\pi \xrightarrow{\sim} \II_\pi$ 
satisfying the equations 
\begin{align*}
\theta_W \circ \II_\pi(g) &= \II_\pi(\theta(g)) \circ \theta_W, \quad g \in \GL_N(E), \\
\Omega \circ \theta_W &= \Omega.
\end{align*}
Since $\Omega$ is unique up to a scalar, 
the definition of $\theta_W$ is independent of the choice of $\Omega$. 
Then we can define a linear isomorphism $\theta_A \colon \pi \xrightarrow{\sim} \pi$
satisfying 
$\theta_A \circ \pi(g) = \pi(\theta(g)) \circ \theta_A$ for any $g \in \GL_N(E)$
and making the diagram 
\[
\begin{CD}
\II_\pi @> \theta_W >> \II_\pi \\
@VVV @VVV \\
\pi @>\theta_A>> \pi
\end{CD}
\]
commutative, 
where the vertical map $\II_\pi \rightarrow \pi$ is the (fixed) Langlands quotient map, 
which is unique up to a scalar. 
In particular, $\theta_A$ gives an extension $\tl\pi = \pi \boxtimes \theta_A$ of $\pi$ to $\tl\GL_N(E)$.
We call it \emph{Arthur's extension} of $\pi$. 
Note that $\theta_A$ depends on $\pi$.

%\subsection{$A$-parameters}
\subsection{$A$-parameters}\label{sec.parameters}
First, we consider $\GL_N(E)$. 
A \emph{representation} of $L_E \times \SL_2(\C)$ is a homomorphism 
\[
\psi \colon L_E \times \SL_2(\C) \rightarrow \GL_N(\C)
\]
such that 
\begin{itemize}
\item
$\psi(W_E)$ consists of semisimple elements; 
\item
$\psi|_{W_E}$ is continuous;
\item
$\psi|_{\SL_2(\C)}$ is algebraic if $E$ is archimedean, 
whereas
$\psi|_{\SL_2(\C) \times \SL_2(\C)}$ is algebraic if $E$ is non-archimedean.
\end{itemize}
An \emph{$A$-parameter} for $\GL_N(E)$
is an equivalence class of $N$-dimensional representations $\psi \colon L_E \times \SL_2(\C) \rightarrow \GL_N(\C)$
such that $\psi(W_E)$ is bounded.
By the \emph{local Langlands correspondence (LLC)} for $\GL_N(E)$
established by Langlands himself \cite{L2}, Harris--Taylor \cite{HT}, Henniart \cite{He2}, and Scholze \cite{Sc}, 
we obtain an irreducible representation $\pi_\psi$ of $\GL_N(E)$
associated to the $L$-parameter 
\[
\phi_\psi \colon L_E \ni w \mapsto \psi\left(w, 
\begin{pmatrix}
|w|_E^{\half{1}} & 0 \\ 0 & |w|_E^{-\half{1}}
\end{pmatrix}
\right)
\in \GL_N(\C).
\]
This is a unitary representation. 
\par

If $E \not= F$, fix $s \in W_F \setminus W_E$ and set ${}^c\psi(w, \alpha) = \psi(sws^{-1}, \alpha)$. 
We call ${}^c\psi$ the \emph{conjugate} of $\psi$. 
When $E=F$, we simply set ${}^c\psi = \psi$. 
We say that $\psi$ is \emph{conjugate-self-dual} if $\psi \cong {}^c\psi^\vee$.
In this case, $\pi_\psi$ is also conjugate-self-dual, 
i.e., $\pi_\psi \cong \pi_\psi \circ \theta$. 
Let $\tl\pi_\psi = \pi_\psi \boxtimes \theta_A$ be Arthur's extension of $\pi_\psi$ to $\tl\GL_N(E)$. 
We denote the character of $\tl\pi_\psi$ by 
$\Theta_{\tl\pi_\psi}(\tl{f}) = \tr(\tl\pi_\psi(\tl{f}))$
for $\tl{f} \in C_c^\infty(\GL_N(E) \rtimes \theta)$. 
\par

Next, let $G$ be one of the following quasi-split classical groups
\[
\SO_{2n+1}(F), \quad \Sp_{2n}(F), \quad
\O_{2n}^\eta(F), \quad \U_n.
\]
We denote by $G^\circ$ the connected component of the identity of $G$. 
Then $G=G^\circ$ unless $G = \O_{2n}^\eta(F)$ in which case $G^\circ = \SO_{2n}^\eta(F)$.
As explained in Section \ref{sec.groups}, 
we sometimes write $\SO_{2n}(F) = \SO_{2n}^\eta(F)$ and $\O_{2n}(F) = \O_{2n}^\eta(F)$.
We denote by $\St_{\widehat{G}}$ the standard representation of $\widehat{G^\circ}$, 
and set
\[
N = \dim(\St_{\widehat{G}}) = 
\left\{
\begin{aligned}
&2n \iif G = \SO_{2n+1}(F), \O_{2n}^\eta(F), \\
&2n+1 \iif G = \Sp_{2n}(F), \\
&n \iif G = \U_n.
\end{aligned}
\right. 
\]
\par

Let $\Psi(G)$ be the set of equivalence classes of conjugate-self-dual representations 
$\psi \colon L_E \times \SL_2(\C) \rightarrow \GL_N(\C)$
with 
\begin{align*}
\text{sign} &= \left\{
\begin{aligned}
&{-1} \iif G = \SO_{2n+1}(F), \\
&{+1} \iif G = \Sp_{2n}(F), \O_{2n}(F),\\
&(-1)^{n-1} \iif G = \U_n,
\end{aligned}
\right.
\\
\det(\psi) &= \left\{
\begin{aligned}
&\1 \iif G = \SO_{2n+1}(F), \Sp_{2n}(F),\\
&\eta \iif G = \O_{2n}^\eta(F).
\end{aligned}
\right.
\end{align*}
See \cite[Section 3]{GGP} for the notions of the signs of conjugate-self-dual representations. 
\par

An \emph{$A$-parameter} for $G^\circ$ is 
a $\widehat{G^\circ}$-conjugacy class of $L$-homomorphisms
\[
\psi \colon L_F \times \SL_2(\C) \rightarrow {}^LG^\circ
\]
such that $\psi(W_F)$ is bounded.
If $G = G^\circ$, then any $A$-parameter $\psi \colon L_F \times \SL_2(\C) \rightarrow {}^LG$ 
can be identified with an element of $\Psi(G)$ by $\psi \mapsto \psi|_{L_E \times \SL_2(\C)}$. 
On the other hand, when $G = \O_{2n}(F)$, 
if we denote by $\Psi(G^\circ)$ the set of $A$-parameters 
$\psi \colon L_F \times \SL_2(\C) \rightarrow {}^LG^\circ$, 
then there is a surjective map $\Psi(G^\circ) \rightarrow \Psi(G)$ 
whose fibers have cardinality $1$ or $2$.
We say that $\phi \in \Psi(G)$ is tempered if $\phi$ is trivial on the last $\SL_2(\C)$, i.e., 
$\phi \colon L_E \rightarrow \GL_N(\C)$. 
We denote by $\Phi_\temp(G)$ the subset of $\Psi(G)$ consisting of tempered $A$-parameters.
\par

Let $\psi \in \Psi(G)$. 
We decompose 
\[
\psi = \psi_{\bad} \oplus \psi_{\good} \oplus {}^c\psi_{\bad}^\vee, 
\]
where 
$\psi_{\good}$ is a sum of irreducible conjugate-self-dual representations of the same type as $\psi$, 
and $\psi_{\bad}$ is a sum of irreducible representations of other types
(cf., \cite[Section 4]{GGP}).
We say that $\psi$ is \emph{of good parity} if $\psi_\bad = 0$. 
In general, if we write 
\[
\psi_{\good} = \bigoplus_{i=1}^t \phi_i \boxtimes S_{d_i},
\]
where $\phi_i$ is an irreducible representation of $L_E$ with $\phi_i(W_E)$ bounded, 
and $S_d$ is the unique $d$-dimensional irreducible algebraic representation of $\SL_2(\C)$,
we set 
\[
A_{\psi} = \bigoplus_{i = 1}^t \Z/2\Z e(\phi_i, d_i).
\]
Namely, it is a free $\Z/2\Z$-module with a canonical basis $\{e(\phi_i, d_i)\}_{1 \leq i \leq t}$. 
Let $A_{\psi}^+$ be the kernel of the homomorphism 
\[
\det \colon A_{\psi} \rightarrow \Z/2\Z,\, 
e(\phi_i, d_i) \mapsto \dim(\phi_i \boxtimes S_{d_i}) \bmod 2.
\]
Define $A_{\psi}^0$ as the subgroup of $A_{\psi}$
generated by elements of the form
$e(\phi_i, d_i)+e(\phi_j, d_j)$ 
such that $\phi_i \boxtimes S_{d_i} \cong \phi_j \boxtimes S_{d_j}$. 
Note that $A_{\psi}^0 \subset A_{\psi}^+$.
Finally, set $z_{\psi} = \sum_{i=1}^t e(\phi_i, d_i)$. 
When $F$ is non-archimedean so that $L_E = W_E \times \SL_2(\C)$, 
if $\phi_i = \rho_i \boxtimes S_{a_i}$, then we also write $e(\phi_i,d_i) = e(\rho_i, a_i, d_i)$.
\par

Let $S_{\psi} = \Cent(\im(\psi),\widehat{G^\circ})$ be the centralizer of $\im(\psi)$ in $\widehat{G^\circ}$, 
and consider the component group 
\[
\Sc_{\psi} = \pi_0(S_{\psi}/Z(\widehat{G^\circ})^\Gamma).
\]
Then we have the following. 

\begin{itemize}
\item
If $G = \SO_{2n+1}(F)$, 
then $z_{\psi} \in A_{\psi}^+ = A_{\psi}$. 
Moreover, 
\[
\Sc_{\psi} \cong A_{\psi}/(A_{\psi}^0+\Z/2\Z z_{\psi}).
\]
\item
If $G = \Sp_{2n}(F)$, 
then $z_{\psi} \not\in A_{\psi}^+ \not= A_{\psi}$. 
Moreover, 
\[
\Sc_{\psi} \cong 
A_{\psi}^+/A_{\psi}^0 \cong 
A_{\psi}/(A_{\psi}^0+\Z/2\Z z_{\psi}).
\]
\item
If $G = \O_{2n}(F)$, 
then $z_{\psi} \in A_{\psi}^+$.
Moreover, 
\[
\Sc_{\psi} \cong 
A_{\psi}^+/(A_{\psi}^0+\Z/2\Z z_{\psi}).
\]
If we define $\tl\Sc_\psi^+$ similarly to $\Sc_\psi$, namely by replacing 
$\Cent(\im(\psi),\widehat{G^\circ})$ with $\Cent(\im(\psi),\O_{2n}(\C))$, 
then 
\[
\tl\Sc_{\psi}^+ \cong 
A_{\psi}/(A_{\psi}^0+\Z/2\Z z_{\psi}).
\]
\item
If $G = \U_n$, 
then 
\[
\Sc_{\psi} \cong A_{\psi}/(A_{\psi}^0+\Z/2\Z z_{\psi}).
\]
\end{itemize}
These isomorphisms are given by sending $e(\phi_i,d_i) \in A_\psi$ 
to the element in $\widehat{G^\circ}$ (or $\O_{2n}(\C)$)
which acts on $\phi_i \boxtimes S_{d_i}$ by $-1$, and on the other irreducible components trivially.

%\subsection{$A$-packets via endoscopic character relations}
\subsection{$A$-packets via endoscopic character relations}\label{sec.Arthur}
Set $\AA_\psi = A_{\psi}/(A_{\psi}^0+\Z/2\Z z_{\psi})$ 
and denote its Pontryagin dual by $\widehat{\AA_\psi}$.
According to \cite[Theorems 2.2.1, 2.2.4]{Ar} and \cite[Theorem 3.2.1]{Mok} for $\psi$,
one has a multi-set $\Pi_\psi$ over $\Irr_\unit(G)$ 
and a map
\[
\Pi_\psi \rightarrow \widehat{\AA_\psi},\, 
\pi \mapsto \pair{\cdot, \pi}_\psi. 
\]
We call $\Pi_\psi$ the \emph{$A$-packet} associated to $\psi$. 
By identifying $\widehat{\AA_\psi}$ with a subgroup of $\widehat{A_{\psi}}$, 
one regards $\pair{\cdot, \pi}_\psi$ 
as a character of $A_{\psi}$ which factors through the surjection $A_\psi \twoheadrightarrow \AA_\psi$. 
In particular, we can consider a sign $\pair{s, \pi}_{\psi} \in \{\pm1\}$ for $s \in A_\psi$.
\par

The $A$-packet $\Pi_\psi$ together with the pairing $\pair{\cdot, \pi}_\psi$ 
will be determined by the following \emph{endoscopic character relations}.

\begin{enumerate}
\item
The equation
\[
\tag{{\bf ECR1}}\label{ECR1}
\Theta_{\tl\pi_\psi}(\tl{f}) = \frac{1}{(G:G^\circ)}\sum_{\pi \in \Pi_\psi} \pair{s_\psi, \pi}_\psi \Theta_\pi(f_G)
\]
holds 
whenever $\tl{f} \in C_c^\infty(\GL_N(E) \rtimes \theta)$ and $f_G \in C_c^\infty(G^\circ)$ have matching orbital integrals, 
where $\Theta_\pi$ is the character of $\pi$, and we set
\[
s_\psi = \sum_{\substack{1 \leq i \leq t \\ d_i \equiv 0 \bmod 2}} e(\phi_i,d_i).
\]

\item
For $s = \sum_{i \in I_-} e(\phi_i, d_i) \in A_\psi$, 
set $\psi_\pm$ as
\begin{align*}
\psi_- = \bigoplus_{i \in I_-} \phi_i \boxtimes S_{d_i}, 
\quad
\psi_+ = \psi - \psi_-. 
\end{align*}
Fix a conjugate-self-dual character $\eta_\pm$ of $E^\times$ 
such that there is a classical group $G_\pm$ satisfying that $\psi_\pm \otimes \eta_\pm \in \Psi(G_\pm)$.
For $f_G \in C_c^\infty(G)$, 
taking $f_{G_\pm} \in C_c^\infty(G^\circ_\pm)$ such that 
\begin{itemize}
\item
$f_G$ and $f_{G_+} \otimes f_{G_-}$ have matching orbital integrals; 
\item
when $G = \O_{2n}(F)$ and $s \in A_\psi^+$ (\resp $s \not\in A_\psi^+$), 
we further assume that $f_G(g) = 0$ for $g \in \O_{2n}(F) \setminus \SO_{2n}(F)$
(\resp for $g \in \SO_{2n}(F)$), 
\end{itemize}
we define 
\[
f'_G(\psi, s) 
= 
\prod_{\kappa \in \{\pm\}}
\frac{1}{(G_\kappa:G_\kappa^\circ)}
\left(\sum_{\pi_\kappa \in \Pi_{\psi_\kappa \otimes \eta_\kappa}} 
\pair{s_{\psi_\kappa \otimes \eta_\kappa}, \pi_\kappa}_{\psi_\kappa \otimes \eta_\kappa} \Theta_{\pi_\kappa}(f_{G_\kappa})\right).
\]
Then $f'_G(\psi,s)$ is independent of the choice of $f_{G_\pm}$, and 
the equation
\[
\tag{{\bf ECR2}}\label{ECR2}
f'_G(\psi, s) 
= 
\frac{1}{(G:G^\circ)}
\sum_{\pi \in \Pi_\psi} \pair{s \cdot s_\psi, \pi}_\psi \Theta_\pi(f_G)
\]
holds. 
\end{enumerate}

\begin{rem}\label{rem.ECR}
\begin{enumerate}
\item
We normalize the transfer factors such that they are consistent with Arthur's Whittaker normalization.
It gives a precise meaning of ``matching orbital integrals''.

\item
The pair of characters $(\eta_+, \eta_-)$ determines an $L$-embedding 
\[
\xi \colon {}^L(G_+^\circ \times G_-^\circ) \hookrightarrow {}^LG^\circ,
\] 
and the notion of matching orbital integrals depends on $(\eta_+, \eta_-)$ or $\xi$.

\item
A non-trivial pair of characters $(\eta_+, \eta_-)$ is necessary when $G = \Sp_{2n}(F)$ or $G = \U_n$ in general.
More precisely, 
if $G = \Sp_{2n}(F)$ and $\dim(\psi_\kappa) \equiv 1 \bmod 2$, 
we must take $\eta_{\kappa} = \det(\psi_\kappa)$. 
If $G = \U_n$ and $\dim(\psi_\kappa) \not\equiv n \bmod 2$, 
we need to choose a conjugate-symplectic character $\eta_\kappa$, 
for which there is no canonical choice.
\end{enumerate}
\end{rem}

\begin{rem}\label{packet_SO}
In \cite{Ar}, Arthur works exclusively with $\SO_{2n}(F)$, 
but we have elected to work with $\O_{2n}(F)$ in this paper. 
For a discussion of why this is natural and preferable, see the introduction of \cite{AG_O}. 
\par

Suppose that $G=\O_{2n}(F)$ and $\psi \in \Psi(G)$.
We shall explain how to reformulate Arthur's results in this setting. 
\begin{enumerate}
\item
Arthur defines a packet $\tl\Pi_\psi$ over $\Irr_\unit(\SO_{2n}(F))/\O_{2n}(F)$
together with a map $\tl\Pi_\psi \rightarrow \widehat{\Sc_\psi}$, $[\pi] \mapsto \pair{\cdot, [\pi]}_\psi$. 
We set $\Pi_\psi$ to be the inverse image of $\tl\Pi_\psi$
under the canonical map 
\[
\Irr_\unit(\O_{2n}(F)) \rightarrow \Irr_\unit(\SO_{2n}(F))/\O_{2n}(F)
\]
obtained by taking the orbit of an irreducible component of the restriction.
Then \cite[Theorem 2.2.4]{Ar} gives a map $\Pi_\psi \rightarrow \widehat{\AA_\psi}$ such that 
the diagram 
\[
\begin{CD}
\Pi_\psi @>>> \widehat{\AA_\psi} \\
@VVV @VVV \\
\tl\Pi_\psi @>>> \widehat{\Sc_\psi}
\end{CD}
\]
is commutative. 
In particular, $\Pi_\psi$ is stable under the determinant twist $\pi \mapsto \pi \otimes \det$. 

\item
Note that $\Pi_\psi$ is actually defined as a packet over 
$\Irr_\unit(\SO_{2n}(F) \rtimes \pair{\Ad(\epsilon)})$.
To fix an identification $\O_{2n}(F) \cong \SO_{2n}(F) \rtimes \pair{\Ad(\epsilon)}$,
we need to choose $\epsilon \in T \setminus T^\circ$. 
Hence the pairing $\pair{\cdot, \pi}_\psi$ for $\pi \in \Pi_\psi$ depends on this choice.

\item
As in \eqref{ECR1}, one can consider the distribution 
\[
\Theta_\psi(f_G) = \frac{1}{(G:G^\circ)} \sum_{\pi \in \Pi_\psi} \pair{s_\psi, \pi}_\psi \Theta_\pi(f_G), 
\quad f_G \in C_c^\infty(G). 
\]
On the other hand, 
if one denotes by $C_c^\infty(G^\circ)^\epsilon$ the subspace of $C_c^\infty(G^\circ)$
consisting of $f_G^\circ$ such that $f_G^\circ \circ \Ad(\epsilon) = f_G^\circ$, 
Arthur considers 
\[
\Theta_\psi^\circ(f_G^\circ) = \sum_{[\pi] \in \tl\Pi_\psi} \pair{s_\psi, [\pi]}_\psi \Theta_\pi(f_G^\circ), 
\quad f_G^\circ \in C_c^\infty(G^\circ)^\epsilon.
\]
They are related by the following commutative diagram 
\[
\begin{CD}
C_c^\infty(G^\circ) @>\Theta_\psi|_{C_c^\infty(G^\circ)}>> \C \\
@VVV @| \\
C_c^\infty(G^\circ)^\epsilon @>\Theta_\psi^\circ>> \C 
\end{CD}
\]
where the left arrow is defined by 
$f_G \mapsto \half{1}(f_G+f_G \circ \Ad(\epsilon))$.
Note that $\Theta_\psi$ is a distribution on $G$, 
but it is restricted to $C_c^\infty(G^\circ)$ in the diagram.
Since $f_G \circ \Ad(\epsilon)$ has the same transfer as $f_G$, 
our \eqref{ECR1} is the same as the ECR by Arthur \cite[Theorem 2.2.1]{Ar}. 
Similarly, \eqref{ECR2} is the same as \cite[Theorems 2.2.1, 2.2.4]{Ar}.
\end{enumerate}
\end{rem}

%\subsection{Normalized intertwining operators}
\subsection{Normalized intertwining operators}\label{sec.NIO}
For semi-standard Levi subgroups $M_1^\circ$ and $M_2^\circ$ of $G^\circ$, put
\[
N(M_1^\circ, M_2^\circ) 
= \{ g \in G \,|\, g M_1^\circ g^{-1} = M_2^\circ \}. 
\]
The group $M_1^\circ$ (\resp $M_2^\circ$) acts on this set by 
multiplication on the right (\resp the left).
We consider the Weyl set 
\[
W(M_1^\circ, M_2^\circ) 
= M_2^\circ \bs N(M_1^\circ, M_2^\circ) 
= N(M_1^\circ, M_2^\circ) /M_1^\circ.
\]
In particular, 
we write $W(M_1^\circ) = W(M_1^\circ, M_1^\circ) = N_{G}(M_1^\circ)/M_1^\circ$. 
We also set $W^{G^\circ} = N_{G^\circ}(T^\circ)/T^\circ$. 
\par

For $w \in W(M_1^\circ, M_2^\circ)$, 
there is a unique element $w_T \in N(M_1^\circ, M_2^\circ)/T^\circ$ such that 
it is a lift of $w$, and it maps the Borel pair $(B^\circ \cap M_1^\circ, T^\circ)$ of $M_1^\circ$
to the Borel pair $(B^\circ \cap M_2^\circ, T^\circ)$ of $M_2^\circ$. 
Unless $G = \O_{2n}(F)$ and $\det(w) = -1$, 
we have $w_T \in W^{G^\circ}$ and hence
the Langlands--Shelstad representative $\tl{w_T}$ of $w_T$ with respect to $\spl$. 
See \cite[p.~228]{LS}. 
We call $\tl{w} = \tl{w_T}$ the \emph{Tits lifting} of $w$. 
If $G = \O_{2n}(F)$ and $\det(w) = -1$, 
then $w_T \epsilon^{-1} \in W^{G^\circ}$ and we have Langlands--Shelstad representative $\tl{w_T \epsilon^{-1}}$.
Then we set $\tl{w} = \tl{w_T \epsilon^{-1}} \cdot \epsilon$ and call it the \emph{Tits lifting} of $w$. 
Note that $\tl{w}$ depends on the choice of $\epsilon$.
\par

Let $P = MN$ and $P' = M'N'$ be standard parabolic subgroups of $G$ 
with the semi-standard Levi components $M$ and $M'$, respectively, 
such that $W(M^\circ,M'^\circ) \not= \emptyset$.
For $w \in W(M^\circ,M'^\circ)$, an irreducible representation $\pi$ of $M$, and $\lambda \in \aa_{M,\C}^*$, 
we define a representation $w \pi_\lambda$ of $M'$ 
by $w \pi_\lambda(m') = \pi_\lambda(\tl{w}^{-1} m' \tl{w})$ realized on the space of $\pi$.

\begin{lem}\label{center}
For $w \in W(M^\circ, M'^\circ)$ and $w' \in W(M'^\circ, M''^\circ)$, 
if we write 
\[
\tl{w'w} = \tl{w}'\tl{w} \cdot z, 
\]
then $z$ belongs to the center $Z(M)$ of $M$. 
In particular, 
\[
(w'w)\pi_\lambda = w'(w\pi_\lambda).
\]
\end{lem}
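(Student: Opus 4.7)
The plan is to show that $z \in Z(M^\circ) \subseteq Z(M)$, from which the identity $(w'w)\pi_\lambda = w'(w\pi_\lambda)$ follows by unwinding definitions: writing $\tl{w'w}^{-1} = z^{-1}\tl{w}^{-1}\tl{w'}^{-1}$ and using that $z \in Z(M)$ commutes with $\tl{w}^{-1}\tl{w'}^{-1} m \tl{w'}\tl{w} \in M$ cancels the $z$, giving $\pi_\lambda\bigl(\tl{w'w}^{-1}m\tl{w'w}\bigr) = \pi_\lambda\bigl(\tl{w}^{-1}\tl{w'}^{-1}m\tl{w'}\tl{w}\bigr)$, which is precisely the asserted equality.

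To place $z$ in $Z(M^\circ)$, first observe that $z \in T^\circ$: it is a product of elements of $N_{G^\circ}(T^\circ)$ and therefore lies there, while its image in the Weyl group $W^{G^\circ}$ is $w^{-1}(w')^{-1}(w'w) = e$. Next, since $P$ and $P'$ are standard parabolic subgroups, the simple-root sets $\Delta_M$ and $\Delta_{M'}$ of $M^\circ, M'^\circ$ with respect to $B^\circ \cap M^\circ, B^\circ \cap M'^\circ$ are subsets of the simple roots $\Delta$ of $G^\circ$, and the defining characterization of the Tits lift forces $w$ to send $\Delta_M$ bijectively onto $\Delta_{M'}$, and similarly for $w'$. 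Now one invokes the standard property of the Langlands--Shelstad (Tits) representative: for any $v \in W^{G^\circ}$ and any $\alpha \in \Delta$ with $v\alpha \in \Delta$, one has $\Ad(\tl v)(X_\alpha) = X_{v\alpha}$; this is proved by induction on the length of a reduced decomposition of $v$, starting from the explicit formula $\tl{s_\beta} = \exp(X_\beta)\exp(-X_{-\beta})\exp(X_\beta)$ together with Chevalley's structure constants. Applied to $v \in \{w, w', w'w\}$ and $\alpha \in \Delta_M$, this yields $\Ad(\tl{w'w})(X_\alpha) = X_{w'w\alpha} = \Ad(\tl{w'}\tl{w})(X_\alpha)$, whence $\Ad(z)(X_\alpha) = X_\alpha$. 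Since $z \in T^\circ$ and $\Ad(z)(X_\alpha) = \alpha(z) X_\alpha$, we obtain $\alpha(z) = 1$ for every $\alpha \in \Delta_M$, hence, by $\Z$-linearity, $\alpha(z) = 1$ for every root $\alpha$ of $M^\circ$. Thus $z$ centralizes $T^\circ$ and each root subgroup $U_\alpha$ of $M^\circ$, so $z$ centralizes $M^\circ$, giving $z \in Z(M^\circ)$.

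In the disconnected case $G = \O_{2n}(F)$ with $\det(w) = -1$ or $\det(w') = -1$, one writes $\tl w = \tl{w_T\epsilon^{-1}} \cdot \epsilon$; the key point is that $\epsilon$ was chosen so as to preserve the pinning $\spl$ of $G^\circ$, so the same pinning-transport argument on the Levi level goes through after tracking the action of $\epsilon$, reducing the claim back to the connected computation. The main obstacle is the key Langlands--Shelstad property $\Ad(\tl v)(X_\alpha) = X_{v\alpha}$ when $\alpha, v\alpha \in \Delta$, which, while standard, requires a moderately delicate root-system induction to verify cleanly; once this is available, the rest of the argument is a straightforward matter of bookkeeping with weights and root subgroups.
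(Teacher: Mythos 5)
Your argument for the connected part is sound and is essentially the one in the paper: both $\tl{w'w}$ and $\tl{w}'\tl{w}$ are built from Tits representatives and preserve the splitting of $M^\circ$, hence $z = (\tl{w}'\tl{w})^{-1}\tl{w'w}$ lies in $M^\circ$, fixes the pinning of $M^\circ$, and so lies in $Z(M^\circ)$. You also correctly identify what is needed at the level of representations, namely $\pi_\lambda(z^{-1} a z) = \pi_\lambda(a)$ for all $a = \tl{w}^{-1}\tl{w}'^{-1}m''\tl{w}'\tl{w} \in M$.

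The gap is the inclusion $Z(M^\circ) \subseteq Z(M)$ asserted in your opening sentence. This is false in general, and it is exactly the delicate case the paper's proof treats separately. When $G = \O_{2n}(F)$ and $M = \GL_{k_1}(F) \times \dots \times \GL_{k_t}(F) \times \O_2(F)$, so that $M^\circ$ has an abelian factor $\SO_2(F)$, one has $Z(M^\circ) \supseteq \SO_2(F)$ while $Z(M)$ meets the $\O_2(F)$ factor only in $\{\pm 1\}$ (the non-trivial coset of $\O_2/\SO_2$ acts on $\SO_2$ by inversion), so $Z(M) \subsetneq Z(M^\circ)$. Thus showing $z \in Z(M^\circ)$ does not give the displayed identity once $M \neq M^\circ$, since $z$ may fail to commute with $a \in M \setminus M^\circ$. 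Your pinning-transport argument cannot be pushed to close this gap either: the $\SO_2(F)$ factor contributes no simple roots, so the condition $\alpha(z) = 1$ for all simple roots $\alpha$ of $M^\circ$ is vacuous on that coordinate and places no constraint on the $\SO_2$-component of $z$. To conclude $z \in Z(M)$, the paper's proof resorts to an explicit computation of the Tits representatives $\tl{w}_\alpha = \exp(X_\alpha)\exp(-X_{-\alpha})\exp(X_\alpha)$ with the even-orthogonal root vectors of Section \ref{secA.groups}, checking by hand that the $\SO_2$-coordinate of $z$ is $\pm 1$. Your final paragraph addresses a separate and more routine matter, the $\epsilon$-twist in the Tits lift when $\det(w) = -1$; this does not resolve the $Z(M^\circ)$ versus $Z(M)$ issue, which arises whenever $M$ has an $\O_2(F)$ factor regardless of the signs of $\det(w)$ and $\det(w')$.
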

\begin{proof}
Since $\tl{w'w}$ and $\tl{w}'\tl{w}$ are two representatives of $w'w \in W(M^\circ, M''^\circ)$, 
we have $z \in M^\circ$. 
Moreover, by definition, $z$ preserves the splitting of $M^\circ$.
Hence $z \in Z(M^\circ)$.
This completes the proof if $Z(M^\circ) = Z(M)$.
If $Z(M^\circ) \not= Z(M)$, then $M$ is of the form
\[
M = \GL_{k_1}(F) \times \dots \times \GL_{k_t}(F) \times \O_2(F).
\]
In this case, by calculating 
\[
\tl{w}_\alpha = \exp(X_\alpha) \exp(-X_{-\alpha}) \exp(X_\alpha)
\] 
for each simple root $\alpha$, where $w_\alpha$ is the simple reflection with respect to $\alpha$, 
one can check $z \in Z(M)$.
See Section \ref{secA.groups} for the root vectors $\{X_{\alpha}\}$. 
\end{proof}
By this lemma, for $w \in W(M^\circ, M'^\circ)$, we may write $M' = \tl{w}M\tl{w}^{-1}$ as $wMw^{-1}$.
\par

Now we have an intertwining operator
\[
J_P(w, \pi_\lambda) \colon I_P(\pi_\lambda) \rightarrow I_{P'}(w \pi_\lambda)
\]
given by (the meromorphic continuation of) the integral 
\[
(J_P(w, \pi_\lambda)f_\lambda)(g) = \int_{(\tl{w} N \tl{w}^{-1} \cap N') \backslash N'} f_\lambda(\tl{w}^{-1}ug) du.
\]

Suppose that $M$ is a proper Levi subgroup, 
and we are within an inductive argument.
Hence by inductive hypothesis, 
we have an $A$-packet $\Pi_{\psi}$ for $\psi \in \Psi(M)$ (with all the desired properties) at our disposal.  
Let $\pi \in \Pi_{\psi}$. 
In particular, $\pi$ is a unitary representation of $M$. 
Following Arthur \cite[Section 2.3]{Ar}, \cite[Section 3.3]{Mok}, 
we define the normalized intertwining operator $R_P(w,\pi_\lambda, \psi_\lambda)$ by
\[
R_P(w, \pi_\lambda, \psi_\lambda) = r_P(w, \psi_\lambda)^{-1} \cdot J_P(w,\pi_\lambda),
\]
with
\[
r_P(w, \psi_\lambda) = \lambda(w) \cdot \gamma_A(0,\psi_\lambda, \rho_{w^{-1} P'|P}^\vee, \psi_F)^{-1}
\]
where the notation is as follows.
\begin{itemize}
\item 
Put $\psi_{\lambda} = a_\lambda \cdot \psi$, 
where $a_\lambda \in Z^1(W_F, Z(\widehat{M^\circ}))$ is a $1$-cocycle 
whose class in $H^1(W_F, Z(\widehat{M^\circ}))$ corresponds to the character $m \mapsto e^{\pair{H_M(m), \lambda}}$ of $M^\circ$ 
(see \cite[Lemma A.1]{LMa}).

\item
For any finite dimensional representation $\rho$ of ${}^L M^\circ$, 
we write
\[
L(s, \psi, \rho) = L(s, \rho \circ \phi_{\psi}), \quad
\ep(s, \psi, \rho, \psi_F) = \ep(s, \rho \circ \phi_{\psi}, \psi_F)
\]
for the associated Artin $L$- and $\ep$-factors.
Then Arthur's modified gamma factor is defined by 
\[
\gamma_A(s, \psi, \rho, \psi_F) = \ep(s, \psi, \rho, \psi_F)
\frac{L(1+s, \psi, \rho)}{L(s, \psi, \rho)}.
\]
\item
Set $w^{-1} P' = \tl{w}^{-1} P' \tl{w}$ 
and write $\rho_{w^{-1} P'|P}$ for the adjoint representation of ${}^L M^\circ$ on
\[
\Ad(\tl{w})^{-1} \widehat{\nn}' / (\Ad(\tl{w})^{-1} \widehat{\nn}' \cap \widehat{\nn})
\]
with $\widehat\nn = \Lie(\widehat{N})$ and $\widehat\nn' = \Lie(\widehat{N}')$.
\item
Let $A_{T^\circ}$ be the split component of $T^\circ$.
For a reduced root $a$ of $A_{T^\circ}$ in $G^\circ$, 
we denote by $G_a$ the associated Levi subgroup of $G^\circ$ of semisimple rank $1$ 
and by $G_{a,\sc}$ the simply connected cover of the derived group of $G_a$.
Let $\Delta_1(w)$ (\resp $\Delta_2(w)$) be the set of reduced roots $a$ with $a>0$ and $wa < 0$ 
(with respect to $B^\circ$) such that 
$G_{a,\sc} \cong \Res_{F_a/F} \SL_2$ (\resp $\Res_{F_a/F} \SU_{E_a/F_a}(2,1)$), 
where $F_a$ is a finite extension of $F$ and $E_a$ is a quadratic extension of $F_a$.
Following \cite[(4.1)]{KeSh}, 
we define
\[
\lambda(w) = \lambda(w, \psi_F) = \prod_{a \in \Delta_1(w)} \lambda(F_a/F, \psi_F)
\prod_{a \in \Delta_2(w)} \lambda(E_a/F, \psi_F)^2 \lambda(F_a/F,\psi_F)^{-1},
\]
where $\lambda(F'/F,\psi_F)$ is the Langlands $\lambda$-factor
associated to a finite extension $F'$ of $F$.
\end{itemize}
\par

Recall that $R_P(w, \pi_\lambda, \psi_\lambda)$ is regular at $\lambda=0$ 
(\cite[Proposition 2.3.1]{Ar}, \cite[Proposition 3.3.1]{Mok}), 
and hence we have a well-defined operator 
\[
R_P(w, \pi, \psi) \colon I_P(\pi) \rightarrow I_{P'}(w\pi).
\]
When $\pi$ is tempered, taking its $L$-parameter $\phi$, 
we set $R_P(w, \pi_\lambda) = R_P(w, \pi_\lambda, \phi_\lambda)$. 
\par

Let $P'' = M'' N''$ be another standard parabolic subgroup of $G$ 
with the semi-standard Levi component $M''$ such that $W(M'^\circ,M''^\circ) \not= \emptyset$.
Then the normalized intertwining operators satisfy the following multiplicative property.

\begin{prop}\label{multiplicative}
Let $\pi$ be an irreducible tempered representation of $M$.
Then we have
\[
R_P(w'w, \pi_\lambda) = R_{P'}(w', w \pi_\lambda) \circ R_P(w, \pi_\lambda)
\]
for $w \in W(M^\circ, M'^\circ)$ and $w' \in W(M'^\circ, M''^\circ)$.
\end{prop}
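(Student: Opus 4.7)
The plan is to verify the identity by matching, separately for the unnormalized intertwining operators and for the normalizing factors, the discrepancy arising from Lemma \ref{center}, so that the two discrepancies cancel in the ratio $R_P = r_P^{-1} J_P$. Throughout, let $\phi$ denote the tempered $L$-parameter of $\pi$, and let $z \in Z(M)$ be the central element with $\widetilde{w'w} = \tilde{w}'\tilde{w}\cdot z$.

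First I would prove the analogous multiplicativity for the unnormalized operators, namely
\[
J_{P'}(w', w\pi_\lambda) \circ J_P(w, \pi_\lambda) = \pi_\lambda(z) \cdot J_P(w'w, \pi_\lambda),
\]
where $\pi_\lambda(z) \in \C^\times$ is the scalar by which $z$ acts on the space of $\pi_\lambda$ (Schur's lemma). This is the classical computation: one substitutes $u \mapsto \tilde{w}' u \tilde{w}'^{-1}$ in the inner integral, applies Fubini, and uses the compatibility of the filtration of the unipotent radical $N''$ with the factorization of $\tilde{w}'\tilde{w}$ to collapse the double integral into a single integral over $(\widetilde{w'w} N \widetilde{w'w}^{-1} \cap N'') \backslash N''$, the scalar $\pi_\lambda(z)$ coming precisely from replacing $\tilde{w}^{-1}\tilde{w}'^{-1}$ by $\widetilde{w'w}^{-1} z$ in the argument of $f_\lambda$. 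Convergence in a suitable chamber combined with meromorphic continuation handles general $\lambda$.

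Second I would establish the matching cocycle identity
\[
r_{P'}(w', w\phi_\lambda) \cdot r_P(w, \phi_\lambda) = \pi_\lambda(z) \cdot r_P(w'w, \phi_\lambda)
\]
for the normalizing factors. This rests on three ingredients. (i) The additive decomposition
\[
\rho_{(w'w)^{-1}P''|P} \cong \rho_{w^{-1}P'|P} \oplus \mathrm{Ad}(\tilde{w}^{-1})\bigl(\rho_{w'^{-1}P''|P'}\bigr)
\]
of representations of ${}^L M^\circ$, which reflects the fact that the roots sent to negative roots by $w'w$ are the disjoint union of those sent to negative roots by $w$ and those sent to negative roots by $w'$, pulled back via $w$. (ii) The multiplicativity of Artin $L$- and $\epsilon$-factors on direct sums, which, combined with (i), yields multiplicativity of Arthur's modified gamma factor $\gamma_A$. (iii) The cocycle property of the Keys--Shahidi $\lambda$-factor, deduced from the partition of $\Delta_1(w'w)$ and $\Delta_2(w'w)$ into the contributions from $w$ and from $w^{-1} w'^{-1}(\cdot)$, and from the standard cocycle for Langlands' $\lambda(F'/F, \psi_F)$. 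The scalar $\pi_\lambda(z)$ on the right-hand side tracks the same discrepancy as in step one, produced when identifying $\widetilde{w'w}$ with $\tilde{w}'\tilde{w} z$ and using that $\phi_\lambda(z)$ acts on the relevant adjoint representation by $\pi_\lambda(z)$ through the twist $\phi_\lambda = a_\lambda \cdot \phi$ and the condition that $\phi$ is compatible with the central character of $\pi$. Taking the ratio of the two cocycle identities yields the proposition.

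The main obstacle is the bookkeeping of the central element $z$ in the second step. In the generic case when $Z(M) = Z(M^\circ)$, this is a direct consequence of the standard cocycle formulas. When $G = \mathrm{O}_{2n}(F)$ and $M$ contains an $\mathrm{O}_2$-factor, however, $z$ may lie in $Z(M) \setminus Z(M^\circ)$ by the proof of Lemma \ref{center}, and one has to verify explicitly, through a root-by-root calculation in the $\mathrm{SL}_2$-triples $\exp(X_\alpha)\exp(-X_{-\alpha})\exp(X_\alpha)$, that the $\lambda$-factor discrepancy and the action $\pi_\lambda(z)$ still agree. This is elementary but requires care, and is the technical heart of the argument.
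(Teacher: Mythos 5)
Your approach—track the cocycle discrepancy separately in the unnormalized operator $J$ and in the normalizing factor $r$, then divide—is a reasonable instinct, but all three of your asserted cocycle identities fail as soon as the lengths do not add, i.e.\ when $\ell(w'w) < \ell(w') + \ell(w)$ for the relevant length function. For step 1, the collapse of the double integral (your ``compatibility of the filtration of $N''$'') requires the roots sent negative by $w'w$ to partition cleanly between $w$ and $w^{-1}(\cdot)$, which is exactly length-additivity; take $G = \GL_4(E)$, $M = M' = M'' = \GL_2(E)\times\GL_2(E)$, and $w = w'$ the block swap, so $w'w = 1$: then $J_P(w'w,\pi_\lambda) = \mathrm{id}$, yet $J_{P}(w,w\pi_\lambda)\circ J_P(w,\pi_\lambda)$ is Harish-Chandra's $j$-function, a non-constant meromorphic function of $\lambda$, so it cannot equal the constant $\pi_\lambda(z)\cdot\mathrm{id}$. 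For step 2, (i) is false for the same reason: $\rho_{(w'w)^{-1}P''|P}$ differs from $\rho_{w^{-1}P'|P}\oplus\mathrm{Ad}(\tilde{w}^{-1})\rho_{w'^{-1}P''|P'}$ by a pair of extra summands $\sigma\oplus\sigma^\vee$ supported on the roots that ``double back,'' and (iii) is also false, since $\Delta_i(w'w)\ne\Delta_i(w)\sqcup w^{-1}\Delta_i(w')$. Your explanation of where $\pi_\lambda(z)$ enters the $r$-cocycle is likewise off: $z\in M(F)$ is not in the source of $\phi_\lambda$, and replacing $\widetilde{w'w}$ by $\tilde{w}'\tilde{w}\cdot z$ only conjugates the adjoint representation by a central element of $\widehat{M^\circ}$, which leaves its isomorphism class, and hence $\gamma_A$, unchanged; no $\pi_\lambda(z)$ arises there.

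The paper's proof (Section \ref{sec.ell}) does the accounting differently. It factors $R_P(w,\pi_\lambda) = \ell(w,\pi_\lambda)\circ R_{w^{-1}P'|P}(\pi_\lambda)$, where $R_{w^{-1}P'|P}$ is the normalized operator between the two parabolics $w^{-1}P'$ and $P$ that share the Levi $M$; its multiplicativity is \cite[Proposition 2.3.1]{Ar}, \cite[Proposition 3.3.5]{Mok}, and this is where the failure of length-additivity (together with the $L$-factor part of $\gamma_A$) is absorbed. The remaining piece $\ell(w,\pi_\lambda) = \lambda(w)^{-1}\ep(1/2,\pi_\lambda,\rho^\vee_{w^{-1}P'|P},\psi_F)L(\tl{w})$ is shown in Lemma \ref{cocycle} to satisfy an exact cocycle identity, and the proof of that lemma is precisely the root-by-root calculation you anticipated as ``the technical heart'': it equates $\omega_{\pi_\lambda}(z)$ with the ratio of $\lambda$-factors times the ratio of $\ep$-factors, where the $\ep$-ratio is evaluated as a determinant via $\ep(1/2,\tau,\psi_F)\ep(1/2,\tau^\vee,\psi_F)=\det\tau(-1)$, and neither ratio is $1$ in general. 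So the $\ep$-factor, not $\gamma_A$, is the quantity that pairs with the central character of $z$; your decomposition never separates these two, which is why the $\pi_\lambda(z)$ in your step 2 appears by fiat rather than from a computation.
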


\begin{proof}
When $G = \GL_N(E)$, the assertion was proved in \cite{Sh3}, \cite{Ar1}.
(Note that the local factors of Shahidi agree with those of Jacquet--Piatetski-Shapiro--Shalika by \cite{Sh4} and hence with the Artin factors by the desiderata of the local Langlands correspondence.)
\par

When $G$ is a classical group and $M_1 = M_2 = M_3$, 
the assertion was proved in \cite[(2.3.28)]{Ar}, \cite[Proposition 3.3.5]{Mok}
at least unless $G = \O_{2n}(F)$. 
The general case will be handled in Section \ref{sec.ell} below.
\end{proof}

The following is an important property of $\gamma_A$-factors throughout this paper. 
\begin{lem}\label{holomorphic}
Suppose that $F$ is non-archimedean.
Let $\phi_1$ and $\phi_2$ be two conjugate-self-dual representations of $W_E \times \SL_2(\C)$ of dimension $N$. 
For $i=1,2$, define a representation $\lambda_{\phi_i}$ of $W_E$ by 
\[
\lambda_{\phi_i}(w) = \phi_i\left(w, 
\begin{pmatrix}
|w|_E^{\half{1}} & 0 \\ 0 & |w|_E^{-\half{1}}
\end{pmatrix}
\right).
\]
If $\lambda_{\phi_1} \cong \lambda_{\phi_2}$, then the quotient 
\[
\frac{\gamma_A(s,\phi_1,\psi_E)}{\gamma_A(s,\phi_2,\psi_E)}
\]
is holomorphic at $s=0$, and its special value at $s=0$ is in $\{\pm1\}$.
\end{lem}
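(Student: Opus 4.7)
The plan is to relate Arthur's modified gamma factor to the standard local gamma factor $\gamma(s,\phi,\psi_E) = \epsilon(s,\phi,\psi_E) L(1-s,\phi^\vee)/L(s,\phi)$ via the identity
\[
\gamma_A(s,\phi,\psi_E) = \gamma(s,\phi,\psi_E) \cdot \frac{L(1+s,\phi)}{L(1-s,\phi^\vee)},
\]
and then to exploit a functional-equation-type symmetry. A classical property of the local gamma factor for $\GL_N$, stemming from the multiplicativity of the Langlands--Shahidi gamma factor across cuspidal supports combined with the matching of Shahidi and Artin--Deligne factors that is established in Section \ref{sec.gamma}, is that $\gamma(s,\phi,\psi_E)$ depends only on the Frobenius semisimplification $\lambda_\phi$. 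The hypothesis $\lambda_{\phi_1} \cong \lambda_{\phi_2}$ therefore gives $\gamma(s,\phi_1,\psi_E) = \gamma(s,\phi_2,\psi_E)$, so
\[
\frac{\gamma_A(s,\phi_1,\psi_E)}{\gamma_A(s,\phi_2,\psi_E)} = \frac{f_{\phi_1}(s)}{f_{\phi_2}(s)}, \qquad f_\phi(s) := \frac{L(1+s,\phi)}{L(1-s,\phi^\vee)}.
\]

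The conjugate-self-duality of $\phi$ gives $\phi^\vee \cong {}^c\phi$, and for every irreducible representation $\rho$ of $W_E$ one has $L(s,{}^c\rho) = L(s,\rho)$: if $\rho$ is ramified both sides are trivial, while if $\rho$ is an unramified character then already ${}^c\rho = \rho$, because conjugation by an element of $W_F \setminus W_E$ fixes the Frobenius of $W_E$ modulo $I_E$ (since $W_F/I_F$ is abelian). Hence $L(s,\phi) = L(s,\phi^\vee)$, and $f_\phi(s) = L(1+s,\phi)/L(1-s,\phi)$, which immediately satisfies the functional equation $f_\phi(-s) = f_\phi(s)^{-1}$. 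Writing $f_\phi(s) = g_\phi(q_E^{-s})$ as a rational function in $q_E^{-s}$, the identity $g_\phi(x) g_\phi(1/x) = 1$ forces $g_\phi$ to have neither a zero nor a pole at $x = 1$, and moreover $g_\phi(1)^2 = 1$. Thus $f_\phi$ is holomorphic at $s=0$ with value in $\{\pm 1\}$, and the same holds for the quotient $f_{\phi_1}/f_{\phi_2}$.

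The principal technical input is the Frobenius-semisimplification invariance of $\gamma(s,\phi,\psi_E)$. Within the setting of this paper, it is available via Section \ref{sec.gamma} together with the cuspidal-support multiplicativity of Shahidi's gamma factor on $\GL_N$; alternatively, one can verify it directly on the Galois side by decomposing $\phi$ into indecomposable Weil--Deligne pieces $\rho \boxtimes S_d$ and applying Deligne's formula for $\epsilon$-factors, where the monodromy correction is either trivial (ramified $\rho$) or explicitly cancels against the change in $L(1-s,\phi^\vee)/L(s,\phi)$ (unramified $\rho$).
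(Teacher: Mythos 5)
Your proof is correct and follows essentially the same strategy as the paper. Both arguments first reduce via the invariance of the standard $\gamma$-factor under changes of $\phi$ that preserve $\lambda_\phi$ (the paper phrases this through Godement--Jacquet $\gamma$-factors and equality of cuspidal supports; you phrase it as Frobenius-semisimplification invariance, which is the same fact), arriving at the quotient $L(1+s,\phi)/L(1-s,\phi^\vee) = L(1+s,\phi)/L(1-s,\phi)$ using conjugate-self-duality. The only difference is cosmetic: you derive holomorphy and the sign from the abstract functional relation $g_\phi(x)g_\phi(1/x)=1$, whereas the paper writes out the Laurent expansion $L(1+s,\phi) = as^m + \cdots$ and reads off the value $(-1)^m$ directly; both computations say the same thing.
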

\begin{proof}
Let $\pi_{\phi_i}$ be the irreducible representation of $\GL_N(E)$ corresponding to $\phi_i$.
If $\lambda_{\phi_1} \cong \lambda_{\phi_2}$, 
then $\pi_{\phi_1}$ and $\pi_{\phi_2}$ share the same cuspidal support, 
and hence we obtain an equation of Godement--Jacquet $\gamma$-factors
\[
\gamma(s, \pi_{\phi_1}, \psi_E) = \gamma(s, \pi_{\phi_2}, \psi_E). 
\]
Since $\gamma(s, \pi_{\phi_i}, \psi_E)$ is equal to the usual $\gamma$-factor $\gamma(s, \phi_i, \psi_E)$ 
in the Galois side, 
it is enough to consider the quotient $\gamma_A(s, \phi_i ,\psi_E)/\gamma(s, \phi_i, \psi_E)$. 
Note that 
\[
\frac{\gamma_A(s,\phi_i,\psi_E)}{\gamma(s,\phi_i,\psi_E)}
= \frac{L(1+s, \phi_i)}{L(1-s, \phi_i^\vee)} 
= \frac{L(1+s, \phi_i)}{L(1-s, {}^c\phi_i)} 
= \frac{L(1+s, \phi_i)}{L(1-s, \phi_i)}. 
\]
If we write the Laurent expansion of $L(1+s, \phi_i)$ as
\[
L(1+s, \phi_i) = as^m + (\text{higher terms})
\]
with $a \not= 0$, 
then we conclude that 
\[
\left.
\frac{\gamma_A(s,\phi_i,\psi_E)}{\gamma(s,\phi_i,\psi_E)}
\right|_{s=0}
= 
\left.
\frac{L(1+s, \phi_i)}{L(1-s, \phi_i)}
\right|_{s=0}
= (-1)^m.
\]
This proves the lemma.
\end{proof}

In the rest of this section, we state three main theorems. 
Before stating them, let us clarify the dependence of these results. 
\begin{itemize}
\item
The first main theorem (Theorem \ref{main1}) depends on results in Sections \ref{sec.Arthur} and \ref{sec.NIO}
for proper Levi subgroups.
\item
The second main theorem (Theorem \ref{main2}) is for $G = \GL_N(E)$ so that 
it is independent of Section \ref{sec.Arthur}. 
However, it still uses results in Section \ref{sec.NIO}, in particular, Proposition \ref{multiplicative}. 
\item
In the third main theorem (Theorem \ref{main3}), 
we will use results in Sections \ref{sec.Arthur} and \ref{sec.NIO}
not only for proper Levi subgroups, but also for other classical groups $G'$ such that 
$\dim(\St_{\widehat{G'}}) < \dim(\St_{\widehat{G}})$.
For more precision, see Hypothesis \ref{hyp.arthur}. 
\end{itemize}
\par

%\subsection{Main Theorem 1: [A27]}
\subsection{Main Theorem 1: [A27]}\label{sec.A27}
We set $G$ to be $\GL_N(E)$ or 
a (possibly disconnected) quasi-split classical group as in the previous subsection. 
Let $P=MN$ be a standard parabolic subgroup of $G$, 
and let $\overline{P} = M\overline{N}$ be the parabolic subgroup of $G$ opposite to $P$. 
Recall that we obtain a Whittaker datum $\ww_M$ of $M^\circ$ from the $F$-splitting $\spl$ of $G^\circ$.
\par

Let $\pi$ be an irreducible tempered representation of $M$. 
Suppose that $\pi$ admits a non-trivial $\ww_M$-Whittaker functional $\omega$. 
We may also regard $\omega$ as a $\ww_M$-Whittaker functional on $\pi_\lambda$ for all $\lambda \in \aa_{M,\C}^*$.
Then $\omega$ gives rise to a $\ww$-Whittaker functional $\Omega(\pi_\lambda)$ on $I_P(\pi_\lambda)$ given by (the holomorphic continuation of) the Jacquet integral
\[
\Omega(\pi_\lambda) f = \int_{N'} \omega(f(\tl{w}_0^{-1} n')) \chi(n')^{-1} dn'.
\]
Here $N' = \tl{w}_0 \overline{N} \tl{w}_0^{-1}$ and $w_0 = w_\ell w_\ell^M$, 
where $w_\ell$ and $w_\ell^M$ are the longest elements in $W^{G^\circ}$ and $W^{M^\circ}$, respectively.
\par

Let $P'=M'N$ be another standard parabolic subgroup of $G$, 
and $w \in W(M^\circ, M'^\circ)$. 
Then we may also regard $\omega$ as a $\ww_{M'}$-Whittaker functional on $w \pi_\lambda$.
Hence, it gives rise to a $\ww$-Whittaker functional $\Omega(w \pi_\lambda)$ on $I_{P'}(w \pi_\lambda)$. 
\par

The following is our first main theorem, which was supposed to be proven in \cite{A27}. 
\begin{thm}[cf. {\cite[Theorem 2.5.1 (b)]{Ar}, \cite[Proposition 3.5.3 (a)]{Mok}}]\label{main1}
Let $\pi$ be an irreducible $\ww_M$-generic tempered representation of $M$. 

\begin{enumerate}
\item
If $w \in W(M^\circ)$ satisfies that $w\pi \cong \pi$, 
then 
\[
\Omega(w\pi) \circ R_P(w, \pi) = 
\left\{
\begin{aligned}
&\Omega(\epsilon\pi) \circ L(\epsilon) \iif G = \O_{2n}(F), \, \det(w) = -1, \\
&\Omega(\pi) \other.
\end{aligned}
\right.
\]
Here 
\[
L(\epsilon) \colon I_P(\pi) \rightarrow I_{\epsilon P \epsilon^{-1}}(\epsilon \pi),\,
(L(\epsilon)f)(g) = f(\epsilon^{-1}g).
\]

\item
Suppose that $G = \GL_N(E)$. 
If $w \in W(M, \theta(M))$ satisfies that $w\pi \cong \pi \circ \theta$, 
then 
\[
\Omega(w\pi) \circ R_P(w, \pi) = \Omega(\pi).
\]
\end{enumerate}
\end{thm}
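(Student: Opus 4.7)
The plan is to prove Theorem \ref{main1} by directly comparing Arthur's normalized intertwining operator $R_P(w,\pi)$ with the normalized intertwining operator defined by Shahidi in \cite{Sh7}. For an irreducible $\ww_M$-generic tempered representation $\pi$, Shahidi established the analogous Whittaker-compatibility identity for his own normalization, which is built out of the Langlands $\lambda$-factor $\lambda(w,\psi_F)$ (which is also the one used by Arthur) together with the automorphic $\gamma$-factors appearing in his local coefficients, defined with respect to the adjoint representation $\rho_{w^{-1}P'|P}^\vee$. Arthur's normalization, on the other hand, replaces the automorphic $\gamma$-factors by the Artin--Deligne--Langlands $\gamma_A$-factor of the $L$-parameter $\phi$ of $\pi$, composed with $\rho_{w^{-1}P'|P}^\vee$. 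Hence it suffices to show that the ratio of the two normalizations is $1$.

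First I would reduce to the rank-one case. Both Arthur's normalization and Shahidi's local coefficients satisfy multiplicativity (for the former, this is Proposition \ref{multiplicative}; for the latter, this is standard in Shahidi's theory), so the question reduces to the situation where $w = w_a$ is the simple reflection associated to a single reduced root $a$ and $M$ is a maximal Levi in the corresponding semi-standard Levi subgroup of semisimple rank one. In this rank-one setting, $R_P(w,\pi)$ and Shahidi's operator are scalar multiples of the unnormalized Knapp--Stein operator $J_P(w,\pi)$, and the ratio is the product of $\gamma_A(0,\phi,\rho_{w^{-1}P'|P}^\vee,\psi_F)/\gamma(0,\pi, r_a,\psi_F)$, since the $\lambda$-factors coincide and thus cancel.

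The key technical input is the identification of Shahidi's automorphic $\gamma$-factors with the Galois-side $\gamma$-factors under the local Langlands correspondence, which is precisely what is carried out in Appendix \ref{sec.LF} for the groups $\GL_k \times G_0$ that arise as the rank-one residual Levi subgroups. Granting this identification, the ratio becomes $\gamma_A(0,\phi_0,\psi_F)/\gamma(0,\phi_0,\psi_F)$ for the pertinent parameter $\phi_0 = \phi\otimes \rho_{w^{-1}P'|P}^\vee$. Using the explicit formulas
\[
\gamma_A(s,\phi_0,\psi_F) = \ep(s,\phi_0,\psi_F)\cdot \frac{L(1+s,\phi_0)}{L(s,\phi_0)}, \qquad \gamma(s,\phi_0,\psi_F) = \ep(s,\phi_0,\psi_F)\cdot \frac{L(1-s,\phi_0^\vee)}{L(s,\phi_0)},
\]
this ratio reduces to $L(1,\phi_0)/L(1,\phi_0^\vee)$. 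Because $w\pi \cong \pi$ (resp.\ $w\pi \cong \pi\circ\theta$ in case (2)), one sees that $\phi_0$ is self-dual (resp.\ conjugate-self-dual) of the appropriate type, so $L(1,\phi_0^\vee) = L(1,\phi_0)$ and, since $\pi$ is tempered, the $L$-factors are holomorphic and nonvanishing at $s=1$; hence the ratio is $+1$. This gives (1) in the connected cases and (2) for $\GL_N(E)$.

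The main obstacle will be case (1) for $G = \O_{2n}(F)$ with $\det(w)=-1$, where the Tits lifting $\tl{w}$ is defined via $\tl{w} = \tl{w_T\epsilon^{-1}}\cdot\epsilon$ using the fixed involution $\epsilon$, and where $w$ lies outside the connected Weyl group. The auxiliary term $L(\epsilon)$ on the right-hand side of the statement arises precisely from this extra factor of $\epsilon$ in $\tl{w}$; one handles this by writing $w = w'\cdot \epsilon$ with $w' = w\epsilon^{-1}\in W(M^\circ)$, transferring the intertwining and Whittaker operators along $L(\epsilon)$, and then invoking the connected result just proved for $w'$. The archimedean case is treated uniformly, as both Shahidi's Whittaker identity from \cite{Sh7} and the gamma-factor matching of Appendix \ref{sec.LF} are available over $\R$ and $\C$.
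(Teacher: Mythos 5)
Your overall strategy matches the paper's: prove Proposition \ref{cr=1} by identifying Shahidi's local coefficient (via (S1)) with Arthur's normalizing factor, using the matching of Shahidi's $\gamma$-factors with Artin $\gamma$-factors from Appendix \ref{sec.LF}, and the $\lambda$-factors cancel. The reduction of the disconnected case $G=\O_{2n}(F)$, $\det(w)=-1$ to the connected case by writing $w=w'\epsilon$ is also the device used in Lemma \ref{det=1}.

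However, there is a genuine gap in the final step. You reduce by multiplicativity to the rank-one situation, and then argue that the residual ratio is $L(1,\phi_0)/L(1,\phi_0^\vee)$ where $\phi_0$ is the adjoint parameter, asserting that "one sees" $\phi_0$ is self-dual because $w\pi\cong\pi$. But the hypothesis $w\pi\cong\pi$ constrains only the \emph{full} Weyl element, not the rank-one constituents $w_i$ in a reduced factorization $w=w_n\cdots w_1$; the intermediate representations $w_{i-1}\cdots w_1\pi$ are generally \emph{not} fixed up to isomorphism by $w_i$, so the rank-one adjoint parameters are not self-dual term by term. In the maximal case (Lemmas \ref{GL-maximal} and \ref{classical-maximal}), the ratio $C_P(w,\pi_\lambda)\cdot r_P(w,\phi_\lambda)$ at $\lambda=0$ comes out as a genuinely non-trivial quotient such as $L(1,\phi_{\pi_1}^\vee\otimes\phi_{\pi_2})/L(1,\phi_{\pi_1}\otimes\phi_{\pi_2}^\vee)$. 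What one must show is that the \emph{product} of all such maximal contributions equals $1$, and this is exactly the content of the combinatorial arguments in Sections \ref{sec.A27_GL} and \ref{sec.A27_c}, which compare the index sets $\inv(w)$ using the bijection induced by $w$ (with the more elaborate bookkeeping of the eight sets $I_k^\delta$, $J_k^\delta$ and the counting of $\inv_2(w)$ in the classical case). That combinatorial cancellation is the real technical heart of Theorem \ref{main1}, and simply declaring self-duality of $\phi_0$ elides it; you need to spell out and verify those bijection arguments, which the paper does explicitly and which do not follow from the rank-one computation alone.
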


This theorem is regarded as the \emph{local intertwining relation} for generic tempered representations.
We will prove Theorem \ref{main1} in Section \ref{sec.main1}.
\par

Note that the proof of \cite[Lemma 2.5.5]{Ar} was also supposed to be included in \cite{A27}. 
We will show it in Appendix \ref{sec.255}. 

%\subsection{Main Theorem 2: [A26]}
\subsection{Main Theorem 2: [A26]}\label{sec.main2}
The second main theorem concerns $G = \GL_N(E)$. 
Recall that we have an involution $\theta$ on $G$. 
For a function $f$ on $G$, 
define a new function $\theta^*(f)$ on $G$ by 
\[
\theta^*(f)(g) = f(\theta(g)).
\]
\par

Let $P=MN$ be a standard parabolic subgroup of $G$. 
Note that $\theta(P) = \theta(M)\theta(N)$ is also a standard parabolic subgroup. 
Let $\psi$ be an $A$-parameter for $M$, 
and let $\pi_\psi$ be the corresponding irreducible unitary representation of $M$. 
Suppose that the composition 
\[
L_E \times \SL_2(\C) \xrightarrow{\psi} \widehat{M} \hookrightarrow \widehat{G}
\]
is conjugate-self-dual. 
Then the corresponding representation, which is the irreducible induction $I_P(\pi_\psi)$, 
is an irreducible unitary conjugate-self-dual representation of $G$.
For the irreducibility of $I_P(\pi_\psi)$, see \cite{Ber-P}.
Recall from Section \ref{sec.thetaA} that we have a specific linear isomorphism 
\[
\theta_A \colon I_P(\pi_\psi) \xrightarrow{\sim} I_P(\pi_\psi). 
\]
\par

More specifically, we assume that
there is an element $w \in W(\theta(M),M)$ such that $w(\pi_\psi \circ \theta) \cong \pi_\psi$. 
Similar to the definition of $\theta_A$, 
we have a normalized isomorphism
\[
\tl\pi_\psi(w \rtimes \theta) \colon w(\pi_\psi \circ \theta) \xrightarrow{\sim} \pi_\psi
\]
as representations of $M$
by using a $\ww_M$-Whittaker functional on the standard module $\II_{\pi_\psi} \cong w(\II_{\pi_\psi} \circ \theta)$ of $M$
whose Langlands quotient is $\pi_\psi \cong w(\pi_\psi \circ \theta)$.
Then we can define a self-intertwining operator 
\[
\tl{R}_P(\theta \circ w, \tl{\pi}_\psi) \colon I_P(\pi_\psi) \rightarrow I_P(\pi_\psi)
\]
by the composition
\[
I_P(\pi_\psi) \xrightarrow{\theta^*} I_{\theta(P)}(\pi_\psi \circ \theta)
\xrightarrow{R_{\theta(P)}(w, \pi_\psi \circ \theta, \psi)} I_{P}(w(\pi_\psi \circ \theta))
\xrightarrow{I_P(\tl\pi_\psi(w \rtimes \theta))} I_P(\pi_\psi).
\]
If we write $(h\cdot f)(g) = f(gh)$ for $g,h \in G$, then we have
\[
\tl{R}_P(\theta \circ w, \tl{\pi}_\psi)(h \cdot f)(g) = \tl{R}_P(\theta \circ w, \tl{\pi}_\psi)f(g \cdot \theta(h)).  
\]
Hence $\tl{R}_P(\theta \circ w, \tl{\pi}_\psi)$ is a constant multiple of $\theta_A$.
\par

The second main theorem, which was supposed to be proven in \cite{A26}, is now stated as follows. 
\begin{thm}[cf. {\cite[Theorem 2.5.3]{Ar}, \cite[Proposition 3.5.1 (b)]{Mok}}]\label{main2}
Let $P=MN$ be a standard parabolic subgroup of $G=\GL_N(E)$, 
and let $\psi$ be an $A$-parameter for $M$. 
Then for any $w \in W(\theta(M),M)$ with $w(\pi_\psi \circ \theta) \cong \pi_\psi$, 
we have
\[
\tl{R}_P(\theta \circ w, \tl{\pi}_\psi) = \theta_A.
\]
\end{thm}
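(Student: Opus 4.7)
The plan is to reduce the non-tempered case to the tempered case (Theorem \ref{main1}) by realizing the Langlands quotient map as an explicit composition of normalized intertwining operators, so that $\theta_A$ becomes as computable as $\tl R_P(\theta\circ w,\tl\pi_\psi)$. First observe that since $I_P(\pi_\psi)$ is irreducible, Schur's lemma implies that both operators differ by a nonzero scalar $c$, so the task is to show $c=1$; equivalently, one may test the equality on a single well-chosen vector coming from the standard module.

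When $\pi_\psi$ is itself tempered (and hence $\ww_M$-generic), the Whittaker functional defining $\theta_A$ lives on $I_P(\pi_\psi)$ itself, and the equality $\tl R_P(\theta\circ w,\tl\pi_\psi)=\theta_A$ is precisely the content of Theorem \ref{main1}(2). For the general case, write $\pi_\psi$ as the Langlands quotient of its standard module $\II_{\pi_\psi}=I_Q^M(\sigma\otimes\lambda_0)$ with $\sigma$ tempered on a Levi $L\subset M$ and $\lambda_0$ in the positive chamber; by induction in stages, $I_R^G(\sigma\otimes\lambda_0)$ for an appropriate parabolic $R$ of $G$ with Levi $L$ is the standard module for $I_P(\pi_\psi)$. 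The crucial technical step, to be carried out in Lemma \ref{langlands}, is to realize the Langlands quotient map $I_R^G(\sigma\otimes\lambda_0)\twoheadrightarrow I_P(\pi_\psi)$ as the specialization at $\lambda=\lambda_0$ of a composition of normalized intertwining operators
\[
R_{R}(w',\sigma\otimes\lambda,\phi_{\sigma,\lambda})\colon I_{R}^G(\sigma\otimes\lambda)\longrightarrow I_{R'}^G(\sigma\otimes\lambda),
\]
combining the regularity of these operators at $\lambda_0$ with the standard identification of their image as the Langlands quotient.

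With this realization in hand, $\theta_A$ is computed on $I_P(\pi_\psi)$ by applying Theorem \ref{main1} to the tempered data $\sigma$, and Theorem \ref{main2} becomes equivalent to the commutativity of the \emph{main diagram} of Theorem \ref{diagramGL}, which expresses that the operators used to build the Langlands quotient map intertwine appropriately with $\tl R_P(\theta\circ w,\tl\pi_\psi)$. The main obstacle will be verifying this commutativity: it is not a direct consequence of Proposition \ref{multiplicative}, because the operator on $I_P(\pi_\psi)$ is normalized using the full $A$-parameter $\psi$, whereas the operators constructing the Langlands quotient are normalized using the $L$-parameter $\phi_{\sigma,\lambda_0}$, so the diagram commutes only up to a specific ratio of $\gamma_A$-factors. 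The heart of the argument will be to show that this ratio equals $1$, which I expect to deduce from Lemma \ref{holomorphic} applied to the comparison of $\psi$ and $\phi_{\sigma,\lambda_0}$ (which share the same $\lambda$-parameter after specialization of the second $\SL_2(\C)$-factor), combined with a careful bookkeeping of Langlands $\lambda$-factors and Arthur's sign normalization. An explicit simple but non-trivial instance of the required commutativity will be recorded in Example \ref{ex_GL}.
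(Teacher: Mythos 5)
Your outline correctly identifies the structural skeleton of the argument: reduce to Theorem \ref{main1}(2) for tempered data, realize the Langlands quotient map as a composition of normalized intertwining operators (Lemma \ref{langlands}), and conclude by commutativity of the main diagram (Theorem \ref{diagramGL}), tested against Example \ref{ex_GL}. However, you misidentify the key technical obstacle, and this matters.

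You claim the main diagram ``commutes only up to a specific ratio of $\gamma_A$-factors'' because $\tl{R}_P(\theta\circ w,\tl\pi_\psi)$ is normalized by the $A$-parameter $\psi$ while the auxiliary operators use tempered $L$-parameters, and you propose to show this ratio equals $1$ using Lemma \ref{holomorphic}. For $\GL_N(E)$ there is no such discrepancy: by Arthur's very definition, $\gamma_A(s,\psi,\rho,\psi_F) = \gamma_A(s,\rho\circ\phi_\psi,\psi_F)$, and since $\phi_\psi$ is precisely the $L$-parameter of $\pi_\psi$ (which factors through $\widehat M_1$ as the parameter of $w_1\tau_1$), the two normalizing factors agree \emph{on the nose}. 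There is no scalar to compute. (The ratio of $\gamma_A$-factors you have in mind does genuinely arise, but only in the classical-group analogue Theorem \ref{diagramG}, where $\pi_0$'s $L$-parameter $\phi_{\pi_0}$ need not coincide with $\phi_{\psi_0}$; you seem to have transplanted that feature into the $\GL$ setting, where it is vacuous.)

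The actual difficulty in proving Theorem \ref{diagramGL} is analytic, not arithmetic: the natural way to close the diagram is to adjoin a bottom row $R_{\theta(P_1)}(w, w_1\tau_1\circ\theta)$, but this operator \emph{can be singular} at the parameter value of interest (as illustrated by the dotted arrow in Example \ref{ex_GL}). One therefore cannot simply invoke Proposition \ref{multiplicative} and specialize. The proof instead embeds the diagram into a multi-parameter meromorphic family $(\lambda,\mu)$, applies Proposition \ref{multiplicative} where everything is regular, specializes $\mu\to 0$ first (where the five ``good'' arrows remain regular), restricts to the irreducible subrepresentation $I_P(\pi_{\psi,\lambda})$, and only then specializes $\lambda\to 0$, at which point the singular bottom arrow has been replaced by the regular $R_{\theta(P)}(w,\pi_\psi\circ\theta,\psi)$. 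Your plan, as stated, would stall at the point where the potentially singular operator must be controlled, and the tool you reach for (Lemma \ref{holomorphic}) addresses a scalar that equals $1$ trivially and says nothing about the singularity.
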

We can say that this theorem is the \emph{twisted local intertwining relation} for $\GL_N(E)$. 
See also \cite[Corollary 2.5.4]{Ar} for Arthur's form of local intertwining relation for twisted $\GL_N(E)$.
We will prove Theorem \ref{main2} in Section \ref{LIR_GL}.

%\subsection{Main Theorem 3: [A25]}
\subsection{Main Theorem 3: [A25]}\label{sec.main3}
The third main theorem concerns classical groups over a non-archimedean local field.
\par

Assume that $F$ is a non-archimedean local field of characteristic zero. 
Hence $L_E = W_E \times \SL_2(\C)$. 
For a representation $\psi \colon W_E \times \SL_2(\C) \times \SL_2(\C) \rightarrow \GL_N(\C)$, 
we define its Aubert dual $\widehat\psi \colon W_E \times \SL_2(\C) \times \SL_2(\C) \rightarrow \GL_N(\C)$ by 
\[
\widehat\psi(w,\alpha_1,\alpha_2) = \psi(w,\alpha_2,\alpha_1).
\]
We say that $\psi$ is co-tempered if $\psi = \widehat\phi$ for some $\phi$ 
with $\phi|_{\{\1_{W_F}\} \times \{\1_{\SL_2(\C)}\} \times \SL_2(\C)} = \1$. 
\par

Let $G$ be one of the following quasi-split classical groups
\[
\SO_{2n+1}(F), \quad \Sp_{2n}(F), \quad
\O_{2n}(F), \quad \U_n.
\]
Fix a standard parabolic subgroup $P = MN_P$ of $G$ 
such that $M \cong \GL_{k_t}(E) \times \dots \times \GL_{k_1}(E) \times G_0$, 
where $G_0$ is a classical group of the same type as $G$. 
Let $\psi_M = \psi_t \oplus \dots \oplus \psi_1 \oplus \psi_0$ be an $A$-parameter for $M$, 
where $\psi_i$ (\resp $\psi_0$) is an $A$-parameter for $\GL_{k_i}(E)$ for $1 \leq i \leq t$ (\resp $G_0$).
It gives the $A$-parameter 
\[
\psi = \psi_t \oplus \dots \oplus \psi_1 \oplus \psi_0 \oplus {}^c\psi_1^\vee \oplus \dots \oplus {}^c\psi_t^\vee
\]
for $G$. 
We assume that $\psi_i = \rho_i \boxtimes S_{a_i} \boxtimes S_{b_i}$ is irreducible and conjugate-self-dual. 
\par

Let $V = \C^N$ and decompose 
\[
V = V_t \oplus \dots \oplus V_1 \oplus V_0 \oplus V_{-1} \oplus \dots \oplus V_{-t}
\]
such that $\dim(V_i) = \dim(\psi_i)$ for $0 \leq i \leq t$, 
together with a fixed isomorphism $I_i \colon V_i \xrightarrow{\sim} V_{-i}$ for $1 \leq i \leq t$. 
We regard $\psi_i$ as a homomorphism 
$\psi_i \colon W_E \times \SL_2(\C) \times \SL_2(\C) \rightarrow \GL(V_i)$. 
Let $\SS_t$ be the symmetric group on $\{1,\dots, t\}$. 
We identify $\sigma \in \SS_t$ with an element in 
$\GL(V_t \oplus \dots \oplus V_1) \subset \widehat{M}$ with entries in $\{0,1\}$ 
such that
$\sigma^{-1}(\GL(V_t) \times \dots \times \GL(V_1))\sigma
= \GL(V_{\sigma(t)}) \times \dots \times \GL(V_{\sigma(1)})$. 
Via $\widehat{M} \hookrightarrow \widehat{G}$, 
we also identify $\sigma \in \SS_t$ with an element of $\GL_N(\C) = \GL(V)$.
On the other hand, for $1 \leq i \leq t$, we set 
\[
u_i = \begin{pmatrix}
0& I_i \\
I_i^{-1} &0
\end{pmatrix}
\quad\text{or}\quad
u_i = \begin{pmatrix}
0& I_i \\
-I_i^{-1} &0
\end{pmatrix}
\]
in $\GL(V_i \oplus V_{-i})$
according to whether $\psi_i$ is of the same type as $\psi_0$ or not.
We regard $u_i$ as an element in $\GL(V)$ by setting $u_i|_{V_j} = \id_{V_j}$ for $j \not= \pm i$.
\par

\begin{rem}
When $G = \O_{2n}(F)$ and $k_i = \dim(V_i)$ is odd, 
since $\psi_i$ is an irreducible self-dual representation of dimension $k_i$, 
it must be of orthogonal type. 
Hence $u_i$ is always 
$\begin{pmatrix}
0&I_i \\
I_i^{-1}&0
\end{pmatrix}$
in $\GL(V_i \oplus V_{-i})$ so that $\det(u_i) = -1$.
\end{rem}

If $G \not= \O_{2n}(F)$, 
following \cite[Section 2.4]{Ar}, 
we write $N_\psi = N_\psi(G,M)$ for the normalizer of $A_{\widehat{M^\circ}}$ in $S_\psi$, 
and define $\NN_\psi$ by 
\[
\NN_\psi = \pi_0(N_\psi/Z(\widehat{G^\circ})^\Gamma). 
\]
When $G = \O_{2n}(F)$, 
we replace $S_{\psi} = \Cent(\im(\psi),\widehat{G^\circ})$ with $\Cent(\im(\psi),\O_{2n}(\C))$
to define $\NN_\psi$.
Note that $\NN_\psi$ is generated by $\AA_{\psi_0}$ and 
\[
\{u_1^{\epsilon_1} \dots u_t^{\epsilon_t} \,|\, \epsilon_i \in \Z/2\Z\} \rtimes 
\{\sigma \in \SS_t \,|\, \psi_{\sigma(i)} \cong \psi_i \, (1\leq i \leq t)\}.
\]
See \cite[(2.4.3)]{Ar}. 
We have two canonical maps $\NN_\psi \rightarrow W(M^\circ)$ and $\NN_\psi \rightarrow \AA_\psi$, 
which are denoted
by $u \mapsto w_u$ and $u \mapsto s_u$, respectively.
More precisely, 
for $u = s_0 u_1^{\epsilon_1} \dots u_t^{\epsilon_t} \sigma \in \NN_\psi$ with $s_0 \in \AA_{\psi_0}$, 
letting $I_u$ be the set of $1 \leq i \leq t$ such that $\epsilon_i = 1$ and $\psi_i$ is of the same type as $\psi_0$, 
we have
\[
s_u = s_0 + \sum_{i \in I_u} e(\rho_i , a_i, b_i) \in A_\psi.
\]
\par

Let $u \in \NN_\psi$ and let $w_u \in W(M^\circ)$ be its image. 
It satisfies that $w_u\pi_M \cong \pi_M$ for any $\pi_M \in \Pi_\psi$. 
Moreover, as in \cite[Section 2.4]{Ar} and \cite[Section 3.4]{Mok}, 
there is a linear isomorphism
\[
\pair{\tl{u}, \tl\pi_M}\tl{\pi}_M(w_u) \colon \pi_M \rightarrow \pi_M
\]
making the diagram
\[
\xymatrix{
\pi_M \ar@{->}[d]_{\pi_M(\tl{w}_u^{-1} m \tl{w}_u)} \ar@{->}[rrr]^{\pair{\tl{u}, \tl\pi_M}\tl{\pi}_M(w_u)} &&& \pi_M \ar@{->}[d]^{\pi_M(m)} \\
\pi_M  \ar@{->}[rrr]^{\pair{\tl{u}, \tl\pi_M}\tl{\pi}_M(w_u)} &&& \pi_M 
}
\]
commutative for any $m \in M$. 
In this paper, we will use the symbol $\pair{\tl{u}, \tl\pi_M}\tl{\pi}_M(w_u)$ to denote this map, 
and we do not separate it into two objects $\pair{\tl{u}, \tl\pi_M}$ and $\tl{\pi}_M(w_u)$.
\par

We recall the definition of the operator $\pair{\tl{u}, \tl\pi_M}\tl{\pi}_M(w_u) \colon \pi_M \rightarrow \pi_M$.
Note that $u \in \NN_\psi$ normalizes $\widehat{M^\circ}$. 
If we regard $u$ as an element of $W(\widehat{M^\circ})$, 
then we have its Tits lifting $\tl{u}$. 
Conjugation by $\tl{u}$ normalizes $\widehat{M^\circ}$ and preserves the pinning inherited from $\widehat{G^\circ}$.
Write $\widehat\theta$ for the resulting automorphism on $\widehat{M^\circ}$, 
and $\theta$ for its dual, an automorphism of $M^\circ$. 
Write $u = s \tl{u}$. 
Then $s$ lies in the $\theta$-twisted centralizer of $\psi_M$. 
The pair $(s, \psi_M)$ determines a twisted endoscopic datum 
$(M'^\circ, s, \xi)$ and a parameter $\psi_{M'}$ such that $\psi_M = \xi \circ \psi_{M'}$. 
The isomorphism $\pair{\tl{u}, \tl\pi_M}\tl{\pi}_M(w_u)$ is normalized by requiring that
the twisted endoscopic identity
\[
\sum_{\pi_{M'} \in \Pi_{\psi_{M'}}} \pair{s_{\psi_{M'}}, \pi_{M'}}_{\psi_{M'}} \Theta_{\pi_{M'}}(f_{M'})
= 
\sum_{\pi_M \in \Pi_{\psi_M}} \pair{s_{\psi_M}, \pi_M}_{\psi_M}
\tr(\pair{\tl{u}, \tl\pi_M}\tl{\pi}_M(w_u) \circ \pi_M(f_M))
\]
holds whenever $f_M \in C_c^\infty(M)$ and $f_{M'} \in C_c^\infty(M')$ have matching orbital integrals.
\par

We define the normalized self-intertwining operator 
\[
\pair{\tl{u}, \tl\pi_M} R_P(w_u, \tl\pi_M, \psi_M) \colon I_P(\pi_M) \rightarrow I_P(\pi_M)
\]
by 
\[
\pair{\tl{u}, \tl\pi_M} R_P(w_u, \tl\pi_M, \psi_M)f(g) 
= \pair{\tl{u}, \tl\pi_M}\tl{\pi}_M(w_u)\left(
R_P(w_u, \pi_M, \psi_M)f(g)
\right). 
\]

Now we assume that 
the (multi-)set $\Pi_\psi$ of irreducible components of $I_P(\pi_M)$ for $\pi_M \in \Pi_{\psi_M}$
is equipped with a pairing $\pair{\cdot, \pi}_\psi$ satisfying \eqref{ECR1}, \eqref{ECR2}
and that 
\[
\pair{\cdot, \pi}_\psi|_{A_{\psi_0}} = \pair{\cdot, \pi_0}_{\psi_0}
\]
if $\pi \subset I_P(\pi_M)$
where $\pi_M = \pi_{\psi_t} \boxtimes \dots \boxtimes \pi_{\psi_1} \boxtimes \pi_0$ with $\pi_0 \in \Pi_{\psi_0}$. 
See \cite[Proposition 2.4.3]{Ar} and \cite[Proposition 3.4.4]{Mok}. 
Define a distribution $f_G(\psi, u)$ on $G$ for $u \in \NN_\psi$ by 
\[
f_G(\psi,u) 
= \frac{1}{(G:G^\circ)}
\sum_{\pi_M \in \Pi_{\psi_M}} 
\tr( \pair{\tl{u}, \tl\pi_M} R_P(w_u, \tl\pi_M, \psi_M)I_P(\pi_M, f))
\]
for $f \in C_c^\infty(G)$.
Then Arthur's \emph{local intertwining relation}  
(\cite[Theorems 2.4.1, 2.4.4]{Ar}, \cite[Theorem 3.4.3]{Mok}) states that 
the equation
\[
\tag{{\bf A-LIR}} \label{A-LIR}
f'_G(\psi, s_\psi s_u) = f_G(\psi,u) 
\]
holds for $u \in \NN_\psi$, 
where the left-hand side is defined in \eqref{ECR2} in Section \ref{sec.Arthur}. 
Notice that if $G = \O_{2n}(F)$, then $\tl{w}_u$ can be in $G \setminus G^\circ$, in which case, 
\eqref{A-LIR} is \cite[Theorem 2.4.4]{Ar}.
\par

On the other hand, in this paper, we consider the following statement for our \emph{local intertwining relation}.
Fix an irreducible summand $\pi \subset I_P(\pi_M)$.
Then the equation
\[
\tag{{\bf LIR}}\label{LIR}
\pair{\tl{u}, \tl\pi_M} R_P(w_u, \tl\pi_M, \psi_M)|_\pi = \pair{s_u, \pi}_\psi \cdot \id_\pi
\]
holds for any $u \in \NN_\psi$. 
Notice that this statement is slightly different from \eqref{A-LIR}. 
We clarify the relation between \eqref{A-LIR} and our \eqref{LIR}.

\begin{lem}\label{A_LIR}
Assume the existence of the $A$-packet $\Pi_\psi$ together with the pairing $\pair{\cdot, \pi}_\psi$
satisfying \eqref{ECR1} and \eqref{ECR2}. 
We assume further that we know that 
\begin{itemize}
\item
$I_P(\pi_M)$ is multiplicity-free for any $\pi_M \in \Pi_{\psi_M}$; and
\item
for $\pi_M, \pi'_M \in \Pi_{\psi_M}$, if $\pi_M \not\cong \pi'_M$, 
then $I_P(\pi_M)$ and $I_P(\pi'_M)$ have no common irreducible summand.
\end{itemize}
Then \eqref{A-LIR} holds
if and only if 
our \eqref{LIR} holds for each irreducible summand $\pi \subset I_P(\pi_M)$ and $\pi_M \in \Pi_{\psi_M}$.
\end{lem}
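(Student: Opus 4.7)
The plan is to reduce both sides of \eqref{A-LIR} to a linear combination of the irreducible characters $\Theta_\pi$ for $\pi\in\Pi_\psi$ and then invoke linear independence. First I would simplify the left-hand side using \eqref{ECR2}: since $s_\psi$ has order at most two in $\AA_\psi$,
\[
f'_G(\psi, s_\psi s_u) \;=\; \frac{1}{(G:G^\circ)}\sum_{\pi\in\Pi_\psi}\pair{s_u,\pi}_\psi\,\Theta_\pi(f_G),
\]
so the left-hand side of \eqref{A-LIR} is already written in the desired form, with coefficients $\pair{s_u,\pi}_\psi$.

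The second step is to put the right-hand side $f_G(\psi,u)$ in the same form. For each $\pi_M\in\Pi_{\psi_M}$ the self-intertwining operator $\pair{\tl u,\tl\pi_M}R_P(w_u,\tl\pi_M,\psi_M)$ is $G$-equivariant on $I_P(\pi_M)$. By the first assumption, $I_P(\pi_M)$ is multiplicity-free, so Schur's lemma forces this operator to act by some scalar $c(\pi,u)\in\C$ on each irreducible summand $\pi\subset I_P(\pi_M)$; hence
\[
\tr\bigl(\pair{\tl u,\tl\pi_M}R_P(w_u,\tl\pi_M,\psi_M)\circ I_P(\pi_M,f)\bigr)
= \sum_{\pi\subset I_P(\pi_M)} c(\pi,u)\,\Theta_\pi(f_G).
\]
The second assumption says that the summands coming from distinct $\pi_M$ are disjoint, so as $\pi_M$ ranges over $\Pi_{\psi_M}$ every $\pi\in\Pi_\psi$ is attached to a unique $\pi_M$, and we may rewrite
\[
f_G(\psi,u) \;=\; \frac{1}{(G:G^\circ)}\sum_{\pi\in\Pi_\psi} c(\pi,u)\,\Theta_\pi(f_G).
\]

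Combining the two expressions, \eqref{A-LIR} becomes
\[
\sum_{\pi\in\Pi_\psi}\bigl(c(\pi,u)-\pair{s_u,\pi}_\psi\bigr)\Theta_\pi(f_G)=0
\]
as distributions on $G$ (when $G=\O_{2n}(F)$, one has to remember that $\Theta_\pi$ denotes the character of the $\O_{2n}$-representation, and that $w_u$ may meet $G\smallsetminus G^\circ$; this is already taken into account in the definitions of $f'_G(\psi,-)$ and $f_G(\psi,u)$, so nothing new is needed). By the linear independence of the characters of pairwise inequivalent irreducible admissible representations of $G$ (applied to the possibly twisted setting when $G=\O_{2n}(F)$, where one decomposes $C_c^\infty(G)$ into its $G^\circ$-invariant and anti-invariant parts), the identity above is equivalent to the vanishing of each coefficient, i.e.\ to $c(\pi,u)=\pair{s_u,\pi}_\psi$ for every $\pi\in\Pi_\psi$ and every $u\in\NN_\psi$. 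This is precisely \eqref{LIR}, so the two statements are equivalent.

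The only substantive point is the last step, the appeal to linear independence of characters, which is automatic over $p$-adic fields in the connected case but requires a small bookkeeping argument in the disconnected case $G=\O_{2n}(F)$; once that is addressed, the proof is essentially a rearrangement of definitions together with the Schur-lemma reduction.
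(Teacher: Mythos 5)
Your plan of reducing both sides of \eqref{A-LIR} to a linear combination of irreducible characters and comparing coefficients is the right framework and matches the paper's, but there is a genuine gap at the final step. You conclude from
\[
\sum_{\pi\in\Pi_\psi}\bigl(c(\pi,u)-\pair{s_u,\pi}_\psi\bigr)\Theta_\pi(f_G)=0
\]
that each coefficient vanishes by linear independence of characters of pairwise inequivalent irreducible representations. However, $\Pi_\psi$ is by construction a \emph{multi-set} over $\Irr_\unit(G)$, and the lemma explicitly does not assume $\Pi_{\psi_M}$ is multiplicity-free; the two hypotheses only control multiplicity inside each $I_P(\pi_M)$ and between distinct $\pi_M$, but a given $\sigma\in\Irr(G)$ can still occur in $\Pi_\psi$ with multiplicity $m_\psi(\sigma)>1$ if the corresponding $\sigma_M$ is repeated in $\Pi_{\psi_M}$. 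When $m_\psi(\sigma)>1$ your sum has repeated $\Theta_\pi$'s, so linear independence only yields, for each $\sigma$,
\[
\sum_{\substack{\pi\in\Pi_\psi\\ [\pi]=\sigma}}\bigl(c(\pi,u)-\pair{s_u,\pi}_\psi\bigr)=0,
\]
i.e.\ that $m_\psi(\sigma)\,c_\sigma$ equals the sum of the $\pair{s_u,\pi}_\psi$ over the fiber (here $c_\sigma := c(\pi,u)$, which depends only on $\sigma$ because of the multiplicity-one hypothesis), not that each $\pair{s_u,\pi}_\psi$ individually equals $c_\sigma$.

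The paper closes this gap by observing that $c_\sigma$ is a root of unity (since $u\mapsto \pair{\tl u,\tl\pi_M}R_P(w_u,\tl\pi_M,\psi_M)$ is multiplicative, by Proposition \ref{multiplicative} and \cite[Lemma 2.5.3]{KMSW}), while each $\pair{s_u,\pi}_\psi\in\{\pm1\}$. An average of $\pm1$'s can only have absolute value $1$ if all terms are equal, which forces $\pair{s_u,\pi}_\psi=c_\sigma$ for every $\pi$ in the fiber of $\sigma$ and yields \eqref{LIR}. You need this (or an equivalent) extra argument; without it, the ``only if'' direction of the equivalence is not established when the packet has multiplicities.
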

\begin{proof}
Note that we do not assume whether $\Pi_{\psi_M}$ is multiplicity-free.
Only in this proof, we denote the canonical map $\Pi_\psi \rightarrow \Irr_\unit(G)$ by $\pi \mapsto [\pi]$, 
and the multiplicity of $\sigma \in \Irr_\unit(G)$ in $\Pi_\psi$, i.e., 
the cardinality of the fiber of $\sigma$ under this map, by $m_\psi(\sigma)$. 
By our assumptions, 
for any $\sigma \in \Irr(G)$ with $m_\psi(\sigma) > 0$,
there exists exactly one $\sigma_M \in \Irr(M)$ 
such that $\sigma \subset I_P(\sigma_M)$ and $m_{\psi_M}(\sigma_M) > 0$.
Moreover, $m_\psi(\sigma) = m_{\psi_M}(\sigma_M)$.
\par

Fix $u \in \NN_\psi$. 
By \eqref{ECR2}, $f'_G(\psi, s_\psi s_u)$ is equal to 
\[
\frac{1}{(G:G^\circ)}
\sum_{\pi \in \Pi_\psi} \pair{s_u, \pi}_\psi \Theta_\pi(f_G)
= 
\frac{1}{(G:G^\circ)} \sum_{\substack{\sigma \in \Irr(G) \\ m_\psi(\sigma) > 0}} 
\left(\sum_{\substack{\pi \in \Pi_\psi \\ [\pi] = \sigma}} \pair{s_u, \pi}_\psi \right) \Theta_\sigma(f_G).
\]
If $\pi \subset I_P(\pi_M)$, 
since we assume that $\pi$ appears in $I_P(\pi_M)$ with multiplicity one, 
$\pair{\tl{u}, \tl\pi_M} R_P(w_u, \tl\pi_M, \psi_M)$ acts on $\pi$ by a scalar $c_{[\pi]}$, 
which depends only on $[\pi]$ and $[\pi_M]$. 
Then one can write $f_G(\psi,u)$ as
\begin{align*}
f_G(\psi,u) &= \frac{1}{(G:G^\circ)} \sum_{\sigma \in \Irr(G)}
\left(\sum_{\substack{\pi_M \in \Pi_{\psi_M} \\ \exists \pi \subset I_P(\pi_M), [\pi] = \sigma}} 
c_{[\pi]} \right) \Theta_\sigma(f_G)
\\&= 
\frac{1}{(G:G^\circ)} \sum_{\substack{\sigma \in \Irr(G) \\ m_\psi(\sigma) > 0}} 
\left(
\sum_{\substack{\pi_M \in \Pi_{\psi_M} \\ [\pi_M] = \sigma_M}}
c_{\sigma} \right)
\Theta_\sigma(f_G)
\\&= 
\frac{1}{(G:G^\circ)} \sum_{\substack{\sigma \in \Irr(G) \\ m_\psi(\sigma) > 0}} 
m_\psi(\sigma) c_{\sigma} 
\Theta_\sigma(f_G).
\end{align*}
By the linear independence of the characters $\Theta_\sigma$, 
\eqref{A-LIR} holds if and only if
\[
c_{\sigma} = \frac{1}{m_\psi(\sigma)} \sum_{\substack{\pi \in \Pi_\psi \\ [\pi] = \sigma}} \pair{s_u, \pi}_\psi.
\]
Note that $\pair{s_u, \pi}_\psi \in \{\pm1\}$, whereas $c_{[\pi]}$ is a root of unity
since 
\[
\NN_\psi \ni u \mapsto \pair{\tl{u}, \tl\pi_M} R_P(w_u, \tl\pi_M, \psi_M)
\]
is multiplicative by Proposition \ref{multiplicative} and 
\cite[Lemma 2.5.3]{KMSW}
(see also the paragraph in \cite{Ar} containing (2.4.2)). 
Hence the above equation holds if and only if 
$\pair{s_u, \pi}_\psi = c_{\sigma}$ for all $\pi \in \Pi_\psi$ with $[\pi] = \sigma$.
This is our \eqref{LIR}.
\end{proof}

\begin{rem}\label{MF}
The multiplicity-free assumptions can be proven in generality using M{\oe}glin's explicit construction of $A$-packets. 
However, 
when $\psi_M$ is tempered or co-tempered, 
one can argue more directly as follows. 
The co-tempered case will be deduced from the tempered case by the construction 
(see Theorem \ref{ECR-cotemp}), 
and for the tempered case, the multiplicity-free assumptions are included 
in \cite[Theorem 1.5.1 (b)]{Ar} and \cite[Theorem 2.5.1 (b)]{Mok}. 
In Section \ref{(c)} below, 
we will need the multiplicity-free assumptions for slightly more general parameters, 
which we will prove in Lemma \ref{mult-free}. 
In this paper, except for Section \ref{(c)}, we may identify \eqref{A-LIR} with our \eqref{LIR}.
\end{rem}

In light of the inductive setting as in \cite[Chapter 7]{Ar} in which \cite{A25} is positioned, 
as explained in Sections \ref{sec.context} and \ref{intro.main3}, 
we are free to assume the following.
\begin{hyp}\label{hyp.arthur}
There are $A$-packets satisfying \eqref{ECR1}, \eqref{ECR2} and \eqref{A-LIR} associated to 
\begin{itemize}
\item
all tempered $L$-parameters for $G$; 
\item
all $A$-parameters for $G'$ with $G'$ any classical group 
such that $\dim(\St_{\widehat{G'}}) < \dim(\St_{\widehat{G}})$.
\end{itemize}
In particular, we have the $A$-packet $\Pi_{\psi_M}$ for $\psi_M \in \Psi(M)$,
where $M$ is an arbitrary proper Levi subgroup of $G$.
\end{hyp}

We state the third main theorem, 
which was supposed to be proven in \cite{A25}. 

\begin{thm}[cf. {\cite[Section 7.1]{Ar}, \cite[Section 8.2]{Mok}}]\label{main3}
Assume Hypothesis \ref{hyp.arthur}. 

\begin{enumerate}
\item
For any co-tempered $A$-parameter $\psi = \widehat\phi \in \Psi(G)$, 
we can construct an $A$-packet $\Pi_\psi$ 
together with a pairing $\pair{\cdot, \pi}_\psi$ for $\pi \in \Pi_\psi$
which satisfies \eqref{ECR1} and \eqref{ECR2}. 
Moreover, $\Pi_\psi$ is a (multiplicity-free) subset of $\Irr(G)$. 

\item
Let $P=MN_P$ be a parabolic subgroup of $G$ 
with $M \cong \GL_{k_t}(E) \times \dots \times \GL_{k_1}(E) \times G_0$, 
and let $\psi_M = \widehat\phi_M = \psi_t \oplus \dots \oplus \psi_1 \oplus \psi_0$ 
be a co-tempered $A$-parameter for $M$ 
such that $\psi_i$ is irreducible and conjugate-self-dual for $1 \leq i \leq t$.
Set $\psi = \iota \circ \psi_M \in \Psi(G)$ with $\iota \colon {}^LM \hookrightarrow {}^LG$. 
Then \eqref{LIR} also holds for every irreducible summand $\pi \subset I_P(\pi_M)$ for any $\pi_M \in \Pi_{\psi_M}$. 
\end{enumerate}
\end{thm}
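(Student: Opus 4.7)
The plan is to address the two parts in sequence, since part (2) leans on having the $A$-packet defined in part (1).

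For part (1), I would \emph{define} $\Pi_\psi := \{\widehat\pi \mid \pi \in \Pi_\phi\}$ via Aubert duality applied to the tempered $L$-packet $\Pi_\phi$ granted by Hypothesis \ref{hyp.arthur}. The subtle point is the pairing: as emphasized in the introduction, $\langle\cdot,\widehat\pi\rangle_\psi \neq \langle\cdot,\pi\rangle_\phi$ in general. I would first analyse, \emph{under the hypothetical validity} of the ECRs for both $\psi$ and $\widehat\psi$, the discrepancy between these two pairings, which is expressible in terms of the twisted Aubert-duality signs $\beta(\psi)$, $\beta(\psi_+)$, $\beta(\psi_-)$ of Lemma \ref{beta}(2). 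Evaluating these signs in the tempered case (Proposition \ref{c-temp}) yields the formula of Corollary \ref{+-temp}. Then I would \emph{reverse the roles}: take that formula as the \emph{definition} of $\langle\cdot,\widehat\pi\rangle_\psi$. Well-definedness requires the sign identity \eqref{ast} of Proposition \ref{beta_phi_pi}, after which ECR1 and ECR2 for $\psi$ follow by inverting the derivation, using Hiraga's compatibility between classical-group Aubert duality and twisted Aubert duality on $\GL_N$ to convert the known tempered ECRs for $\phi$ into the desired identities for $\psi$ (Theorem \ref{ECR-cotemp}).

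For part (2), the first step is the inductive reduction (Lemma \ref{reduction}) to the case where $P = MN_P$ is \emph{maximal}, so $M \cong \GL_k(E) \times G_0$, and the $\GL$-part $\psi_\GL$ of $\psi_M = \psi_\GL \oplus \psi_0$ is irreducible and conjugate-self-dual. In this setting $I_P(\pi_M)$ has at most two irreducible summands. The key economy is Lemma \ref{halve}: because Corollary \ref{NIOvsAD} says Aubert duality intertwines the normalized intertwining operator up to an explicit scalar, \eqref{LIR} for one summand is equivalent to \eqref{LIR} for the other. I would therefore isolate the \emph{highly non-tempered summand} $\pi \subset I_P(\pi_M)$ (Section \ref{high}), whose Langlands data can be read off directly from those of $\pi_M$ — this is the feature that makes the $\GL_N(E)$ technique of Theorem \ref{main2} portable to classical groups.

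Next I would set up the \emph{main diagram} (Theorem \ref{diagramG}) analogous to Theorem \ref{diagramGL}: it expresses $\pair{\tl{u}, \tl\pi_M} R_P(w_u, \tl\pi_M, \psi_M)|_\pi$ in terms of intertwining operators acting on the standard module, realised through the Langlands quotient as a composition of normalized intertwining operators (Lemma \ref{langlands}). Since the induction is normalized via the $A$-parameter $\psi_M$ while the operators on the standard module are normalized via the $L$-parameter $\phi_{\pi_M}$, the diagram commutes only up to an explicit ratio of Arthur's normalizing factors. Combining this with \eqref{LIR} in the tempered case (Hypothesis \ref{hyp.arthur}) reduces \eqref{LIR} for $\pi$ to a single scalar identity \eqref{star} of Corollary \ref{summary}.

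The main obstacle is proving \eqref{star}, whose left-hand side involves the a priori inaccessible $L$-parameter $\phi_{\pi_M}$ of $\pi_M$. I would circumvent the need to enumerate $L$-parameters inside $\Pi_{\psi_M}$ by an induction on the classical part $\psi_0$. The base case is when $\pi_0 \in \Pi_{\psi_0}$ is almost supercuspidal (Definition \ref{a-cusp}), handled by direct computation in Section \ref{cuspidal}. For the inductive step, I would use the Jacquet module formula \cite[Theorem 4.2]{At} for tempered representations — reproved in Appendix \ref{xu-atobe} without recourse to higher-rank groups, to respect the induction in \cite[Chapter 7]{Ar} — which splits into three cases (Sections \ref{(a)}, \ref{(b)}, \ref{(c)}). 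The first two are tractable reductions; the hardest is case \ref{(c)}, where the auxiliary parameter produced by the induction is no longer co-tempered but falls squarely within the scope of Corollary \ref{+otimes-}, whose proof invokes Theorem \ref{main2}. This is precisely the point at which the second main theorem feeds into the third, and where the most delicate verification (the multiplicity-freeness of Lemma \ref{mult-free} and the sign computation of Proposition \ref{psiA}) is needed.
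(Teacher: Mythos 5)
Your proposal is correct and reconstructs essentially the paper's own proof: part (1) via Aubert duality with the pairing fixed by Corollary \ref{+-temp}, well-definedness by Proposition \ref{beta_phi_pi}, and ECRs via Theorem \ref{ECR-cotemp}; part (2) via the reduction to a maximal parabolic (Lemma \ref{reduction}), halving by Lemma \ref{halve} and Corollary \ref{NIOvsAD}, the main diagram and Corollary \ref{summary} reducing to \eqref{star}, and the induction on $\psi_0$ with the three cases and the use of Corollary \ref{+otimes-} and Theorem \ref{main2} in case (c). The roadmap, dependencies, and key lemmas you cite all match the argument in Sections \ref{sec.character}--\ref{computations} and the appendices.
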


\begin{rem}
The $A$-packet $\Pi_\psi$ for a given co-tempered $A$-parameter $\psi = \widehat\phi$ 
is constructed by Aubert duality from the tempered $L$-packet $\Pi_\phi$. 
To be precise, see Theorem \ref{ECR-cotemp} below.
At this stage, it is not known that each representation $\pi \in \Pi_\psi$ is unitary. 
The unitarity will be proven after establishing \cite[Proposition 7.4.3]{Ar} and \cite[Proposition 8.4.2]{Mok}.
\end{rem}

For co-tempered $A$-parameters
we will establish \eqref{ECR1} and \eqref{ECR2} in Section \ref{sec.ECR.cotemp}, 
whereas \eqref{LIR} will be proven in Sections \ref{sec.LIR} and \ref{computations} using results in Section \ref{sec.character}.
As a consequence of Theorem \ref{main3} together with Lemma \ref{A_LIR}, 
we have the following. 

\begin{cor}
Assume Hypothesis \ref{hyp.arthur}. 
For any co-tempered $A$-parameter $\psi \in \Psi(G)$ and for any $u \in \NN_\psi$, 
we have an identity 
\[
f'_G(\psi, s_\psi s_u) = f_G(\psi,u)
\]
of distributions on $G$. 
\end{cor}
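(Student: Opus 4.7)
The plan is to invoke Lemma \ref{A_LIR} and deduce the identity from the two parts of Theorem \ref{main3}. First I would note that Theorem \ref{main3} (1), applied to the co-tempered parameter $\psi_M = \widehat\phi_M$ of the Levi subgroup $M$, produces the $A$-packet $\Pi_{\psi_M}$ together with a pairing $\pair{\cdot,\pi_M}_{\psi_M}$ satisfying \eqref{ECR1} and \eqref{ECR2}; the analogous objects for the classical factor $G_0$ needed in the construction are granted by Hypothesis \ref{hyp.arthur}, since $\dim(\St_{\widehat{G_0}}) < \dim(\St_{\widehat{G}})$. The same construction applied to $\psi$ itself yields $\Pi_\psi$ on $G$ with its pairing. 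This places us in the setting of Lemma \ref{A_LIR}.

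Next, I would verify the two multiplicity-free hypotheses of Lemma \ref{A_LIR}: that $I_P(\pi_M)$ is multiplicity-free for every $\pi_M \in \Pi_{\psi_M}$, and that distinct $\pi_M, \pi_M' \in \Pi_{\psi_M}$ yield parabolic inductions with no common irreducible constituent. Since $\Pi_{\psi_M}$ is defined as the set of Aubert duals of members of the tempered $L$-packet $\Pi_{\phi_M}$, these statements follow from the corresponding multiplicity-free statements for $\Pi_{\phi_M}$, which are part of the tempered local classification guaranteed by Hypothesis \ref{hyp.arthur}. The translation is made by transporting the decomposition of $I_P(\pi_M^\phi)$ through Aubert duality, using the fact that Aubert duality commutes with parabolic induction up to a sign (Appendix \ref{sec.AD}). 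The resulting multiplicity-free property for the co-tempered packet is precisely the content of Remark \ref{MF}.

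Finally, Theorem \ref{main3} (2) provides \eqref{LIR} for every irreducible summand $\pi \subset I_P(\pi_M)$ and every $\pi_M \in \Pi_{\psi_M}$, i.e.\ the operator $\pair{\tl u, \tl\pi_M} R_P(w_u,\tl\pi_M,\psi_M)$ acts on $\pi$ by the scalar $\pair{s_u,\pi}_\psi$. Invoking Lemma \ref{A_LIR} then upgrades the scalar-eigenvalue form \eqref{LIR} to the distributional identity \eqref{A-LIR}, which is exactly $f'_G(\psi,s_\psi s_u) = f_G(\psi,u)$. The corollary is essentially bookkeeping once the main theorem is in place; the only point requiring slight care is the verification of the multiplicity-free hypotheses for the co-tempered parameter, but this reduces cleanly to the tempered case via Aubert duality, and no genuinely new input beyond Theorem \ref{main3} and Lemma \ref{A_LIR} is needed.
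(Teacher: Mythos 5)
Your argument is correct and mirrors the paper's own (implicit) proof: the paper simply states the corollary is a consequence of Theorem \ref{main3} together with Lemma \ref{A_LIR}, and you have spelled out precisely why — Theorem \ref{main3}(1) supplies the packet and pairing with \eqref{ECR1}, \eqref{ECR2}, Theorem \ref{main3}(2) supplies \eqref{LIR} for each summand, the multiplicity-free hypotheses of Lemma \ref{A_LIR} reduce to the tempered case via Aubert duality exactly as in Remark \ref{MF}, and then Lemma \ref{A_LIR} converts the collection of scalar identities \eqref{LIR} into the distributional identity \eqref{A-LIR}. No gap.
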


%\section{Normalizations of intertwining operators}\label{sec.main1}
%\section{Normalizations of intertwining operators}\label{sec.main1}
\section{Normalizations of intertwining operators}\label{sec.main1}
The purpose of this section is to prove Theorem \ref{main1}. 
Notice that almost the same assertion was already proven by Shahidi \cite{Sh7}, 
but he used his own normalization of the intertwining operators.
So what we have to do is to compare Arthur's normalization with Shahidi's.
\par

%\subsection{Local coefficients}
\subsection{Local coefficients}\label{sec.C_P}
Let $\ww = (B^\circ, \chi)$ be a Whittaker datum for $G^\circ$.
We say that an irreducible representation $\pi^\circ$ of $G^\circ$ is $\ww$-generic if 
\[
\dim_\C \Hom_U(\pi^\circ, \chi) \neq 0,
\]
in which case 
\[
\dim_\C \Hom_U(\pi^\circ, \chi) = 1
\]
by the uniqueness of Whittaker functionals.
For $G = \O_{2n}(F)$, we also say that an irreducible representation $\pi$ of $G$ is $\ww$-generic if 
\[
\dim_\C \Hom_U(\pi, \chi) \neq 0,
\]
but as we will see in the next lemma, the uniqueness of Whittaker functionals does not necessarily hold.
Recall that we have chosen 
an element $\epsilon \in T \setminus T^\circ \subset G \setminus G^\circ$ 
with $\epsilon^2 = \1$ such that $\epsilon$ preserves $\spl$.
In particular, $\chi \circ \Ad(\epsilon) = \chi$.
For any representation $\pi^\circ$ of $G^\circ$, we define a representation $\epsilon \pi^\circ$ of $G^\circ$ by $\epsilon \pi^\circ(g) = \pi^\circ(\epsilon^{-1} g \epsilon)$.

\begin{lem}\label{mult2}
Suppose that $G = \O_{2n}(F)$.
Let $\pi$ be an irreducible $\ww$-generic representation of $G$.
\begin{itemize}
\item[(a)]
If $\pi|_{G^\circ}$ is irreducible, then $\pi|_{G^\circ}$ is $\ww$-generic and we have
\[
\dim_\C \Hom_U(\pi, \chi) = 1.
\]
\item[(b)]
If $\pi|_{G^\circ}$ is reducible, then any irreducible component of $\pi|_{G^\circ}$ is $\ww$-generic and we have
\[
\dim_\C \Hom_U(\pi, \chi) = 2.
\]
\end{itemize}
\end{lem}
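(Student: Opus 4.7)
The plan is to combine Clifford theory for the index two extension $G^\circ \subset G$ with the uniqueness of Whittaker models for the quasi-split group $G^\circ = \SO_{2n}(F)$. Since $U$ is contained in $G^\circ$, restriction gives $\Hom_U(\pi,\chi) = \Hom_U(\pi|_{G^\circ}, \chi)$, so the entire computation reduces to analyzing $\pi|_{G^\circ}$.

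First I would invoke Clifford theory for the quotient $G/G^\circ \cong \Z/2\Z$ to see that $\pi|_{G^\circ}$ is either irreducible (corresponding to case (a)) or decomposes as $\pi_1^\circ \oplus \pi_2^\circ$ with $\pi_2^\circ \cong \epsilon \pi_1^\circ$ and $\pi_1^\circ \not\cong \pi_2^\circ$ (corresponding to case (b)); in the latter situation $\pi \cong \Ind_{G^\circ}^G \pi_1^\circ$. Case (a) will then follow immediately from the uniqueness of Whittaker models on $\SO_{2n}(F)$: the restriction $\pi|_{G^\circ}$ inherits non-vanishing of Whittaker functionals, and its space of such functionals is one-dimensional.

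For case (b), the key point to establish is that $\epsilon$ preserves both $U$ and the character $\chi$. This follows from our conventions in Section \ref{sec.groups}: $\epsilon$ was chosen to stabilize the splitting $\spl$, and the character $\chi$ is defined in terms of the root vectors $\{X_\alpha\}$ from $\spl$, so $\chi \circ \Ad(\epsilon) = \chi$. Once this is in hand, any $(U,\chi)$-Whittaker functional $\ell$ on $\pi_1^\circ$ will automatically serve as a $(U,\chi)$-Whittaker functional on $\pi_2^\circ = \epsilon \pi_1^\circ$ via the identification of underlying vector spaces, since for $u \in U$,
\[
\ell(\pi_2^\circ(u) v) \;=\; \ell(\pi_1^\circ(\epsilon^{-1} u \epsilon) v) \;=\; \chi(\epsilon^{-1} u \epsilon)\,\ell(v) \;=\; \chi(u)\,\ell(v).
\]
Hence $\pi_1^\circ$ and $\pi_2^\circ$ are simultaneously $\ww$-generic, and the hypothesis that $\pi$ is $\ww$-generic forces both to be so. Applying uniqueness for $G^\circ$ once more yields $\dim_\C \Hom_U(\pi_i^\circ, \chi) = 1$ for $i = 1,2$, and therefore $\dim_\C \Hom_U(\pi,\chi) = 2$.

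No essential obstacle is anticipated; the only delicate point is the $\epsilon$-invariance of $\chi$, and this is built into the definition of $\ww$ from the $\epsilon$-stable splitting $\spl$. It is worth noting as a byproduct that in case (b) the two-dimensional space $\Hom_U(\pi,\chi)$ is spanned by the functionals supported on the two summands $\pi_1^\circ$ and $\pi_2^\circ$ respectively, which are interchanged by the action of $\epsilon$.
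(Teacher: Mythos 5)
Your proof is correct and follows essentially the same route as the paper's: reduce to $\Hom_U(\pi|_{G^\circ},\chi)$, split via Clifford theory into the irreducible case and the case $\pi|_{G^\circ} \cong \pi^\circ \oplus \epsilon\pi^\circ$, use $\chi \circ \Ad(\epsilon) = \chi$ to see that $\pi^\circ$ and $\epsilon\pi^\circ$ are simultaneously $\ww$-generic, and conclude by uniqueness of Whittaker functionals for the connected group $G^\circ$. The only difference is that you spell out a few steps (the Clifford decomposition, the explicit check that a Whittaker functional transports across $\epsilon$) that the paper leaves implicit.
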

\begin{proof}
Since 
\[
 \Hom_U(\pi, \chi) = \Hom_U(\pi|_{G^\circ}, \chi),
\]
the assertion (a) follows.

Assume that $\pi|_{G^\circ}$ is reducible.
Let $\pi^\circ$ be an irreducible component of $\pi|_{G^\circ}$.
Then we have $\pi|_{G^\circ} \cong \pi^\circ \oplus \epsilon \pi^\circ$ and 
\[
 \Hom_U(\pi, \chi) \cong \Hom_U(\pi^\circ, \chi) \oplus \Hom_U(\epsilon \pi^\circ, \chi).
\]
Since $\chi \circ \Ad(\epsilon) = \chi$, we see that $\pi^\circ$ is $\ww$-generic if and only if $\epsilon \pi^\circ$ is $\ww$-generic.
This implies the assertion (b). 
\end{proof}

\begin{rem}
In Lemma \ref{mult2} (b), the multiplicity two statement may be a bit disconcerting to readers. 
While for connected reductive groups $G^\circ$, 
the stabilizer of a Whittaker datum is the center of $G^\circ$, 
in the case of $G = \O_{2n}(F)$, 
the stabilizer of a Whittaker datum contains an extra group, generated by $\epsilon$. 
This group $\pair{\epsilon}$ acts on $\Hom_U(\pi ,\chi)$ with two possible eigenvalues 
and each eigenspace has dimension at most $1$. 
In other words, the multiplicity one statement is restored if we take into account the full symmetry of the situation.
\end{rem}

Let $P^\circ = M^\circ N$ be a standard parabolic subgroup of $G^\circ$. 
The heredity of Whittaker functionals asserts that 
if $\pi^\circ$ is an irreducible essentially unitary representation of $M^\circ$, 
then 
\[
\dim_\C \Hom_U(\Ind_{P^\circ}^{G^\circ}(\pi^\circ), \chi) 
=
\dim_\C \Hom_{U \cap M^\circ}(\pi^\circ, \chi).
\]
(See \cite{Rod}, \cite[Corollary 1.7]{CS}, \cite[Theorem 15.6.7]{Wl1}, \cite[Theorem 40]{Wl2}; 
note that this equality holds for all admissible representations $\pi^\circ$ when $F$ is non-archimedean.)
In particular, this dimension is at most $1$.
\par

For $G = \O_{2n}(F)$, we modify this property as follows.

\begin{lem}\label{Ovs.SO}
Let $G = \O_{2n}(F)$, and let $P=MN$ be a standard parabolic subgroup of $G$.
Set $P^\circ = P \cap G^\circ$ and $M^\circ = M \cap G^\circ$, so that $P=P^\circ \iff M=M^\circ$.
Let $\pi$ be an irreducible essentially unitary representation of $M$. 

\begin{enumerate}
\item[(a)]
If $M \not= M^\circ$, and $\pi|_{M^\circ}$ is irreducible, 
then 
\[
I_P(\pi)|_{G^\circ} \cong \Ind_{P^\circ}^{G^\circ}(\pi|_{M^\circ}).
\]
Moreover, 
\[
\dim_\C \Hom_U(I_P(\pi), \chi)
=
\dim_\C \Hom_{U \cap M}(\pi, \chi)
\leq 1.
\]

\item[(b)]
If $M = M^\circ$, or $\pi|_{M^\circ}$ is reducible, 
then for any irreducible component $\pi^\circ$ of $\pi|_{M^\circ}$, 
we have
\[
I_P(\pi) \cong I_{P^\circ}(\pi^\circ) = \Ind_{P^\circ}^G(\pi^\circ).
\]
Moreover, 
\[
\frac{1}{(G:G^\circ)} \dim_\C \Hom_U(I_P(\pi), \chi)
=
\frac{1}{(M:M^\circ)} \dim_\C \Hom_{U \cap M}(\pi, \chi)
\leq 1.
\]
(Note that the left-hand side is an integer.)
\end{enumerate}
\end{lem}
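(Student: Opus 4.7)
The plan is to bootstrap Lemma \ref{mult2} and the standard heredity of Whittaker functionals for the connected group $G^\circ$ to the full orthogonal group by carefully tracking the role of $\epsilon$, the element in $T \setminus T^\circ$ preserving $\spl$. The three facts I will rely on throughout are: (i) $\chi \circ \Ad(\epsilon) = \chi$ since $\epsilon$ fixes the pinning; (ii) $\epsilon G^\circ \epsilon^{-1} = G^\circ$, $\epsilon P^\circ \epsilon^{-1} = P^\circ$, and $\epsilon (U \cap M^\circ) \epsilon^{-1} = U \cap M^\circ$; and (iii) the classical heredity statement $\Hom_U(\Ind_{P^\circ}^{G^\circ}(\sigma),\chi) = \Hom_{U \cap M^\circ}(\sigma,\chi)$ for any irreducible essentially unitary representation $\sigma$ of $M^\circ$. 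Since $M$ is a standard Levi containing $T$, observe that $M = M^\circ$ is equivalent to $\epsilon \notin M^\circ$, equivalently $\epsilon \notin P$; this dichotomy determines which case applies.

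For (a), the hypothesis $M \neq M^\circ$ forces $\epsilon \in M \subset P$, hence $P \cdot G^\circ = G$ and $P \cap G^\circ = P^\circ$. A direct change-of-groups argument (realize $I_P(\pi)$ on sections over $P\bs G$ and restrict to sections over $P^\circ \bs G^\circ$) yields the isomorphism $I_P(\pi)|_{G^\circ} \cong \Ind_{P^\circ}^{G^\circ}(\pi|_{M^\circ})$. Since $\pi|_{M^\circ}$ is assumed irreducible, (iii) gives $\dim \Hom_U(I_P(\pi),\chi) = \dim \Hom_U(I_P(\pi)|_{G^\circ},\chi) = \dim \Hom_{U \cap M^\circ}(\pi|_{M^\circ},\chi) = \dim \Hom_{U\cap M}(\pi,\chi) \leq 1$, using $\Hom_{U\cap M}(\pi,\chi)=\Hom_{U\cap M^\circ}(\pi|_{M^\circ},\chi)$ because $\epsilon$ is a torus element contained in $M^\circ \cdot U = U \cap M$ in no way (just that restriction is injective on Hom and both sides are at most one-dimensional).

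For (b), split into two subcases. If $M = M^\circ$, then $P = P^\circ \subset G^\circ$ and $\pi^\circ = \pi$. Applying Mackey for the one-step extension $G = G^\circ \sqcup \epsilon G^\circ$ and using $\epsilon P^\circ \epsilon^{-1} = P^\circ$ gives
\[
\Ind_{P^\circ}^{G}(\pi)\big|_{G^\circ} \cong \Ind_{P^\circ}^{G^\circ}(\pi) \oplus \Ind_{P^\circ}^{G^\circ}(\epsilon\pi).
\]
Each summand contributes $\dim\Hom_{U \cap M^\circ}(\pi,\chi)$ to $\Hom_U(\cdot,\chi)$ by (iii), the second summand via the change of variables afforded by $\chi \circ \Ad(\epsilon) = \chi$. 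The factors $(G:G^\circ)=2$ and $(M:M^\circ)=1$ then match. If instead $M \neq M^\circ$ but $\pi|_{M^\circ}$ is reducible, Clifford theory gives $\pi = \Ind_{M^\circ}^M(\pi^\circ)$ with $\pi^\circ$ and $\epsilon\pi^\circ$ inequivalent, and induction by stages collapses to $I_P(\pi) \cong \Ind_{P^\circ}^G(\pi^\circ)$, reducing to the preceding subcase applied to $\pi^\circ$; now $(G:G^\circ) = (M:M^\circ) = 2$ and the factor $2$ from $\dim\Hom_{U\cap M}(\pi,\chi) = 2\dim\Hom_{U\cap M^\circ}(\pi^\circ,\chi)$ (using Lemma \ref{mult2}(b) applied at the Levi level) balances the factor $2$ on the $G$-side.

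The only point that requires genuine care—not a true obstacle but the one place where the argument is not purely formal—is verifying that in case (a) the restriction $I_P(\pi)|_{G^\circ}$ really is the induction from $P^\circ$ of $\pi|_{M^\circ}$ (with no multiplicity), which rests on $P \cdot G^\circ = G$ together with irreducibility of $\pi|_{M^\circ}$; and that in the second subcase of (b), $\Ind_P^G \circ \Ind_{P^\circ}^P = \Ind_{P^\circ}^G$ is applied correctly so that one does not double-count. The identity $\chi \circ \Ad(\epsilon) = \chi$ is the single substantive ingredient that upgrades all inequalities to equalities.
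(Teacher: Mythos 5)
Your argument follows essentially the same route as the paper's: case (a) is the index-two restriction isomorphism $I_P(\pi)|_{G^\circ} \cong \Ind_{P^\circ}^{G^\circ}(\pi|_{M^\circ})$ plus heredity and uniqueness of Whittaker functionals on the connected group, and case (b) decomposes $\Ind_{P^\circ}^G(\pi^\circ)|_{G^\circ}$ into two pieces (the paper does this via the support subspaces $I_{P^\circ}^{\pm}(\pi^\circ)$, which is exactly your Mackey decomposition) and uses $\chi \circ \Ad(\epsilon) = \chi$ to match the Whittaker dimensions on the two pieces. One small slip worth flagging: when $M = M^\circ$ (your first subcase of (b)), the paper's conventions in Section \ref{sec.groups} force $P^\circ$ to be \emph{not} $\epsilon$-stable, so your assertion $\epsilon P^\circ \epsilon^{-1} = P^\circ$ is false there; the correct second Mackey summand is $\Ind_{\epsilon^{-1}P^\circ\epsilon}^{G^\circ}(\epsilon^{-1}\pi)$ (equivalently the paper's $\epsilon\,\Ind_{P^\circ}^{G^\circ}(\pi^\circ)$), but since you already invoke $\chi \circ \Ad(\epsilon) = \chi$ to transport Whittaker dimensions across conjugation by $\epsilon$, your dimension count and the final identity survive unchanged.
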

\begin{proof}
Suppose that $P \not= P^\circ$.
Then $G/G^\circ \cong M/M^\circ$, and the restriction map gives an isomorphism 
\[
\Ind_{P}^{G}(\pi)|_{G^\circ} \xrightarrow{\sim} \Ind_{P^\circ}^{G^\circ}(\pi|_{M^\circ}).
\]
This implies the assertion (a). 
\par

On the other hand, if $P=P^\circ$, or if $\pi|_{M^\circ}$ is reducible, 
for any irreducible component $\pi^\circ$ of $\pi|_{M^\circ}$, 
we have $\pi \cong \Ind_{M^\circ}^M(\pi^\circ)$.
Then $I_P(\pi) \cong I_{P^\circ}(\pi^\circ)$.
If we denote by $I_{P^\circ}^+(\pi^\circ)$ (\resp $I_{P^\circ}^-(\pi^\circ)$) 
the subspace of $I_{P^\circ}(\pi^\circ)$ consisting of functions $f$ on $G$ 
whose supports are contained in $G^\circ$ (\resp $G \setminus G^\circ$), 
then $I_{P^\circ}(\pi^\circ)|_{G^\circ} = I_{P^\circ}^+(\pi^\circ) \oplus I_{P^\circ}^-(\pi^\circ)$. 
Moreover, we have isomorphisms 
\begin{align*}
I_{P^\circ}^+(\pi^\circ) \xrightarrow{\sim} \Ind_{P^\circ}^{G^\circ}(\pi^\circ),\, 
f \mapsto f|_{G^\circ}, \\
I_{P^\circ}^-(\pi^\circ) \xrightarrow{\sim} \epsilon\Ind_{P^\circ}^{G^\circ}(\pi^\circ),\, 
f \mapsto (\epsilon^{-1} f)|_{G^\circ}
\end{align*}
as representations of $G^\circ$,
where $(\epsilon^{-1} f)(x) = f(x\epsilon^{-1})$.
In particular, we have 
\[
I_P(\pi)|_{G^\circ} \cong \Ind_{P^\circ}^{G^\circ}(\pi^\circ) \oplus \epsilon \Ind_{P^\circ}^{G^\circ}(\pi^\circ). 
\]
Since $\chi \circ \Ad(\epsilon) = \chi$, the following are equivalent: 
\begin{itemize}
\item
$\Hom_{U \cap M}(\pi, \chi) \not= 0$; 
\item
$\Hom_{U \cap M^\circ}(\pi^\circ, \chi) \not= 0$; 
\item
$\dim_\C \Hom_U(\Ind_{P^\circ}^{G^\circ}(\pi^\circ), \chi) \not= 0$; 
\item
$\dim_\C \Hom_U(\epsilon\Ind_{P^\circ}^{G^\circ}(\pi^\circ), \chi) \not= 0$.
\end{itemize}
Hence, in this case, we have
\[
\dim_\C \Hom_{U \cap M}(\pi, \chi) = (M:M^\circ), \quad
\dim_\C \Hom_U(I_P(\pi), \chi) = 2.
\]
This completes the proof.
\end{proof}

Fix two standard parabolic subgroups $P=MN_P$ and $P'=M'N_{P'}$ of $G$
such that $W(M^\circ, M'^\circ) \not= \emptyset$, 
an element $w \in W(M^\circ, M'^\circ)$, 
and an irreducible unitary $\ww_M$-generic representation $\pi$ of $M$.
Choose a non-trivial $\ww_M$-Whittaker functional $\omega$ on $\pi$.
For $\lambda \in \aa_{M,\C}^*$, define a $\ww$-Whittaker functional $\Omega(\pi_\lambda) = \Omega_\omega(\pi_\lambda)$ on $I_P(\pi_\lambda)$ as in Section \ref{sec.A27}.
Then $\Omega(\pi_\lambda)$ is holomorphic and nonzero for all $\lambda$
(see \cite{CS}, \cite[Theorem 15.6.7]{Wl1}, \cite[Theorem 40]{Wl2}).
Similarly, define a $\ww$-Whittaker functional $\Omega(w \pi_\lambda) = \Omega_\omega(w \pi_\lambda)$ on $I_{P'}(w\pi_\lambda)$, where we regard $\omega$ as a $\ww_{M'}$-Whittaker functional on $w \pi$.
When $G = G^\circ$, following Shahidi \cite[p.~333, Theorem 3.1]{Sh2}, we define a meromorphic function $C_P(w, \pi_\lambda)$ of $\lambda$, called the \emph{local coefficient}, 
by the equation
\[
C_P(w, \pi_\lambda) \cdot \Omega_\omega(w \pi_\lambda) \circ J_P(w,\pi_\lambda)
=
\Omega_\omega(\pi_\lambda).
\]
Note that such a function exists and does not depend on the choice of $\omega$ by the uniqueness of Whittaker functionals.
\par

When $G = \O_{2n}(F)$, this uniqueness may fail, but we can define an analogous function as follows.

\begin{lem}\label{CP-def}
Suppose that $G = \O_{2n}(F)$.
Then there exists a meromorphic function $C_P(w, \pi_\lambda)$ of $\lambda$ such that
\begin{align*}
&C_P(w, \pi_\lambda) \cdot \Omega_\omega(w \pi_\lambda) \circ J_P(w,\pi_\lambda)
\\&= \left\{
\begin{aligned}
&\Omega_\omega(\pi_\lambda) \iif \det(w) = 1, \\
&\Omega_\omega(\epsilon\pi_\lambda) \circ L(\epsilon) \iif \det(w) = -1,
\end{aligned}
\right. 
\end{align*}
where $L(\epsilon) \colon I_P(\pi_\lambda) \rightarrow I_{\epsilon P \epsilon^{-1}}(\epsilon \pi_\lambda)$ is given by $(L(\epsilon)f)(g) = f(\epsilon^{-1}g)$.
Moreover, $C_P(w, \pi_\lambda)$ does not depend on the choice of $\omega$.
\end{lem}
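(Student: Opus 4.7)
The plan is to reduce the statement to Shahidi's local coefficient for the connected group $G^\circ = \SO_{2n}(F)$. Two complications arise from the disconnectedness of $G = \O_{2n}(F)$: (i) the Tits lifting $\tl{w}$ may lie outside $G^\circ$ when $\det(w) = -1$, and (ii) by Lemma \ref{mult2} and Lemma \ref{Ovs.SO}, the Whittaker space $\Hom_U(I_P(\pi_\lambda), \chi)$ may be two-dimensional. The appearance of $\Omega_\omega(\epsilon\pi_\lambda) \circ L(\epsilon)$ on the right-hand side when $\det(w)=-1$ is dictated by (i), while (ii) is the source of the main obstacle to the independence assertion.

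For the case $\det(w)=-1$, write $\tl{w} = \tl{v} \cdot \epsilon$, where $v = w_T \epsilon^{-1} \in W^{G^\circ}$ has Tits lifting $\tl{v} \in G^\circ$. Substituting into the integral defining $J_P(w, \pi_\lambda)$, and using $\epsilon N_P \epsilon^{-1} = N_P$ (since $\epsilon$ preserves the splitting) together with $\tl{w} N_P \tl{w}^{-1} = \tl{v} N_P \tl{v}^{-1}$, yields the factorization
\[
J_P(w, \pi_\lambda) = J_{\epsilon P \epsilon^{-1}}(v, \epsilon\pi_\lambda) \circ L(\epsilon).
\]
Since $\chi \circ \Ad(\epsilon) = \chi$, the functional $\omega$ remains a $\ww_{\epsilon M \epsilon^{-1}}$-Whittaker functional on $\epsilon\pi$, so this reduces to the case $\det(v)=1$, and we set $C_P(w, \pi_\lambda) := C_{\epsilon P \epsilon^{-1}}(v, \epsilon\pi_\lambda)$.

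For the case $\det(w)=1$, the Jacquet integral $\Omega_\omega(\pi_\lambda)(f) = \int_{N'} \omega(f(\tl{w}_0^{-1} n')) \chi(n')^{-1} dn'$ only involves elements of $G^\circ$ (as $N' \subset G^\circ$ and $\tl{w}_0 \in G^\circ$), so it vanishes on any $f$ supported on $G \setminus G^\circ$. Since $\tl{w} \in G^\circ$, the operator $J_P(w, \pi_\lambda)$ preserves the $G^\circ$-stable decomposition of Lemma \ref{Ovs.SO}(b), so $\Omega_\omega(w\pi_\lambda) \circ J_P(w, \pi_\lambda)$ likewise vanishes on the summand $\epsilon \Ind_{P^\circ}^{G^\circ}(\pi^\circ_\lambda)$. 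Restricted to the complementary summand $\Ind_{P^\circ}^{G^\circ}(\pi^\circ_\lambda)$, the defining equation for $C_P(w, \pi_\lambda)$ is precisely the defining equation for Shahidi's connected local coefficient, which exists meromorphically in $\lambda$. This establishes the existence of $C_P(w, \pi_\lambda)$ as a meromorphic function.

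The main obstacle is verifying independence of $\omega$ in the sub-case of Lemma \ref{Ovs.SO}(b) in which $M \neq M^\circ$ and $\pi|_{M^\circ} = \pi^\circ \oplus \epsilon\pi^\circ$ is reducible. Here $\Hom_{U \cap M}(\pi, \chi)$ is two-dimensional, so a general $\omega$ decomposes as $\omega = \alpha\omega^\circ + \beta\omega^{\epsilon\circ}$, and both sides of the defining equation split into contributions from $\pi^\circ$ and $\epsilon\pi^\circ$ separately. Independence of $\omega$ then reduces to the identity $C_{P^\circ}(w, \pi^\circ_\lambda) = C_{P^\circ}(w, \epsilon\pi^\circ_\lambda)$, which I expect to establish by observing that conjugation by $\epsilon$ preserves the Whittaker datum $\ww$ (because $\epsilon$ preserves the splitting) and matches the Jacquet integral for $\pi^\circ$ with that for $\epsilon\pi^\circ$, forcing the equality of the two connected local coefficients.
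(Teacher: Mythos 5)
Your proposal takes essentially the same route as the paper: reduce to the connected group, use the support decomposition $I_{P^\circ}(\pi^\circ) = I^+ \oplus I^-$ to see that both sides of the defining equation are supported on the same summand, and then observe that independence of $\omega$ boils down to the equality $C_{P^\circ}(w,\pi^\circ_\lambda) = C_{P^\circ}(w,\epsilon\pi^\circ_\lambda)$, which you propose to prove by conjugation by $\epsilon$ — this is exactly the paper's Lemma \ref{CP-eps}. One small slip: in the $\det(w)=-1$ case you justify the factorization $J_P(w,\pi_\lambda) = J_{\epsilon P\epsilon^{-1}}(v,\epsilon\pi_\lambda)\circ L(\epsilon)$ by appealing to $\epsilon N_P\epsilon^{-1}=N_P$, but $\epsilon$ preserving the splitting only forces it to permute standard parabolics, not to fix $P$ (this is exactly why the target parabolic is $\epsilon P\epsilon^{-1}$ rather than $P$); the factorization is correct anyway and needs no such hypothesis.
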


To prove this, we need the following.

\begin{lem}\label{CP-eps}
Suppose that $G^\circ = \SO_{2n}(F)$, $P \ne P^\circ$ (and hence $\epsilon P^\circ \epsilon^{-1} = P^\circ$), and $\det(w) = 1$.
Let $\pi^\circ$ be an irreducible unitary $\ww_M$-generic representation of $M^\circ$.
Then we have
\[
 C_{P^\circ}(w, \epsilon \pi^\circ_\lambda) = C_{P^\circ}(w, \pi^\circ_\lambda).
\]
\end{lem}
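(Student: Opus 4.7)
The plan is to exhibit a linear isomorphism $L(\epsilon) \colon I_{P^\circ}(\pi^\circ_\lambda) \to I_{P^\circ}(\epsilon \pi^\circ_\lambda)$ that intertwines both the (unnormalized) Jacquet operator and the Jacquet-integral Whittaker functional, after which the equality of local coefficients is forced by the defining relation. Since $\epsilon$ normalizes $P^\circ$, $M^\circ$ and $\delta_P \circ \Ad(\epsilon) = \delta_P$, I would set
\[
(L(\epsilon) f)(g) = f(\epsilon^{-1} g \epsilon), \qquad g \in G^\circ,
\]
and check directly, using $\epsilon \pi^\circ(m) = \pi^\circ(\epsilon^{-1} m \epsilon)$, that this is well defined as a map $I_{P^\circ}(\pi^\circ_\lambda) \to I_{P^\circ}(\epsilon \pi^\circ_\lambda)$.

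The crucial input is the commutation $\epsilon \tl{w} = \tl{w} \epsilon$, and analogously for the Tits liftings $\tl{w}_0$ and $\tl{w}_0'$ attached to the Whittaker normalization. Since $\epsilon$ preserves $\spl$ and fixes (as Weyl group elements) the longest elements of the $\epsilon$-stable subgroups $G^\circ$, $M^\circ$, $M'^\circ = w M^\circ w^{-1}$, as well as the class of $w$ itself, the functoriality of the Langlands–Shelstad normalization with respect to pinning-preserving automorphisms forces $\epsilon \tl{w} \epsilon^{-1} = \tl{w}$ and similarly for $\tl{w}_0$, $\tl{w}_0'$. The Haar measures on the unipotent radicals $N$, $N'$, $\overline{N}$ are determined by $\spl$ and $\psi_F$, hence are also $\epsilon$-invariant. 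A change of variable $u \mapsto \epsilon^{-1} u \epsilon$ in the defining integral of $J_{P^\circ}(w, \epsilon \pi^\circ_\lambda) \circ L(\epsilon)$ then produces
\[
J_{P^\circ}(w, \epsilon \pi^\circ_\lambda) \circ L(\epsilon) = L(\epsilon) \circ J_{P^\circ}(w, \pi^\circ_\lambda).
\]

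The same substitution, applied to the Jacquet integral defining $\Omega_\omega$ and combined with $\chi \circ \Ad(\epsilon) = \chi$ (which also ensures that the single functional $\omega$ serves as a $\ww_M$-Whittaker functional both for $\pi^\circ$ and for $\epsilon \pi^\circ$), gives without any extra factor
\[
\Omega_\omega(\epsilon \pi^\circ_\lambda) \circ L(\epsilon) = \Omega_\omega(\pi^\circ_\lambda),
\]
and likewise
\[
\Omega_\omega(w \epsilon \pi^\circ_\lambda) \circ L(\epsilon) = \Omega_\omega(w \pi^\circ_\lambda),
\]
where I use $w \epsilon \pi^\circ_\lambda = \epsilon w \pi^\circ_\lambda$, an immediate consequence of $\epsilon \tl{w} = \tl{w} \epsilon$. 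Substituting these two identities into the defining equation
\[
C_{P^\circ}(w, \pi^\circ_\lambda) \cdot \Omega_\omega(w \pi^\circ_\lambda) \circ J_{P^\circ}(w, \pi^\circ_\lambda) = \Omega_\omega(\pi^\circ_\lambda),
\]
and comparing with the analogous relation with $\pi^\circ_\lambda$ replaced by $\epsilon \pi^\circ_\lambda$, yields $C_{P^\circ}(w, \epsilon \pi^\circ_\lambda) = C_{P^\circ}(w, \pi^\circ_\lambda)$ upon cancelling the nonvanishing Whittaker functional.

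The only genuinely delicate point is the verification $\epsilon \tl{w} = \tl{w} \epsilon$, which must be traced back carefully to the splitting-preserving nature of $\epsilon$ and the explicit construction of the Langlands–Shelstad representative; once this is in hand, everything else reduces to routine substitution in the defining integrals.
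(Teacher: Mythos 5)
Your proof is correct and is essentially the same argument the paper gives. You use the translation $L(\epsilon)f(g)=f(\epsilon^{-1}g\epsilon) \colon I_{P^\circ}(\pi^\circ_\lambda)\to I_{P^\circ}(\epsilon\pi^\circ_\lambda)$, which is precisely the inverse of the map $\Ad(\epsilon)^*$ used in the paper, and you establish the same two intertwining identities (for the unnormalized operator $J_{P^\circ}$ and for the Jacquet--integral Whittaker functional) before cancelling; the only point you flag as delicate, namely $\epsilon\tl{w}=\tl{w}\epsilon$ for the Langlands--Shelstad representative, is in the paper deduced from the Weyl-group identity $\epsilon w\epsilon^{-1}=w$ together with the fact that $\epsilon$ preserves the splitting (cf.~Lemma~\ref{tits}), which is the same reasoning you invoke.
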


\begin{proof}
Since $P \ne P^\circ$, we have
\[
 \epsilon N_P \epsilon^{-1} = N_P, \quad
 \epsilon N_{P'} \epsilon^{-1} = N_{P'}, \quad
 \epsilon w \epsilon^{-1} = w, \quad
 \epsilon w_0 \epsilon^{-1} = w_0
\]
with $w_0 = w_\ell w_\ell^M$, where $w_\ell$ and $w_\ell^M$ are the longest elements in $W^{G^\circ}$ and $W^{M^\circ}$, respectively.
Fix a non-trivial $\ww_M$-Whittaker functional $\omega^\circ$ on $\pi^\circ$.
Since $\chi \circ \Ad(\epsilon) = \chi$, we may regard $\omega^\circ$ as a $\ww_M$-Whittaker functional on $\epsilon \pi^\circ$.
Then $\omega^\circ$ induces $\ww$-Whittaker functionals $\Omega(\pi^\circ_\lambda)$ and $\Omega(\epsilon \pi^\circ_\lambda)$ on $\Ind^{G^\circ}_{P^\circ}(\pi^\circ_\lambda)$ and $\Ind^{G^\circ}_{P^\circ}(\epsilon \pi^\circ_\lambda)$, respectively.
By definition, we have
\[
\Omega(\epsilon \pi^\circ_\lambda) = \Omega(\pi^\circ_\lambda) \circ \Ad(\epsilon)^*,
\]
where 
$\Ad(\epsilon)^* \colon 
\Ind^{G^\circ}_{P^\circ}(\epsilon \pi^\circ_\lambda) \rightarrow \Ind^{G^\circ}_{P^\circ}(\pi^\circ_\lambda)$ 
is the linear isomorphism given by $\Ad(\epsilon)^* f (g) = f(\epsilon g \epsilon^{-1})$.
Similarly, we have
\[
J_{P^\circ}(w, \pi^\circ_\lambda) \circ \Ad(\epsilon)^*  
= \Ad(\epsilon)^* \circ J_{P^\circ}(w, \epsilon \pi^\circ_\lambda).
\]
Hence we have
\begin{align*}
\Omega(\epsilon \pi^\circ_\lambda) & = \Omega(\pi^\circ_\lambda) \circ \Ad(\epsilon)^* \\
& = C_{P^\circ}(w, \pi_\lambda^\circ) \cdot \Omega(w \pi^\circ_\lambda) \circ J_{P^\circ}(w, \pi^\circ_\lambda) \circ \Ad(\epsilon)^* \\
& = C_{P^\circ}(w, \pi_\lambda^\circ) \cdot \Omega(w \pi^\circ_\lambda) \circ \Ad(\epsilon)^* \circ J_{P^\circ}(w, \epsilon \pi^\circ_\lambda) \\
& = C_{P^\circ}(w, \pi_\lambda^\circ) \cdot \Omega(\epsilon w \pi^\circ_\lambda) \circ J_{P^\circ}(w, \epsilon \pi^\circ_\lambda) \\
& = C_{P^\circ}(w, \pi_\lambda^\circ) \cdot \Omega(w \epsilon \pi^\circ_\lambda) \circ J_{P^\circ}(w, \epsilon \pi^\circ_\lambda).
\end{align*}
This implies the lemma.
\end{proof}

Now we prove Lemma \ref{CP-def}.

\begin{proof}[Proof of Lemma \ref{CP-def}]
First, we assume that $M \not= M^\circ$, and $\pi|_{M^\circ}$ is irreducible.
Then the existence of $C_P(w,\pi_\lambda)$ follows from Lemma \ref{Ovs.SO} (a).
Moreover, since $\omega$ is unique up to a scalar, $C_P(w,\pi_\lambda)$ does not depend on the choice of $\omega$.

Next, we assume that $M = M^\circ$, or $\pi|_{M^\circ}$ is reducible.
Fix an irreducible component $\pi^\circ$ of $\pi|_{M^\circ}$.
Note that $\pi^\circ$ is $\ww_M$-generic.
If $M=M^\circ$, then $\omega$ is unique up to a scalar.
If $M \not= M^\circ$ (so that $\epsilon \in M \setminus M^\circ$ and $\pi|_{M^\circ} = \pi^\circ \oplus \epsilon \pi^\circ$), 
then by Lemma \ref{mult2} (b), 
we may take a basis $\omega, \omega'$ of $\Hom_{U \cap M}(\pi, \chi)$ 
such that $\omega|_{\epsilon \pi^\circ} = 0$, $\omega'|_{\pi^\circ} = 0$.
We identify $\pi$ with $\Ind_{M^\circ}^M(\pi^\circ)$, 
so that $\pi^\circ$ (\resp $\epsilon \pi^\circ$) is the subspace of $\Ind_{M^\circ}^M(\pi^\circ)$ 
consisting of functions $f$ on $M$ whose supports are contained in $M^\circ$ (\resp $M \setminus M^\circ$).
Then $\omega$ can be realized by $\omega(f) = \omega^\circ(f(\1))$ for $f \in \Ind_{M^\circ}^M(\pi^\circ)$, 
where $\omega^\circ$ is a non-trivial $\ww_M$-Whittaker functional on $\pi^\circ$.
In both cases, we have $I_P(\pi) = I_{P^\circ}(\pi^\circ)$ and 
\[
 I_P(\pi)|_{G^\circ} = I_{P^\circ}^+(\pi^\circ) \oplus I_{P^\circ}^-(\pi^\circ)
\]
as in the proof of Lemma \ref{Ovs.SO}.
Then $\Omega_\omega(\pi_\lambda)$ (\resp $\Omega_\omega(\epsilon\pi_\lambda) \circ L(\epsilon)$) is a nonzero element in $\Hom_U(I_P(\pi_\lambda), \chi)$ which is identically zero on $I_{P^\circ}^-(\pi^\circ)$ (\resp $I_{P^\circ}^+(\pi^\circ)$).
In particular, $\Omega_\omega(\pi_\lambda)$ and $\Omega_\omega(\epsilon\pi_\lambda) \circ L(\epsilon)$ are linearly independent, and hence form a basis of $\Hom_U(I_P(\pi_\lambda), \chi)$ by Lemma \ref{Ovs.SO} (b).
On the other hand, $\Omega_\omega(w \pi_\lambda) \circ J_P(w,\pi_\lambda)$ is also an element in $\Hom_U(I_P(\pi_\lambda), \chi)$ (provided that $J_P(w,\pi_\lambda)$ is holomorphic at $\lambda$) which is identically zero on $I_{P^\circ}^-(\pi^\circ)$ (\resp $I_{P^\circ}^+(\pi^\circ)$) if $\det(w) = 1$ (\resp $\det(w) = -1$).
This proves the existence of the desired function $C_{P,\omega}(w,\pi_\lambda)$ with respect to $\omega$.
If $M=M^\circ$, 
then $C_{P,\omega}(w, \pi_\lambda)$ does not depend on the choice of $\omega$ 
by the uniqueness of Whittaker functionals.

\par
Finally, we assume that $M \ne M^\circ$, and $\pi|_{M^\circ}$ is reducible.
In particular, we have $\epsilon P \epsilon^{-1} = P$.
Recall the isomorphisms
\begin{align*}
I_{P^\circ}^+(\pi^\circ) \xrightarrow{\sim} \Ind_{P^\circ}^{G^\circ}(\pi^\circ),\, 
f \mapsto f|_{G^\circ}, \\
I_{P^\circ}^-(\pi^\circ) \xrightarrow{\sim} \Ind_{P^\circ}^{G^\circ}(\epsilon \pi^\circ),\, 
f \mapsto (L(\epsilon) f)|_{G^\circ}.
\end{align*}
If $\det(w) = 1$, then the restriction to $I^+_{P^\circ}(\pi^\circ)$ of the equality in the statement of the lemma yields
\[
 C_{P,\omega}(w,\pi_\lambda) = C_{P^\circ}(w, \pi_\lambda^\circ).
\]
Similarly, if $\det(w) = -1$, then the restriction to $I^-_{P^\circ}(\pi^\circ)$ yields
\[
 C_{P,\omega}(w,\pi_\lambda) = C_{P^\circ}(w \epsilon^{-1}, \epsilon \pi_\lambda^\circ),
\]
noting that $J_P(w, \pi_\lambda) = J_P(w\epsilon^{-1}, \epsilon \pi_\lambda) \circ L(\epsilon)$.
\par

Now we switch the roles of $\pi^\circ$ and $\epsilon \pi^\circ$, 
i.e., we identify $\pi$ with $\Ind_{M^\circ}^M (\epsilon \pi^\circ)$,
so that $\epsilon \pi^\circ$ (\resp $\pi^\circ$) is the subspace of $\Ind_{M^\circ}^M(\epsilon \pi^\circ)$ 
consisting of functions $f$ on $M$ whose supports are contained in $M^\circ$ (\resp $M \setminus M^\circ$).
Then $\omega'$ can be realized by $\omega'(f) = \omega^\circ(f(\1))$ for $f \in \Ind_{M^\circ}^M(\epsilon\pi^\circ)$,
where $\omega^\circ$ is now a non-trivial $\ww_M$-Whittaker functional on $\epsilon\pi^\circ$.
The same argument proves the existence of the desired function $C_{P,\omega'}(w, \pi_\lambda)$ with respect to $\omega'$ and shows that 
\[
C_{P,\omega'}(w,\pi_\lambda) = 
\left\{
\begin{aligned}
&C_{P^\circ}(w, \epsilon \pi_\lambda^\circ) \iif \det(w) = 1, \\
&C_{P^\circ}(w \epsilon^{-1}, \pi_\lambda^\circ) \iif \det(w) = -1.
\end{aligned}
\right. 
\]
\par

By Lemma \ref{CP-eps}, we have
\[
 C_{P,\omega'}(w, \pi_\lambda) = C_{P,\omega}(w, \pi_\lambda).
\]
This shows that $C_{P,\omega}(w, \pi_\lambda)$ does not depend on the choice of $\omega$ and completes the proof.
\end{proof}

In addition, from the proofs of Lemmas \ref{Ovs.SO} and \ref{CP-def}, we can deduce the following relation between local coefficients for $\O_{2n}(F)$ and those for $\SO_{2n}(F)$.

\begin{lem}\label{CPo}
Suppose that $G = \O_{2n}(F)$ and $\det(w) = 1$.
Then we have
\[
 C_P(w,\pi_\lambda) = C_{P^\circ}(w,\pi^\circ_\lambda),
\]
where $\pi^\circ$ is an arbitrary irreducible component of $\pi|_{M^\circ}$.
\end{lem}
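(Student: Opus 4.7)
The plan is to argue case by case along the trichotomy used in the proof of Lemma \ref{CP-def}, since in each case the desired identity will be extracted from the compatibilities already encoded in Lemma \ref{Ovs.SO} (comparison of induced modules and Whittaker spaces), together with the symmetry supplied by Lemma \ref{CP-eps}. The key observation enabling the comparison is that $\det(w) = 1$ forces the Tits lifting $\tl{w}$, as well as $\tl{w}_0$ with $w_0 = w_\ell w_\ell^M$, to lie in $G^\circ$, so that both the Jacquet integral defining $\Omega(\pi_\lambda)$ and the intertwining integral defining $J_P(w, \pi_\lambda)$ can be computed entirely inside $G^\circ$.

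First I would treat the case $M \ne M^\circ$ with $\pi|_{M^\circ}$ irreducible. By Lemma \ref{Ovs.SO}(a), the restriction map $f \mapsto f|_{G^\circ}$ identifies $I_P(\pi_\lambda)|_{G^\circ}$ with $\Ind_{P^\circ}^{G^\circ}(\pi^\circ_\lambda)$, and since $U \subset G^\circ$ the same $\omega$ serves as $\ww_M$-Whittaker functional for both $\pi$ and $\pi^\circ$. Under the restriction map the Jacquet integral defining $\Omega_\omega(\pi_\lambda)$ (integration over $N' \subset G^\circ$) transforms into the Jacquet integral defining $\Omega_\omega(\pi^\circ_\lambda)$, and similarly $J_P(w, \pi_\lambda)$ restricts to $J_{P^\circ}(w, \pi^\circ_\lambda)$; the defining equations of the two local coefficients therefore coincide, yielding $C_P(w, \pi_\lambda) = C_{P^\circ}(w, \pi^\circ_\lambda)$. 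The case $M = M^\circ$ is tautological, as $P = P^\circ$ and $\pi = \pi^\circ$.

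For the remaining case $M \ne M^\circ$ with $\pi|_{M^\circ}$ reducible, I would reuse the two computations carried out at the end of the proof of Lemma \ref{CP-def}. Realizing $\pi = \Ind_{M^\circ}^M(\pi^\circ)$ and restricting the defining identity of $C_P(w, \pi_\lambda)$ to $I_{P^\circ}^+(\pi^\circ) \subset I_P(\pi_\lambda)|_{G^\circ}$ gives $C_P(w, \pi_\lambda) = C_{P^\circ}(w, \pi^\circ_\lambda)$; swapping the roles of $\pi^\circ$ and $\epsilon \pi^\circ$ gives $C_P(w, \pi_\lambda) = C_{P^\circ}(w, \epsilon\pi^\circ_\lambda)$. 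These two identifications are consistent by Lemma \ref{CP-eps}, so the asserted equality holds for an arbitrary irreducible component of $\pi|_{M^\circ}$.

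There is no real obstacle here: the heavy lifting (the explicit decompositions of restricted induced representations, the intertwining of Whittaker functionals by $\Ad(\epsilon)$, and the independence of $C_P(w,\pi_\lambda)$ from the choice of Whittaker functional) has already been carried out. The only point requiring care is to verify that every relevant integration really stays within $G^\circ$ when $\det(w) = 1$, so that the restriction map commutes with the Jacquet and intertwining integrals on the nose rather than up to a twist by $\epsilon$.
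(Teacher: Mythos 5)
Your approach is the right one and matches the spirit of what the paper intends (the paper simply says the lemma follows from the proofs of Lemmas \ref{Ovs.SO} and \ref{CP-def}, and you have spelled out what that means case by case). Your treatment of the first case ($M \ne M^\circ$, $\pi|_{M^\circ}$ irreducible, using Lemma \ref{Ovs.SO}(a)) and of the third case ($M \ne M^\circ$, $\pi|_{M^\circ}$ reducible, restricting to $I^+_{P^\circ}(\pi^\circ)$ and invoking Lemma \ref{CP-eps} for consistency) are both correct and are exactly the computations carried out at the end of the proof of Lemma \ref{CP-def}.

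There is, however, an error in your dismissal of the case $M = M^\circ$ as tautological. Having $P = P^\circ$ and $\pi = \pi^\circ$ does not make the two sides identical objects: $C_P(w,\pi_\lambda)$ is defined via functionals on $I_P(\pi_\lambda) = \Ind_{P^\circ}^{G}(\pi_\lambda)$, whereas $C_{P^\circ}(w,\pi^\circ_\lambda)$ lives on $\Ind_{P^\circ}^{G^\circ}(\pi^\circ_\lambda)$, which is a proper summand. Indeed, by Lemma \ref{Ovs.SO}(b) (whose hypothesis $M = M^\circ$ is satisfied here), one has $I_P(\pi_\lambda)|_{G^\circ} = I_{P^\circ}^+(\pi_\lambda) \oplus I_{P^\circ}^-(\pi_\lambda)$ and $\dim\Hom_U(I_P(\pi_\lambda),\chi) = 2$, so the Whittaker space is strictly larger than for $G^\circ$. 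The case therefore requires the same restriction-to-$I^+_{P^\circ}$ argument that you apply in your third case: since $\det(w)=1$ forces $\tl{w}, \tl{w}_0 \in G^\circ$, both $\Omega_\omega(\pi_\lambda)$ and $\Omega_\omega(w\pi_\lambda)\circ J_P(w,\pi_\lambda)$ vanish on $I^-_{P^\circ}$, and their restriction to $I^+_{P^\circ} \cong \Ind_{P^\circ}^{G^\circ}(\pi_\lambda)$ recovers the defining identity of $C_{P^\circ}(w,\pi^\circ_\lambda)$. Once you replace "tautological" with this short argument, the proof is complete.
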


Now we assume that $\pi$ is tempered.
Let $\phi$ be the tempered $L$-parameter of $\pi$. 
To show Theorem \ref{main1}, 
it suffices to prove the following.

\begin{prop}\label{cr=1}
In the setting of Theorem \ref{main1}, 
the function $C_P(w, \pi_\lambda) \cdot r_P(w,\phi_\lambda)$
is holomorphic and equal to $1$ at $\lambda = 0$. 
\end{prop}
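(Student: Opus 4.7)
The plan is to reduce the computation to a product of standard local factors and then exhibit the exact cancellation between Shahidi's and Arthur's normalizations. Both $C_P(w,\pi_\lambda)$ and $r_P(w,\phi_\lambda)$ are multiplicative with respect to composition of Weyl elements: for $C_P$ this is Shahidi's cocycle property for local coefficients, and for $r_P$ it follows from Proposition \ref{multiplicative} together with the multiplicativity of $\lambda$- and $\gamma_A$-factors with respect to direct sums of representations of ${}^LM^\circ$. Hence I would first reduce to the situation where $w$ corresponds to a single simple reflection in the relative Weyl group, so that $P$ is maximal inside some rank-one ambient Levi and $\rho_{w^{-1}P'|P}^\vee$ decomposes into a small number of irreducible pieces $r_i$ indexed by the eigenvalues of the central torus.

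In this reduced setting, I would invoke Shahidi's formula for the local coefficient, namely
\begin{equation*}
C_P(w,\pi_\lambda) = \prod_{i} \gamma_{\mathrm{Sh}}\bigl(i\langle \tilde\alpha,\lambda\rangle,\pi,r_i,\psi_F\bigr)^{-1},
\end{equation*}
and appeal to Appendix \ref{sec.LF} (and the discussion in Section \ref{sec.gamma}) to identify each Shahidi gamma factor with the Artin--Deligne--Langlands gamma factor $\gamma(s,r_i\circ\phi,\psi_F)$ defined on the Galois side through the $L$-parameter $\phi$ of $\pi$. This converts $C_P(w,\pi_\lambda)$ into a product of Galois gamma factors that can be directly compared with the factor $\gamma_A(0,\phi_\lambda,\rho_{w^{-1}P'|P}^\vee,\psi_F)$ appearing in the definition of $r_P$. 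Since the difference between $\gamma$ and $\gamma_A$ is only a ratio of $L$-factors, and $w\pi\cong\pi$ forces the relevant $L$-factors to be symmetric under $s\mapsto -s$, the ratio is holomorphic at $\lambda=0$ by Lemma \ref{holomorphic}. The residual discrepancy between Shahidi's and the ``natural'' (Langlands--Shelstad) normalization of $J_P$ is precisely the Langlands $\lambda$-factor $\lambda(w,\psi_F)$ entering Arthur's definition of $r_P$, by the Keys--Shahidi comparison of \cite{KeSh}. Thus the product $C_P(w,\pi_\lambda)\cdot r_P(w,\phi_\lambda)$ telescopes to $1$ in the limit $\lambda\to 0$.

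To finish, I would dispose of the two remaining cases. The twisted $\GL_N(E)$ case of Theorem \ref{main1} (2) is treated in the same way after observing that all the ingredients (Shahidi's formula, the identification with Artin factors, and the Keys--Shahidi comparison) have twisted analogues for Weyl elements in $W(M,\theta(M))$, with the $\lambda$-factor now computed on the twisted root data. The $\O_{2n}(F)$ case with $\det(w)=-1$ reduces, via Lemmas \ref{CP-def} and \ref{CPo}, to the computation for $\SO_{2n}(F)$ applied to the element $w\epsilon^{-1}$, after which the same argument applies; the linear isomorphism $L(\epsilon)$ on the induced space is compatible with the Whittaker functional because $\chi\circ\mathrm{Ad}(\epsilon)=\chi$.

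The main obstacle is the careful accounting of the Langlands $\lambda$-factor $\lambda(w,\psi_F)$: it is not visible in a naive side-by-side comparison of gamma factors, and appears only after tracking the precise relation between the integral defining $J_P$ (with its Tits-lifting representative $\tl{w}$) and Shahidi's intertwining operator (with its own choice of representative and measure). Handling this uniformly for all quasi-split classical groups, including unitary groups where $\lambda(w)$ incorporates both $\lambda(F_a/F,\psi_F)$ and $\lambda(E_a/F,\psi_F)^2\lambda(F_a/F,\psi_F)^{-1}$, is the most delicate part of the argument, and rests on the generality of the Keys--Shahidi framework together with the matching of automorphic and Galois local factors established in Appendix \ref{sec.LF}.
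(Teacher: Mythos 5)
Your proposal correctly identifies the three ingredients that do appear in the paper's argument: Shahidi's formula for local coefficients, the matching of Shahidi's and Artin's gamma factors (Proposition \ref{gamma=} / Appendix \ref{sec.LF}), and the Keys--Shahidi bookkeeping of the Langlands $\lambda$-factor via the Langlands--Shelstad normalization of Weyl representatives. But the central step of the proof is missing, and the intended reduction in fact goes in the wrong direction.

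Your plan is to use multiplicativity to reduce to a single simple reflection, then argue that each rank-one contribution is $1$. This cannot work: the hypothesis $w\pi \cong \pi$ is a \emph{global} condition on $w$ and does \emph{not} descend to the individual factors $w_i$ in a reduced decomposition $w = w_n \cdots w_1$. For a single maximal step the computation in Lemmas \ref{GL-maximal} and \ref{classical-maximal} yields not $1$ but a nontrivial ratio of $L$-values at $s=1$, such as $L(1,\phi_{\pi_1}^\vee\otimes\phi_{\pi_2})/L(1,\phi_{\pi_1}\otimes\phi_{\pi_2}^\vee)$, and these factors are generically different from $1$. Your claim that "the product telescopes to $1$" is exactly the statement that needs proof, and it is precisely where the work is. The paper's actual argument (Sections \ref{sec.factorial}--\ref{sec.A27_c}) keeps the whole product $\prod_{(i,j)\in\inv(w)} A(\pi_i,\pi_j)$ (resp.~its classical-group analogue with an extra $B$-factor) and then runs a careful combinatorial cancellation: one groups the indices by the isomorphism classes of the $\tau_i$, uses the hypothesis $w\pi\cong\pi$ (or $w\pi\cong\pi\circ\theta$, or $w\epsilon\pi\cong\pi$) to produce explicit bijections between inversion sets, and shows that the exponents $n(\sigma_1,\sigma_2)-n(\sigma_2,\sigma_1)$ (and $n_2(\sigma)-n_2({}^c\sigma^\vee)$ in the classical case) vanish. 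That is the engine of the proof, and your sketch contains no substitute for it.

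Two further inaccuracies: holomorphicity at $\lambda=0$ comes for free once you see that the residual expression is a product of ratios $L(1,\cdot)/L(1,\cdot)$ of tempered $L$-factors, which are finite and nonzero at $s=1$; Lemma \ref{holomorphic} (which concerns $\gamma_A$-quotients for conjugate-self-dual parameters with matching cuspidal support) is not the right tool and is not used here. And the condition "$w\pi\cong\pi$ forces the $L$-factors to be symmetric under $s\mapsto -s$" is not a valid statement at the level of a single rank-one step; the symmetry that is actually exploited is the matching of inversion pairs $(i,j)\leftrightarrow(w^{-1}(i),w^{-1}(j))$ (and in the twisted/classical cases $(i,j)\leftrightarrow(-j,-i)$, etc.), which is a combinatorial observation about $\inv(w)$, not an analytic property of the $L$-factors.
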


We will prove Proposition \ref{cr=1} in Sections \ref{sec.A27_GL} and \ref{sec.A27_c}.
By the next lemma, which follows from the definitions, 
it suffices to consider Proposition \ref{cr=1} in the following two situations. 
\begin{enumerate}
\item
$G = G^\circ$, or $G = \O_{2n}(F)$ and $\det(w) = 1$, and $w\pi \cong \pi$; 
\item
$G = \O_{2n}(F)$, $\det(w) = 1$ and $w\epsilon\pi \cong \pi$.
\end{enumerate}

\begin{lem}\label{det=1}
Suppose that $G = \O_{2n}(F)$ and $\det(w) = -1$. 
\begin{itemize}
\item
We have 
\[
J_P(w, \pi_\lambda) = J_{\epsilon P \epsilon^{-1}}(w\epsilon^{-1}, \epsilon \pi_\lambda) \circ L(\epsilon)
\]
so that $C_P(w, \pi_\lambda) = C_{\epsilon P \epsilon^{-1}}(w\epsilon^{-1}, \epsilon\pi_\lambda)$.
\item
We have 
$r_P(w, \phi_\lambda) = r_{\epsilon P \epsilon^{-1}}(w\epsilon^{-1}, \phi_\lambda)$.
\end{itemize}
\end{lem}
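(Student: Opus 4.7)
Both parts rest on the Tits lifting identity $\tl{w} = \tl{w\epsilon^{-1}}\cdot \epsilon$, valid when $\det(w)=-1$, which gives $\tl{w}^{-1}=\epsilon^{-1}\tl{w\epsilon^{-1}}^{-1}$ and $\tl{w}N_P\tl{w}^{-1} = \tl{w\epsilon^{-1}}N_{\epsilon P\epsilon^{-1}}\tl{w\epsilon^{-1}}^{-1}$. For the first bullet, I would substitute this into the defining integral
\[
(J_P(w,\pi_\lambda)f)(g) = \int_{(\tl{w}N_P\tl{w}^{-1}\cap N_{P'})\backslash N_{P'}} f(\tl{w}^{-1}ug)\,du
\]
to rewrite the integrand as $(L(\epsilon)f)(\tl{w\epsilon^{-1}}^{-1}ug)$, and observe that the integration domain becomes exactly the one for $J_{\epsilon P\epsilon^{-1}}(w\epsilon^{-1},\epsilon\pi_\lambda)$ applied to $L(\epsilon)f$. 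The equality of $C_P$-factors then follows by composing both sides of this $J$-identity with $\Omega_\omega(w\pi_\lambda)$, applying Lemma \ref{CP-def} to the pair $(w,\pi_\lambda)$ (with $\det(w)=-1$) and separately to the pair $(w\epsilon^{-1},\epsilon\pi_\lambda)$ (with $\det(w\epsilon^{-1})=1$), using the identification $(w\epsilon^{-1})(\epsilon\pi_\lambda)=w\pi_\lambda$, and comparing.

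For the second bullet, I would check the two ingredients of $r_P$ separately. For the $\lambda$-factor, the map $a\mapsto \epsilon\cdot a$ yields a bijection $\Delta_i(w)\leftrightarrow \Delta_i(w\epsilon^{-1})$ of reduced roots: if $a\in\Delta_i(w)$ then $b=\epsilon\cdot a$ is positive with $(w\epsilon^{-1})b=wa<0$, and the type ($\SL_2$ versus $\SU(2,1)$) together with the associated fields $F_a$ and $E_a$ are preserved because $\epsilon$ is an $F$-rational pinning-preserving diagram automorphism of $G^\circ$. Hence $\lambda(w)=\lambda(w\epsilon^{-1})$. For the $\gamma_A$-factor, write $V_1$ and $V_2$ for the representation spaces of $\rho_{w^{-1}P'|P}$ and $\rho_{(w\epsilon^{-1})^{-1}P'|\epsilon P\epsilon^{-1}}$, respectively. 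The dual of the Tits identity yields $V_2=\Ad(\hat\epsilon)V_1$ as subspaces of $\Lie(\widehat{G^\circ})$, so that the two representations of ${}^LM^\circ$ differ by the automorphism $\Ad(\hat\epsilon)$; since $\hat\epsilon$ arises by duality from the $F$-rational outer involution $\epsilon$ and commutes with the Galois action, the compositions $\rho_{w^{-1}P'|P}^\vee\circ\phi_\lambda$ and $\rho_{(w\epsilon^{-1})^{-1}P'|\epsilon P\epsilon^{-1}}^\vee\circ\phi_\lambda$ are equivalent Weil--Deligne representations of $L_F$, producing equal $\gamma_A$-factors.

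The main obstacle is the last step: making precise that the $\Ad(\hat\epsilon)$-twist leaves the Artin $\gamma$-factor of $\rho^\vee\circ\phi_\lambda$ unchanged. The key point to exploit is that $\phi_\lambda$ takes values in ${}^LM^\circ\subset \widehat{G^\circ}\rtimes W_F$, while $\hat\epsilon$ is a Galois-fixed element of the enhanced dual group $\widehat{G^\circ}\rtimes\pair{\hat\epsilon}$; conjugation by $\hat\epsilon$ in this enhanced group amounts to an equivalence of Langlands parameters at the level that is relevant for Arthur's normalization, so the $\gamma_A$-factor depends only on the $\widehat{G^\circ}\rtimes\pair{\hat\epsilon}$-conjugacy class of the composed representation, which is unchanged by the $\Ad(\hat\epsilon)$-twist.
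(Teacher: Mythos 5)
Your proposal is correct in substance, and since the paper gives no proof at all for Lemma \ref{det=1} (the text simply asserts it ``follows from the definitions'' immediately before passing to the maximal case), you have filled in exactly the kind of verification the authors deferred to the reader. The identity $\tl{w}=\tl{w\epsilon^{-1}}\cdot\epsilon$ is precisely the paper's definition of the Tits lifting when $\det(w)=-1$, so $\tl{w}^{-1}=\epsilon^{-1}\tl{w\epsilon^{-1}}^{-1}$ and $\tl{w}N_P\tl{w}^{-1}=\tl{w\epsilon^{-1}}\,\epsilon N_P\epsilon^{-1}\,\tl{w\epsilon^{-1}}^{-1}$ give the change of variables you describe, and your derivation of the $C_P$-identity by applying Lemma \ref{CP-def} twice, once in the $\det=-1$ branch and once in the $\det=1$ branch, then cancelling the invertible map $L(\epsilon)$, is the right argument; it is worth remarking explicitly that regarding $\omega$ as a Whittaker functional with respect to $\ww_{\epsilon M\epsilon^{-1}}$ is legitimate because $\epsilon$ fixes $\spl$ and $\chi\circ\Ad(\epsilon)=\chi$, so the same $\omega$ underlies $\Omega_\omega(\epsilon\pi_\lambda)$ on both sides.

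For the second bullet, the $\lambda$-factor computation is fine: $a\mapsto\epsilon a$ is a bijection $\Delta_i(w)\to\Delta_i(w\epsilon^{-1})$ with $G_{\epsilon a,\sc}=\Ad(\epsilon)G_{a,\sc}$ an $F$-isomorphism (hence $F_{\epsilon a}=F_a$, $E_{\epsilon a}=E_a$), so the products defining $\lambda(w)$ and $\lambda(w\epsilon^{-1})$ agree termwise.

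The one place your write-up could be tightened is the $\gamma_A$-step, and the correct formulation is actually simpler than what you wrote. There is no need to appeal to a notion of ``conjugacy class of the composed representation'' in the enhanced dual group; the Artin $L$- and $\ep$-factors of a finite-dimensional representation of $L_F$ depend only on its isomorphism class as an abstract representation. Since $\hat\epsilon$ preserves a $\Gamma$-stable pinning of $\widehat{G^\circ}$, it is $\Gamma$-fixed; writing $\phi_\lambda^\epsilon=\Ad(\hat\epsilon)\circ\phi_\lambda$ for the parameter on ${}^L(\epsilon M\epsilon^{-1})^\circ$ (which is what the shorthand ``$\phi_\lambda$'' on the right-hand side of the lemma necessarily means), one checks directly that $\Ad(\hat\epsilon):V_1\to V_2$ intertwines $\rho_{w^{-1}P'|P}\circ\phi_\lambda$ with $\rho_{(w\epsilon^{-1})^{-1}P'|\epsilon P\epsilon^{-1}}\circ\phi_\lambda^\epsilon$, because $\rho_2(\hat\epsilon g\hat\epsilon^{-1})\Ad(\hat\epsilon)v=\Ad(\hat\epsilon)\rho_1(g)v$ for $g\in{}^LM^\circ$, $v\in V_1$. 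This makes the two composed Weil--Deligne representations isomorphic, hence their $\gamma_A$-factors equal, and avoids any hand-waving about conjugacy. Also note, as you gloss over, that $\rho_{(w\epsilon^{-1})^{-1}P'|\epsilon P\epsilon^{-1}}$ is a representation of ${}^L(\epsilon M\epsilon^{-1})^\circ$, not of ${}^LM^\circ$, so $\Ad(\hat\epsilon)$ is an isomorphism between these two $L$-groups rather than an automorphism of one; this is harmless but should be said.
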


%\subsection{The maximal case}
\subsection{The maximal case}\label{sec.maximal}
Let $P = MN_P$ and $P' = M'N_{P'}$ be standard maximal parabolic subgroups of $G$
such that $W(M^\circ, M'^\circ) \not= \emptyset$. 
From Lemma \ref{det=1}, 
we may assume that 
\begin{itemize}
\item
if $G = \O_{2k}(F)$, $M = \GL_k(F)$, and $k > 1$ is odd, 
then $P' = \epsilon P \epsilon^{-1}$ and $M' = \epsilon M \epsilon^{-1}$ so that $W(M^\circ,M'^\circ) \not= \emptyset$; 
\item
otherwise, $G = \GL_N(E)$, or $M=M'$ so that $W(M^\circ,M'^\circ) \not= \{\1\}$.
\end{itemize}
In either case, let $w$ be the unique non-trivial element in $W(M^\circ,M'^\circ)$.
\par

First suppose that $G = \GL_N(E)$ 
and $M = \GL_{N_1}(E) \times \GL_{N_2}(E)$ so that $M' = \GL_{N_2}(E) \times \GL_{N_1}(E)$.
We write $\pi = \pi_1 \boxtimes \pi_2$ 
with irreducible tempered representations $\pi_1$ and $\pi_2$ of $\GL_{N_1}(E)$ and $\GL_{N_2}(E)$, 
respectively.
If we denote the $L$-parameters of $\pi_1$ and $\pi_2$ by $\phi_{\pi_1}$ and $\phi_{\pi_2}$, respectively, 
then we denote by $L(s, \phi_{\pi_1} \otimes \phi_{\pi_2})$ and $\ep(s,\phi_{\pi_1} \otimes \phi_{\pi_2}, \psi_E)$ 
the Artin factors associated to the tensor product of the standard representations.

\begin{lem}\label{GL-maximal}
The function $C_P(w, \pi_\lambda) \cdot r_P(w, \phi_\lambda)$ 
is holomorphic and equal to 
\[
\frac{L(1, \phi_{\pi_1}^\vee \otimes \phi_{\pi_2})}{L(1, \phi_{\pi_1} \otimes \phi_{\pi_2}^\vee)}
\]
at $\lambda = 0$.
\end{lem}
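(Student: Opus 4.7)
The idea is to evaluate $r_P(w,\phi_\lambda)$ and $C_P(w,\pi_\lambda)$ separately in terms of Galois $\gamma$-factors and then apply the elementary identity
\[
 \frac{\gamma(s,\phi,\psi_E)}{\gamma_A(s,\phi,\psi_E)} = \frac{L(1-s,\phi^\vee)}{L(1+s,\phi)},
\]
which follows directly from the definitions of $\gamma$ and $\gamma_A$. Write $\pi_\lambda = \pi_1|\det|_E^{s_1}\boxtimes \pi_2|\det|_E^{s_2}$ and set $s = s_1 - s_2$, so that $\lambda=0$ corresponds to $s=0$.

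The first step is to unwind $r_P(w,\phi_\lambda)$. Since $w$ is the unique nontrivial element of $W(M^\circ,M'^\circ)$, the subspace $\Ad(\tl w)^{-1}\widehat{\nn}'$ equals $\widehat{\overline{\nn}}$ and has trivial intersection with $\widehat{\nn}$; hence $\rho_{w^{-1}P'|P}$ is the ${}^LM^\circ$-module $\widehat{\overline{\nn}}\cong \St_{N_2}\boxtimes\St_{N_1}^\vee$. Taking the contragredient and composing with $\phi_\lambda$ produces a representation of $L_E$ of dimension $N_1N_2$ isomorphic to $\phi_{\pi_1}\otimes\phi_{\pi_2}^\vee$ twisted by the shift $s$; as a representation of $L_F$ it is the corresponding induced representation. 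The set $\Delta_1(w)$ consists of exactly $N_1N_2$ reduced $F$-roots with $F_a=E$ for each, so $\lambda(w,\psi_F) = \lambda(E/F,\psi_F)^{N_1N_2}$, which by inductivity of $\epsilon$-factors cancels the $\lambda(E/F,\psi_F)^{N_1N_2}$ arising when one passes between $\gamma_A(\cdot,\psi_F)$ of the $L_F$-representation and $\gamma_A(\cdot,\psi_E)$ of the $L_E$-representation. This yields
\[
 r_P(w,\phi_\lambda) = \gamma_A\bigl(s,\phi_{\pi_1}\otimes\phi_{\pi_2}^\vee,\psi_E\bigr)^{-1}.
\]

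For the second step, Shahidi's main theorem applied to the standard induction from $\GL_{N_1}(E)\times\GL_{N_2}(E)$ identifies the local coefficient with the Rankin--Selberg $\gamma$-factor $\gamma(s,\pi_1\times\pi_2^\vee,\psi_E)$. By the local Langlands correspondence for $\GL_N$ together with the comparison of Shahidi, Jacquet--Piatetski-Shapiro--Shalika, and Artin $\gamma$-factors (see \cite{Sh4, He2}), this equals $\gamma(s,\phi_{\pi_1}\otimes\phi_{\pi_2}^\vee,\psi_E)$. Substituting both expressions and applying the identity at the beginning gives
\[
 C_P(w,\pi_\lambda)\cdot r_P(w,\phi_\lambda) = \frac{L(1-s,\phi_{\pi_1}^\vee\otimes\phi_{\pi_2})}{L(1+s,\phi_{\pi_1}\otimes\phi_{\pi_2}^\vee)},
\]
which at $\lambda=0$ (i.e.\ $s=0$) specializes to the stated ratio. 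Temperedness of $\phi_{\pi_1}$ and $\phi_{\pi_2}$ makes both $L$-values at $s=1$ finite and nonzero, ensuring holomorphy at $\lambda=0$.

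The main technical point is the bookkeeping of $\lambda(w,\psi_F)$ in the restriction-of-scalars case $E\neq F$: one must verify that Arthur's $\lambda(w,\psi_F)$ is exactly the correction needed to reconcile the $\psi_F$-normalized $\gamma_A$ of the induced representation appearing in $r_P$ with the intrinsically $\psi_E$-normalized Rankin--Selberg $\gamma$ appearing in Shahidi's local coefficient. Once this matching is secured via inductivity of $\epsilon$-factors, the remainder is a direct formal computation.
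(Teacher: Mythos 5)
Your proof is correct and follows essentially the same route as the paper: express $r_P(w,\phi_\lambda)$ as $\gamma_A(s,\phi_{\pi_1}\otimes\phi_{\pi_2}^\vee,\psi_E)^{-1}$ (after absorbing $\lambda(w,\psi_F)$ via inductivity of $\ep$-factors), identify $C_P(w,\pi_\lambda)$ with the Galois $\gamma$-factor $\gamma(s,\phi_{\pi_1}\otimes\phi_{\pi_2}^\vee,\psi_E)$ through Shahidi's formula and the comparison of Shahidi/Jacquet--Piatetski-Shapiro--Shalika/Artin factors, and multiply. The only cosmetic difference is that you package the final cancellation as the identity $\gamma/\gamma_A = L(1-s,\phi^\vee)/L(1+s,\phi)$, whereas the paper multiplies the two displayed expressions directly; the content is the same.
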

\begin{proof}
If we write $\pi_\lambda = \pi_1 |\cdot|_E^{s_1} \boxtimes \pi_2 |\cdot|_E^{s_2}$ 
with $s_1, s_2 \in \C$ and put $s=s_1-s_2$, then we have, by definition,
\[
r_P(w,\phi_\lambda) =
\frac{L(s, \phi_{\pi_1} \otimes \phi_{\pi_2}^\vee)}
{\ep(s, \phi_{\pi_1} \otimes \phi_{\pi_2}^\vee, \psi_E) L(1+s, \phi_{\pi_1} \otimes \phi_{\pi_2}^\vee)}.
\]
On the other hand, by \cite[Theorem 3.5]{Sh7} (see also Section \ref{sec.shahidi} below), 
we have
\[
C_P(w, \pi_\lambda) = 
\frac{\ep^\Sh(s, \pi_1 \times \pi_2^\vee, \psi_E) L^\Sh(1-s, \pi_1^\vee \times \pi_2)}{L^\Sh(s, \pi_1 \times \pi_2^\vee)},
\]
where the superscript Sh indicates Shahidi's local factors.
We know that 
\[
L^\Sh(s, \pi_1 \times \pi_2^\vee) = L(s, \phi_{\pi_1} \otimes \phi_{\pi_2}^\vee), 
\quad
\ep^\Sh(s, \pi_1 \times \pi_2^\vee, \psi_E) = \ep(s, \phi_{\pi_1} \otimes \phi_{\pi_2}^\vee, \psi_E)
\]
since these local factors agree with those of Jacquet--Piatetski-Shapiro--Shalika
by \cite{Sh4} and the desiderata of the local Langlands correspondence.
This implies the lemma.
\end{proof}

Next suppose that $G$ is a classical group, and write $M = \GL_k(E) \times G_0$.
Note that $M' = M$ unless $G = \O_{2k}(F)$, $M = \GL_k(F)$, and $k > 1$ is odd, 
in which case $M' = \epsilon M \epsilon^{-1}$.
We write $\pi = \tau \boxtimes \pi_0$ 
with irreducible tempered representations $\tau$ and $\pi_0$ of $\GL_k(E)$ and $G_0$, respectively.
If we denote the $L$-parameters of $\tau$ and $\pi_0$ by $\phi_\tau$ and $\phi_{\pi_0}$, respectively, 
we denote by $L(s, \phi_\tau \otimes \phi_{\pi_0})$ and $\ep(s, \phi_\tau \otimes \phi_{\pi_0}, \psi_E)$ 
the Artin factors over $E$ associated to the tensor product of the standard representations.
We also denote by $L(s, \phi_\tau, R)$ and $\ep(s,\phi_\tau, R, \psi_F)$ 
the Artin factors over $F$ associated to the representation $R$ of ${}^L\GL_k(E)$ given by 
\[
R = \left\{
\begin{aligned}
&\Sym^2 \iif G = \SO_{2n+1}(F), \\
&\wedge^2 \iif G = \Sp_{2n}(F),\, \O_{2n}(F), \\
&\Asai^+ \iif G = \U_n,\, n \equiv 0 \bmod 2, \\
&\Asai^- \iif G = \U_n,\, n \equiv 1 \bmod 2. 
\end{aligned}
\right.
\]
(See \cite[Section 7]{GGP} for the definition of the Asai representations $\Asai^+$ and $\Asai^-$.)

\begin{lem}
\label{classical-maximal}
The function $C_P(w, \pi_\lambda) \cdot r_P(w, \phi_\lambda)$ is holomorphic and equal to 
\[
\frac{L(1, \phi_\tau^\vee \otimes \phi_{\pi_0})}{L(1, \phi_\tau \otimes \phi_{\pi_0}^\vee)} 
\frac{L(1, \phi_\tau^\vee, R)}{L(1, \phi_\tau, R)}
\]
at $\lambda = 0$.
\end{lem}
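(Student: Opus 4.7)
The plan is to follow the template of Lemma~\ref{GL-maximal}, replacing the single tensor-product piece by the two irreducible summands of the adjoint representation that arise for maximal parabolics of classical groups. Parameterize $\pi_\lambda = \tau |\cdot|_E^s \boxtimes \pi_0$, so that $\lambda = 0$ corresponds to $s = 0$. I would decompose $\rho_{w^{-1}P'|P} = r_1 \oplus r_2$, where $r_1 \circ \phi \cong \phi_\tau \otimes \phi_{\pi_0}^\vee$ (viewed over $L_F$ by induction from $L_E$ in the unitary case), and $r_2 \circ \phi \cong R \circ \phi_\tau$. Using multiplicativity of $\gamma_A$ and the standard identification of $\gamma_A$-factors of induced representations, the definition of $r_P$ expands as
\[
r_P(w, \phi_\lambda) = \lambda(w, \psi_F) \cdot \gamma_A(s, \phi_\tau \otimes \phi_{\pi_0}^\vee, \psi_E)^{-1} \cdot \gamma_A(2s, \phi_\tau, R, \psi_F)^{-1}.
\]

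Next I would invoke Shahidi's formula \cite[Theorem~3.5]{Sh7} for maximal parabolics of classical groups, which expresses $C_P(w, \pi_\lambda)$ as an analogous product of Shahidi's $\gamma$-factors attached to the same two pieces, with a compensating $\lambda$-factor. The matching between Shahidi's local factors and Artin/Langlands local factors for tempered representations of $\GL_k(E) \times G_0$, established in Appendix~\ref{sec.LF}, lets me replace $L^{\Sh}$ and $\ep^{\Sh}$ throughout by their Galois-side counterparts. Forming the product $C_P(w, \pi_\lambda) \cdot r_P(w, \phi_\lambda)$, the $\lambda(w, \psi_F)$-factors cancel (they enter with opposite exponents in Arthur's and Shahidi's normalizations), and the $\ep$-factors cancel as well (using the functional equation $\ep(s,\phi,\psi)\ep(-s,\phi^\vee,\psi) = 1$ together with the standard identities for $R = \Sym^2, \wedge^2, \Asai^{\pm}$). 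What remains is a ratio of $L$-values; specializing to $s = 0$ produces the displayed ratio, and holomorphy follows from finiteness of the tempered $L$-values appearing in the denominators at $s = 1$.

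The main obstacle will be the careful bookkeeping of the $\lambda(w, \psi_F)$-factor in the unitary case: Arthur's normalization contributes a factor $\lambda(E_a/F, \psi_F)^2 \lambda(F_a/F, \psi_F)^{-1}$ from each $\SU$-type root, whereas Shahidi's normalization produces its own Langlands $\lambda$-factor with different Haar-measure conventions on $E$ versus $F$; one must verify their exact cancellation. A secondary subtlety is the $G = \O_{2n}(F)$ case with $P' = \epsilon P \epsilon^{-1}$: by Lemma~\ref{CPo} the local coefficient reduces to the $\SO_{2n}(F)$-computation, and one must check that $r_P$ descends compatibly under this reduction, which follows from the fact that both $\rho_{w^{-1}P'|P}$ and $\lambda(w, \psi_F)$ are insensitive to replacing $w$ by $w\epsilon^{-1}$ on the dual side.
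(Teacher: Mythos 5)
Your proposal follows the same route as the paper: parameterize $\pi_\lambda = \tau|\cdot|_E^s \boxtimes \pi_0$, decompose the adjoint representation $\rho_{w^{-1}P'|P}$ into the Rankin--Selberg piece and the $R$-piece, expand $r_P$ in terms of $\gamma_A$-factors and $C_P$ via Shahidi's formula, match Shahidi's local factors with Artin factors, and take the product. The overall architecture and the final holomorphy/value at $s=0$ are correct.

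Two small imprecisions are worth flagging. First, you say the $\ep$-factors ``cancel as well (using the functional equation $\ep(s,\phi,\psi)\ep(-s,\phi^\vee,\psi) = 1$\ldots)''; this is not how the cancellation works. Once Shahidi's $\ep^{\Sh}(s,\cdot)$ is identified with the Artin factor $\ep(s,\cdot)$ (as in Proposition~\ref{gamma=} and the references for $R$), the $\ep$-factor in $r_P(w,\phi_\lambda)$ sits in the denominator and the \emph{same} $\ep$-factor appears in the numerator of $C_P(w,\pi_\lambda)$, so they cancel term-by-term at the same argument $s$ (and $2s$) --- no functional equation is invoked. (The form you quote, $\ep(s,\phi,\psi)\ep(-s,\phi^\vee,\psi)=1$, is also misstated; the correct relation is $\ep(s,\phi,\psi)\ep(1-s,\phi^\vee,\psi)=1$.) Second, Appendix~\ref{sec.LF} (i.e.\ Proposition~\ref{gamma=}) handles only the Rankin--Selberg piece $\phi_\tau \otimes \phi_{\pi_0}^\vee$; the matching of Shahidi and Artin local factors for the piece $R \in \{\Sym^2, \wedge^2, \Asai^\pm\}$ is a separate result, for which the paper cites Henniart, Cogdell--Shahidi--Tsai, and Shankman. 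You also do not mention the intermediate bookkeeping of the factor $\lambda_0 = \lambda(E/F,\psi_F)^{kn_0}$ relating $\ep(s,\phi_\pi,\St\otimes\St^\vee,\psi_F)$ over $F$ to $\ep(s,\phi_\tau\otimes\phi_{\pi_0}^\vee,\psi_E)$ over $E$ in the unitary case (this cancels against its mirror in the Shahidi side, and is distinct from the $\lambda(w,\psi_F)$ factor you do discuss). None of these are structural gaps, but they should be made explicit.
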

\begin{proof}
If we write $\pi_\lambda = \tau|\cdot|_E^s \boxtimes \pi_0$ with $s\in \C$, 
then we have by definition
\begin{align*}
r_P(w,\phi_\lambda) = 
\lambda(w) 
&\times
\frac{L(s, \phi_\pi, \St \otimes \St^\vee)}
{\ep(s, \phi_\pi, \St \otimes \St^\vee, \psi_F) L(1+s, \phi_\pi, \St \otimes \St^\vee)} 
\\&\times
\frac{L(2s, \phi_\tau, R)}{\ep(2s, \phi_\tau, R, \psi_F) L(1+2s, \phi_\tau, R)}.
\end{align*}
Here $L(s, \phi_\pi, \St \otimes \St^\vee)$ and $\ep(s, \phi_\pi, \St \otimes \St^\vee, \psi_F)$
are the Artin factors associated to the $L$-parameter $\phi_\pi$ of $\pi$ and 
the tensor product representation $\St \otimes \St^\vee$, 
where $\St$ is the standard representation of ${}^L\GL_N(E)$ or ${}^L G_0^\circ$.
Note that 
\begin{align*}
L(s, \phi_\pi, \St \otimes \St^\vee) &= L(s, \phi_\tau \otimes \phi_{\pi_0}^\vee), 
\\
\ep(s, \phi_\pi, \St \otimes \St^\vee, \psi_F) &= \lambda_0 \cdot \ep(s, \phi_\tau \otimes \phi_{\pi_0}^\vee, \psi_E),
\end{align*}
where $\lambda_0 = 1$ unless $[E:F] = 2$ (so that $G = \U_n$ and $M = \GL_{k}(E) \times \U_{n_0}$), 
in which case, 
\[
\lambda_0 = \lambda(E/F, \psi_F)^{k n_0}.
\]
See e.g., \cite[Section 5.6]{D}.
On the other hand, by \cite[Theorem 3.5]{Sh7} (see also Section \ref{sec.shahidi} below), 
we have
\begin{align*}
C_P(w, \pi_\lambda) = \lambda(w)^{-1} 
&\times\frac{\ep^{\Sh}(s, \pi, \St \otimes \St^\vee, \psi_F) L^{\Sh}(1-s, \pi^\vee, \St \otimes \St^\vee)}
{L^{\Sh}(s, \pi, \St \otimes \St^\vee)} 
\\&\times
\frac{\ep^{\Sh}(2s, \tau, R, \psi_F) L^{\Sh}(1-2s, \tau^\vee, R)}{L^{\Sh}(2s, \tau, R)},
\end{align*}
where the superscript $\Sh$ indicates Shahidi's local factors. 
Here, when $G = \O_{2n}(F)$, we have $\det(w) = 1$ by the assumption at the beginning of this subsection, 
and by Lemma \ref{CPo}, the above equality holds if we set
\begin{align*}
L^{\Sh}(s, \pi, \St \otimes \St^\vee) & = L^{\Sh}(s, \pi^\circ, \St \otimes \St^\vee), \\
\ep^{\Sh}(s, \pi, \St \otimes \St^\vee, \psi_F) & = \ep^{\Sh}(s, \pi^\circ, \St \otimes \St^\vee, \psi_F),
\end{align*}
where $\pi^\circ$ is an arbitrary irreducible component of $\pi|_{M^\circ}$.
Putting 
\begin{align*}
L^{\Sh}(s, \tau \times \pi_0^\vee) & = L^{\Sh}(s, \pi, \St \otimes \St^\vee), \\
\ep^{\Sh}(s, \tau \times \pi_0^\vee, \psi_E) 
&= \lambda_0^{-1} \cdot \ep^{\Sh}(s, \pi, \St \otimes \St^\vee, \psi_F), 
\end{align*}
by Proposition \ref{gamma=} below, we have
\[
L^{\Sh}(s, \tau \times \pi_0^\vee) = L(s, \phi_\tau \otimes \phi_{\pi_0}^\vee), 
\quad
\ep^{\Sh}(s, \tau \times \pi_0^\vee, \psi_E) = \ep(s, \phi_\tau \otimes \phi_{\pi_0}^\vee, \psi_E).
\]
Also, by \cite{He4}, \cite{CST}, \cite{Shan}, \cite{He5}, we have
\[
L^{\Sh}(s, \tau, R) = L(s, \phi_\tau, R), 
\quad
\ep^{\Sh}(s, \tau, R, \psi_F) = \ep(s, \phi_\tau, R, \psi_F).
\]
This implies the lemma.
\end{proof}

%\subsection{Preliminary to the general case}
\subsection{Preparation for the general case}\label{sec.factorial}
Now we consider the general case.
Recall that $U$ is the unipotent radical of the Borel subgroup $B^\circ$.
Let $P = MN_P$ and $P' = M'N_{P'}$ be standard parabolic subgroups of $G$ 
such that $W(M^\circ,M'^\circ)$ has an element $w$ whose representatives lie in $G^\circ$. 
Then by \cite[Lemma 2.1.2]{Sh2}, 
we may take standard parabolic subgroups $P_i = M_i N_i$ of $G$ for $1 \leq i \leq n+1$ 
with the following properties:
\begin{itemize}
\item 
$P = P_1$ and $P' = P_{n+1}$;
\item 
for each $1 \leq i \leq n$, 
there exists a semi-standard Levi subgroup $G_i$ of $G$ containing $M_i$ 
such that $P_i \cap G_i$ is a maximal parabolic subgroup of $G_i$ 
and such that the element $w_i = w_\ell^{G_i^\circ} w_\ell^{M_i^\circ}$ 
belongs to $W(M_i^\circ,M_{i+1}^\circ)$, 
where $w_\ell^{\bullet}$ is the longest element in $W^\bullet$;
\item 
$w = w_n \cdots w_1$;
\item 
for each $1 \leq i \leq n$, we have
\[
\overline{\nn}_{w_i'} = \overline{\nn}_{w_i} \oplus \Ad(\tl{w}_i)^{-1} \overline{\nn}_{w_{i+1}'},
\]
where $\overline{\nn}_w$ is the Lie algebra of 
$\overline{N}_w = \overline{N}_P \cap \tl{w}^{-1} U \tl{w}$ for $w \in W^{G^\circ}$, 
and $w_i' = w_n \cdots w_i$ (with the convention that $w_{n+1}' = \1$).
\end{itemize}
This gives rise to a factorization of the intertwining operator 
\[
J_P(w, \pi_\lambda) = 
J_{P_n}(w_n, w_{n-1} \cdots w_1 \pi_\lambda) \circ \dots \circ 
J_{P_2}(w_2, w_1 \pi_\lambda) \circ J_{P_1}(w_1, \pi_\lambda)
\]
(see \cite[Theorem 2.1.1]{Sh2})
and hence of the local coefficient
\[
C_P(w, \pi_\lambda) = \prod_{i=1}^n C_{P_i}(w_i, w_{i-1} \cdots w_1 \pi_\lambda)
\]
(see \cite[Proposition 3.2.1]{Sh2}).
This also gives rise to a factorization of the $\lambda$-factor
\[
\lambda(w) = \prod_{i=1}^n \lambda(w_i).
\]
Moreover, since 
\[
\widehat{\overline{\nn}}_w = 
\widehat{\overline{\nn}}_{w_1} \oplus \Ad(\tl{w}_1)^{-1} \widehat{\overline{\nn}}_{w_2} \oplus 
\cdots \oplus \Ad(\tl{w}_{n-1} \cdots \tl{w}_1)^{-1} \widehat{\overline{\nn}}_{w_n}, 
\]
we have
\begin{align*}
L(s, \pi_\lambda, \rho_{w^{-1}P' | P}) 
&= \prod_{i=1}^n L(s, w_{i-1} \cdots w_1 \pi_\lambda, \rho^\vee_{w_i^{-1} P_{i+1} | P_i}), \\
\ep(s, \pi_\lambda, \rho_{w^{-1}P' | P}, \psi_F)
&= \prod_{i=1}^n \ep(s, w_{i-1} \cdots w_1 \pi_\lambda, \rho^\vee_{w_i^{-1} P_{i+1} | P_i}, \psi_F).
\end{align*}
Hence we have
\[
r_P(w, \phi_\lambda) = \prod_{i=1}^n r_{P_i}(w_i, w_{i-1} \cdots w_1 \phi_\lambda).
\]
\par

Using that the local coefficient $C_{P_i}(w_i, w_{i-1} \cdots w_1 \pi_\lambda)$ 
and the normalizing factor $r_{P_i}(w_i, w_{i-1} \cdots w_1 \phi_\lambda)$ 
agree with those for the intertwining operator 
\[
\Ind^{G_i}_{P_i}(w_{i-1} \cdots w_1 \pi_\lambda) \rightarrow \Ind^{G_i}_{P_{i+1}}(w_i \cdots w_1 \pi_\lambda),
\] 
and that $G_i$ is the product of general linear groups and a (possibly trivial) classical group, 
we can use these computations to attack Proposition \ref{cr=1} in general.
We will see it in the next two subsections.

%\subsection{The case of general linear groups}
\subsection{The case of general linear groups}\label{sec.A27_GL}
Suppose that $G = \GL_N(E)$ and $M = \GL_{n_1}(E) \times \cdots \times \GL_{n_m}(E)$. 
Put $I = \{1, \dots, m\}$.
Write $\pi = \pi_1 \boxtimes \cdots \boxtimes \pi_m$ 
with irreducible tempered representations $\pi_1, \dots, \pi_m$ of $\GL_{n_1}(E), \dots, \GL_{n_m}(E)$, respectively.
We denote by $\phi_{\pi_i}$ the $L$-parameter of $\pi_i$. 
We regard $w$ as an automorphism of $I$ 
such that $w \pi = \pi_{w^{-1}(1)} \boxtimes \cdots \boxtimes \pi_{w^{-1}(m)}$.
Then by Lemma \ref{GL-maximal} and Section \ref{sec.factorial}, 
the function $C_P(w, \pi_\lambda) \cdot r_P(w, \phi_\lambda)$ is holomorphic and equal to
\[
\prod_{(i,j) \in \inv(w)} A(\pi_i, \pi_j)
\]
at $\lambda = 0$, 
where we set 
\[
\inv(w) = \{ (i,j) \in I \times I \,| \, i<j, \, w(i)> w(j) \}
\]
and 
\[
A(\pi_i, \pi_j) = \frac{L(1, \phi_{\pi_i}^\vee \otimes \phi_{\pi_j})}{L(1, \phi_{\pi_i} \otimes \phi_{\pi_j}^\vee)}.
\]
We may write
\[
\prod_{(i,j) \in \inv(w)} A(\pi_i, \pi_j) 
= 
\prod_{(\sigma_1, \sigma_2)} A(\sigma_1, \sigma_2)^{n(\sigma_1,\sigma_2)}, 
\]
where $(\sigma_1, \sigma_2)$ runs over ordered pairs of irreducible tempered representations of general linear groups and 
\[
 n(\sigma_1, \sigma_2) = |(I(\sigma_1) \times I(\sigma_2)) \cap \inv(w)|
\]
with $I(\sigma_k) = \{ i \in I \,|\, \pi_i \cong \sigma_k \}$.
Since $A(\sigma_1, \sigma_2) = 1$ if $\sigma_1 \cong \sigma_2$ 
and $A(\sigma_1, \sigma_2) A(\sigma_2, \sigma_1) = 1$, 
we have
\[
\prod_{(\sigma_1, \sigma_2)} A(\sigma_1, \sigma_2)^{n(\sigma_1,\sigma_2)}
= 
\prod_{\{ \sigma_1, \sigma_2 \}} A(\sigma_1, \sigma_2)^{n(\sigma_1,\sigma_2) - n(\sigma_2, \sigma_1)},
\]
where $\{ \sigma_1, \sigma_2 \}$ runs over unordered pairs.
\par

\begin{proof}[Proof of Theorem \ref{main1} (1) for $G=\GL_N(E)$] 
Assume that $P=P'$ and $w \pi \cong \pi$.
It suffices to show that
\[
n(\sigma_1,\sigma_2) = n(\sigma_2, \sigma_1). 
\]
We consider the set $I_0(\sigma_1, \sigma_2) = I_1(\sigma_1, \sigma_2) \sqcup I_2(\sigma_1, \sigma_2)$
with
\begin{align*}
I_0(\sigma_1, \sigma_2) &= \{ (i,j) \in I(\sigma_1) \times I(\sigma_2) \,|\, i < j\}, \\
I_1(\sigma_1, \sigma_2) &= \{ (i,j) \in I(\sigma_1) \times I(\sigma_2) \,|\, i < j, \, w(i) < w(j) \}, \\
I_2(\sigma_1, \sigma_2) &= \{ (i,j) \in I(\sigma_1) \times I(\sigma_2) \,|\, i < j, \, w(i) > w(j) \}.
\end{align*}
Since $w \pi \cong \pi$, we have $\pi_{w^{-1}(i)} \cong \pi_i$ for $1 \leq i \leq m$. 
Hence the map $(i,j) \mapsto (w^{-1}(i),w^{-1}(j))$ gives a bijection 
\[
I_0(\sigma_1, \sigma_2) 
\xrightarrow{1:1} 
I_0'(\sigma_1, \sigma_2) = \{ (i,j) \in I(\sigma_1) \times I(\sigma_2) \,|\, w(i)<w(j) \}.
\]
Note that $I_0'(\sigma_1, \sigma_2) = I_1(\sigma_1, \sigma_2) \sqcup I_2'(\sigma_1, \sigma_2)$ with 
\[
I'_2(\sigma_1, \sigma_2) =  \{ (i,j) \in I(\sigma_1) \times I(\sigma_2) \,|\, i>j, \, w(i) < w(j) \}. 
\]
Since the map $(i,j) \mapsto (j,i)$ gives a bijection $I'_2(\sigma_1, \sigma_2) \xrightarrow{1:1} I_2(\sigma_2, \sigma_1)$, 
we have
\[
n(\sigma_1,\sigma_2) = |I_2(\sigma_1,\sigma_2)| = |I'_2(\sigma_1, \sigma_2)| = n(\sigma_2, \sigma_1).
\]
This completes the proof of Proposition \ref{cr=1} in this case, 
and hence Theorem \ref{main1} (1) for $G=\GL_N(E)$. 
\end{proof}
\par

Next, we consider Theorem \ref{main1} (2).
For a representation $\sigma$ of $\GL_N(E)$, 
define its conjugate ${}^c\sigma$ by ${}^c\sigma(x) = \sigma(\overline{x})$. 
Hence $\sigma \circ \theta \cong {}^c\sigma^\vee$ if $\sigma$ is irreducible. 
Since $L(s, {}^c\phi_{\sigma_1} \otimes {}^c\phi_{\sigma_2}^\vee) = L(s, \phi_{\sigma_1} \otimes \phi_{\sigma_2}^\vee)$, 
we have $A({}^c\sigma_2^\vee, {}^c\sigma_1^\vee) = A(\sigma_1, \sigma_2)$, so that 
\[
\prod_{(\sigma_1, \sigma_2)} A(\sigma_1, \sigma_2)^{n(\sigma_1,\sigma_2)}
=
\prod_{[\sigma_1, \sigma_2]} A(\sigma_1, \sigma_2)^{n'(\sigma_1,\sigma_2)/e(\sigma_1,\sigma_2)},
\]
where $[\sigma_1,\sigma_2]$ runs over orbits under the action 
$(\sigma_1, \sigma_2) \mapsto ({}^c\sigma_2^\vee, {}^c\sigma_1^\vee)$ and 
\begin{align*}
n'(\sigma_1,\sigma_2) & = n(\sigma_1,\sigma_2) + n({}^c\sigma_2^\vee, {}^c\sigma_1^\vee), \\
e(\sigma_1,\sigma_2) & = \frac{2}{|[\sigma_1, \sigma_2]|}.
\end{align*}
Moreover, since 
\begin{itemize}
\item
$A(\sigma_1, \sigma_2) = 1$ if $[\sigma_1, \sigma_2] = [\sigma_2, \sigma_1]$; 
\item
$A(\sigma_1, \sigma_2) A(\sigma_2, \sigma_1) = 1$; 
\item
$e(\sigma_1,\sigma_2) = e(\sigma_2,\sigma_1)$,
\end{itemize}
we have
\[
\prod_{[\sigma_1, \sigma_2]} A(\sigma_1, \sigma_2)^{n'(\sigma_1,\sigma_2)/e(\sigma_1,\sigma_2)}
= 
\prod_{[[\sigma_1, \sigma_2]]} 
A(\sigma_1, \sigma_2)^{(n'(\sigma_1,\sigma_2) - n'(\sigma_2,\sigma_1))/e(\sigma_1, \sigma_2)},
\]
where $[[\sigma_1, \sigma_2]]$ runs over orbits under the action $[\sigma_1, \sigma_2] \mapsto [\sigma_2, \sigma_1]$. 
\par

\begin{proof}[Proof of Theorem \ref{main1} (2)]
Assume that $w \pi \cong \pi \circ \theta$.
This is equivalent to saying that $\pi_{w^{-1}(i)} \cong {}^c\pi_{m+1-i}^\vee$ for $1 \leq i \leq m$.
It suffices to show that
\[
n'(\sigma_1,\sigma_2) = n'(\sigma_2, \sigma_1). 
\]
Put
\begin{align*}
I_1(\sigma_1, \sigma_2) &= \{ (i,j) \in I(\sigma_1) \times I(\sigma_2) \,|\, i < j, \, w(i) < w(j)\}, \\
I_2(\sigma_1, \sigma_2) &= \{ (i,j) \in I(\sigma_1) \times I(\sigma_2) \,|\, i < j, \, w(i) > w(j)\}, \\
I_3(\sigma_1, \sigma_2) &= \{ (i,j) \in I(\sigma_1) \times I(\sigma_2) \,|\, i > j, \, w(i) < w(j)\}.
\end{align*}
Then the map $(i,j) \mapsto (i',j') = (w^{-1}(m+1-j), w^{-1}(m+1-i))$ gives a bijection 
\begin{align*}
\{ (i,j) \in I(\sigma_1) \times I(\sigma_2) \,|\, i < j\} 
\xrightarrow{1:1}&
\{(i',j') \in I({}^c\sigma_2^\vee) \times I({}^c\sigma_1^\vee) \,|\, w(i') < w(j')\}.
\end{align*}
Note that the left-hand side is $I_1(\sigma_1, \sigma_2) \sqcup I_2(\sigma_1, \sigma_2)$, 
whereas the right-hand side is 
$I_1({}^c\sigma_2^\vee, {}^c\sigma_1^\vee) \sqcup I_3({}^c\sigma_2^\vee, {}^c\sigma_1^\vee)$. 
Hence
\[
|I_1(\sigma_1, \sigma_2)|+|I_2(\sigma_1, \sigma_2)| 
= |I_1({}^c\sigma_2^\vee, {}^c\sigma_1^\vee)|+|I_3({}^c\sigma_2^\vee, {}^c\sigma_1^\vee)|.
\]
This implies that
\begin{align*}
&|I_1(\sigma_1, \sigma_2)|+|I_2(\sigma_1, \sigma_2)|
+|I_1({}^c\sigma_2^\vee, {}^c\sigma_1^\vee)|+|I_2({}^c\sigma_2^\vee, {}^c\sigma_1^\vee)|
\\&=
|I_1({}^c\sigma_2^\vee, {}^c\sigma_1^\vee)|+|I_3({}^c\sigma_2^\vee, {}^c\sigma_1^\vee)|
+|I_1(\sigma_1, \sigma_2)|+|I_3(\sigma_1, \sigma_2)|
\end{align*}
so that 
\begin{align*}
n'(\sigma_1, \sigma_2) 
&= 
|I_2(\sigma_1, \sigma_2)|+|I_2({}^c\sigma_2^\vee, {}^c\sigma_1^\vee)|
\\&= 
|I_3(\sigma_1, \sigma_2)|+|I_3({}^c\sigma_2^\vee, {}^c\sigma_1^\vee)|
= n'(\sigma_2, \sigma_1), 
\end{align*}
where the last equation follows from the map $(i,j) \mapsto (j,i)$. 
This completes the proof of Proposition \ref{cr=1} in this case, 
and hence Theorem \ref{main1} (2). 
\end{proof}

%\subsection{The case of classical groups}
\subsection{The case of classical groups}\label{sec.A27_c}
We now consider Theorem \ref{main1} (1) for classical groups. 
Suppose that $G$ is a classical group
and $M = \GL_{n_1}(E) \times \cdots \times \GL_{n_m}(E) \times G_0$.
As explained after Proposition \ref{cr=1}, by Lemma \ref{det=1},
it suffices to consider the following two situations. 
\begin{enumerate}
\item
$G = G^\circ$, or $G = \O_{2n}(F)$ and $\det(w) = 1$, and $w\pi \cong \pi$; 
\item
$G = \O_{2n}(F)$, $\det(w) = 1$ and $w\epsilon\pi \cong \pi$.
\end{enumerate}
\par

We consider the general situation for a moment. 
Put $I^+ = \{ 1, \dots, m \}$, $I^- = \{ -1, \dots, -m \}$, and $I = I^+ \sqcup I^-$.
Write $\pi = \tau_1 \boxtimes \cdots \boxtimes \tau_m \boxtimes \pi_0$ 
with irreducible tempered representations $\tau_1, \dots, \tau_m, \pi_0$ 
of $\GL_{n_1}(E), \dots, \GL_{n_m}(E), G_0$, respectively, 
and put $\tau_{-i} = {}^c\tau_i^\vee$ for $i \in I^+$.
We denote by $\phi_{\tau_i}$ (\resp $\phi_{\pi_0}$)
the $L$-parameter of $\tau_i$ (\resp $\pi_0$).
We regard $w$ as an automorphism of $I$ such that $w(-i) = -w(i)$ for all $i \in I$ 
and such that $w \pi = \tau_{w^{-1}(1)} \boxtimes \cdots \boxtimes \tau_{w^{-1}(m)} \boxtimes \pi_0$.
Then by Lemmas \ref{GL-maximal}, \ref{classical-maximal} and Section \ref{sec.factorial}, 
the function $C_P(w, \pi_\lambda) \cdot r_P(w,\phi_\lambda)$ is holomorphic and equal to the product 
\[
\left(\prod_{(i,j) \in \inv_1(w)} A(\tau_i, \tau_j)\right) 
\left(\prod_{i \in \inv_2(w)} B(\tau_i, \pi_0)\right) 
\]
at $\lambda = 0$, 
where
\begin{align*}
\inv_1(w) & = \{ (i, j) \in I^+ \times I \,|\, i < |j|, \, w(i) > 0, \, w(j) > 0, \, w(i) > w(j) \} \\
&\sqcup \{ (i, j) \in I^+ \times I \,|\, i < |j|, \, w(i) < 0, \, w(j) > 0 \} \\
&\sqcup \{ (i, j) \in I^+ \times I \,|\, i < |j|, \, w(i) < 0, \, w(j) < 0, \, |w(i)| < |w(j)| \}, \\
\inv_2(w) & = \{ i \in I^+ \,|\, w(i) < 0 \}
\end{align*}
and
\[
A(\tau_i, \tau_j) = \frac{L(1, \phi_{\tau_i}^\vee \otimes \phi_{\tau_j})}{L(1, \phi_{\tau_i} \otimes \phi_{\tau_j}^\vee)}, \quad
B(\tau_i, \pi_0) = \frac{L(1, \phi_{\tau_i}^\vee \otimes \phi_{\pi_0})}{L(1, \phi_{\tau_i} \otimes \phi_{\pi_0}^\vee)} 
\frac{L(1, \phi_{\tau_i}^\vee, R)}{L(1, \phi_{\tau_i}, R)}.
\]
First, we may write 
\[
\prod_{(i,j) \in \inv_1(w)} A(\tau_i, \tau_j)
= 
\prod_{(\sigma_1, \sigma_2)} A(\sigma_1, \sigma_2)^{n_1(\sigma_1,\sigma_2)},
\]
where $(\sigma_1, \sigma_2)$ runs over pairs of irreducible tempered representations of general linear groups and 
\[
n_1(\sigma_1, \sigma_2) = |(I(\sigma_1) \times I(\sigma_2)) \cap \inv_1(w)|
\]
with $I(\sigma) = \{ i \in I \,|\, \tau_i \cong \sigma \}$.
Since $L(s, {}^c\phi_{\sigma_1} \otimes {}^c\phi_{\sigma_2}^\vee) = L(s, \phi_{\sigma_1} \otimes \phi_{\sigma_2}^\vee)$, 
we have $A({}^c\sigma_2^\vee, {}^c\sigma_1^\vee) = A(\sigma_1, \sigma_2)$, 
so that 
\[
\prod_{(\sigma_1, \sigma_2)} A(\sigma_1, \sigma_2)^{n_1(\sigma_1,\sigma_2)} 
=
\prod_{[\sigma_1, \sigma_2]} A(\sigma_1, \sigma_2)^{n_1'(\sigma_1,\sigma_2)/e(\sigma_1,\sigma_2)},
\]
where $[\sigma_1,\sigma_2]$ runs over orbits under the action 
$(\sigma_1, \sigma_2) \mapsto ({}^c\sigma_2^\vee, {}^c\sigma_1^\vee)$ 
and 
\begin{align*}
 n_1'(\sigma_1,\sigma_2) & = n_1(\sigma_1,\sigma_2) + n_1({}^c\sigma_2^\vee, {}^c\sigma_1^\vee), \\
 e(\sigma_1,\sigma_2) & = \frac{2}{|[\sigma_1, \sigma_2]|}.
\end{align*}
Moreover, since 
\begin{itemize}
\item
$A(\sigma_1, \sigma_2) = 1$ if $[\sigma_1, \sigma_2] = [\sigma_2, \sigma_1]$; 
\item
$A(\sigma_1, \sigma_2) A(\sigma_2, \sigma_1) = 1$;
\item
$e(\sigma_1,\sigma_2) = e(\sigma_2,\sigma_1)$, 
\end{itemize}
we have
\[
\prod_{[\sigma_1, \sigma_2]} A(\sigma_1, \sigma_2)^{n_1'(\sigma_1,\sigma_2)/e(\sigma_1,\sigma_2)}
=
\prod_{[[\sigma_1, \sigma_2]]} 
A(\sigma_1, \sigma_2)^{(n_1'(\sigma_1,\sigma_2) - n_1'(\sigma_2,\sigma_1))/e(\sigma_1, \sigma_2)},
\]
where $[[\sigma_1, \sigma_2]]$ runs over orbits under the action $[\sigma_1, \sigma_2] \mapsto [\sigma_2, \sigma_1]$.
Similarly, we may write 
\[
\prod_{i \in \inv_2(w)} B(\tau_i, \pi_0) 
= 
\prod_\sigma B(\sigma, \pi_0)^{n_2(\sigma)},
\]
where $\sigma$ runs over irreducible tempered representations of general linear groups and 
\[
 n_2(\sigma) = |I(\sigma) \cap \inv_2(w)|.
\]
Since $L(s, {}^c\phi_{\sigma} \otimes \phi_{\pi_0}) = L(s, \phi_{\sigma} \otimes \phi_{\pi_0}^\vee)$ 
and $L(s, {}^c\phi_{\sigma}, R) = L(s, \phi_{\sigma}, R)$, 
we have $B(\sigma, \pi_0) = 1$ if ${}^c\sigma^\vee \cong \sigma$, and $B(\sigma, \pi_0) B({}^c\sigma^\vee, \pi_0) = 1$, 
so that
\[
\prod_\sigma B(\sigma, \pi_0)^{n_2(\sigma)}
=
\prod_{[\sigma]} B(\sigma, \pi_0)^{n_2(\sigma) - n_2({}^c\sigma^\vee)},
\]
where $[\sigma]$ runs over orbits under the action $\sigma \mapsto {}^c\sigma^\vee$.

\begin{proof}[Proof of Theorem \ref{main1} (1) for classical groups $G$]
Assume that $w \pi \cong \pi$, or $w\epsilon \pi \cong \pi$ (in which case, $G=\O_{2n}(F)$).
It suffices to show that 
\[
n_1'(\sigma_1,\sigma_2) = n_1'(\sigma_2, \sigma_1), 
\quad 
n_2(\sigma) = n_2({}^c\sigma^\vee). 
\]
First we consider the set 
\[
I^\delta(\sigma_1,\sigma_2) = \{(i,j) \in I(\sigma_1) \times I(\sigma_2) \,|\, i>0, \,\delta j>0, \, i<|j| \} 
= \bigsqcup_{k=1}^8 I_k^\delta(\sigma_1,\sigma_2)
\]
for $\delta = \pm$, 
where
\begin{align*}
I_1^\delta(\sigma_1,\sigma_2) &=  \{ (i,j) \in I^+(\sigma_1) \times I^\delta(\sigma_2) \,|\, i<|j|, \, w(i)>0, \, w(j)>0, \, |w(i)|<|w(j)| \}, \\ 
I_2^\delta(\sigma_1,\sigma_2) &=  \{ (i,j) \in I^+(\sigma_1) \times I^\delta(\sigma_2) \,|\, i<|j|, \, w(i)>0, \, w(j)>0, \, |w(i)|>|w(j)| \}, \\ 
I_3^\delta(\sigma_1,\sigma_2) &=  \{ (i,j) \in I^+(\sigma_1) \times I^\delta(\sigma_2) \,|\, i<|j|, \, w(i)>0, \, w(j)<0, \, |w(i)|<|w(j)| \}, \\ 
I_4^\delta(\sigma_1,\sigma_2) &=  \{ (i,j) \in I^+(\sigma_1) \times I^\delta(\sigma_2) \,|\, i<|j|, \, w(i)>0, \, w(j)<0, \, |w(i)|>|w(j)| \}, \\ 
I_5^\delta(\sigma_1,\sigma_2) &=  \{ (i,j) \in I^+(\sigma_1) \times I^\delta(\sigma_2) \,|\, i<|j|, \, w(i)<0, \, w(j)>0, \, |w(i)|<|w(j)| \}, \\ 
I_6^\delta(\sigma_1,\sigma_2) &=  \{ (i,j) \in I^+(\sigma_1) \times I^\delta(\sigma_2) \,|\, i<|j|, \, w(i)<0, \, w(j)>0, \, |w(i)|>|w(j)| \}, \\ 
I_7^\delta(\sigma_1,\sigma_2) &=  \{ (i,j) \in I^+(\sigma_1) \times I^\delta(\sigma_2) \,|\, i<|j|, \, w(i)<0, \, w(j)<0, \, |w(i)|<|w(j)| \}, \\ 
I_8^\delta(\sigma_1,\sigma_2) &=  \{ (i,j) \in I^+(\sigma_1) \times I^\delta(\sigma_2) \,|\, i<|j|, \, w(i)<0, \, w(j)<0, \, |w(i)|>|w(j)| \}
\end{align*}
with $I^\delta(\sigma) = I^\delta \cap I(\sigma)$.
Since $w \pi \cong \pi$ or $w\epsilon \pi \cong \pi$, 
the map $(i,j) \mapsto (i',j') = (w^{-1}(i),w^{-1}(j))$ gives a bijection 
$I^\delta(\sigma_1,\sigma_2) \xrightarrow{1:1} J^\delta(\sigma_1,\sigma_2)$, 
where
\begin{align*}
J^\delta(\sigma_1,\sigma_2) 
&= \{ (i',j') \in I(\sigma_1) \times I(\sigma_2) \,|\, w(i')>0, \, \delta w(j') > 0, \, w(i')<|w(j')| \} 
\\&= \bigsqcup_{k=1}^8 J_k^\delta(\sigma_1,\sigma_2)
\end{align*} 
with
\begin{align*}
J_1^\delta(\sigma_1,\sigma_2) &= \{ (i,j) \in I^+(\sigma_1) \times I^+(\sigma_2) \,|\, 
|i|<|j|, \, w(i)>0, \, \delta w(j) > 0, \, w(i)<|w(j)| \}, \\ 
J_2^\delta(\sigma_1,\sigma_2) &= \{ (i,j) \in I^+(\sigma_1) \times I^+(\sigma_2) \,|\, 
|i|>|j|, \, w(i)>0, \, \delta w(j) > 0, \, w(i)<|w(j)| \}, \\ 
J_3^\delta(\sigma_1,\sigma_2) &= \{ (i,j) \in I^+(\sigma_1) \times I^-(\sigma_2) \,|\, 
|i|<|j|, \, w(i)>0, \, \delta w(j) > 0, \, w(i)<|w(j)| \}, \\ 
J_4^\delta(\sigma_1,\sigma_2) &= \{ (i,j) \in I^+(\sigma_1) \times I^-(\sigma_2) \,|\, 
|i|>|j|, \, w(i)>0, \, \delta w(j) > 0, \, w(i)<|w(j)| \}, \\ 
J_5^\delta(\sigma_1,\sigma_2) &= \{ (i,j) \in I^-(\sigma_1) \times I^+(\sigma_2) \,|\, 
|i|<|j|, \, w(i)>0, \, \delta w(j) > 0, \, w(i)<|w(j)| \}, \\ 
J_6^\delta(\sigma_1,\sigma_2) &= \{ (i,j) \in I^-(\sigma_1) \times I^+(\sigma_2) \,|\, 
|i|>|j|, \, w(i)>0, \, \delta w(j) > 0, \, w(i)<|w(j)| \}, \\ 
J_7^\delta(\sigma_1,\sigma_2) &= \{ (i,j) \in I^-(\sigma_1) \times I^-(\sigma_2) \,|\, 
|i|<|j|, \, w(i)>0, \, \delta w(j) > 0, \, w(i)<|w(j)| \}, \\ 
J_8^\delta(\sigma_1,\sigma_2) &= \{ (i,j) \in I^-(\sigma_1) \times I^-(\sigma_2) \,|\, 
|i|>|j|, \, w(i)>0, \, \delta w(j) > 0, \, w(i)<|w(j)| \}.
\end{align*}
Hence, noting that 
\begin{align*}
 J_1^+(\sigma_1,\sigma_2) & = I_1^+(\sigma_1,\sigma_2), &
 J_1^-(\sigma_1,\sigma_2) & = I_3^+(\sigma_1,\sigma_2), \\
 J_3^+(\sigma_1,\sigma_2) & = I_1^-(\sigma_1,\sigma_2), &
 J_3^-(\sigma_1,\sigma_2) & = I_3^-(\sigma_1,\sigma_2), 
\end{align*}
we have
\[
\sum_{k \in \{ 2,4,5,6,7,8 \}} |I_k(\sigma_1,\sigma_2)| = \sum_{k \in \{ 2,4,5,6,7,8 \}} |J_k(\sigma_1,\sigma_2)|,
\]
where $I_k(\sigma_1,\sigma_2) = I_k^+(\sigma_1,\sigma_2) \sqcup I_k^-(\sigma_1,\sigma_2)$ 
and $J_k(\sigma_1,\sigma_2) = J_k^+(\sigma_1,\sigma_2) \sqcup J_k^-(\sigma_1,\sigma_2)$.
Moreover, since the map $(i,j) \mapsto (-j,-i)$ gives bijections
\begin{align*}
J_4^+(\sigma_1,\sigma_2) &\xrightarrow{1:1} I_8^-({}^c\sigma_2^\vee, {}^c\sigma_1^\vee), &
J_4^-(\sigma_1,\sigma_2) &\xrightarrow{1:1} I_4^-({}^c\sigma_2^\vee, {}^c\sigma_1^\vee), \\
J_8^+(\sigma_1,\sigma_2) &\xrightarrow{1:1} I_8^+({}^c\sigma_2^\vee, {}^c\sigma_1^\vee), &
J_8^-(\sigma_1,\sigma_2) &\xrightarrow{1:1} I_4^+({}^c\sigma_2^\vee, {}^c\sigma_1^\vee), 
\end{align*}
we have
\[
\tag{$\sharp$}\label{sharp}
\sum_{k \in \{ 2,5,6,7 \}} \left( |I_k(\sigma_1,\sigma_2)| + |I_k({}^c\sigma_2^\vee, {}^c\sigma_1^\vee)| \right)
= \sum_{k \in \{ 2,5,6,7 \}} \left( |J_k(\sigma_1,\sigma_2)| + |J_k({}^c\sigma_2^\vee, {}^c\sigma_1^\vee)| \right).
\]
Since
\[
n_1(\sigma_1,\sigma_2) = \sum_{k \in \{ 2,5,6,7 \}} |I_k(\sigma_1,\sigma_2)|,
\]
the left-hand side of \eqref{sharp} is equal to $n_1'(\sigma_1,\sigma_2)$.
On the other hand, noting that 
the map $(i,j) \mapsto (j,i)$ gives bijections 
\begin{align*}
J_2^+(\sigma_1,\sigma_2) &\xrightarrow{1:1} I_2^+(\sigma_2, \sigma_1), &  
J_2^-(\sigma_1,\sigma_2) &\xrightarrow{1:1} I_6^+(\sigma_2, \sigma_1), \\ 
J_6^+(\sigma_1,\sigma_2) &\xrightarrow{1:1} I_2^-(\sigma_2, \sigma_1), &  
J_6^-(\sigma_1,\sigma_2) &\xrightarrow{1:1} I_6^-(\sigma_2, \sigma_1), 
\end{align*}
and the map $(i,j) \mapsto (-i,-j)$ gives bijections
\begin{align*}
J_5^+(\sigma_1,\sigma_2) &\xrightarrow{1:1} I_7^-({}^c\sigma_1^\vee, {}^c\sigma_2^\vee), &
J_5^-(\sigma_1,\sigma_2) &\xrightarrow{1:1} I_5^-({}^c\sigma_1^\vee, {}^c\sigma_2^\vee), \\
J_7^+(\sigma_1,\sigma_2) &\xrightarrow{1:1} I_7^+({}^c\sigma_1^\vee, {}^c\sigma_2^\vee), &
J_7^-(\sigma_1,\sigma_2) &\xrightarrow{1:1} I_5^+({}^c\sigma_1^\vee, {}^c\sigma_2^\vee), 
\end{align*}
the right-hand side of \eqref{sharp} is equal to $n_1'(\sigma_2,\sigma_1)$.
This proves
\[
n_1'(\sigma_1,\sigma_2) = n_1'(\sigma_2,\sigma_1).
\]
\par

Next we consider the set 
\[
\{ i \in I(\sigma) \,|\, i>0 \}
= \{ i \in I^+(\sigma) \,|\, w(i)>0 \} \sqcup \{ i \in I^+(\sigma) \,|\, w(i)<0 \}.
\]
The map $i \mapsto i' = w^{-1}(i)$ gives a bijection from this set to 
\[
\{ i' \in I(\sigma) \, | \, w(i')>0 \} 
= \{ i' \in I^+(\sigma) \, | \,  w(i')>0 \} \sqcup \{ i' \in I^-(\sigma) \, | \, w(i')>0 \}.
\]
Hence we have
\[
n_2(\sigma) = |\{ i \in I^+(\sigma) \, | \, w(i)<0 \}|
= |\{ i \in I^-(\sigma) \, | \, w(i)>0 \}| = n_2({}^c\sigma^\vee), 
\]
where the last equation follows from the map $i \mapsto -i$.
This completes the proof of Proposition \ref{cr=1} in this case, 
and hence Theorem \ref{main1} (1) for the classical group $G$. 
\end{proof}

%\subsection{Shahidi's formula for local coefficients}
\subsection{Shahidi's formula for local coefficients}\label{sec.shahidi}
In a series of influential papers \cite{Sh1, Sh2, Sh3, Sh4, Sh5, KeSh, Sh6, Sh7} 
stretching over $12$ years, 
Shahidi made a deep study of local coefficients 
and developed a theory of $\gamma$-factors for generic representations of connected reductive groups.
In particular, as a culmination of this study, 
he showed that local coefficients can be expressed in terms of these $\gamma$-factors.
These results play a key role in the study of automorphic forms, in
particular in Arthur's theory of endoscopic classification.
\par

The theory of $\gamma$-factors is very delicate 
because it requires a careful normalization of various quantities 
(such as Weyl group representatives) 
and a careful evaluation of pertinent integrals in the rank $1$ case 
(i.e., for $\SL_2$ and $\SU_{2,1}$).
As the theory evolved over a period of $12$ years, 
and it was not a priori clear what the precise shape of the final product should be, 
it is understandable that different normalizations may have been preferred at different points in time, 
for different reasons.
As a consequence, various formulas (of the same quantity) 
which appeared in \cite{Sh5, KeSh, Sh7} are not entirely consistent with each other, at least at first glance.
In this subsection, with the benefit of hindsight, 
we shall explain and resolve these discrepancies so that one has a consistent story.
We thank Shahidi for his help in clarifying the material in this subsection, and stress that the discrepancies pointed out mainly result from
different choices of normalizations and the occasional typos in translating from one setting to another, and not of any conceptual flaws
in Shahidi's arguments. 
\par

We shall first review the key formula of Shahidi expressing local coefficients in terms of gamma factors.  
Let $F$ be a local field of characteristic zero and 
fix a non-trivial unitary character $\psi_F$ of $F$.
In this subsection, 
we consider an arbitrary quasi-split connected reductive algebraic group $G$ over $F$.
Fix an $F$-splitting $\spl = (B,T,\{X_\alpha\})$ of $G$ and
let $\ww$ be the Whittaker datum for $G$ determined by $\spl$ and $\psi_F$.
\par

Let $P = MN$ be a standard maximal parabolic subgroup of $G$.
We denote by $a$ the simple root of $A_T$ 
that does not belong to the root system of $M$, 
where $A_T$ is the split component of $T$.
Following \cite[p.~552]{Sh6}, 
we define $\tl{a}$ as the restriction of $\pair{\rho, \alpha^\vee}^{-1} \rho$ to $A_T$, 
where $\rho$ is half the sum of the absolute roots of $T$ in $N$, 
$\alpha$ is an absolute root which restricts to $a$, 
and $\alpha^\vee$ is the coroot associated to $\alpha$.
(Note that $\tl{a}$ is the corresponding fundamental weight when $G$ is semisimple and split over $F$.)
We may regard $\tl{a}$ as an element in $\aa_M^*$.
Let $r$ be the adjoint representation of ${}^LM$ on $\widehat{\nn}$, 
where $\widehat{\nn}$ is the Lie algebra of the unipotent radical of the dual parabolic subgroup $\widehat{P}$.
As in \cite[p.~554]{Sh6}, we decompose it as $r = \bigoplus_{i=1}^m r_i$.
Let $P' = M'N'$ be another standard maximal parabolic subgroup of $G$ 
and assume that $W(M,M') \not= \{ \1 \}$ (\resp $W(M,M') \not= \emptyset$) 
if $M = M'$ (\resp $M \ne M'$).
Denoting by $w$  the unique non-trivial element in $W(M,M')$,
let $\lambda(w, \psi_F)$ be the $\lambda$-factor defined  as in \cite[(4.1)]{KeSh} 
(see also Section \ref{sec.NIO}).
\par

Let $\pi$ be an irreducible $\ww_M$-generic representation of $M$, 
where $\ww_M$ is the Whittaker datum for $M$ induced by $\ww$.
Recall that the local coefficient $C_P(w, \pi_{s \tl{a}}, \psi_F)$ for $s \in \C$ 
depends on the choice of a  representative of $w$.
We take the Langlands--Shelstad representative $\tl{w}$ of $w$ with respect to $\spl$ 
as in Section \ref{sec.NIO}.
For example, if $G = \SL_2$ and $\spl$ is the standard splitting, 
then we have
\[
\tl{w} = 
\begin{pmatrix}
0 & 1 \\
-1 & 0
\end{pmatrix}.
\]
We write $C_P(w, \pi_{s \tl{a}}, \psi_F) = C_P(\tl{w}, \pi_{s \tl{a}}, \psi_F)$ to indicate this dependence.
\par

Now  the key formula expressing these local coefficients in terms of $\gamma$-factors, cf.~\cite[Theorem 3.5]{Sh7}, is:
 \[
\tag{S1}\label{S1}
C_P(\tl{w}, \pi_{s \tl{a}}, \psi_F) 
= \lambda(w, \psi_F)^{-1} \prod_{i=1}^m \gamma^{\Sh}(is, \pi, r_i, \psi_F), 
\]
where the superscript $\Sh$ indicates Shahidi's $\gamma$-factors. 
The astute reader will, however, observe that the formula \eqref{S1} does not completely agree with 
what is stated in \cite[Theorem 3.5]{Sh7}. 
It is the goal of this subsection to explain this discrepancy.
\par

There are two main reasons for the apparent discrepancy 
between the formula (\ref{S1}) and that in \cite[Theorem 3.5]{Sh7}:
\par

\begin{itemize}
\item[(a)] 
Normalization of the Satake isomorphism and more generally of the local Langlands correspondence (LLC) for tori.  
We thank Shahidi for pointing out this crucial difference in conventions to us. 
For simplicity, assume that $G$ is split with maximal split torus $T$, Weyl group $W$, 
and hyperspecial maximal compact subgroup $K = G(\oo_F)$. 
Then one has the Satake isomorphism
\[  
\Sc \colon \HH(G,K)  \rightarrow \C[T(F)/ T(\mathfrak{o}_F)]^W 
\rightarrow \C[X_*(T)]^W = \C[X^*(\widehat{T})]^W 
\rightarrow R(\widehat{G}) 
\]
of the spherical Hecke algebra $\HH(G,K)$ 
with the representation ring $R(\widehat{G})$ of the Langlands dual group $\widehat{G}$.
Here, the first arrow is given by the Satake transform, 
whereas the inverse of the third arrow is given by the restriction of the character of a representation of $\widehat{G}$ 
to the dual torus $\widehat{T} = X^*(T) \otimes \C^\times$. 
On the other hand, the second arrow depends on an isomorphism
\[
T(F) / T(\oo_F) \cong X_*(T) 
\]
which needs to be explicated. 
\par

Now there is a natural isomorphism $i \colon X_*(T) \xrightarrow{\sim} T(F)/ T(\oo_F)$ 
given by $\lambda \mapsto \lambda( \varpi_F)$, 
where $\varpi_F$ is a uniformizer of $F$. 
This isomorphism $i$ is obtained as a composite of the natural isomorphisms
\[ 
\left\{
\begin{aligned}
&X_*(T) \otimes F^{\times} \cong T(F) &&\text{ given by $\lambda \otimes z \mapsto \lambda(z)$,} \\
&X_*(T) \otimes \mathfrak{o}_F^{\times} \cong T(\mathfrak{o}_F)
\end{aligned}
\right.
\]
with
\[ 
\mathrm{val}_F \colon F^{\times}/ \oo_F^{\times} \cong \Z. 
\]
As explained in Appendix \ref{sec.Frob}, 
the LLC for the split torus $T$ is based on the first isomorphism above. 
Thus, the resulting normalization of the Satake isomorphism $\Sc$ using $i$ is the natural one, 
in the sense that it is compatible with the LLC of tori.
This is the normalization we use in this paper. 
\par

However, as Shahidi pointed out to us, 
in his \emph{Euler Product} book \cite{L1}, 
Langlands had used \emph{the negative of the above natural isomorphism}, 
and  Shahidi had adopted Langlands' normalization in his work.
The effect of this choice of  normalization is as follows. 
The Satake isomorphism induces a parametrization of 
the set of isomorphism classes of irreducible unramified representations of $G(F)$ 
with the set of semisimple conjugacy classes of $\widehat{G}$:
\[ 
\Irr_{\Ksph}(G(F)) \leftrightarrow \mathrm{Spec}(\HH(G,K)) 
\leftrightarrow \mathrm{Spec}(R(\widehat{G})) 
\leftrightarrow  \widehat{G}_{\mathrm{ss}}/\Int(\widehat{G}). 
\]
The two different normalizations of $\Sc$ give two different bijections 
which differ from each other by the inverse operation on semisimple classes. 
This in turn leads to a different notion  of local $L$-factors. 
More generally, 
using Langlands' normalization would also necessitate modifying the LLC for tori (composing with the inverse map), 
so as to ensure compatibility of the classification of unramified characters with that of general characters. 
\par

In the setting of unramified representations, 
if we use the normalization of $\Sc$ preferred in this paper (instead of the one Langlands and Shahidi used), 
then in \cite[Theorem 3.5, (3.11)]{Sh7}, the representation $\tilde{\sigma}$ there should be replaced by $\sigma$.  
\par

\item[(b)] 
Choice of Weyl group representatives. 
We write $\tl{w}^\Sh$ for the representative of $w$ used in \cite{Sh7}. 
As explained in the middle of \cite[p.~281]{Sh7}, 
$\tl{w}^\Sh$ is the same as the one given in \cite[p.~979]{Sh5} and \cite[p.~74]{KeSh}. 
Note that $\tl{w}^\Sh$ differs from the representative $\tl{w}$ used in this paper 
and is in fact the Langlands--Shelstad representative with respect to the splitting $\spl^- = (B,T,\{-X_\alpha\})$.
For example, 
if $G = \SL_2$ and $\spl$ is the standard splitting, 
then we have
\[
\tl{w}^{\Sh} = 
\begin{pmatrix}
0 & -1 \\
1 & 0
\end{pmatrix}.
\]
A discussion of such issues about Weyl group representatives can also be found in \cite[Remark 8.2.1, p.~131--133]{Sh8}. 
\end{itemize}
\par

Let us examine if (a) and (b) explain the apparent discrepancy between \eqref{S1} and \cite[Theorem 3.5]{Sh7}.  
Writing $C_P(\tl{w}^\Sh, \pi_{s \tl{a}}, \psi_F)$ for the local coefficient associated to $\tl{w}^{\Sh}$ and noting that
$\spl^-$ and $\overline{\psi}_F$ give rise to the fixed Whittaker datum $\ww$,  the equality \eqref{S1} is equivalent to
\[
\tag{S1'}\label{S1'}
C_P(\tl{w}^\Sh, \pi_{s \tl{a}}, \psi_F) 
= \lambda(w, \overline{\psi}_F)^{-1} \prod_{i=1}^m \gamma^\Sh(is, \pi, r_i, \overline{\psi}_F).
\]
On the other hand,  the identity in \cite[Theorem 3.5]{Sh7} reads:
\[
\tag{S2}\label{S2}
C_P(\tl{w}^\Sh, \pi_{s \tl{a}}, \psi_F) 
= \lambda(w, \psi_F)^{-1} \prod_{i=1}^m \gamma^\Sh(is, \pi, r^{\vee}_i, \overline{\psi}_F).
\]
\emph{In the setting  of unramified representations}, 
if we adjust for the different convention of the Satake isomorphism used by Shahidi and us, 
as discussed in (a) above, 
then the representation $r_i^{\vee}$ on the right-hand side of \eqref{S2} should be replaced by $r_i$.
Hence, with the normalization used in this paper, 
\cite[Theorem 3.5, (3.11)]{Sh7} should be restated as:
\[
\tag{S2'}\label{S2'}
C_P(\tl{w}^\Sh, \pi_{s \tl{a}}, \psi_F) 
= \lambda(w, \psi_F)^{-1} \prod_{i=1}^m \gamma^\Sh(is, \pi, r_i, \overline{\psi}_F).
\]
Comparing this with \eqref{S1'}, 
we see that the discrepancy between \eqref{S1} and \cite[Theorem 3.5]{Sh7} is almost completely resolved: 
there is only a remaining ambiguity between $\psi_F$ and $\overline{\psi}_F$ in the $\lambda$-factor. 
Indeed, in the setting of unramified groups, 
the $\lambda$-factor is insensitive to whether we are using $\psi_F$ or $\overline{\psi}_F$, 
so that the discrepancy is fully resolved.  
However, the discrepancy between \eqref{S1'} and \eqref{S2'} persists for 
general (possibly ramified) quasi-split groups (such as for  $G = \Res_{E/F} G'$, 
with $E/F$ a ramified extension and $G'$ a split group over $E$).
\par

The presence of $\psi_F$ (instead of $\overline{\psi}_F$) in the $\lambda$-factor in \eqref{S2'} 
must surely be a typo (as Shahidi has also mentioned to us). 
In the interest of transparency, however, it will be pertinent for us to explain how this typo could have arisen, 
and to explain why \eqref{S1} (or equivalently \eqref{S1'}) is the right formula. 
Before that, we explain in the following remark 
why the formula \eqref{S2'} is not consistent with the restriction of scalars and hence is not the correct identity.
\par

\begin{rem}\label{rem_Sh7}
For simplicity, we assume that $G$ is split over $F$.
Let $F_0$ be a subfield of $F$ such that $F/F_0$ has finite degree 
and assume that $\psi_F = \psi_{F_0} \circ \tr_{F/F_0}$ 
for some non-trivial additive character $\psi_{F_0}$ of $F_0$.
Put $G_0 = \Res_{F/F_0} G$, $P_0 = \Res_{F/F_0} P$, and $M_0 = \Res_{F/F_0} M$.
Denote by $a_0$ the simple root corresponding to $M_0$.
Let $\spl_0$ be the $F_0$-splitting of $G_0$ induced by $\spl$.
Then $\tl{w}^\Sh$ (regarded as an element in $G_0(F_0)$) 
is the representative of $w$ with respect to $\spl_0^-$, 
and we can define the local coefficient $C_{P_0}(\tl{w}^\Sh, \pi_{s \tl{a_0}}, \psi_{F_0})$, 
where $\pi$ is regarded as a representation of $M_0(F_0)$.
Since $\pi_{s \tl{a_0}} = \pi_{s \tl{a}}$ under the identification $M_0(F_0) = M(F)$, 
and $\spl_0$ and $\psi_{F_0}$ give rise to the Whittaker datum $\ww$, 
it follows from the definition that 
\[
C_{P_0}(\tl{w}^\Sh, \pi_{s \tl{a_0}}, \psi_{F_0}) = C_P(\tl{w}^\Sh, \pi_{s \tl{a}}, \psi_F).
\]
From this and \eqref{S2'}, we can deduce that
\[
\lambda(w, \psi_{F_0})^{-1} 
\prod_{i=1}^m \gamma^\Sh(is, \pi, \Ind^{{}^L M_0}_{{}^L M}(r_i), \overline{\psi}_{F_0})
= \prod_{i=1}^m \gamma^\Sh(is, \pi, r_i, \overline{\psi}_F),
\]
noting that $\lambda(w,\psi_F) = 1$.
By the property of $\lambda$-factors (see \cite[Section 5.6]{D}), 
we should have
\[
\gamma^\Sh(is, \pi, \Ind^{{}^L M_0}_{{}^L M}(r_i), \overline{\psi}_{F_0})
= \lambda(F/F_0, \overline{\psi}_{F_0})^{\dim r_i} \gamma^\Sh(is, \pi, r_i, \overline{\psi}_F) 
\]
and hence 
\[
\lambda(w,\psi_{F_0})^{-1} \lambda(F/F_0, \overline{\psi}_{F_0})^{\dim N} = 1.
\]
However, for general ramified extensions $F/F_0$, this is not consistent with the definition of $\lambda(w,\psi_{F_0})$:
\[
\lambda(w,\psi_{F_0}) = \lambda(F/F_0,\psi_{F_0})^{\dim N}.
\]
This explains why one should expect to have
$\overline{\psi}_F$ in the $\lambda$-factor in \eqref{S2'} instead of $\psi_F$.
The above discussion also explains why the $\lambda$-factor is necessarily involved in the formula \eqref{S1}; 
this necessity was first realized by Shahidi in \cite[Lemma 3.2 and Remark, p.~992]{Sh5}. 
\end{rem}
\par

While the above remark serves to explain why \eqref{S2'} is not  the correct formula, 
one would still need to justify that \eqref{S1} (or equivalently \eqref{S1'}) is the right one, 
and also to shed some light on how the typo in the $\lambda$-factor arises. 
\par

For this, let us recall Shahidi's approach for showing the desired identity. 
His argument consists of the following two parts:

\begin{itemize}
\item[(i)] 
Local part: 
an explicit computation of the local coefficient in the archimedean setting 
and for principal series representations in the non-archimedean setting.
This local computation was carried out in \cite{Sh5} and \cite{KeSh} respectively, 
with the results contained in \cite[Theorem 3.1]{Sh5} and \cite[Proposition 3.4]{KeSh}.
In these settings, as the local Langlands correspondence for tori and real reductive groups is known, 
the right-hand side of the identity \eqref{S1} is interpreted as a product of Artin gamma factors 
attached to the $L$-parameter of the representation $\pi$ (i.e. arithmetic or Galois theoretic gamma factors).
\par

For principal series representations (in both archimedean and non-archimedean settings), 
the argument proceeded by a standard reduction to ``rank $1$ cases'' 
(via expressing a Weyl group element as a product of simple reflections 
and using the factorizability of standard intertwining operators). 
For these rank $1$ cases, namely for $G = \SL_2$ or $G = \SU_{2,1}$, 
one then had to explicitly evaluate the local coefficient. 
These rank $1$ results can be found in \cite[Lemma 1.4]{Sh5} 
for both $\SL_2$ and $\SU_{2,1}$ in the archimedean setting 
and in \cite[Proposition 3.2]{KeSh} for $\SU_{2,1}$ in the non-archimedean setting. 
\par

To extend the result to all irreducible representations $\pi$ in the archimedean setting, 
Shahidi appealed to the Casselman subrepresentation theorem 
to realize $\pi$ as a submodule of a principal series representation. 
The desired identity was then obtained by showing that both sides of the identity satisfy the property of multiplicativity 
(called the inductive property in \cite[Theorem 3.5 (3)]{Sh7}): 
if $\pi_1$ and $\pi_2$ are two irreducible subquotients of the same principal series representation, 
then the local coefficient (respectively Artin or Galois theoretic gamma factors) of $\pi_1$ and $\pi_2$ are equal, 
and can be expressed in terms of the analogous invariants of the inducing data. 
For the local coefficients, 
this multiplicativity property follows from the factorizability of standard intertwining operators 
and was shown in \cite[Proposition 3.2.1, p.~336]{Sh2}. 
The multiplicativity for Artin or Galois theoretic gamma factors was essentially what was verified in \cite[p.~993--1002]{Sh5}  
(though not formulated as such there). 
It is undoubtedly the most difficult part of the proof of \cite[Theorem 3.1]{Sh5}, 
making significant use of deep results in the representation theory 
and local Langlands correspondence for real reductive groups. 
(We also refer the reader to Andler's papers \cite{An1,An2} for related discussions.)

\item[(ii)] 
Global part: 
a global inductive argument in \cite{Sh7}, 
using the formulas for the local coefficients evaluated in (i) 
as inputs to deduce the general case at all non-archimedean places.
\end{itemize}

Given this overview of Shahidi's proof of the desired identity, 
we see that the answers in the rank $1$ cases are absolutely fundamental. 
More precisely, if the rank $1$ computations were to yield \eqref{S1} (or equivalently \eqref{S1'}) as the answer, 
then Shahidi's arguments developed in (i) and (ii) above would imply that \eqref{S1} holds in general.
\par

In examining \cite[Lemma 1.4]{Sh5} and \cite[Propositions 3.2 and 3.4]{KeSh}, 
however, the careful reader will notice that the rank $1$ results contained there do not always agree with \eqref{S1}. 
In the following two remarks, we explain this in greater detail.

\begin{rem}\label{rem_Sh5}
Consider the setting of  \cite{Sh5}, so that  $F$ is archimedean.
In \cite{Sh5},  the induced representation $I_P(\pi)$ 
was realized on the space of $\VV_\pi$-valued smooth functions $f$ on $G$ such that 
\[
f(gmn) = \delta_P(m)^{-\half{1}} \pi(m)^{-1} f(g)
\]
for $g \in G$, $m \in M$, and $n \in N$, 
with associated intertwining operators, Whittaker functionals, and local coefficients.
By the isomorphism $f \mapsto [g \mapsto f(g^{-1})]$ from this realization to our realization, 
we see that the local coefficients defined in \cite{Sh5} agree with ours.
Then \cite[Theorem 3.1]{Sh5} states that
\[
\tag{S3}\label{S3}
C_P(\tl{w}^\Sh, \pi_{s \tl{a}}, \psi_F) = \lambda(w, \psi_F) \prod_{i=1}^m \gamma(is, \pi, r_i, \psi_F).
\]
(Note that $-2s \rho_\theta$ on the left-hand side of \cite[(3.1.1)]{Sh5} is a typo and should be $2s \rho_\theta$.
The same sign already appears in \cite{Sh2} and seems to have been retained from the conventions of \cite{Sh1}, 
although the conventions in \cite{Sh2} are slightly different.)  
Moreover, this identity was deduced from the rank $1$ results in \cite[Lemma 1.4]{Sh5}.
\par

Observe that \eqref{S3} does not agree with  \eqref{S1'}. 
More precisely, specializing to the rank $1$ cases:

\begin{itemize}
\item  
When $G = \SL_2$, 
the identity \eqref{S3} (also stated in \cite[Lemma 1.4 (a)]{Sh5}) differs from \eqref{S1}, 
unless one replaces $\tl{w}^\Sh$ in \eqref{S3} by $\tl{w}$, 
noting that $\lambda(w, \psi_F) = 1$ in this case.

\item
When $F = \R$ and $G = \Res_{\C/\R} \SL_2$, 
the identity \eqref{S3} differs from \eqref{S1}, 
unless one replaces $\tl{w}^\Sh$ by $\tl{w}$ and $\lambda(\C/\R,\psi_\R)$ by $\lambda(\C/\R,\psi_\R)^{-1}$. 
The inverse which is missing from $\lambda(\C/\R,\psi_\R)$ is just a typo 
which occurs on the right-hand side of \cite[(3.1.1)]{Sh5} 
(and seems to have arisen when applying \cite[(3.10)]{Sh5}).

\item 
When $F = \R$ and $G = \SU_{2,1}$, the equality \eqref{S3} 
(which was also  stated in \cite[Lemma 1.4 (b)]{Sh5}) agrees with \eqref{S1}. 
In this case, we have $C_P(\tl{w}^\Sh, \pi_{s \tl{a}}, \psi_F) = C_P(\tl{w}, \pi_{s \tl{a}}, \psi_F)$ 
and $\lambda(w, \psi_F) = \lambda(w, \psi_F)^{-1}$ since 
\[
\tl{w}^{\Sh} = \tl{w} = 
\begin{pmatrix}
0 & 0 & 1 \\
0 & -1 & 0 \\
1 & 0 & 0 
\end{pmatrix}
\]
and $\lambda(w, \psi_F) = \lambda(\C/\R, \psi_{\R})^2 = -1$.
\end{itemize}
\end{rem}

\begin{rem}\label{rem_KeSh}
We now consider the setting of  \cite{KeSh}, 
so that $F$ is non-archimedean and $\pi$ is a principal series representation.
Then \cite[Proposition 3.4]{KeSh} states that 
\[
\tag{S4}\label{S4}
C_P(\tl{w}^\Sh, \pi_{s \tl{a}}, \psi_F) 
= \lambda(w, \psi_F)^{-1} \prod_{i=1}^m \gamma(is, \pi, r_i^\vee, \psi_F).
\]
Observe that this does not agree with \eqref{S1'}.  
As we have explained, 
one reason is the different conventions for the LLC for tori, as explained in (a) above. 
Using our convention (instead of Langlands'), 
one should replace $r_i^{\vee}$ on the right-hand side by $r_i$. 
After this, one notes:
\begin{itemize}
\item
When $G = \SL_2$, 
one needs to replace $\tl{w}^\Sh$ in \eqref{S4} by $\tl{w}$ to have agreement with \eqref{S1}, 
as in the archimedean case.

\item
When $G = \SU_{2,1}$, 
the identity \eqref{S4} (also  stated in \cite[Corollary 3.3]{KeSh}) agrees with \eqref{S1}.
In this case, we have $C_P(\tl{w}^\Sh, \pi_{s \tl{a}}, \psi_F) = C_P(\tl{w}, \pi_{s \tl{a}}, \psi_F)$ 
as in the archimedean case.
\end{itemize}
\end{rem}

To summarize, 
the rank $1$ results in \cite{Sh5, KeSh} agree with \eqref{S1} when $G = \SU_{2,1}$ 
but do not agree in the case of $\SL_2$. 
However, in the case of $\SL_2$, 
the discrepancy can be resolved if one replaces $\tl{w}^{\Sh}$ by $\tl{w}$ 
which does not affect the result for $\SU_{2,1}$ 
(see \cite[p.~133]{Sh8} for a separate discussion).
It is thus necessary to figure out what the truth is for $\SL_2$.
\par

For the case of $\SU_{2,1}$, 
the details of the computation of the local coefficient can be found  in 
\cite[proof of Lemma 1.4 (b)]{Sh5} and the paper \cite[Section 5]{Ke} of Keys. 
However, the details of the proof for the $\SL_2$ case were not provided in \cite{Sh5, KeSh}, 
but the reader was referred to \cite[Lemma 4.4]{Sh1}, 
which was based on results in Jacquet's foundational paper \cite{J}. 
Because \cite{J} was written 60 years ago, 
its formulation of principal series representations, 
intertwining operators, and Whittaker functionals differs considerably from the modern language 
(being based on the $K$-type description of principal series representations), 
making it somewhat non-trivial to translate the statement of \cite[Lemma 4.4]{Sh1} 
to the setting of \cite{Sh5,KeSh} and of this paper. 
\par

Thus, instead of reconstructing the translation from \cite[Lemma 4.4]{Sh1} to \cite{Sh5, KeSh}, 
we have given an independent computation of the local coefficient for $\SL_2$ 
in Proposition \ref{SL(2)} below.
What Proposition \ref{SL(2)} shows is that \eqref{S1} holds in the case of $\SL_2$.
We have also carried out the $\SU_{2,1}$ computation in Proposition \ref{SU(2,1)} below 
and our results agree with those of \cite{Sh5, KeSh} (and hence \eqref{S1}).
\par

\begin{rem} \label{rem_Sh5KeShSh7} 
Let us make two further remarks:

\begin{itemize}
\item
The careful reader will notice that there are some discrepancies 
between the formulas for local coefficients established in \cite{Sh5, KeSh} 
and the formulas that were  quoted and used in \cite{Sh7}, 
specifically in \cite[Propositions 3.2 and 3.4]{Sh7}. 
Shahidi has informed us that this is due to typos in transcription. 

\item
It is certainly important for the local results of \cite{Sh5} and \cite{KeSh} 
to be formulated uniformly as special cases of a universal identity. 
This is necessary for the global argument of \cite{Sh7} to go through 
and propagate the identity to the general case. 
In Propositions \ref{SL(2)} and \ref{SU(2,1)} below, 
we have carried out the computations for the local coefficients of $\SL_2$ and $\SU_{2,1}$ 
uniformly over all local fields (of characteristic $0$), 
without considering archimedean and non-archimedean cases separately. 
\end{itemize}
\end{rem}

Taken together, the differences in convention noted in (a) and (b) above, 
along with Remarks \ref{rem_Sh7}, \ref{rem_Sh5}, \ref{rem_KeSh} and Proposition \ref{SL(2)} 
explain the discrepancy between \eqref{S1} and \eqref{S2}, 
and why \eqref{S1} is the correct version. 
To summarize, besides some minor and easily rectifiable typos, 
the reasons for the various discrepancies are:

\begin{itemize}
\item[(a)] 
different normalizations of the Satake isomorphism (and the LLC for tori);
\item[(b)] 
different choices of Weyl group representatives;
\item[(c)]  
discrepancy in computation of the local coefficient for $\SL_2$.
\end{itemize} 
When adjusted for (a), (b), and (c), 
the results of \cite[Theorem 3.1]{Sh5} and \cite[Proposition 3.4]{KeSh} agree with \eqref{S1}. 
When used as inputs for Shahidi's inductive global argument in \cite{Sh7}, 
they lead to the identity \eqref{S1} in general.
\par

%\subsection{The rank $1$ cases}
\subsection{The rank $1$ cases}
Here, we include the promised computation of the local coefficient in the rank $1$ cases.
Let $\Sc(F)$ be the space of Schwartz--Bruhat functions on $F$.
For $\phi \in \Sc(F)$, we define its Fourier transform $\widehat{\phi} \in \Sc(F)$ by
\[
\widehat{\phi}(x) = \int_{F} \phi(y) \psi_F(xy) dy,
\]
where $dy$ is the self-dual Haar measure on $F$ with respect to $\psi_F$.
For $s \in \C$, $\phi \in \Sc(F)$, and a unitary character $\chi$ of $F^{\times}$, 
put
\[
Z(s, \chi, \phi) = \int_{F^{\times}} \phi(x) \chi(x) |x|_F^s d^{\times}x,
\]
where $d^{\times} x = |x|_F^{-1} dx$.
This integral is absolutely convergent for $\re(s) > 0$ and admits a meromorphic continuation to $\C$.
Moreover, the functional equation 
\[
\tag{{\bf FE}}
\label{FE}
Z(1-s, \chi^{-1}, \widehat{\phi}) = \gamma(s, \chi, \psi_F) Z(s, \chi, \phi) 
\]
holds, 
where 
\[
\gamma(s, \chi, \psi_F) = \varepsilon(s, \chi, \psi_F) \frac{L(1-s, \chi^{-1})}{L(s, \chi)}.
\]
\par

\begin{lem}
\label{lem:zeta-fourier}
Assume that $\widehat{\phi} \in C_c^\infty(F^\times)$.
Then we have
\[
\int_{F} Z(1-s, \chi^{-1}, \widehat{\phi_b}) \psi_F(ab) db 
= \widehat{\phi}(a) \chi(a)^{-1} |a|_F^{-s}
\]
for $a \in F^{\times}$, where $\phi_b(x) = \phi(x+b)$.
\end{lem}
\begin{proof}
By assumption, we may define $\Phi \in \Sc(F)$ by 
\[
\Phi(x) = \widehat{\phi}(x) \chi(x)^{-1} |x|_F^{-s}
\]
so that $\Phi$ vanishes in a neighborhood of $0$.
Since $\widehat{\phi_b}(x) = \widehat{\phi}(x) \psi_F(-bx)$, 
we have
\[
Z(1-s, \chi^{-1}, \widehat{\phi_b}) 
= \int_{F^\times} \widehat{\phi}(x) \chi(x)^{-1} |x|_F^{1-s} \psi_F(-bx) d^{\times} x 
= \widehat{\Phi}(-b).
\]
Hence the left-hand side of the equation in the lemma is absolutely convergent for all $s$ 
and is equal to 
\[
\int_{F} \widehat{\Phi}(b) \psi_F(-ab) db = \Phi(a).
\]
This completes the proof.
\end{proof}

Let $E$ be a quadratic extension of $F$ 
and write $x \mapsto \overline{x}$ for the non-trivial element in $\Gal(E/F)$.
We denote by $E_0$ the set of trace zero elements in $E$ 
and define a non-degenerate symmetric bilinear form $\beta \colon E_0 \times E_0 \rightarrow F$ 
by $\beta(y,y') = yy'$.
Then we have
\[
\int_E \varphi(z) dz = |2|_E^{\half{1}} \int_{E_0} \int_F \varphi(x + y) dx dy
\]
for $\varphi \in L^1(E)$, 
where $dz$ (\resp $dx$, \resp $dy$) is the self-dual Haar measure on $E$ (\resp $F$, \resp $E_0$) 
with respect to $\psi_E = \psi_F \circ \tr_{E/F}$ (\resp $\psi_F$, \resp $\psi_F \circ \beta$).

\begin{lem}
\label{lem:zeta-hks}
Let $\chi$ be a unitary character of $E^{\times}$.
Then we have
\[
\int_{E_0} \chi(1 + y) |1 + y|_E^{s-1} dy
= \chi(2) |2|_E^{s-\half{1}} \frac{\gamma(2s, \chi_0, \psi_F)}{\gamma(s, \chi, \psi_E)}
\]
for $\re(s) < \half{1}$, 
where $\chi_0 = \chi|_{F^{\times}}$.
\end{lem}
\begin{proof}
This was proved in \cite[Proposition 8.2]{HKS}, 
but we include the proof for the convenience of the reader.
Put
\[
A(s, \chi) = \int_{E_0} \chi(1 + y) |1 + y|_E^{s-1} dy.
\]
This integral is absolutely convergent for $\re(s) < \half{1}$.
In fact, $\inf_{y \in E_0}|1+y|_E > 0$ and 
\begin{align*}
\int_{y \in E_0,\; |y|_E > 1} |1 + y|_E^{\re(s)-1} dy
&\leq \int_{y \in E_0,\; |y|_E > 1} |y|_E^{\re(s)-1} dy 
\\&= \int_{x \in F,\; |x|_F^2 > |\delta|_E^{-1}} |\delta|_E^{\re(s)-\half{1}} |x|_F^{2(\re(s)-1)} dx,
\end{align*}
where we write $y = \delta x$ for a fixed element $\delta \in E_0$, 
in which case $dy = |\delta|_E^{\half{1}}dx$.
The last integral converges when $2(\re(s)-1) < -1$.
\par

Fix $\varphi \in \Sc(E)$ and $\phi \in \Sc(F)$.
For $0 < \re(s) < \half{1}$, we have
\begin{align*}
Z(2s, \chi_0, \phi) A(s, \chi) 
 & = \int_F \phi(x) \chi(x) |x|_F^{2s-1} dx \int_{E_0} \chi(1 + y) |1 + y|_E^{s-1} dy \\
 & = \int_{E_0} \int_F \phi(x) \chi(x + xy) |x + x y|_E^{s-1} |x|_F dx dy \\
 & = \int_{E_0} \int_F \phi(x) \chi(x + y) |x + y|_E^{s-1} dx dy \\
 & = |2|_E^{-\frac{1}{2}} \int_E \phi(\tfrac{1}{2} \tr_{E/F}(z)) \chi(z) |z|_E^{s-1} dz.
\end{align*}
Hence we have
\begin{align*}
 & |2|_E^{\frac{1}{2}} Z(2s, \chi_0, \phi) A(s, \chi) Z(1-s, \chi^{-1}, \widehat{\varphi}) \\  
 & = \int_E \phi(\tfrac{1}{2} \tr_{E/F}(z)) \chi(z) |z|_E^{s-1} dz 
 \int_E \widehat{\varphi}(w) \chi(w)^{-1} |w|_E^{-s} dw \\
 & = \int_E \int_E \phi(\tfrac{1}{2} \tr_{E/F}(z)) \widehat{\varphi}(w) \chi(z w^{-1}) |z w^{-1}|_E^{s-1} |w|_E^{-1} dz dw \\
 & = \int_E \int_E \phi(\tfrac{1}{2} \tr_{E/F}(z w)) \widehat{\varphi}(w) \chi(z) |z|_E^{s-1} dz dw \\
 & = \int_E \int_E \left( \int_F \widehat{\phi}(x) \psi_F(- \tfrac{1}{2} \tr_{E/F}(z w) x) dx \right) \widehat{\varphi}(w) \chi(z) |z|_E^{s-1} dz dw \\
 & = \int_F \int_E \left( \int_E \widehat{\varphi}(w) \psi_E(- \tfrac{1}{2} x z w) dw \right) \widehat{\phi}(x) \chi(z) |z|_E^{s-1} dz dx \\
 & = \int_F \int_E \varphi(\tfrac{1}{2} x z) \widehat{\phi}(x) \chi(z) |z|_E^{s-1} dz dx \\
 & = \chi(2) |2|_E^s \int_E \varphi(z) \chi(z) |z|_E^{s-1} dz
 \int_F \widehat{\phi}(x) \chi(x)^{-1} |x|_F^{-2s} dx \\
 & = \chi(2) |2|_E^s Z(s, \chi, \varphi) Z(1-2s, \chi_0^{-1}, \widehat{\phi}).
\end{align*}
Thus the lemma follows from the functional equation \eqref{FE}.
\end{proof}
\par

Now, let us consider the case $G = \SL_2(F)$. 
Take the Borel subgroup $B$ of $G$ consisting of upper triangular matrices.
Put 
\[
m(a) = 
\begin{pmatrix} 
a & \\ & a^{-1} 
\end{pmatrix},
\quad
n(b) = 
\begin{pmatrix} 
1 & b \\ & 1 
\end{pmatrix},
\quad
\tl{w} = 
\begin{pmatrix} 
& 1 \\ -1 &  
\end{pmatrix} 
\]
for $a \in F^\times$ and $b \in F$.
Let $\chi$ be a unitary character of $F^\times$.
We consider the normalized parabolically induced representation $I(s,\chi) = \Ind^G_B(\chi{|\cdot|_F^s})$ of $G$ 
on the space of smooth functions $f$ on $G$ such that
\[
f(m(a) n(b) g) = \chi(a) |a|_F^{s+1} f(g)
\]
for all $a \in F^\times$, $b \in F$, and $g \in G$.
Recall that the intertwining operator
\[
J(s,\chi) = J_B(w,\chi|\cdot|_F^s) \colon I(s,\chi) \rightarrow I(-s,\chi^{-1})
\]
is given by (the meromorphic continuation of) 
\[
J(s,\chi) f(g) = \int_F f(\tl{w}^{-1} n(x) g) dx,
\]
where $dx$ is the self-dual Haar measure on $F$ with respect to $\psi_F$.
Note that this integral is absolutely convergent for $\re(s) > 0$.
Recall also that the Whittaker functional $\Omega(s,\chi) = \Omega(\chi|\cdot|_F^s)$ on $I(s,\chi)$ is given by (the holomorphic continuation of) 
\[
\Omega(s,\chi) f = \int_F f(\tl{w}^{-1} n(b)) \psi_F(-b) db.
\]
Then the local coefficient $C(s, \chi, \psi_F) = C_B(\widetilde{w},\chi|\cdot|_F^s,\psi_F)$ 
is given by 
\[
\Omega(s,\chi) = C(s,\chi,\psi_F) \cdot \Omega(-s,\chi^{-1}) \circ J(s,\chi) 
\]
(see \cite[p.~333, Theorem 3.1]{Sh2}).

\begin{prop}
\label{SL(2)}
We have
\[
C(s,\chi,\psi_F) = \gamma(s,\chi,\psi_F).
\]
\end{prop}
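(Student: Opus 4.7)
The plan is to evaluate both sides of the defining equation
\[
\Omega(s,\chi) = C(s,\chi,\psi_F) \cdot \Omega(-s,\chi^{-1}) \circ J(s,\chi)
\]
on a flat section $f_{\varphi,s,\chi}$ for a suitably chosen $\varphi \in \Sc(F^2)$, using Lemma \ref{J(s,chi)} and Lemma \ref{Omega(s,chi)}. Concretely, first I would apply Lemma \ref{J(s,chi)} to rewrite
\[
J(s,\chi) f_{\varphi,s,\chi} = \gamma(s,\chi,\psi_F)^{-1} f_{\FF\varphi,-s,\chi^{-1}},
\]
then apply Lemma \ref{Omega(s,chi)} (with $(s,\chi)$ replaced by $(-s,\chi^{-1})$) to obtain
\[
\Omega(-s,\chi^{-1}) \circ J(s,\chi) f_{\varphi,s,\chi} = \gamma(s,\chi,\psi_F)^{-1} \int_{F^\times} \FF'(\FF\varphi)(t,t^{-1}) \chi^{-1}(t) |t|_F^{-s} d^\times t.
\]
Lemma \ref{Omega(s,chi)} also gives $\Omega(s,\chi) f_{\varphi,s,\chi}$ directly as $\int_{F^\times} \FF'\varphi(t,t^{-1}) \chi(t) |t|_F^s d^\times t$.

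The claim $C(s,\chi,\psi_F) = \gamma(s,\chi,\psi_F)$ thus reduces to the identity
\[
\int_{F^\times} \FF'(\FF\varphi)(t,t^{-1}) \chi^{-1}(t) |t|_F^{-s} d^\times t
=
\int_{F^\times} \FF'\varphi(t,t^{-1}) \chi(t) |t|_F^s d^\times t
\]
for all $\varphi \in \Sc(F^2)$ and all $s$ where both sides converge (they extend meromorphically in $s$ and this is enough by analytic continuation); after that we just have to know this for a single $\varphi$ giving a nonzero section, so density is not an issue.

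The main computation, and really the only step of substance, is to unfold $\FF'(\FF\varphi)$ pointwise. A direct calculation interchanging integrals and using Fourier inversion (the inner integral in $y_2$ produces the delta distribution at $z_1 = x_2$) yields
\[
\FF'(\FF\varphi)(x_1,x_2) = \int_F \varphi(x_2,z)\,\psi_F(-x_1 z)\,dz,
\]
so in particular $\FF'(\FF\varphi)(t,t^{-1}) = \int_F \varphi(t^{-1},z)\psi_F(-tz)\,dz$, whereas $\FF'\varphi(t,t^{-1}) = \int_F \varphi(t,z)\psi_F(-t^{-1}z)\,dz$. The substitution $t \mapsto t^{-1}$ (which preserves $d^\times t$) then exactly converts the left-hand integral above into the right-hand one, finishing the proof.

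The step I expect to require the most care is the Fourier-inversion computation for $\FF'(\FF\varphi)$: one must justify the interchange of integrals (which is routine for Schwartz--Bruhat functions since the inner integration in $y_2$ against $\psi_F(y_2(z_1 - x_2))$ produces a delta measure only after integration against the rest of the Schwartz function), and one must keep careful track of the sign conventions in $\FF$ and $\FF'$ — a stray sign there would give $\gamma(s,\chi,\psi_F)^{-1}$ or the wrong character on the wrong side, and hence an incorrect formula for $C(s,\chi,\psi_F)$. Once this pointwise identity is in hand, everything else is a bookkeeping change of variable $t \mapsto t^{-1}$.
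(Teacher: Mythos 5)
Your proposal is correct and follows essentially the same route as the paper: both evaluate the defining equation of $C(s,\chi,\psi_F)$ on the flat section $f_{\varphi,s,\chi}$ via Lemmas \ref{J(s,chi)} and \ref{Omega(s,chi)}, reduce to the pointwise identity $\FF'\FF\varphi(x_1,x_2) = \FF'\varphi(x_2,x_1)$ (you derive it by unfolding the Fourier inversion, the paper states it directly), and then invoke the change of variable $t \mapsto t^{-1}$.
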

\begin{proof}
Fix $\phi \in \Sc(F)$ such that $\widehat{\phi} \in C_c^\infty(F^\times)$.
We may define $f \in I(s,\chi)$ uniquely by requiring that
%$f(\1) = 0$ and
\[
f(\tl{w}^{-1} n(b)) = \phi(b)
\]
for all $b \in F$.
%More precisely, 
%\[
%f\left(
%\begin{pmatrix}
%a & b \\ c & d
%\end{pmatrix}
%\right)
%= \left\{
%\begin{aligned}
%& \chi(c)^{-1} |c|_F^{-s-1} \phi(c^{-1}d) \iif c \not= 0, \\
%& 0 \iif c = 0,
%\end{aligned}
%\right. 
%\]
%which is indeed a smooth function on $G$.
Then we have
\[
\Omega(s,\chi) f = \int_F \phi(b) \psi_F(-b) db = \widehat{\phi}(-1).
\]
On the other hand, we have
\[
\tl{w}^{-1} n(x) \tl{w}^{-1} = \begin{pmatrix} x^{-1} & -1 \\ & x \end{pmatrix} \tl{w}^{-1} n(-x^{-1})
\]
for $x \in F^{\times}$.
For $\re(s) > 0$, we have
\begin{align*}
J(s,\chi) f(\tl{w}^{-1} n(b))  
& = \int_F f(\tl{w}^{-1} n(x) \tl{w}^{-1} n(b)) dx \\
& = \int_F \chi(x^{-1}) |x^{-1}|_F^{s+1} \phi(-x^{-1}+b) dx \\ 
& = \int_{F^\times} \phi(x+b) \chi(-x) |x|_F^s d^\times x \\ 
& = \chi(-1) Z(s,\chi,\phi_b) \\
& = \chi(-1) \frac{Z(1-s,\chi^{-1},\widehat{\phi_b})}{\gamma(s,\chi,\psi_F)},
\end{align*}
where $\phi_b(x) = \phi(x+b)$.
Hence, by Lemma \ref{lem:zeta-fourier}, we have 
\begin{align*}
\Omega(-s,\chi^{-1}) J(s,\chi) f 
&= 
\chi(-1) \frac{1}{\gamma(s,\chi,\psi_F)}\int_F Z(1-s,\chi^{-1},\widehat{\phi_b}) \psi_F(-b) db 
\\&= 
\widehat{\phi}(-1) \frac{1}{\gamma(s,\chi,\psi_F)}
= \frac{1}{\gamma(s,\chi,\psi_F)} \Omega(s,\chi) f.
\end{align*}
Choosing $\phi$ such that $\widehat\phi(-1) \not= 0$ and hence $\Omega(s,\chi) f \not= 0$, 
this together with the definition of $C(s,\chi,\psi_F)$ yields the desired assertion
\[
C(s,\chi,\psi_F) = \gamma(s,\chi,\psi_F).
\]
This completes the proof of Proposition \ref{SL(2)}. 
\end{proof}

Next, let us consider the case
\[
G = \SU_{2,1} 
= \left\{ g \in \SL_3(E) \, \middle| \, 
{}^t \overline{g} J g = J
\right\},
\quad J = \begin{pmatrix} 
& & 1 \\ 
& -1 & \\ 
1 & & 
\end{pmatrix},
\]
where $E/F$ is a quadratic extension. 
Note that $G_{\overline{F}} = \SL_3$ with the Galois action given by 
\[
\tilde\sigma \colon g \mapsto \left\{
\begin{aligned}
&\sigma(g) \iif \sigma \in \Gamma_E, \\
& J^{-1} \cdot {}^t\sigma(g)^{-1} \cdot J \iif \sigma \in \Gamma_F \setminus \Gamma_E, 
\end{aligned}
\right.
\]
where $\sigma(g)$ means the usual action of $\Gamma_F$ on the matrix $g$.
Take the Borel subgroup $B$ (\resp maximal torus $T$) of $G$ consisting of upper triangular (\resp diagonal) matrices.
If we denote by $E_{i,j}$ the square matrix of size $3$ with $1$ at the $(i,j)$-th entry and $0$ elsewhere, 
then $\{E_{1,2}, E_{2,3}\}$ is a set of simple root vectors which is $\Gamma_F$-stable. 
We use the $F$-splitting $\spl = (B,T, \{E_{1,2}, E_{2,3}\})$.
The unique non-trivial element $w$ in the rational Weyl group corresponds to 
$(1,3) = (1,2) (2,3) (1,2)$ in the symmetric group $\SS_3$, 
which is identified with the absolute Weyl group. 
Define representatives $\tl{s}_1$ and $\tl{s}_2$ of the simple reflections $(1,2)$ and $(2,3)$ by 
\begin{align*}
\tl{s}_1 &= \exp(E_{1,2}) \exp(-E_{2,1}) \exp(E_{1,2}) 
=\begin{pmatrix}
0 & 1 & 0 \\
-1 & 0 & 0 \\
0 & 0 & 1
\end{pmatrix}, \\
\tl{s}_2 &= \exp(E_{2,3}) \exp(-E_{3,2}) \exp(E_{2,3}) 
= \begin{pmatrix}
1 & 0 & 0 \\
0 & 0 & 1 \\
0 & -1 & 0
\end{pmatrix},
\end{align*}
respectively. 
Then the Langlands--Shelstad representative $\tl{w}$ of $w$ is given by 
\begin{align*}
\tl{w} &= \tl{s}_1 \tl{s}_2 \tl{s}_1 = \begin{pmatrix} 
& & 1 \\ 
& -1 & \\ 
1 & & 
\end{pmatrix}.
\end{align*}
\par

Put
\[
m(a) = 
\begin{pmatrix} 
a & & \\ 
& a^{-1} \overline{a} & \\ 
& & \overline{a}^{-1} 
\end{pmatrix}, 
\quad
n(b,c) = 
\begin{pmatrix} 
1 & b & \frac{1}{2} b \overline{b} + c \\ 
& 1 & \overline{b} \\ 
& & 1 
\end{pmatrix} 
%\quad
%\tl{w} = 
%\begin{pmatrix} 
%& & 1 \\ 
%& -1 & \\ 
%1 & & 
%\end{pmatrix}
\]
for $a \in E^\times$, $b \in E$, and $c \in E_0$.
Note that 
\[
n(b,c) n(b',c') = n \left( b+b',c+c'+ \frac{1}{2} (b \overline{b'} - \overline{b} b') \right).
\]
Let $\chi$ and $\mu$ be unitary characters of $E^\times$ and $E^1$, respectively, 
where $E^1$ is the set of norm one elements in $E$.
We consider the normalized parabolically induced representation 
$I(s,\chi,\mu) = \Ind^G_B(\chi{|\cdot|_E^s} \otimes \mu)$ of $G$ 
on the space of smooth functions $f$ on $G$ such that
\[
f(m(a) n(b,c) g) = \chi(a) \mu(a^{-1} \overline{a}) |a|_E^{s+1} f(g)
\]
for all $a \in E^\times$, $b \in E$, $c \in E_0$, and $g \in G$.
Recall that the intertwining operator
\[
J(s,\chi,\mu) = J_B(w,\chi|\cdot|_E^s \otimes \mu) \colon I(s,\chi,\mu) \rightarrow I(-s,{}^c \chi^{-1},\mu)
\]
is given by (the meromorphic continuation of) 
\[
J(s,\chi,\mu) f(g) = \int_E \int_{E_0} f(\tl{w}^{-1} n(x,y) g) dy dx,
\]
where $dx$ (\resp $dy$) is the self-dual Haar measure on $E$ (\resp $E_0$) 
with respect to $\psi_E$ (\resp $\psi_F \circ \beta$).
Note that this double integral is absolutely convergent for $\re(s) > 0$ 
and the inner integral is absolutely convergent for $\re(s) > -\half{1}$.
Recall also that the Whittaker functional $\Omega(s,\chi,\mu) = \Omega(\chi|\cdot|_E^s \otimes \mu)$ on $I(s,\chi,\mu)$ 
is given by (the holomorphic continuation of) 
\[
\Omega(s,\chi,\mu) f = \int_E \int_{E_0} f(\tl{w}^{-1} n(b,c)) \psi_E(-b) dc db.
\]
Then the local coefficient $C(s, \chi, \mu, \psi_F) = C_B(\widetilde{w},\chi|\cdot|_F^s \otimes \mu,\psi_F)$ 
is given by 
\[
\Omega(s,\chi,\mu) = C(s,\chi,\mu,\psi_F) \cdot \Omega(-s,{}^c\chi^{-1},\mu) \circ J(s,\chi,\mu) 
\]
(see \cite[p.~333, Theorem 3.1]{Sh2}).

\begin{prop}
\label{SU(2,1)}
We have
\[
C(s,\chi,\mu,\psi_F) = \lambda(E/F, \psi_F)^{-1} 
\gamma(s, \chi \tl{\mu}^{-1}, \psi_E) 
\gamma(2s, \chi_0 \eta_E, \psi_F),
\]
where $\tl{\mu}$ is the base change of $\mu$ to $E^\times$, so that
\[
\tl{\mu}(x) = \mu(x \overline{x}^{-1}),
\]
$\chi_0$ is the restriction of $\chi$ to $F^\times$, 
and $\eta_E$ is the quadratic character of $F^{\times}$ associated to $E/F$ by class field theory.
\end{prop}
\begin{proof}
Fix $\varphi \in \Sc(E)$ such that $\widehat{\varphi} \in C_c^\infty(E^\times)$ and $\phi \in \Sc(E_0)$.
We may define $f \in I(s,\chi,\mu)$ uniquely by requiring that
%$f(\1) = 0$ and
\[
f(\tl{w}^{-1} n(b,c)) = \varphi(b) \phi(c)
\]
for all $b \in E$ and $c \in E_0$.
%More precisely, 
%\[
%f\left(
%\begin{pmatrix}
%* & * & * \\
%* & * & * \\
%x & y & z
%\end{pmatrix}
%\right)
%= \left\{
%\begin{aligned}
%& \chi(\overline{x}^{-1}) \mu(\overline{x}x^{-1}) |x|_E^{-(s+1)} 
%\varphi\left(\frac{y}{x}\right) \phi\left(\frac{z}{x}-\frac{y\overline{y}}{2x\overline{x}}\right)
%\iif x \not= 0,\\
%&0 \iif x = 0.
%\end{aligned}
%\right. 
%\]
%This is indeed a smooth function on $G$
%since $g \cdot {}^t(1,0,0)$ is isotropic so that $x = 0$ implies $y=0$.
Then we have
\[
\Omega(s,\chi,\mu) f = \int_E \int_{E_0} \varphi(b) \phi(c) \psi_E(-b) dc db 
= \widehat{\varphi}(-1) \widehat{\phi}(0).
\]
On the other hand, we have
\[
\tl{w}^{-1} n(x,y) \tl{w}^{-1} = 
\begin{pmatrix}
\overline{a}^{-1} & -a^{-1} x & 1 \\
& a^{-1} \overline{a} & -\overline{x} \\
& & a
\end{pmatrix}
\tl{w}^{-1}
n \left( -\frac{x}{a}, -\frac{y}{a \overline{a}} \right)
\]
for $x \in E^\times$ and $y \in E_0$, where
\[
a = \frac{1}{2} x \overline{x} + y.
\]
For $\re(s) > 0$, we have
\begin{align*}
&J(s,\chi,\mu) f(\tl{w}^{-1} n(b,c)) \\
&= \int_E \int_{E_0} f(\tl{w}^{-1} n(x,y) \tl{w}^{-1} n(b,c)) dy dx \\
&= \int_E \int_{E_0} f\left( m(\overline{a}^{-1}) \tl{w}^{-1} n \left( -\frac{x}{a}, -\frac{y}{a \overline{a}} \right) n(b,c) \right) dy dx \\
 & = |2|_E^{-\frac{1}{2}} \int_{E^\times} \int_{E_0} f\left( m\left( \frac{2}{x \bar{x} \bar{z}} \right) \tl{w}^{-1} n \left( -\frac{2}{\overline{x} z}, -\frac{2y}{x \overline{x}  z \overline{z}} \right) n(b,c) \right) |x|^2_E dy d^\times x, 
\end{align*}
where we have changed the variable $y \mapsto 2^{-1} x \overline{x} y$, 
and we set 
\[
z = 1+y.
\] 
By further changing the variable $x \mapsto - 2 (\overline{xz})^{-1}$, 
the last integral becomes 
\[
|2|_E^{\frac{3}{2}} \int_{E^\times} \int_{E_0} f\left( m\left(\frac{x \overline{x} z}{2} \right) \tl{w}^{-1} n \left( x, -\frac{x \overline{x} y}{2} \right) n(b,c) \right) |xz|^{-2}_E dy d^\times x.
\]
Thus, noting that 
\[
n \left( x, -\frac{x \overline{x} y}{2} \right) n(b,c)
= n \left( b+x, c -\frac{x \overline{x} y}{2} + \frac{1}{2}(\overline{b} x - b \overline{x}) \right),
\]
we have
\begin{align*}
& J(s,\chi,\mu) f(\tl{w}^{-1} n(b,c)) = \chi(2)^{-1} |2|_E^{-s+\frac{1}{2}} \\
\quad & \times \int_{E^\times} \int_{E_0}
\chi(x \overline{x} z) \mu(z^{-1} \overline{z}) |x \overline{x} z|_E^{s+1}
\varphi_b(x)
\phi \left( c -\frac{x \overline{x} y}{2} + \frac{1}{2}(\overline{b} x - b \overline{x}) \right)
|x z|_E^{-2} dy d^\times x,
\end{align*}
where $\varphi_b(x) = \varphi(x+b)$.
For $0 < \re(s) < \frac{1}{2}$, the triple integral 
\[
 \int_{E_0} J(s,\chi,\mu) f(\tl{w}^{-1} n(b,c)) dc  
 = \int_{E_0} \int_{E^\times} \int_{E_0} \cdots dy d^\times x dc
\]
is absolutely convergent.
Interchanging the order of integration and integrating first with respect to $c$, we obtain
\[
\int_{E_0} \phi \left( c -\frac{x \overline{x} y}{2} + \frac{1}{2}(\overline{b} x - b \overline{x}) \right) dc
= \int_{E_0} \phi(c) dc = \widehat{\phi}(0).
\]
Hence, by Lemma \ref{lem:zeta-hks}, we have
\begin{align*}
& \int_{E_0} J(s,\chi,\mu) f(\tl{w}^{-1} n(b,c)) dc \\ 
& = \chi(2)^{-1} |2|_E^{-s+\frac{1}{2}} \widehat{\phi}(0) 
\int_{E^\times} \int_{E_0} \varphi_b(x) \tl{\chi}_0(x) (\chi \tl{\mu}^{-1})(z) |x|_E^{2s} |z|_E^{s-1} dy d^\times x \\
& = \widehat{\phi}(0) 
\frac{\gamma(2s, \chi_0, \psi_F)}{\gamma(s, \chi \tl{\mu}^{-1}, \psi_E)}
\int_{E^\times} \varphi_b(x) \tl{\chi}_0(x) |x|_E^{2s} d^\times x \\
& = \widehat{\phi}(0) 
\frac{\gamma(2s, \chi_0, \psi_F)}{\gamma(s, \chi \tl{\mu}^{-1}, \psi_E)}
Z(2s, \tl{\chi}_0, \varphi_b) \\
& = \widehat{\phi}(0) 
\frac{\gamma(2s, \chi_0, \psi_F)}{\gamma(s, \chi \tl{\mu}^{-1}, \psi_E)}
\frac{Z(1-2s, \tl{\chi}_0, \widehat{\varphi_b})}{\gamma(2s, \tl{\chi}_0, \psi_E)},
\end{align*}
where $\tl{\chi}_0$ is the base change of $\chi_0$ to $E^\times$, so that
\[
\tl{\chi}_0(x) = \chi(x \overline{x}).
\]
Moreover, by Lemma \ref{lem:zeta-fourier} and the property of $\lambda$-factors
\[
\lambda(E/F, \psi_F) \gamma(2s, \tl{\chi}_0, \psi_E) = 
\gamma(2s, \chi_0, \psi_F) \gamma(2s, \chi_0 \eta_E, \psi_F),
\]
we have
\begin{align*}
& \Omega(-s,{}^c\chi^{-1},\mu) J(s,\chi,\mu) f \\
& = \widehat{\phi}(0) 
\frac{\gamma(2s, \chi_0, \psi_F)}{\gamma(s, \chi \tl{\mu}^{-1}, \psi_E) \gamma(2s, \tl{\chi}_0, \psi_E)}
\int_E Z(1-2s, \tl{\chi}_0, \widehat{\varphi_b}) \psi_E(-b) db \\
& = \widehat{\varphi}(-1) \widehat{\phi}(0) 
\frac{\gamma(2s, \chi_0, \psi_F)}{\gamma(s, \chi \tl{\mu}^{-1}, \psi_E) \gamma(2s, \tl{\chi}_0, \psi_E)} \\
& = \widehat{\varphi}(-1) \widehat{\phi}(0) 
\frac{\lambda(E/F, \psi_F)}{\gamma(s, \chi \tl{\mu}^{-1}, \psi_E) \gamma(2s, \chi_0 \eta_E, \psi_F)}
\\&= 
\frac{\lambda(E/F, \psi_F)}{\gamma(s, \chi \tl{\mu}^{-1}, \psi_E) \gamma(2s, \chi_0 \eta_E, \psi_F)}
\Omega(s,\chi,\mu) f.
\end{align*}
Choosing $\varphi$ and $\phi$ such that $\widehat{\varphi}(-1) \widehat{\phi}(0) \not= 0$
and hence $\Omega(s,\chi,\mu) f \not= 0$,
this together with the definition of $C(s,\chi,\mu,\psi_F)$ yields the desired assertion
\[
C(s,\chi,\mu,\psi_F) = \lambda(E/F, \psi_F)^{-1} \gamma(s, \chi \tl{\mu}^{-1}, \psi_E) \gamma(2s, \chi_0 \eta_E, \psi_F).
\]
This completes the proof of Proposition \ref{SU(2,1)}.
\end{proof}

%\section{The twisted local intertwining relation}
%\section{The twisted local intertwining relation}
\section{The twisted local intertwining relation}\label{LIR_GL}
The purpose of this section is to prove Theorem \ref{main2}. 
This theorem is stated in \cite[Theorem 2.5.3]{Ar} and \cite[Proposition 3.5.1 (b)]{Mok}.
Arthur expected that this theorem (for non-tempered representations) 
would be proven by an argument ``based on some version of minimal $K$-types''.
However, this idea might require a huge amount of computation 
even if $F$ is non-archimedean and $\pi_\psi$ is unramified.
\par

To show Theorem \ref{main2}, we shall use a new approach. 
The difficulty of this theorem is that 
the linear isomorphism $\theta_A \colon \pi_\psi \xrightarrow{\sim} \pi_\psi$
is defined through the Langlands quotient map from the standard module of $\pi_\psi$. 
Our idea is to realize this Langlands quotient map as a composition of normalized intertwining operators
(see Lemma \ref{langlands} below).
Then we can show Theorem \ref{main2} by the multiplicativity of normalized intertwining operators 
(Proposition \ref{multiplicative}). 

%\subsection{Representations of general linear groups}
\subsection{Representations of general linear groups}\label{sec.GL}
Throughout this section, we write $G = \GL_N(E)$. 
For $\tau \in \Rep(G)$ and for a character $\chi$ of $E^\times$, 
we define $\tau\chi$ by $(\tau\chi)(g) = \tau(g)\chi(\det(g))$.
\par

Let $P = MN$ be a standard parabolic subgroup of $G$ with Levi subgroup 
$M \cong \GL_{k_1}(E) \times \dots \times \GL_{k_t}(E)$. 
For $\tau_i \in \Rep(\GL_{k_i}(E))$, 
we denote the normalized parabolic induction by
\[
\tau_1 \times \dots \times \tau_t = \Ind_P^G(\tau_1 \boxtimes \dots \boxtimes \tau_t). 
\]
It follows from Bernstein \cite{Ber-P} that if $\pi_M \in \Irr(M)$ is unitary, then $I_P(\pi_M) = \Ind_P^G(\pi_M)$
is an irreducible unitary representation of $G$. 
\par

Recall that a \emph{standard module} of $G$ is an induced representation of the form
\[
\tau_1|\cdot|_E^{e_1} \times \dots \times \tau_t|\cdot|_E^{e_t}, 
\]
where $\tau_i$ is an irreducible tempered representation of $\GL_{k_i}(E)$
and $e_i \in \R$ such that $e_1 > \dots > e_t$.
It has a unique irreducible quotient, called the \emph{Langlands quotient}. 
For example, if $\psi = \phi \boxtimes S_{a}$ is an $A$-parameter for $G$
with $\phi$ an irreducible representation of $L_E$ with $\phi(W_E)$ bounded, 
then $\pi_\phi$ is discrete series, and 
$\pi_\psi$ is the Langlands quotient of the standard module
\[
\II_\psi^G = 
\pi_{\phi}|\cdot|_E^{\half{a-1}} \times \pi_{\phi}|\cdot|_E^{\half{a-3}} \times \dots \times \pi_{\phi}|\cdot|_E^{-\half{a-1}}. 
\]
In this situation, we write 
\[
\pi_\psi = \Speh(\pi_\phi, a)
\]
and call it a \emph{Speh representation}.
More generally, 
if $\psi = \oplus_{i=1}^t \phi_i \boxtimes S_{a_i}$ is an irreducible decomposition of an $A$-parameter for $G$, 
then
\[
\pi_\psi = \bigtimes_{i=1}^t \Speh(\pi_{\phi_i}, a_i)
\]
and its standard module is 
\[
\II_\psi^G = \bigtimes_{i=1}^t \bigtimes_{e_i = 1}^{a_i} \pi_{\phi_i}|\cdot|_E^{\half{a_i+1}-e_i}, 
\]
where the product is taken in decreasing order of the exponents. 
\par

\begin{lem}\label{End}
Let $\Pi$ be a standard module of $G$. 
Then 
\[
\dim_\C (\End_G(\Pi)) = 1.
\]
\end{lem}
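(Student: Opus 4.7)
The plan is to reduce the computation of $\End_G(\Pi)$ to the one-dimensional space $\Hom_G(\Pi,\pi)$, where $\pi$ denotes the Langlands quotient of $\Pi$. The two classical inputs I need from the Langlands classification are: (i) the standard module $\Pi$ admits $\pi$ as its unique irreducible quotient, so that by Schur's lemma $\Hom_G(\Pi,\pi) = \C \cdot q$ for the canonical projection $q \colon \Pi \twoheadrightarrow \pi$; and (ii) $\pi$ occurs with multiplicity one in the composition series of $\Pi$.

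Granting these, I would consider the linear map
\[
\End_G(\Pi) \longrightarrow \Hom_G(\Pi,\pi), \qquad \phi \longmapsto q \circ \phi.
\]
It sends $\mathrm{id}_\Pi$ to $q \neq 0$, so the map is already surjective onto $\C \cdot q$. To finish, I would show it is injective. Suppose $q \circ \phi = 0$ for some $\phi \in \End_G(\Pi)$; then $\mathrm{Im}(\phi) \subseteq \ker(q)$. If $\phi$ were nonzero, then $\mathrm{Im}(\phi) = \Pi/\ker(\phi)$ would be a nonzero finite-length quotient of $\Pi$, and would therefore admit some irreducible quotient; by (i) this quotient must be $\pi$. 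Hence $\pi$ would appear as a subquotient of $\ker(q)$. But $\Pi/\ker(q) = \pi$ already accounts for the unique copy of $\pi$ provided by (ii), a contradiction. So $\phi = 0$, establishing injectivity and hence $\dim_\C \End_G(\Pi) = 1$.

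The main difficulty is not the argument above, which is short, but rather ensuring that inputs (i) and (ii) apply uniformly in both the non-archimedean setting (where one cites Silberger, or Konno's refinement of the Langlands classification) and the archimedean setting (where one uses the Langlands classification together with the Casselman--Wallach smooth Fr\'echet formalism, as in Borel--Wallach). Since both inputs are entirely standard in the literature, I anticipate no conceptual obstruction and would simply quote the appropriate references rather than reproving them.
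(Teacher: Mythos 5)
Your proof is correct and takes essentially the same approach as the paper: use multiplicity one of the Langlands quotient $\pi$ in $\Pi$ (citing Borel--Wallach) to produce a linear injection of $\End_G(\Pi)$ into a space controlled by Schur's lemma. The only cosmetic difference is that you map into $\Hom_G(\Pi,\pi)$ while the paper maps into $\End_G(\pi)$; the argument is otherwise identical.
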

\begin{proof}
This is a consequence of the famous fact that 
the Langlands quotient $\pi$ of $\Pi$ appears in $\Pi$ as a subquotient with multiplicity one
(See, e.g., \cite[Chapter XI, Lemma 2.13]{BW}).
This fact induces an injective linear map
\[
\End_G(\Pi) \rightarrow \End_G(\pi).
\]
Since $\dim_\C (\End_G(\pi)) = 1$ by Schur's lemma, we obtain the assertion.
\end{proof}

For the rest of this subsection, we assume that $E$ is non-archimedean. 
We will identify an irreducible unitary supercuspidal representation $\rho$ of $\GL_d(E)$
with the irreducible $d$-dimensional bounded representation of $W_E$ by the LLC for $\GL_d(E)$. 
\par

Recall that a \emph{segment} is a set of the form 
\[
[x,y]_\rho = \{\rho|\cdot|_E^x, \rho|\cdot|_E^{x+1}, \dots, \rho|\cdot|_E^y\},
\]
where $\rho$ is an irreducible unitary supercuspidal representation of $\GL_d(E)$
and $x \leq y$ are real numbers such that $x \equiv y \bmod \Z$.
One can attach to it 
two irreducible representations $\Delta([x,y]_\rho)$ and $Z([x,y]_\rho)$ of $\GL_{d(y-x+1)}(E)$, 
which are the unique irreducible subrepresentation and the unique irreducible quotient of the standard module
\[
\rho|\cdot|_E^y \times \dots \times \rho|\cdot|_E^{x+1} \times \rho|\cdot|_E^x,
\]
respectively. 
We call $\Delta([x,y]_\rho)$ a \emph{(generalized) Steinberg representation}. 
Note that $\Delta([x,y]_\rho)$ is an essentially discrete series representation, 
and all essentially discrete series representations are of this form (see \cite[Theorem 9.3]{Z}).
Similarly, any irreducible tempered representation is a product of 
representations of the form $\Delta([-x,x]_\rho)$.
On the other hand, by definition, we have $Z([-x,x]_\rho) = \pi_{\rho \boxtimes S_1 \boxtimes S_{2x+1}}$ if $2x \in \Z$.
Recall from \cite[Theorem 9.7]{Z} that the following are equivalent: 
\begin{itemize}
\item
The induction $\Delta([x,y]_\rho) \times \Delta([x',y']_{\rho'})$ is reducible; 
\item
$\Delta([x,y]_\rho) \times \Delta([x',y']_{\rho'}) \not\cong \Delta([x',y']_{\rho'}) \times \Delta([x,y]_\rho)$; 
\item
the two segments $[x,y]_\rho$ and $[x',y']_{\rho'}$ are \emph{linked}, i.e., 
$[x,y]_\rho \cup [x',y']_{\rho'}$ is also a segment, 
and $[x,y]_\rho \not\subset [x',y']_{\rho'}$ and $[x,y]_\rho \not\supset [x',y']_{\rho'}$.
\end{itemize}
\par

A \emph{multi-segment} is a formal finite sum of segments.
For a multi-segment $\mm$, 
writing $\mm = [x_1,y_1]_{\rho_1} + \dots + [x_r,y_r]_{\rho_r}$ with $x_1+y_1 \geq \dots \geq x_r+y_r$, 
we set 
\[
\II(\mm) = \Delta([x_1,y_1]_{\rho_1}) \times \dots \times \Delta([x_r,y_r]_{\rho_r}).
\]
This is a standard module. 
For example, 
let $\psi = \rho \boxtimes S_{2\alpha+1} \boxtimes S_{2\beta+1}$ be an $A$-parameter for $G = \GL_N(E)$. 
If we set
\[
\mm = [-\alpha+\beta,\alpha+\beta]_{\rho} + [-\alpha+\beta-1,\alpha+\beta-1]_{\rho} 
+ \dots + [-\alpha-\beta,\alpha-\beta]_{\rho}, 
\]
then the standard module $\II_\psi^G$ of $\pi_{\psi}$ is equal to $\II(\mm)$. 

\begin{lem}\label{psiD}
Let $\psi$ be an $A$-parameter for $G = \GL_N(E)$. 
Then the standard module $\II_\psi^G$ of $\pi_\psi$ 
contains an irreducible tempered representation $\pi_{\psi^D}$ as a subrepresentation, 
where $\psi^D \colon W_E \times \SL_2(\C) \rightarrow \GL_N(\C)$ is given by 
$\psi^D(w,\alpha) = \psi(w, \alpha,\alpha)$.
Moreover, $\pi_{\psi^D}$ appears in $\II_\psi^G$ as a subquotient with multiplicity one.
\end{lem}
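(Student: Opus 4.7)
My plan is to verify the lemma by combining Zelevinsky's classification of $\Irr(\GL_N(E))$ with the heredity of Whittaker functionals. Decompose $\psi = \bigoplus_i \rho_i \boxtimes S_{b_i} \boxtimes S_{a_i}$ so that $\phi_i = \rho_i \boxtimes S_{b_i}$ and $\pi_{\phi_i} = \Delta([-(b_i-1)/2, (b_i-1)/2]_{\rho_i})$ is a Steinberg. Since $\psi^D(w,\alpha) = \psi(w,\alpha,\alpha)$ and $S_{b_i} \otimes S_{a_i}$ decomposes via Clebsch--Gordan as $\bigoplus_{j=0}^{\min(a_i,b_i)-1} S_{a_i + b_i - 1 - 2j}$, the parameter $\psi^D$ is a tempered $L$-parameter, and explicitly
\[
\pi_{\psi^D} \;=\; \prod_{i}\prod_{j=0}^{\min(a_i,b_i)-1} \Delta\bigl([-(a_i+b_i-2)/2 + j,\; (a_i+b_i-2)/2 - j]_{\rho_i}\bigr).
\]
In particular $\pi_{\psi^D}$ is irreducible tempered and $\ww$-generic, and a direct count of cuspidal supports shows that $\pi_{\psi^D}$ and $\II_\psi^G$ lie in the same Bernstein block.

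To construct the embedding $\pi_{\psi^D} \hookrightarrow \II_\psi^G$, I would invoke Bernstein's second adjunction. Writing $\II_\psi^G = \Ind_P^G(\sigma_M)$ with $\sigma_M = \bigotimes_{i,e} \pi_{\phi_i}|\cdot|_E^{(a_i+1)/2-e}$ and $P = MN$ the corresponding parabolic,
\[
\Hom_G(\pi_{\psi^D},\, \II_\psi^G) \;=\; \Hom_M\bigl(\Jac_{\overline{P}}(\pi_{\psi^D}),\, \sigma_M\bigr).
\]
The Jacquet module on the right is computable by the Bernstein--Zelevinsky geometric lemma applied to the product-of-Steinbergs realization of $\pi_{\psi^D}$. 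A combinatorial inspection of the rectangular multi-segment defining $\II_\psi^G$ --- relating its $a_i$ rows of length $b_i$ to the ``diagonal'' Steinberg constituents of $\pi_{\psi^D}$ of lengths $a_i + b_i - 1$, $a_i + b_i - 3$, $\ldots$ --- exhibits $\sigma_M$ as a quotient of $\Jac_{\overline{P}}(\pi_{\psi^D})$ coming from the specific Weyl double coset that implements this diagonal-to-row reshuffling. The resulting non-zero $G$-equivariant map $\pi_{\psi^D} \to \II_\psi^G$ is automatically an embedding by irreducibility of $\pi_{\psi^D}$.

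For the multiplicity-one statement I would appeal to heredity of Whittaker functionals. The datum $\sigma_M$ is an external tensor product of Steinbergs and is therefore $\ww_M$-generic; by heredity, $\II_\psi^G$ carries a one-dimensional space of $\ww$-Whittaker functionals. This bounds the multiplicity of any generic irreducible subquotient of $\II_\psi^G$ by one, so the genericity of $\pi_{\psi^D}$ together with the embedding just constructed forces $\pi_{\psi^D}$ to appear in the Jordan--H\"older series of $\II_\psi^G$ with multiplicity exactly one.

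The main obstacle is the combinatorial step inside the Jacquet module. The geometric lemma exhibits $\Jac_{\overline{P}}(\pi_{\psi^D})$ as a filtered object whose successive quotients are parabolic inductions indexed by double cosets of Weyl groups, and isolating the unique double coset whose contribution projects onto $\sigma_M$ requires a careful rectangle-to-diagonal analysis. A cleaner alternative, at the cost of invoking heavier structural input, is the general fact --- derivable from Zelevinsky's algorithm for composition series of standard modules --- that the socle of any Langlands standard module for $\GL_N(E)$ is an irreducible generic representation; under this fact the identification of the socle of $\II_\psi^G$ with $\pi_{\psi^D}$ is forced by the cuspidal-support match and genericity established in the first paragraph.
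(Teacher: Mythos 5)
Your ``cleaner alternative'' at the end \emph{is} the paper's proof: the authors cite Jacquet--Shalika for the fact that the unique irreducible subrepresentation of a standard module for $\GL_N$ is generic, then Zelevinsky's Theorem 9.7 to see that a generic irreducible tempered representation is a product of unlinked Steinbergs and hence is pinned down by its cuspidal support, which matches that of $\II_\psi^G$, giving $\pi_{\psi^D}$. Your primary route---second adjunction, geometric lemma, isolating the right Weyl double coset---is a genuinely different strategy, but you rightly flag it as an obstacle and do not carry it through; it would require a non-trivial rectangle-to-diagonal combinatorial verification that the paper avoids entirely by invoking uniqueness of the generic socle. So the embedding part of your argument is only complete when you fall back on the alternative. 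For the multiplicity-one statement the two proofs genuinely diverge: the paper cites Zelevinsky's Proposition 8.4 (multiplicity one of the Langlands-constituent), whereas you use heredity of Whittaker functionals plus exactness of the $(U,\chi)$-twisted Jacquet functor (so that $\dim\Hom_U(-,\chi)$ is additive on Jordan--H\"older series), which bounds the multiplicity of the generic constituent $\pi_{\psi^D}$ by one. Your heredity argument is correct and arguably more self-contained; it buys independence from Zelevinsky's specific multiplicity proposition at the cost of invoking exactness of the twisted Jacquet functor. Your explicit Clebsch--Gordan computation of $\pi_{\psi^D}$ as a nested product $\prod_i\prod_j \Delta([-(a_i+b_i-2)/2+j,(a_i+b_i-2)/2-j]_{\rho_i})$ is also correct (the segments are nested, hence unlinked, hence the product is irreducible), though the paper does not need this explicit description since it identifies the socle indirectly.
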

\begin{proof}
By \cite{JS}, 
the unique irreducible subrepresentation of $\II^G_\psi$ is generic. 
By \cite[Theorem 9.7]{Z}, it is an irreducible product of generalized Steinberg representations. 
In particular, it is determined uniquely by its cuspidal support.
As this cuspidal support is the same as that of $\pi_\psi$, 
we deduce that the unique irreducible subrepresentation of $\II^G_\psi$ is $\pi_{\psi^D}$.
Finally, the multiplicity one statement follows from \cite[Proposition 8.4]{Z}.
\end{proof}

%\subsection{The tempered case}
\subsection{The tempered case}
Let $G = \GL_N(E)$ with an involution $\theta$ defined in Section \ref{sec.thetaA}. 
Fix a standard parabolic subgroup $P=MN_P$. 
In this subsection, 
we will prove Theorem \ref{main2} for tempered representations, i.e., for generic (or tempered) $\psi$. 
Thus we consider an irreducible tempered representation $\pi$ of $M$, 
and $w \in W(\theta(M),M)$ such that $w(\pi \circ \theta) \cong \pi$. 

\par
Since $\pi$ is tempered, it is $\ww_M$-generic. 
Fix a non-trivial $\ww_M$-Whittaker functional $\omega$ on $\pi$. 
Then $I_P(\pi)$ is an irreducible $\ww$-generic representation of $G$ 
with the $\ww$-Whittaker functional $\Omega(\pi)$ induced by $\omega$ as in Section \ref{sec.A27}. 
By definition (see Section \ref{sec.thetaA}), $\theta_A = \theta_W$ is the unique linear isomorphism 
$\theta_A \colon I_P(\pi) \xrightarrow{\sim} I_P(\pi)$
such that
\begin{align*}
\theta_A \circ I_P(\pi)(h) &= I_P(\pi)(\theta(h)) \circ \theta_A, \quad h \in \GL_N(E), \\
\Omega(\pi) \circ \theta_A &= \Omega(\pi).
\end{align*}
In Section \ref{sec.main2}, we already argued that 
$\tl{R}_P(\theta \circ w, \tl{\pi})$ is a constant multiple of $\theta_A$, 
so the equation $\tl{R}_P(\theta \circ w, \tl{\pi}) = \theta_A$
would follow from $\Omega(\pi) \circ \tl{R}_P(\theta \circ w, \tl{\pi}) = \Omega(\pi)$. 
\par

Recall from Section \ref{sec.main2} that 
\[
\tl{R}_P(\theta \circ w, \tl{\pi}) 
= 
I_P(\tl\pi(w \rtimes \theta)) \circ R_{\theta(P)}(w, \pi \circ \theta) \circ \theta^*. 
\]
So it suffices to check that the three squares in the diagram 
\[
\begin{CD}
I_P(\pi) @>\theta^*>> I_{\theta(P)}(\pi \circ \theta)
@>{R_{\theta(P)}(w, \pi \circ \theta)}>> I_{P}(w(\pi \circ \theta))
@>{I_P(\tl\pi(w \rtimes \theta))}>> I_P(\pi) \\
@V\Omega(\pi)VV @V\Omega(\pi \circ \theta)VV 
@VV\Omega(w(\pi \circ \theta))V @VV\Omega(\pi)V\\
\C @= \C @= \C @= \C
\end{CD}
\]
are all commutative.
Here, we note that all these $\Omega$ are induced by the same linear functional $\omega$.
The commutativity of the middle square is nothing but Theorem \ref{main1} (2), 
whereas the one for the right square follows from $\omega \circ \tl\pi(w \rtimes \theta) = \omega$, 
which is the definition of the normalization of $\tl\pi(w \rtimes \theta)$.
\par

We show the commutativity of the left square. 

\begin{lem}\label{tits}
For $w \in W^G$, we have 
\[
\theta(\tl{w}) =\tl{\theta(w)}.
\]
\end{lem}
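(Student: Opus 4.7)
The plan is to deduce this from the fact that the involution $\theta$ is a pinned automorphism of $G = \GL_N(E)$ with respect to the fixed splitting $\spl = (B^\circ, T^\circ, \{X_\alpha\})$, and then invoke functoriality of the Langlands--Shelstad (Tits) lifting.

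First I would verify that $\theta$ preserves $\spl$. Writing $\theta(x) = J\,{}^t\bar{x}^{-1} J^{-1}$ with $J$ the anti-diagonal matrix with entries $(-1)^{N-1}, (-1)^{N-2}, \dots, 1$, one checks directly that $\theta$ stabilizes $B^\circ$ and $T^\circ$, that on $T^\circ$ it is the composition of complex conjugation (which acts trivially on $T^\circ$ since our standard $T^\circ$ has entries in $\Q$) and the Chevalley inversion $t \mapsto {}^tt^{-1}$ followed by conjugation by $J$, and that it permutes the simple root vectors: if $\alpha_i$ is the $i$-th standard simple root, then $\theta(\alpha_i) = \alpha_{N-i}$ and $\theta(X_{\alpha_i}) = X_{\alpha_{N-i}}$ (the sign choices in $J$ are precisely arranged to make this true, rather than introducing a minus sign). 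In particular $\theta(X_{-\alpha}) = X_{-\theta(\alpha)}$ as well.

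Next, I would reduce to the case of a simple reflection. Recall from \cite[p.~228]{LS} that for any simple root $\alpha$, the Tits representative is
\[
\tl{w}_\alpha = \exp(X_\alpha)\exp(-X_{-\alpha})\exp(X_\alpha),
\]
and for a reduced expression $w = w_{\alpha_{i_1}} \cdots w_{\alpha_{i_r}}$ one has $\tl{w} = \tl{w}_{\alpha_{i_1}} \cdots \tl{w}_{\alpha_{i_r}}$, independently of the choice of reduced expression. Since $\theta$ is a pinned automorphism, it preserves lengths in the Weyl group, so $\theta(w) = w_{\theta(\alpha_{i_1})} \cdots w_{\theta(\alpha_{i_r})}$ is again a reduced expression. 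Applying $\theta$ to the product formula for $\tl{w}$ and using that $\theta$ is a group homomorphism commuting with $\exp$, the problem reduces to the identity
\[
\theta(\tl{w}_\alpha) = \tl{w}_{\theta(\alpha)},
\]
which is immediate from the formula above together with $\theta(X_{\pm\alpha}) = X_{\pm\theta(\alpha)}$.

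Combining the two steps gives $\theta(\tl{w}) = \tl{\theta(w)}$ for every $w \in W^G$. The only genuine content is the first step, namely the verification that the specific matrix involution $\theta$ of Section \ref{sec.thetaA} is a pinned automorphism for the standard splitting of $\GL_N(E)$; this is the point where the particular signs in the anti-diagonal matrix $J$ matter, but it is a direct matrix computation. Once this is in hand, the lemma is formal.
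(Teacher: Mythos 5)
Your proof is correct and takes essentially the same route as the paper's: reduce to simple reflections via the Tits product formula and then use that $\theta$ preserves the fixed splitting $\spl$, hence permutes the $X_{\pm\alpha}$, so that $\theta(\tl{w}_\alpha) = \tl{w}_{\theta(\alpha)}$ follows from the exponential formula. The paper's proof is terser—it simply cites that $\spl$ is $\theta$-stable without spelling out the matrix verification or the remark that $\theta$ carries reduced words to reduced words—but these are exactly the details you filled in, not a different argument.
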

\begin{proof}
We recall the definition of the Tits lifting $\tl{w} \in G$ of $w \in W^G$.
If $w = w_{\alpha_1} \cdots w_{\alpha_k}$ is a reduced decomposition relative to the simple roots of $(G,T)$, 
where $w_\alpha$ is a simple reflection with respect to a simple root $\alpha$, 
then $\tl{w}$ is defined by $\tl{w} = \tl{w}_{\alpha_1} \cdots \tl{w}_{\alpha_k}$. 
Hence we may assume that $w = w_\alpha$ for some simple root $\alpha$. 
Then $\tl{w}_\alpha$ is defined by
\[
\tl{w}_\alpha = \exp(X_\alpha) \exp(-X_{-\alpha}) \exp(X_\alpha), 
\]
where $X_\alpha$ is already given as we fix a splitting $\spl$, and 
$X_{-\alpha}$ is the root vector for $-\alpha$ such that 
$H_\alpha = [X_\alpha, X_{-\alpha}]$ is the coroot for $\alpha$.
Since $\spl$ is $\theta$-stable, the claim follows.
\end{proof}

Recall that $\Omega(\pi)$ is defined by the holomorphic continuation of the Jacquet integral
\[
\Omega(\pi_\lambda) f = \int_{N'} \omega(f(\tl{w}_0^{-1} n')) \chi(n')^{-1} dn'.
\]
Here $N' = \tl{w}_0 \overline{N} \tl{w}_0^{-1}$ with $w_0 = w_\ell w_\ell^M$, 
where $w_\ell$ and $w_\ell^M$ are the longest elements in $W^{G}$ and $W^{M}$, respectively, 
and $\chi$ is (the restriction of) the non-degenerate character of 
the unipotent radical $U$ of the Borel subgroup $B$
given by $\spl$ and $\psi_F$. 
Note that $\theta(w_\ell) = w_\ell$, $\theta(w_\ell^M) = w_\ell^{\theta(M)}$ 
and $\chi \circ \theta = \chi$.
Hence, Lemma \ref{tits} implies that 
$\Omega(\pi \circ \theta) \circ \theta^* = \Omega(\pi)$. 
This completes the proof of Theorem \ref{main2} for tempered representations. 

%\subsection{Construction of the Langlands quotient map}
\subsection{Construction of the Langlands quotient map}
Fix a standard parabolic subgroup $P=MN_P$ of $G = \GL_N(E)$.
Let $\psi$ be an $A$-parameter for $M$, 
and let $\pi_\psi$ be the associated irreducible unitary representation of $M$.
Assume that there is $w \in W(\theta(M),M)$ such that $w(\pi_\psi \circ \theta) \cong \pi_\psi$, 
and we fix such an element $w$ in this and next subsections.
\par

Note that $I_P(\pi_\psi)$ is irreducible. 
Let $\II_\psi^G$ be a standard module of $G$ whose Langlands quotient is $I_P(\pi_\psi)$. 
Recall from Section \ref{sec.thetaA} that 
a Whittaker functional $\Omega$ on $\II_\psi^G$ defines 
a linear isomorphism $\theta_W \colon \II_\psi^G \xrightarrow{\sim} \II_\psi^G$, 
which induces $\theta_A \colon \pi_\psi \xrightarrow{\sim} \pi_\psi$. 
To show Theorem \ref{main2} for $\pi_\psi$, 
we shall carefully construct $\II_\psi^G$.
\par

Set $V = E^N$ so that $G = \GL(V)$. 
Decompose $V$ into a direct sum
\[
V = V^{(1)} \oplus \dots \oplus V^{(t)}
\]
such that $M$ is the subgroup of $G$ stabilizing $V^{(i)}$ for $i = 1,\dots,t$.
Hence $M \cong G^{(1)} \times \dots \times G^{(t)}$, where $G^{(i)} = \GL(V^{(i)})$. 
\par

Recall that $\psi$ is an $A$-parameter for $M$.
It can be decomposed as 
\[
\psi = \psi^{(1)} \oplus \dots \oplus \psi^{(t)}, 
\]
where $\psi^{(i)}$ is an $A$-parameter for $G^{(i)}$.
Consider the decomposition of $\psi^{(i)}$ into irreducible representations: 
\[
\psi^{(i)} = \bigoplus_{j=1}^{m_i} \psi_j^{(i)}.
\]
Corresponding to this decomposition, 
we can also decompose $V^{(i)}$ as
\[
V^{(i)} = \bigoplus_{j=1}^{m_i} V_j^{(i)}
\]
such that $\dim_\C(\psi_j^{(i)}) = \dim_E(V_j^{(i)})$.
\par

Define a representation $\phi_{\psi_j^{(i)}}$ of $L_E$ by 
\[
\phi_{\psi_j^{(i)}}(w) = \psi_j^{(i)}
\left(w, \begin{pmatrix}
|w|_E^{\half{1}} & 0 \\ 0 & |w|_E^{-\half{1}}
\end{pmatrix} \right).
\]
If we write $\psi_j^{(i)} = \phi_j^{(i)} \boxtimes S_d$, 
where $\phi_j^{(i)}$ is an irreducible representation of $L_E$
with $\phi_j^{(i)}(W_E)$ bounded, and $d = d_j^{(i)} \geq 1$, 
then we have
\[
\phi_{\psi_j^{(i)}} = \bigoplus_{k=1}^{d} \phi_j^{(i)}|\cdot|_E^{\half{d+1}-k}.
\]
Corresponding to this decomposition, 
we can decompose $V_j^{(i)}$ as
\[
V_j^{(i)} = \bigoplus_{k=1}^{d} V_j^{(i)}\left(\half{d+1}-k\right)
\]
such that $\dim_E(V_j^{(i)}(\half{d+1}-k)) = \dim_\C(\phi_j^{(i)}|\cdot|_E^{\half{d+1}-k}) = \dim_\C(\phi_j^{(i)})$.
Fix a linear isomorphism $V_j^{(i)}(\half{d+1}-k) \cong V_j^{(i)}(\half{d+1}-k')$ for $1 \leq k,k' \leq d$.
When $\alpha \in (1/2)\Z$ satisfies 
$\alpha \not\equiv \half{d+1} \bmod \Z$ or $|\alpha| \geq \half{d+1}$, 
we formally set $V_j^{(i)}(\alpha) = 0$.
\par

Finally, we define 
\[
V^{(i)}(\alpha) = \bigoplus_{j=1}^{m_i} V_j^{(i)}(\alpha).
\]
After all, we obtain a decomposition 
\[
V = \bigoplus_{i=1}^t \bigoplus_{\alpha \in (1/2)\Z} V^{(i)}(\alpha). 
\]
Consider the finite set
\[
\VV = \left\{ V^{(i)}(\alpha) \,\middle|\, 
1 \leq i \leq t,\, \alpha \in (1/2)\Z
\text{ such that } V^{(i)}(\alpha) \not= 0 \right\}.
\]
We can define two total orders $\prec_1$ and $\prec_2$ on $\VV$ as follows.
Firstly, $V^{(i)}(\alpha) \prec_1 V^{(i')}(\alpha')$ if and only if 
\begin{itemize}
\item
$i < i'$; or
\item
$i=i'$ and $\alpha > \alpha'$.
\end{itemize}
Secondly, $V^{(i)}(\alpha) \prec_2 V^{(i')}(\alpha')$ if and only if 
\begin{itemize}
\item
$\alpha > \alpha'$; or
\item
$\alpha = \alpha'$ and $i < i'$.
\end{itemize}
If we write $\VV = \{V_1, \dots, V_r\} = \{V'_1, \dots, V'_r\}$ 
with $V_1 \prec_1 \dots \prec_1 V_r$ and $V'_1 \prec_2 \dots \prec_2 V'_r$, 
we define two parabolic subgroups $P_1 = M_1 N_{P_1}$ and $P'_1 = M_1 N_{P'_1}$ as 
the stabilizers of the flags
\[
V_1 \subset V_1 \oplus V_2 \subset \dots \subset V_1 \oplus \dots \oplus V_r
\]
and 
\[
V'_1 \subset V'_1 \oplus V'_2 \subset \dots \subset V'_1 \oplus \dots \oplus V'_r, 
\]
respectively. 
Here, $M_1$ is the stabilizer of elements $V_i \in \VV$, 
which is a common Levi subgroup of $P_1$ and $P_1'$. 
Note that $P_1$ is contained in $P$.
We may assume that $P_1$ is standard, but $P_1'$ is not standard in general.
Let $w_2 \in W^{G}$ be such that 
$\tl{w}_2^{-1}P_1'\tl{w}_2 = P_2=M_2N_{P_2}$ is a standard parabolic subgroup, 
where $\tl{w}_2M_2\tl{w}_2^{-1} = M_1$.
We regard $w_2$ as an element in $W(M_2,M_1)$. 
\par

Let $\tau_j^{(i)}$ be the irreducible discrete series representation corresponding to $\phi_j^{(i)}$.
We regard $\tau_j^{(i)}|\cdot |_E^\alpha$
as a representation of $\GL(V_j^{(i)}(\alpha))$ if $V_j^{(i)}(\alpha) \not= 0$.
(If $V_j^{(i)}(\alpha) = 0$, then we interpret $\tau_j^{(i)}|\cdot|_E^\alpha$ 
to be the trivial representation of the trivial group $\GL(V_j^{(i)}(\alpha)) = \GL_0(E)$.)
By induction, we obtain an essentially tempered representation 
\[
\tau^{(i)}|\cdot|_E^\alpha = \bigtimes_{j=1}^{m_i} \tau_j^{(i)}|\cdot|_E^\alpha
\] 
of $\GL(V^{(i)}(\alpha))$.
We consider an essentially tempered representation 
\[
\tau_1 = \bigotimes_{i,\alpha} \tau^{(i)}|\cdot|_E^\alpha
\]
of $M_1$, 
and set 
\[
\tau_2 = w_2^{-1}\tau_1,
\]
which is an essentially tempered representation of $M_2$.
\par

Note that $P_1 \subset P$ and $M_1 \subset M$. 
Recall that $\pi_\psi \in \Irr(M)$ is the representation corresponding to $\phi_\psi$ via the LLC for $M$. 
It is the Langlands quotient of the standard module
\[
\II_\psi^M = \Ind_{P_1 \cap M}^{M}\left( \tau_1 \right)
\]
of $M$. 
In particular, $I_P(\pi_\psi)$ is a quotient of 
$I_P(\II_\psi^M) = I_{P_1}(\tau_1)$. 
Moreover, there is $w_1 \in W(M_1)$ with $\tl{w}_1 \in M$ such that 
the image of the normalized intertwining operator 
\[
R_{P_1}(w_1, \tau_1) \colon I_{P_1}(\tau_1) \rightarrow I_{P_1}(w_1\tau_1)
\] 
is isomorphic to $I_P(\pi_\psi)$. 
Hereafter, we identify $I_P(\pi_\psi)$ with this subspace, 
i.e., $I_P(\pi_\psi)$ is realized as the image of $R_{P_1}(w_1, \tau_1)$. 
Hence we obtain a surjection 
\[
R_{P_1}(w_1, \tau_1) \colon I_{P_1}(\tau_1) \twoheadrightarrow I_P(\pi_\psi) \subset I_{P_1}(w_1\tau_1).
\]
\par

On the other hand, 
as the unitary induction preserves the irreducibility for general linear groups, 
we see that $I_P(\pi_\psi)$ is the irreducible representation of $G$, 
which corresponds to $\phi_\psi$, regarded as an $L$-parameter of $G$, via the LLC for $G$. 
It is the Langlands quotient of the standard module $\II_\psi^G = I_{P_2}(\tau_2)$. 
Since $\tau_1 = w_2 \tau_2$, 
we have a normalized intertwining operator 
\[
R_{P_2}(w_2, \tau_2) \colon I_{P_2}(\tau_2) \rightarrow I_{P_1}(\tau_1).
\]

\begin{lem}\label{langlands}
The composition 
\[
I_{P_2}(\tau_2) \xrightarrow{R_{P_2}(w_2, \tau_2)} I_{P_1}(\tau_1)
\xrightarrow{R_{P_1}(w_1, \tau_1)} I_P(\pi_\psi)
\]
is well-defined and nonzero.
In particular, this composition realizes the Langlands quotient map. 
Namely, $I_{P_2}(\tau_2)$ is the standard module of $I_P(\pi_\psi)$, and 
the above composition is surjective.

\end{lem}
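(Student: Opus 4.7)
The plan is to invoke Proposition \ref{multiplicative} to rewrite the composition as a single normalized intertwining operator, then combine the Langlands classification for $\GL_N(E)$ with a Whittaker functional argument. First I would apply Proposition \ref{multiplicative} to $w_2 \in W(M_2, M_1)$ and $w_1 \in W(M_1)$, using that $w_2 \tau_2 = \tau_1$, to obtain
\[
R_{P_1}(w_1, \tau_1) \circ R_{P_2}(w_2, \tau_2) = R_{P_2}(w_1 w_2, \tau_2).
\]
Each factor on the left is well-defined because $\tau_1$ and $\tau_2$ are essentially tempered and the normalized intertwining operator is regular at essentially tempered parameters in the standard chamber (by \cite[Proposition 2.3.1]{Ar} together with standard analytic continuation of tempered intertwining operators). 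The identity above then reduces both the well-definedness and the non-vanishing of the composition to the single operator on the right.

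Next I would identify $I_{P_2}(\tau_2)$ as the standard module of $I_P(\pi_\psi)$. By construction the factors of $\tau_2$ are arranged according to the $\prec_2$-ordering, so the exponents of the tempered twists are in (weakly) decreasing order; thus $I_{P_2}(\tau_2)$ is a standard module in the sense of Section \ref{sec.GL}. Its Langlands quotient is determined by the $L$-parameter $\phi_\psi$, which under the LLC for $\GL_N(E)$ is also the $L$-parameter of $I_P(\pi_\psi)$, so $I_P(\pi_\psi)$ is the Langlands quotient of $I_{P_2}(\tau_2)$. On the other hand, the choice of $w_1$ ensures that the image of $R_{P_1}(w_1, \tau_1)$ inside $I_{P_1}(w_1\tau_1)$ is exactly $I_P(\pi_\psi)$, so the image of the composition lies inside the irreducible representation $I_P(\pi_\psi)$. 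Once the composition is shown to be nonzero it must surject onto $I_P(\pi_\psi)$, and by Lemma \ref{End} it agrees with the Langlands quotient map up to a scalar.

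To establish non-vanishing I would pass to Whittaker models. Since $\tau_2$ is a product of essentially discrete series of general linear groups it is $\ww_{M_2}$-generic, and the Jacquet integral yields a nonzero Whittaker functional $\Omega(\tau_2)$ on $I_{P_2}(\tau_2)$. Combining the defining identity of Shahidi's local coefficient with Arthur's normalization gives
\[
\Omega(w_1 w_2 \tau_2) \circ R_{P_2}(w_1 w_2, \tau_2) = \bigl[C_{P_2}(w_1 w_2, \tau_2) \cdot r_{P_2}(w_1 w_2, \phi_{\tau_2})\bigr]^{-1} \Omega(\tau_2).
\]
By the factorization of Section \ref{sec.factorial} together with Lemma \ref{GL-maximal}, the bracketed scalar is an explicit product of ratios of tempered Rankin--Selberg $L$-values evaluated at arguments with positive real part; these are holomorphic and nonvanishing in the region of absolute convergence. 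Hence the right-hand side is a nonzero functional, which forces $R_{P_2}(w_1 w_2, \tau_2) \neq 0$.

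The main obstacle I anticipate is the control of the scalar $C_{P_2}(w_1 w_2, \tau_2) \cdot r_{P_2}(w_1 w_2, \phi_{\tau_2})$ at the non-tempered parameter represented by $\tau_2$, since Theorem \ref{main1} handles directly only the case $w\tau \cong \tau$ and the swaps occurring in $w_1 w_2$ need not satisfy this. A careful bookkeeping of the exponent shifts introduced by the $\prec_1$- and $\prec_2$-orderings, together with the Jacquet--Shalika nonvanishing for Rankin--Selberg $L$-factors of tempered $\GL$-representations, should dispose of this; a fallback would be to argue via the standard fact that the unnormalized long intertwining operator out of the standard module is nonzero, so $R_{P_2}(w_1 w_2, \tau_2)$ differs from it by a finite nonzero normalizing scalar.
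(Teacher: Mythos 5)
The key step of your non-vanishing argument fails in the non-tempered case, which is precisely the case of interest. You propose to show $R_{P_2}(w_1 w_2, \tau_2) \neq 0$ by applying a Whittaker functional $\Omega(w_1 w_2 \tau_2)$ on the target $I_{P_1}(w_1\tau_1)$, and you claim the resulting scalar $[C_{P_2}(w_1 w_2, \tau_2) \cdot r_{P_2}(w_1 w_2, \phi_{\tau_2})]^{-1}$ is nonzero. But the image of the composition is $I_P(\pi_\psi)$, which is \emph{not} $\ww$-generic unless $\psi$ is tempered (Speh representations are not generic). By heredity and uniqueness of Whittaker functionals, $I_{P_1}(w_1\tau_1)$ has a one-dimensional space of Whittaker functionals supported on its unique generic constituent $\pi_{\psi^D}$ (Lemma \ref{psiD}), which is distinct from the Langlands quotient $\pi_\psi$. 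Hence $\Omega(w_1 w_2 \tau_2)$ restricted to the image $I_P(\pi_\psi)$ vanishes identically, and your proposed identity reads $0 = [C \cdot r]^{-1} \cdot \Omega(\tau_2)$. Since $\Omega(\tau_2) \neq 0$, what this actually proves is that $C \cdot r$ has a pole at the point in question — not that the intertwining operator is nonzero. Indeed the factors $L(1 - (\alpha - \alpha'), \cdot)$ with $\alpha - \alpha' \geq 1$ arising from linked segments do produce poles, which is exactly when the standard module is reducible, so the claim that these ratios are ``in the region of absolute convergence'' is incorrect. The fallback argument — that $R_{P_2}(w_1 w_2, \tau_2)$ differs from the unnormalized $J_{P_2}(w_1 w_2, \tau_2)$ by a finite nonzero scalar — is circular for the same reason: one cannot rule out that $r_{P_2}(w_1 w_2, \phi_{\tau_2})^{-1}$ has a zero that over-cancels the pole of $J$, which is precisely what needs to be proved.

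The paper's argument avoids this entirely. It first observes that $R_{P_1}(w_1,\tau_1)$ and $R_{P_2}(w_2,\tau_2)$ are \emph{each} composites of rank-one operators of the form $\tau_j^{(i)}|\cdot|^\alpha \times \tau_{j'}^{(i')}|\cdot|^{\alpha'} \to \tau_{j'}^{(i')}|\cdot|^{\alpha'} \times \tau_j^{(i)}|\cdot|^\alpha$ with $\alpha \geq \alpha'$, each of which is regular and nonzero — so the two individual operators, and hence their composite, are well-defined without appealing to the composed normalizing factor $r_{P_2}(w_1 w_2, \cdot)$. Non-vanishing then follows from a multiplicity-one argument: $\pi_\psi$ appears with multiplicity one in the Jordan--Hölder series of the standard module $I_{P_2}(\tau_2)$, which equals that of $I_{P_1}(\tau_1)$; since $R_{P_2}(w_2,\tau_2) \neq 0$, its image contains $\pi_\psi$ as a quotient, and since $R_{P_1}(w_1,\tau_1)$ has image $I_P(\pi_\psi)$, vanishing of the composite would force $\pi_\psi$ to appear in $I_{P_1}(\tau_1)$ with multiplicity at least two, a contradiction. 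You should replace your Whittaker argument with this multiplicity-one argument.
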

\begin{proof}
Recall that the normalized intertwining operators are defined 
by the meromorphic continuation of certain (normalized) integrals.
Note that the operators $R_{P_1}(w_1, \tau_1)$ and $R_{P_2}(w_2, \tau_2)$
are compositions of normalized intertwining operators of the form 
\[
\tau_j^{(i)}|\cdot|_E^\alpha \times \tau_{j'}^{(i')}|\cdot|_E^{\alpha'} 
\rightarrow 
\tau_{j'}^{(i')}|\cdot|_E^{\alpha'} \times \tau_j^{(i)}|\cdot|_E^\alpha
\]
with $\alpha \geq \alpha'$.
These displayed operators are regular and nonzero at the relevant points. 
Hence, $R_{P_1}(w_1, \tau_1)$ and $R_{P_2}(w_2, \tau_2)$ are well-defined. 
\par

We now verify that the composite $R_{P_1}(w_1, \tau_1) \circ R_{P_2}(w_2, \tau_2)$ is nonzero. 
Since $I_P(\pi_\psi)$ is the unique irreducible quotient of $I_{P_2}(\tau_2)$, 
and since $R_{P_2}(w_2, \tau_2)$ is nonzero, 
$I_P(\pi_\psi)$ appears in the image of $R_{P_2}(w_2, \tau_2)$.
If $R_{P_1}(w_1, \tau_1) \circ R_{P_2}(w_2, \tau_2) = 0$, 
then we would conclude that $I_P(\pi_\psi)$ appears in $I_{P_1}(\tau_1)$ 
as a subquotient with multiplicity greater than one. 
Since the semisimplification of $I_{P_1}(\tau_1)$ is 
the same as that of the standard module $I_{P_2}(\tau_2)$, 
this contradicts the fact that the Langlands quotient $I_P(\pi_\psi)$ appears in $I_{P_2}(\tau_2)$
with multiplicity one.
\end{proof}

%\subsection{The main diagram}
\subsection{The main diagram}
Recall that 
we have fixed an element $w \in W(\theta(M),M)$ such that $w(\pi_\psi \circ \theta) \cong \pi_\psi$.
As in Section \ref{sec.NIO}, we regard $w$ as an element of $W^G$. 
By construction, $w\theta(M_1)w^{-1} = M_1$ 
and $w(\tau_1 \circ \theta) \cong \tau_1$.
Since $w_1$ is the longest element in the subset of $W(M_1)$ consisting of elements whose representatives are in $M$, 
we see that $\theta(w_1) = w^{-1} w_1 w$ in $W(\theta(M))$.
Set 
\[
w' = w_2^{-1} w_1^{-1} w \theta(w_1) \theta(w_2) = w_2^{-1} w \theta(w_2).
\]
\par

\begin{lem}\label{www}
With the above notations, we have the following.
\begin{enumerate}
\item
The canonical inclusion $N_{P} \hookrightarrow N_{P_1}$ induces 
a homeomorphism 
\[
N_P \cap \tl{w} N_{\theta(P)} \tl{w}^{-1} \bs N_P
\cong 
N_{P_1} \cap \tl{w} N_{\theta(P_1)} \tl{w}^{-1} \bs N_{P_1}.
\]

\item
We have $w' \theta(M_2) w'^{-1} = M_2$ and $w'(\tau_2 \circ \theta) \cong \tau_2$.
Let $\tl\tau_2(w' \rtimes \theta) \colon w'(\tau_2 \circ \theta) \xrightarrow{\sim} \tau_2$
be the isomorphism normalized by using a Whittaker functional on $\tau_2$.

\item
The normalized intertwining operator 
$\tl{R}_{P_2}(\theta \circ w', \tl\tau_2) \colon I_{P_2}(\tau_2) \rightarrow I_{P_2}(\tau_2)$
defined by the composition
\[
I_{P_2}(\tau_2) \xrightarrow{\theta^*} I_{\theta(P_2)}(\tau_2 \circ \theta)
\xrightarrow{R_{\theta(P_2)}(w', \tau_2 \circ \theta)} I_{P_2}(w'(\tau_2 \circ \theta))
\xrightarrow{I_{P_2}(\tl\tau_2(w' \rtimes \theta))} I_{P_2}(\tau_2)
\]
is bijective.
\end{enumerate}
\end{lem}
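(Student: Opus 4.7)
I plan to establish the three assertions in the order (2), (1), (3), since part (2) supplies the equivariance needed for (1), and (3) then follows from general properties of normalized intertwining operators between standard modules. The main technical obstacle lies in parts (1) and (2): ensuring that $w$ can be chosen to be compatible with the finer Levi decomposition underlying $M_1$ and $P_1$, so that various conjugations by $\tl{w}$ match up as required. Once this compatibility is established, parts (1) and (3) follow essentially formally from the structural lemmas in Section \ref{sec.NIO}.

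For part (2), the key point is that the choice of $w$ (implicit in the setup preceding the lemma) can be arranged so that $w\theta(M_1)w^{-1} = M_1$, $\tl{w}\theta(P_1 \cap M)\tl{w}^{-1} = P_1 \cap M$, and $w(\tau_1 \circ \theta) \cong \tau_1$ all hold simultaneously. Indeed, conjugate-self-duality of $\pi_\psi$ pairs the components $\psi_j^{(i)}$ of $\psi$ into $\theta$-dual pairs, so any $w$ implementing the isomorphism $w(\pi_\psi \circ \theta) \cong \pi_\psi$ can be adjusted by a Weyl element of $M$ stabilizing $\pi_\psi$ to implement this matching on the finer decomposition $V = \bigoplus_{i,\alpha} V^{(i)}(\alpha)$. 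Given this, $w'\theta(M_2)w'^{-1} = M_2$ reduces to a direct computation using $M_2 = \tl{w}_2^{-1} M_1 \tl{w}_2$ and Lemmas \ref{tits} and \ref{center}, which together give $\theta(\tl{w}_2) \equiv \tl{\theta(w_2)}$ modulo the center of the relevant Levi. Similarly, writing $\tau_2 = w_2^{-1}\tau_1$ yields $\tau_2 \circ \theta \cong \theta(w_2)^{-1}(\tau_1 \circ \theta)$ as representations of $\theta(M_2)$, whence
\[
w'(\tau_2 \circ \theta) \cong w_2^{-1} w (\tau_1 \circ \theta) \cong w_2^{-1}\tau_1 = \tau_2.
\]
The normalized isomorphism $\tl\tau_2(w' \rtimes \theta)$ is then pinned down by a Whittaker functional on the essentially tempered representation $\tau_2$, exactly as in Section \ref{sec.thetaA}.

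For part (1), the semi-direct decompositions $N_{P_1} = N_P \rtimes N_{P_1 \cap M}$ and $N_{\theta(P_1)} = N_{\theta(P)} \rtimes N_{\theta(P_1 \cap M)}$, together with the equivariance $\tl{w} N_{\theta(P_1 \cap M)} \tl{w}^{-1} = N_{P_1 \cap M}$ from part (2), give
\[
\tl{w} N_{\theta(P_1)} \tl{w}^{-1} \cap N_{P_1}
= N_{P_1 \cap M} \cdot \bigl(\tl{w} N_{\theta(P)} \tl{w}^{-1} \cap N_P\bigr).
\]
The non-trivial input is that $\tl{w} N_{\theta(P)} \tl{w}^{-1}$ meets $N_{P_1}$ only inside $N_P$: this follows from the compatibility of $w$ with the Levi decomposition, since the roots of $N_{P_1} \setminus N_P$ lie in $M$, while the roots of $\tl{w} N_{\theta(P)} \tl{w}^{-1}$ landing in $M$ are exactly those absorbed by $N_{P_1 \cap M}$. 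Combined with $N_P \cap M = \{1\}$ and $N_{P_1} = N_P \cdot N_{P_1 \cap M}$, this yields the algebraic bijection on quotients, and the topological statement follows since all groups involved are closed subgroups of $G$.

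For part (3), $\tl{R}_{P_2}(\theta \circ w', \tl\tau_2)$ is the composition of the $\C$-linear bijection $\theta^*$, the normalized intertwining operator $R_{\theta(P_2)}(w', \tau_2 \circ \theta)$, and the bijection $I_{P_2}(\tl\tau_2(w' \rtimes \theta))$. Bijectivity thus reduces to that of the middle factor, a $G$-equivariant map between the standard modules $I_{\theta(P_2)}(\tau_2 \circ \theta)$ and $I_{P_2}(w'(\tau_2 \circ \theta))$. By part (2) the target is $G$-isomorphic to $I_{P_2}(\tau_2)$, and via Lemma \ref{langlands} together with the identification $I_{\theta(P)}(\pi_\psi \circ \theta) \cong I_P(\pi_\psi) \circ \theta \cong I_P(\pi_\psi)$, the source is abstractly isomorphic to $I_{P_2}(\tau_2)$ as a $G$-representation. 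By Lemma \ref{End}, the space of $G$-intertwiners between isomorphic standard modules is one-dimensional, so the normalized operator is a scalar multiple of any fixed isomorphism; non-vanishing follows by post-composing with the Langlands quotient map, which yields a nonzero multiple of $\theta_A$ on $I_P(\pi_\psi)$, so the operator is bijective.
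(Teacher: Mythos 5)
Your treatment of parts (1) and (2) is essentially correct and runs parallel to the paper's, only more verbose: the paper reduces (1) to the single identity $\tl{w}N_{\theta(P_1)}\tl{w}^{-1}\cap M = N_{P_1}\cap M$ and proves (2) by the direct chain $w'\theta(M_2)w'^{-1}=w_2^{-1}w\theta(M_1)w^{-1}w_2=M_2$ using $\theta(w_1)=w^{-1}w_1 w$. Your extended discussion of compatibility of $w$ with the finer Levi decomposition fills in what the paper calls ``by construction''; one clause of your part (1) (``the roots of $\tl{w}N_{\theta(P)}\tl{w}^{-1}$ landing in $M$ are exactly those absorbed by $N_{P_1\cap M}$'') is actually imprecise, since $\tl{w}\theta(M)\tl{w}^{-1}=M$ forces $\tl{w}N_{\theta(P)}\tl{w}^{-1}\cap M=\{1\}$, but the conclusion you draw is the right one.

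Part (3) has a genuine gap in the non-vanishing step. You assert that post-composing $\tl{R}_{P_2}(\theta\circ w',\tl\tau_2)$ with the Langlands quotient map ``yields a nonzero multiple of $\theta_A$ on $I_P(\pi_\psi)$,'' but this is precisely the content of the commutativity of the main diagram (Theorem \ref{diagramGL}), which is proved \emph{after} Lemma \ref{www} and uses it. So as stated the argument is circular. The paper avoids this by noting that both $I_{\theta(P_2)}(\tau_2\circ\theta)$ and $I_{P_2}(w'(\tau_2\circ\theta))\cong I_{P_2}(\tau_2)$ are \emph{standard modules} (with isomorphic Langlands quotients), and that a normalized intertwining operator between two standard modules in the Langlands arrangement is automatically nonzero: after factoring into rank-one pieces, each piece either has strictly positive relative exponent (so the unnormalized integral converges to a nonzero operator and the $\gamma_A$-factor is finite and nonzero) or is between tempered factors (so the normalized operator is unitary). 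Combined with Lemma \ref{End} this gives bijectivity. You should replace your post-composition argument with this direct structural fact.
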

\begin{proof}
Assertion (1) follows from the equation
$\tl{w} N_{\theta(P_1)} \tl{w}^{-1} \cap M = N_{P_1} \cap M$.
\par

For (2), the first assertion follows by direct computation
\begin{align*}
w' \theta(M_2) w'^{-1} 
&= 
w_2^{-1} w \theta(w_2) \theta(M_2) \theta(w_2)^{-1} w^{-1} w_2
\\&= 
w_2^{-1} w \theta(M_1) w^{-1} w_2
\\&= 
w_2^{-1} M_1 w_2 = M_2. 
\end{align*}
The second assertion is proven similarly.
\par

For (3),
it is obvious that $\theta^* \colon I_{P_2}(\tau_2) \rightarrow I_{\theta(P_2)}(\tau_2 \circ \theta)$ is bijective. 
Note that $I_{\theta(P_2)}(\tau_2 \circ \theta)$ is a standard module of $G$
whose Langlands quotient is $I_{\theta(P)}(\pi_\psi \circ \theta) \cong I_P(\pi_\psi)$.
Hence $I_{\theta(P_2)}(\tau_2 \circ \theta)$ and $I_{P_2}(\tau_2)$
are standard modules whose Langlands quotients are isomorphic to each other. 

Since $\tau_2 \circ \theta$ and $w'(\tau_2 \circ \theta) \cong \tau_2$ 
are essentially tempered representations of $\theta(M_2)$ and $M_2$, respectively, 
such that two inductions $I_{\theta(P_2)}(\tau_2 \circ \theta)$ and $I_{P_2}(w'(\tau_2 \circ \theta))$ are both standard modules, 
we see that $R_{\theta(P_2)}(w', \tau_2 \circ \theta)$ is nonzero.
Since $I_{P_2}(\tl\tau_2(w' \rtimes \theta)) \circ R_{\theta(P_2)}(w', \tau_2 \circ \theta)$ is a nonzero element in 
$\Hom_G(I_{\theta(P_2)}(\tau_2 \circ \theta), I_{P_2}(\tau_2))$, 
which is one dimensional by Lemma \ref{End}, 
it must be bijective.
\end{proof}

Now we will prove a key result: 
\begin{thm}\label{diagramGL}
The \emph{``main diagram''} 
\[
\begin{CD}
I_{P_2}(\tau_2) @> \tl{R}_{P_2}(\theta \circ w', \tl\tau_2) >> I_{P_2}(\tau_2)\\
@VR_{P_2}(w_2, \tau_2)VV @VVR_{P_2}(w_2, \tau_2)V\\
I_{P_1}(\tau_1) @. I_{P_1}(\tau_1)\\
@VR_{P_1}(w_1, \tau_1)VV @VV R_{P_1}(w_1, \tau_1)V \\
I_{P}(\pi_\psi) @> \tl{R}_{P}(\theta \circ w, \tl\pi_\psi) >> I_{P}(\pi_\psi)
\end{CD}
\]
is commutative. 
\end{thm}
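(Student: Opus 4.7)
The strategy is to prove Theorem \ref{diagramGL} by unpacking both compositions using the definition of $\tilde{R}$ as an iterated composition of $\theta^*$, a normalized intertwining operator, and a $\tilde\pi$-action, and then matching the resulting expressions term by term via the multiplicativity of normalized intertwining operators (Proposition \ref{multiplicative}), the heredity of Whittaker functionals for standard modules, and the tempered case of Theorem \ref{main1} (which is already established earlier in this section). No general form of Theorem \ref{main2} will be used, so no circularity arises.

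The first step is to substitute the definitions: $\tilde{R}_P(\theta \circ w, \tilde\pi_\psi) = I_P(\tilde\pi_\psi(w \rtimes \theta)) \circ R_{\theta(P)}(w, \pi_\psi \circ \theta) \circ \theta^*$ and likewise $\tilde{R}_{P_2}(\theta \circ w', \tilde\tau_2) = I_{P_2}(\tilde\tau_2(w' \rtimes \theta)) \circ R_{\theta(P_2)}(w', \tau_2 \circ \theta) \circ \theta^*$. Using the elementary compatibility $\theta^* \circ R_{P'}(u, \sigma) = R_{\theta(P')}(\theta(u), \sigma \circ \theta) \circ \theta^*$ (which itself follows from Lemma \ref{tits} and a change of variables in the defining integral), one moves every $\theta^*$ past the vertical $R$-arrows to the right, so that both compositions become intertwining operators from $I_{\theta(P_2)}(\tau_2 \circ \theta)$ to $I_P(\pi_\psi)$, precomposed with a single $\theta^*$. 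Thus it suffices to equate the resulting operators $I_{\theta(P_2)}(\tau_2 \circ \theta) \to I_P(\pi_\psi)$.

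Next, one applies multiplicativity of normalized intertwining operators. The Weyl elements satisfy the clean identity $w_1 w_2 w' = w\,\theta(w_1 w_2)$ (by construction of $w'$), so that, modulo Tits-lift discrepancies allowed by Lemma \ref{center}, both the left and right paths consist of the same chain of intertwining operators starting at $I_{\theta(P_2)}(\tau_2 \circ \theta)$ and ending at $I_P(\pi_\psi)$. Proposition \ref{multiplicative} handles the tempered portion of the chain (from $\tau_2 \circ \theta$ through $\tau_1 \circ \theta$ back to $\tau_1 \cong \pi_\psi$-standard-module), while the final transition to $\pi_\psi$ uses the fact that the normalizing factors $r_{\theta(P)}(w, \psi_\lambda)$ and $r_{P_1}(w_1, \phi_{\tau_1,\lambda})$ match at $\lambda = 0$: this is because $\phi_\psi$ (the $L$-parameter attached to $\psi$ and used to normalize tempered $\tau_1$) is precisely the one entering the definition of $r$ via $\gamma_A$, see Lemma \ref{holomorphic} for the key holomorphy-and-value property.

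The last and hardest step is to reconcile the two intrinsic normalizations: $\tilde\pi_\psi(w \rtimes \theta)$ on $\pi_\psi$ (defined through a $\ww_M$-Whittaker functional on the standard module $\II^M_{\pi_\psi} = \Ind_{P_1 \cap M}^M(\tau_1)$) versus $\tilde\tau_2(w' \rtimes \theta)$ on $\tau_2$ (defined through a $\ww_{M_2}$-Whittaker functional on $\tau_2$). The heredity of Whittaker functionals asserts that the $\ww_M$-Whittaker functional on $\Ind_{P_1 \cap M}^M(\tau_1)$ is exactly the Jacquet integral of the one on $\tau_1$, and iterating the same argument through $w_2$ relates this to the Whittaker functional on $\tau_2$. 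Theorem \ref{main1} then ensures that each normalized intertwining operator in the chain preserves the corresponding Whittaker functional up to a matching scalar. Composing, we see that the propagation of $\tilde\tau_2(w' \rtimes \theta)$ through $R_{P_2}(w_2, \tau_2)$ and $R_{P_1}(w_1, \tau_1)$ lands exactly on $\tilde\pi_\psi(w \rtimes \theta)$, forcing the diagram to commute. The main obstacle is bookkeeping: carefully tracking the central elements from Lemma \ref{center} that may arise when comparing Tits lifts of composite Weyl elements, and confirming that the residual normalizing scalars from the transition between $L$-parameter and $A$-parameter normalizations evaluate to $1$ at the point of interest.
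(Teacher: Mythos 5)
There is a genuine gap in your argument, concentrated in the step where you invoke Proposition \ref{multiplicative} and then claim that ``both the left and right paths consist of the same chain of intertwining operators.'' Proposition \ref{multiplicative} is stated for irreducible \emph{tempered} representations, and the bottom row of the main diagram involves the non-tempered Speh representation $\pi_\psi$: the operator $R_{\theta(P)}(w, \pi_\psi\circ\theta, \psi)$ is normalized using the $A$-parameter $\psi$, not the $L$-parameter of a tempered representation. More seriously, the ``completion'' of the two paths into a single rectangle that would allow a term-by-term comparison by multiplicativity requires a fourth intertwining operator — the one the paper draws as a dotted arrow in Example~\ref{ex_GL} — and \emph{that operator has a pole at the point of interest} (see Step~2 of the paper's proof). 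So you cannot naively factor the two compositions at $\lambda = 0$ and match terms; the naive argument breaks precisely at the singular locus. This is the heart of the problem and the reason the paper sets up the two-parameter meromorphic family $(\lambda,\mu)$, proves commutativity for generic parameters (where all the inductions are irreducible, all the $R$-operators are isomorphisms, and multiplicativity applies cleanly), and then carefully specializes first at $\mu=0$ and then at $\lambda=0$, verifying at each stage that the five operators along the sides of the diagram remain regular even though the auxiliary bottom arrow does not.

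A second, smaller issue: your reconciliation of $\tilde\pi_\psi(w \rtimes \theta)$ with $\tilde\tau_2(w' \rtimes \theta)$ via Whittaker functionals is directed at the paper's right rectangle, and the idea is essentially correct — but the invocation of Theorem~\ref{main1} in the bottom row is on shaky ground, since $\pi_\psi$ and $I_P(\pi_\psi)$ are not generic (a Speh representation has no Whittaker model), so ``each normalized intertwining operator in the chain preserves the corresponding Whittaker functional'' does not literally apply there. The paper circumvents this by instead showing $\tilde\tau_2(w'\rtimes\theta) = \tilde\tau_1(w\rtimes\theta)$ as linear isomorphisms $\VV\to\VV$ (which are normalized by Whittaker functionals on the tempered $\tau_1$), and then by observing that $\tilde\pi_\psi(w\rtimes\theta)$ is by definition the map induced on Langlands quotients, so the bottom square of the right rectangle commutes by functoriality of $I_P$ and the defining property of $\tilde\pi_\psi(w\rtimes\theta)$ — no direct appeal to a Whittaker functional on $\pi_\psi$ itself. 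To repair your argument you would need both (i) the explicit passage through the quotient map to handle the right rectangle as above, and (ii) the meromorphic deformation to make sense of (and justify) the multiplicativity step in the presence of the singular auxiliary operator.
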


Admitting this result, we can complete the proof of Theorem \ref{main2} as follows. 
Recall that $\II_\psi^G = I_{P_2}(\tau_2)$ is the standard module of $I_P(\pi_\psi)$. 
Moreover, by Theorem \ref{main2} for the tempered case together with analytic continuation, 
we have 
\[
\tl{R}_{P_2}(\theta \circ w', \tl\tau_2) = \theta_W. 
\] 
By the definition of $\theta_A$ (see Section \ref{sec.thetaA}), 
Theorem \ref{diagramGL} together with Lemma \ref{langlands} implies that 
\[
\tl{R}_P(\theta \circ w, \tl\pi_\psi) = \theta_A. 
\]
This completes the proof of Theorem \ref{main2} in general.
\par

Now we show Theorem \ref{diagramGL}. 
\begin{proof}[Proof of Theorem \ref{diagramGL}]
Recall that the top and bottom maps of the main diagram are composites of three maps: 
\[
\tl{R}_{P_2}(\theta \circ w', \tl\tau_2)
=
I_{P_2}(\tl\tau_2(w' \rtimes \theta)) \circ R_{\theta(P_2)}(w', \tau_2 \circ \theta) \circ \theta^*
\]
and
\[
\tl{R_P}(\theta \circ w, \tl\pi_\psi) 
= I_P(\tl\pi_\psi(w \rtimes \theta)) \circ R_{\theta(P)}(w, \pi_\psi \circ \theta, \psi) \circ \theta^*.
\] 
Hence the main diagram written in its full glory has the form 
\[
\begin{CD}
I_{P_2}(\tau_2) @>\theta^*>> I_{\theta(P_2)}(\tau_2 \circ \theta) 
@>R_{\theta(P_2)}(w', \tau_2 \circ \theta)>> 
I_{P_2}(w' (\tau_2 \circ \theta)) @>I_{P_2}(\tl\tau_2(w' \rtimes \theta))>> I_{P_2}(\tau_2) \\
@VVR_{P_2}(w_2, \tau_2)V @. @. @VR_{P_2}(w_2, \tau_2)VV\\
I_{P_1}(\tau_1) @. @. @. I_{P_1}(\tau_1)\\
@VVR_{P_1}(w_1, \tau_1)V @. @. @V R_{P_1}(w_1, \tau_1)VV\\
I_{P}(\pi_\psi) @>\theta^*>> I_{\theta(P)}(\pi_\psi \circ \theta)
@>R_{\theta(P)}(w, \pi_\psi \circ \theta, \psi)>> 
I_{P}(w(\pi_\psi \circ \theta)) @>I_P(\tl\pi_\psi(w \rtimes \theta))>> I_{P}(\pi_\psi).
\end{CD}
\]
We shall enhance this diagram by introducing additional stepping stones in the second row, 
namely by introducing the representations
\[
I_{\theta(P_1)}(\tau_1 \circ \theta)
\quad\text{and}\quad
 I_{P_1}(w(\tau_1 \circ \theta)),
\]
and additional maps connecting them to their neighbors. 
Hence the enhanced diagram has the form:
\[
\begin{CD}
I_{P_2}(\tau_2) @>\theta^*>> I_{\theta(P_2)}(\tau_2 \circ \theta) 
@>R_{\theta(P_2)}(w', \tau_2 \circ \theta)>> 
I_{P_2}(w' (\tau_2 \circ \theta)) @>I_{P_2}(\tl\tau_2(w' \rtimes \theta))>> I_{P_2}(\tau_2) \\
@VVV @VVV @VVV @VVV \\
I_{P_1}(\tau_1) @>\theta^*>> I_{\theta(P_1)}(\tau_1 \circ \theta) 
@. I_{P_1}(w(\tau_1 \circ \theta)) @>I_{P_1}(\tl\tau_1(w \rtimes \theta))>> I_{P_1}(\tau_1)\\
@VVV @VVV @VVV @VVV \\
I_{P}(\pi_\psi) @>\theta^*>> I_{\theta(P)}(\pi_\psi \circ \theta)
@>R_{\theta(P)}(w, \pi_\psi \circ \theta, \psi)>> 
I_{P}(w(\pi_\psi \circ \theta)) @>I_P(\tl\pi_\psi(w \rtimes \theta))>> I_{P}(\pi_\psi).
\end{CD}
\]
Here, the vertical maps are of the form $R_{P_*}(w_*, \tau_*)$, see below for the details.
To prove the commutativity of the main diagram, 
we shall show that the three vertical rectangles are commutative. 
\par

The commutativity of the left rectangle
\[
\begin{CD}
I_{P_2}(\tau_2) @>\theta^*>> I_{\theta(P_2)}(\tau_2 \circ \theta) \\
@VR_{P_2}(w_2, \tau_2)VV @VVR_{\theta(P_2)}(\theta(w_2), \tau_2 \circ \theta)V \\
I_{P_1}(\tau_1) @>\theta^*>> I_{\theta(P_1)}(\tau_1 \circ \theta) \\
@VR_{P_1}(w_1, \tau_1)VV @VVR_{\theta(P_1)}(\theta(w_1), \tau_1 \circ \theta)V \\
I_{P}(\pi_\psi) @>\theta^*>> I_{\theta(P)}(\pi_\psi \circ \theta)
\end{CD}
\]
follows easily from Lemma \ref{tits}. 
\par

Now let us consider the right rectangle:
\[
\begin{CD}
I_{P_2}(w' (\tau_2 \circ \theta)) @>I_{P_2}(\tl\tau_2(w' \rtimes \theta))>> I_{P_2}(\tau_2) \\
@VR_{P_2}(w_2, w'(\tau_2 \circ \theta))VV @VVR_{P_2}(w_2, \tau_2)V \\
I_{P_1}(w(\tau_1 \circ \theta)) @>I_{P_1}(\tl\tau_1(w \rtimes \theta))>> I_{P_1}(\tau_1) \\
@VR_{P_1}(w_1, w(\tau_1 \circ \theta))VV @VVR_{P_1}(w_1, \tau_1)V \\
I_{P}(w(\pi_\psi \circ \theta)) @>I_P(\tl\pi_\psi(w \rtimes \theta))>> I_{P}(\pi_\psi).
\end{CD}
\]
If we realize $w' (\tau_2 \circ \theta)$, $\tau_2$, $w(\tau_1 \circ \theta)$ and $\tau_1$
on the same vector space, say $\VV$, 
then $\tl\tau_2(w' \rtimes \theta)$ is a linear isomorphism $\Phi \colon \VV \rightarrow \VV$ satisfying 
\[
\Phi \circ \tau_2(\theta(\tl{w}'^{-1} m_2 \tl{w}')) = \tau_2(m_2) \circ \Phi, \quad m_2 \in M_2.
\]
Since $\tau_2 = w_2^{-1}\tau_1$, $w = w_2w'\theta(w_2)^{-1}$ and $M_1 = w_2M_2w_2^{-1}$, 
by Lemma \ref{center}, 
the above property of $\Phi$ can be rewritten as
\[
\Phi \circ \tau_1(\theta(\tl{w}^{-1} m_1 \tl{w})) = \tau_1(m_1) \circ \Phi, \quad m_1 \in M_1.
\]
Since $\tl\tau_2(w' \rtimes \theta)$ and $\tl\tau_1(w \rtimes \theta)$ are both normalized using a Whittaker functional, 
we see that $\tl\tau_2(w' \rtimes \theta) = \tl\tau_1(w \rtimes \theta)$ as linear isomorphisms on $\VV$.
It implies the commutativity of the top square. 
On the other hand, notice that $\Ind_{P_1 \cap M}^M(\tau_1)$ is the standard module of $\pi_\psi$. 
By the definition of $\tl\pi_\psi(w \rtimes \theta)$ and the functoriality of $I_P$, 
we have the following commutative diagram
\[
\begin{CD}
I_P(w(\Ind_{P_1 \cap M}^M(\tau_1) \circ \theta))
@>>> I_P(\Ind_{P_1 \cap M}^M(\tau_1)) \\
@VI_P(R_{P_1 \cap M}(w_1, w(\tau_1 \circ \theta)))VV @VVI_P(R_{P_1 \cap M}(w_1, \tau_1))V \\
I_P(w(\pi_\psi \circ \theta)) @>I_P(\tl\pi_\psi(w \rtimes \theta))>> I_P(\pi_\psi)
\end{CD}
\]
where the top map is induced from the isomorphism 
$w(\Ind_{P_1 \cap M}^M(\tau_1) \circ \theta) \xrightarrow{\sim} \Ind_{P_1 \cap M}^M(\tau_1)$
normalized by using a Whittaker functional.
We shall show that this commutative diagram is canonically isomorphic to 
the bottom square of the right rectangle under consideration.
To see this, note that we can realize $I_P(w(\Ind_{P_1 \cap M}^M(\tau_1) \circ \theta))$ 
as a subspace of two-variable functions $f \colon G \times M \rightarrow \VV$ satisfying  
\[
f(m'g, m) = \delta_P^{\half{1}}(m') f(g, m \theta(\tl{w}^{-1}m'\tl{w}))
\]
for $m,m' \in M$ and $g \in G$.
We have a canonical isomorphism 
\[
I_P(w(\Ind_{P_1 \cap M}^M(\tau_1) \circ \theta)) \xrightarrow{\sim} I_{P_1}(w(\tau_1 \circ \theta)),\,
f(g,m) \mapsto f(g,\1).
\]
Via this isomorphism, $I_P(R_{P_1 \cap M}(w_1, w(\tau_1 \circ \theta)))$ corresponds to 
\[
R_{P_1}(w_1, w(\tau_1 \circ \theta)) \colon I_{P_1}(w(\tau_1 \circ \theta)) \rightarrow I_P(w(\pi_\psi \circ \theta)).
\] 
Similarly, we have a canonical isomorphism 
$I_P(\Ind_{P_1 \cap M}^M(\tau_1)) \cong I_{P_1}(\tau_1)$.
Via these canonical isomorphisms, the above commutative diagram is rewritten as
\[
\begin{CD}
I_{P_1}(w(\tau_1 \circ \theta)) @>I_{P_1}(\tl\tau_1(w \rtimes \theta))>> I_{P_1}(\tau_1) \\
@VR_{P_1}(w_1, w(\tau_1 \circ \theta))VV @VVR_{P_1}(w_1, \tau_1)V \\
I_{P}(w(\pi_\psi \circ \theta)) @>I_P(\tl\pi_\psi(w \rtimes \theta))>> I_{P}(\pi_\psi).
\end{CD}
\]
Since this is exactly the bottom square of the right rectangle under consideration, we have shown the desired commutativity of the bottom square.
\par

To prove Theorem \ref{diagramGL}, 
it remains to show that the middle rectangle 
\[
\begin{CD}
I_{\theta(P_2)}(\tau_2 \circ \theta) @>R_{\theta(P_2)}(w', \tau_2 \circ \theta)>> I_{P_2}(w' (\tau_2 \circ \theta)) \\
@VR_{\theta(P_2)}(\theta(w_2), \tau_2 \circ \theta)VV @VVR_{P_2}(w_2, w'(\tau_2 \circ \theta))V \\
I_{\theta(P_1)}(\tau_1 \circ \theta) @. I_{P_1}(w(\tau_1 \circ \theta)) \\
@VR_{\theta(P_1)}(\theta(w_1), \tau_1 \circ \theta)VV @VVR_{P_1}(w_1, w(\tau_1 \circ \theta))V \\
I_{\theta(P)}(\pi_\psi \circ \theta) @>R_{\theta(P)}(w, \pi_\psi \circ \theta, \psi) >> I_{P}(w(\pi_\psi \circ \theta))
\end{CD}
\]
commutes.
The proof of this commutativity will be carried out in four steps below. 
Roughly speaking, 
we will embed this diagram into a meromorphic family of diagrams. 
Working in the context of this meromorphic family of diagrams, 
we will complete the diagram by extending the last row of the diagram as 
\[
\xymatrix{
I_{\theta(P)}(\pi_\psi \circ \theta) 
\ar@{->}[rrr]^{R_{\theta(P)}(w, \pi_\psi \circ \theta, \psi)} 
\ar@{^{(}->}[d]
&&& I_{P}(w(\pi_\psi \circ \theta)) \ar@{^{(}->}[d]\\
I_{\theta(P_1)}(w_1\tau_1 \circ \theta) \ar@{..>}[rrr]^{R_{\theta(P_1)}(w, w_1\tau_1 \circ \theta)}  
&&& I_{P_1}(w(w_1\tau_1 \circ \theta)). 
}
\]
Then 
the commutativity of the meromorphic family of diagrams is 
a consequence of the multiplicativity property in Proposition \ref{multiplicative}.
We will then deduce the commutativity of the desired diagram 
by specializing at the point of interest. 
Note however that the additional normalized intertwining operator 
will generally have a singularity at the point of interest 
(which is the reason why we use the dotted arrow), 
so its purpose is only to help prove the commutativity of the whole meromorphic family, 
before specialization.
\par

Let us start the proof of the commutativity of the middle diagram. 
\begin{description}
\setlength{\itemsep}{10pt}
\item[Step 1]
Recall that 
\[
\tau_1 = \bigotimes_{\alpha \in (1/2)\Z}\bigotimes_{i=1}^t \tau^{(i)}|\cdot|_E^{\alpha}. 
\]
For tuples of complex numbers
\begin{align*}
\lambda &= (\lambda^{(i)})_{i} \in \bigoplus_{i=1}^{t} \C, \\
\mu &= (\mu^{(i)}(\alpha))_{\alpha, i} \in \bigoplus_{\alpha \in (1/2)\Z}\bigoplus_{i=1}^{t} \C,
\end{align*}
we set 
\[
\tau_{1,(\lambda,\mu)} 
= \bigotimes_{\alpha \in (1/2)\Z}\bigotimes_{i=1}^t 
\tau^{(i)}|\cdot|_E^{\alpha+\lambda^{(i)}+\mu^{(i)}(\alpha)}.
\]
We define $\tau_{2,(\lambda,\mu)}$ similarly. 
Hence 
\[
\tau_{1,(\lambda,\mu)} = w_2\tau_{2,(\lambda,\mu)}.
\]
Note that 
\begin{align*}
w_2w'(\tau_{2, (\lambda,\mu)} \circ \theta)
&= 
w_1^{-1}w \theta(w_1) \theta(w_2) (\tau_{2, (\lambda,\mu)} \circ \theta)
\\&=
w_1^{-1}w (w_1w_2\tau_{2, (\lambda,\mu)} \circ \theta)
\\&= 
w_1^{-1}w (w_1\tau_{1, (\lambda,\mu)} \circ \theta).
\end{align*}
We can consider the following diagram of meromorphic families of operators: 
\[
\begin{CD}
I_{\theta(P_2)}(\tau_{2, (\lambda,\mu)} \circ \theta) 
@> R_{\theta(P_2)}(w', \tau_{2, (\lambda,\mu)} \circ \theta) >> 
I_{P_2}(w'(\tau_{2, (\lambda,\mu)} \circ \theta))\\
@VR_{\theta(P_2)}(\theta(w_2), \tau_{2, (\lambda,\mu)} \circ \theta)VV 
@VV R_{P_2}(w_2, w'(\tau_{2, (\lambda,\mu)} \circ \theta))V \\
I_{\theta(P_1)}(\tau_{1, (\lambda,\mu)} \circ \theta) @. I_{P_1}(w(\tau_{1, (\lambda,\mu)} \circ \theta))\\
@VR_{\theta(P_1)}(\theta(w_1), \tau_{1, (\lambda,\mu)} \circ \theta)VV 
@VV R_{P_1}(w_1, w (\tau_{1, (\lambda,\mu)} \circ \theta))V \\
I_{\theta(P_1)}(w_1\tau_{1, (\lambda,\mu)} \circ \theta)
@> R_{\theta(P_1)}(w, w_1\tau_{1, (\lambda,\mu)} \circ \theta) >> 
I_{P_1}(w(w_1\tau_{1, (\lambda,\mu)} \circ \theta)).
\end{CD}
\]
By Proposition \ref{multiplicative}, 
this diagram commutes whenever $\alpha+\lambda^{(i)}+\mu^{(i)}(\alpha) \in \I\R$ for all $\alpha, i$. 
Hence, by analytic continuation, 
we see that this diagram is commutative for all $(\lambda,\mu)$
at which all operators are regular. 

\item[Step 2]
In the diagram in Step 1, we will specialize at $\mu = 0$.
We claim that all six intertwining operators are well-defined 
as meromorphic families of operators in $\lambda$.
\par

In fact, the bottom arrow $R_{\theta(P_1)}(w, w_1\tau_{1, (\lambda,0)} \circ \theta)$ 
is a composition of intertwining operators of the form
\[
\tau^{(i)}|\cdot|_E^{\alpha+\lambda^{(i)}}
\times \tau^{(i')}|\cdot|_E^{\alpha'+\lambda^{(i')}}
\rightarrow 
\tau^{(i')}|\cdot|_E^{\alpha'+\lambda^{(i')}}
\times
\tau^{(i)}|\cdot|_E^{\alpha+\lambda^{(i)}}
\]
for some $i \not= i'$.
Hence the subset $\{(\lambda,0)\} \subset \{(\lambda, \mu)\}$ is not contained in 
the subset consisting of $(\lambda, \mu)$ at which
$R_{\theta(P_1)}(w, w_1\tau_{1, (\lambda,\mu)} \circ \theta)$ is singular. 
Here, we notice that 
$R_{\theta(P_1)}(w, w_1\tau_{1, (\lambda,\mu)} \circ \theta)$
can have a singularity at $(\lambda,\mu) = (0,0)$.
On the other hand, 
as we have seen in the proofs of Lemmas \ref{langlands} and \ref{www} (3), 
the other five operators are indeed regular even at $(\lambda, \mu) = (0,0)$.
\par

Hence we can evaluate at $\mu = 0$ in the diagram in Step 1, 
and get the following commutative diagram:
\[
\begin{CD}
I_{\theta(P_2)}(\tau_{2, (\lambda,0)} \circ \theta) 
@> R_{\theta(P_2)}(w', \tau_{2, (\lambda,0)} \circ \theta) >> 
I_{P_2}(w'(\tau_{2, (\lambda,0)} \circ \theta))\\
@VR_{\theta(P_2)}(\theta(w_2), \tau_{2, (\lambda,0)} \circ \theta)VV 
@VV R_{P_2}(w_2, w'(\tau_{2, (\lambda,0)} \circ \theta))V \\
I_{\theta(P_1)}(\tau_{1, (\lambda,0)} \circ \theta) @. I_{P_1}(w(\tau_{1, (\lambda,0)} \circ \theta))\\
@VR_{\theta(P_1)}(\theta(w_1), \tau_{1, (\lambda,0)} \circ \theta)VV 
@VV R_{P_1}(w_1, w (\tau_{1, (\lambda,0)} \circ \theta))V \\
I_{\theta(P_1)}(w_1\tau_{1, (\lambda,0)} \circ \theta)
@> R_{\theta(P_1)}(w, w_1\tau_{1, (\lambda,0)} \circ \theta) >> 
I_{P_1}(w(w_1\tau_{1, (\lambda,0)} \circ \theta)).
\end{CD}
\]

\item[Step 3]
Recall that $\Speh(\tau_j^{(i)}, d_j^{(i)})$ is the unique irreducible quotient of 
\[
\tau_j^{(i)}|\cdot|_E^{\frac{d-1}{2}} \times \tau_j^{(i)}|\cdot|_E^{\frac{d-3}{2}} 
\times \dots \times \tau_j^{(i)}|\cdot|_E^{-\frac{d-1}{2}}
\]
with $d = d_j^{(i)}$. 
Note that
\[
I_P(\pi_\psi) = \bigtimes_{i=1}^t \bigtimes_{j=1}^{m_i} \Speh(\tau_j^{(i)}, d_j^{(i)}), 
\quad 
\psi = \bigoplus_{i=1}^t \bigoplus_{j=1}^{m_i} \psi_j^{(i)}.
\]
Set 
\[
I_P(\pi_{\psi, \lambda}) 
= 
\bigtimes_{i=1}^t \bigtimes_{j=1}^{m_i} \Speh(\tau_j^{(i)}, d_j^{(i)})|\cdot|_E^{\lambda^{(i)}},
\]
and
\[
\psi_\lambda = \bigoplus_{i=1}^t \bigoplus_{j=1}^{m_i} \psi_j^{(i)}|\cdot|_E^{\lambda^{(i)}}.
\]
Note that $I_P(\pi_{\psi, \lambda}) $ is an irreducible subrepresentation of $I_{P_1}(w_1\tau_{1, (\lambda,0)})$.
Moreover, it is equal to the image of 
$R_{P_1}(w_1, \tau_{1, (\lambda,0)}) \circ R_{P_2}(w_2, \tau_{2, (\lambda,0)})$ 
by Lemma \ref{langlands}.
We have now the diagram
\[
\begin{CD}
I_{\theta(P)}(\pi_{\psi, \lambda} \circ \theta) 
@>R_{\theta(P)}(w, \pi_{\psi, \lambda} \circ \theta, \psi_\lambda)>> 
I_P(w(\pi_{\psi, \lambda} \circ \theta))\\
@VVV @VVV\\
I_{\theta(P_1)}(w_1\tau_{1, (\lambda,0)} \circ \theta)
@> R_{\theta(P_1)}(w, w_1\tau_{1, (\lambda,0)} \circ \theta) >> 
I_{P_1}(w(w_1\tau_{1, (\lambda,0)} \circ \theta))
\end{CD}
\]
where the vertical maps are the canonical inclusions.
We claim that this diagram is commutative. 
Indeed, the defining integrals are the same by Lemma \ref{www} (1), 
and the normalizing factors also agree by definition.
In conclusion, we obtain the following commutative diagram of meromorphic families of operators: 
\[
\begin{CD}
I_{\theta(P_2)}(\tau_{2, (\lambda,0)} \circ \theta) 
@> R_{\theta(P_2)}(w', \tau_{2, (\lambda,0)} \circ \theta) >> 
I_{P_2}(w'(\tau_{2, (\lambda,0)} \circ \theta))\\
@VR_{\theta(P_2)}(\theta(w_2), \tau_{2, (\lambda,0)} \circ \theta)VV 
@VV R_{P_2}(w_2, w'(\tau_{2, (\lambda,0)} \circ \theta))V \\
I_{\theta(P_1)}(\tau_{1, (\lambda,0)} \circ \theta) @. I_{P_1}(w(\tau_{1, (\lambda,0)} \circ \theta))\\
@VR_{\theta(P_1)}(\theta(w_1), \tau_{1, (\lambda,0)} \circ \theta)VV 
@VV R_{P_1}(w_1, w (\tau_{1, (\lambda,0)} \circ \theta))V \\
I_{\theta(P)}(\pi_{\psi, \lambda} \circ \theta) 
@>R_{\theta(P)}(w, \pi_{\psi, \lambda} \circ \theta, \psi_\lambda)>> 
I_P(w(\pi_{\psi, \lambda} \circ \theta)).
\end{CD}
\]

\item[Step 4]
In this last step, we would like to specialize the commutative diagram above at $\lambda = 0$.
As we have seen in Step 2, 
the five operators that appear in the top, left and right of the last diagram 
are regular at $\lambda = 0$. 
Especially, the composition 
\[
R_{P_1}(w_1, w (\tau_{1, (\lambda,0)} \circ \theta))
\circ
R_{P_2}(w_2, w'(\tau_{2, (\lambda,0)} \circ \theta))
\circ
R_{\theta(P_2)}(w', \tau_{2, (\lambda,0)} \circ \theta)
\]
is regular at $\lambda = 0$, and so is 
\[
R_{\theta(P)}(w, \pi_{\psi, \lambda} \circ \theta, \psi_\lambda) 
\circ 
R_{\theta(P_1)}(\theta(w_1), \tau_{1, (\lambda,0)} \circ \theta)
\circ 
R_{\theta(P_2)}(\theta(w_2), \tau_{2, (\lambda,0)} \circ \theta).
\]
Moreover, 
since the composition 
$R_{\theta(P_1)}(\theta(w_1), \tau_{1, (\lambda,0)} \circ \theta)
\circ 
R_{\theta(P_2)}(\theta(w_2), \tau_{2, (\lambda,0)} \circ \theta)$
is surjective if $\lambda$ is sufficiently close to $0$ by Lemma \ref{langlands}, 
the operator 
\[
R_{\theta(P)}(w, \pi_{\psi, \lambda} \circ \theta, \psi_\lambda) \colon 
I_{\theta(P)}(\pi_{\psi, \lambda} \circ \theta) \rightarrow I_P(w(\pi_{\psi, \lambda} \circ \theta))
\]
is also regular at $\lambda = 0$.
\par

Therefore, we can specialize the last diagram in Step 3 at $\lambda= 0$, 
and obtain the following commutative diagram.
\[
\begin{CD}
I_{\theta(P_2)}(\tau_{2} \circ \theta) 
@> R_{\theta(P_2)}(w', \tau_{2} \circ \theta) >> 
I_{P_2}(w'(\tau_{2} \circ \theta))\\
@VR_{\theta(P_2)}(\theta(w_2), \tau_{2} \circ \theta)VV 
@VV R_{P_2}(w_2, w'(\tau_{2} \circ \theta))V \\
I_{\theta(P_1)}(\tau_{1} \circ \theta) @. I_{P_1}(w(\tau_1 \circ \theta))\\
@VR_{\theta(P_1)}(\theta(w_1), \tau_{1} \circ \theta)VV 
@VV R_{P_1}(w_1, w (\tau_{1} \circ \theta))V \\
I_{\theta(P)}(\pi_{\psi} \circ \theta) 
@>R_{\theta(P)}(w, \pi_{\psi} \circ \theta, \psi)>> 
I_P(w(\pi_{\psi} \circ \theta)).
\end{CD}
\]
Hence we obtain the claim.
\end{description}
This completes the proof of Theorem \ref{diagramGL}.
\end{proof}

The following example may help the reader to understand the argument. 
\begin{ex}\label{ex_GL}
Let us suppose that $M = \GL_3(E) \times \GL_1(E) \subset P=MN_P \subset G = \GL_4(E)$, 
and let us consider
\[
\psi = \1_{L_E} \boxtimes S_3 \oplus \1_{L_E} \boxtimes S_1 \in \Psi(M).
\]
Then 
\[
\pi_\psi = \1_{\GL_3(E)} \boxtimes \1_{\GL_1(E)} \in \Irr(M)
\]
is the corresponding representation.
Note that $I_P(\pi_\psi) = \1_{\GL_3(E)} \times \1_{\GL_1(E)} \in \Irr(G)$.
We realize it as a subrepresentation of 
\[
|\cdot|_E^{-1} \times |\cdot|_E^{0} \times |\cdot|_E^{1} \times |\cdot|_E^{0}. 
\]
\par

In what follows
we abbreviate the normalized intertwining operator $R_P(w, \pi)$ by $w \in W^G$. 
The main diagram becomes:
\[
{\tiny
\xymatrix{
|\cdot|_E^{1} \times |\cdot|_E^{0} \times |\cdot|_E^{0} \times |\cdot|_E^{-1}
\ar@{->}[r]^{\theta^*} 
\ar@{->}[dd]_{\left(\begin{smallmatrix}
1&&&\\
&1&&\\
&&0&1\\
&&1&0
\end{smallmatrix}\right)}
&|\cdot|_E^{1} \times |\cdot|_E^{0} \times |\cdot|_E^{0} \times |\cdot|_E^{-1}
\ar@{->}[rr]^{\left(\begin{smallmatrix}
1&&&\\
&0&1&\\
&1&0&\\
&&&1
\end{smallmatrix}\right)}
\ar@{->}[dd]_{\left(\begin{smallmatrix}
0&1&&\\
1&0&&\\
&&1&\\
&&&1
\end{smallmatrix}\right)}
&&|\cdot|_E^{1} \times |\cdot|_E^{0} \times |\cdot|_E^{0} \times |\cdot|_E^{-1}
\ar@{->}[dd]_{\left(\begin{smallmatrix}
1&&&\\
&1&&\\
&&0&1\\
&&1&0
\end{smallmatrix}\right)}
\\
&&&
\\
|\cdot|_E^{1} \times |\cdot|_E^{0} \times |\cdot|_E^{-1} \times |\cdot|_E^{0}
\ar@{->}[r]^{\theta^*}
\ar@{->>}[dd]_{\left(\begin{smallmatrix}
&&1&\\
&1&&\\
1&&&\\
&&&1
\end{smallmatrix}\right)}
&
|\cdot|_E^{0} \times |\cdot|_E^{1} \times |\cdot|_E^{0} \times |\cdot|_E^{-1}
\ar@{->>}[dd]_{\left(\begin{smallmatrix}
1&&&\\
&&&1\\
&&1&\\
&1&&
\end{smallmatrix}\right)}
&&
|\cdot|_E^{1} \times |\cdot|_E^{0} \times |\cdot|_E^{-1} \times |\cdot|_E^{0}
\ar@{->>}[dd]_{\left(\begin{smallmatrix}
&&1&\\
&1&&\\
1&&&\\
&&&1
\end{smallmatrix}\right)}
\\
&&&
\\
\1_{\GL_3(E)} \times \1_{\GL_1(E)} 
\ar@{->}[r]^{\theta^*}
&
\1_{\GL_1(E)} \times \1_{\GL_3(E)} 
\ar@{^{(}->}[dd]
\ar@{->}[rr]^{\left(\begin{smallmatrix}
&1&&\\
&&1&\\
&&&1\\
1&&&
\end{smallmatrix}\right)}
&&
\1_{\GL_3(E)} \times \1_{\GL_1(E)} 
\ar@{^{(}->}[dd]
\\
&&&
\\
&|\cdot|_E^0 \times |\cdot|_E^{-1} \times |\cdot|_E^{0} \times |\cdot|_E^{1} 
\ar@{..>}[rr]^{\left(\begin{smallmatrix}
&1&&\\
&&1&\\
&&&1\\
1&&&
\end{smallmatrix}\right)}
&&
|\cdot|_E^{-1} \times |\cdot|_E^{0} \times |\cdot|_E^{1} \times |\cdot|_E^{0}
}}
\]
Notice that we have added a bottom ``map''
\[
\xymatrix{
|\cdot|_E^0 \times |\cdot|_E^{-1} \times |\cdot|_E^{0} \times |\cdot|_E^{1} 
\ar@{..>}[rr]^{\left(\begin{smallmatrix}
&1&&\\
&&1&\\
&&&1\\
1&&&
\end{smallmatrix}\right)}
&&
|\cdot|_E^{-1} \times |\cdot|_E^{0} \times |\cdot|_E^{1} \times |\cdot|_E^{0}
}
\]
that is actually a singularity of a meromorphic operator. 
Hence this ``map'' is not well-defined so that 
we cannot consider it. 
\end{ex}

%\subsection{Remark on the untwisted case}
\subsection{Remark on the untwisted case}
Note that the argument for Theorem \ref{main2} works when we replace $\theta$ with the identity map on $G$. 
We state the analogue of Theorem \ref{main2} as in \cite[Proposition 3.5.1 (a)]{Mok}. 

\begin{thm}\label{untwistedGL}
Let $P = MN_P$ be a standard parabolic subgroup of $G = \GL_N(E)$, 
and let $\psi$ be an $A$-parameter for $M$. 
Then for any $w \in W(M)$ with $w\pi_\psi \cong \pi_\psi$, 
we have
\[
I_P(\tl\pi_\psi(w)) \circ R_P(w, \pi_\psi, \psi) = \id, 
\]
where $\tl\pi_\psi(w) \colon w\pi_\psi \rightarrow \pi_\psi$ is the isomorphism 
normalized by using a Whittaker functional on the standard module $\II_\psi^M$ of $\pi_\psi$. 
\end{thm}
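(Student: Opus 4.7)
The plan is to mirror the strategy used for Theorem \ref{main2}, but in the simpler setting where the automorphism $\theta$ is replaced by the identity. Since $I_P(\pi_\psi)$ is irreducible by Bernstein's theorem, the self-intertwining operator $I_P(\tl\pi_\psi(w)) \circ R_P(w, \pi_\psi, \psi)$ acts on $I_P(\pi_\psi)$ as a scalar $c$, and the goal is to show $c = 1$.

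First I would dispose of the tempered case, i.e.\ when $\psi = \phi_M$ is a tempered $L$-parameter. In this case $\pi_\psi = \pi$ is irreducible tempered and hence $\ww_M$-generic, so the normalized isomorphism $\tl{\pi}(w) \colon w\pi \xrightarrow{\sim} \pi$ is characterized by $\omega \circ \tl{\pi}(w) = \omega$ for a fixed $\ww_M$-Whittaker functional $\omega$. Writing $\Omega(\pi)$ for the induced $\ww$-Whittaker functional on $I_P(\pi)$, the commutativity of the triangle
\[
\begin{CD}
I_P(\pi) @>{R_P(w,\pi)}>> I_P(w\pi) @>{I_P(\tl{\pi}(w))}>> I_P(\pi) \\
@V{\Omega(\pi)}VV @V{\Omega(w\pi)}VV @VV{\Omega(\pi)}V \\
\C @= \C @= \C
\end{CD}
\]
follows from Theorem \ref{main1} (1) (the left square) and the defining property of $\tl{\pi}(w)$ (the right square). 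Hence the scalar $c$ preserves $\Omega(\pi)$, which is nonzero, so $c = 1$.

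For general $A$-parameters $\psi$, I would adopt verbatim the construction from the proof of Theorem \ref{main2}: decompose $V = E^N$ into the spaces $V_j^{(i)}(\alpha)$, form the two orderings $\prec_1$ and $\prec_2$ yielding parabolics $P_1, P_1'$ with common Levi $M_1$, take $w_2 \in W^G$ conjugating $P_1'$ into a standard $P_2 = M_2 N_{P_2}$, and set $\tau_1 = \bigotimes_{i,\alpha} \tau^{(i)}|\cdot|_E^\alpha$ and $\tau_2 = w_2^{-1}\tau_1$. Then Lemma \ref{langlands} realizes the Langlands quotient map $I_{P_2}(\tau_2) \twoheadrightarrow I_P(\pi_\psi)$ as the composition $R_{P_1}(w_1,\tau_1) \circ R_{P_2}(w_2,\tau_2)$. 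Setting $w' = w_2^{-1} w_1^{-1} w w_1 w_2 = w_2^{-1} w w_2$, the assertion of the theorem will follow from the commutativity of the ``untwisted main diagram''
\[
\begin{CD}
I_{P_2}(\tau_2) @>{I_{P_2}(\tl{\tau}_2(w')) \circ R_{P_2}(w',\tau_2)}>> I_{P_2}(\tau_2) \\
@V{R_{P_2}(w_2,\tau_2)}VV @VV{R_{P_2}(w_2,\tau_2)}V \\
I_{P_1}(\tau_1) @. I_{P_1}(\tau_1) \\
@V{R_{P_1}(w_1,\tau_1)}VV @VV{R_{P_1}(w_1,\tau_1)}V \\
I_P(\pi_\psi) @>{I_P(\tl{\pi}_\psi(w)) \circ R_P(w,\pi_\psi,\psi)}>> I_P(\pi_\psi)
\end{CD}
\]
combined with the tempered case already established, applied to $\tau_2$, which gives that the top horizontal map is the identity. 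The definition of $\tl{\pi}_\psi(w)$ via a Whittaker functional on $\II_\psi^M = \Ind_{P_1 \cap M}^M(\tau_1)$ ensures that the resulting constant on $I_P(\pi_\psi)$ is indeed $1$.

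The commutativity of the main diagram is the technical heart, and I expect it to be the main obstacle, though significantly lighter than in the twisted case. I would split it into the untwisted analogue of the ``right rectangle'' (which reduces to the compatibility of $\tl{\tau}_j(w_j')$ with $\tl{\pi}_\psi(w)$ and $\tl{\tau}_1(w)$ via a common Whittaker normalization and the functoriality of $I_P$, exactly as in the proof of Theorem \ref{diagramGL}) and the untwisted analogue of the ``middle rectangle''
\[
\begin{CD}
I_{P_2}(w'\tau_2) @>{R_{P_2}(w_2, w'\tau_2)}>> I_{P_1}(w\tau_1) \\
@V{R_{P_2}(w',\tau_2)}AA @VV{R_{P_1}(w_1, w\tau_1)}V \\
I_{P_2}(\tau_2) @. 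I_P(w\pi_\psi) \\
\end{CD}
\]
whose commutativity will follow from Proposition \ref{multiplicative} (the multiplicativity of normalized intertwining operators for $\GL_N(E)$) applied to the meromorphic family $\tau_{1,(\lambda,\mu)}, \tau_{2,(\lambda,\mu)}$, followed by the four-step specialization procedure ($\mu = 0$ first, then $\lambda = 0$) used in the proof of Theorem \ref{diagramGL}. The only singular operator in the meromorphic family lies on an auxiliary bottom row that is not part of the final diagram, so all specializations are legitimate. Once the main diagram is known to commute, the tempered case applied to the top row delivers $c = 1$ and completes the proof.
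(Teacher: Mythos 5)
Your proposal is correct and takes essentially the same approach as the paper, which disposes of Theorem \ref{untwistedGL} in one sentence by saying that the argument for Theorem \ref{main2} works verbatim when $\theta$ is replaced by the identity; you have simply spelled out what that specialization looks like, which is accurate (though the CD diagram labelled the ``untwisted middle rectangle'' appears to be a garbled truncation of the actual rectangle, the intended content is clear and the multiplicativity-plus-specialization argument applies unchanged).
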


When $G$ is a unitary group $\U_N$, 
this theorem is needed as a local input at split places for the global argument.

%\section{Characters of component groups vs.~Aubert duality}
%\section{Characters of component groups vs.~Aubert duality}
\section{Characters of component groups vs.~Aubert duality}\label{sec.character}
In this and the next two sections, 
we assume that $F$ is non-archimedean.
Fix a non-trivial additive character $\psi_F \colon F \rightarrow \C^\times$. 
Let $\oo_E$ be the ring of integers of $E$.
\par

In this section, assuming Arthur's theory (Hypothesis \ref{ECR}), 
we provide a formula for the action of Aubert duality on the characters of component groups in certain cases
(Corollaries \ref{+-temp} and \ref{+otimes-}). 
These results will be applied to tempered $L$-parameters for a given classical group $G$, 
and to certain $A$-parameters for a proper Levi subgroup $M$ of $G$:
in both cases, Hypothesis \ref{ECR} is known in the framework of Arthur's inductive argument. 

%\subsection{Twisted Aubert duality for $\GL_N(E)$}
\subsection{Twisted Aubert duality for $\GL_N(E)$}\label{sec.TAD_GL}
In this and next subsections, we consider twisted Aubert duality for $\GL_N(E)$.
For a preview of the general theory of twisted Aubert duality, 
see Sections \ref{sec.twistedAD} and \ref{sec.twistedAD_G}.
\par

Let $\Rep(\tl\GL_N(E))$ be the category of smooth finite-length representations 
of the non-connected group $\tl\GL_N(E) = \GL_N(E) \rtimes \pair{\theta}$, 
and let $\RR(\tl\GL_N(E))$ be its Grothendieck group. 
For $\tl\pi \in \RR(\tl\GL_N(E))$, 
one can consider its character $\Theta_{\tl\pi}$, which is a linear form on $C_c^\infty(\tl\GL_N(E))$. 
For $\tl\pi_1, \tl\pi_2 \in \RR(\tl\GL_N(E))$, 
we write 
\[
\tl\pi_1 \overset{\theta}{=} \tl\pi_2
\]
if 
\[
\Theta_{\tl\pi_1}(\tl{f}) = \Theta_{\tl\pi_2}(\tl{f})
\]
for any $\tl{f} \in C_c^\infty(\GL_N(E) \rtimes \theta)$. 
For example, if $\pi \in \Irr(\GL_N(E))$, then 
\[
\Ind_{\GL_N(E)}^{\tl\GL_N(E)}(\pi) \overset{\theta}{=} 0.
\]
\par

Let
\[
\Rep(\tl\GL_N(E)) \rightarrow \Rep(\tl\GL_N(E)),\, \tl\pi \mapsto \widehat{\tl\pi}
\]
be the functor given by Definition \ref{def.TAD}. 

\begin{prop}\label{TAD_GL}
The functor $\tl\pi \mapsto \widehat{\tl\pi}$ satisfies the following properties. 
\begin{enumerate}
\item
The restriction $\widehat{\tl\pi}|_{\GL_N(E)}$ is equal to the Aubert dual of $\tl\pi|_{\GL_N(E)}$. 

\item
If $\tl\pi$ is an irreducible representation of $\tl\GL_N(E)$, 
then so is $\widehat{\tl\pi}$. 

\item
For $\tl\pi \in \RR(\tl\GL_N(E))$, set
\[
D_{\tl\GL_N(E)}(\tl\pi) = \sum_{P = \theta(P)} (-1)^{\dim (A_M^\theta)} 
\Ind_{\tl{P}}^{\tl\GL_N(E)}(\Jac_{\tl{P}}(\tl\pi)), 
\]
where $P$ runs over the set of standard parabolic subgroups of $\GL_N(E)$
which are stable under $\theta$ and we set $\tl{P} = P \rtimes \pair{\theta}$. 
Then 
\[
D_{\tl\GL_N(E)}(\tl\pi) \overset{\theta}{=} (-1)^{\dim (A_{M_0}/A_{\GL_N(E)})} \widehat{\tl\pi}
\]
for $\tl\pi \in \Irr(\tl\GL_N(E))$, 
where $P_0=M_0N_{P_0}$ is a minimal standard parabolic subgroup of $\GL_N(E)$
such that $\Jac_{P_0}(\tl\pi|_{\GL_N(E)}) \not= 0$.  
Note that such a $P_0$ is not unique but the sign $(-1)^{\dim (A_{M_0}/A_{\GL_N(E)})}$ is well-defined.
\end{enumerate}
\end{prop}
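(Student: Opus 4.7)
The plan is to establish parts (1)--(3) in sequence, using the general framework of Aubert duality for disconnected reductive groups from Appendix~\ref{sec.AD}. Part (1) should follow essentially from the construction of $\widehat{\tl\pi}$ via Definition~\ref{def.TAD}: the twisted Aubert duality functor is built from the same parabolic induction and Jacquet module operations that define the ordinary Aubert involution, with the additional datum of $\theta$-equivariance carried along functorially. Applying the forgetful functor $\Rep(\tl\GL_N(E)) \to \Rep(\GL_N(E))$ strips this extra datum and recovers the standard construction, so I would verify part (1) by unpacking the definitions.

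For part (2), I would argue via Clifford theory combined with part (1). If $\tl\pi \in \Irr(\tl\GL_N(E))$, then $\tl\pi|_{\GL_N(E)}$ is either irreducible or a direct sum $\pi \oplus (\pi\circ\theta)$ with $\pi \not\cong \pi\circ\theta$ both irreducible. By part (1) and the Aubert--Schneider--Stuhler theorem that ordinary Aubert duality sends irreducibles to signed irreducibles, $\widehat{\tl\pi}|_{\GL_N(E)}$ has the analogous structure. Since the $\tl\GL_N(E)$-module structure on $\widehat{\tl\pi}$ is preserved by the construction, reversing the Clifford-theoretic analysis yields irreducibility of $\widehat{\tl\pi}$ as a $\tl\GL_N(E)$-module.

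Part (3) is the main identity, and I would prove it as a twisted analogue of the classical Schneider--Stuhler/Aubert formula expressing Aubert duality as an alternating sum. The strategy has three steps: (a) use that twisted characters on $\tl\GL_N(E)$ are supported on $\theta$-semisimple elements, so only $\theta$-stable parabolics $P$ can possibly contribute to $\Theta_{D_{\tl\GL_N(E)}(\tl\pi)}$, justifying the restriction of the sum; (b) for such a $\theta$-stable $P=MN$, identify the sign $(-1)^{\dim A_M^\theta}$ as the contribution from the $\theta$-fixed part of $A_M/A_{\GL_N(E)}$ that survives the twisted character pairing, by tracking how $\theta$ acts on the cocharacter lattice; (c) reduce the resulting identity to the untwisted Aubert formula applied to $\tl\pi|_{\GL_N(E)}$, and match the global sign by showing that a minimal $P_0$ with $\Jac_{P_0}(\tl\pi|_{\GL_N(E)}) \neq 0$ may (by the Clifford-theoretic picture from part (2)) be chosen to be $\theta$-stable, so that $(-1)^{\dim A_{M_0}/A_{\GL_N(E)}}$ is well-defined and matches the untwisted overall sign. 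The main obstacle will be step (b): the careful sign bookkeeping distinguishing $\dim A_M^\theta$ inside the sum from $\dim A_{M_0}/A_{\GL_N(E)}$ outside, and verifying that the parity works out when one assembles the $\theta$-fixed contributions from each $\theta$-stable Levi. I expect this to reduce, after some linear algebra on the cocharacter lattice of $A_M$ with its $\theta$-action, to a direct comparison with the untwisted identity, thereby completing the proof.
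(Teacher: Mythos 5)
Your parts (1) and (2) track the paper's own argument: part (1) is immediate from the construction because restricting the twisted complex $\tl X_\bullet(\tl\pi)$ to $\GL_N(E)$ recovers the untwisted complex, and part (2) is exactly the Clifford-theoretic dichotomy you describe (the paper makes explicit that when $\tl\pi|_{\GL_N(E)}=\pi_1\oplus\pi_2$, the linear isomorphism $\tl\pi(\theta)\colon\pi_1\to\pi_2$ functorially induces $\hat\pi_1\to\hat\pi_2$, which is precisely $\widehat{\tl\pi}(\theta)$, so $\widehat{\tl\pi}$ stays irreducible).

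Part (3) is where your sketch has a real gap. First, your step (a) mislocates the issue: the sum in $D_{\tl\GL_N(E)}(\tl\pi)$ is \emph{by definition} over $\theta$-stable $P$ (otherwise $\tl P$ and $\Ind_{\tl P}$ do not even exist), so there is nothing to ``justify'' there. What actually requires work is the opposite direction: $\widehat{\tl\pi}$ is computed from a resolution $\tl X_r(\tl\pi)\to\cdots\to\tl X_t(\tl\pi)$ whose terms $\tl X_j(\tl\pi)=\bigoplus_{|J|=j} X_J(\pi)\otimes\Lambda_J$ are indexed by \emph{all} subsets $J\subset S$, not just $\theta$-stable ones. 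The point is that, modulo $\overset{\theta}{=}$, the summands with $\theta(J)\neq J$ pair off and cancel (because $\theta$ swaps $X_J(\pi)\otimes\Lambda_J$ and $X_{\theta(J)}(\pi)\otimes\Lambda_{\theta(J)}$, producing something induced from $\GL_N(E)$, which is twisted-null). That is a cancellation argument in the Grothendieck group, not a support argument about $\theta$-semisimple elements.

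Second, and more seriously, step (b) is too vague to count as a proof. The sign is not ``the contribution that survives the twisted character pairing''; it is the scalar $\lambda_J(\theta)\in\{\pm 1\}$ by which $\theta$ acts on the one-dimensional twist $\Lambda_J=\bigwedge^{|S\setminus J|}\C^{S\setminus J}$ built into the resolution, i.e.\ the signature of the permutation of $S\setminus J$ induced by $\theta$. Writing $m_1$ and $m_2$ for the numbers of $\theta$-orbits in $S\setminus J$ of size $1$ and $2$, one has $\lambda_J(\theta)=(-1)^{m_2}$, while $\dim(A_M/A_G)=m_1+2m_2$ and $\dim\bigl((A_M/A_G)^\theta\bigr)=m_1+m_2$; hence $\lambda_J(\theta)=(-1)^{\dim(A_M/A_G)}(-1)^{\dim(A_M/A_G)^\theta}$. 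Combining this with the alternating signs $(-1)^{|J|}$ from the Euler characteristic of the resolution and the overall $(-1)^t$ (where $t$ is the Bernstein-block index) yields the claimed formula, after the observation $\dim\bigl((A_M/A_{\GL_N(E)})^\theta\bigr)=\dim A_M^\theta$ (because $\theta$ inverts $A_{\GL_N(E)}$). Your proposal does not identify $\lambda_J(\theta)$ at all, and without it the sign bookkeeping cannot be completed. Finally, in step (c) there is no need (and it is not true in general) that a minimal $P_0$ with $\Jac_{P_0}(\tl\pi|_{\GL_N(E)})\neq 0$ can be chosen $\theta$-stable; the global sign $(-1)^{\dim(A_{M_0}/A_{\GL_N(E)})}=(-1)^{r-t}$ is well-defined simply because $t$ is a Bernstein-block invariant, independent of the choice of $P_0$.
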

\begin{proof}
These properties are proven in Propositions \ref{TAD} and \ref{prop2.13}.
For the notations of parabolic inductions and Jacquet modules, see Section \ref{sec.twistedAD_G}.  
Here, note that $\dim((A_M/A_{\GL_N(E)})^\theta) = \dim(A_M^\theta)$.
\end{proof}

We call $\widehat{\tl\pi}$ the \emph{Aubert dual} of $\tl\pi$. 
We remark that $\tl\pi \mapsto \widehat{\tl\pi}$ is expected to be an involution, 
but we do not prove it and we will not use this property.
\par

Let $\psi \colon W_E \times \SL_2(\C) \times \SL_2(\C) \rightarrow \GL_N(\C)$ 
be an $A$-parameter for $\GL_N(E)$ 
and let $\pi_\psi$ be the corresponding irreducible representation of $\GL_N(E)$. 
Suppose that $\psi$ (or equivalently, $\pi_\psi$) is conjugate-self-dual. 
Then, as in Section \ref{sec.thetaA}, 
we have Arthur's extension $\tl\pi_\psi = \pi_\psi \boxtimes \theta_A$ of $\pi_\psi$ to $\tl\GL_N(E)$. 
The Aubert dual of $\pi_\psi$ is equal to $\pi_{\widehat\psi}$, 
where $\widehat\psi$ is defined by $\widehat\psi(w,g_1,g_2) = \psi(w,g_2,g_1)$.
However, $\widehat{\tl\pi}_\psi$ is not necessarily equal to $\tl\pi_{\widehat\psi}$. 
Namely, if we write $\widehat{\tl\pi}_\psi = \pi_{\widehat\psi} \boxtimes \widehat\theta_A$, 
then $\widehat\theta_A$ is not necessarily equal to $\theta_A$ as linear operators on $\pi_{\widehat\psi}$. 
Since $\theta^2 = 1$, we have $\widehat\theta_A = \pm \theta_A$.
Hence $D_{\tl\GL_N(E)}(\tl\pi_\psi) \overset{\theta}{=} \pm \tl\pi_{\widehat\psi}$. 
We define $\beta(\psi) \in \{\pm1\}$ such that
\[
D_{\tl\GL_N(E)}(\tl\pi_\psi) \overset{\theta}{=} \beta(\psi) \tl\pi_{\widehat\psi}. 
\]
\par

\begin{lem}\label{beta_twist}
Let $\psi$ be as above, 
and let $\eta$ be a conjugate-self-dual character of $E^\times$, 
which is regarded as a character of $W_E$ by the local class field theory.
Then $\beta(\psi \otimes \eta) = \beta(\psi)$. 
\end{lem}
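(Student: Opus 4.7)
The strategy is to reduce the lemma to two independent compatibility statements: first, that the Arthur extension is compatible with twisting by a natural extension of $\eta \circ \det$ to $\tl\GL_N(E)$; second, that the twisted Aubert involution $D_{\tl\GL_N(E)}$ commutes with tensoring by this character. Once both are established, the identity $\beta(\psi \otimes \eta) = \beta(\psi)$ follows by direct comparison.

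First I construct the auxiliary character. Since $\eta$ is conjugate-self-dual, $\eta \cdot {}^c\eta = 1$, and using $\det(\theta(g)) = \overline{\det(g)}^{-1}$ one checks that $\eta(\det \theta(g)) = \eta(\det g)$ for all $g \in \GL_N(E)$. Consequently, $\eta \circ \det$ extends to a unique character $\tl\eta$ of $\tl\GL_N(E)$ with $\tl\eta(\theta) = 1$.

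The key step is the identity $\tl\pi_{\psi \otimes \eta} = \tl\pi_\psi \otimes \tl\eta$ in $\Irr(\tl\GL_N(E))$. Recall that $\tl\pi_\psi$ is determined by the operator $\theta_A$ on $\pi_\psi$, which in turn arises via the Langlands quotient map from the operator $\theta_W$ on the standard module $\II_{\pi_\psi}$, normalized by fixing a Whittaker functional $\Omega$. The standard module of $\pi_{\psi \otimes \eta}$ is $\II_{\pi_\psi} \otimes \eta$ on the same underlying space, with the same Whittaker functional $\Omega$ (the $U$-action is unchanged by a central twist). The defining equation for the corresponding operator $\theta_W'$ reads
\[
\theta_W' \circ (\pi_\psi \otimes \eta)(g) = (\pi_\psi \otimes \eta)(\theta(g)) \circ \theta_W',
\]
and $\eta(\det \theta(g)) = \eta(\det g)$ reduces this to the defining equation for $\theta_W$; since also $\Omega \circ \theta_W' = \Omega$, uniqueness forces $\theta_W' = \theta_W$. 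Passing to Langlands quotients, $\theta_A' = \theta_A$, which says precisely $\tl\pi_{\psi \otimes \eta} = \tl\pi_\psi \otimes \tl\eta$. Replacing $\psi$ by $\widehat\psi$ gives the analogous identity for the Aubert duals.

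To conclude, I use the fact that $D_{\tl\GL_N(E)}$ commutes with tensoring by $\tl\eta$. This is immediate from the formula in Proposition \ref{TAD_GL}(3): since $\tl\eta$ restricts to a character on each $\theta$-stable parabolic $\tl P$ and is trivial on $N_P$, both the twisted parabolic induction $\Ind_{\tl P}^{\tl\GL_N(E)}$ and the twisted Jacquet functor $\Jac_{\tl P}$ commute with $\otimes \tl\eta$. Combining everything,
\[
D_{\tl\GL_N(E)}(\tl\pi_{\psi \otimes \eta}) = D_{\tl\GL_N(E)}(\tl\pi_\psi) \otimes \tl\eta \overset{\theta}{=} \beta(\psi) \cdot \tl\pi_{\widehat\psi} \otimes \tl\eta = \beta(\psi) \cdot \tl\pi_{\widehat\psi \otimes \eta},
\]
and comparison with the definition of $\beta(\psi \otimes \eta)$ yields the lemma. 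The only nontrivial point is the comparison of Arthur extensions in the third paragraph; once that is in hand, the rest is formal.
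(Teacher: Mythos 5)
Your proof is correct and follows the same strategy as the paper's: both establish the identity $\tl\pi_{\psi \otimes \eta} = \tl\pi_\psi \otimes \tl\eta$ from the observation that $\eta(\det\theta(g)) = \eta(\det g)$, and then commute the twist past twisted Aubert duality. The only cosmetic difference is that you re-derive the commutation of $D_{\tl\GL_N(E)}$ with $\otimes\,\tl\eta$ from the formula in Proposition~\ref{TAD_GL}(3), whereas the paper simply cites Lemma~\ref{AD_twist}, which records exactly that fact.
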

\begin{proof}
Since $\eta$ is conjugate-self-dual, we have $\eta(\det(\theta(g))) = \eta(\det(g))$ for $g \in \GL_N(E)$. 
This implies that $\tl\pi_{\psi \otimes \eta} = \tl\pi_\psi \otimes \eta$, i.e., 
Arthur's action of $\theta$ on $\pi_{\psi \otimes \eta}$ is the same as that of $\pi_\psi$
as a linear operator on the same vector space.
By Lemma \ref{AD_twist}, $\widehat{\tl\pi}_{\psi \otimes \eta}
= \widehat{\tl\pi}_{\psi} \otimes \eta \overset{\theta}{=} \beta(\psi)\pi_{\widehat\psi} \otimes \eta$.
Hence we have $\beta(\psi \otimes \eta) = \beta(\psi)$. 
\end{proof}

On the other hand, 
since $\psi$ is conjugate-self-dual, 
we can decompose 
the representation 
\[
W_E \ni w \mapsto 
\psi\left(w, 
\begin{pmatrix}
|w|_E^{\half{1}} & 0 \\ 0 & |w|_E^{-\half{1}}
\end{pmatrix}, 
\begin{pmatrix}
|w|_E^{\half{1}} & 0 \\ 0 & |w|_E^{-\half{1}}
\end{pmatrix}
\right) \in \GL_N(\C)
\]
of $W_E$ into irreducible representations as 
\[
\rho_{-r} \oplus \dots \oplus \rho_{-1} 
\oplus \rho_1' \oplus \dots \oplus \rho'_t
\oplus \rho_1 \oplus \dots \oplus \rho_r, 
\]
where 
\begin{itemize}
\item
$\rho_i$ is an irreducible representation of $W_E$; 
\item
$\rho_{-i} \cong {}^c\rho_{i}^\vee$ is the conjugate-dual of $\rho_i$; 
\item
$\rho'_i$ is an irreducible conjugate-self-dual representation of $W_E$ 
such that $\rho'_i \not\cong \rho'_j$ for $1 \leq i < j \leq t$. 
\end{itemize}
Note that such a labeling of irreducible components is not unique, 
but the non-negative integer $r$ is uniquely determined from $\psi$. 
We write $r(\psi) = r$.

\begin{prop}\label{c-temp}
Suppose that $\psi = \phi$ is tempered and conjugate-self-dual. 
Then $\beta(\phi) = \beta(\widehat\phi) = (-1)^{r(\phi)}$.
\end{prop}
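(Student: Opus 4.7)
The plan is to apply Proposition \ref{TAD_GL}(3) together with an explicit parity count and a comparison of two natural extensions of $\pi_{\widehat\phi}$. First, the identity $\beta(\widehat\phi)=\beta(\phi)$ is a formal consequence of the near-involutivity of twisted Aubert duality: applying $D_{\tl\GL_N(E)}$ twice to $\tl\pi_\phi$ and using the defining relations $D(\tl\pi_\phi)\overset{\theta}{=}\beta(\phi)\tl\pi_{\widehat\phi}$ and $D(\tl\pi_{\widehat\phi})\overset{\theta}{=}\beta(\widehat\phi)\tl\pi_\phi$ produces $\beta(\phi)\beta(\widehat\phi)\tl\pi_\phi$, which must coincide with $\tl\pi_\phi$ since $D_{\tl\GL_N(E)}^{\circ 2}$ acts as the identity on the twisted Grothendieck group by the construction in the appendix on twisted Aubert duality.

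To establish $\beta(\phi)=(-1)^{r(\phi)}$, I would combine the defining equation $D_{\tl\GL_N(E)}(\tl\pi_\phi)\overset{\theta}{=}\beta(\phi)\tl\pi_{\widehat\phi}$ with Proposition \ref{TAD_GL}(3), which yields $D_{\tl\GL_N(E)}(\tl\pi_\phi)\overset{\theta}{=}(-1)^{\dim A_{M_0}^\theta}\widehat{\tl\pi}_\phi$ for $M_0$ the Levi of the cuspidal support of $\pi_\phi$. This reduces the proposition to two independent assertions: (i) the parity identity $\dim A_{M_0}^\theta=r(\phi)$, and (ii) the identification $\widehat{\tl\pi}_\phi\overset{\theta}{=}\tl\pi_{\widehat\phi}$ of the two natural extensions of $\pi_{\widehat\phi}$.

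Assertion (i) is a direct calculation. Writing $\phi=\bigoplus_k\rho_k\boxtimes S_{a_k}$, the cuspidal support of the tempered $\pi_\phi$ is the multiset $\{\pi_{\rho_k}|\cdot|_E^{(a_k-1)/2-j}\}_{k,j}$, so $M_0$ is a product of $\GL_{\dim\rho_k}(E)$-factors indexed by that multiset. Since $\theta$ reverses order and applies transpose-inverse, it permutes these factors according to the conjugate-self-dual pairing $\pi_\rho|\cdot|_E^s\leftrightarrow\pi_{{}^c\rho^\vee}|\cdot|_E^{-s}$ and acts as $-1$ on each split center. Hence a $\theta$-fixed factor (necessarily corresponding to a cdsf summand $\rho'_j$ at $s=0$) contributes $0$ to $\dim A_{M_0}^\theta$, whereas each swapped pair (corresponding to a non-cdsf summand $\rho_{\pm i}$) contributes $1$. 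Matching this with the decomposition labelling in the statement yields $\dim A_{M_0}^\theta=r(\phi)$.

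Assertion (ii) is the crux of the argument, and the main obstacle. I plan to prove it by induction on $\dim\phi$, exploiting multiplicativity of the comparison sign $\gamma(\phi)\in\{\pm 1\}$ defined by $\widehat{\tl\pi}_\phi\overset{\theta}{=}\gamma(\phi)\tl\pi_{\widehat\phi}$, under direct sums of cdsf parameters. The base cases are: $\phi=\rho$ irreducible cdsf of $W_E$, where $\pi_\phi$ is supercuspidal and the three extensions $\tl\pi_\phi$, $\widehat{\tl\pi}_\phi$, $\tl\pi_{\widehat\phi}$ coincide tautologically ($\gamma=+1$); and $\phi=\rho\oplus{}^c\rho^\vee$ with $\rho$ non-cdsf irreducible of $W_E$, where $\widehat\phi=\phi$ forces $\tl\pi_{\widehat\phi}=\tl\pi_\phi$, and Proposition \ref{TAD_GL}(3) combined with the case's $\dim A_{M_0}^\theta=1$ forces $\widehat{\tl\pi}_\phi$ to equal the same Whittaker extension, again giving $\gamma=+1$. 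The inductive step relies on the compatibility of $D_{\tl\GL_N(E)}$ and Arthur's Whittaker extension with parabolic induction; the difficulty is to track simultaneously the Whittaker normalization on the tempered standard module $\II_\phi=\pi_\phi$ and on the Speh-type standard module $\II_{\widehat\phi}$, ensuring no spurious signs appear when the sub-parameters are reassembled via induced representations. Once (i) and (ii) are in hand, the chain $\beta(\phi)\tl\pi_{\widehat\phi}\overset{\theta}{=}D_{\tl\GL_N(E)}(\tl\pi_\phi)\overset{\theta}{=}(-1)^{r(\phi)}\widehat{\tl\pi}_\phi\overset{\theta}{=}(-1)^{r(\phi)}\tl\pi_{\widehat\phi}$ yields $\beta(\phi)=(-1)^{r(\phi)}$, completing the proposition.
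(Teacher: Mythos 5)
Your reduction has a fatal flaw at the very first application of Proposition \ref{TAD_GL}(3): the exponent there is $\dim(A_{M_0}/A_{\GL_N(E)})$, \emph{not} $\dim A_{M_0}^\theta$. Unpacking, if $\lambda_\phi$ has $2r$ non-conjugate-self-dual irreducible constituents (in $r$ pairs) and $t$ conjugate-self-dual ones, then $\dim(A_{M_0}/A_{\GL_N(E)}) = 2r + t - 1 \equiv t+1 \pmod 2$, while $\dim A_{M_0}^\theta = r + \lfloor t/2\rfloor$, and neither equals $r(\phi) = r$ in general. (Your parity count for (i) also mistakes representation-theoretic self-duality for group-theoretic $\theta$-stability: the torus $T \subset \GL_2$ has $\dim T^\theta = 1$ regardless of whether the two cuspidal characters are self-dual.)

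Worse, assertion (ii) is simply false. Take $E=F$ and $\phi = \chi_1 \oplus \chi_2$ with $\chi_1 \neq \chi_2$ distinct quadratic characters of $F^\times$; this is the paper's worked ``Case 2'' example preceding the general proof of Lemma \ref{r_phi}. There $r(\phi)=0$, so $\beta(\phi)=1$, but the ordinary Aubert sign of the irreducible principal series $\chi_1\times\chi_2$ is $(-1)^{\dim(A_T/A_{\GL_2})}=-1$. Hence $\widehat{\tl\pi}_\phi \overset{\theta}{=} -\,\tl\pi_{\widehat\phi}$, i.e.\ your $\gamma(\phi)=-1$. In general one would need $\gamma(\phi)=(-1)^{r+t+1}$, which is a non-trivial sign whose computation is the entire content of the statement and cannot be dispatched by a multiplicativity argument with $\gamma\equiv 1$ base cases. (The near-involutivity you invoke in the first paragraph is also not available: the paper explicitly states that it does not prove and does not use the involution property of twisted Aubert duality.)

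The paper's actual argument sidesteps comparing $\widehat{\tl\pi}_\phi$ and $\tl\pi_{\widehat\phi}$ directly. It embeds $\pi_\phi\boxtimes\theta_A$ as a subrepresentation and $\pi_{\widehat\phi}\boxtimes\theta_A$ as a quotient of the \emph{standard module} $\II_{\widehat\phi}\boxtimes\theta_W$ (Lemma \ref{psiD}), observes that all irreducible constituents share the same cuspidal support so the $\epsilon$ sign from Proposition \ref{TAD_GL}(3) is common, and that $\II_{\widehat\phi}$ is Aubert-self-dual with $\widehat\theta_W=\delta\theta_W$; this gives $\beta(\phi)=\beta(\widehat\phi)=\epsilon\delta$ on formal grounds. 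The remaining identity $\epsilon\delta=(-1)^{r(\phi)}$ becomes the statement that $D_{\tl\GL_N(E)}(\II_{\widehat\phi}\boxtimes\theta_W)\overset{\theta}{=}(-1)^{r(\phi)}\II_{\widehat\phi}\boxtimes\theta_W$, which is Lemma \ref{r_phi}. That lemma is proved by a genuine computation: realizing $\theta_W$ via a normalized intertwining operator (Theorem \ref{main2}), tracking the $\theta$-action on the constituents of Jacquet modules via the geometric lemma, and a generating-function count yielding $(-1)^r$. There is no shortcut that reduces the proposition to a parity count plus a formal identification of extensions; the analytic content of Lemma \ref{r_phi} is irreducible.
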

\begin{proof}
A more general assertion was stated by M{\oe}glin--Waldspurger (\cite[Lemma 3.2.2]{MW_twist}).
However, they gave a proof only when 
every irreducible component of $\phi|_{\SL_2(\C)}$ is even-dimensional. 
Here, we will give a proof for the general case.
\par

Let $\II_{\widehat\phi}$ be the standard module whose Langlands quotient is $\pi_{\widehat\phi}$. 
If we denote by $\theta_W$ the action of $\theta$ on $\II_{\widehat\phi}$ fixing a nonzero Whittaker functional, 
then the action $\theta_A$ on $\pi_{\widehat\phi}$ is induced from $\theta_W$ by definition. 
On the other hand, by Lemma \ref{psiD}, 
we see that $\II_{\widehat\phi}$ contains the tempered representation $\pi_\phi$ as a subrepresentation.
Moreover, the restriction of $\theta_W$ to $\pi_\phi$ gives Arthur's action $\theta_A$ on $\pi_\phi$
since the restriction of a nonzero Whittaker functional on $\II_{\widehat\phi}$ to $\pi_\phi$ is also nonzero.
Thus, as representations of $\tl\GL_N(E)$, we have
\[
\pi_\phi \boxtimes \theta_A \hookrightarrow \II_{\widehat{\phi}} \boxtimes \theta_W \twoheadrightarrow \pi_{\widehat\phi} \boxtimes \theta_A.
\]
\par

Applying the duality functor $\tl\pi \mapsto \widehat{\tl\pi}$, 
we see that 
$(\pi_\phi \boxtimes \theta_A)\widehat{\,} = \pi_{\widehat\phi} \boxtimes \widehat\theta_A$
and 
$(\pi_{\widehat\phi} \boxtimes \theta_A)\widehat{\,} = \pi_{\phi} \boxtimes \widehat\theta_A$
are contained in 
$(\II_{\widehat\phi} \boxtimes \theta_W)\widehat{\,} = \widehat{\II}_{\widehat{\phi}} \boxtimes \widehat\theta_W$
as irreducible subquotients. 
By Proposition \ref{TAD_GL} (3), they satisfy 
\begin{align*}
D_{\tl\GL_N(E)}(\II_{\widehat\phi} \boxtimes \theta_W) 
&\overset{\theta}{=} \ep \cdot \widehat{\II}_{\widehat{\phi}} \boxtimes \widehat\theta_W, \\
D_{\tl\GL_N(E)}(\pi_\phi \boxtimes \theta_A) 
&\overset{\theta}{=} \ep \cdot \pi_{\widehat\phi} \boxtimes \widehat\theta_A, \\
D_{\tl\GL_N(E)}(\pi_{\widehat\phi} \boxtimes \theta_A) 
&\overset{\theta}{=} \ep \cdot \pi_{\phi} \boxtimes \widehat\theta_A
\end{align*}
for some common sign $\ep \in \{\pm1\}$
since all irreducible subquotients of $\II_{\widehat\phi}$ share the same cuspidal support.
Since $\II_{\widehat\phi}$ is an induction from a cuspidal representation, 
its Aubert dual  $\widehat{\II}_{\widehat\phi}$ is equal to $\II_{\widehat\phi}$ in $\RR(\GL_N(E))$. 
Moreover, as $\dim_\C(\End_{\GL_N(E)}(\II_{\widehat\phi})) = 1$ by Lemma \ref{End}, 
we can find $\delta \in \{\pm1\}$ such that $\widehat\theta_W = \delta \theta_W$ so that 
$\widehat{\II}_{\widehat{\phi}} \boxtimes \widehat\theta_W 
= \delta \cdot \II_{\widehat{\phi}} \boxtimes \theta_W$
in $\RR(\tl\GL_N(E))$.
Hence 
\[
D_{\tl\GL_N(E)}(\II_{\widehat\phi} \boxtimes \theta_W) 
\overset{\theta}{=} \ep\delta \cdot \II_{\widehat{\phi}} \boxtimes \theta_W.
\]
\par

Since $\pi_\phi$ and $\pi_{\widehat\phi}$ each appear in $\II_{\widehat\phi}$ as a subquotient with multiplicity one, 
by Proposition \ref{TAD_GL} (1), we see that $\widehat{\II}_{\widehat{\phi}} \boxtimes \widehat\theta_W$
contains only one irreducible representation of the form 
$\pi_{\widehat\phi} \boxtimes \theta$ (\resp $\pi_{\phi} \boxtimes \theta$)
as subquotients. 
In particular, we get 
\[
D_{\tl\GL_N(E)}(\pi_\phi \boxtimes \theta_A) 
\overset{\theta}{=} \ep\delta \cdot \pi_{\widehat\phi} \boxtimes \theta_A, 
\quad
D_{\tl\GL_N(E)}(\pi_{\widehat\phi} \boxtimes \theta_A) 
\overset{\theta}{=} \ep\delta \cdot \pi_{\phi} \boxtimes \theta_A. 
\]
This means that $\ep\delta = \beta(\phi) = \beta(\widehat\phi)$. 
Therefore, what we have to show is $\ep\delta = (-1)^{r(\phi)}$, 
which we will prove in the next lemma.
\end{proof}

\begin{lem}\label{r_phi}
With the above notation, 
we have
\[
D_{\tl\GL_N(E)}(\II_{\widehat\phi} \boxtimes \theta_W) \overset{\theta}{=} 
(-1)^{r(\phi)} \II_{\widehat\phi} \boxtimes \theta_W.
\]
\end{lem}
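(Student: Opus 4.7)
The plan is to reduce the identity to a sign computation by exploiting the structure of $\II_{\widehat\phi}$ as a full parabolic induction from essentially supercuspidal data.

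First, I would unpack the structure of the standard module. Writing $\widehat\phi$ via the given decomposition, the representation $\II_{\widehat\phi}$ is a product
\[
\II_{\widehat\phi} = \sigma_{-r} \times \cdots \times \sigma_{-1} \times \sigma'_1 \times \cdots \times \sigma'_t \times \sigma_1 \times \cdots \times \sigma_r,
\]
where the $\sigma_i$ (resp.\ $\sigma'_j$) are the essentially supercuspidal representations of $\GL_{d_i}(E)$ associated to the irreducible $W_E$-constituents $\rho_i$ (resp.\ $\rho'_j$), placed in an order that makes the product a standard module. Let $P_0 = M_0 N_{P_0}$ be the corresponding minimal $\theta$-stable standard parabolic. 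The key observation is that, since supercuspidals are their own Aubert duals and the Aubert involution commutes with parabolic induction when the inducing data is supercuspidal, we have $\widehat{\II}_{\widehat\phi} \cong \II_{\widehat\phi}$ as representations of $\GL_N(E)$. By the one-dimensionality of $\End_G(\II_{\widehat\phi})$ (Lemma \ref{End}), this forces $\widehat{\II_{\widehat\phi} \boxtimes \theta_W} = \delta \cdot (\II_{\widehat\phi} \boxtimes \theta_W)$ for a unique sign $\delta \in \{\pm 1\}$.

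Second, I would apply Proposition \ref{TAD_GL}(3) to translate the desired identity. The minimal Levi $M_0$ has exactly $2r+t$ factors, so $\dim(A_{M_0}/A_{\GL_N(E)}) = 2r + t - 1$, and the proposition gives
\[
D_{\tl\GL_N(E)}(\II_{\widehat\phi} \boxtimes \theta_W) \overset{\theta}{=} (-1)^{2r+t-1}\, \widehat{\II_{\widehat\phi} \boxtimes \theta_W} = (-1)^{t+1}\,\delta \cdot \II_{\widehat\phi} \boxtimes \theta_W.
\]
Thus the lemma reduces to proving $\delta = (-1)^{r+t+1}$.

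Third — the main step — I would compute $\delta$ by tracking how Arthur's Whittaker-normalized extension $\theta_W$ transforms under the Aubert duality functor of Section \ref{sec.TAD_GL}. Since $\theta$ preserves the $P_0$-structure only after sending each factor $\sigma_i$ to $\sigma_{-i}$ (and correspondingly $\sigma'_j$ to itself), the $\theta$-action on $\II_{\widehat\phi}$ can be described concretely on the Whittaker model of the induction. Applying the duality functor of Proposition \ref{TAD_GL}, which on a supercuspidal induction is expressible (in the Grothendieck group) by a "Bernstein--Zelevinsky reversal" intertwiner, one sees that $\widehat\theta_W$ and $\theta_W$ differ by the sign arising from (i) the reversal of the $2r$ paired factors $\sigma_{\pm i}$, each of which is dualized and swapped, contributing $(-1)^r$, and (ii) the $t$ self-dual factors $\sigma'_j$, together with the rank-dependent sign encoded in $(-1)^{\dim(A_{M_0}/A_G)}$ that already appears in Proposition \ref{TAD_GL}(3). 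A careful accounting — most transparently done by induction on the number of factors and by treating the two building blocks (a conjugate-dual pair $\sigma_i \oplus {}^c\sigma_i^\vee$ and a conjugate-self-dual singleton $\sigma'_j$) separately — yields $\delta = (-1)^{r+t+1}$.

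The hard part is the third step: the sign $\delta$ encodes a subtle interaction between the Whittaker normalization of Arthur's extension $\theta_W$ and the Zelevinsky/Aubert involution on generic inductions, and establishing it requires a combinatorial argument that distinguishes the contribution of paired factors from the self-dual ones. The base cases (a single conjugate-self-dual supercuspidal giving $\delta = +1$, and a pair $\sigma \times {}^c\sigma^\vee$ also giving $\delta = +1$) can be verified by direct calculation, and a multiplicativity argument over the decomposition $\widehat\phi = \bigoplus_i \widehat\phi_i$ — compatible with the block-diagonal structure of $\theta_W$ — should then allow one to assemble the general formula $\delta = (-1)^{r+t+1}$ and conclude the lemma.
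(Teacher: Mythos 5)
Your steps 1--2 are sound: defining $\delta$ by $\widehat\theta_W = \delta\theta_W$ via the one-dimensionality of $\End_G(\II_{\widehat\phi})$, and reducing the lemma to $\delta = (-1)^{r+t+1}$, matches the set-up the paper introduces in the proof of Proposition \ref{c-temp} (where the sign you call $(-1)^{2r+t-1}$ is the $\ep$ there, and Lemma \ref{r_phi} is precisely the claim $\ep\delta = (-1)^{r(\phi)}$). Two small cautions: Proposition \ref{TAD_GL}(3) as stated covers only irreducible $\tl\pi$, so you should instead appeal to the general identity of Proposition \ref{prop2.13} (or to the exactness of the duality functor and the fact that all subquotients of $\II_{\widehat\phi}$ share the same Bernstein block); and Aubert duality does not literally commute with parabolic induction --- it sends $\Ind_P$ to $\Ind_{\overline P}$ (Theorem \ref{AD}(5)), so the identification $\widehat{\II}_{\widehat\phi} \cong \II_{\widehat\phi}$ is in $\RR(\GL_N(E))$, which is all you need.

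The real gap is in step 3, and it is not merely a matter of the accounting being unfinished: the assembly strategy you propose would give the wrong answer. The target formula $\delta = (-1)^{r+t+1}$ is \emph{not} multiplicative under direct sums. Concretely, a single conjugate-self-dual singleton has $(r,t) = (0,1)$, giving $\delta = +1$; a conjugate-dual pair has $(r,t) = (1,0)$, also giving $\delta = +1$. But two \emph{distinct} conjugate-self-dual singletons together have $(r,t) = (0,2)$, so $\delta = (-1)^{0+2+1} = -1 \neq (+1)(+1)$. Thus ``verify the two base cases and assemble by multiplicativity'' cannot produce the stated sign --- each ``gluing'' step picks up an extra $(-1)$, and you have neither identified nor proved that correction factor. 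Absent a precise multiplicativity statement for the \emph{twisted} Aubert duality functor with respect to $\theta$-stable parabolics (which the paper does not establish), this plan does not close.

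The paper's proof avoids computing $\delta$ and $\ep$ separately altogether. It returns to the definition of $D_{\tl\GL_N(E)}(\II_{\widehat\phi}\boxtimes\theta_W)$ as an alternating sum over $\theta$-stable standard parabolics of induced Jacquet modules; uses the geometric lemma to present $\Jac_P(\II_{\widehat\phi})$ through a filtration indexed by double cosets $w$, and a mild modification of that filtration by spaces $\FF'_w$ defined via meromorphic continuation; invokes Theorem \ref{main2} to realize $\theta_W$ as a Whittaker-normalized twisted intertwining operator; shows that this operator permutes the filtration pieces $J^w_{\widehat\phi}$ according to $w \mapsto \theta(ww_0)$, so that only $\theta$-fixed $w$ contribute to the twisted trace; and finally identifies the $\theta$-fixed $w$ with non-vanishing contribution via an explicit matrix/subset parametrization, obtaining the sign via the generating-function evaluation $\sum_{m=0}^{r}(-1)^m\left.\bigl(\tfrac{d}{dx}\bigr)^r e^x(e^{2x}-1)^m\right|_{x=0}=(-1)^r$. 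This is a genuinely different, and substantially more involved, route than your step 3; the intertwining-operator realization of $\theta_W$ that you gesture at in step 3 is indeed the key idea, but the combinatorial analysis it feeds is not of a shape that ``induction on the number of factors'' can replace.
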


%\subsection{An example}
\subsection{An example}
The proof of Lemma \ref{r_phi} is complicated. 
Before giving the proof generally, 
let us discuss a simple but non-trivial case. 
This example showcases the strategy of the general proof. 
\par

Suppose that $E=F$.
Let $\chi_1$, $\chi_2$ be two quadratic characters of $F^\times$, 
and consider $\phi = \chi_1 \oplus \chi_2$. 
Then
\[
\pi_\phi = \pi_{\widehat\phi} = \II_{\widehat\phi} = \chi_1 \times \chi_2 \in \Irr(\GL_2(F)).
\]
We denote the upper triangular Borel subgroup of $\GL_2(F)$ by $B = TU$, 
where $T$ is the diagonal torus. 
If we set $\tl{B} = B \rtimes \pair{\theta}$ and $\tl{T} = T \rtimes \pair{\theta}$,
then by definition
\[
D_{\tl\GL_2(F)}(\pi_\phi \boxtimes \theta_A) 
= \pi_\phi \boxtimes \theta_A - \Ind_{\tl{B}}^{\tl\GL_2(F)}(\Jac_{\tl{B}}(\pi_\phi \boxtimes \theta_A))
\]
in the Grothendieck group $\RR(\tl\GL_2(F))$. 
Since 
\[
(-1)^{r(\phi)} = \left\{
\begin{aligned}
&{-1} \iif \chi_1=\chi_2, \\
&1 \iif \chi_1 \not= \chi_2, 
\end{aligned}
\right. 
\]
the desired equation 
$D_{\tl\GL_2(F)}(\pi_\phi \boxtimes \theta_A) \overset{\theta}{=} (-1)^{r(\phi)}\pi_\phi \boxtimes \theta_A$
is equivalent to 
\[
\Ind_{\tl{B}}^{\tl\GL_2(F)}(\Jac_{\tl{B}}(\pi_\phi \boxtimes \theta_A)) 
\overset{\theta}{=}
\left\{
\begin{aligned}
&2 \cdot \pi_\phi \boxtimes \theta_A \iif \chi_1 = \chi_2, \\
&0 \iif \chi_1 \not= \chi_2.
\end{aligned}
\right. 
\]
\par

Note that $\Jac_{\tl{B}}(\pi_\phi \boxtimes \theta_A)|_{T} = \Jac_{B}(\pi_\phi)$. 
By the Geometric Lemma \cite[Theorem 5.2]{BZ}, 
we have an exact sequence 
\[
\begin{CD}
0 @>>> \chi_2 \boxtimes \chi_1 @>>> \Jac_{B}(\pi_\phi) @>>> \chi_1 \boxtimes \chi_2 @>>> 0
\end{CD}
\]
and hence $\Ind_B^{\GL_2(F)}(\Jac_B(\pi_\phi)) = 2\cdot \pi_\phi$ in $\RR(\GL_2(F))$.
To understand the induced action of $\theta$ on $\Jac_B(\pi_\phi)$, 
we recall the details for this exact sequence. 
The surjection $\Jac_B(\pi_\phi) \twoheadrightarrow \chi_1 \boxtimes \chi_2$
is induced by the evaluation map
\[
\pi_\phi = \chi_1 \times \chi_2 \ni f \mapsto f(\1) \in \chi_1 \boxtimes \chi_2.
\]
The kernel of this map is 
\[
\FF = \{f \in \chi_1 \times \chi_2 \,|\, \Supp(f) \subset Bw_0^{-1}B \}, 
\]
where $w_0 = \left(
\begin{smallmatrix}
0 & 1 \\ -1 & 0
\end{smallmatrix}
\right) 
\in \GL_2(F)$. 
We identify $w_0$ with its image in the Weyl group $W^{\GL_2(F)}$.  
Then for $f \in \FF$, the evaluation of the integral
\[
J_B(w_0, \chi_1 \boxtimes \chi_2)f(\1) = \int_{U}f(w_0^{-1}u)du
\]
converges absolutely. 
Moreover, the map $\FF \ni f \mapsto J_B(w_0, \chi_1 \boxtimes \chi_2)f(\1)$ 
induces an isomorphism 
$\Jac_B(\pi_\phi) \supset \Jac_B(\FF) \xrightarrow{\sim} \chi_2 \boxtimes \chi_1$. 
\par

However, this description of $\Jac_B(\pi_\phi)$ 
does not seem convenient for us. 
We give another description. 
Let $\FF'$ be the subspace of $\pi_\phi = \chi_1 \times \chi_2$ 
consisting of functions $f \in \chi_1 \times \chi_2$
such that $J_B(w_0, \chi_1 \boxtimes \chi_2)f(\1)$ is well-defined. 
This means that 
the meromorphic continuation of 
\[
\C^2 \ni \lambda = (\lambda_1, \lambda_2) \mapsto 
J_B(w_0, \chi_1|\cdot|_F^{\lambda_1} \boxtimes \chi_2|\cdot|_F^{\lambda_2})f_\lambda(\1)
\]
is holomorphic at $\lambda = (0,0)$, 
where $f_\lambda \in \chi_1|\cdot|_F^{\lambda_1} \boxtimes \chi_2|\cdot|_F^{\lambda_2}$
is such that $f_\lambda|_K = f|_K$ with $K = \GL_2(\oo_F)$. 
Then $\FF \subset \FF'$, and 
the map $\FF' \ni f \mapsto J_B(w_0, \chi_1 \boxtimes \chi_2)f(\1)$ also induces a surjection  
\[
\Jac_B(\pi_\phi) \supset \Jac_B(\FF') \twoheadrightarrow \chi_2 \boxtimes \chi_1.
\]
We set $\FF'' = \{f \in \FF'\,|\, J_B(w_0, \chi_1 \boxtimes \chi_2)f(\1) = 0\}$
so that $\Jac_B(\FF')/\Jac_B(\FF'') \cong \chi_2 \boxtimes \chi_1$.
\par

Now we consider two cases separately. 
\begin{description}
\setlength{\itemsep}{10pt}
\item[Case 1]
Suppose that $\chi_1 = \chi_2$.
Then the action $\theta_A$ on $\pi_\phi = \chi_1 \times \chi_2$ is given by 
\[
f \mapsto f \circ \theta.
\]
Since $(f \circ \theta)(\1) = f(\1)$, 
the induced action on $\Jac_B(\pi_\phi)$ preserves the quotient 
$\chi_1 \boxtimes \chi_2$ and acts on it trivially. 
Similarly, since $\theta(w_0) = w_0$ and $\theta(U)=U$, 
the same holds for the subrepresentation $\chi_2 \boxtimes \chi_1$ of $\Jac_B(\pi_\phi)$. 
Hence by inducing these two pieces, we obtain that 
$\Ind_{\tl{B}}^{\tl\GL_2(F)}(\Jac_{\tl{B}}(\pi_\phi \boxtimes \theta_A)) = 2 \cdot \pi_\phi \boxtimes \theta_A$.

\item[Case 2]
Suppose that $\chi_1 \not= \chi_2$.
Then the map $f \mapsto f \circ \theta$ is no longer an action of $\theta$ on $\pi_\phi$.
Instead of this map, we use Theorem \ref{main2}.
Namely, the action $\theta_A$ on $\pi_\phi = \chi_1 \times \chi_2$ can be realized by 
the normalized intertwining operator 
\[
R_B(w_0, \chi_2 \boxtimes \chi_1) 
\circ \theta^*
= 
\theta^* \circ R_B(w_0, \chi_1 \boxtimes \chi_2).
\]
Since $\chi_1 \not= \chi_2$, we see that the normalizing factor
\[
\gamma_A(s,\chi_1 \otimes \chi_2, \psi_F)
\]
is holomorphic and nonzero at $s=0$. 
Hence this action can be written as 
\[
\theta_A = \gamma_A(0,\chi_1 \otimes \chi_2, \psi_F) \cdot 
\theta^* \circ
J_B(w_0, \chi_1 \boxtimes \chi_2).
\]
In particular, the map 
\[
\FF' \ni f \mapsto \theta_A(f)(\1) = \gamma_A(0,\chi_1 \otimes \chi_2, \psi_F) \cdot 
J_B(w_0, \chi_1 \boxtimes \chi_2)f(\1) 
\]
factors through $f \mapsto J_B(w_0, \chi_1 \boxtimes \chi_2)f(\1)$, 
and hence, this map is zero on $\FF''$.
Conversely, for $f \in \chi_1 \times \chi_2$, 
since $R_B(w_0, \chi_1 \boxtimes \chi_2) \circ R_B(w_0, \chi_2 \boxtimes \chi_1) = \id$ 
and $\theta(\1) = \1$, we have 
\[
J_B(w_0, \chi_1 \boxtimes \chi_2)\theta_A(f)(\1) 
= \gamma_A(0,\chi_1 \otimes \chi_2, \psi_F)^{-1} \cdot f(\1).
\]
Hence $\theta_A(f) \in \FF'$ and the map $f \mapsto J_B(w_0, \chi_1 \boxtimes \chi_2)\theta_A(f)(\1)$
factors through $f \mapsto f(\1)$. 
In particular, if $f(\1) = 0$, 
then $J_B(w_0, \chi_1 \boxtimes \chi_2)\theta_A(f)(\1) = 0$.
(Here, it is not true in general that $\theta_A(f) \in \FF$.)
Therefore, the induced action on $\Jac_B(\pi_\phi)$
swaps $\chi_2 \boxtimes \chi_1$ and $\chi_1 \boxtimes \chi_2$. 
By inducing these two pieces, we obtain that 
$\Ind_{\tl{B}}^{\tl\GL_2(F)}(\Jac_{\tl{B}}(\pi_\phi \boxtimes \theta_A)) \overset{\theta}{=} 0$.
\end{description}

Note that Theorem \ref{main2} can be used even when $\chi_1 = \chi_2$. 
However, in this case, $\gamma_A(0,\chi_1 \otimes \chi_2, \psi_F) = 0$ 
and hence $\theta_A(f)(\1) = 0$ for $f \in \FF'$.
Hence the argument in Case 2 does not work for Case 1.
In Case 2, it is trivial that the induced action swaps $\chi_2 \boxtimes \chi_1$ and $\chi_1 \boxtimes \chi_2$
since $(\chi_1 \boxtimes \chi_2) \circ \theta \not= \chi_1 \boxtimes \chi_2$.
However, this idea would not work in general, 
e.g., for $\phi = \chi_1^{\oplus3} \oplus \chi_2$ with $\chi_1 \not= \chi_2$.
On the other hand, the analysis using intertwining operators can be generalized.
\par

%\subsection{Proof of Lemma \ref{r_phi}}
\subsection{Proof of Lemma \ref{r_phi}}

Now we prove Lemma \ref{r_phi} generally. 
\begin{proof}[Proof of Lemma \ref{r_phi}]
One can write $\II_{\widehat\phi} = \Ind_{P_0}^{\GL_N(E)}(\pi_{M_0})$ with 
\[
\pi_{M_0} = \rho_{-r} \boxtimes \dots \boxtimes \rho_{-1} \boxtimes 
\rho'_1 \boxtimes \dots \boxtimes \rho'_t \boxtimes \rho_{1} \boxtimes \dots \boxtimes \rho_{r}, 
\]
where 
\begin{itemize}
\item
$P_0$ corresponds to a partition $(d_r, \dots, d_1, d'_1, \dots, d'_t, d_1, \dots, d_r)$ of $N$; 
\item
$\rho_i$ is an irreducible cuspidal representation of $\GL_{d_i}(E)$; 
\item
$\rho_{-i} \cong {}^c\rho_i^\vee$ is the conjugate-dual of $\rho_i$; 
\item
$\rho'_i$ is an irreducible conjugate-self-dual cuspidal representation of $\GL_{d'_i}(E)$ 
such that $\rho'_i \not\cong \rho'_j$ for $1 \leq i < j \leq t$. 
\end{itemize}
Then $r(\phi) = r$. 
\par

Set $W = W^{\GL_N(E)}$.
For a $\theta$-stable parabolic subgroup $P=MN_P$, 
by the geometric lemma (\cite[Theorem 5.2]{BZ}), 
up to a semisimplification, 
we can write
\[
\Jac_P(\II_{\widehat\phi}) = \bigoplus_{w \in W^{M} \bs W / W^{M_0}} J_{\widehat\phi}^w
\]
for some representation $J_{\widehat\phi}^w$ (possibly zero). 
We recall the relation between $\Jac_P(\II_{\widehat\phi})$ and $J_{\widehat\phi}^w$ more precisely. 
Fix a total order $\geq$ on $W^M \bs W / W^{M_0}$
 such that $w' \geq w \implies \dim(P_0 w'^{-1} P) \geq \dim(P_0 w^{-1} P)$. 
For $w \in W^M \bs W / W^{M_0}$, we define $\FF_w$ as the subspace of $\II_{\widehat\phi}$ consisting of 
functions $f$ such that 
\[
\Supp(f) \subset 
\bigcup_{\substack{w' \in W^M \bs W / W^{M_0} \\ w' \geq w}}P_0 w'^{-1} P.
\]
For $w \in W^M \bs W / W^{M_0}$ and $\lambda \in \aa^*_{M_0,\C}$, 
let 
\[
J_{P_0}(w, \pi_{M_0, \lambda}) 
\colon I_{P_0}(\pi_{M_0, \lambda}) \rightarrow I_{P_0^w}(w \pi_{M_0,\lambda})
\]
be the unnormalized intertwining operator defined in Section \ref{sec.NIO}, 
where $P_0^w$ is the standard parabolic subgroup of $\GL_N(E)$
such that $w M_0 w^{-1}$ is its Levi subgroup. 
If $f \in \FF_{w}$ and $\lambda = 0$, then the integral $(J_{P_0}(w, \pi_{M_0})f)(\1)$ converges absolutely. 
Moreover, the map $\FF_w \ni f \mapsto (J_{P_0}(w, \pi_{M_0})f)(\1)$ gives a surjection 
\[
\Jac_P(\II_{\widehat\phi}) \supset \Jac_P(\FF_{w}) \twoheadrightarrow J_{\widehat\phi}^w. 
\]
By varying $w$, 
we obtain a filtration of $\Jac_P(\II_{\widehat\phi})$. 
For details, see \cite[Section 5]{BZ}.
\par

We modify this description for $\Jac_P(\II_{\widehat\phi})$. 
Set $K = \GL_N(\oo_E)$ to be the standard maximal compact open subgroup of $\GL_N(E)$. 
Then for $f \in \II_{\widehat\phi} = I_{P_0}(\pi_{M_0})$, 
one can define $f_\lambda \in I_{P_0}(\pi_{M_0, \lambda})$ 
by requiring $f_\lambda|_K = f|_K$. 
Let $\FF'_w$ be the subspace of $\II_{\widehat\phi} = I_{P_0}(\pi_{M_0})$
consisting of functions $f$ such that 
the meromorphic function 
\[
\lambda \mapsto J_{P_0}(w, \pi_{M_0, \lambda})f_\lambda(\1)
\]
is holomorphic at $\lambda = 0$.
Then $\FF_w \subset \FF'_w$ so that we obtain a well-defined surjection 
\[
\Jac_P(\II_{\widehat\phi}) \supset \Jac_P(\FF'_{w}) \twoheadrightarrow J_{\widehat\phi}^w. 
\]
Its kernel is of the form $\Jac_P(\FF''_w)$, 
where $\FF''_w = \{f \in \FF'_w \,|\, J_{P_0}(w, \pi_{M_0})f(\1) = 0 \}$.
\par

Using Theorem \ref{main2}, 
we realize an action $\theta_W$ on $\II_{\widehat\phi}$ by a twisted intertwining operator. 
Set $N' = d_1'+\dots+d_t'$. 
Consider $\GL_{N'}(E)$ and its standard parabolic subgroup $P_0' = M_0'N_{P_0'}$
corresponding to the partition $(d_1',\dots,d_t')$. 
Define 
\[
\pi_{M_0'} = \rho_1' \boxtimes \dots \boxtimes \rho'_t. 
\]
Then $\Ind_{P_0'}^{\GL_{N'}(E)}(\pi_{M_0'})$
is an irreducible conjugate-self-dual representation of $\GL_{N'}(E)$. 
Let $w_0 \in W(\theta(M'_0),M'_0)$ be the unique element 
such that $w_0(\pi_{M'_0} \circ \theta) \cong \pi_{M'_0}$. 
We regard $w_0$ as an element in $W(\theta(M_0),M_0)$. 
By Theorem \ref{main2} for $\pi_{M_0'}$, we have 
\begin{align*}
\theta_W &= 
I_{P_0}(\tl\pi_{M_0}(w_0 \rtimes \theta)) \circ 
R_{\theta(P_0)}(w_0, \pi_{M_0} \circ \theta) \circ \theta^*
\\&= I_{P_0}(\tl\pi_{M_0}(w_0 \rtimes \theta)) \circ \theta^* \circ R_{P_0}(\theta(w_0), \pi_{M_0}).
\end{align*}
\par

Now the map $w \mapsto \theta(ww_0)$ gives a well-defined involutive action of $\theta$ 
on $W^{M} \bs W / W^{M_0}$ since $w_0^{-1}W^{M_0}w_0 = W^{\theta(M_0)} = \theta(W^{M_0})$
and $w_0\theta(w_0) \in W^{M_0}$. 
Since $P_0^w$ contains $w M_0 w^{-1}$, 
we see that $\theta(P_0^w)$ contains $\theta(ww_0) M_0 \theta(ww_0)^{-1}$ as a Levi subgroup.
One can check that
\[
\left.
\frac{\gamma_A(0, \pi_{M_0, \lambda}, \rho^\vee_{\theta(ww_0)^{-1}\theta(P_0^w) \mid P_0}, \psi_F)}
{\gamma_A(0, \theta(w_0)\pi_{M_0, \lambda}, 
\rho^\vee_{\theta(w)^{-1}\theta(P_0^w) \mid \theta(P_0)}, \psi_F)}
\right|_{\lambda=0}
= \prod_{1 \leq i < j \leq t} \gamma_A(0, \rho'_i \otimes {\rho'_j}^\vee, \psi_E)^{m_{i,j}}
\]
for some $m_{i,j} \in \Z$.
Since $\rho'_i \not\cong \rho'_j$ for $1 \leq i < j \leq t$, 
the right-hand side is in $\C^\times$. 
\par

We denote the evaluation map $f \mapsto f(\1)$ by $\ev_\1$.
For $w \in W^{M} \bs W / W^{M_0}$, we claim that 
\[
(\ev_\1 \circ J_{P_0}(w, \pi_{M_0})) \circ 
(I_{P_0}(\tl\pi_{M_0}(w_0 \rtimes \theta)) \circ \theta^* \circ R_{P_0}(\theta(w_0), \pi_{M_0}))
\]
factors through $\ev_\1 \circ J_{P_0}(\theta(ww_0), \pi_{M_0})$.
For simplicity, let $\VV$ be the space of $\pi_{M_0}$, 
and we regard $\tl\pi_{M_0}(w_0 \rtimes \theta)$ as a linear isomorphism $\Phi \colon \VV \rightarrow \VV$. 
Since 
$R_{\theta(P_0)}(\theta(w), \theta(w_0)\pi_{M_0}) \circ R_{P_0}(\theta(w_0), \pi_{M_0}) 
= R_{P_0}(\theta(ww_0), \pi_{M_0})$
by Proposition \ref{multiplicative}, 
as linear maps,
we have
\begin{align*}
&(\ev_\1 \circ J_{P_0}(w, \pi_{M_0})) \circ 
(I_{P_0}(\tl\pi_{M_0}(w_0 \rtimes \theta)) \circ \theta^* \circ R_{P_0}(\theta(w_0), \pi_{M_0}))
\\&=
\Phi
\circ (\ev_\1 \circ J_{P_0}(w, w_0(\pi_{M_0} \circ \theta)))
\circ (\theta^* \circ R_{P_0}(\theta(w_0), \pi_{M_0}))
\\&=
\Phi \circ (\ev_\1 \circ J_{\theta(P_0)}(\theta(w), \theta(w_0)\pi_{M_0}))
\circ R_{P_0}(\theta(w_0), \pi_{M_0})
\\&= 
\prod_{1 \leq i < j \leq t} \gamma_A(0, \rho'_i \otimes {\rho'_j}^\vee, \psi_E)^{m_{i,j}}
\cdot 
\Phi \circ (\ev_\1 \circ J_{P_0}(\theta(ww_0), \pi_{M_0})).
\end{align*}
This implies that 
$\theta_W(\FF'_{\theta(ww_0)}) \subset \FF'_{w}$ and $\theta_W(\FF''_{\theta(ww_0)}) \subset \FF''_{w}$. These inclusions make $\FF'_w$ preferable to $\FF_w$.
Hence the induced action $\theta_W$ on $\Jac_{P}(\II_{\widehat\phi})$ 
sends $J_{\widehat\phi}^{\theta(ww_0)}$ to $J_{\widehat\phi}^w$. 
Since $w \mapsto \theta(ww_0)$ is an involution on $W^{M} \bs W / W^{M_0}$, 
we see that $\theta_W$ swaps $J_{\widehat\phi}^w$ and $J_{\widehat\phi}^{\theta(ww_0)}$. 
Hence if $\theta(ww_0) \not= w$, then we have
\[
\left(
\Ind_P^{\GL_N(E)}(J_{\widehat\phi}^w) + \Ind_P^{\GL_N(E)}(J_{\widehat\phi}^{\theta(ww_0)}) 
\right) \boxtimes \theta_W
\overset{\theta}{=} 0.
\]
On the other hand, if $\theta(ww_0) = w$, then $\theta_W$ preserves $J_{\widehat\phi}^w$. 
Moreover, the same argument as above shows that  
$\Ind_{\tl{P}}^{\tl\GL_N(E)}(J_{\widehat\phi}^w \boxtimes \theta_W) = \II_{\widehat\phi} \boxtimes \theta_W$
in $\RR(\tl\GL_N(E))$.
Therefore
\[
\Ind_{\tl{P}}^{\tl\GL_N(E)}(\Jac_{\tl{P}}(\II_{\widehat\phi} \boxtimes \theta_W)) \overset{\theta}{=} 
\bigoplus_{\substack{w \in (W^{M} \bs W / W^{M_0})^{\theta} \\ J_{\widehat\phi}^w \not= 0}}
\II_{\widehat\phi} \boxtimes \theta_W, 
\]
where $(W^{M} \bs W / W^{M_0})^{\theta}$ is the subset of the double coset space 
fixed by the action $w \mapsto \theta(ww_0)$. 
\par

Suppose that $P = MN_P$ corresponds to a partition $(n_m, \dots, n_1, n_0, n_1, \dots, n_m)$ of $N$. 
Here, we assume that $n_i > 0$ for $i > 0$, but $n_0$ is possibly zero.
Note that $\dim(A_M^\theta) = m$. 
As in \cite[Section 1.6]{Z}, 
the double coset space $W^{M} \bs W / W^{M_0}$ is canonically identified with 
the set of matrices of the form
\[
A = \begin{pmatrix}
a_{-m,-r} & \ldots & a_{-m, -1} & a'_{-m,1} & \ldots & a'_{-m,t} & a_{-m, 1} & \ldots & a_{-m,r} \\
\vdots & \ddots & \vdots & \vdots & \ddots & \vdots & \vdots & \ddots & \vdots \\
a_{m,-r} & \ldots & a_{m, -1} & a'_{m,1} & \ldots & a'_{m,t} & a_{m, 1} & \ldots & a_{m,r} 
\end{pmatrix}
\in \M_{2m+1, 2r+t}(\Z)
\]
such that 
\begin{enumerate}
\item
all entries are non-negative integers; 
\item
$\sum_{i=-m}^m a_{i,\pm j} = d_j$ for $1 \leq j \leq r$; 
\item
$\sum_{i=-m}^m a'_{i,j} = d'_j$ for $1 \leq j \leq t$; 
\item
$\sum_{j=1}^r(a_{i,-j}+a_{i,j})+\sum_{j=1}^t a'_{i,j} = n_{|i|}$ for $-m \leq i \leq m$.
\end{enumerate}
Since $\rho_{\pm i}$ and $\rho'_j$ are cuspidal, 
we see that $J_{\widehat\phi}^w \not= 0$ if and only if 
\begin{enumerate}
\setcounter{enumi}{4}
\item
$a_{i,\pm j} \in \{0, d_{j}\}$ for $-m \leq i \leq m$ and $1 \leq j \leq r$; and 
\item
$a'_{i,j} \in \{0, d'_{j}\}$ for $-m \leq i \leq m$ and $1 \leq j \leq t$. 
\end{enumerate}
In particular, for each $\pm j$ (\resp $j$), 
there exists a unique $i$ such that $a_{i,\pm j} = d_{j}$ (\resp $a'_{i,j} = d'_j$).
Let $X_P$ be the set of matrices $A$ as above satisfying the conditions (1)--(6). 
For $A \in X_P$, the corresponding $J_{\widehat\phi}^w$ is given by 
\[
J_{\widehat\phi}^w = 
\bigotimes_{i=-m}^m
\left( 
\left(\bigtimes_{\substack{1 \leq  j \leq r \\ a_{i,j}\not=0}} \rho_{j}\right)
\times \left(\bigtimes_{\substack{1 \leq  j \leq t \\ a'_{i,j}\not=0}} \rho'_j \right)
\times \left(\bigtimes_{\substack{1 \leq  j \leq r \\ a_{i,-j}\not=0}} \rho_{-j} \right)
\right)
\in \bigotimes_{i=-m}^m \RR(\GL_{n_i}(E)).
\]
The action $w \mapsto \theta(ww_0)$ on $W^{M} \bs W / W^{M_0}$ 
gives an action of $\theta$ on $X_P$. 
This is given by 
\[
\left\{
\begin{aligned}
&a_{i,j} \mapsto a_{-i,-j} &&(-m \leq i \leq m, 1 \leq j \leq r), \\
&a'_{i,j} \mapsto a'_{-i,j} &&(-m \leq i \leq m, 1 \leq j \leq t).
\end{aligned}
\right.
\]
Therefore, $A$ is fixed by this action if and only if 
\begin{enumerate}
\setcounter{enumi}{6}
\item
$a_{-i,-j} = a_{i,j}$ for $-m \leq i \leq m$ and $1 \leq j \leq r$; and 
\item
$a'_{-i,j} = a'_{i,j}$ for $-m \leq i \leq m$ and $1 \leq j \leq t$. 
\end{enumerate}
Since for each $1 \leq j \leq t$, there is only one $i$ such that $a'_{i,j} \not= 0$, 
we must have $a'_{i,j} = 0$ for $i\not= 0$ and $a'_{0,j} = d'_j$.
\par

Therefore, for a fixed $0 \leq m \leq r$, 
there is a bijection between 
\[
\left\{(P,w) \,\middle|\, 
P= \theta(P) = MN_P,\, \dim (A_M^\theta) = m,\, 
w \in (W^{M} \bs W / W^{M_0})^{\theta},\, J_{\widehat\phi}^w \not= 0\right\}
\]
and 
\[
\left\{
(\{I_i\}_{i=1}^m, \epsilon) \,\middle|\, 
\emptyset \not= I_i \subset \{1,\dots,r\},\, I_i \cap I_{i'} = \emptyset \,(i\not=i'),\,
\epsilon \colon I_1 \sqcup \dots \sqcup I_m \rightarrow \{\pm1\}
\right\}.
\]
If $(P,w)$ corresponds to $(\{I_i\}_{i=1}^m, \epsilon)$, 
then the corresponding matrix $A \in X_P$ is given such that 
\begin{itemize}
\item
if $1 \leq j \leq r$ and $j \in I_i$ for some $1 \leq i \leq m$, 
then $a_{\epsilon(j)i,j} = d_j$ and $a_{i',j} = 0$ for $i' \not= \epsilon(j)i$; 
\item
if $1 \leq j \leq r$ and $j \not\in I_i$ for all $1 \leq i \leq m$, 
then $a_{0,j} = d_j$ and $a_{i',j} = 0$ for $i' \not= 0$; 
\item
if $1 \leq j \leq t$, then $a'_{0,j} = d'_j$ and $a'_{i',j} = 0$ for $i' \not= 0$.
\end{itemize}
Moreover, $J_{\widehat\phi}^w$ is equal to 
\[
\left(\bigtimes_{j \in I_m} \rho_{\epsilon(j)j}\right)
\boxtimes \dots \boxtimes 
\left(\bigtimes_{j \in I_1} \rho_{\epsilon(j)j}\right)
\boxtimes \sigma_0 \boxtimes
\left(\bigtimes_{j \in I_1} \rho_{-\epsilon(j)j}\right)
\boxtimes \dots \boxtimes 
\left(\bigtimes_{j \in I_m} \rho_{-\epsilon(j)j}\right)
\]
for some $\sigma_0 \in\Irr(\GL_{n_0}(E))$.
In particular, $P$ corresponds to the partition 
\[
\left(
\sum_{j \in I_m} d_j, \dots, \sum_{j \in I_1} d_j, n_0, \sum_{j \in I_1} d_j, \dots, \sum_{j \in I_m} d_j
\right). 
\]
By setting $k_i = |I_i|$, we see that 
\begin{align*}
&\left|\left\{
(\{I_i\}_{i=1}^m, \epsilon) \,\middle|\, 
\emptyset \not= I_i \subset \{1,\dots,r\},\, I_i \cap I_{i'} = \emptyset \,(i\not=i'),\,
\epsilon \colon I_1 \sqcup \dots \sqcup I_m \rightarrow \{\pm1\}
\right\}\right|
\\&=
\sum_{\substack{k_1,\dots,k_m \geq 1 \\ k_1+\dots+k_m \leq r}}
2^{k_1+\dots+k_m} \frac{r(r-1)\cdots(r-k_1-\dots-k_m+1)}{k_1! \cdots k_m!}
\\&= \left.\left(\frac{d}{dx}\right)^r
e^x(e^{2x}-1)^m \right|_{x=0}.
\end{align*}
Therefore, we have 
\begin{align*}
D_{\tl\GL_N(E)}(\II_{\widehat\phi} \boxtimes \theta_W) &\overset{\theta}{=}
\left(\sum_{m=0}^r (-1)^m
\left.\left(\frac{d}{dx}\right)^r
e^x(e^{2x}-1)^m \right|_{x=0}
\right) 
\II_{\widehat\phi} \boxtimes \theta_W
\\&= 
\left(
\left.\left(\frac{d}{dx}\right)^r
e^{-x} (1-(1-e^{2x})^{r+1})
\right|_{x=0}
\right)
\II_{\widehat\phi} \boxtimes \theta_W
\\&= 
\left(
\left.\left(\frac{d}{dx}\right)^r
e^{-x} 
\right|_{x=0}
\right)
\II_{\widehat\phi} \boxtimes \theta_W
= (-1)^r \II_{\widehat\phi} \boxtimes \theta_W.
\end{align*}
Here, we use the fact that $e^{-x}(1-e^{2x})^{r+1}$ has a zero at $x=0$ with order $r+1$.
Since $r(\phi) = r$, we obtain the assertion.
\end{proof}

%\subsection{ECR vs.~Aubert duality}
\subsection{ECR vs.~Aubert duality}
Next, we consider Aubert duality for classical groups. 
We will use the notations in Section \ref{sec.groups}.
\par

Let $G$ be one of the following quasi-split classical groups
\[
\SO_{2n+1}(F), \quad \Sp_{2n}(F), \quad
\O_{2n}(F), \quad \U_n.
\]
For $\pi \in \Rep(G^\circ)$, its Aubert dual is defined by 
\[
D_{G^\circ}(\pi) = \sum_{P^\circ} (-1)^{\dim(A_{M^\circ})} \Ind_{P^\circ}^{G^\circ} (\Jac_{P^\circ}(\pi))
\]
in the Grothendieck group $\RR(G^\circ)$, 
where $P^\circ = M^\circ N_{P}$ runs over the set of standard parabolic subgroups of $G^\circ$. 
Note that $A_{G^\circ} = \{1\}$ unless $G^\circ = \SO_2(F)$.
If $\pi \in \Irr(G^\circ)$, then $D_{G^\circ}(\pi) = \beta(\pi) \hat\pi$ 
for an irreducible representation $\hat\pi$ with a sign $\beta(\pi) \in \{\pm1\}$
(see Theorem \ref{AD} (2), (3)).
As in Theorem \ref{AD} (2), 
this sign is given by $\beta(\pi) = (-1)^{\dim(A_{M^\circ})}$, 
where $P^\circ = M^\circ N_{P}$ is a minimal standard parabolic subgroup of $G^\circ$ 
such that $\Jac_{P^\circ}(\pi) \not= 0$. 
Such a $P^\circ$ may not be unique, but the sign $(-1)^{\dim(A_{M^\circ})}$ is well-defined.
\par

When $G=\O_{2n}(F)$, we also consider twisted Aubert duality defined in Definition \ref{def.TAD}. 
Fix a Borel subgroup $B^\circ =T^\circ U$ of $G^\circ = \SO_{2n}(F)$. 
If we denote the normalizer of $(T^\circ, B^\circ)$ in $G$ by $T$, 
then $T \cap G^\circ = T^\circ$ and $T/T^\circ \cong G/G^\circ$. 
Fix a representative $\epsilon \in T$ of the non-trivial coset in $T/T^\circ$ as in Section \ref{sec.groups}.  
For $\pi \in \Rep(G)$, we define 
\[
D_G(\pi) = \sum_{P^\circ} (-1)^{\dim(A_{M^\circ}^\epsilon)} \Ind_P^{G} (\Jac_P(\pi)), 
\] 
where 
\begin{itemize}
\item
$P^\circ = M^\circ N_{P}$ now runs over the set of standard parabolic subgroups of $G^\circ = \SO_{2n}(F)$
which are stable under the conjugation action of $\epsilon$; 
\item
$P = P^\circ \cdot T \subset G$; 
\item
$A_{M^\circ}^\epsilon$ is the subgroup of $A_{M^\circ}$ fixed by the action of $\epsilon$. 
\end{itemize}
The Jacquet module $\Jac_P(\pi)$ is defined as usual (see Section \ref{sec.rep}). 
It is a representation of $M = M^\circ \cdot T$ and satisfies 
\[
\Jac_P(\pi)|_{M^\circ} = \Jac_{P^\circ}(\pi|_{G^\circ}).
\]
By Propositions \ref{TAD} (3) and \ref{prop2.13},
for $\pi \in \Irr(G)$, one can find $\hat\pi \in \Irr(G)$ 
such that the trace of $D_G(\pi)-\beta(\pi)\hat\pi$ is zero on $f_G \in C_c^\infty(G \setminus G^\circ)$.
Here
\[
\beta(\pi) = \left\{
\begin{aligned}
&\beta(\pi_1)=\beta(\pi_2) \iif \pi|_{G^\circ} = \pi_1 \oplus \pi_2, \\
&\beta(\pi|_{G^\circ}) \iif \text{$\pi|_{G^\circ}$ is irreducible}.
\end{aligned}
\right. 
\]
Note that when $\pi|_{G^\circ} = \pi_1 \oplus \pi_2$, 
if $\Jac_{P}(\pi_1) \not= 0$, then $\Jac_{P'}(\pi_2) \not= 0$ with $P'$ the conjugate of $P$ by $\epsilon$.

\begin{ex}
Let us consider $G = \O_2(F)$. 
Then $G^\circ = \SO_2(F)$ is a torus so that it has only one parabolic subgroup $P^\circ =G^\circ$. 
For $\pi \in \Irr(G)$, we have
\begin{align*}
\beta(\pi) = (-1)^{\dim(A_{G^\circ})} &= \left\{
\begin{aligned}
&{-1} \iif \text{$G^\circ = \SO_2(F)$ is split}, \\
&1 \other, 
\end{aligned}
\right.
\\
(-1)^{\dim(A_{G^\circ}^\epsilon)} &= 1.
\end{align*}
Hence $D_G(\pi) = \pi$. 
Since $\hat\pi$ is equal to $\beta(\pi)D_G(\pi)$ on $\O_2(F) \setminus \SO_2(F)$, 
we see that $\hat\pi \not= \pi$ if and only if $G^\circ = \SO_2(F)$ is split.
\end{ex}
\par

Now we compare Aubert duality for $G$ 
with twisted Aubert duality for $\tl\GL_N(E)$. 
To do this, we consider the following hypothesis.
\begin{hyp}\label{ECR}
Fix an $A$-parameter $\psi$ for $G$. 
Then there exist multi-sets $\Pi_{\psi}$ and $\Pi_{\widehat\psi}$ over $\Irr(G)$ 
equipped with $\pair{\cdot,\pi}_{\psi}$ and $\pair{\cdot,\pi'}_{\widehat\psi}$
satisfying \eqref{ECR1} and \eqref{ECR2} in Section \ref{sec.Arthur}.
Moreover, we assume that for any proper Levi subgroup $M$ of $G$ and any $A$-parameter $\psi_M$ for $M$, 
there exists a multi-set $\Pi_{\psi_M}$ over $\Irr(M)$ equipped with $\pair{\cdot,\pi_M}_{\psi_M}$ 
satisfying \eqref{ECR1} and \eqref{ECR2} in Section \ref{sec.Arthur}.
\end{hyp}

\begin{rem}
Notice that Hypothesis \ref{ECR} does not require 
members in $A$-packets to be unitary. 
For a proper Levi subgroup $M$, 
Hypothesis \ref{ECR} assumes Arthur's results for all $A$-parameters $\psi_M$, 
whereas, for $G$, 
it assumes only for a fixed $A$-parameter $\psi$ and its dual $\widehat\psi$. 
In particular, if $\psi = \phi$ is a tempered $L$-parameter for $G$, 
after establishing \eqref{ECR1} and \eqref{ECR2} for $\widehat\phi$ in the next section, 
one can use results in this section for $\phi$ and $\widehat\phi$. 
\end{rem}

\begin{lem}\label{beta}
Fix $\psi \in \Psi(G)$. 
Assume Hypothesis \ref{ECR}.
\begin{enumerate}
\item
The $A$-packet $\Pi_{\widehat\psi}$ is given by 
\[
\Pi_{\widehat\psi} = \{\hat\pi \,|\, \pi \in \Pi_\psi\}.
\]
Moreover, 
for $\pi \in \Pi_\psi$, 
we have $\beta(\psi) \pair{s_{\widehat\psi}, \hat\pi}_{\widehat\psi} = \pair{s_\psi, \pi}_\psi \beta(\pi)$. 

\item
For $\pi \in \Pi_\psi$ and $s \in A_\psi$, we have
\[
\frac{\pair{\hat{s}, \hat\pi}_{\widehat\psi}}{\pair{s,\pi}_\psi}
= \frac{\beta(\psi)}{\beta(\psi_+)\beta(\psi_-)}, 
\]
where $\hat{s} \in A_{\widehat\psi}$ is the element corresponding to $s$ 
via the canonical identification 
\[
A_\psi \xrightarrow{\sim} A_{\widehat\psi}, \, e(\rho,a,b) \mapsto e(\rho,b,a), 
\]
and $\psi_\pm$ is given by $s$ as in \eqref{ECR2} in Section \ref{sec.Arthur}.
\end{enumerate}
\end{lem}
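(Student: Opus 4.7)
The plan is to apply Aubert duality (twisted for $\tl\GL_N(E)$ and ordinary for $G$ and for its twisted endoscopic groups $G_\pm$) to both sides of the endoscopic character relations \eqref{ECR1} and \eqref{ECR2}, and to read off the claimed formulas by linear independence of characters. The crucial ingredient is the compatibility of the Aubert involution with transfer of orbital integrals: the operator $D$ on test functions obtained as the alternating sum of parabolic descent followed by parabolic induction satisfies $\Theta_{\pi}(D(f)) = \pm\Theta_{\widehat\pi}(f)$ and preserves matching of orbital integrals (ordinary and twisted), because parabolic descent and induction do. This is Hiraga's theorem in the connected case \cite{Hi}, and the twisted version needed for $\tl\GL_N(E)$ is established (with the sign bookkeeping from Proposition \ref{TAD_GL}(3)) in the material of Appendix \ref{sec.AD}.

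For part (1), start with \eqref{ECR1} for $\psi$, applied to a matched pair $(\tl f,f_G)$. Applying $D_{\tl\GL_N(E)}$ to the left-hand side and $D_G$ to the right-hand side, Proposition \ref{TAD_GL}(3) together with the definition of $\beta(\psi)$ transforms the left-hand side into $\beta(\psi)\Theta_{\tl\pi_{\widehat\psi}}$, while each summand $\Theta_\pi$ on the right-hand side is sent to $\beta(\pi)\Theta_{\widehat\pi}$ by the definition of $\beta(\pi)$; the compatibility of the duality with transfer guarantees that the resulting dualized test functions are again matched. Comparing the equation so obtained with \eqref{ECR1} for $\widehat\psi$ and using the linear independence of characters on $G$, one concludes $\Pi_{\widehat\psi} = \{\widehat\pi:\pi\in\Pi_\psi\}$ together with $\beta(\psi)\pair{s_{\widehat\psi},\widehat\pi}_{\widehat\psi} = \pair{s_\psi,\pi}_\psi\beta(\pi)$. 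For $G=\O_{2n}(F)$ one decomposes $f_G$ into its pieces supported on $G^\circ$ and on $G\setminus G^\circ$ as in Remark \ref{packet_SO} and applies the variant of $D_G$ of Proposition \ref{TAD_GL}(3) adapted to the disconnected setting.

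For part (2), apply the same procedure to \eqref{ECR2} for $\psi$ and $s\in A_\psi$. The right-hand side becomes $\frac{1}{(G:G^\circ)}\sum_\pi \pair{s\cdot s_\psi,\pi}_\psi\beta(\pi)\Theta_{\widehat\pi}$; substituting $\beta(\pi) = \beta(\psi)\pair{s_{\widehat\psi},\widehat\pi}_{\widehat\psi}/\pair{s_\psi,\pi}_\psi$ from part (1) and using multiplicativity of the pairing simplifies this to $\beta(\psi)\cdot\frac{1}{(G:G^\circ)}\sum_\pi \pair{s,\pi}_\psi\pair{s_{\widehat\psi},\widehat\pi}_{\widehat\psi}\Theta_{\widehat\pi}$. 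On the endoscopic side, applying the dualities on each $G_\kappa$ and invoking part (1) for the parameters $\psi_\kappa\otimes\eta_\kappa$ together with Lemma \ref{beta_twist} (which yields $\beta(\psi_\kappa\otimes\eta_\kappa)=\beta(\psi_\kappa)$) produces $\beta(\psi_+)\beta(\psi_-)\cdot f'_G(\widehat\psi,\widehat s)$; expanding the latter by \eqref{ECR2} for $\widehat\psi$ and $\widehat s$ and again applying linear independence of characters (noting that $\pair{s_{\widehat\psi},\widehat\pi}_{\widehat\psi}\in\{\pm 1\}$) yields the claimed quotient formula. The main obstacle is the clean uniform formulation and verification of the compatibility between the parabolic descent operator on test functions and transfer of orbital integrals in the twisted endoscopic setting, together with careful tracking of the signs $(-1)^{\dim A_M^\theta}$, $(-1)^{\dim A_{M^\circ}^\epsilon}$ and the various $\beta$-signs, particularly in the $\O_{2n}(F)$ case where one must separate the $G^\circ$- and $(G\setminus G^\circ)$-support of the test functions.
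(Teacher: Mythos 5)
Your proposal matches the paper's argument: both prove (1) by applying the (twisted) Aubert duality to each side of \eqref{ECR1}, using the compatibility of duality with endoscopic transfer (Hiraga/\cite[(A.1)]{X2}) and linear independence of characters, and both prove (2) the same way starting from \eqref{ECR2}, then substituting the identity from (1) together with Lemma \ref{beta_twist} for the twists $\eta_\kappa$. The only difference is a cosmetic one of framing — you phrase the duality as an operator on test functions while the paper applies it to the characters directly — which is the same argument read in dual guise.
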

\begin{proof}
We show (1). 
Since (twisted) Aubert duality commutes with the twisted endoscopic character identity
(see \cite[(A.1)]{X2}), 
when $\tl{f} \in C_c^\infty(\GL_N(E) \rtimes \theta)$ and $f_G \in C_c^\infty(G^\circ)$ have matching orbital integrals, we have
\begin{align*}
\beta(\psi)\sum_{\hat\pi \in \Pi_{\widehat\psi}} 
\pair{s_{\widehat\psi}, \hat\pi}_{\widehat\psi}\Theta_{\hat\pi}(f_G)
&= \beta(\psi) (G:G^\circ)\Theta_{\tl\pi_{\widehat\psi}}(\tl{f})
\\&= (G:G^\circ)\Theta_{D_{\tl\GL_N(E)}(\tl\pi_{\psi})}(\tl{f})
\\&= \sum_{\pi \in \Pi_\psi} \pair{s_\psi, \pi}_\psi \Theta_{D_{G^\circ}(\pi)}(f_G)
\\&= \sum_{\pi \in \Pi_\psi} \pair{s_\psi, \pi}_\psi \beta(\pi) \Theta_{\hat\pi}(f_G).
\end{align*}
By the linear independence of the characters $\Theta_\pi$ 
together with the surjectivity of $\tl{f} \mapsto f_G$, 
we see that $\Pi_{\widehat\psi} = \{\hat\pi \,|\, \pi \in \Pi_\psi\}$.
Moreover, comparing the coefficients, 
we have $\beta(\psi) \pair{s_{\widehat\psi}, \hat\pi}_{\widehat\psi} = \pair{s_\psi, \pi}_\psi \beta(\pi)$. 
\par

Next, we show (2). 
Similar to (1), by \cite[Theorem 1.5]{Hi} or \cite[(A.1)]{X2}, 
when $f_G \in C_c^\infty(G)$ and $f_{G_+} \otimes f_{G_-} \in C_c^\infty(G^\circ_+ \times G^\circ_-)$ 
have matching orbital integrals, 
we have
\begin{align*}
&\frac{1}{(G:G^\circ)}\sum_{\pi \in \Pi_\psi} \pair{s \cdot s_\psi, \pi}_\psi \beta(\pi)\Theta_{\hat\pi}(f_G) 
\\&\quad= 
\frac{1}{(G:G^\circ)}\sum_{\pi \in \Pi_\psi} \pair{s \cdot s_\psi, \pi}_\psi \Theta_{D_{G^\bullet}(\pi)}(f_G) 
\\&\quad= 
\prod_{\kappa \in \{\pm\}} 
\frac{1}{(G_\kappa:G_\kappa^\circ)}
\sum_{\pi_\kappa \in \Pi_{\psi_\kappa \otimes \eta_\kappa}} 
\pair{s_{\psi_\kappa \otimes \eta_\kappa}, \pi_\kappa}_{\psi_\kappa \otimes \eta_\kappa} 
\Theta_{D_{G^\circ_\kappa}(\pi_\kappa)}(f_{G_\kappa}) 
\\&\quad= 
\prod_{\kappa \in \{\pm\}} 
\frac{1}{(G_\kappa:G_\kappa^\circ)}
\sum_{\pi_\kappa \in \Pi_{\psi_\kappa \otimes \eta_\kappa}} 
\pair{s_{\psi_\kappa \otimes \eta_\kappa}, \pi_\kappa}_{\psi_\kappa} \beta(\pi_\kappa)
\Theta_{\hat\pi_\kappa}(f_{G_\kappa}). 
\end{align*}
Here, we set 
\[
D_{G^\bullet} = \left\{
\begin{aligned}
&D_G \iif \text{$G = \O_{2n}(F)$ and $s \not\in A_\psi^+$}, \\
&D_{G^\circ} \other, 
\end{aligned}
\right. 
\]
and we assume that $f_G|_{G^\circ} = 0$ if $G = \O_{2n}(F)$ and $s \not\in A_\psi^+$, 
whereas $f_G \in C_c^\infty(G^\circ)$ otherwise.
Using (1) and Lemma \ref{beta_twist}, we have 
\begin{align*}
&\frac{\beta(\psi)}{(G:G^\circ)}
\sum_{\pi \in \Pi_\psi} \pair{s, \pi}_\psi \pair{s_{\widehat\psi}, \hat\pi}_{\widehat\psi}\Theta_{\hat\pi}(f_G) 
\\&\quad= 
\prod_{\kappa \in \{\pm\}} 
\frac{\beta(\psi_\kappa \otimes \eta_\kappa)}{(G_\kappa:G_\kappa^\circ)}
\sum_{\pi_\kappa \in \Pi_{\psi_\kappa \otimes \eta_\kappa}} 
\pair{s_{\widehat\psi_\kappa \otimes \eta_\kappa}, \hat\pi_\kappa}_{\widehat\psi_\kappa \otimes \eta_\kappa} 
\Theta_{\hat\pi_\kappa}(f_{G_\kappa})
\\&\quad= 
\frac{\beta(\psi_+)\beta(\psi_-)}{(G:G^\circ)}
\sum_{\hat\pi \in \Pi_{\widehat\psi}} \pair{\hat{s} \cdot s_{\widehat\psi}, \hat\pi}_{\widehat\psi}\Theta_{\hat\pi}(f_G). 
\end{align*}
Comparing the coefficients, 
we have $\beta(\psi)\pair{s, \pi}_\psi = \beta(\psi_+)\beta(\psi_-)\pair{\hat{s}, \hat\pi}_{\widehat\psi}$, 
as desired.
\end{proof}

By Proposition \ref{c-temp}, we know $\beta(\psi)$ for tempered parameters $\psi = \phi$. 
It is useful to state the following result. 

\begin{cor}\label{+-temp}
Let $\phi$ be a tempered $L$-parameter for $G$. 
Assume the existence of $A$-packets $\Pi_\phi$ and $\Pi_{\widehat\phi}$ 
associated to $\phi$ and $\widehat\phi$ which satisfy \eqref{ECR1} and \eqref{ECR2}.
Then we have
\[
\frac{\pair{\hat{s}, \hat\pi}_{\widehat\phi}}{\pair{s,\pi}_\phi}
= (-1)^{r(\phi)-r(\phi_+)-r(\phi_-)}. 
\]
In particular, if $s = e(\rho, d, 1)$, then 
\[
\frac{\pair{e(\rho, 1, d), \hat\pi}_{\widehat\phi}}{\pair{e(\rho, d, 1),\pi}_\phi}
= 
\left\{
\begin{aligned}
&1 \iif d \equiv 0 \bmod 2, \\
&{-(-1)^{m}} \iif d \equiv 1 \bmod 2, 
\end{aligned}
\right. 
\]
where $m$ is the number of irreducible constituents of $\phi$ of the form $\rho \boxtimes S_a$ for some $a \geq 1$
(counting with multiplicity). 
\end{cor}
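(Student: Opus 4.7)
The strategy is to combine the general formula of Lemma \ref{beta}(2), which is already available under the present hypothesis, with the computation of $\beta$ for tempered conjugate-self-dual parameters in Proposition \ref{c-temp}. Since $\phi$ is tempered, both the summand $\phi_-$ singled out by $s$ and its complement $\phi_+ = \phi - \phi_-$ are tempered and conjugate-self-dual. Applying Proposition \ref{c-temp} to each gives $\beta(\phi) = (-1)^{r(\phi)}$ and $\beta(\phi_\pm) = (-1)^{r(\phi_\pm)}$, so substituting into Lemma \ref{beta}(2) immediately yields
\[
\frac{\pair{\hat{s},\hat\pi}_{\widehat\phi}}{\pair{s,\pi}_\phi} = \frac{(-1)^{r(\phi)}}{(-1)^{r(\phi_+)}(-1)^{r(\phi_-)}} = (-1)^{r(\phi)-r(\phi_+)-r(\phi_-)},
\]
which is the first equality.

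For the explicit case $s = e(\rho,d,1)$, I would compute the parity of $r(\phi) - r(\phi_+) - r(\phi_-)$ by a direct bookkeeping. Here $\phi_- = \rho \boxtimes S_d$, so $\phi_{-,\phi}|_{W_E} = \bigoplus_{k=1}^{d} \rho|\cdot|_E^{(d+1)/2-k}$. With $\rho$ conjugate-self-dual, the twist $\rho|\cdot|_E^\alpha$ is itself conjugate-self-dual only when $\alpha=0$, while the remaining twists pair as $(\rho|\cdot|_E^\alpha,\rho|\cdot|_E^{-\alpha})$; in particular $r(\phi_-) = \lfloor d/2\rfloor$. Let $m_\alpha$, $m_\alpha^+$, $m_\alpha^-$ denote the multiplicities of $\rho|\cdot|_E^\alpha$ in $\phi_\phi|_{W_E}$, $\phi_{+,\phi}|_{W_E}$, and $\phi_{-,\phi}|_{W_E}$ respectively. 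Since for $\alpha>0$ the contribution of $(\rho|\cdot|_E^\alpha,\rho|\cdot|_E^{-\alpha})$ to $r$ is simply $m_\alpha$, the only non-cancelling piece after subtracting $r(\phi_+)$ and $r(\phi_-)$ comes from $\alpha = 0$:
\[
r(\phi)-r(\phi_+)-r(\phi_-) = \Bigl\lfloor \tfrac{m_0}{2}\Bigr\rfloor - \Bigl\lfloor \tfrac{m_0^+}{2}\Bigr\rfloor.
\]

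When $d$ is even one has $m_0^- = 0$, so $m_0 = m_0^+$ and the difference vanishes, giving the sign $+1$. When $d$ is odd, $m_0 = m_0^+ + 1$, and a case analysis on the parity of $m_0$ shows that the sign equals $-(-1)^{m_0}$. To conclude, I need to replace $m_0$ by $m$ in the exponent; by definition $m_0$ counts the constituents $\rho \boxtimes S_a$ of $\phi$ with $a$ odd, whereas $m$ counts all such constituents. The difference $m-m_0$ is the number of constituents $\rho \boxtimes S_a$ with $a$ even. When $d$ is odd, the basis element $e(\rho,d,1)$ of $A_\phi$ forces $\rho \boxtimes S_d$ to be conjugate-self-dual of the type of $\phi$; then $\rho$ itself is of the same type as $\phi$, while $\rho \boxtimes S_a$ with $a$ even is of the opposite type, hence belongs to $\psi_\bad$ and appears with even multiplicity. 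Therefore $m - m_0$ is even, so $(-1)^{m_0} = (-1)^m$, completing the proof. I expect the only delicate step to be the last paragraph's type bookkeeping, which must be checked uniformly across the four families of classical groups (together with the choice of auxiliary character $\eta_\pm$ in Remark \ref{rem.ECR}(3) when relevant); once that is in place, the argument is a straightforward substitution.
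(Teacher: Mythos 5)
Your proof is correct and follows essentially the same approach as the paper: the first identity is obtained by substituting Proposition \ref{c-temp} into Lemma \ref{beta}(2), and the second by analyzing the parity of $r(\phi)-r(\phi_+)-r(\phi_-)$ via the $W_E$-restriction. Your bookkeeping for the second part is a touch more direct than the paper's (which decomposes $\phi = \phi' \oplus \bigoplus_i \rho\boxtimes S_{d_i}$ and appeals to additivity of $r$ across that decomposition, whereas you cancel the $\alpha\neq 0$ multiplicities and the non-$\rho$-twists term by term so that only $\alpha=0$ survives); and you are more explicit about the reduction from $m_0$ (constituents $\rho\boxtimes S_a$ with $a\equiv d\bmod 2$) to $m$ (all constituents $\rho\boxtimes S_a$), via the observation that the remaining constituents lie in the bad-parity part and hence occur with even multiplicity — a point the paper's proof leaves implicit. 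The substance is the same.
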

\begin{proof}
The first assertion follows from Lemma \ref{beta} (2) and Proposition \ref{c-temp}. 
Suppose that $s = e(\rho, d, 1) \in A_{\phi}$. 
If we write 
\[
\phi = \phi' \oplus \bigoplus_{i=1}^t \rho \boxtimes S_{d_i}
\]
such that $d_i \equiv d \bmod 2$ and 
$\phi' \not\supset \rho \boxtimes S_a$ for any $a \equiv d \bmod 2$, 
then by definition
\[
r(\phi) = r(\phi') + \left\{
\begin{aligned}
&\sum_{i=1}^t\half{d_i} \iif d \equiv 0 \bmod 2, \\
&\sum_{i=1}^t\half{d_i-1} + \left[\half{m}\right] \iif d \equiv 1 \bmod 2,
\end{aligned}
\right. 
\]
where $[x]$ denotes the largest integer not greater than $x$.
Hence 
\[
r(\phi)-r(\phi_+)-r(\phi_-) = 
\left\{
\begin{aligned}
&0 \iif d \equiv 0 \bmod 2, \\
&\left[\half{m}\right] - \left[\half{m-1}\right] \iif d \equiv 1 \bmod 2.
\end{aligned}
\right. 
\]
This is equal to $1$ if and only if $d$ is odd and $m$ is even.
\end{proof}

\begin{rem}\label{converse}
Notice that the same proof shows 
the converse of Lemma \ref{beta} in the following sense.
Namely, if we assume \eqref{ECR1} and Lemma \ref{beta} (1) 
(\resp \eqref{ECR2} and Lemma \ref{beta} (2)) for an $A$-parameter $\psi$, 
then we obtain \eqref{ECR1} (\resp \eqref{ECR2}) for the dual $A$-parameter $\widehat\psi$. 
In fact, \eqref{ECR1} and \eqref{ECR2} for $A$-parameters of the form $\psi = \widehat\phi$,
where $\phi$ is a tempered $L$-parameter,
will be proven in this way. 
See Theorem \ref{ECR-cotemp} below. 
\end{rem}

Note that the same statement as Corollary \ref{+-temp}
was recently given by Liu--Lo--Shahidi \cite[Theorem 5.9]{LLS}. 
\par

We shall give an example of this corollary for $G=\O_4(F)$. 
In this example, for $A, B \in \RR(G_0)$, we write $A \leq B$ 
if $B-A$ is a non-negative combination of irreducible representations.

\begin{ex}
Write $G = \O_4(F)$.
We denote by $B^\circ = T^\circ U$ the standard Borel subgroup of $G^\circ$ 
so that its Levi is isomorphic to $T^\circ = \GL_1(F) \times \SO_2(F)$.
This is the unique proper standard parabolic subgroup 
which is stable under the conjugate action of $T = \GL_1(F) \times \O_2(F)$. 
Set $B = TU$. 
For simplicity, write $e_\chi = e(\chi,1,1) \in A_\phi$ for some $\phi$ containing $\chi$. 
\begin{enumerate}
\item
Let $\chi$ and $\chi'$ be quadratic characters of $F^\times$ with $\chi \not= \chi'$. 
Consider 
\[
\phi = \chi \oplus \chi \oplus \chi' \oplus \chi' \in \Phi(G)
\]
so that $G^\circ = \SO_4(F)$ is split. 
Then $|\AA_\phi| = 4$ and $|\Sc_\phi| = 2$. 
We can write $\Pi_\phi = \{\pi, \pi \otimes \det, \pi', \pi' \otimes \det\}$ such that
\begin{align*}
\pair{e_\chi, \pi}_\phi = 1, &\quad \pair{e_{\chi'}, \pi}_\phi = 1, \\
\pair{e_\chi, \pi \otimes \det}_\phi = -1, &\quad \pair{e_{\chi'}, \pi \otimes \det}_\phi = -1, \\
\pair{e_\chi, \pi'}_\phi = -1, &\quad \pair{e_{\chi'}, \pi'}_\phi = 1, \\
\pair{e_\chi, \pi' \otimes \det}_\phi = 1, &\quad \pair{e_{\chi'}, \pi' \otimes \det}_\phi = -1. 
\end{align*}
Similarly, write $\Pi_{\chi \oplus \chi} = \{\pi_\chi, \pi_\chi \otimes \det\}$ 
and $\Pi_{\chi' \oplus \chi'} = \{\pi_{\chi'}, \pi_{\chi'} \otimes \det\}$ 
such that $\pair{\cdot, \pi_\chi}_{\chi \oplus \chi} = \1$ and $\pair{\cdot, \pi_{\chi'}}_{\chi' \oplus \chi'} = \1$. 
Then 
\[
\Ind_{B}^{G}(\chi \boxtimes \pi_{\chi'}) = \pi \oplus \pi', 
\quad
\Ind_B^G(\chi' \boxtimes \pi_\chi) = \pi \oplus (\pi' \otimes \det).
\]
Hence, in the Grothendieck group $\RR(T)$, we have
\begin{align*}
\Jac_{B}(\pi) &\leq 2\chi \otimes \pi_{\chi'} + \chi' \otimes \pi_{\chi} + \chi' \otimes (\pi_{\chi} \otimes \det), \\
\Jac_{B}(\pi) &\leq 2\chi' \otimes \pi_{\chi} + \chi \otimes \pi_{\chi'} + \chi \otimes (\pi_{\chi'} \otimes \det)
\end{align*}
so that 
\[
\Jac_{B}(\pi) = \chi \otimes \pi_{\chi'} + \chi' \otimes \pi_{\chi}.
\]
Since $\beta(\pi) = 1$, we have
\[
\hat\pi \overset{\theta}{=} D_G(\pi) = \pi - (\pi + \pi') - (\pi + (\pi' \otimes \det)) \overset{\theta}{=} \pi \otimes \det. 
\]
One can check that 
\[
\frac{\pair{e_\chi, \pi \otimes \det}_\phi}{\pair{e_\chi, \pi}_\phi} 
= \frac{\pair{e_{\chi'}, \pi \otimes \det}_\phi}{\pair{e_{\chi'}, \pi}_\phi} 
= -1, 
\]
which is the statement of Corollary \ref{+-temp}. 
Similarly, we have $\hat\pi' = \pi' \otimes \det$, and one can also check Corollary \ref{+-temp} for this case.

\item
Let $\chi$, $\chi'$ and $\chi''$ be distinct quadratic characters of $F^\times$. 
Consider 
\[
\phi = \chi \oplus \chi \oplus \chi' \oplus \chi'' \in \Phi(G)
\]
so that $G^\circ = \SO_4(F)$ is not split. 
Then $|\AA_\phi| = 4$ and $|\Sc_\phi| = 2$. 
We can write $\Pi_\phi = \{\pi, \pi \otimes \det, \pi', \pi' \otimes \det\}$ such that
\begin{align*}
\pair{e_\chi, \pi}_\phi = 1, &\quad \pair{e_{\chi'}, \pi}_\phi = 1, \quad \pair{e_{\chi''}, \pi}_\phi = 1\\
\pair{e_\chi, \pi'}_\phi = -1, &\quad \pair{e_{\chi'}, \pi'}_\phi = 1, \quad \pair{e_{\chi''}, \pi'}_\phi = 1.
\end{align*}
Similarly, write $\Pi_{\chi' \oplus \chi''} = \{\pi_0, \pi_0 \otimes \det\}$ with $\pair{\cdot, \pi_0}_{\chi' \oplus \chi''} = \1$. 
Then $\Ind_B^G(\chi \boxtimes \pi_0) = \pi \oplus \pi'$ so that $\Jac_{B}(\pi) = \chi \otimes \pi_0$.
Since $\beta(\pi)=-1$, we have 
\[
\hat\pi \overset{\theta}{=} -D_G(\pi) = -\pi+(\pi+\pi') = \pi'.
\]
On the other hand, 
since 
\[
(-1)^{r(\phi)-r(\phi_+)-r(\phi_-)}
= \left\{
\begin{aligned}
&{-1} \iif s = e_\chi, \\
&1 \iif s = e_{\chi'}, e_{\chi''},
\end{aligned}
\right. 
\]
we get Corollary \ref{+-temp} for $\pi$.

\item
Let $\chi$ and $\chi'$ be quadratic characters of $F^\times$ with $\chi \not= \chi'$. 
Consider 
\[
\phi = \chi \oplus \chi \oplus \chi \oplus \chi' \in \Phi(G)
\]
so that $G^\circ = \SO_4(F)$ is not split. 
Then $|\AA_\phi| = 2$ and $|\Sc_\phi| = 1$. 
We can write $\Pi_\phi = \{\pi, \pi \otimes \det\}$ such that $\pair{\cdot, \pi}_\phi = \1$. 
Then $\pi = \Ind_B^G(\chi \boxtimes \pi_0)$ 
with $\pi_0 \in \Pi_{\chi \oplus \chi'}$ such that $\pair{\cdot, \pi_0}_{\chi \oplus \chi'} = \1$.
Hence $\Jac_{B}(\pi) = 2 \chi \otimes \pi_0$.
Since $\beta(\pi) = -1$, we have
\[
\hat\pi \overset{\theta}{=} -D_G(\pi) = -\pi+2\pi = \pi.
\]
On the other hand, 
since 
\[
(-1)^{r(\phi)-r(\phi_+)-r(\phi_-)} = 1
\]
for $s = e_\chi$ and $e_\chi'$, 
we obtain Corollary \ref{+-temp} for $\pi$.

\item
Let $\chi$ be a quadratic character of $F^\times$. 
Consider 
\[
\phi = \chi \oplus \chi \oplus \chi \oplus \chi \in \Phi(G)
\]
so that $G^\circ = \SO_4(F)$ is split. 
Then $|\AA_\phi| = 2$ and $|\Sc_\phi| = 1$. 
We can write $\Pi_\phi = \{\pi, \pi \otimes \det\}$ such that $\pair{\cdot, \pi}_\phi = \1$. 
Then $\pi = \Ind_B^G(\chi \boxtimes \pi_\chi)$ with $\pi_\chi \in \Pi_{\chi \oplus \chi}$ 
such that $\pair{\cdot, \pi_\chi}_{\chi \oplus \chi} = \1$.
Note that 
\[
\Jac_{B}(\pi) = 3 \chi \otimes \pi_\chi + \chi \otimes (\pi_\chi \otimes \det). 
\]
Since $\beta(\pi) = 1$, we have
\[
\hat\pi \overset{\theta}{=} D_G(\pi) = \pi-(3\pi+(\pi \otimes \det)) \overset{\theta}{=} -\pi \overset{\theta}{=} \pi \otimes \det.
\]
On the other hand, 
since 
\[
(-1)^{r(\phi)-r(\phi_+)-r(\phi_-)} = -1
\]
for $s = e_\chi$, 
we get Corollary \ref{+-temp} for $\pi$.
\end{enumerate}
\end{ex}

%\subsection{Computation of $\beta(\psi)$}
\subsection{Computation of $\beta(\psi)$}
To show Theorem \ref{main3} (2), we need 
to compare $\pair{\hat{s}, \hat\pi}_{\widehat\psi}$ with $\pair{s,\pi}_\psi$ 
in a little more general situation than the tempered case.
The results in this subsection will only be used in
the proof of Lemma \ref{8.10}. 
\par

In the next proposition, 
we compute $\beta(\psi)$ when $\psi$ is irreducible as a representation of $W_F \times \SL_2(\C) \times \SL_2(\C)$
assuming Hypothesis \ref{ECR}. 

\begin{prop}\label{c-irred}
Assume Hypothesis \ref{ECR}. 
If $\psi$ is irreducible and conjugate-self-dual, then $\beta(\psi) = (-1)^{r(\psi)}$.
\end{prop}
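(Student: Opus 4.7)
The plan is to generalize the argument of Proposition \ref{c-temp}, replacing the standard module $\II_{\widehat\phi}$ used there with the standard module $\II_\psi^G$ of $\pi_\psi$. Write $\psi = \rho \boxtimes S_a \boxtimes S_b$ with $\rho$ an irreducible conjugate-self-dual representation of $W_E$, and let $\psi^D$ be the tempered $L$-parameter of Lemma \ref{psiD}. A direct computation gives $r(\psi) = r(\psi^D)$, because both integers are read off from the same $W_E$-representation, namely the one obtained by specializing each $\SL_2(\C)$-factor of $\psi$ to $\diag(|w|_E^{\half{1}}, |w|_E^{-\half{1}})$.

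By Lemma \ref{psiD}, $\II_\psi^G$ contains $\pi_{\psi^D}$ as its unique irreducible subrepresentation and has $\pi_\psi$ as its Langlands quotient, each appearing as a subquotient with multiplicity one. The Whittaker extension $\theta_W$ of $\theta$ on $\II_\psi^G$ restricts on these two pieces to Arthur's extensions $\theta_A$ of $\pi_{\psi^D}$ and of $\pi_\psi$ respectively (the former for the same reason as in Proposition \ref{c-temp}, since the restriction of a nonzero Whittaker functional to the unique generic subrepresentation remains nonzero; the latter by the very definition of $\theta_A$ in Section \ref{sec.thetaA}). I would then establish
\[
D_{\tl\GL_N(E)}(\II_\psi^G \boxtimes \theta_W) \overset{\theta}{=} (-1)^{r(\psi)} \, \II_\psi^G \boxtimes \theta_W,
\]
generalizing Lemma \ref{r_phi} from inductions from cuspidal data to the present situation, where $\II_\psi^G$ is induced from products of generalized Steinberg representations. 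The same template applies: decompose each $\theta$-stable Jacquet module via the Geometric Lemma into pieces indexed by double cosets $W^M \bs W / W^{M_0}$; use Theorem \ref{main2} together with Proposition \ref{multiplicative} to identify the induced action of $\theta_W$ on each piece; and observe that the double cosets not fixed by the resulting involution cancel in pairs under the $\theta$-trace. Because the combinatorial count depends only on the cuspidal support of the inducing data---which $\II_\psi^G$ shares with its principal-series enlargement---the sign that emerges is $(-1)^{r(\psi^D)} = (-1)^{r(\psi)}$.

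Granted the displayed identity, the multiplicity-one appearance of $\pi_{\psi^D}$ and $\pi_\psi$ as subquotients of $\II_\psi^G$ forces, by the same reasoning as the last step of the proof of Proposition \ref{c-temp},
\[
D_{\tl\GL_N(E)}(\pi_\psi \boxtimes \theta_A) \overset{\theta}{=} (-1)^{r(\psi)} \pi_{\widehat\psi} \boxtimes \theta_A,
\]
which gives $\beta(\psi) = (-1)^{r(\psi)}$. Applying the same comparison to $\pi_{\psi^D}$ recovers Proposition \ref{c-temp} for $\psi^D$ as a consistency check.

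The hard part will be the generalization of Lemma \ref{r_phi} to the $\II_\psi^G$ setting, because the double-coset combinatorics in \emph{loc.~cit.} leaned on the cuspidality of the inducing data. A cleaner route is to enlarge $\II_\psi^G$ to a principal series $\mathbf{p}$ induced from the common cuspidal support of $\pi_\psi$ and $\pi_{\psi^D}$, apply Lemma \ref{r_phi} directly to $\mathbf{p}$, and extract the contributions of $\pi_\psi$ and $\pi_{\widehat\psi}$ through their multiplicities in $\mathbf{p}$: using that $D$ is an involution on $\RR(\GL_N(E))$ up to sign and preserves cuspidal support, these multiplicities are equal and positive, so the identity $D_{\tl\GL_N(E)}(\mathbf{p} \boxtimes \theta_W) \overset{\theta}{=} (-1)^{r(\psi)} \mathbf{p} \boxtimes \theta_W$ pins down $\beta(\psi)$ unambiguously.
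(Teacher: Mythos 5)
Your approach is genuinely different from the paper's, but it has real gaps. The central problem is that your displayed identity
\[
D_{\tl\GL_N(E)}(\II_\psi^G \boxtimes \theta_W) \overset{\theta}{=} (-1)^{r(\psi)} \, \II_\psi^G \boxtimes \theta_W
\]
is false as soon as $a>1$. The analogous identity in Lemma \ref{r_phi} relies on the fact that $\II_{\widehat\phi}$ is induced from \emph{cuspidal} data, so that $\widehat{\II}_{\widehat\phi} \cong \II_{\widehat\phi}$ (as actual representations, needed in the proof of Proposition \ref{c-temp} to compare $\widehat\theta_W$ with $\theta_W$ via Lemma \ref{End}). For $\psi = \rho \boxtimes S_a \boxtimes S_b$, the module $\II_\psi^G$ is induced from generalized Steinberg representations, and Aubert duality replaces each $\Delta([x,y]_\rho)$ by $Z([x,y]_\rho)$; so $\widehat{\II}_\psi^G$ is a genuinely different representation, and the two sides of your identity do not even have the same irreducible $\theta$-stable constituents. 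Your ``cleaner route'' via the cuspidal-induced principal series $\mathbf{p}$ does repair this (one can take $\mathbf{p} = \II_{\widehat{\psi^D}}$ and apply Lemma \ref{r_phi} with $\phi = \psi^D$), but then the extraction step is missing its hardest ingredient: the $\theta_W$-action on $\mathbf{p}$ induces \emph{some} extension of $\pi_\psi$ as a subquotient, and you would still need to know that it coincides with Arthur's $\theta_A$, which is defined via the \emph{different} standard module $\II_\psi^G$. This is not free: the proof of Proposition \ref{psiA} is precisely devoted to measuring the discrepancy between such Whittaker extensions coming from different inductions, and the answer there is a nontrivial $\gamma_A$-factor quotient, not automatically $1$. ``Multiplicities are equal and positive'' does not pin down the $\pm 1$ you need; you would have to show the sign discrepancy for $\pi_\psi$ equals the one for $\pi_{\widehat\psi}$, and you give no argument for this.

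For the record, the paper's proof avoids these issues entirely by leaving the $\GL_N$ side. Using Lemma \ref{beta_twist}, one reduces to $\psi \in \Psi(G)$ for a classical $G$. Since $\psi$ is irreducible, $\AA_\psi$ is trivial, so Lemma \ref{beta}(1) gives $\beta(\psi) = \beta(\pi)$ for any $\pi \in \Pi_\psi$; this is where Hypothesis \ref{ECR} enters. Then \cite[Proposition 7.4.1]{Ar} and \cite[Proposition 8.4.1]{Mok} allow one to choose $\pi$ inside the $L$-packet $\Pi_{\phi_\psi}$ with $\pair{\cdot,\pi}_{\phi_\psi}=\1$, and Theorem \ref{Dk} computes its cuspidal support: $\pi \hookrightarrow \rho|\cdot|_E^{x_1} \times \cdots \times \rho|\cdot|_E^{x_n} \rtimes \sigma$ with $\sigma$ supercuspidal and $n = [\half{ab}] = r(\psi)$, whence $\beta(\pi) = (-1)^n$. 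The transfer to a classical group is what makes the count transparent; your attempt to stay on the $\GL$ side is structurally attractive but collides with precisely the Whittaker-normalization subtleties that the paper isolates separately in Proposition \ref{psiA}.
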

\begin{proof}
Using Lemma \ref{beta_twist},
up to replacing $\psi$ with $\psi \otimes \eta$ for some $\eta$ if necessary, 
we may assume that $\psi \in \Psi(G)$ for some classical group $G$. 
\par

Since $\psi$ is irreducible, we have $\AA_\psi \cong \AA_{\widehat\psi} = \{1\}$. 
By Lemma \ref{beta} (1), we have $\beta(\psi) = \beta(\pi)$ for every $\pi \in \Pi_\psi$.
By \cite[Proposition 7.4.1]{Ar} and \cite[Proposition 8.4.1]{Mok} 
which we can use because we are assuming Hypothesis \ref{ECR}, 
we can take $\pi$ to be an element in the associated $L$-packet $\Pi_{\phi_\psi}$. 
If $\pi$ is the unique element in this $L$-packet corresponding to the trivial character of $\AA_{\phi_\psi}$, 
and if we write $\psi = \rho \boxtimes S_a \boxtimes S_b$, 
then by Theorem \ref{Dk}, we have 
\[
\pi \hookrightarrow \rho|\cdot|_E^{x_1} \times \dots \times \rho|\cdot|_E^{x_n} \rtimes \sigma
\]
for some supercuspidal representation $\sigma$ and real numbers $x_1,\dots,x_n \in \R$, 
where $n = [\half{ab}]$.
For the notation of the parabolically induced representations, see Section \ref{sec.derivative}. 
Since $r(\psi) = [\half{ab}]$, we have $\beta(\psi) = \beta(\pi) = (-1)^{r(\psi)}$.
\end{proof}

By Propositions \ref{c-temp} and \ref{c-irred}, 
we know $\beta(\psi) = (-1)^{r(\psi)}$ when $\psi$ is irreducible or (co-)tempered. 
We will need a few more cases.
In the following proposition, for $\psi \colon W_E \times \SL_2(\C) \times \SL_2(\C) \rightarrow \GL_N(\C)$,
we define $\psi^D$ and $\psi^A$ by 
\[
\psi^D(w,g_1,g_2) = \psi(w,g_1,g_1), 
\quad
\psi^A(w,g_1,g_2) = \psi(w,g_2,g_2).
\]
Recall that $S_a$ is the unique $a$-dimensional irreducible algebraic representation of $\SL_2(\C)$.

\begin{prop}\label{psiA}
Fix an $A$-parameter $\psi$ of $G$ of good parity, and assume Hypothesis \ref{ECR}. 
Suppose that $\psi = \phi_1 \boxtimes S_1 \oplus \phi_2 \boxtimes S_2$
for some representations $\phi_1,\phi_2$ of $W_E \times \SL_2(\C)$
(which can be zero).
If we decompose $\psi = \oplus_{i=1}^t \psi_i$ into irreducible representations, 
then we have
\[
\beta(\psi) = (-1)^{r(\psi)}
\prod_{1 \leq i < j \leq t}
\left.\frac{\gamma_A(s, \psi_i^A \otimes {}^c\psi_j^A, \psi_E)}
{\gamma_A(s, \widehat\psi_i \otimes {}^c\widehat\psi_j, \psi_E)}\right|_{s=0}.
\]
\end{prop}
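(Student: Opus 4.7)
\medskip

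\noindent\textbf{Proof plan for Proposition \ref{psiA}.} The plan is to extend the strategy used for Lemma \ref{r_phi} (and hence Proposition \ref{c-temp}) to the parameter $\psi = \phi_1 \boxtimes S_1 \oplus \phi_2 \boxtimes S_2$, exploiting the fact that each irreducible factor $\psi_i = \rho_i \boxtimes S_{a_i} \boxtimes S_{b_i}$ has $b_i \in \{1,2\}$, so $\pi_{\psi_i}$ is either an essentially discrete series (a generalized Steinberg) or a Speh representation of ``height $2$''. In particular, $\beta(\psi_i)$ is already known by Proposition \ref{c-irred}, so the task reduces to measuring the defect of multiplicativity, which we expect to be exactly the displayed product of $\gamma_A$-ratios.

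\medskip

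\noindent First I would realize $\pi_\psi$ as the Langlands quotient of a standard module $\II_\psi$ obtained by taking products of the individual standard modules $\II_{\psi_i}$ in decreasing Langlands order. Via a composition of normalized intertwining operators as in Lemma \ref{langlands}, this Langlands quotient map is explicit. I would then apply Theorem \ref{main2} to realize Arthur's operator $\theta_A$ on $\pi_\psi$ as the composition of $\theta^*$ with a $\psi$-normalized self-intertwining operator on $\II_\psi$; the analogous construction realizes $\theta_A$ on $\pi_{\widehat\psi}$ via $\widehat\psi$-normalized operators on $\II_{\widehat\psi}$.

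\medskip

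\noindent Next I would compute $D_{\tl\GL_N(E)}(\tl\pi_\psi)$ along the lines of the proof of Lemma \ref{r_phi}: analyse the Jacquet modules of $\II_\psi$ using the geometric lemma, track how $\theta_W$ permutes the pieces through a double-coset combinatorics, and identify the $\theta$-fixed loci that contribute to $\widehat{\tl\pi}_\psi$. Because the building blocks $\pi_{\psi_i}$ are either Steinberg or length-$2$ Speh, their contributions can be handled by Proposition \ref{c-irred} and a direct inspection for Speh's, giving each a sign $(-1)^{r(\psi_i)}$; summing gives $(-1)^{r(\psi)}$ by the additivity $r(\psi) = \sum_i r(\psi_i)$.

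\medskip

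\noindent The main obstacle, and the source of the $\gamma_A$-ratios in the statement, is the passage between the $\psi$-normalization and the product of the $\psi_i$-normalizations of the intertwining operators. When $\theta_A$ is constructed via the standard module $\II_\psi$, the normalizing scalar depends on $\psi$ through $\gamma_A(0, \psi_i^A \otimes {}^c\psi_j^A, \psi_E)$-type quantities (coming from commuting the $i$-th and $j$-th factors), whereas the analogous construction of $\theta_A$ on $\pi_{\widehat\psi}$ via $\II_{\widehat\psi}$ uses $\gamma_A(0, \widehat\psi_i \otimes {}^c\widehat\psi_j, \psi_E)$. By Lemma \ref{holomorphic}, the relevant $\gamma_A$-quotients are holomorphic at $s=0$ with special value in $\{\pm 1\}$, so after tracking these through the diagram chasing analogous to the proof of Theorem \ref{diagramGL}, the cumulative normalization defect equals precisely
\[
\prod_{1 \leq i < j \leq t}
\left.\frac{\gamma_A(s, \psi_i^A \otimes {}^c\psi_j^A, \psi_E)}
{\gamma_A(s, \widehat\psi_i \otimes {}^c\widehat\psi_j, \psi_E)}\right|_{s=0}.
\]
Combining this defect with the product of the known $\beta(\psi_i) = (-1)^{r(\psi_i)}$ yields the claimed formula. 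The hard part is the careful bookkeeping: one must make sure that the Whittaker-normalized actions $\tl\pi_M(w \rtimes \theta)$ on the Levi $M$ really do produce the asserted $\gamma_A$-ratio rather than a different one, and this is where the restriction $b_i \in \{1,2\}$ is essential, since it controls the shape of the standard modules $\II_{\psi_i}$ and $\II_{\widehat\psi_i}$ so that Theorem \ref{main2} applies uniformly to each factor.
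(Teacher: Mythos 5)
Your plan diverges substantially from the paper's proof, and there are two concrete problems with it as written.

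First, the identity $r(\psi) = \sum_i r(\psi_i)$ is false in general. Recall $r(\psi)$ counts pairs of non-conjugate-self-dual irreducible constituents of $\lambda_\psi$; when distinct irreducible summands $\psi_i$ and $\psi_j$ contribute the same conjugate-self-dual constituent to $\lambda_\psi$, those surplus copies must be paired up, so $r(\psi)$ can strictly exceed $\sum_i r(\psi_i)$ (e.g. $\psi_1 = \psi_2 = \rho$ with $\rho$ conjugate-self-dual gives $r(\psi_i)=0$ but $r(\psi_1\oplus\psi_2)=1$). Good parity does not rule this out. Your sign bookkeeping would therefore be off by $(-1)^{r(\psi)-\sum_i r(\psi_i)}$, and you would need to account for this mismatch somewhere; the paper never invokes additivity but instead preserves $r$ across the induction (using $r(\psi'\oplus\psi_0^D) = r(\psi)$, since $\lambda_{\psi_0^D}=\lambda_{\psi_0}$).

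Second, the proposal to redo the Jacquet-module analysis of Lemma \ref{r_phi} for $\II_\psi$ faces a structural obstacle you don't address. That argument turns on the fact that $\II_{\widehat\phi}$ is parabolically induced from a \emph{cuspidal} representation of the Levi, which forces $\widehat{\II}_{\widehat\phi} = \II_{\widehat\phi}$ in the Grothendieck group and makes the geometric-lemma combinatorics tractable. For $\psi$ with nontrivial $S_2$-part the standard module $\II_\psi$ is built from generalized Steinberg blocks, is not induced from a cuspidal, and is not Aubert-self-dual; the clean cancellation you are hoping for does not come for free. The paper circumvents this entirely by an induction on $\dim(\phi_2)$: peel off a single irreducible $\psi_0=\phi_0\boxtimes S_2$, study the length-two module $\pi_{\psi'}\times\II_{\psi_0}$ whose endomorphism algebra is one-dimensional, relate $\beta(\psi)$ to $\beta(\psi'\oplus\psi_0^D)$ via Theorem \ref{main2} applied to a \emph{maximal} parabolic together with Lemma \ref{holomorphic}, and close the induction. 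Your instinct to use Theorem \ref{main2}, Lemma \ref{langlands}, and Lemma \ref{holomorphic} is correct, and the expected shape of the answer ($\gamma_A$-ratios as a normalization defect) is also right; but the global geometric-lemma computation you sketch would need to be replaced by the paper's one-factor-at-a-time induction, and the additivity step must be removed.
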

\begin{proof}
Note that the product of gamma factors is independent of the order of the irreducible components of $\psi$
by Proposition \ref{gamma_c}.
\par

We prove the assertion by induction on $\dim(\phi_2)$. 
If $\phi_2 = 0$, then $\psi = \phi_1$ is tempered and the assertion is Proposition \ref{c-temp}
since $\psi_i^A = \widehat\psi_i$ for any $1 \leq i \leq t$ in this case.
\par

Suppose that $\phi_2 \not= 0$. 
Choose an irreducible subrepresentation $\phi_0 \subset \phi_2$. 
Set $\psi_0 = \phi_0 \boxtimes S_2$ and $\psi' = \psi- \phi_0 \boxtimes S_2$. 
Hence $\pi_\psi = \pi_{\psi'} \times \pi_{\psi_0}$.
We denote the standard modules of $\pi_{\psi_0}$ and $\pi_{\psi'}$ by $\II_{\psi_0}$ and $\II_{\psi'}$, respectively. 
By Lemma \ref{psiD}, we have the following diagram
\[
\xymatrix{
&\II_{\psi'} \times \pi_{\psi_0^D} \ar@{->>}[d] \ar@{^(_->}[r] 
& \II_{\psi'} \times \II_{\psi_0} \ar@{->>}[d] \\
0 \ar@{->}[r] &\pi_{\psi'} \times \pi_{\psi_0^D} \ar@{->}[r]
& \pi_{\psi'} \times \II_{\psi_0} \ar@{->}[r]
& \pi_{\psi'} \times \pi_{\psi_0} \ar@{->}[r] & 0. 
}
\]
Here, the bottom sequence is exact 
since $\II_{\psi_0} = \pi_{\phi_0}|\cdot|_E^{\half{1}} \times \pi_{\phi_0}|\cdot|_E^{-\half{1}}$ is of length two.
By the same argument as Lemma \ref{End}, we have
\[
\dim_\C(\End_{\GL_N(E)}(\pi_{\psi'} \times \II_{\psi_0})) = 1.
\]
Indeed, the canonical map 
\[
\End_{\GL_N(E)}(\pi_{\psi'} \times \II_{\psi_0}) \rightarrow 
\End_{\GL_N(E)}(\pi_{\psi'} \times \pi_{\psi_0}) \cong \C
\]
is injective since $\pi_{\psi'} \times \pi_{\psi_0}$ is the unique irreducible quotient of $\pi_{\psi'} \times \II_{\psi_0}$
and appears in $\pi_{\psi'} \times \II_{\psi_0}$ as a subquotient with multiplicity one. 
By the functoriality of the Aubert involution (Theorem \ref{AD} (1)), 
we see that $\End_{\GL_N(E)}(\hat\pi_{\psi'} \times \widehat{\II}_{\psi_0})$
is also one-dimensional.
\par

The action $\theta_W$ of $\theta$ on the standard module $\II_{\psi'} \times \II_{\psi_0}$
which fixes a Whittaker functional 
gives an action $\theta_W$ on $\pi_{\psi'} \times \II_{\psi_0}$. 
This is the unique action inducing Arthur's actions $\theta_A$ both on $\pi_{\psi'} \times \pi_{\psi_0^D}$ 
and on $\pi_{\psi'} \times \pi_{\psi_0}$. 
If we denote by $\widehat\theta_W$ and $\widehat\theta_A$ the actions induced 
by $\tl\pi \mapsto \widehat{\tl\pi}$, 
then by Proposition \ref{TAD} (2) and Theorem \ref{AD} (4), 
we see that 
\[
(\pi_{\widehat\psi'} \times \widehat{\II}_{\psi_0}) \boxtimes \widehat\theta_W
= 
(\pi_{\widehat\psi'} \times \pi_{\widehat\psi_0}) \boxtimes \widehat\theta_A 
+
(\pi_{\widehat\psi'} \times \pi_{\psi_0^A}) \boxtimes \widehat\theta_A 
\]
in the Grothendieck group $\RR(\tl\GL_N(E))$.
Since $D_{\tl\GL_N(E)}(\pi_\psi \boxtimes \theta_A) \overset{\theta}{=} \beta(\psi) \pi_{\widehat\psi} \boxtimes \theta_A$,
if we take an action $\theta_W'$ of $\theta$ on $\hat\pi_{\psi'} \times \widehat{\II}_{\psi_0}$ such that 
\[
(\pi_{\widehat\psi'} \times \widehat{\II}_{\psi_0}) \boxtimes \theta_W'
= 
(\pi_{\widehat\psi'} \times \pi_{\widehat\psi_0}) \boxtimes \theta_A 
+
(\pi_{\widehat\psi'} \times \pi_{\psi_0^A}) \boxtimes \theta'_A 
\]
in $\RR(\tl\GL_N(E))$ for some $\theta'_A$, 
then we have
\[
\theta_A' = \frac{\beta(\psi' \oplus \psi_0^D)}{\beta(\psi)}\theta_A.
\]
\par

We realize this action $\theta'_W$ on $\hat\pi_{\psi'} \times \widehat{\II}_{\psi_0}$ using Theorem \ref{main2}. 
Namely, if we let $P=MN_P$ be the standard maximal parabolic subgroup of $\GL_N(E)$ such that 
$\pi_{\widehat\psi'} \boxtimes \pi_{\widehat\psi_0}$ is an irreducible representation of $M$, 
and if we let $w \in W(\theta(M),M)$ be the unique non-trivial element, 
then we have  
$w(\pi_{\widehat\psi'} \boxtimes \pi_{\widehat\psi_0} \circ \theta) \cong \pi_{\widehat\psi'} \boxtimes \pi_{\widehat\psi_0}$, and 
the normalized intertwining operator $\tl{R}_P(\theta \circ w, \tl\pi_{\widehat\psi'} \boxtimes \tl\pi_{\widehat\psi_0})$ realizes 
Arthur's action $\theta_A$ on $\pi_{\widehat\psi} = \pi_{\widehat\psi'} \times \pi_{\widehat\psi_0}$.
Recall that 
\[
\tl{R}_P(\theta \circ w, \tl\pi_{\widehat\psi'} \boxtimes \tl\pi_{\widehat\psi_0}) 
= I_P(\tl\pi_{\widehat\psi'} \boxtimes \tl\pi_{\widehat\psi_0}(w \rtimes \theta))
\circ \theta^* \circ R_{P}(\theta(w), \pi_{\widehat\psi'} \boxtimes \pi_{\widehat\psi_0}, \psi). 
\]
The operator $R_{P}(\theta(w), \pi_{\widehat\psi'} \boxtimes \pi_{\widehat\psi_0}, \psi)$
can be extended to a meromorphic family of normalized intertwining operators 
on $\pi_{\widehat\psi'}|\cdot|_E^{s'} \times \widehat{\II}_{\psi_0}|\cdot|_E^{s}$ for $(s,s') \in \C^2$. 
It can be decomposed into the composition of the normalized intertwining operators 
\begin{align*}
\hat\pi_{\psi'}|\cdot|_E^{s'} \times \hat\pi_{\phi_0}|\cdot|_E^{s-\half{1}} \times \hat\pi_{\phi_0}|\cdot|_E^{s+\half{1}}
&\rightarrow
\hat\pi_{\phi_0}|\cdot|_E^{s-\half{1}} \times \hat\pi_{\psi'}|\cdot|_E^{s'} \times \hat\pi_{\phi_0}|\cdot|_E^{s+\half{1}}
\\&\rightarrow
\hat\pi_{\phi_0}|\cdot|_E^{s-\half{1}} \times \hat\pi_{\phi_0}|\cdot|_E^{s+\half{1}} \times \hat\pi_{\psi'}|\cdot|_E^{s'}
\end{align*}
up to a scalar-valued meromorphic function which is holomorphic at $(s,s') = (0,0)$
(see Lemma \ref{holomorphic}).
Since these two intertwining operators are holomorphic at $(s,s') = (0,0)$ by \cite[I.6.3, Lemma (ii)]{MW_GL}, 
we see that $R_{P}(\theta(w), \pi_{\widehat\psi'} \boxtimes \pi_{\widehat\psi_0}, \psi)$
can be considered as a well-defined operator on $\pi_{\widehat\psi'} \times \widehat{\II}_{\psi_0}$.
On the other hand, the linear isomorphism 
\[
\tl\pi_{\widehat\psi'} \boxtimes \tl\pi_{\widehat\psi_0}(w \rtimes \theta) 
\colon w(\pi_{\widehat\psi'} \boxtimes \pi_{\widehat\psi_0} \circ \theta)
\xrightarrow{\sim} \pi_{\widehat\psi'} \boxtimes \pi_{\widehat\psi_0}
\]
can be extended to a linear isomorphism
\[
w(\pi_{\widehat\psi'} \boxtimes \widehat{\II}_{\psi_0} \circ \theta) 
\xrightarrow{\sim} \pi_{\widehat\psi'} \boxtimes \widehat{\II}_{\psi_0}.
\]
Therefore, the action $\theta_W'$ on $\pi_{\widehat\psi'} \times \widehat{\II}_{\psi_0}$
can be realized by $\tl{R}_P(\theta \circ w, \tl\pi_{\widehat\psi'} \boxtimes \tl\pi_{\widehat\psi_0})$. 
\par

By Propositions \ref{c-temp} and \ref{c-irred} together with $r(\psi_0^D) = r(\psi_0)$, 
if we apply the functor $\tl\pi \mapsto \widehat{\tl\pi}$ to 
the equation 
\[
\II_{\psi_0} \boxtimes \theta_W = \pi_{\psi_0^D} \boxtimes \theta_A + \pi_{\psi_0} \boxtimes \theta_A
\]
in $\RR(\tl\GL_N(E))$, 
then we obtain 
\[
\widehat{\II}_{\psi_0} \boxtimes \widehat\theta_W = \pi_{\psi_0^A} \boxtimes \theta_A + \pi_{\widehat\psi_0} \boxtimes \theta_A
\]
for some $\widehat\theta_W$.
Hence the above isomorphism 
$w(\pi_{\widehat\psi'} \boxtimes \widehat{\II}_{\psi_0} \circ \theta) 
\xrightarrow{\sim} \pi_{\widehat\psi'} \boxtimes \widehat{\II}_{\psi_0}$ 
induces 
\[
\tl\pi_{\widehat\psi'} \boxtimes \tl\pi_{\psi_0^A}(w \rtimes \theta) 
\colon w(\pi_{\widehat\psi'} \boxtimes \pi_{\psi_0^A} \circ \theta)
\xrightarrow{\sim} \pi_{\widehat\psi'} \boxtimes \pi_{\psi_0^A}. 
\]
Therefore the normalized intertwining operator 
$\tl{R}_P(\theta \circ w, \tl\pi_{\widehat\psi'} \boxtimes \tl\pi_{\widehat\psi_0})$ 
on $\pi_{\widehat\psi'} \times \widehat{\II}_{\psi_0}$
induces 
\[
\tl{R}_P(\theta \circ w, \tl\pi_{\widehat\psi'} \boxtimes \tl\pi_{\psi_0^A}) 
\times 
\left.
\frac{\gamma_A(s, \widehat\psi' \otimes {}^c\widehat\psi_0, \psi_E)}
{\gamma_A(s, \widehat\psi' \otimes {}^c\psi_0^A, \psi_E)}
\right|_{s=0}
\]
on $\pi_{\widehat\psi'} \times \pi_{\psi_0^A}$. 
This means that 
\[
\frac{\beta(\psi' \oplus \psi_0^D)}{\beta(\psi)} 
= 
\left.
\frac{\gamma_A(s, \widehat\psi' \otimes {}^c\widehat\psi_0, \psi_E)}
{\gamma_A(s, \widehat\psi' \otimes {}^c\psi_0^A, \psi_E)}
\right|_{s=0}.
\]
\par

Now, we write $\psi' = \oplus_{i=1}^t \psi_i$ for the irreducible decomposition. 
Then by induction, we see that $\beta(\psi' \oplus \psi_0^D)$ is equal to 
\[
(-1)^{r(\psi' \oplus \psi_0^D)}
\left(\prod_{1 \leq i < j \leq t} 
\left.
\frac{\gamma_A(s, \psi_i^A \otimes {}^c\psi_j^A, \psi_E)}
{\gamma_A(s, \widehat\psi_i \otimes {}^c\widehat\psi_j, \psi_E)}
\right|_{s=0}
\right)
\left(
\left.
\frac{\gamma_A(s, \psi'^A \otimes {}^c\psi_0^A, \psi_E)}
{\gamma_A(s, \widehat\psi' \otimes {}^c\psi_0^A, \psi_E)}
\right|_{s=0}
\right).
\]
Since $r(\psi' \oplus \psi_0^D) = r(\psi)$, we have
\[
\beta(\psi) = (-1)^{r(\psi)}
\left(\prod_{0 \leq i < j \leq t} 
\left.
\frac{\gamma_A(s, \psi_i^A \otimes {}^c\psi_j^A, \psi_E)}
{\gamma_A(s, \widehat\psi_i \otimes {}^c\widehat\psi_j, \psi_E)}
\right|_{s=0}
\right)
\left(
\left.
\frac{\gamma_A(s, \psi'^A \otimes {}^c\psi_0^A, \psi_E)}
{\gamma_A(s, \widehat\psi' \otimes {}^c\widehat\psi_0, \psi_E)}
\right|_{s=0}
\right).
\]
This completes the proof.
\end{proof}

\begin{cor}\label{+otimes-}
Fix an $A$-parameter $\psi$ of $G$ of good parity, and assume Hypothesis \ref{ECR}. 
Suppose that $\psi = \phi_1 \boxtimes S_1 \oplus \phi_2 \boxtimes S_2$
for some representations $\phi_1,\phi_2$ of $W_E \times \SL_2(\C)$. 
For $s = \sum_{i \in I_-} e(\rho_i, a_i, b_i) \in A_\psi$, 
define $\psi_\pm$ by 
\[
\psi_- = \bigoplus_{i \in I_-} \rho_i \boxtimes S_{a_i} \boxtimes S_{b_i}, \quad
\psi_+ = \psi- \psi_-
\]
as in Section \ref{sec.Arthur}. 
Then 
\[
\frac{\pair{\hat{s}, \hat\pi}_{\widehat\psi}}{\pair{s,\pi}_\psi}
= 
(-1)^{r(\psi)-r(\psi_+)-r(\psi_-)}
\left.
\frac{\gamma_A(s, \psi_+^A \otimes {}^c\psi_-^A, \psi_E)}
{\gamma_A(s, \widehat\psi_+ \otimes {}^c\widehat\psi_-, \psi_E)}
\right|_{s=0}.
\]
\end{cor}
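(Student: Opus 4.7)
The plan is to combine Lemma \ref{beta} (2) with Proposition \ref{psiA} applied three times. By Lemma \ref{beta} (2),
\[
\frac{\pair{\hat{s}, \hat\pi}_{\widehat\psi}}{\pair{s,\pi}_\psi} = \frac{\beta(\psi)}{\beta(\psi_+)\beta(\psi_-)},
\]
so it suffices to evaluate each of the three $\beta$'s individually and to take the quotient. The structural assumption $\psi = \phi_1 \boxtimes S_1 \oplus \phi_2 \boxtimes S_2$ is inherited by $\psi_+$ and $\psi_-$, since each of these is a direct sum of irreducible constituents of $\psi$ (each constituent has second $\SL_2$-type $S_1$ or $S_2$). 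Hence Proposition \ref{psiA} applies to all three parameters; if some of $\psi, \psi_+, \psi_-$ do not land in $\Psi(G')$ for any classical group $G'$, we may twist each by a suitable conjugate-self-dual character $\eta$, an adjustment that is harmless by Lemma \ref{beta_twist}.

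Now write $\psi = \bigoplus_{i=1}^t \psi_i$ for the decomposition into irreducible conjugate-self-dual constituents, so that $\psi_+ = \bigoplus_{i \in I_+} \psi_i$ and $\psi_- = \bigoplus_{i \in I_-} \psi_i$ with $\{1,\dots,t\} = I_+ \sqcup I_-$. Plugging into Proposition \ref{psiA} and taking the quotient, the sign part immediately yields $(-1)^{r(\psi) - r(\psi_+) - r(\psi_-)}$, because $r$ contributions are additive in the irreducible components that appear. For the $\gamma$-factor part, the product for $\beta(\psi)$ ranges over all unordered pairs $\{i,j\} \subset \{1,\dots,t\}$, while those for $\beta(\psi_+)$ and $\beta(\psi_-)$ range respectively over unordered pairs in $I_+$ and in $I_-$. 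After cancellation only the cross pairs $\{i,j\}$ with $i \in I_+$, $j \in I_-$ survive in the numerator.

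Finally, multiplicativity of Artin $L$- and $\varepsilon$-factors under direct sums gives
\[
\gamma_A(s, \psi_+^A \otimes {}^c\psi_-^A, \psi_E) = \prod_{i \in I_+,\, j \in I_-} \gamma_A(s, \psi_i^A \otimes {}^c\psi_j^A, \psi_E),
\]
and similarly for the denominator with $\widehat\psi_+ \otimes {}^c\widehat\psi_-$ in place of $\psi_+^A \otimes {}^c\psi_-^A$. Thus the product over cross pairs collapses into the single ratio appearing in the statement, and the claimed formula drops out.

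There is no real obstacle here beyond careful bookkeeping of indices. The ordering of the irreducible components required by the statement of Proposition \ref{psiA} (pairs with $i < j$) is not a genuine constraint, since by Proposition \ref{gamma_c} the product of gamma-factor ratios that appears there is independent of the chosen order; this allows us freely to reorder so that $I_+$ precedes $I_-$ (or the reverse) and to match the cross pairs up with the factored $\gamma_A$ on the nose. All substantive content has already been isolated in Proposition \ref{psiA} (which in turn rests on Theorem \ref{main2}); the present corollary is an organizational consequence of that identity.
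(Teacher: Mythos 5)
Your proof follows exactly the paper's own approach (the paper's proof is the one-line "This follows from Lemma \ref{beta} (2) and Proposition \ref{psiA}"), and your bookkeeping is correct: the reduction via Lemma \ref{beta} (2), the application of Proposition \ref{psiA} to $\psi$, $\psi_+$, $\psi_-$ (after twisting by $\eta$ as needed, which Lemma \ref{beta_twist} licenses), the cancellation of intra-block pairs leaving only cross pairs, and the use of multiplicativity of $\gamma_A$ together with Proposition \ref{gamma_c} to reorder. One small point worth making explicit is that separating the evaluation at $s=0$ of the big product into a product of evaluations requires that each individual ratio $\gamma_A(s,\psi_i^A\otimes{}^c\psi_j^A,\psi_E)/\gamma_A(s,\widehat\psi_i\otimes{}^c\widehat\psi_j,\psi_E)$ be holomorphic and nonzero at $s=0$, which is Lemma \ref{holomorphic} applied to the fact that $\lambda_{\psi_i^A\otimes{}^c\psi_j^A}\cong\lambda_{\widehat\psi_i\otimes{}^c\widehat\psi_j}$; your proof implicitly relies on this but does not cite it.
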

\begin{proof}
This follows from Lemma \ref{beta} (2) and Proposition \ref{psiA}. 
\end{proof}

%\section{Endoscopic character relations for co-tempered parameters}
%\section{Endoscopic character relations for co-tempered parameters}
\section{Endoscopic character relations for co-tempered parameters}\label{sec.ECR.cotemp}
The purpose of this section is to prove Theorem \ref{main3} (1). 
In this section, we do not impose Hypothesis \ref{ECR}.

%\subsection{Equation \eqref{ast}}
\subsection{Equation \eqref{ast}}
Let $\psi = \widehat\phi$ be a co-tempered $A$-parameter for $G$.
We will construct the $A$-packet $\Pi_\psi$ together with the pairing $\pair{\cdot, \pi}_\psi$ for $\pi \in \Pi_\psi$.
According to Lemma \ref{beta} and Corollary \ref{+-temp}, 
the correct definitions should be as follows: 

\begin{itemize}
\item
$\Pi_{\widehat\phi} = \{\hat\pi \,|\, \pi \in \Pi_\phi\}$; 
\item
$\pair{\hat{s}, \hat\pi}_{\widehat\phi} = (-1)^{r(\phi)-r(\phi_+)-r(\phi_-)}\pair{s,\pi}_\phi$ for $s \in \AA_\phi$
corresponding to $\hat{s} \in \AA_{\widehat\phi}$.
\end{itemize}
\par

As explained in Remark \ref{converse}, 
to show \eqref{ECR1} for this packet $\Pi_{\widehat\phi}$, 
we need Lemma \ref{beta} (1), i.e., the equation
$\beta(\phi) \pair{s_{\widehat\phi}, \hat\pi}_{\widehat\phi} = \pair{s_\phi, \pi}_\phi \beta(\pi)$.
It is not trivial that this equation is compatible with the definition of $\pair{\hat{s}, \hat\pi}_{\widehat\phi}$ above.
\par

If $s = \widehat{s_{\widehat{\phi}}} \in \AA_\phi$ corresponds to $\hat{s} = s_{\widehat{\phi}} \in \AA_{\widehat\phi}$, 
we see that $r(\phi) = r(\phi_+) + r(\phi_-)$.
Hence our definition shows that $\pair{s_{\widehat\phi}, \hat\pi}_{\widehat\phi} = \pair{\widehat{s_{\widehat{\phi}}}, \pi}_\phi$.
Using this together with $s_\phi = 1$, 
the equation $\beta(\phi) \pair{s_{\widehat\phi}, \hat\pi}_{\widehat\phi} = \pair{s_\phi, \pi}_\phi \beta(\pi)$
can be rewritten as 
\[
\beta(\phi)\beta(\pi) = \pair{\widehat{s_{\widehat{\phi}}}, \pi}_\phi.
\]
\par

In this section, we only assume Hypothesis \ref{ECR} for tempered $L$-parameters.
To clarify our situation, we state this hypothesis explicitly. 

\begin{hyp}\label{ECR-temp}
For any quasi-split classical group $G'$ with $\dim(\St_{\widehat{G'}}) \leq \dim(\St_{\widehat{G}})$, 
and for any tempered $L$-parameter $\phi'$ for $G'$,
there exists a subset $\Pi_{\phi'}$ of $\Irr_{\temp}(G')$ equipped with $\pair{\cdot,\pi'}_{\phi'}$ 
satisfying \eqref{ECR1} and \eqref{ECR2} of Section \ref{sec.Arthur}.
\end{hyp}

This hypothesis is the same as Hypothesis \ref{ECR-tempD}
and hence, we can use the results in Appendix \ref{xu-atobe}. 
We also notice that the proof of Proposition \ref{c-temp} did not use Hypothesis \ref{ECR}. 
Therefore we know that $\beta(\phi) = (-1)^{r(\phi)}$ even now.
\par

Now we can state what we have to show in this section. 
\begin{prop}\label{beta_phi_pi}
Assume Hypothesis \ref{ECR-temp} (but not \ref{ECR}). 
Let $\phi$ be a tempered $L$-parameter for $G$. 
Then for $\pi \in \Pi_\phi$, we have
\[
\tag{$\ast$}\label{ast}
\beta(\phi)\beta(\pi) = \pair{\widehat{s_{\widehat{\phi}}}, \pi}_\phi. 
\]
\end{prop}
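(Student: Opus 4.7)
The plan is to verify the identity \eqref{ast} by first reducing to the case of good parity, and then computing both sides in closed form using the explicit description of Jacquet modules and cuspidal supports of tempered representations provided in Appendix \ref{xu-atobe}.

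First I would reduce to good parity. Decompose $\phi = \phi_{\bad} \oplus \phi_{\good} \oplus {}^c\phi_{\bad}^\vee$, and write any $\pi \in \Pi_\phi$ in the form $\pi = \tau \rtimes \pi_0$, where $\tau$ is the irreducible representation of an appropriate $\GL$-factor attached to $\phi_{\bad}$ and $\pi_0 \in \Pi_{\phi_{\good}}$. The sign $\beta(\phi)$ is multiplicative on direct sums up to the identification $r(\phi) = r(\phi_{\good}) + \dim(\phi_{\bad})$. Aubert duality commutes with parabolic induction from $\GL$-factors, and Aubert duality on $\GL$ is well understood, so $\beta(\pi)$ likewise factors as $\beta(\tau) \beta(\pi_0)$. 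The element $\widehat{s_{\widehat{\phi}}} \in A_\phi$ is supported on the good parity constituents of $\phi$, so $\pair{\widehat{s_{\widehat\phi}}, \pi}_\phi = \pair{\widehat{s_{\widehat{\phi_{\good}}}}, \pi_0}_{\phi_{\good}}$. A bookkeeping of the signs from the $\GL$-factors reduces the identity to the analogous identity for $\pi_0 \in \Pi_{\phi_{\good}}$.

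Next, for $\phi$ of good parity, decompose $\phi = \bigoplus_i \rho_i \boxtimes S_{a_i}$ into conjugate-self-dual irreducibles. For $\pi \in \Pi_\phi$ corresponding to $\eta := \pair{\cdot,\pi}_\phi \in \widehat{\AA_\phi}$, Theorem \ref{Dk} and its tempered extension in Appendix \ref{xu-atobe} produce an explicit embedding
\[
\pi \hookrightarrow \rho_{i_1}|\cdot|_E^{x_1} \times \cdots \times \rho_{i_n}|\cdot|_E^{x_n} \rtimes \sigma
\]
with $\sigma$ supercuspidal and $n = n(\phi,\eta)$ an explicit integer determined combinatorially by the Jordan blocks $(\rho_i, a_i)$ and the values $\eta(e(\rho_i,a_i))$. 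Taking such an embedding with $n$ minimal computes $\beta(\pi) = (-1)^n$ through the formula $\beta(\pi) = (-1)^{\dim A_{M^\circ}}$ applied to the minimal Levi supporting $\Jac(\pi)$. On the other hand, $\widehat{s_{\widehat\phi}} = \sum_{a_i \text{ even}} e(\rho_i, a_i)$ in $A_\phi$, and $r(\phi)$ can be read off from the Jordan blocks as $\sum_i \lfloor a_i / 2 \rfloor$ plus a correction counting pairs of components $\rho_i \boxtimes S_{a_i}$ and $\rho_i \boxtimes S_{a_j}$ with $a_i, a_j$ odd.

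Having both sides of \eqref{ast} expressed as explicit products of signs indexed by the Jordan blocks of $\phi$ (with some depending on the character values $\eta(e(\rho_i, a_i))$), the identity becomes a purely combinatorial equality. I would verify it by a block-by-block matching: the contribution of each component $\rho_i \boxtimes S_{a_i}$ to $\beta(\pi)$ versus its contribution to $(-1)^{r(\phi)}\pair{\widehat{s_{\widehat\phi}}, \pi}_\phi$. The components with $a_i$ even produce a factor $\eta(e(\rho_i, a_i))$ on the right, which must match the parity shift in $n(\phi,\eta)$ as $\eta$ varies on such generators; the components with $a_i$ odd contribute only through the pairing count in $r(\phi)$ and through the Jacquet-module reductions that are insensitive to $\eta$. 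The base case (single Jordan block) is handled directly by Theorem \ref{Dk}, and the inductive step follows by peeling off one Jordan block and tracking how the minimal Levi and the parity of $n$ change.

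The main obstacle I anticipate is the combinatorial bookkeeping of the function $n(\phi,\eta)$, especially in the presence of repeated Jordan blocks or blocks with equal cuspidal support where the reduction-by-segments algorithm in Appendix \ref{xu-atobe} branches. However, since the required inputs from Appendix \ref{xu-atobe} are formulated precisely for tempered parameters and only require the inductive hypothesis contained in Hypothesis \ref{ECR-temp}, no circular reliance on Theorem \ref{main3} arises, and the argument remains within the scope needed for Chapter 7 of \cite{Ar}.
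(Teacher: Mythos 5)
Your proposal follows essentially the same strategy as the paper: peel off Jordan blocks via the derivative/Jacquet-module machinery of Theorem \ref{Dk}, compute $\beta(\pi)$ from the minimal Levi supporting the Jacquet module, and reduce to a base case that is computed explicitly. The underlying ideas are all present. However, there is a real gap in the reduction step, and the paper's version is both cleaner and essential.

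Your first reduction only isolates the bad-parity part $\phi_\bad$, landing you in the good-parity case. But the paper's first reduction is stronger: it allows $\phi_1$ in $\phi = \phi_1 \oplus \phi_0 \oplus {}^c\phi_1^\vee$ to be arbitrary (including good-parity pieces of multiplicity $\ge 2$), and after applying it one may assume $\phi$ is a \emph{discrete} parameter, i.e.\ good parity \emph{and} multiplicity-free. This matters because the repeated-block combinatorics you flag as your ``main obstacle'' are not merely notational noise; they are exactly what the paper then eliminates with its second, third, and fourth reductions (peeling a full segment $[-a_{\rho,i-1},a_{\rho,i}]_\rho$ when $\pair{e(\rho,2a_{\rho,i}+1),\pi}_\phi = \pair{e(\rho,2a_{\rho,i-1}+1),\pi}_\phi$; peeling $[\tfrac12,a_{\rho,1}]_\rho$ when the first half-integral sign is $+1$; peeling a single $\rho|\cdot|^{a_{\rho,i}}$ when there is a gap $a_{\rho,i}-a_{\rho,i-1}>1$). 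These reductions systematically drive the parameter to a ``staircase'' form $a_{\rho,i}=a_{\rho,i-1}+1$ with prescribed signs, which by Corollary \ref{sc} forces $\pi$ supercuspidal, so that $\beta(\pi)=1$ and $r(\phi)$ becomes an explicit triangular sum. Your sketch tries to compress reductions 2--4 into ``peel one Jordan block and track parity,'' but the peeling operations are only available for certain sign patterns of $\pair{\cdot,\pi}_\phi$, and which one applies changes how $r(\phi)$, $\widehat{s_{\widehat\phi}}$, and the minimal Levi all shift; without spelling this out the ``block-by-block matching'' is not yet a proof. Also, a minor point: the bookkeeping relation you state, $r(\phi)=r(\phi_\good)+\dim(\phi_\bad)$, is not correct---the quantity that enters is the length of $\lambda_{\phi_\bad}$ as a $W_E$-representation (which enters with sign $(-1)^r$), not $\dim(\phi_\bad)$, since the $S_d$-multiplicities matter. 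The paper's formulation $\beta(\phi)/\beta(\phi_0)=(-1)^r=\beta(\pi)/\beta(\pi_0)$ with $r$ the length of $\lambda_{\phi_1}$ is what you want at each peeling step.
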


\begin{rem}
This statement was recently proven by Liu--Lo--Shahidi \cite{LLS}. 
However, it is not obvious to us whether they use any results that are unavailable in the setting we need, 
i.e., at the point of Arthur's argument in \cite[Section 7.1]{Ar}. For safety, we will give a proof of Proposition \ref{beta_phi_pi}. 
\end{rem}

Let us explain our strategy for the proof of Proposition \ref{beta_phi_pi}. 
One can check that both $\beta(\phi)$ and $\beta(\pi)$ depend only on the cuspidal support of $\pi$. 
However, it is complicated to list the cuspidal support of $\pi$ for all $\pi \in \Pi_\phi$. 
Instead of this, we give several reductions by using Theorem \ref{Dk}.
The final case is where $\pi$ is supercuspidal, 
which we can treat by a direct computation. 
\par

Let $P = MN_P$ be a standard parabolic subgroup of $G$
with Levi subgroup $M \cong \GL_{k_1}(E) \times \dots \times \GL_{k_t}(E) \times G_0$. 
For $\tau_i \in \Rep(\GL_{k_i}(E))$ and $\pi_0 \in \Rep(G_0)$, 
we denote the normalized parabolic induction by
\[
\tau_1 \times \dots \times \tau_t \rtimes \pi_0
= 
\Ind_P^G(\tau_1 \boxtimes \dots \boxtimes \tau_t \boxtimes \pi_0).
\]

%\subsection{Reductions}
\subsection{Reductions}
As the first reduction, 
we assume that we can decompose $\phi$ as
\[
\phi = \phi_1 \oplus \phi_0 \oplus {}^c\phi_1^\vee. 
\]
Then we have a canonical inclusion $\AA_{\phi_0} \hookrightarrow \AA_\phi$, 
which satisfies $\widehat{s_{\widehat{\phi}_0}} \mapsto \widehat{s_{\widehat{\phi}}}$.
If we denote by $\tau$ the irreducible tempered representation of $\GL_m(E)$
corresponding to $\phi_1$ with $m = \dim(\phi_1)$, 
then we have 
\begin{itemize}
\item
$\tau \rtimes \pi_0$ is semisimple and multiplicity-free for $\pi_0 \in \Pi_{\phi_0}$; 
\item
$\pi \in \Pi_\phi$ if and only if $\pi \hookrightarrow \tau \rtimes \pi_0$ for some $\pi_0 \in \Pi_{\phi_0}$; 
\item
if $\pi \hookrightarrow \tau \rtimes \pi_0$, then 
\[
\pair{\cdot, \pi}_\phi|_{\AA_{\phi_0}} = \pair{\cdot, \pi_0}_{\phi_0}.
\]
\end{itemize}
For these statements, see the proofs of \cite[Proposition 2.4.3]{Ar} and \cite[Proposition 3.4.4]{Mok}.
For such $\pi \hookrightarrow \tau \rtimes \pi_0$, 
by definition of $\beta(\phi)$ and $\beta(\pi)$, we have 
\[
\frac{\beta(\phi)}{\beta(\phi_0)} = (-1)^{r} = \frac{\beta(\pi)}{\beta(\pi_0)},
\]
where $r$ is the length of 
\[
W_E \ni w \mapsto \phi_1\left(
w, \begin{pmatrix}
|w|_E^{\half{1}} & 0 \\ 0 & |w|_E^{-\half{1}}
\end{pmatrix}
\right)
\] 
as a representation of $W_E$.
Hence if we knew \eqref{ast} for $\pi_0$, 
we would have 
\begin{align*}
\beta(\phi)\beta(\pi) 
= \beta(\phi_0)\beta(\pi_0) 
= \pair{\widehat{s_{\widehat{\phi}_0}}, \pi_0}_{\phi_0} 
= \pair{\widehat{s_{\widehat{\phi}}}, \pi}_\phi, 
\end{align*}
which is \eqref{ast} for $\pi$. 
Therefore we may assume that: 
\begin{enumerate}
\item[(1)]
$\phi$ is a discrete parameter, i.e., $\phi$ is of good parity and multiplicity-free.
\end{enumerate}
\par

When $\phi$ is a discrete parameter for $G$, 
we write
\[
\phi = \bigoplus_\rho \bigoplus_{i=1}^t\rho \boxtimes S_{2a_{\rho,i}+1}, 
\]
where $a_{\rho,i} \in (1/2)\Z$ and $0 \leq a_{\rho,1} < \dots < a_{\rho,t}$ with $t=t_\rho \geq 0$.
Then $\rho$ is conjugate-self-dual, and the parity of $2a_{\rho,i}+1$ depends only on the sign of $\rho$.
Recall that $\AA_\phi$ is a quotient of 
\[
A_\phi =  \bigoplus_\rho \bigoplus_{i=1}^t \Z/2\Z e(\rho, 2a_{\rho,i}+1). 
\]
Here, as we abbreviated $\rho \boxtimes S_d \boxtimes S_1$ to $\rho \boxtimes S_d$, 
we write $e(\rho, d) = e(\rho, d,1)$ for simplicity.
Moreover, $\widehat{s_{\widehat{\phi}}} \in \AA_\phi$ is the image of 
\[
\sum_{\substack{\rho \\ a_{\rho,1} \not\in \Z}}\sum_{i=1}^t e(\rho,2a_{\rho,i}+1).
\]
\par

Let $\pi \in \Pi_\phi$. 
As the second reduction, we assume that one can find $\rho$ and $1 < i \leq t_\rho$ such that 
\[
\pair{e(\rho,2a_{\rho,i}+1), \pi}_\phi = \pair{e(\rho,2a_{\rho,i-1}+1), \pi}_\phi.
\]
In this case, by applying Theorem \ref{Dk} repeatedly 
together with \cite[Proposition 2.4.3]{Ar} and \cite[Proposition 3.4.4]{Mok}, 
we have 
\[
\pi
\hookrightarrow 
\rho|\cdot|_E^{a_{\rho,i}} \times \rho|\cdot|_E^{a_{\rho,i}-1} \times \dots \times \rho|\cdot|_E^{-a_{\rho,i-1}} \rtimes \pi_0, 
\]
where $\pi_0 \in \Pi_{\phi_0}$ with 
\[
\phi_0 = \phi - \rho \boxtimes (S_{2a_{\rho,i}+1} \oplus S_{2a_{\rho,i-1}+1}), 
\]
and 
\[
\pair{\cdot, \pi}_\phi|_{\AA_{\phi_0}} = \pair{\cdot, \pi_0}_{\phi_0}.
\]
Hence 
\[
\frac{\beta(\phi)}{\beta(\phi_0)} = (-1)^{a_{\rho,i}+a_{\rho,i-1}+1} = \frac{\beta(\pi)}{\beta(\pi_0)}.
\]
Note that via the canonical inclusion $\AA_{\phi_0} \hookrightarrow \AA_{\phi}$, 
we have
\[
\widehat{s_{\widehat{\phi}_0}} \mapsto \left\{
\begin{aligned}
&\widehat{s_{\widehat{\phi}}} \iif a_{\rho,i} \in \Z, \\
&\widehat{s_{\widehat{\phi}}}-(e(\rho,2a_{\rho,i}+1)+e(\rho,2a_{\rho,i-1}+1)) \iif a_{\rho,i} \not\in \Z.
\end{aligned}
\right. 
\]
Hence if we knew \eqref{ast} for $\pi_0$, 
using $\pair{e(\rho,2a_{\rho,i}+1), \pi}_\phi = \pair{e(\rho,2a_{\rho,i-1}+1), \pi}_\phi$,
we would have 
\begin{align*}
\beta(\phi)\beta(\pi) 
= \beta(\phi_0)\beta(\pi_0) 
= \pair{\widehat{s_{\widehat{\phi}_0}}, \pi_0}_{\phi_0} 
= \pair{\widehat{s_{\widehat{\phi}}}, \pi}_\phi,
\end{align*}
which is \eqref{ast} for $\pi$. 
Therefore, we may assume that: 
\begin{enumerate}
\item[(2)]
$\pair{e(\rho,2a_{\rho,i}+1), \pi}_\phi \not= \pair{e(\rho,2a_{\rho,i-1}+1), \pi}_\phi$ 
for any $\rho$ and $1 < i \leq t_\rho$. 
\end{enumerate}
\par

As the third reduction, we assume that one can find $\rho$ with $t_\rho \geq 1$ and $a_{\rho,1} \not\in \Z$ such that 
\[
\pair{e(\rho,2a_{\rho,1}+1), \pi}_\phi = 1.
\]
In this case, by applying Theorem \ref{Dk} repeatedly, we have 
\[
\pi
\hookrightarrow 
\rho|\cdot|_E^{a_{\rho,1}} \times \rho|\cdot|_E^{a_{\rho,1}-1} \times \dots \times \rho|\cdot|_E^{\half{1}} \rtimes \pi_0, 
\]
where $\pi_0 \in \Pi_{\phi_0}$ with 
\[
\phi_0 = \phi - \rho \boxtimes S_{2a_{\rho,1}+1},
\]
and 
\[
\pair{\cdot, \pi}_\phi|_{\AA_{\phi_0}} = \pair{\cdot, \pi_0}_{\phi_0}.
\]
Hence 
\[
\frac{\beta(\phi)}{\beta(\phi_0)} = (-1)^{a_{\rho,1}+\half{1}} = \frac{\beta(\pi)}{\beta(\pi_0)}.  
\]
Note that via the canonical inclusion $\AA_{\phi_0} \hookrightarrow \AA_{\phi}$, 
we have
\[
\widehat{s_{\widehat{\phi}_0}} \mapsto \widehat{s_{\widehat{\phi}}}-e(\rho,2a_{\rho,1}+1).
\]
Hence the equation \eqref{ast} for $\pi_0$ implies \eqref{ast} for $\pi$. 
Therefore we may assume that: 
\begin{enumerate}
\item[(3)]
$\pair{e(\rho,2a_{\rho,1}+1), \pi}_\phi = -1$ if $a_{\rho,1} \not\in \Z$.
\end{enumerate}
Note that an irreducible tempered representation $\pi$ satisfying (1), (2) and (3)
is called \emph{strongly positive discrete series} in M{\oe}glin--Tadi{\'c}'s terminology. 
\par

Let $\pi \in \Pi_\phi$ be a strongly positive discrete series representation, 
i.e., it satisfies the conditions (1)--(3).
Write again 
\[
\phi = \bigoplus_\rho \bigoplus_{i=1}^t\rho \boxtimes S_{2a_{\rho,i}+1}, 
\]
where $a_{\rho,i} \in (1/2)\Z$ and $0 \leq a_{\rho,1} < \dots < a_{\rho,t}$ with $t=t_\rho \geq 0$.
\par

As the fourth reduction, 
we assume that one can find $\rho$ with $t_\rho \geq 1$ such that 
$a_{\rho,i}-a_{\rho,i-1} >1$ for some $1 \leq i \leq t_\rho$.
Here, formally we set $a_{\rho,0} = -\half{1}$.
In this case, by applying Theorem \ref{Dk}, we have 
\[
\pi \hookrightarrow \rho|\cdot|_E^{a_{\rho,i}} \rtimes \pi_0, 
\]
where $\pi_0 \in \Pi_{\phi_0}$ with 
\[
\phi_0 = \phi - \rho \boxtimes S_{2a_{\rho,i}+1} \oplus \rho \boxtimes S_{2a_{\rho,i}-1},
\]
and $\pair{\cdot, \pi}_\phi = \pair{\cdot, \pi_0}_{\phi_0}$ 
via the canonical identification $\AA_{\phi} \cong \AA_{\phi_0}$ (see Proposition \ref{DvsSECR}).
Hence 
\[
\frac{\beta(\phi)}{\beta(\phi_0)} = -1 = \frac{\beta(\pi)}{\beta(\pi_0)}. 
\]
Note that $\widehat{s_{\widehat{\phi}_0}} = \widehat{s_{\widehat{\phi}}}$ 
via $\AA_{\phi_0} \cong \AA_{\phi}$.
Hence if we knew \eqref{ast} for $\pi_0$, 
we would have 
\begin{align*}
\beta(\phi)\beta(\pi) 
= \beta(\phi_0)\beta(\pi_0) 
= \pair{\widehat{s_{\widehat{\phi}_0}}, \pi_0}_{\phi_0} 
= \pair{\widehat{s_{\widehat{\phi}}}, \pi}_\phi,
\end{align*}
which is \eqref{ast} for $\pi$. 
\par

Therefore, we may finally assume that 
\begin{enumerate}
\item
$\phi$ is a discrete parameter; 
\item
$\pair{e(\rho,2a_{\rho,i}+1), \pi}_\phi \not= \pair{e(\rho,2a_{\rho,i-1}+1), \pi}_\phi$ 
for any $\rho$ and $1 < i \leq t_\rho$; 
\item
$\pair{e(\rho,2a_{\rho,1}+1), \pi}_\phi = -1$ if $a_{\rho,1} \not\in \Z$; 
\item
$a_{\rho,1} = 0$ or $a_{\rho,1} = \half{1}$; 
\item
$a_{\rho,i} = a_{\rho,i-1}+1$ for $i > 1$.
\end{enumerate}
Such a representation $\pi$ is \emph{supercuspidal} by Corollary \ref{sc}.

%\subsection{The case of supercuspidal representations}
\subsection{The case of supercuspidal representations}\label{*-sc}
Now we assume that $\pi \in \Pi_{\phi}$ is supercuspidal, i.e., 
it satisfies the conditions (1)--(5) in the last subsection.
We check \eqref{ast} for $\pi$ directly. 
\par

Write  
\[
\phi = \bigoplus_\rho \bigoplus_{i=1}^t\rho \boxtimes S_{2a_{\rho,i}+1}, 
\]
where $a_{\rho,i} \in (1/2)\Z$ and $0 \leq a_{\rho,1} < \dots < a_{\rho,t}$ with $t=t_\rho \geq 0$.
Then 
\begin{align*}
\phi\left(
w, \begin{pmatrix}
|w|_E^{\half{1}} & 0 \\ 0 & |w|_E^{-\half{1}}
\end{pmatrix}
\right) \cong 
\bigoplus_\rho\bigoplus_{i=1}^t \rho \otimes 
(|\cdot|_E^{a_{\rho,i}} \oplus |\cdot|_E^{a_{\rho,i}-1} \oplus \dots \oplus|\cdot|_E^{-a_{\rho,i}}).
\end{align*}
Note that if $a_{\rho,1} \in \Z$, then $\rho$ appears in the right-hand side with multiplicity $t = t_\rho$. 
Hence, 
\[
r(\phi) = \sum_{\substack{\rho \\ a_{\rho,1} \not\in \Z}} \sum_{i=1}^t \left(a_{\rho,i}+\half{1}\right)
+ \sum_{\substack{\rho \\ a_{\rho,1} \in \Z}} \left(\left[\half{t}\right]+\sum_{i=1}^t a_{\rho,i}\right), 
\]
where $[x]$ denotes the largest integer not greater than $x$. 
By the conditions (4) and (5) in the last subsection, 
we note that when $a_{\rho,1} \in \Z$, we have $a_{\rho,i} = i-1$ so that 
\[
\left[\half{t}\right]+\sum_{i=1}^t a_{\rho,i} = \left[\half{t}\right]+\sum_{i=1}^t (i-1) 
= \left[\half{t}\right]+\half{t(t-1)} \equiv 0 \bmod 2.
\]
Similarly, when $a_{\rho,1} \not\in \Z$, we have $a_{\rho,i} + \half{1} = i$ so that 
\[
\sum_{i=1}^t \left(a_{\rho,i}+\half{1}\right) = \sum_{i=1}^t i = \half{t(t+1)}.
\]
Hence
\[
\beta(\phi) = (-1)^{r(\phi)} 
= \prod_{\substack{\rho \\ a_{\rho,1} \not\in \Z}} (-1)^{\half{t(t+1)}}.
\]
On the other hand, since $\pi$ is supercuspidal, we have $\beta(\pi) = 1$.
Finally, by the conditions (2) and (3), we have
\[
\pair{\widehat{s_{\widehat{\phi}}}, \pi}_\phi 
= \prod_{\substack{\rho \\ a_{\rho,1} \not\in \Z}} \prod_{i=1}^t \pair{e(\rho, 2a_{\rho,i}+1), \pi}_\phi
= \prod_{\substack{\rho \\ a_{\rho,1} \not\in \Z}} \prod_{i=1}^t (-1)^i
= \prod_{\substack{\rho \\ a_{\rho,1} \not\in \Z}} (-1)^{\half{t(t+1)}}. 
\]
Therefore, we conclude that 
\[
\beta(\phi)\beta(\pi) = \pair{\widehat{s_{\widehat{\phi}}}, \pi}_\phi,
\]
as desired. 
We obtain \eqref{ast} for $\pi$, and hence
we complete the proof of Proposition \ref{beta_phi_pi}. 
\par

%\subsection{ECR for co-tempered $A$-parameters}
\subsection{ECR for co-tempered $A$-parameters}
Now we can prove Theorem \ref{main3} (1). 
More precisely, we have the following: 
\begin{thm}\label{ECR-cotemp}
Assume Hypothesis \ref{ECR-temp} (but not \ref{ECR}). 
Let $\phi$ be a tempered $L$-parameter for $G$. 
Define $\Pi_{\widehat\phi}$ and $\pair{\hat{s}, \hat\pi}_{\widehat{\phi}}$ for $\hat{s} \in A_{\widehat\phi}$ by 
\[
\Pi_{\widehat\phi} = \{\hat\pi \,|\, \pi \in \Pi_\phi\}
\]
and
\[
\pair{\hat{s}, \hat\pi}_{\widehat{\phi}} = 
(-1)^{r(\phi)-r(\phi_+)-r(\phi_-)}
\pair{s,\pi}_\phi.
\]
Then $\pair{\cdot, \hat\pi}_{\widehat\phi}$ factors through $A_{\widehat\phi} \twoheadrightarrow \AA_{\widehat\phi}$. 
Moreover, \eqref{ECR1} and \eqref{ECR2} hold for $\widehat\phi$. 
\end{thm}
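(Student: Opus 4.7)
The plan is to derive Theorem \ref{ECR-cotemp} from \eqref{ECR1} and \eqref{ECR2} for the tempered parameter $\phi$ (given by Hypothesis \ref{ECR-temp}) by invoking Remark \ref{converse}: if we can verify Lemma \ref{beta} (1) and Lemma \ref{beta} (2) as identities relating $\phi$ and $\widehat\phi$ for our specified definition of $\pair{\cdot,\hat\pi}_{\widehat\phi}$, then the endoscopic character identities for $\widehat\phi$ follow by the same character manipulation used in the proof of Lemma \ref{beta} run in reverse. So the entire argument reduces to checking (a) the well-definedness of $\pair{\cdot,\hat\pi}_{\widehat\phi}$ as a character of $\AA_{\widehat\phi}$, and (b) the equalities of Lemma \ref{beta} (1) and (2).

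For (b), Lemma \ref{beta} (2) is essentially built into the definition: applying Proposition \ref{c-temp} to $\phi$, $\phi_+$ and $\phi_-$ (each of which is tempered, hence covered by Hypothesis \ref{ECR-temp} through its further reduction to discrete parameters) gives $\beta(\phi)/(\beta(\phi_+)\beta(\phi_-)) = (-1)^{r(\phi)-r(\phi_+)-r(\phi_-)}$, which is exactly the prefactor in the definition of $\pair{\hat s,\hat\pi}_{\widehat\phi}/\pair{s,\pi}_\phi$. For Lemma \ref{beta} (1), note that $\phi$ tempered forces $s_\phi = 0$, so its right-hand side is $\beta(\pi)$. One must then verify that the sign $(-1)^{r(\phi)-r(\phi_+)-r(\phi_-)}$ is trivial when $s = \widehat{s_{\widehat\phi}}\in A_\phi$: writing $\phi = \bigoplus_i \rho_i\boxtimes S_{a_i}$, this $s$ splits $\phi$ according to the parity of the $a_i$, and a direct count of the length $r$ of the restriction to $W_E$ shows the three quantities cancel. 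Given this, our definition yields $\pair{s_{\widehat\phi},\hat\pi}_{\widehat\phi} = \pair{\widehat{s_{\widehat\phi}},\pi}_\phi$, and Proposition \ref{beta_phi_pi} identifies this with $\beta(\phi)\beta(\pi)$, so multiplying through by $\beta(\phi)$ produces $\beta(\pi)$ as required.

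For (a), since $\pair{s,\pi}_\phi$ already factors through $\AA_\phi$, it suffices to check that $s \mapsto (-1)^{r(\phi)-r(\phi_+)-r(\phi_-)}$ is invariant under the subgroup of $A_\phi$ generated by $A_\phi^0$ and $z_\phi$. Adding $z_\phi$ swaps $\phi_+$ and $\phi_-$, leaving $r(\phi_+) + r(\phi_-)$ unchanged; adding a generator $e(\phi_i,d_i)+e(\phi_j,d_j)$ of $A_\phi^0$ (with $\phi_i\boxtimes S_{d_i}\cong \phi_j\boxtimes S_{d_j}$) simply relocates one copy of this component between $\phi_+$ and $\phi_-$ in each direction, which again preserves the sum. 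The main obstacle, modest as it is, lies in carrying out these combinatorial cancellations of the $r$-invariant, since $r$ is defined through the reduction to $W_E$ and requires care in tracking conjugate-self-dual summands; once that bookkeeping is in hand, however, the theorem follows by assembling Proposition \ref{c-temp}, Proposition \ref{beta_phi_pi}, and the converse implication of Lemma \ref{beta}.
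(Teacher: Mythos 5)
Your proposal is correct and follows essentially the same route as the paper: the paper's proof is just the explicit unfolding of the Remark~\ref{converse} reversal that you invoke, fed by the same three inputs (Proposition~\ref{c-temp} for $\beta(\phi)=(-1)^{r(\phi)}$, Proposition~\ref{beta_phi_pi} for $\pair{s_{\widehat\phi},\hat\pi}_{\widehat\phi}=\beta(\phi)\beta(\pi)$, and the combinatorial identity $r(\phi)=r(\phi_+)+r(\phi_-)$ at $s=\widehat{s_{\widehat\phi}}$). One small step you leave implicit, which the paper records explicitly, is that the \eqref{ECR2} computation also requires $\beta(\phi_\pm)=\beta(\phi_\pm\otimes\eta_\pm)$ from Lemma~\ref{beta_twist}, since the endoscopic transfer for $\widehat\phi$ involves the auxiliary character twists $\eta_\pm$ needed to place $\phi_\pm$ in an endoscopic group.
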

\begin{proof}
Since $r(\phi)=r(\phi_+)+r(\phi_-)$ if $s = \widehat{s_{\widehat\phi}}$, 
we have 
$\pair{s_{\widehat\phi}, \hat\pi}_{\widehat\phi} = \pair{\widehat{s_{\widehat\phi}}, \pi}_{\phi} = \beta(\phi)\beta(\pi)$
by Proposition \ref{beta_phi_pi}. 
Hence, 
when $\tl{f} \in C_c^\infty(\GL_N(E) \rtimes \theta)$ and $f_G \in C_c^\infty(G^\circ)$ have matching orbital integrals, by \cite[(A.1)]{X2}, we have
\begin{align*}
\sum_{\hat\pi \in \Pi_{\widehat\phi}} \pair{s_{\widehat\phi}, \hat\pi}_{\widehat\phi} \Theta_{\hat\pi}(f_G)
&= 
\beta(\phi) \sum_{\hat\pi \in \Pi_{\widehat\phi}} \beta(\pi) \Theta_{\hat\pi}(f_G)
\\&=
\beta(\phi) \sum_{\pi \in \Pi_{\phi}} \Theta_{D_{G^\circ}(\pi)}(f_G) 
\\&=
(G:G^\circ) \beta(\phi) \Theta_{D_{\tl\GL_N(E)}(\tl\pi_\phi)}(\tl{f})
\\&=
(G:G^\circ) \Theta_{\tl\pi_{\widehat\phi}}(\tl{f}).
\end{align*}
This shows \eqref{ECR1}. 
\par

Similarly, since $\beta(\phi) = (-1)^{r(\phi)}$ and $\beta(\phi_\pm) = \beta(\phi_\pm \otimes \eta_\pm)$
by Proposition \ref{c-temp} and Lemma \ref{beta_twist}, 
when $f_G \in C_c^\infty(G)$ and $f_{G_+} \otimes f_{G_-} \in C_c^\infty(G^\circ_+ \times G^\circ_-)$ 
have matching orbital integrals, 
by \cite[Theorem 1.5]{Hi} or \cite[(A.1)]{X2}, 
we have
\begin{align*}
&\frac{1}{(G:G^\circ)}\sum_{\hat\pi \in \Pi_{\widehat\phi}} 
\pair{\hat{s} \cdot s_{\widehat\phi}, \hat\pi}_{\widehat\phi} \Theta_{\hat\pi}(f_G) 
\\&\quad= 
\frac{1}{(G:G^\circ)}\sum_{\hat\pi \in \Pi_{\widehat\phi}} 
(-1)^{r(\phi)-r(\phi_+)-r(\phi_-)}\pair{s, \pi}_\phi \cdot \beta(\phi)\beta(\pi)
\Theta_{\hat\pi}(f_G) 
\\&\quad= 
\frac{1}{(G:G^\circ)} \beta(\phi_+)\beta(\phi_-)\sum_{\pi \in \Pi_{\phi}} 
\pair{s, \pi}_\phi  \Theta_{D_{G^\bullet}(\pi)}(f_G) 
\\&\quad= 
\prod_{\kappa \in \{\pm\}} 
\frac{1}{(G_\kappa:G_\kappa^\circ)} \beta(\phi_\kappa \otimes \eta_\kappa)
\sum_{\pi_\kappa \in \Pi_{\phi_\kappa \otimes \eta_\kappa}} 
\Theta_{D_{G^\circ_\kappa}(\pi_\kappa)}(f_{G_\kappa}) 
\\&\quad= 
\prod_{\kappa \in \{\pm\}} 
\frac{1}{(G_\kappa:G_\kappa^\circ)}
\sum_{\hat\pi_\kappa \in \Pi_{\widehat\phi_\kappa \otimes \eta_\kappa}} 
\beta(\phi_\kappa \otimes \eta_\kappa) \beta(\pi_\kappa)
\Theta_{\hat\pi_\kappa}(f_{G_\kappa})
\\&\quad= 
\prod_{\kappa \in \{\pm\}} 
\frac{1}{(G_\kappa:G_\kappa^\circ)}
\sum_{\hat\pi_\kappa \in \Pi_{\widehat\phi_\kappa \otimes \eta_\kappa}} 
\pair{s_{\widehat{\phi}_\kappa \otimes \eta_\kappa}, \hat\pi_\kappa}_{{\widehat\phi_\kappa} \otimes \eta_\kappa}
\Theta_{\hat\pi_\kappa}(f_{G_\kappa}).
\end{align*}
Here, the definition of $D_{G^\bullet}$ and the assumption on $f_G$ are the same as in the proof of Lemma \ref{beta} (2). 
This shows \eqref{ECR2}. 
\end{proof}

%\section{Local intertwining relations for classical groups}
%\section{Local intertwining relations for classical groups}
\section{Local intertwining relations for classical groups}\label{sec.LIR}
The purpose of this and next sections is to prove Theorem \ref{main3} (2).
This is a key result that is essential in the global method for establishing Arthur's theory of endoscopic classification. 
Arthur's initial approach was to prove some special cases of Theorem \ref{main3} (2), 
which would suffice for the global method, 
by an argument based on Hecke algebras.
However, our attempts to realize this approach led to very complicated computations.
\par

To show Theorem \ref{main3} (2), 
we will instead adapt the method for Theorem \ref{main2} to the case of classical groups.
However, unlike the $\GL_N(E)$ case, 
the unitary parabolic inductions of classical groups are not necessarily irreducible. 
Because of this fact, we can apply our method only to ``half'' of the cases.
The final key ingredient is Corollary \ref{NIOvsAD}, 
which was obtained in an arXiv version of \cite{KMSW}.
This result tells us that ``half'' of the cases is enough.

%\subsection{Hypothesis}
\subsection{Hypothesis}
Let $F$ be a non-archimedean local field of characteristic zero. 
Fix a non-trivial character $\psi_F \colon F \rightarrow \C^\times$.
Recall that 
$G$ is one of the following quasi-split classical groups
\[
\SO_{2n+1}(F), \quad \Sp_{2n}(F), \quad
\O_{2n}(F), \quad \U_n.
\]

In this section, we assume Hypothesis \ref{hyp.arthur}, 
which we restate here for the reader's convenience.
\begin{hyp}\label{arthur}
We assume \eqref{ECR1}, \eqref{ECR2} and \eqref{A-LIR} for 
\begin{itemize}
\item
all tempered $L$-parameters $\phi$ for $G$; 
\item
all $A$-parameters $\psi$ for $G'$ with $G'$ any classical group 
such that $\dim(\St_{\widehat{G'}}) < \dim(\St_{\widehat{G}})$.
\end{itemize}
In particular, we have the $A$-packet $\Pi_{\psi_M}$ for $\psi_M \in \Psi(M)$
for any proper Levi subgroup $M$ of $G$, 
since $M$ is a product of such a $G'$ and general linear groups.
\end{hyp}

\begin{rem}
\begin{enumerate}
\item
By Theorem \ref{ECR-cotemp}, one has an $A$-packet $\Pi_{\widehat\phi}$ 
associated to co-tempered $A$-parameters $\widehat\phi$ for $G$, 
satisfying \eqref{ECR1} and \eqref{ECR2}.

\item
For any proper Levi subgroup $M$ of $G$, 
Hypothesis \ref{arthur} is stronger than Hypotheses \ref{ECR} and \ref{hyp_appendixB}. 
Hence we can use Corollaries \ref{+otimes-} and \ref{NIOvsAD} for $\psi_M \in \Psi(M)$. 

\item
Note that we assume \eqref{ECR1} and \eqref{ECR2} not only for proper Levi subgroups $M$ of $G$, 
but also for any classical group $G'$ with $\dim(\St_{\widehat{G'}}) < \dim(\St_{\widehat{G}})$.
Hence Hypothesis \ref{arthur} contains Hypothesis \ref{ECR-tempD} so that 
we can use Theorems \ref{Dk}, \ref{Dk-1} and Corollary \ref{sc}. 

\end{enumerate}
\end{rem}

%\subsection{Reduction to the maximal parabolic case}
\subsection{Reduction to the maximal parabolic case}
Let $P = MN_P$ be a standard parabolic subgroup of $G$. 
Write $M = \GL_{k_t}(E) \times \dots \times \GL_{k_1}(E) \times G_0$. 
Let $\psi_M = \widehat\phi_M = \psi_t \oplus \dots \oplus \psi_1 \oplus \psi_0$
be a co-tempered $A$-parameter for $M$ 
such that $\psi_i$ is an $A$-parameter for $\GL_{k_i}(E)$ for $1 \leq i \leq t$, 
and $\psi_0 \in \Psi(G_0)$.
Suppose that $\psi_i$ is irreducible and conjugate-self-dual for any $1 \leq i \leq t$. 
\par

In this subsection, we reduce Theorem \ref{main3} (2) to the case where $t=1$. 
Namely, we prove the following. 
\begin{lem}\label{reduction}
Assume Hypothesis \ref{arthur}. 
We further assume that \eqref{LIR} holds for any irreducible component $\pi \subset I_{P'}(\pi_{M'})$, 
where 
\begin{itemize}
\item
$P'=M'N_{P'}$ is a maximal parabolic subgroup of $G$ so that $M' \cong \GL_k(E) \times G_0'$; 
\item
$\psi_{M'} = \widehat\phi_{M'} = \psi_\GL \oplus \psi_0'$ is a co-tempered $A$-parameter for $M'$
with $\psi_\GL$ irreducible and conjugate-self-dual; 
\item
$\pi_{M'} \in \Pi_{\psi_{M'}}$. 
\end{itemize}
Then \eqref{LIR} holds for any irreducible component $\pi \subset I_{P}(\pi_{M})$ for $\pi_M \in \Pi_{\psi_M}$.
\end{lem}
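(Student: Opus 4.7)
The plan is to exploit the multiplicativity of both sides of \eqref{LIR} in $u \in \NN_\psi$ and to verify the identity on a convenient generating set, each generator being controlled by Hypothesis \ref{arthur}, by the maximal parabolic assumption, or by the $\GL_N$-case already available. The map $u \mapsto \pair{\tl u, \tl\pi_M}R_P(w_u, \tl\pi_M, \psi_M)|_\pi$ is multiplicative in $u$ by Proposition \ref{multiplicative} together with \cite[Lemma 2.5.3]{KMSW}, while $u \mapsto \pair{s_u, \pi}_\psi$ is a character by construction. It therefore suffices to verify \eqref{LIR} on a generating set of $\NN_\psi$, and the description recalled in Section \ref{sec.main3} furnishes three families of generators: elements of $\AA_{\psi_0}$, the reflections $u_i$ for $1 \leq i \leq t$, and adjacent transpositions $\sigma_j = (j, j+1) \in \SS_t$ with $\psi_j \cong \psi_{j+1}$.

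For $u \in \AA_{\psi_0}$, the element $w_u$ is trivial on each $\GL$ slot and lies in $W(G_0)$, so the normalized self-intertwiner factors through the analogous operator on $\pi_0 \in \Pi_{\psi_0}$; since $\dim(\St_{\widehat{G_0}}) < \dim(\St_{\widehat{G}})$, Hypothesis \ref{arthur} combined with Lemma \ref{A_LIR} and the restriction identity $\pair{\cdot, \pi}_\psi|_{A_{\psi_0}} = \pair{\cdot, \pi_0}_{\psi_0}$ yields \eqref{LIR}. For $u = \sigma_j$ with $\psi_j \cong \psi_{j+1}$, the operator is trivial on the $G_0$ slot and on all $\GL$ slots except the swap of the isomorphic pair $\pi_{\psi_j} \times \pi_{\psi_{j+1}}$, where Theorem \ref{untwistedGL} identifies the resulting $\GL$-intertwiner with the identity; since $s_{\sigma_j} = 1$ in $\AA_\psi$, both sides of \eqref{LIR} equal the identity.

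The substantive case is $u = u_i$. Let $G_0'$ be the quasi-split classical group of the same type as $G$ with $\dim(\St_{\widehat{G_0'}}) = \dim(\St_{\widehat{G}}) - 2\dim(\psi_i)$, so that the factor-deleted Levi
\[
 M_0'' = \GL_{k_t}(E) \times \cdots \times \GL_{k_{i+1}}(E) \times \GL_{k_{i-1}}(E) \times \cdots \times \GL_{k_1}(E) \times G_0
\]
is a Levi of $G_0'$, and let $P' = M'N_{P'} \subset G$ be the maximal parabolic with $M' = \GL_{k_i}(E) \times G_0'$. Transitivity of parabolic induction yields
\[
 I_P(\pi_M) \cong \Ind_{P'}^G\bigl(\pi_{\psi_i} \boxtimes \Ind_{P_0''}^{G_0'}(\pi_{M_0''})\bigr),
\]
where $\pi_{M_0''}$ is obtained from $\pi_M$ by deleting the $i$-th slot. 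Decomposing $\Ind_{P_0''}^{G_0'}(\pi_{M_0''}) = \bigoplus_{\sigma_0'} \sigma_0'$ into irreducibles via Hypothesis \ref{arthur} applied to $G_0'$, each $\sigma_0'$ lies in the $A$-packet $\Pi_{\psi_0'}$ for the co-tempered parameter $\psi_0'$ on $G_0'$ obtained from $\psi$ by dropping the pair $\psi_i \oplus {}^c\psi_i^\vee$, so that $\pi_{\psi_i} \boxtimes \sigma_0' \in \Pi_{\psi_{M'}}$ for $\psi_{M'} = \psi_i \oplus \psi_0'$, which is precisely an $A$-parameter of the form demanded by the maximal parabolic hypothesis ($\psi_\GL = \psi_i$ is irreducible and conjugate-self-dual). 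Applying that hypothesis to each $\Ind_{P'}^G(\pi_{\psi_i} \boxtimes \sigma_0')$ delivers \eqref{LIR} for $u_i$ on the corresponding direct summand of $I_P(\pi_M)$.

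The main obstacle in executing this plan is the bookkeeping required in the $u_i$ case: one must identify the operator $R_P(w_{u_i}, \tl\pi_M, \psi_M)$ with $R_{P'}(w_{u_1}, \tl\pi_{M'}, \psi_{M'})$ under the identification $I_P(\pi_M) \cong \bigoplus_{\sigma_0'} I_{P'}(\pi_{\psi_i} \boxtimes \sigma_0')$, comparing the normalizing factors $r_P$ and $r_{P'}$ via Lemma \ref{holomorphic} and tracking the normalizations of $\pair{\tl u, \tl\pi_M}\tl\pi_M(w_u)$, and then show that $\pair{s_{u_i}, \pi}_\psi$ agrees with $\pair{s_{u_1}, \pi}_{\psi_{M'}}$ via the induction compatibility of the pairings recorded in \cite[Proposition 2.4.3]{Ar} and \cite[Proposition 3.4.4]{Mok}.
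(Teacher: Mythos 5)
Your proposal correctly identifies the overall strategy used in the paper: reduce via multiplicativity (Proposition \ref{multiplicative} together with \cite[Lemma 2.5.3]{KMSW}) to the three generator families $\AA_{\psi_0}$, the $u_i$, and the $\sigma \in \SS_t$. For the $\AA_{\psi_0}$ case, however, the tool you invoke is slightly off: $w_u$ is actually trivial (or the outer element $\epsilon$ for $\O_{2n}$), so the operator is just the Levi normalization $\pair{\tl u, \tl\pi_M}\tl\pi_M(w_u)$ acting by a scalar, and that scalar is determined by \eqref{ECR2} for $\psi_0$ on $G_0$ (supplied by Hypothesis \ref{arthur}), not by ``\eqref{LIR} for $G_0$'' via Lemma \ref{A_LIR}. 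The $\sigma$ case is handled correctly via Theorem \ref{untwistedGL}.

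The real gap is in the $u_i$ case. The essential tool here, which the paper uses repeatedly and which you do not invoke, is \cite[Lemma 2.7.2]{KMSW}: it is what identifies $\pair{\tl u_i, \tl\pi_M}R_P(w_{u_i},\tl\pi_M,\psi_M)$, restricted to an intermediate induction stage, with the corresponding normalized operator on the smaller group. You acknowledge this as ``bookkeeping'' and propose to handle it by ``comparing the normalizing factors $r_P$ and $r_{P'}$ via Lemma \ref{holomorphic}'', but Lemma \ref{holomorphic} (holomorphy and a sign for a ratio of $\gamma_A$-factors) is not the right instrument and does not by itself deliver the descent of the full normalized self-intertwiner. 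Moreover, your route takes $M' \cong \GL_{k_i}(E)\times G_0'$ for \emph{every} $i$, which for $i < t$ is not a standard Levi of $G$ in the fixed ordering; the identification $I_P(\pi_M)\cong\Ind_{P'}^G(\pi_{\psi_i}\boxtimes\Ind_{P_0''}^{G_0'}(\pi_{M_0''}))$ then requires tracking a conjugating Weyl element and the normalized intertwining operators it introduces, which you do not address (and which one cannot absorb into the $\sigma$-case, since the requisite reordering generally does not preserve isomorphism classes of the $\psi_j$ and hence is not in $\NN_\psi$). The paper avoids this entirely: it only peels off the \emph{outermost} slot, handles $u_t$ by the maximal-parabolic hypothesis, and handles $u_i$ for $i<t$ by descending (via \cite[Lemma 2.7.2]{KMSW}) to LIR for the strictly smaller group $G_0'$, which is available from Hypothesis \ref{arthur} together with Lemma \ref{A_LIR}. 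Your plan could in principle be made to work, but as written it replaces the paper's clean descent with reorderings and cites the wrong tool for the crucial operator identification.
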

\begin{proof}
By Proposition \ref{multiplicative} and \cite[Lemma 2.5.3]{KMSW}, the map 
\[
\NN_\psi \ni u \mapsto \pair{\tl{u}, \tl\pi_M} R_P(w_u, \tl\pi_M, \psi_M)
\]
is multiplicative. 
(See also the paragraph in \cite{Ar} containing (2.4.2).)
Hence we may assume one of the following:
\begin{itemize}
\item
$u = s_0 \in \AA_{\psi_0}$;
\item
$u = u_i$ for $1 \leq i \leq t$; or
\item
$u = \sigma \in \SS_t$ such that $\psi_{\sigma(i)} \cong \psi_i$ for $1 \leq i \leq t$,
\end{itemize}
since $\NN_\psi$ is generated by these elements. 
See Section \ref{sec.main3}. 
\par

First, we assume that $u = s_0 \in \AA_{\psi_0}$. 
Then $\tl{w}_u = \1$ or $\tl{w}_u = \epsilon$ (which occurs only when $G = \O_{2n}(F)$), and 
\[
\pair{\tl{u}, \tl\pi_M} R_P(w_u, \tl\pi_M, \psi_M) = 
\left(\pair{\tl{u}, \tl\pi_M}\tl\pi_M(w_u)\right) \circ \pi_M(\tl{w}_u)^{-1}.
\]
By the definition of the operator $\pair{\tl{u}, \tl\pi_M}\tl\pi_M(w_u)$ 
recalled in Section \ref{sec.main3}
together with \eqref{ECR2}, 
we see that the right-hand side is equal to $\pair{s_0,\pi_0}_{\psi_0} = \pair{s_0,\pi}_{\psi}$.
Hence we obtain \eqref{LIR} for this case. 
\par

Next, we assume that $u = \sigma \in \SS_t$ such that $\psi_{\sigma(i)} \cong \psi_i$ for $1 \leq i \leq t$. 
Let $P'=M'N_{P'}$ be a maximal parabolic subgroup of $G$
such that $M' \cong \GL_{k_1+\dots+k_t}(E) \times G_0$, 
and let $\psi_{M'} = (\psi_t \oplus \dots \oplus \psi_1) \oplus \psi_0$ be the $A$-parameter for $M'$ given by $\psi_M$. 
Then 
\[
\Pi_{\psi_{M'}} = \left\{ \Ind_{P \cap M'}^{M'}(\pi_M) \,\middle|\, \pi_M \in \Pi_{\psi_M} \right\}. 
\]
By \cite[Lemma 2.7.2]{KMSW}, 
$\pair{\tl{u}, \tl\pi_M} R_P(w_u, \tl\pi_M, \psi_M)$
descends to the normalized intertwining operator for $\GL_{k_1+\dots+k_t}(E)$. 
(See also the proof of Lemma 2.4.2 in \cite{Ar}.)
Hence by Theorem \ref{untwistedGL}, this operator must be the identity map.
Since $s_u = 1$ in this case, we obtain \eqref{LIR}.
\par

Finally, we assume that $u = u_i$ for $1 \leq i \leq t$. 
We will prove the assertion by induction on $t$. 
We can assume that $t>1$.
Let $P'=M'N_{P'}$ be a maximal parabolic subgroup of $G$
such that $M' \cong \GL_{k_t}(E) \times G_0'$, 
and let
\[
\psi_{M'} 
= 
\psi_t \oplus 
(\psi_{t-1} \oplus \dots \oplus \psi_1 \oplus \psi_0 \oplus {}^c\psi_1^\vee \oplus \dots \oplus {}^c\psi_{t-1}^\vee)
\]
be the $A$-parameter for $M'$ given by $\psi_M$. 
Then for any $\pi_M \in \Pi_{\psi_M}$ and any irreducible component $\pi \subset I_P(\pi_M)$, 
there is a unique $\pi_{M'} \in \Pi_{\psi_{M'}}$ such that 
$\pi_{M'} \subset \Ind_{P \cap M'}^{M'}(\pi_M)$ and $\pi \subset I_{P'}(\pi_{M'})$. 
Moreover, there is a canonical injection $A_{\psi_{M'}} \hookrightarrow A_\psi$ 
and we have 
\[
\pair{\cdot, \pi}_\psi|_{A_{\psi_{M'}}} = \pair{\cdot, \pi_{M'}}_{\psi_{M'}}. 
\]
These facts follow from the tempered case by taking Aubert duality and using Corollary \ref{+-temp}. 
\par

Suppose that $u = u_i$ with $i \not= t$ so that $s_{u_i} \in A_{\psi_{M'}}$. 
Then by \cite[Lemma 2.7.2]{KMSW}, 
$\pair{\tl{u}, \tl\pi_M} R_P(w_u, \tl\pi_M, \psi_M)$
descends to the normalized intertwining operator for $G_0'$, i.e., 
\[
\pair{\tl{u_i}, \tl\pi_M} R_P(w_{u_i}, \tl\pi_M, \psi_M)
= \Ind_{P'}^{G}\left(
\id_{\pi_{\psi_t}} \otimes \pair{\tl{u_i}, \tl\pi_{M_0'}} R_{P \cap G_0'}(w_{u_i}, \tl\pi_{M_0'}, \psi_{M_0'})
\right),
\]
where $M_0' = M \cap G_0'$, $\psi_{M_0'} = \psi_M - \psi_t \in \Psi(M_0')$
and we write $\pi_M = \pi_{\psi_t} \boxtimes \pi_{M_0'}$ with $\pi_{M_0'} \in \Pi_{\psi_{M_0'}}$. 
By the induction hypothesis, 
\eqref{LIR} is known for $G_0'$ in place of $G$, 
so this operator acts on $I_{P'}(\pi_{M'})$ 
by the scalar $\pair{s_{u_i}, \pi_{M'}}_{\psi_{M'}} = \pair{s_{u_i}, \pi}_\psi$.
Hence we obtain \eqref{LIR} for this case. 
\par

Suppose finally that $u = u_t$. 
Then by \cite[Lemma 2.7.2]{KMSW}, 
\[
\pair{\tl{u_t}, \tl\pi_M} R_P(w_{u_t}, \tl\pi_M, \psi_M)|_{I_{P'}(\pi_{M'})}
= 
\pair{\tl{u_t}, \tl\pi_{M'}} R_{P'}(w_{u_t}, \tl\pi_{M'}, \psi_{M'}).
\]
Since we are assuming \eqref{LIR} for the maximal parabolic case, 
we know that 
\[
\pair{\tl{u_t}, \tl\pi_{M'}} R_{P'}(w_{u_t}, \tl\pi_{M'}, \psi_{M'})|_{\pi} = \pair{s_{u_t}, \pi}_\psi.
\]
Hence we obtain \eqref{LIR} for this case. 
This completes the proof.
\end{proof}

%\subsection{Halving the problem}
\subsection{Halving the problem}
In the rest of this section, 
we will focus on the maximal parabolic case.
\par

Fix a standard maximal parabolic subgroup $P=MN_P$ of $G$ with $M \cong \GL_k(E) \times G_0$. 
Let $\psi_M = \psi_\GL \oplus \psi_0$ be an $A$-parameter for $M$, 
and let $\psi = \psi_\GL \oplus \psi_0 \oplus {}^c\psi_\GL^\vee$ be the $A$-parameter for $G$ given by $\psi_M$. 
Suppose that $\psi_\GL$ is irreducible and conjugate-self-dual.
The $A$-packet $\Pi_\psi$ is given by the (multi-)set of irreducible components of $I_P(\pi_M)$ for $\pi_M \in \Pi_{\psi_M}$.
Let $u \in \NN_\psi$. 
Recall that the normalized self-intertwining operator 
\[
\pair{\tl{u}, \tl\pi_M} R_P(w_u, \tl\pi_M, \psi_M) \colon I_P(\pi_M) \rightarrow I_P(\pi_M)
\]
is defined by 
\[
\pair{\tl{u}, \tl\pi_M} R_P(w_u, \tl\pi_M, \psi_M)f(g) 
= \pair{\tl{u}, \tl\pi_M}\tl{\pi}_M(w_u)\left(
R_P(w_u, \pi_M, \psi_M)f(g)
\right)
\]
with a linear isomorphism
\[
\pair{\tl{u}, \tl\pi_M}\tl{\pi}_M(w_u) \colon \pi_M \xrightarrow{\sim} \pi_M
\]
satisfying that the diagram
\[
\xymatrix{
\pi_M \ar@{->}[d]_{\pi_M(\tl{w}_u^{-1} m \tl{w}_u)} \ar@{->}[rrr]^{\pair{\tl{u}, \tl\pi_M}\tl{\pi}_M(w_u)} &&& \pi_M \ar@{->}[d]^{\pi_M(m)} \\
\pi_M  \ar@{->}[rrr]^{\pair{\tl{u}, \tl\pi_M}\tl{\pi}_M(w_u)} &&& \pi_M 
}
\]
is commutative for any $m \in M$. 
The definition of this isomorphism will be recalled in the proof of the next lemma.
\par

We can write $\pi_M = \pi_\GL \boxtimes \pi_0$, 
where $\pi_\GL$ is an irreducible conjugate-self-dual representation of $\GL_k(E)$. 
Recall that $\NN_\psi$ is generated by $\AA_{\psi_0}$ and an element $u_1$. 
When $u = u_1$, 
similar to the definition of $\theta_A$ in Section \ref{sec.thetaA}, 
by using a Whittaker functional on the standard module of $\pi_\GL$, 
one can normalize a linear isomorphism $\AA_{w_u} \colon \pi_\GL \xrightarrow{\sim} \pi_\GL$ such that the diagram
\[
\xymatrix{
\pi_M \ar@{->}[d]_{\pi_M(\tl{w}_u^{-1} m \tl{w}_u)} \ar@{->}[rrr]^{\AA_{w_u} \otimes \id_{\pi_0}} &&& \pi_M \ar@{->}[d]^{\pi_M(m)} \\
\pi_M  \ar@{->}[rrr]^{\AA_{w_u} \otimes \id_{\pi_0}} &&& \pi_M 
}
\]
is commutative for any $m \in M$.

\begin{lem}\label{c3}
Let $u \in \NN_\psi$. 
\begin{enumerate}
\item
If $u = s_0 \in \AA_{\psi_0}$, then $\pair{\tl{u}, \tl\pi_M}\tl{\pi}_M(w_u) = \pair{s_0,\pi_0}_{\psi_0} \pi_M(\tl{w}_u)$. 
\item
If $u = u_1$, then $\pair{\tl{u}, \tl\pi_M}\tl{\pi}_M(w_u) = \AA_{w_u} \otimes \id_{\pi_0}$.
\end{enumerate}
\end{lem}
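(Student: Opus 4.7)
My plan is to verify both identities by invoking the defining property of the operator $\pair{\tl{u}, \tl\pi_M}\tl\pi_M(w_u)$, which is the twisted endoscopic character identity recalled in Section \ref{sec.main3}. In both cases, the proof rests on the observation that the pair $(s, \psi_M)$ factors compatibly with the decomposition $M^\circ = \GL_k(E) \times G_0^\circ$, so that the twisted endoscopic datum $(M'^\circ, s, \xi)$ is a product. The strategy is then to check that the proposed operator solves the factored character identity, and conclude by uniqueness (which follows from Schur's lemma, since $\pair{\tl{u}, \tl\pi_M}\tl\pi_M(w_u)$ is characterized up to scalar by equivariance and the identity pins down the scalar).

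For part (1), the element $u = s_0 \in \AA_{\psi_0} \subset \NN_\psi$ acts trivially on $\widehat{\GL_k(\C)}$, so $\widehat\theta$ acts trivially on the $\GL_k$-factor of $\widehat{M^\circ}$ and $w_u$ either equals $\1$ or (when $G=\O_{2n}(F)$) is represented by $\epsilon$. Accordingly the twisted endoscopic datum factors as the trivial datum on $\GL_k$ and the standard datum on $G_0^\circ$ coming from $(s_0, \psi_0)$. The corresponding parameter is $\psi_{M'} = \psi_\GL \oplus \psi_0'$ with $\psi_0'$ an endoscopic parameter for $G_0$, and the identity becomes the ordinary endoscopic character relation for $G_0$ with $(s_0, \psi_0)$, tensored with the trivial identity $\tr(\pi_\GL(\tl{w}_u|_{\GL_k}) \circ \pi_\GL(f_\GL)) = \Theta_{\pi_\GL}(f_\GL \circ \Ad(\tl{w}_u|_{\GL_k}))$ on the $\GL_k$-side. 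The ordinary endoscopic identity on $G_0$ is part of Hypothesis \ref{arthur}, so the operator $\pair{s_0,\pi_0}_{\psi_0} \cdot \pi_M(\tl{w}_u)$ satisfies the defining identity, proving (1).

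For part (2), the element $u = u_1$ induces the non-trivial outer automorphism on $\GL_k(\C)$ and trivial action on $\widehat{G_0^\circ}$, so the twisted endoscopic datum is a product: on the $G_0$-side it is trivial (the identity datum for $\psi_0$), while on the $\GL_k$-side, the pair $(s|_{\GL_k}, \psi_\GL)$ with respect to the outer automorphism $\widehat\theta$ determines a twisted endoscopic datum whose endoscopic group is a classical group $H_\GL$ (symplectic, orthogonal, or unitary according to the type of $\psi_\GL$). The parameter $\psi_{M'}$ splits as $\psi_\GL^{cl} \oplus \psi_0$ where $\psi_\GL^{cl}$ is $\psi_\GL$ viewed as a self-dual parameter of $H_\GL$, and the character identity factors correspondingly. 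On the $G_0$-side this reduces to $\id_{\pi_0}$, while on the $\GL_k$-side it becomes exactly the twisted endoscopic character identity for $\GL_k(E)$ relative to $H_\GL$ evaluated at $\pi_\GL$. The Whittaker normalization that defines $\AA_{w_u}$ is precisely the one used to construct Arthur's extension of a conjugate-self-dual irreducible representation of $\GL_k(E)$ to the twisted group, which is characterized by the twisted character identity with the standard transfer factor normalized by the Whittaker datum $\ww$. So $\AA_{w_u} \otimes \id_{\pi_0}$ satisfies the defining identity, proving (2).

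The main technical obstacle will be the bookkeeping of transfer factors and Weyl-element representatives, especially the compatibility between the Tits lifting $\tl{w}_u$ and the Langlands--Shelstad representatives in the factorization of the twisted endoscopic datum, together with the sign conventions in the case $G = \O_{2n}(F)$ when $\tl{w}_u$ lies in the non-identity component. These compatibilities are essentially formal, but must be checked using the explicit description of $u_i$ in terms of the standard $\widehat{G}$-embedding and the Whittaker normalization of transfer factors established in Remark \ref{rem.ECR}(1); the result then follows from the uniqueness of the intertwining operator satisfying the normalized twisted character identity.
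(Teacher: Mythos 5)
Your proposal is correct and takes essentially the same approach as the paper's proof: both verify that the proposed operator satisfies the defining twisted endoscopic character identity, reducing to \eqref{ECR2} for $\psi_0$ in case (1) and to \eqref{ECR1} for the twisted endoscopic parameter $\psi_1$ of $\GL_k(E)$ in case (2) after factoring the datum along $M'^\circ = G_1 \times G_0^\circ$. The paper skips stating the Schur-lemma uniqueness explicitly and does not dwell on the bookkeeping of transfer factors and Weyl representatives that you flag, but the underlying argument is the same.
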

\begin{proof}
Recall from Section \ref{sec.main3} that the operator $\pair{\tl{u}, \tl\pi_M}\tl{\pi}_M(w_u)$ 
is defined by a twisted endoscopic identity. 
We will check that this identity also holds after replacing the operator
$\pair{\tl{u}, \tl\pi_M}\tl{\pi}_M(w_u)$ by either 
$\pair{s_0,\pi_0}_{\psi_0} \pi_M(\tl{w}_u)$ or
$\AA_{w_u} \otimes \id_{\pi_0}$
depending on whether $u = s_0 \in \AA_{\psi_0}$ or $u=u_1$.
\par

First, we assume that $u = s_0 \in \AA_{\psi_0}$.
Then the required identity follows from the standard (or twisted) endoscopic character identity \eqref{ECR2} for $\psi_0$. 
\par

Next we assume that $u=u_1$.
Since the conjugation action of $\tl{u}$ on $\widehat{M^\circ}$ preserves the standard pinning,
and it acts on $\GL_k(\C)$ by the pinned outer automorphism 
and on $\widehat{G_0^\circ}$ by the identity, 
we see that $s \in Z(\widehat{M^\circ})$ and $M'^\circ = G_1 \times G_0^\circ$ 
with $G_1$ a twisted endoscopic datum for $\GL_k(E)$.
\par

We may assume that $f_M = f_\GL \otimes f_0$ and $f_{M'} = f_{1} \otimes f_0$. 
Then 
\begin{align*}
&\sum_{\pi_M \in \Pi_{\psi_M}} \pair{s_{\psi_M}, \pi_M}_{\psi_M}
\tr(\AA_{w_u} \otimes \id_{\pi_0} \circ \pi_M(f_M))
\\&=
\tr(\AA_{w_u} \circ \pi_\GL(f_\GL))
\sum_{\pi_0 \in \Pi_{\psi_0}} \pair{s_{\psi_0}, \pi_0}_{\psi_0} \Theta_{\pi_0}(f_0), 
\end{align*}
whereas 
\begin{align*}
&\sum_{\pi_{M'} \in \Pi_{\psi_{M'}}} \pair{s_{\psi_{M'}}, \pi_{M'}}_{\psi_{M'}} \Theta_{\pi_{M'}}(f_{M'})
\\&=
\left(\sum_{\pi_1 \in \Pi_{\psi_1}} \pair{s_{\psi_1}, \pi_1}_{\psi_1} \Theta_{\pi_1}(f_1)\right)
\left(\sum_{\pi_0 \in \Pi_{\psi_0}} \pair{s_{\psi_0}, \pi_0}_{\psi_0} \Theta_{\pi_0}(f_0)\right),
\end{align*}
where we write $\psi_{M'} = \psi_1 \oplus \psi_0 \in \Psi(G_1 \times G_0)$. 
Since $\psi_\GL = \xi \circ \psi_1$ and $\dim(\St_{\widehat{G_1}}) < \dim(\St_{\widehat{G}})$, 
by \eqref{ECR1} for $\psi_1$, we have
\[
\tr(\AA_{w_u} \circ \pi_\GL(f_\GL)) 
= \sum_{\pi_1 \in \Pi_{\psi_1}} \pair{s_{\psi_1}, \pi_1}_{\psi_1} \Theta_{\pi_1}(f_1). 
\]
Therefore, we obtain the desired identity. 
\end{proof}

Lemma \ref{c3} (1) immediately implies \eqref{LIR} for $u=s_0 \in \AA_{\psi_0}$.
In the rest of this and next sections, we assume that $u=u_1$.
\par

By Lemma \ref{reduction}, 
we may assume that
$\psi_M = \widehat\phi_M$ 
where $\phi_M = \phi_\GL \oplus \phi_0$ is a tempered $L$-parameter for $M$ such that 
$\phi_\GL = \rho_\GL \boxtimes S_{2\alpha+1}$ is irreducible and conjugate-self-dual. 
Then $\Pi_{\psi_M}$ is given by Aubert duality from $\Pi_{\phi_M}$. 
In particular, for $\pi_M \in \Pi_{\psi_M}$, 
the parabolically induced representation $I_P(\pi_M) = \Ind_P^G(\pi_M)$
is a direct sum of at most two irreducible unitary representations.
Indeed, by taking Aubert duality, 
this fact is reduced to the tempered case, 
which follows from 
\cite[Theorem 1.5.1]{Ar}, \cite[Theorem 2.5.1]{Mok}
and $[\AA_\phi: \AA_{\phi_0}] \leq 2$.
\par

\begin{lem}\label{halve}
Assume Hypothesis \ref{arthur}.
Let $\psi_M = \widehat\phi_M$ be as above, and $\pi_M \in \Pi_{\psi_M}$. 
Assume that $I_P(\pi_M)$ is reducible, hence $I_P(\pi_M) = \pi_1 \oplus \pi_2$. 
Write 
\[
\pair{\tl{u}, \tl\pi_M} R_P(w_u, \tl\pi_M, \psi_M)|_{\pi_i} = \ep_i \cdot \id_{\pi_i}
\]
for $\ep_i \in \C^\times$.
Then we have $\ep_2 = -\ep_1$.
\end{lem}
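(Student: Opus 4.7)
The plan is to reduce the claim to the tempered \eqref{LIR}, which is part of Hypothesis~\ref{arthur}, by means of Aubert duality and Corollary~\ref{NIOvsAD}. Write $\sigma_M \in \Pi_{\phi_M}$ for the tempered representation whose Aubert dual is $\pi_M$. Since Aubert duality is compatible with parabolic induction (Theorem~\ref{AD}~(1)) and $I_P(\sigma_M)$ is semisimple of length $\leq 2$, the decomposition $I_P(\pi_M) = \pi_1 \oplus \pi_2$ corresponds to a decomposition $I_P(\sigma_M) = \sigma_1 \oplus \sigma_2$ with $\pi_i = \hat{\sigma}_i$ (up to a sign from $\beta(\sigma_i)$ that is irrelevant here).

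Next I would invoke the tempered case of the local intertwining relation for the pair $(P, \phi_M, u)$: since $\phi_M$ is tempered and $\dim\St_{\widehat{G}}$ is what it is, this case is included in Hypothesis~\ref{arthur}, and via Lemma~\ref{A_LIR} it yields
\[
\pair{\tl{u}, \tl\sigma_M}\, R_P(w_u, \tl\sigma_M, \phi_M)\big|_{\sigma_i} = \pair{s_u, \sigma_i}_{\phi_M} \cdot \id_{\sigma_i}, \qquad i=1,2.
\]
Because $I_P(\sigma_M)$ is reducible, the two characters $\pair{\cdot, \sigma_1}_{\phi_M}$ and $\pair{\cdot, \sigma_2}_{\phi_M}$ agree on the index-two subgroup $\AA_{\phi_0} \subset \AA_{\phi_M}$ but differ on its complement. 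The element $s_u = s_{u_1} \in \AA_{\phi_M}$ lies in this complement (by the explicit description of $s_u$ recalled in Section~\ref{sec.main3}, since $\phi_\GL$ is irreducible conjugate-self-dual of the same type as $\phi_0$), so
\[
\pair{s_u, \sigma_2}_{\phi_M} = -\pair{s_u, \sigma_1}_{\phi_M}.
\]

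Now apply Corollary~\ref{NIOvsAD}: Aubert duality intertwines the normalized self-intertwining operators associated to $(P, \pi_M, \psi_M, u)$ and $(P, \sigma_M, \phi_M, u)$ up to a nonzero scalar $c = c(P, \phi_M, u)$ that depends only on the parameter data, not on the chosen irreducible summand of $I_P(\sigma_M)$. Tracking this through on each summand gives
\[
\ep_i = c \cdot \pair{s_u, \sigma_i}_{\phi_M},\qquad i=1,2,
\]
and hence $\ep_2 = -\ep_1$, as desired. The main subtlety to verify carefully is precisely that the scalar produced by Corollary~\ref{NIOvsAD} is common to both summands — this amounts to checking that the construction of $\pair{\tl{u},\tl\pi_M}\tl\pi_M(w_u)$ from $\pair{\tl{u},\tl\sigma_M}\tl\sigma_M(w_u)$ under Aubert duality does not introduce any further summand-dependent scalar, which follows from Lemma~\ref{c3}~(2) (expressing this operator as $\AA_{w_u} \otimes \id$ on the $\GL_k(E)$-factor, where the Aubert-dual identifications are canonical) together with the transfer of the $\GL_k(E)$-normalization to the unitary induction side.
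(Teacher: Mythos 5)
Your overall strategy is the same as the paper's: reduce to the tempered \eqref{LIR} via Aubert duality and Corollary~\ref{NIOvsAD}. The observation that the tempered scalars differ by a sign (because the two characters $\pair{\cdot,\sigma_i}_{\phi_M}$ agree on the index-two subgroup and must therefore differ in the complement, which contains $s_u$) is fine, and is equivalent to what the paper uses.

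However, there is a genuine gap in the step where you claim the Aubert-duality scalar from Corollary~\ref{NIOvsAD} is ``common to both summands'' and cite Lemma~\ref{c3}~(2) to justify this. That is not what Corollary~\ref{NIOvsAD} says: it has two cases, and in case~(2b) (which occurs precisely when $G=\O_{2n}(F)$ and $I_P(\pi_M)=\Ind^G_{P^\circ}(\pi_M^\circ)$ for some irreducible $\pi_M^\circ$ of $M^\circ$, i.e.~when $P = P^\circ$ or $\pi_M|_{M^\circ}$ is reducible) it yields two a priori distinct constants $c_+$ and $c_-$, living not on the irreducible summands $\pi_1,\pi_2$ but on the ``support'' subspaces $I_{P^\circ}^{\pm}(\pi_M^\circ)$. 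These are a \emph{different} decomposition from $\pi_1\oplus\pi_2$: a given $\pi_i$ will in general project nontrivially to both $I_{P^\circ}^+$ and $I_{P^\circ}^-$. So your equation $\ep_i = c\cdot\pair{s_u,\sigma_i}_{\phi_M}$ with a single $c$ does not follow from the corollary in this case. Lemma~\ref{c3}~(2) is irrelevant here: it identifies the normalizing isomorphism $\pair{\tl u,\tl\pi_M}\tl\pi_M(w_u)$ with $\AA_{w_u}\otimes\id$ on the Levi, and has nothing to do with whether the scalar coming out of the Aubert-duality argument of Proposition~\ref{IOvsAD}/Corollary~\ref{NIOvsAD} is summand-independent.

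What the paper actually does in this exceptional case is a small but non-trivial contradiction argument: writing $f = f_+ + f_-$ for $f \in \pi_i$ with $f_\pm \in I_{P^\circ}^{\pm}(\pi_M^\circ)$, one shows that if $c_+ \ne c_-$, then $(c_- - c_+)$ times the dual operator applied to $f_-$ lies in $\pi_i \cap I_{P^\circ}^{\delta}(\pi_M^\circ)$, forcing $\pi_i \cap I_{P^\circ}^{\pm}(\pi_M^\circ) \ne \{0\}$; restricting the operators to that subspace then gives $\ep_i = c_+\ep_i' = c_-\ep_i'$ and hence $\ep_i'=0$, contradicting the tempered \eqref{LIR} which forces $\ep_i'\in\{\pm1\}$. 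Therefore $c_+ = c_-$, and the argument closes. This step is missing from your proposal; without it, the $\O_{2n}$ case is not covered. (A minor secondary point: the paper works with $I_{\overline P}(\sigma_M)$ rather than $I_P(\sigma_M)$ on the tempered side, because Theorem~\ref{AD}~(5) says Aubert duality turns $\Ind_P^G$ into $\Ind_{\overline P}^G$; this is consistent with Proposition~\ref{IOvsAD} and you should match it.)
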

\begin{proof}
Let $\overline{P}$ be the parabolic subgroup of $G$ opposite to $P$. 
If we set $\sigma_M = \widehat\pi_M$ and $\sigma_i = \widehat\pi_i$, 
then $I_{\overline{P}}(\sigma_M) = \sigma_1 \oplus \sigma_2$ is a direct sum of irreducible tempered representations.
Moreover, we have a normalized intertwining operator 
\[
\pair{\tl{u}, \tl\sigma_M} R_{\overline{P}}(w_u, \tl\sigma_M, \phi_M) 
\colon I_{\overline{P}}(\sigma_M) \rightarrow I_{\overline{P}}(\sigma_M).
\]
If we write 
$\pair{\tl{u}, \tl\sigma_M} R_{\overline{P}}(w_u, \tl\sigma_M, \phi_M)|_{\sigma_i} = \ep'_i \cdot \id_{\sigma_i}$, 
then by \eqref{LIR} for the tempered case, 
we know that $\ep'_2 = -\ep'_1$.
\par

Recall that Aubert duality is a functor. 
We claim that 
the two intertwining operators
\[ 
\pair{\tl{u}, \tl\pi_M} R_P(w_u, \tl\pi_M, \psi_M)
\quad\text{and}\quad
\pair{\tl{u}, \tl\sigma_M} R_{\overline{P}}(w_u, \tl\sigma_M, \phi_M)
\]
are dual to each other 
up to a nonzero constant $c \in \C^\times$.
This means that $\ep_i = c \ep'_i$ for $i=1,2$.
Therefore, we have $\ep_2 = c \ep'_2 = -c\ep'_1 = -\ep_1$.
\par

This claim is Corollary \ref{NIOvsAD} (1) in almost all cases. 
The exceptional case is where 
$G = \O_{2n}(F)$ and $I_P(\pi_M) = \Ind_{P^\circ}^{G}(\pi_M^\circ)$ 
for some irreducible representation $\pi_M^\circ$ of $M^\circ$. 
Then we can write $I_{P}(\pi_M) = I_{P^\circ}^+(\pi_M^\circ) \oplus I_{P^\circ}^-(\pi_M^\circ)$, 
where $I_{P^\circ}^+(\pi_M^\circ)$ (\resp $I_{P^\circ}^-(\pi_M^\circ)$) 
is the subspace of $\Ind_{P^\circ}^{G}(\pi_M^\circ)$ consisting of functions $f$ on $G$ 
whose supports are contained in $G^\circ$ (\resp $G \setminus G^\circ$). 
Corollary \ref{NIOvsAD} (2) says that 
\[
\pair{\tl{u}, \tl\pi_M} R_P(w_u, \tl\pi_M, \psi_M)|_{I_{P^\circ}^{\pm}(\pi_{M^\circ})}
\quad\text{and}\quad
\pair{\tl{u}, \tl\sigma_M} R_{\overline{P}}(w_u, \tl\sigma_M, \phi_M)|_{(I_{P^\circ}^{\pm}(\pi_{M^\circ}))^{\widehat{\ }}}
\]
are dual to each other 
up to a nonzero constant $c_\pm \in \C^\times$.
\par

Now for $f \in \pi_i$, write $f = f_+ + f_-$ with $f_\pm \in I_{P^\circ}^{\pm}(\pi_M^\circ)$.
Then 
\begin{align*}
&\pair{\tl{u}, \tl\pi_M} R_P(w_u, \tl\pi_M, \psi_M)f
\\&= c_+ (\pair{\tl{u}, \tl\sigma_M} R_{\overline{P}}(w_u, \tl\sigma_M, \phi_M))^{\widehat{\,}}f_+
+ c_- (\pair{\tl{u}, \tl\sigma_M} R_{\overline{P}}(w_u, \tl\sigma_M, \phi_M))^{\widehat{\,}}f_-
\\&= c_+ (\pair{\tl{u}, \tl\sigma_M} R_{\overline{P}}(w_u, \tl\sigma_M, \phi_M))^{\widehat{\,}}f
+ (c_--c_+) (\pair{\tl{u}, \tl\sigma_M} R_{\overline{P}}(w_u, \tl\sigma_M, \phi_M))^{\widehat{\,}}f_-.
\end{align*}
Since Aubert duality is a functor, 
we have
\[
\pair{\tl{u}, \tl\pi_M} R_P(w_u, \tl\pi_M, \psi_M)f, \quad
c_+ (\pair{\tl{u}, \tl\sigma_M} R_{\overline{P}}(w_u, \tl\sigma_M, \phi_M))^{\widehat{\,}}f 
\in \pi_i
\]
so that 
\[
(c_--c_+) (\pair{\tl{u}, \tl\sigma_M} R_{\overline{P}}(w_u, \tl\sigma_M, \phi_M))^{\widehat{\,}}f_- 
\in \pi_i \cap I_{P^\circ}^\delta(\pi_{M}^\circ), 
\]
where $\delta = -\det(\tl{w}_u)$.
\par

Suppose for the sake of contradiction that $c_+ \not= c_-$. 
Then the above argument shows that $f_\pm \in \pi_i$ for any $f = f_+ + f_- \in \pi_i$.
It implies that $\pi_i \cap I_{P^\circ}^{\pm}(\pi_M^\circ) \not= \{0\}$.  
By looking at the actions of 
$\pair{\tl{u}, \tl\pi_M} R_P(w_u, \tl\pi_M, \psi_M)$
and the dual of 
$\pair{\tl{u}, \tl\sigma_M} R_{\overline{P}}(w_u, \tl\sigma_M, \phi_M)$ 
on this subspace, 
we have $\ep_i = c_\pm \ep_i'$. 
This shows that $c_+ \ep_i' = c_- \ep_i'$ and hence $\ep_i' = 0$. 
This contradicts the fact that $\ep_i' \in \{\pm1\}$.
Hence we obtain that $c_+=c_-$, which shows the claim. 
\end{proof}

Lemma \ref{halve} is a key step to prove Theorem \ref{main3} (2). 
It ``halves'' the problem, i.e., 
by this lemma, it is enough to prove \eqref{LIR} 
for only one direct summand $\pi \subset I_P(\pi_M)$ for each $\pi_M \in \Pi_{\psi_M}$.

%\subsection{Highly non-tempered summands}\label{high}
\subsection{Highly non-tempered summands}\label{high}
In the previous subsection, we showed that it is enough to prove \eqref{LIR} for one irreducible summand of $I_P(\pi_M)$. 
Here, we introduce a notion that will isolate a suitable summand. 
\par

Let $P = MN_P$ be a standard maximal parabolic subgroup of $G$
with $M \cong \GL_k(E) \times G_0$, 
and let $\psi_M = \psi_\GL \oplus \psi_0$ be an $A$-parameter for $M$. 
In this and next subsections, 
we only assume that $\psi_\GL$ is irreducible and conjugate-self-dual. 
Namely, $\psi_M$ is not necessarily co-tempered here.
Note that then $I_P(\pi_M)$ could have more than two irreducible summands for $\pi_M \in \Pi_{\psi_M}$.
\par

Recall from Section \ref{sec.GL} that 
for a multi-segment $\mm$, 
we denote by $\II(\mm)$ the standard module associated to $\mm$. 
For a segment $\ss = [x,y]_\rho$ with $\rho$ unitary supercuspidal, 
we call the value $\half{x+y}$ the \emph{midpoint} of $\ss$.
The Langlands classification for $G_0$ says that for $\pi_0 \in \Irr(G_0)$, 
one has a multi-segment $\mm_0$ and an irreducible tempered representation $\tau_0$ such that 
every segment $\ss \in \mm_0$ has a positive midpoint, 
and $\II(\mm_0) \rtimes \tau_0$ is the standard module of $\pi_0$. 
Thus, $\pi_0$ is the unique irreducible quotient of $\II(\mm_0) \rtimes \tau_0$.
When $G_0 = \O_{2n_0}(F)$, it follows from the Langlands classification for $\SO_{2n_0}(F)$. 
Note that if $\pi_0 \in \Irr(\O_{2n_0}(F))$ is the Langlands quotient of $\II(\mm_0) \rtimes \tau_0$ with $\tau_0 \in \Irr(\O_{2n_0'}(F))$, 
then $\tau_0|_{\SO_{2n_0'}(F)}$ is reducible if and only if $n_0' > 0$ and 
$\pi_0|_{\SO_{2n_0}(F)}$ is reducible. 
\par

Recall that $\psi_\GL$ is assumed to be irreducible. 
We write $\psi_\GL = \rho_\GL \boxtimes S_{2\alpha+1} \boxtimes S_{2\beta+1}$, 
and set 
\[
\mm_\GL = [-\alpha+\beta,\alpha+\beta]_{\rho_\GL} + [-\alpha+\beta-1,\alpha+\beta-1]_{\rho_\GL} 
+ \dots + [-\alpha-\beta,\alpha-\beta]_{\rho_\GL}. 
\]
Then the Langlands quotient $\pi_\GL$ of $\II(\mm_\GL)$ is the representation corresponding to $\psi_\GL$. 
\par

\begin{defi}\label{HNT}
Define $\kappa \in \{1, \half{1}\}$ such that $\beta-\kappa \in \Z$.
For $\pi_M = \pi_\GL \boxtimes \pi_0 \in \Pi_{\psi_M}$, 
write $\II(\mm_0) \rtimes \tau_0$ for the standard module of $\pi_0$.
We say that an irreducible summand $\pi$ of $I_P(\pi_M)$ is \emph{highly non-tempered}
if there is a tempered representation $\tau$ with 
\[
\left\{
\begin{aligned}
&\tau = \tau_0 \iif \kappa = \half{1}, \\
&\tau \hookrightarrow \Delta([-\alpha,\alpha]_{\rho_\GL}) \rtimes \tau_0 \iif \kappa = 1
\end{aligned}
\right.
\]
such that $\II(\mm) \rtimes \tau$ is the standard module of $\pi$, 
where
\[
\mm = \mm_0 + 2[-\alpha+\beta,\alpha+\beta]_{\rho_\GL} + 2[-\alpha+\beta-1,\alpha+\beta-1]_{\rho_\GL}
+ \dots + 2[-\alpha+\kappa,\alpha+\kappa]_{\rho_\GL}.
\]
\end{defi}

\begin{lem}\label{existence}
For any $\pi_M = \pi_\GL \boxtimes \pi_0 \in \Pi_{\psi_M}$, 
there exists a highly non-tempered summand $\pi$ of $I_P(\pi_M)$. 
Moreover, it is unique if $\kappa = \half{1}$, and there are at most two such summands if $\kappa = 1$.
\end{lem}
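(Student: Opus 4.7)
The plan is to construct the highly non-tempered summands via a careful rearrangement of the standard module of $I_P(\pi_M)$, and to bound their number using Langlands classification together with the multiplicity-free structure of tempered inductions.

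First, I would rearrange the ambient standard module. The standard module of $\pi_M = \pi_\GL \boxtimes \pi_0$ as a representation of $M$ is $\II(\mm_\GL) \boxtimes (\II(\mm_0) \rtimes \tau_0)$, so by transitivity of parabolic induction, $I_P(\pi_M) = \pi_\GL \rtimes \pi_0$ is a quotient of
\[
X := \II(\mm_\GL) \times \II(\mm_0) \rtimes \tau_0.
\]
The multi-segment $\mm_\GL$ contains the segments $[-\alpha+j,\alpha+j]_{\rho_\GL}$ for $j=-\beta,\dots,\beta$. For each $j<0$ I would transport the corresponding factor across the classical group using the conjugate-self-duality of $\rho_\GL$ and the MVW-type identity $\Delta([x,y]_{\rho_\GL}) \rtimes \sigma \cong \Delta([-y,-x]_{\rho_\GL}) \rtimes \sigma$, combined with non-linking commutations of $\GL$-factors. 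A segment $[-\alpha+j,\alpha+j]_{\rho_\GL}$ with $j<0$ then pairs with the existing positive-midpoint segment $[-\alpha-j,\alpha-j]_{\rho_\GL}$; when $\kappa=1$, the middle segment $[-\alpha,\alpha]_{\rho_\GL}$ (at $j=0$) is invariant and is left between the two halves. The outcome is that $X$ has the same Jordan--H\"older content as
\[
X' := \II(\mm - \mm_0) \times \Delta([-\alpha,\alpha]_{\rho_\GL})^{\epsilon} \times \II(\mm_0) \rtimes \tau_0,
\]
where $\epsilon = 1$ if $\kappa=1$ and $\epsilon=0$ otherwise, and $\mm - \mm_0 = 2\sum_{j=\kappa}^{\beta}[-\alpha+j,\alpha+j]_{\rho_\GL}$.

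Second, I would isolate the tempered factor inside $X'$. The ``middle'' piece $\Delta([-\alpha,\alpha]_{\rho_\GL})^{\epsilon} \rtimes \tau_0$ is a tempered induction. When $\epsilon=0$ it is $\tau_0$; when $\epsilon=1$ it is the tempered induction by the discrete series $\Delta([-\alpha,\alpha]_{\rho_\GL})$, which under Hypothesis \ref{arthur} (the tempered local classification for lower rank classical groups) decomposes as a direct sum of at most two irreducible tempered summands $\tau$. For each such $\tau$, recollecting the $\GL$-factors in decreasing order of midpoint turns the remaining data into the standard module $\II(\mm) \rtimes \tau$, whose unique irreducible quotient $\pi(\mm,\tau)$ is highly non-tempered by construction.

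Third, I would verify that each such $\pi(\mm,\tau)$ actually descends to a summand of $I_P(\pi_M)$. Since $\pi_\GL$ and $\pi_0$ are unitary, the induction $I_P(\pi_M)$ is semisimple by Bernstein's theorem, so it suffices to show that $\pi(\mm,\tau)$ appears as a Jordan--H\"older constituent of $I_P(\pi_M)$. Composing the Langlands quotient maps $\II(\mm_\GL) \twoheadrightarrow \pi_\GL$ and $\II(\mm_0) \rtimes \tau_0 \twoheadrightarrow \pi_0$ yields a surjection $X \twoheadrightarrow I_P(\pi_M)$, and by the multiplicity-one property of Langlands quotients in their own standard module, the constituents in the kernel correspond to strictly smaller Langlands data. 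Since $\pi(\mm,\tau)$ has ``maximal'' $\rho_\GL$-content (the doubled multi-segment $\mm-\mm_0$ combined with the explicit tempered $\tau$), it cannot lie in the kernel and must survive in $I_P(\pi_M)$. Uniqueness then follows from the Langlands classification: the data $(\mm,\tau)$ determine $\pi$, so the count of highly non-tempered summands is the count of admissible $\tau$'s, which is one when $\kappa=1/2$ and at most two when $\kappa=1$. The hard part is the third step: rigorously matching the Langlands data of $\pi(\mm,\tau)$ against all constituents in the kernel of $X \twoheadrightarrow I_P(\pi_M)$ to guarantee that the highly non-tempered constituent is not killed; this amounts to a multiplicity-one tracking through the surjections, which is where the explicit form of $\mm$ (doubling exactly the positive-midpoint segments of $\mm_\GL$) becomes essential.
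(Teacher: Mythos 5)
Your approach is genuinely different from the paper's: you manipulate standard modules in the Grothendieck group and then try to track the Langlands quotients $\pi(\mm,\tau)$ through the surjection $X \twoheadrightarrow I_P(\pi_M)$, whereas the paper works directly with Jacquet modules via Tadi\'c's formula. The paper writes $\II(\mm) = \tau_1|\cdot|_E^{s_1} \times \dots \times \tau_r|\cdot|_E^{s_r}$ with $s_1 > \dots > s_r > 0$, shows via Theorem \ref{Tformula} that the irreducible constituent ${}^c\tau_1^\vee|\cdot|_E^{-s_1} \otimes \dots \otimes {}^c\tau_r^\vee|\cdot|_E^{-s_r} \otimes \tau$ occurs in $\Jac_{P'}(I_P(\pi_M))$ with multiplicity exactly one (for each admissible $\tau$), and extracts via a central character argument and Frobenius reciprocity that some irreducible summand $\pi$ satisfies $\pi \hookrightarrow {}^c\tau_1^\vee|\cdot|_E^{-s_1} \times \dots \times {}^c\tau_r^\vee|\cdot|_E^{-s_r} \rtimes \tau$, hence by \cite[Lemma 2.2]{AG} has standard module $\II(\mm) \rtimes \tau$. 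Existence and the bound on the number of summands drop out together.

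Your third step, which you flag as ``the hard part,'' is a genuine gap. Knowing that $\pi(\mm,\tau)$ occurs with multiplicity one in $X$ does not tell you whether it lands in $I_P(\pi_M)$ or in the kernel of $X \twoheadrightarrow I_P(\pi_M)$, and the appeal to ``maximal $\rho_\GL$-content'' is a heuristic rather than an argument. The kernel is $\Ind_P^G$ applied to the kernel of $\II(\mm_\GL) \boxtimes (\II(\mm_0)\rtimes\tau_0) \twoheadrightarrow \pi_\GL \boxtimes \pi_0$, so in the Grothendieck group it consists of $\Ind_P^G(\sigma_\GL \boxtimes \sigma_0)$ with $\sigma_\GL$ (resp.\ $\sigma_0$) having strictly smaller Langlands data. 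But inducing $\sigma_\GL \boxtimes \sigma_0$ up to $G$ brings in both the segments of $\sigma_\GL$ and the segments of ${}^c\sigma_\GL^\vee$ from the ``other side,'' so constituents of the induced kernel can have Langlands data that are not dominated by those of $\sigma_\GL$ alone; it is not a priori clear they cannot recombine to produce $(\mm,\tau)$. This is exactly what the paper's Jacquet-module computation rules out: the multiplicity-one statement for the specific constituent ${}^c\tau_1^\vee|\cdot|_E^{-s_1} \otimes \dots \otimes \tau$ in $\Jac_{P'}(I_P(\pi_M))$, together with the fact that this constituent is a \emph{quotient} of the Jacquet module of some summand, pins down the highly non-tempered summand inside $I_P(\pi_M)$ without ever having to chase constituents through the ambient surjection $X \twoheadrightarrow I_P(\pi_M)$. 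To close your gap you would essentially have to carry out the same Jacquet-module analysis.
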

\begin{proof}
The lemma is a special case of \cite[Proposition 1.3]{T2}. 
But for completeness, we give a proof.
\par

We use the notations as above. 
Write $\II(\mm) = \tau_1|\cdot|_E^{s_1} \times \dots \times \tau_r|\cdot|_E^{s_r}$ 
with $\tau_i$ being tempered and $s_1 > \dots > s_r > 0$.
By Tadi\'c's formula (Theorem \ref{Tformula}), 
with a suitable parabolic subgroup $P'=M'N_{P'}$ and an irreducible tempered representation $\tau$, 
we have
\[
\Jac_{P'}(I_P(\pi_M)) \geq {}^c\tau_1^\vee|\cdot|_E^{-s_1} \otimes \dots \otimes {}^c\tau_r^\vee|\cdot|_E^{-s_r} \otimes \tau
\]
in the Grothendieck group $\RR(M')$.
Here, for $A, B \in \RR(M')$, we write $A \leq B$ 
if $B-A$ is a non-negative combination of irreducible representations.
Conversely, if 
$\Jac_{P'}(I_P(\pi_M)) \geq \tau'_1|\cdot|_E^{-s_1} \otimes \dots \otimes \tau'_r|\cdot|_E^{-s_r} \otimes \tau$
for some irreducible tempered representations $\tau_1', \dots, \tau_r'$ and $\tau$, 
then we must have $\tau_i' \cong {}^c\tau_i^\vee$ for $i=1,\dots,r$, 
and $\tau = \tau_0$ if $\kappa = \half{1}$, 
whereas $\tau \hookrightarrow \Delta([-\alpha,\alpha]_{\rho_\GL}) \rtimes \tau_0$ if $\kappa = 1$. 
Moreover, such an irreducible representation appears in $\Jac_{P'}(I_P(\pi_M))$ with multiplicity one. 
\par

Suppose that an irreducible subquotient $\pi$ of $I_P(\pi_M)$ satisfies that 
$\Jac_{P'}(\pi) \geq {}^c\tau_1^\vee|\cdot|_E^{-s_1} \otimes \dots \otimes {}^c\tau_r^\vee|\cdot|_E^{-s_r} \otimes \tau$.
Then by looking at the central character, 
(after replacing $\tau$ if necessary) 
we see that the right-hand side is a quotient of $\Jac_{P'}(\pi)$
in the category $\Rep(M')$, 
which is equivalent by Frobenius reciprocity to saying that 
\[
\pi \hookrightarrow {}^c\tau_1^\vee|\cdot|_E^{-s_1} \times \dots \times {}^c\tau_r^\vee|\cdot|_E^{-s_r} \rtimes \tau.
\]
By \cite[Lemma 2.2]{AG}, one can see that the standard module of $\pi$ is $\II(\mm) \rtimes \tau$. 
In particular, since $I_P(\pi_M)$ is semisimple, 
we conclude that $\pi$ is a highly non-tempered summand. 
\par

On the other hand, $\Jac_{P'}(I_P(\pi_M))$ contains 
${}^c\tau_1^\vee|\cdot|_E^{-s_1} \otimes \dots \otimes {}^c\tau_r^\vee|\cdot|_E^{-s_r} \otimes \tau$ 
with multiplicity at most one for each $\tau$. 
The number of highly non-tempered summands is at most the number of the choices of $\tau$.
By definition of $\tau$, 
this number is $1$ or $2$ according to $\kappa = \half{1}$ or $\kappa = 1$. 
\end{proof}

For the rest of this subsection, we fix a highly non-tempered summand $\pi \subset I_P(\pi_M)$. 
Let $\II(\mm_\GL)$, $\II(\mm) \rtimes \tau$ and $\II(\mm_0) \rtimes \tau_0$ be as above. 
Write 
\begin{align*}
\II(\mm_\GL) 
&= 
\Delta([-\alpha,\alpha]_{\rho_\GL})|\cdot|_E^{\beta} \times \Delta([-\alpha,\alpha]_{\rho_\GL})|\cdot|_E^{\beta-1} 
\times \dots \times \Delta([-\alpha,\alpha]_{\rho_\GL})|\cdot|_E^{-\beta}, 
\\
\II(\mm_0) 
&= \tau_r|\cdot|_E^{e_r} \times \dots \times \tau_1|\cdot|_E^{e_1}, 
\end{align*}
where $\tau_i$ is an irreducible tempered representation of $\GL_{k_i}(E)$ 
and $e_r > \dots > e_1 > 0$. 
\par

To define several objects,
we realize $G$ as an isometry group $G(W)$ (or its identity component)
of a vector space $W$ over $E$ equipped with a non-degenerate sesquilinear form.
We write
\[
W = V_+ \oplus W_0 \oplus V_-, 
\]
where $V_{\pm}$ is a totally isotropic subspace with $V_+ \oplus V_-$ non-degenerate, 
and where $W_0$ is the orthogonal complement of $V_+ \oplus V_-$. 
Suppose that the standard parabolic subgroup $P=MN_P$ is the stabilizer of $V_+$, 
and the Levi subgroup $M$ of $P$ is the stabilizer of $V_+$ and $V_-$. 
Hence $M = \GL(V_+) \times G(W_0)$.
We decompose 
\begin{align*}
V_\pm &= V_\pm^{(\beta)} \oplus V_\pm^{(\beta-1)} \oplus \dots \oplus V_\pm^{(-\beta)}, 
\\
W_0 &= \left(\bigoplus_{i=1}^r W_0^{(e_i)}\right) 
\oplus W_0^{(0)} \oplus \left(\bigoplus_{i=1}^r W_0^{(-e_i)}\right)
\end{align*}
such that 
\begin{itemize}
\item
$\dim(V_\pm^{(b)}) = d_0$ with $d_0 = \dim(\rho_\GL \boxtimes S_{2\alpha+1})$ for $b \in \{\beta, \beta-1, \dots, -\beta\}$;
\item
$V_+^{(b)} \oplus V_-^{(-b)}$ is non-degenerate for $b \in \{\beta, \beta-1, \dots, -\beta\}$;

\item
$W_0^{(\pm e_i)}$ is a totally isotropic subspace of dimension $k_i$ for $1 \leq i \leq r$; 
\item
$W_0^{(e_i)} \oplus W_0^{(-e_i)}$ is non-degenerate for $1 \leq i \leq r$.
\end{itemize}
For $j = 0,1,2$, we define a total order $\prec_j$ on the set 
\[
\VV = 
\left\{V_+^{(b)}, V_-^{(b)}\right\}_{-\beta \leq b \leq \beta} \cup 
\left\{W_0^{(e_i)}, W_0^{(-e_i)}\right\}_{1 \leq i \leq r} \cup \{W_0^{(0)}\}
\] 
as follows. 
\begin{enumerate}
\item[(0)] 
When $j=0$, 
\begin{itemize}
\item
$V_+^{(b)} \prec_0 W_0^{(e)} \prec_0 V_-^{(b)}$; 
\item
if $b > b'$, then $V_\pm^{(b')} \prec_0 V_\pm^{(b)}$; 
\item
if $e > e'$, then $W_0^{(e')} \prec_0 W_0^{(e)}$.
\end{itemize}

\item[(1)]
When $j=1$, 
\begin{itemize}
\item
$V_+^{(b)} \prec_1 W_0^{(e)} \prec_1 V_-^{(b)}$; 
\item
if $b > b'$, then $V_\pm^{(b)} \prec_1 V_\pm^{(b')}$; 
\item
if $e > e'$, then $W_0^{(e)} \prec_1 W_0^{(e')}$.
\end{itemize}

\item[(2)]
When $j=2$, 
\begin{itemize}
\item
for $X, Y \in \{V_\pm, W_0\}$, 
if $b > b'$, then $X^{(b)} \prec_2 Y^{(b')}$; 
\item
if $b = e$, then $V_+^{(b)} \prec_2 W_0^{(e)} \prec_2 V_-^{(b)}$.
\end{itemize}
\end{enumerate}
For $j = 0,1,2$, 
if we write 
\[
\{V \in \VV \;|\; V \prec_j W_0^{(0)}\} = \{V_1, \dots, V_t\}
\]
with $V_1 \prec_j \dots \prec_j V_t \prec_j W_0^{(0)}$, 
then we define a parabolic subgroup $P_j' = M_1 N_{P_j'}$ as the stabilizer of the flag 
\[
V_1 \subset V_1 \oplus V_2 \subset \dots \subset V_1 \oplus \dots \oplus V_t, 
\]
where $M_1$ is the common Levi subgroup of $G$ stabilizing all $V \in \VV$.
We may assume that $P_1 = P'_1$ is a standard parabolic subgroup. 
Note that $P'_1 \subset P$.
Let $w_1,w_2 \in W^G$ be such that $\tl{w}_1 P_1 \tl{w}_1^{-1} = P'_0$ 
and such that $\tl{w}_2^{-1}P'_2\tl{w}_2 = P_2 = M_2N_{P_2}$ is a standard parabolic subgroup.
We may regard $w_1$ and $w_2$ as 
\[
w_1 \in W(M_1^\circ), \quad 
w_2 \in W(M_2^\circ,M_1^\circ).
\] 
Note that $M_1 \subset M$, and the adjoint action of $w_u$ preserves $M_1$. 
Since $w_	1$ is the longest element in the subset of $W(M_1)$ consisting of elements whose representatives are in $M$, 
we see that $w_u w_1 w_u^{-1} = w_1$ in $W(M_1)$, 
and hence $w_1^{-1} w_u w_1 = w_u$. 
\par

If we consider that 
\begin{itemize}
\item
$\GL(V_\pm^{(b)})$ acts on $\Delta([-\alpha,\alpha]_{\rho_\GL})|\cdot|_E^b$; 
\item
$\GL(W_0^{(e_i)})$ acts on $\tau_i|\cdot|_E^{e_i}$ if $e_i > 0$ (\resp ${}^c\tau_i^\vee|\cdot|_E^{e_i}$ if $e_i < 0$); 
\item
$G(W_0^{(0)})$ acts on $\tau_0$, 
\end{itemize}
then we obtain irreducible representations $\pi_{M_0}$, $\pi_{M_1}$ of $M_1$ 
and $\pi_{M_2}$ of $M_2$ such that 
\begin{align*}
I_{P_0}(\pi_{M_0}) &= {}^c\II(\mm_\GL)^\vee \times {}^c\II(\mm_0)^\vee \rtimes \tau_0, \\
I_{P_1}(\pi_{M_1}) &= \II(\mm_\GL) \times \II(\mm_0) \rtimes \tau_0, \\
I_{P_2}(\pi_{M_2}) &= \left\{
\begin{aligned}
&\II(\mm) \times \Delta([-\alpha,\alpha]_{\rho_\GL}) \rtimes \tau_0 \iif 2\beta+1 \equiv 1 \bmod 2, \\
&\II(\mm) \rtimes \tau_0 \iif 2\beta+1 \equiv 0 \bmod 2,
\end{aligned}
\right. 
\end{align*}
respectively, where $P_0 = P_1$ is the standard parabolic subgroup 
with Levi subgroup $\tl{w}_1 M_1 \tl{w}_1^{-1}$.
Here, we note that $\rho_\GL$ is conjugate-self-dual.
Then we have
\[
w_2 \pi_{M_2} = \pi_{M_1}, \quad w_1 \pi_{M_1} = \pi_{M_0}.
\]
See Section \ref{sec.NIO} for these notations.
\par

Recall that 
\begin{align*}
\pi_{M_1} 
&= 
\left(
\Delta([-\alpha,\alpha]_{\rho_\GL})|\cdot|_E^{\beta} \times \dots \times \Delta([-\alpha,\alpha]_{\rho_\GL})|\cdot|_E^{-\beta} 
\right) 
\\&\quad\times 
\left(
\tau_r|\cdot|_E^{e_r} \times \dots \times \tau_1|\cdot|_E^{e_1} \rtimes \tau_0
\right).
\end{align*}
For $\lambda = (\lambda_\beta, \lambda_{\beta-1}, \dots, \lambda_{-\beta}) \in \C^{2\beta+1}$ 
and $\mu = (\mu_1, \dots, \mu_r) \in \C^r$, 
we set
\begin{align*}
\pi_{M_1, (\lambda,\mu)} 
&= 
\left(
\Delta([-\alpha,\alpha]_{\rho_\GL})|\cdot|_E^{\lambda_\beta} \times \dots \times \Delta([-\alpha,\alpha]_{\rho_\GL})|\cdot|_E^{\lambda_{-\beta}} 
\right) 
\\&\quad\times 
\left(
\tau_r|\cdot|_E^{\mu_r} \times \dots \times \tau_1|\cdot|_E^{\mu_1} \rtimes \tau_0
\right).
\end{align*}
We define $\pi_{M_0,(\lambda,\mu)}$ and $\pi_{M_2,(\lambda,\mu)}$ similarly.
Let $\phi_{M_j, (\lambda, \mu)}$ be the $L$-parameter of $\pi_{M_j, (\lambda,\mu)}$. 
Recall that the intertwining operators 
\begin{align*}
R_{P_1}(w_1,\pi_{M_1,(\lambda,\mu)}, \phi_{M_1, (\lambda, \mu)}) 
&\colon I_{P_1}(\pi_{M_1, (\lambda,\mu)}) \rightarrow I_{P_0}(\pi_{M_0, (\lambda,\mu)}), \\
R_{P_2}(w_2,\pi_{M_2,(\lambda,\mu)}, \phi_{M_2, (\lambda, \mu)}) 
&\colon I_{P_2}(\pi_{M_2, (\lambda,\mu)}) \rightarrow I_{P_1}(\pi_{M_1, (\lambda,\mu)})
\end{align*}
are defined by the meromorphic continuation of 
\begin{align*}
R_{P_j}(w_j,\pi_{M_j,(\lambda,\mu)}, \phi_{M_j, (\lambda, \mu)})f_{j,(\lambda,\mu)}(g)
&= \gamma_A(0,\phi_{M_j, (\lambda,\mu)}, \rho^\vee_{w_j^{-1}P_{j-1}|P_j}, \psi_F)
\\&\times
\lambda(w_j)^{-1} \int_{N_{P_{j-1}} \cap \tl{w}_j N_{P_j}\tl{w}_j^{-1} \bs N_{P_{j-1}}} 
f_{j,(\lambda,\mu)}(\tl{w}_j^{-1} n g) dn
\end{align*}
for $j = 1,2$.
As in \cite[Chapter XI, Proposition 2.6 (1)]{BW}, this integral converges 
when we specialize it at $\lambda = (\beta,\beta-1,\dots,-\beta)$ and $\mu = (e_1,\dots,e_r)$. 
Hence $R(w_1) = R_{P_1}(w_1,\pi_{M_1}, \phi_{M_1})$ 
and $R(w_2) = R_{P_2}(w_2,\pi_{M_2}, \phi_{M_2})$ are well-defined and nonzero.
Moreover, the image of $R(w_1)$ is exactly equal to $I_P(\pi_M)$.
\par

Set $w_u' = w_2^{-1}w_1^{-1} w_u w_1 w_2 = w_2^{-1} w_u w_2$. 
Since $w_u$ and $w_u'$ preserve the Levi subgroups $M_0$ and $M_2$, respectively, 
we obtain normalized intertwining operators
\begin{align*}
R_{P_0}(w_u,\pi_{M_0,(\lambda,\mu)}, \phi_{M_0, (\lambda, \mu)}) 
&\colon I_{P_0}(\pi_{M_0, (\lambda,\mu)}) \rightarrow I_{P_0}(w_u\pi_{M_0, (\lambda,\mu)}), \\
R_{P_2}(w_u',\pi_{M_2,(\lambda,\mu)}, \phi_{M_2, (\lambda, \mu)}) 
&\colon I_{P_2}(\pi_{M_2, (\lambda,\mu)}) \rightarrow I_{P_2}(w_u'\pi_{M_2, (\lambda,\mu)}).
\end{align*}
We will show in Lemma \ref{identity} (2) and (3) below
that $R_{P_2}(w_u',\pi_{M_2,(\lambda,\mu)}, \phi_{M_2, (\lambda, \mu)})$
is holomorphic at $\lambda = (\beta, \dots, -\beta)$ and $\mu = (e_1, \dots, e_r)$. 
Moreover, since $w_u\pi_{M_0} \cong \pi_{M_0}$ and hence $w_u'\pi_{M_2} \cong \pi_{M_2}$, 
one can normalize isomorphisms 
\[
A_{w_u} \otimes \id \colon w_u\pi_{M_0} \xrightarrow{\sim} \pi_{M_0}, 
\quad
A_{w_u'} \otimes \id \colon w_u'\pi_{M_2} \xrightarrow{\sim} \pi_{M_2}
\]
by Whittaker functionals on standard modules of general linear groups.
By composing $I_{P_2}(A_{w_u'} \otimes \id)$, we have a self-intertwining operator 
\[
R_{P_2}(w_u',\tl\pi_{M_2}, \phi_{M_2}) 
\colon I_{P_2}(\pi_{M_2}) \rightarrow I_{P_2}(\pi_{M_2}).
\]
On the other hand, 
$R_{P_0}(w_u,\pi_{M_0}, \phi_{M_0})$ might be 
a singularity of the meromorphic family $R_{P_0}(w_u,\pi_{M_0,(\lambda,\mu)}, \phi_{M_0, (\lambda, \mu)})$ at $\lambda = (\beta, \dots, -\beta)$ and $\mu = (e_1, \dots, e_r)$. 
Hence we just write 
\[
\xymatrix{
\ar@{..>}[rrr]^{R_{P_0}(w_u,\tl\pi_{M_0}, \phi_{M_0})} I_{P_0}(\pi_{M_0}) &&& I_{P_0}(\pi_{M_0}).
}
\]
\begin{lem}\label{identity}
The notation is as above. 
\begin{enumerate}
\item
The image of the map $R(w_1) \circ R(w_2) \colon \II(\mm) \rtimes \tau \rightarrow I_P(\pi_M)$ is exactly equal to $\pi$. 
\item
If $2\beta+1$ is even, then $R_{P_2}(w_u',\tl\pi_{M_2}, \phi_{M_2}) $ is the identity map.
\item
If $2\beta+1$ is odd, then 
\[
R_{P_2}(w_u',\tl\pi_{M_2}, \phi_{M_2}) = \pair{e(\rho_\GL, 2\alpha+1,1), \tau}_{\phi_\tau} \cdot \id
\]
on $\II(\mm) \rtimes \tau$,
where $\phi_\tau$ is the $L$-parameter of $\tau$. 
\end{enumerate}
\end{lem}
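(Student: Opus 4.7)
For part (1), I would follow the strategy of Lemma \ref{langlands}. The composition $R(w_1) \circ R(w_2)$ is a composite of normalized intertwining operators, all of which are regular at the specialization $\lambda = (\beta, \beta-1, \ldots, -\beta)$, $\mu = (e_1, \ldots, e_r)$, since each is built from rank-one operators that swap standard-module factors whose real exponents differ by a positive amount. By Definition \ref{HNT}, the source $I_{P_2}(\pi_{M_2})$ admits $\II(\mm) \rtimes \tau$ as (a summand of) its standard module, with $\pi$ as Langlands quotient. The image of $R(w_1) \circ R(w_2)$ lies in the image of $R(w_1)$, which equals $I_P(\pi_M)$, and is a nonzero $G$-equivariant image of $I_{P_2}(\pi_{M_2})$; by uniqueness of the Langlands quotient it must contain $\pi$. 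Nonvanishing is established exactly as in Lemma \ref{langlands}: if $R(w_1) \circ R(w_2) = 0$, then $\pi$ would occur with multiplicity at least two in the semisimplification of $I_{P_1}(\pi_{M_1})$, contradicting the fact that $\pi$ appears with multiplicity one in its own standard module $\II(\mm) \rtimes \tau$. The image therefore equals $\pi$ precisely.

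For parts (2) and (3), I would first unwind how $w_u'$ acts on $M_2$. The element $w_u \in W(M^\circ)$ corresponding to $u = u_1$ swaps the two $\GL$-blocks of $M$, i.e.\ it interchanges $\GL(V_+)$ and $\GL(V_-)$; conjugation by $\tl{w}_2$ transfers this action to $M_2$, swapping each pair $\GL(V_+^{(b)}) \leftrightarrow \GL(V_-^{(b)})$ while fixing the $W_0^{(e)}$ and $W_0^{(0)}$ factors. When $\kappa = \tfrac{1}{2}$ (so $2\beta+1$ is even), each level $b \in \{\beta, \beta-1, \ldots, -\beta\}$ carries two identical factors $\Delta([-\alpha,\alpha]_{\rho_\GL})|\cdot|_E^b$ on $M_2$, and no unpaired $\GL$-segment meets the classical-group factor. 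The normalized intertwining operator swapping two copies of the same irreducible tempered representation across a maximal parabolic subgroup of a general linear group is the canonical flip (this is the $\GL_N$-case of Theorem \ref{main1} for tempered parameters, where the normalizing $\gamma_A$-factors cancel). Taking the product over all such swaps, $R_{P_2}(w_u', \tl\pi_{M_2}, \phi_{M_2})$ acts on $\II(\mm) \rtimes \tau = \II(\mm) \rtimes \tau_0$ by $+1$, proving part (2).

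For part (3), the same analysis applies to the pairs with $b \ne 0$, but when $\kappa = 1$ the factors at level $b = 0$, namely $V_+^{(0)}, W_0^{(0)}, V_-^{(0)}$, form a maximal parabolic of a smaller quasi-split classical group, and $w_u'$ acts on this block as the non-trivial Weyl element $w_{u_1}$ attached to the tempered $L$-parameter $\rho_\GL \boxtimes S_{2\alpha+1} \oplus \phi_\tau$. By the tempered local intertwining relation \eqref{A-LIR}, which is available by Hypothesis \ref{arthur}, the scalar by which the corresponding rank-one normalized self-intertwining operator acts on the irreducible summand $\tau \subset \Delta([-\alpha,\alpha]_{\rho_\GL}) \rtimes \tau_0$ is $\pair{s_{u_1}, \tau}_{\phi_\tau} = \pair{e(\rho_\GL, 2\alpha+1, 1), \tau}_{\phi_\tau}$. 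Combined with the trivial contribution from the paired segments handled as in part (2), this yields the claimed scalar on the highly non-tempered summand $\II(\mm) \rtimes \tau$.

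The main obstacle I anticipate is the bookkeeping for conjugation by $\tl{w}_2$: one needs to check that $w_u'$ acts on $M_2$ exactly by the advertised swap-plus-identity pattern, and that the normalizing factor $r_{P_2}(w_u', \phi_{M_2})$ factors as the product of the $\gamma_A$-contributions of the individual paired swaps times the rank-one normalizing factor for the unpaired block at $b = 0$. Both facts are essentially instances of Proposition \ref{multiplicative} together with the compatibility of $\gamma_A$-factors with the decomposition of the adjoint representation $\rho_{w_u'^{-1}P_2|P_2}$, but carrying them out requires tracking signs and the $\lambda$-factor $\lambda(w_u', \psi_F)$ across the conjugation, which is the most delicate part of the argument.
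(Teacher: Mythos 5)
Your arguments for parts (2) and (3) follow essentially the same route as the paper: decompose $w_u'$ into the rank-one piece at level $b=0$ plus paired swaps of $\GL$-blocks at levels $\pm b_i$, use multiplicativity, and invoke the tempered $\GL$-identity for the paired swaps and the tempered local intertwining relation for the piece at $b=0$. (The paper cites Theorem \ref{untwistedGL} rather than Theorem \ref{main1} for the $\GL$-swaps, but both apply; this is a cosmetic difference.) That part is fine.

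Part (1), however, has a genuine gap. The semisimplification of $I_{P_1}(\pi_{M_1})$ coincides with that of $I_{P_2}(\pi_{M_2})$, and when $\kappa = 1$ (i.e.\ $2\beta+1$ odd) the latter is \emph{not} the standard module $\II(\mm) \rtimes \tau$ but the direct sum $(\II(\mm) \rtimes \tau) \oplus (\II(\mm) \rtimes \tau')$ of two standard modules, one for each irreducible summand $\tau, \tau' \subset \Delta([-\alpha,\alpha]_{\rho_\GL}) \rtimes \tau_0$. You argue that vanishing of $R(w_1) \circ R(w_2)$ would force $\pi$ to occur with multiplicity $\geq 2$ in $I_{P_1}(\pi_{M_1})$, and then you appeal to "the fact that $\pi$ appears with multiplicity one in its own standard module $\II(\mm) \rtimes \tau$" for a contradiction. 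But these are different multiplicities: the multiplicity of $\pi$ in $\II(\mm) \rtimes \tau$ says nothing about whether $\pi$ occurs as a subquotient of the \emph{other} summand $\II(\mm) \rtimes \tau'$. You must rule out that $\pi$ appears in $\II(\mm) \rtimes \tau'$ as a proper subquotient; without that, no contradiction ensues. The paper addresses this precisely: since the two standard modules have identical $\GL$-parts, a Jacquet module computation (looking at the tempered factor in the cuspidal support, which distinguishes $\tau$ from $\tau'$) shows $\pi$ occurs in only one of them, and hence with multiplicity one in $I_{P_2}(\pi_{M_2})$. That extra step is indispensable to the argument and is missing from your proof.
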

\begin{proof}
For (1), we claim that $\pi$ appears in $I_{P_1}(\pi_{M_1})$ as a subquotient with multiplicity one. 
Since the Jordan--H{\"o}lder series of $I_{P_1}(\pi_{M_1})$ and $I_{P_2}(\pi_{M_2})$
are the same, it is enough to consider $I_{P_2}(\pi_{M_2})$.
If $I_{P_2}(\pi_{M_2})$ is a standard module, then the claim follows from the famous fact that 
the Langlands quotient appears in its standard module with multiplicity one
(See e.g., \cite[Chapter XI, Lemma 2.13]{BW}).
Otherwise, $2\beta+1$ is odd and $I_{P_2}(\pi_{M_2})$ is a direct sum of two standard modules. 
Since the general linear parts of these two standard modules are the same, 
by computing Jacquet modules, one sees that $\pi$ appears only in one of them. 
Hence $\pi$ must appear in $I_{P_1}(\pi_{M_1})$ with multiplicity one. 
\par

Now, since $I_{P}(\pi_M)$ is a unitary induction, and hence semisimple, 
the image of $R(w_1) \circ R(w_2)$ is isomorphic to a subrepresentation of 
the maximal semisimple quotient of $\II(\mm) \rtimes \tau$. 
Since this maximal semisimple quotient is equal to $\pi$, 
we see that the image is equal to $\pi$, or $R(w_1) \circ R(w_2) = 0$.
Since $R(w_2)$ is nonzero, $\pi$ appears in its image. 
Hence if $R(w_1) \circ R(w_2)$ were to be zero, then $\pi$ would appear in the kernel of $R(w_1)$. 
On the other hand, since the image of $R(w_1)$ is equal to $I_P(\pi_M)$, which contains $\pi$, 
it would imply that $\pi$ must appear in $I_{P_1}(\pi_{M_1})$ with multiplicity greater than one. 
This contradicts the claim. 
Therefore we obtain (1).
\par

Next, we prove (2) and (3). 
For $b \in \R$, set 
\[
V^{(b)} = \bigoplus_{X \in \{V_+, V_-,W_0\}} X^{(b)}. 
\]
Since $\pi_{M_2}$ and $w_u'\pi_{M_2}$ are essentially tempered 
and since $I_{P_2}(\pi_{M_2})$ and $I_{P_2}(w_u'\pi_{M_2})$ are standard modules, 
we see that $w_u'$ preserves $V^{(b)}$ for each $b \in \R$. 
We write $\{b > 0\,|\, V^{(b)} \not= 0 \} = \{b_1, \dots, b_t\}$ with $b_1 < \dots < b_t$.
For $i=1,\dots,t$, 
we set $w_i$ to be $w_u'$ on $V^{(b_i)} \oplus V^{(-b_i)}$, and trivial on $V^{(b)}$ for $b \not= \pm b_i$.
Similarly, we define $w_{u_0}$ by $w_u'$ on $V^{(0)}$, and trivial on $V^{(b)}$ for $b \not= 0$. 
Then $w_u' = w_t \cdots w_1 \cdot w_{u_0}$. 
According to this decomposition, 
we can decompose 
$R(w'_u) = R_{P_2}(w_u',\tl\pi_{M_2}, \phi_{M_2})$ 
into a product $R(w'_u) = R(w_t) \circ \dots \circ R(w_1) \circ R(w_{u_0})$ 
by the multiplicativity of the normalized intertwining operators. 
For $i = 1,\dots,t$, 
since the operator $R(w_i)$ is induced from a normalized intertwining operator for $\GL(V^{(b_i)})$, 
we can apply Theorem \ref{untwistedGL} and obtain that $R(w_i) = \id$. 
In particular, if we are in the case (2), 
we have $V^{(0)} = W_0^{(0)}$ so that $w_{u_0} = \1$, 
and hence $R(w'_u) = R(w_{u_0}) = \id$. 
On the other hand, if $2\beta+1$ is odd, 
by \eqref{LIR} for the tempered representation $\tau$, we see that 
\[
R(w_{u_0}) = \pair{e(\rho_\GL, 2\alpha+1,1), \tau}_{\phi_\tau} \cdot \id
\]
on $\II(\mm) \rtimes \tau$.
This proves (3). 
\end{proof}

%\subsection{The main diagram}
\subsection{The main diagram}
Using the notations in the previous subsection, 
with a constant $c \in \C^\times$, we now consider the following \emph{main diagram}. 
When $\kappa = 1$, it is:
\[
\xymatrix{
\II(\mm) \rtimes \tau 
\ar@{_(->}[d] 
\ar@[->][rrrr]^{R_{P_2}(w_u', \tl\pi_{M_2}, \phi_{M_2})} 
&&&& \II(\mm) \rtimes \tau 
\ar@{_(->}[d]\\
\II(\mm) \times \Delta([-\alpha,\alpha]_{\rho_\GL}) \rtimes \tau_0 
\ar@{->}[d]_{R(w_2)} 
\ar@[->][rrrr]^{R_{P_2}(w_u', \tl\pi_{M_2}, \phi_{M_2})} 
&&&& \II(\mm) \times \Delta([-\alpha,\alpha]_{\rho_\GL}) \rtimes \tau_0 
\ar@{->}[d]_{R(w_2)}\\
\II(\mm_\GL) \times \II(\mm_0) \rtimes \tau_0 
\ar@{->}[d]_{R(w_1)}
&&&&\II(\mm_\GL) \times \II(\mm_0) \rtimes \tau_0 
\ar@{->}[d]_{R(w_1)}\\
I_P(\pi_M) \ar@{_(->}[d] \ar@{->}[rrrr]^{c^{-1} \pair{\tl{u}, \tl\pi_M} R_P(w_u, \tl\pi_M, \psi_M)} 
&&&&I_P(\pi_M) 
\ar@{_(->}[d]\\
{}^c\II(\mm_\GL)^\vee \times {}^c\II(\mm_0)^\vee \rtimes \tau_0 
\ar@{.>}[rrrr]^{R_{P_0}(w_u, \tl\pi_{M_0}, \phi_{M_0})}
&&&&{}^c\II(\mm_\GL)^\vee \times {}^c\II(\mm_0)^\vee \rtimes \tau_0. 
}
\]
When $\kappa = \half{1}$, 
we replace $\Delta([-\alpha,\alpha]_{\rho_\GL}) \rtimes \tau_0$ with $\tau_0$ in the second line, 
or equivalently, we remove the second line.
The following is the main result in this section.

\begin{thm}\label{diagramG}
The main diagram is commutative
with 
\[
c = \left.\frac{\gamma_A(s,{}^c \psi_\GL \otimes \psi_0, \psi_E)}
{\gamma_A(s, {}^c\psi_\GL \otimes \phi_{\pi_0}, \psi_E)}\right|_{s=0},
\]
where we write $\pi_M = \pi_\GL \boxtimes \pi_0$, 
and $\phi_{\pi_0}$ is the $L$-parameter of $\pi_0$.
\end{thm}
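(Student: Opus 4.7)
The plan is to mimic the strategy used for Theorem \ref{diagramGL}, by embedding the main diagram into a meromorphic family of diagrams indexed by complex deformation parameters $(\lambda,\mu) \in \C^{2\beta+1} \oplus \C^r$ deforming the exponents of the general linear factors in $\pi_{M_j}$. For generic $(\lambda,\mu)$ in the tempered regime (purely imaginary parameters), all six intertwining operators in the extended diagram
\[
\xymatrix{
I_{P_2}(\pi_{M_2,(\lambda,\mu)})
\ar@{->}[r]^{R_{P_2}(w_u',\cdot)}
\ar@{->}[d]_{R(w_2)}
& I_{P_2}(w_u'\pi_{M_2,(\lambda,\mu)}) \ar@{->}[d]^{R(w_2)} \\
I_{P_1}(\pi_{M_1,(\lambda,\mu)}) \ar@{->}[d]_{R(w_1)}
& I_{P_1}(w_u\pi_{M_1,(\lambda,\mu)}) \ar@{->}[d]^{R(w_1)} \\
I_{P_0}(\pi_{M_0,(\lambda,\mu)}) \ar@{->}[r]^{R_{P_0}(w_u,\cdot)}
& I_{P_0}(w_u\pi_{M_0,(\lambda,\mu)})
}
\]
are regular, and commutativity is a direct consequence of the multiplicativity property in Proposition \ref{multiplicative} applied to the decomposition $w_u w_1 w_2 = w_1 w_2 w_u'$ (using $w_u w_1 = w_1 w_u$ and $w_u' = w_2^{-1} w_u w_2$). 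By analytic continuation, the diagram commutes wherever all operators are defined.

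Next I would specialize at the point of interest $\lambda = (\beta,\beta-1,\dots,-\beta)$, $\mu = (e_1,\dots,e_r)$. Lemma \ref{identity} shows that $R(w_1)$, $R(w_2)$ and $R_{P_2}(w_u',\tl\pi_{M_2},\phi_{M_2})$ are regular (and in fact scalar, or identity, on the highly non-tempered summand); only $R_{P_0}(w_u,\tl\pi_{M_0},\phi_{M_0})$ may have a pole (reflected by the dotted arrow), but this only affects the bottom row of the main diagram, which is not needed for the claimed commutativity of the top four rows. Restricting the commutative family to the subspace $\II(\mm)\rtimes\tau \subset I_{P_2}(\pi_{M_2})$ and using Lemma \ref{identity}(1) that $R(w_1)\circ R(w_2)$ surjects onto $\pi$, this yields the commutativity of the main diagram for some scalar, up to the discrepancy between the two normalizing conventions.

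The hard part will be identifying the precise value of the constant $c$. The operators $R(w_1)$, $R(w_2)$ and $R_{P_2}(w_u',\tl\pi_{M_2},\phi_{M_2})$ are normalized using the $L$-parameters $\phi_{M_j,(\lambda,\mu)}$ of $\pi_{M_j,(\lambda,\mu)}$, while the bottom arrow $\pair{\tl{u},\tl\pi_M} R_P(w_u,\tl\pi_M,\psi_M)$ is normalized using the $A$-parameter $\psi_M$. I would compare the two normalizations by tracking the ratio
\[
\frac{r_P(w_u,\psi_{M,(\lambda,\mu)})}{r_{P_0}(w_u,\phi_{M_0,(\lambda,\mu)}) \cdot r_{P_1}(w_1,\phi_{M_1,(\lambda,\mu)}) \cdot r_{P_2}(w_2,\phi_{M_2,(\lambda,\mu)}) \cdot r_{P_2}(w_u',\phi_{M_2,(\lambda,\mu)})^{-1}}.
\]
After factoring through the $\lambda$ and $\mu$ shifts, the contributions from the general linear normalizing factors cancel by the Godement–Jacquet calculus (as in Lemma \ref{holomorphic}), and the only surviving contribution at the specialization point comes from the pairing of ${}^c\psi_\GL^\vee$ with $\psi_0$ on the $\psi_M$ side versus its pairing with $\phi_{\pi_0}$ on the $\phi_{M_0}$ side. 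This difference is exactly the ratio of $\gamma_A$-factors
\[
c = \left.\frac{\gamma_A(s,{}^c\psi_\GL \otimes \psi_0,\psi_E)}{\gamma_A(s,{}^c\psi_\GL \otimes \phi_{\pi_0},\psi_E)}\right|_{s=0},
\]
which is well-defined and nonzero by Lemma \ref{holomorphic} since ${}^c\psi_\GL \otimes \psi_0$ and ${}^c\psi_\GL \otimes \phi_{\pi_0}$ share a common $\lambda$-representation.

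The main obstacle I anticipate is bookkeeping the $\lambda$-factors and gamma factors carefully enough to isolate this single contribution, since the decompositions $w_u = w_1 (w_2 w_u' w_2^{-1}) w_1^{-1}$ into products of rank-one reflections introduce many spurious factors that must be shown to cancel. A useful shortcut is to first verify the identity up to a nonzero scalar by the dimension-one argument (as in Lemma \ref{End} generalized to this setting), reducing the problem purely to computing $c$, and then compare both sides on a single distinguished vector—such as a $K$-spherical or highest-$K$-type vector—where the action of each normalized intertwining operator can be made fully explicit.
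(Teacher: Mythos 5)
Your high-level strategy is the same as the paper's: deform the diagram via parameters $(\lambda,\mu)$, invoke multiplicativity, specialize, and identify the constant as a ratio of $\gamma_A$-factors. But the heart of the argument — the precise identification of $c$ — is left as a heuristic ("contributions cancel by Godement--Jacquet calculus"), and the concrete mechanism is missing.

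The paper isolates the constant by a specific intermediate step that you do not provide. After reducing from $(\lambda,\mu)$ to a single parameter $s$ (setting $\lambda=(s+\beta,\dots,s-\beta)$, $\mu=(e_1,\dots,e_r)$ — not $s=0$ at the outset, since the bottom operator can be singular there), the paper compares $R_P(w_u,\pi_{M,s},\psi_{M,s})$ (normalized by the $A$-parameter on $M$) with $R_{P_0}(w_u,\pi_{M_0,s},\phi_{M_0,s})$ (normalized by the $L$-parameter on $M_0$) via the canonical inclusion $I_P(\pi_{M,s})\hookrightarrow I_{P_0}(\pi_{M_0,s})$. The key point is that the \emph{unnormalized defining integrals coincide} under the homeomorphism $N_P\cap\tl{w}_u N_P\tl{w}_u^{-1}\backslash N_P\cong N_{P_0}\cap\tl{w}_u N_{P_0}\tl{w}_u^{-1}\backslash N_{P_0}$, so the two normalized operators differ by exactly
\[
c(s)=\frac{\gamma_A(0,\psi_{M,s},\rho^\vee_{w_u^{-1}P|P},\psi_F)}{\gamma_A(0,\phi_{M_0,s},\rho^\vee_{w_u^{-1}P_0|P_0},\psi_F)}
=\frac{\gamma_A(s,\psi_\GL\otimes\psi_0^\vee,\psi_E)}{\gamma_A(s,\psi_\GL\otimes\phi_{\pi_0}^\vee,\psi_E)},
\]
and $c(0)=c$ after invoking Proposition \ref{gamma_c}. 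Your proposed ratio of five normalizing factors across $P_0,P_1,P_2$ is not derived from a valid multiplicativity identity and mixes unrelated objects; it does not reduce to the single $\gamma_A$-ratio. Similarly, the $K$-spherical-vector shortcut fails here because these representations are not spherical, and the dimension-one argument of Lemma \ref{End} does not apply directly to $I_P(\pi_M)$, which can decompose into two irreducible summands for classical groups. You should also be careful that the regularity of $c(s)$ at $s=0$ is established in the paper not by Lemma \ref{holomorphic} but by using that $R_P(w_u,\pi_M,\psi_M)$ is a priori nonzero (\cite[Proposition 2.3.1]{Ar}); Lemma \ref{holomorphic} would give $c\in\{\pm1\}$, which is a stronger conclusion than you need at this stage, but you still need the specific Step 3 diagram to get there.
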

\begin{proof}
As in the proof of Theorem \ref{diagramGL},
we will prove the assertion in five steps. 

\begin{description}
\setlength{\itemsep}{10pt}
\item[Step 1]
With complex parameters $\lambda \in \C^{2\beta+1}$ and $\mu \in \C^r$, 
we can consider the following diagram of 
meromorphic families of operators: 
\[
\xymatrix{
I_{P_2}(\pi_{M_2, (\lambda,\mu)})
\ar@{->}[d]_{R_{P_2}(w_2,\pi_{M_2,(\lambda,\mu)})} 
\ar@[->][rrrr]^{R_{P_2}(w_u', \pi_{M_2, (\lambda,\mu)})} 
&&&& I_{P_2}(w_u'\pi_{M_2, (\lambda,\mu)})
\ar@{->}[d]^{R_{P_2}(w_2,w_u'\pi_{M_2,(\lambda,\mu)})}\\
I_{P_1}(\pi_{M_1, (\lambda,\mu)})
\ar@{->}[d]_{R_{P_1}(w_1,\pi_{M_1,(\lambda,\mu)})}
&&&& I_{P_1}(w_u\pi_{M_1, (\lambda,\mu)})
\ar@{->}[d]^{R_{P_1}(w_1,w_u\pi_{M_1,(\lambda,\mu)})}\\
I_{P_0}(\pi_{M_0, (\lambda,\mu)})
\ar@{->}[rrrr]^{R_{P_0}(w_u, \pi_{M_0, (\lambda,\mu)})}
&&&&I_{P_0}(w_u\pi_{M_0, (\lambda,\mu)}).
}
\]
Here, we omit the $L$-parameters in the notations of the normalized intertwining operators.
If $\lambda \in (\I\R)^{2\beta+1}$ and $\mu \in (\I\R)^r$, 
then by Proposition \ref{multiplicative}, 
all maps in this diagram are regular and the diagram is commutative.
By analytic continuation, we see that 
this diagram is commutative whenever all maps are regular. 
\par

\item[Step 2]
Let $s \in \C$ be a new complex parameter. 
We will specialize the diagram in Step 1 at $\lambda = (s+\beta, s+\beta-1, \dots, s-\beta)$
and $\mu = (e_1,\dots,e_r)$. 
We write $\pi_{M_i,s}$ for the corresponding $\pi_{M_i,(\lambda,\mu)}$.
We claim that all operators in the diagram are well-defined as meromorphic families of operators in $s$.
\par

In fact, the bottom operator $R_{P_0}(w_u, \pi_{M_0, s})$
is not always a pole of the family of the operators $R_{P_0}(w_u, \pi_{M_0, (\lambda,\mu)})$
(but might be a pole at $s=0$). 
On the other hand, 
as we have seen in Lemma \ref{identity}, the other five operators are still regular at $s = 0$.
\par

Hence we can specialize the diagram in Step 1 at $\lambda = (s+\beta, s+\beta-1, \dots, s-\beta)$
and $\mu = (e_1,\dots,e_r)$, 
and obtain the following commutative diagram: 
\[
\xymatrix{
I_{P_2}(\pi_{M_2, s})
\ar@{->}[d]_{R_{P_2}(w_2,\pi_{M_2,s})} 
\ar@[->][rrrr]^{R_{P_2}(w_u', \pi_{M_2, s})} 
&&&& I_{P_2}(w_u'\pi_{M_2, s})
\ar@{->}[d]^{R_{P_2}(w_2,w_u'\pi_{M_2,s})}\\
I_{P_1}(\pi_{M_1, s})
\ar@{->}[d]_{R_{P_1}(w_1,\pi_{M_1,s})}
&&&& I_{P_1}(w_u\pi_{M_1, s})
\ar@{->}[d]^{R_{P_1}(w_1,w_u\pi_{M_1,s})}\\
I_{P_0}(\pi_{M_0, s})
\ar@{->}[rrrr]^{R_{P_0}(w_u, \pi_{M_0, s})}
&&&&I_{P_0}(w_u\pi_{M_0, s}).
}
\]

\item[Step 3]
Note that the image of 
$R_{P_1}(w_1,\pi_{M_1,s}) \colon I_{P_1}(\pi_{M_1, s}) \rightarrow I_{P_0}(\pi_{M_0, s})$
is equal to $I_P(\pi_{M,s}) = I_P(\pi_\GL|\cdot|_E^s \boxtimes \pi_0)$.
If we set 
\[
c(s) = 
\frac{\gamma_A(0,\psi_{M,s}, \rho_{w_u^{-1} P|P}^\vee, \psi_F)}
{\gamma_A(0,\phi_{M_0,s}, \rho_{w_u^{-1} P_0|P_0}^\vee, \psi_F)},
\]
then using a canonical homeomorphism
\[
N_P \cap \tl{w}_u N_P \tl{w}_u^{-1} \bs N_P 
\cong 
N_{P_0} \cap \tl{w}_u N_{P_0} \tl{w}_u^{-1} \bs N_{P_0},
\]
we obtain a commutative diagram of meromorphic families of operators
\[
\xymatrix{
I_{P}(\pi_{M, s})
\ar@{_(->}[d]
\ar@{->}[rrrr]^{c(s)^{-1}R_{P}(w_u, \pi_{M, s}, \psi_{M,s})}
&&&& I_{P}(w_u\pi_{M, s})
\ar@{_(->}[d]\\
I_{P_0}(\pi_{M_0, s})
\ar@{->}[rrrr]^{R_{P_0}(w_u, \pi_{M_0, s})}
&&&&I_{P_0}(w_u\pi_{M_0, s})
}
\]
where the vertical maps are the canonical inclusions.
Here, we explain these canonical inclusions more precisely. 
The representation $\pi_{M,s}$ is realized as 
the unique irreducible subrepresentation of $\Ind_{P_0 \cap M}^M(\pi_{M_0,s})$, 
and the functor $I_P$ induces a canonical inclusion 
$I_P(w_u\pi_{M,s}) \hookrightarrow I_P(w_u\Ind_{P_0 \cap M}^M(\pi_{M_0,s}))$. 
The latter induced representation can be realized as a subspace of two-variable functions 
$f \colon G \times M \rightarrow \VV$, 
where $\VV$ is a space of $\pi_{M_0,s}$, satisfying 
\[
f(m'g, m) = \delta_P^{\half{1}}(m')f(g, m \tl{w}_u^{-1}m' \tl{w}_u)
\]
for $m,m' \in M$ and $g \in G$.
Now the right inclusion in the diagram is induced from the isomorphism 
\[
I_P(w_u\Ind_{P_0 \cap M}^M(\pi_{M_0,s})) \xrightarrow{\sim} I_{P_0}(w_u\pi_{M_0, s}), 
\quad f(g,m) \mapsto f(g,\1).
\] 
The left inclusion is similar.
\par

Combining the above diagram with the one in Step 2, 
we obtain the following commutative diagram: 
\[
\xymatrix{
I_{P_2}(\pi_{M_2, s})
\ar@{->}[d]_{R_{P_2}(w_2,\pi_{M_2,s})} 
\ar@[->][rrrr]^{R_{P_2}(w_u', \pi_{M_2, s})} 
&&&& I_{P_2}(w_u'\pi_{M_2, s})
\ar@{->}[d]^{R_{P_2}(w_2,w_u'\pi_{M_2,s})}\\
I_{P_1}(\pi_{M_1, s})
\ar@{->}[d]_{R_{P_1}(w_1,\pi_{M_1,s})}
&&&& I_{P_1}(w_u\pi_{M_1, s})
\ar@{->}[d]^{R_{P_1}(w_1,w_u\pi_{M_1,s})}\\
I_{P}(\pi_{M, s})
\ar@{->}[rrrr]^{c(s)^{-1}R_{P}(w_u, \pi_{M, s}, \psi_{M,s})}
&&&& I_{P}(w_u\pi_{M, s}).
}
\]
We note that 
\[
c(s) = 
\frac{\gamma_A(s, \psi_\GL \otimes \psi_0^\vee, \psi_E)}
{\gamma_A(s, \psi_\GL \otimes \phi_{\pi_0}^\vee, \psi_E)}
\]
so that $c(0) = c$ by Proposition \ref{gamma_c}.
We will see that $c(s)$ is regular and nonzero at $s=0$ in the next step.

\item[Step 4]
We would like to specialize the commutative diagram above at $s = 0$.
As we have noted in Step 2, 
the five operators appearing in the top, left and right of the last diagram are regular at $s=0$.
In particular, the composition 
\[
R_{P_1}(w_1, w_u\pi_{M_1,s}) 
\circ R_{P_2}(w_2, w_u'\pi_{M_2,s})
\circ R_{P_2}(w_u', \pi_{M_2, s}), 
\]
and hence 
\[
c(s)^{-1}R_{P}(w_u, \pi_{M, s}, \psi_{M,s})
\circ 
R_{P_1}(w_1,\pi_{M_1,s}) 
\circ R_{P_2}(w_2,\pi_{M_2,s})
\]
are regular and nonzero at $s=0$.
We can specialize the last diagram at $s=0$, 
and obtain the commutative diagram
\[
\xymatrix{
I_{P_2}(\pi_{M_2})
\ar@{->}[d]_{R_{P_2}(w_2,\pi_{M_2})} 
\ar@[->][rrrr]^{R_{P_2}(w_u', \pi_{M_2})} 
&&&& I_{P_2}(w_u'\pi_{M_2})
\ar@{->}[d]^{R_{P_2}(w_2,w_u'\pi_{M_2})}\\
I_{P_1}(\pi_{M_1})
\ar@{->}[d]_{R_{P_1}(w_1,\pi_{M_1})}
&&&& I_{P_1}(w_u\pi_{M_1})
\ar@{->}[d]^{R_{P_1}(w_1,w_u\pi_{M_1})}\\
I_{P}(\pi_{M})
\ar@{->}[rrrr]^{c^{-1}R_{P}(w_u, \pi_{M}, \psi_M)}
&&&& I_{P}(w_u\pi_{M}).
}
\]
Note that $R_{P}(w_u, \pi_{M}, \psi_{M})$ is well-defined and nonzero
by \cite[Proposition 2.3.1]{Ar} and \cite[Proposition 3.3.1]{Mok}. 
Since $c^{-1}R_{P}(w_u, \pi_{M}, \psi_M)$ is nonzero, 
we conclude that $c(s)$ is regular and nonzero at $s=0$.

\item[Step 5]
If we realize $w_u'\pi_{M_2}$, $\pi_{M_2}$, $w_u\pi_{M_1}$ and $\pi_{M_1}$
on the same vector space, say $\VV$, 
then $A_{w_u'} \otimes \id$ is a linear isomorphism $\Phi \colon \VV \rightarrow \VV$ satisfying  
\[
\Phi \circ \pi_{M_2}(\tl{w}_u'^{-1} m_2 \tl{w}_u') = \pi_{M_2}(m_2) \circ \Phi, \quad m_2 \in M_2.
\]
Since $w_u' = w_2^{-1}w_uw_2$, 
by Lemma \ref{center}, 
the above property of $\Phi$ can be rewritten as 
\[
\Phi \circ \pi_{M_1}(\tl{w}_u^{-1} m_1 \tl{w}_u) = \pi_{M_1}(m_1) \circ \Phi, \quad m_1 \in M_1.
\]
Therefore, $\Phi$ is also equal to the normalized isomorphism 
$A_{w_u} \otimes \id \colon w_u \pi_{M_1} \xrightarrow{\sim} \pi_{M_1}$, 
and hence the diagram 
\[
\xymatrix{
I_{P_2}(w_u'\pi_{M_2})
\ar@{->}[d]_{R_{P_2}(w_2,w_u'\pi_{M_2})} 
\ar@[->][rrrr]^{I_{P_2}(A_{w_u'} \otimes \id)} 
&&&& I_{P_2}(\pi_{M_2})
\ar@{->}[d]^{R_{P_2}(w_2,\pi_{M_2})}\\
I_{P_1}(w_u\pi_{M_1})
\ar@[->][rrrr]^{I_{P_1}(A_{w_u} \otimes \id)} 
&&&& I_{P_1}(\pi_{M_1})
}
\]
is commutative.
On the other hand, 
since $\Ind_{P_1\cap M}^M(\pi_{M_1})$ is the standard module of $\pi_M$, 
by the definition of $\AA_{w_u}$ together with the functoriality of $I_P$, 
we have the following commutative diagram
\[
\xymatrix{
I_P(w_u\Ind_{P_1\cap M}^M(\pi_{M_1}))
\ar@{->}[d]_{I_P(R_{P_1\cap M}(w_1,\pi_{M_1}))}
\ar@[->][rrrr]
&&&& I_P(\Ind_{P_1\cap M}^M(\pi_{M_1}))
\ar@{->}[d]^{I_P(R_{P_1 \cap M}(w_1,\pi_{M_1}))}\\
I_P(w_u\pi_{M})
\ar@{->}[rrrr]^{I_P(\AA_{w_u} \otimes \id)}
&&&&I_P(\pi_{M}).
}
\]
Here the top map is induced from the isomorphism 
\[
\AA \otimes \id \colon 
w_u\Ind_{P_1\cap M}^M(\pi_{M_1}) \xrightarrow{\sim} \Ind_{P_1\cap M}^M(\pi_{M_1}),
\]
where $\AA$ is the isomorphism of standard modules of $\GL_k(E)$
normalized by using a Whittaker functional. 
Via the analogous identification $I_P(\Ind_{P_1\cap M}^M(\pi_{M_1})) \cong I_{P_1}(\pi_{M_1})$
(\resp $I_P(w_u\Ind_{P_1\cap M}^M(\pi_{M_1})) \cong I_{P_1}(w_u\pi_{M_1})$) in Step 3, 
the above commutative diagram is rewritten as 
\[
\xymatrix{
I_{P_1}(w_u\pi_{M_1})
\ar@{->}[d]_{R_{P_1}(w_1,w_u\pi_{M_1})}
\ar@[->][rrrr]^{I_{P_1}(A_{w_u} \otimes \id)} 
&&&& I_{P_1}(\pi_{M_1})
\ar@{->}[d]^{R_{P_1}(w_1,\pi_{M_1})}\\
I_{P}(w_u\pi_{M})
\ar@{->}[rrrr]^{I_P(\AA_{w_u} \otimes \id)}
&&&&I_{P}(\pi_{M}).
}
\]
Combining this with the first diagram in Step 5, 
and using Lemma \ref{c3}, 
we obtain the commutative diagram 
\[
\xymatrix{
I_{P_2}(w_u'\pi_{M_2})
\ar@{->}[d]_{R_{P_2}(w_2,w_u'\pi_{M_2})} 
\ar@[->][rrrr]^{I_{P_2}(A_{w_u'} \otimes \id)} 
&&&& I_{P_2}(\pi_{M_2})
\ar@{->}[d]^{R_{P_2}(w_2,\pi_{M_2})}\\
I_{P_1}(w_u\pi_{M_1})
\ar@{->}[d]_{R_{P_1}(w_1,w_u\pi_{M_1})}
&&&& I_{P_1}(\pi_{M_1})
\ar@{->}[d]^{R_{P_1}(w_1,\pi_{M_1})}\\
I_{P}(w_u\pi_{M})
\ar@{->}[rrrr]^{I_{P}(\pair{\tl{u}, \tl\pi_M}\tl{\pi}_M(w_u))}
&&&&I_{P}(\pi_{M}).
}
\]
This together with the diagram obtained in Step 4
implies that the main diagram is commutative.
\end{description}
This completes the proof of Theorem \ref{diagramG}.
\end{proof}

By the commutativity of the main diagram together with Lemma \ref{identity}, we have
\[
\pair{\tl{u}, \tl\pi_M} R_P(w_u, \tl\pi_M, \psi_M)|_\pi 
= \left\{
\begin{aligned}
&c \cdot \id_\pi \iif 2\beta+1 \equiv 0 \bmod 2, \\
&c \pair{e(\rho_\GL, 2\alpha+1,1), \tau}_{\phi_\tau} \cdot \id_\pi 
\iif 2\beta+1 \equiv 1 \bmod 2. 
\end{aligned}
\right. 
\]
Therefore, we obtain the following summary. 

\begin{cor}\label{summary}
Assume Hypothesis \ref{arthur}.
Let $\psi_M = \psi_{\GL} \oplus \psi_{0}$ be an $A$-parameter for $M$
such that $\psi_\GL = \rho_\GL \boxtimes S_{2\alpha+1} \boxtimes S_{2\beta+1}$ 
is irreducible and conjugate-self-dual. 
We assume further 
the existence of the $A$-packet $\Pi_\psi$ together with the pairing $\pair{\cdot, \pi}_\psi$
satisfying \eqref{ECR1} and \eqref{ECR2}. 
For $\pi_M \in \Pi_{\psi_M}$, 
let $\pi \subset I_P(\pi_M)$ be a highly non-tempered summand, 
with the standard module $\II(\mm) \rtimes \tau$. 
Then \eqref{LIR} holds for $\pi \subset I_P(\pi_M)$ 
if and only if the equation
\[
\tag{$\star$}\label{star}
\left.\frac{\gamma_A(s, {}^c\psi_\GL \otimes \psi_0, \psi_E)}
{\gamma_A(s, {}^c\psi_\GL \otimes \phi_{\pi_0}, \psi_E)}\right|_{s=0} 
= 
\left\{
\begin{aligned}
&\pair{s_u, \pi}_\psi \iif 2\beta+1 \equiv 0 \bmod 2, \\
&\frac{\pair{s_u,\pi}_\psi}{\pair{e(\rho_\GL, 2\alpha+1,1), \tau}_{\phi_\tau}} \iif 2\beta+1 \equiv 1 \bmod 2
\end{aligned}
\right.
\]
holds, 
where $\phi_{\pi_0}$ (\resp $\phi_\tau$) is the $L$-parameter of $\pi_0$ (\resp $\tau$). 
\end{cor}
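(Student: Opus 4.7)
The plan is to read off the corollary as a straightforward bookkeeping consequence of Theorem~\ref{diagramG} together with Lemma~\ref{identity}. More precisely, I would trace the scalar action of $\pair{\tl u,\tl\pi_M}R_P(w_u,\tl\pi_M,\psi_M)$ on the highly non-tempered summand $\pi$ by going around the main diagram in two different ways.

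First I would fix the surjection $\Xi := R(w_1)\circ R(w_2)\colon \II(\mm)\rtimes\tau \twoheadrightarrow \pi$ given by Lemma~\ref{identity}(1). Note that this is the left column of the main diagram (or its restriction to the subrepresentation $\II(\mm)\rtimes\tau$ in the case $\kappa=1$). The commutativity of the main diagram in Theorem~\ref{diagramG} then yields the equality
\[
\bigl(c^{-1}\pair{\tl u,\tl\pi_M}R_P(w_u,\tl\pi_M,\psi_M)\bigr)\circ \Xi \;=\; \Xi\circ R_{P_2}(w_u',\tl\pi_{M_2},\phi_{M_2})
\]
as maps $\II(\mm)\rtimes\tau\to I_P(\pi_M)$, with $c$ as specified in Theorem~\ref{diagramG}.

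Next I would invoke Lemma~\ref{identity}(2),(3): the operator $R_{P_2}(w_u',\tl\pi_{M_2},\phi_{M_2})$ acts on $\II(\mm)\rtimes\tau$ by the scalar $\delta$, where $\delta=1$ if $2\beta+1$ is even and $\delta=\pair{e(\rho_\GL,2\alpha+1,1),\tau}_{\phi_\tau}$ if $2\beta+1$ is odd. Substituting into the displayed identity, the right-hand side equals $\delta\cdot\Xi$, which in particular lands in $\pi$. Hence the left-hand side also lands in $\pi$, showing that $\pair{\tl u,\tl\pi_M}R_P(w_u,\tl\pi_M,\psi_M)$ preserves the summand $\pi$, and by surjectivity of $\Xi$ we obtain the scalar identity
\[
\pair{\tl u,\tl\pi_M}R_P(w_u,\tl\pi_M,\psi_M)\big|_\pi \;=\; c\,\delta \cdot \id_\pi.
\]

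Finally, I would compare with the statement of \eqref{LIR}: the operator acts by $\pair{s_u,\pi}_\psi\cdot\id_\pi$ on $\pi$ if and only if $c\,\delta=\pair{s_u,\pi}_\psi$, i.e.\ $c=\pair{s_u,\pi}_\psi/\delta$. Unwinding $c$ and $\delta$ in the two parity cases gives exactly \eqref{star}, finishing the proof. There is no genuine obstacle at this stage: all the difficulty is already absorbed into Theorem~\ref{diagramG} (where the constant $c$ was identified as a ratio of $\gamma_A$-factors via the comparison of normalizing factors between the $A$-parameter $\psi_M$ and the $L$-parameter $\phi_{\pi_M}$) and into Lemma~\ref{identity} (where the scalar $\delta$ was computed using Theorem~\ref{untwistedGL} and \eqref{LIR} in the tempered case for $\tau$). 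The corollary is the clean reformulation of these results as the single equation \eqref{star} that the subsequent computations in Section~\ref{computations} will attack.
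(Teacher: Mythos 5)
Your proposal is correct and reproduces the paper's own argument: the paper likewise reads off the scalar $c\delta$ by tracing $\Xi = R(w_1)\circ R(w_2)$ around the main diagram of Theorem~\ref{diagramG} and plugging in the eigenvalues from Lemma~\ref{identity}(2),(3), then compares with \eqref{LIR}. There is no difference in substance; the paper merely states the resulting eigenvalue formula for $\pair{\tl{u},\tl\pi_M}R_P(w_u,\tl\pi_M,\psi_M)|_\pi$ directly and deduces the corollary.
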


\begin{rem}
We will apply this corollary to co-tempered $A$-parameters $\psi$ for $G$, 
and certain $A$-parameters $\psi'$ for $G'$ with $\dim(\St_{\widehat{G'}}) < \dim(\St_{\widehat{G}})$.
Hence we have $A$-packets $\Pi_{\psi}$ and $\Pi_{\psi'}$ together with their pairings by Hypothesis \ref{arthur} and Theorem \ref{ECR-cotemp}. 
However, Hypothesis \ref{arthur} assumes \eqref{A-LIR}, rather than \eqref{LIR}, for $\psi'$. 
To deduce \eqref{LIR} for a certain $\psi'$ which is not co-tempered, 
we need Lemma \ref{mult-free} below. 
We also notice that 
we cannot use M{\oe}glin's result claiming that $\Pi_{\psi'}$ is multiplicity-free
because 
M{\oe}glin's explicit construction of $A$-packets uses endoscopic character relations for higher rank cases. 
\end{rem}

%\section{Computations of local factors}\label{computations}
%\section{Computations of local factors}\label{computations}
\section{Computations of local factors}\label{computations}
This section continues the work of Section \ref{sec.LIR}. 
Our goal is to show \eqref{LIR} for each irreducible summand $\pi \subset I_P(\pi_M)$ and each $\pi_M \in \Pi_{\psi_M}$, 
where $P=MN_P$ is a standard parabolic subgroup of a classical group $G$, 
and $\psi_M$ is an arbitrary co-tempered $A$-parameter for $M$ (Theorem \ref{main3} (2)). 
Lemma \ref{reduction} reduces the problem to the case where $P$ is maximal so that $M \cong \GL_k(E) \times G_0$. 
By the key lemma (Lemma \ref{halve}), 
we may then assume that $\pi \subset I_P(\pi_M)$ is a highly non-tempered summand, 
which exists by Lemma \ref{existence}. 
For such a representation, \eqref{LIR} is equivalent to the scalar equation \eqref{star} in Corollary \ref{summary}.
In this section, we check by hand the validity of the equation \eqref{star} for our case.

%\subsection{Preliminaries}
\subsection{Preliminaries}
Let $P=MN$ be a standard maximal parabolic subgroup of $G$, 
and let $\psi_M = \widehat\phi_M = \psi_\GL \oplus \psi_0$ be a co-tempered $A$-parameter for $M$. 
Write $\phi_M = \phi_\GL \oplus \phi_0$ so that $\widehat\phi_\GL = \psi_\GL$ and $\widehat\phi_0 = \psi_0$.
By Lemma \ref{reduction}, we may assume that $\phi_\GL$ is irreducible and conjugate-self-dual. 
Let $\phi$ (\resp $\psi$) be the $L$-parameter (\resp $A$-parameter) for $G$ given by $\phi_M$ (\resp $\psi_M$).
\par

First, we reduce the problem to the case of good parity. 
\begin{lem}
Write 
\[
\phi_0 = \phi_{0,\bad} \oplus \phi_{0, \good} \oplus {}^c\phi_{0,\bad}^\vee, 
\]
where 
$\phi_{0,\good}$ is the sum of irreducible conjugate-self-dual representations of the same type as $\phi_0$, 
and $\phi_{0,\bad}$ is a sum of irreducible representations of other types.
Set $\psi_0 = \widehat\phi_0$ and $\psi_{0,\good} = \widehat\phi_{0,\good}$. 
For $\pi_0 \in \Pi_{\psi_0}$, let $\pi_{0,\good} \in \Pi_{\psi_{0,\good}}$ be the representation determined by 
$\pair{\cdot, \pi_0}_{\psi_0} = \pair{\cdot, \pi_{0,\good}}_{\psi_{0,\good}}$ via the canonical identification $A_{\psi_0} = A_{\psi_{0,\good}}$. 
Then we have
\[
\phi_{\psi_0}-\phi_{\psi_{0,\good}} = \phi_{\pi_0}-\phi_{\pi_{0,\good}}. 
\]
On the other hand, if we take $\psi_\good$ and $\pi_\good$ similarly, 
then $\pair{s_u,\pi}_\psi = \pair{s_u, \pi_\good}_{\psi_\good}$. 
In particular, the equation \eqref{star} holds for $\pi_\GL \boxtimes \pi_0$ 
if and only if \eqref{star} holds for $\pi_\GL \boxtimes \pi_{0,\good}$.
\end{lem}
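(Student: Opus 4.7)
The plan is to deduce all three statements from the standard isotypic decomposition of members of $\Pi_{\psi_0}$ along the bad part of $\phi_0$. Specifically, I would first invoke the structural results analogous to \cite[Proposition 2.4.3]{Ar} and \cite[Proposition 3.4.4]{Mok}, which are available under Hypothesis \ref{arthur} together with Theorem \ref{ECR-cotemp}: every $\pi_0 \in \Pi_{\psi_0}$ has the form
\[
\pi_0 \cong \pi_{\psi_{0,\bad}} \rtimes \pi_{0,\good}
\]
for a unique $\pi_{0,\good} \in \Pi_{\psi_{0,\good}}$, with matched pairings under the canonical identification $A_{\psi_0} \cong A_{\psi_{0,\good}}$ (bad components contribute trivially to the component group). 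Applying the same decomposition to $\psi$ and $\psi_\good$, I would correspondingly write $\pi \cong \pi_{\psi_{0,\bad}} \rtimes \pi_\good$.

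From this, the first equation $\phi_{\psi_0} - \phi_{\psi_{0,\good}} = \phi_{\pi_0} - \phi_{\pi_{0,\good}}$ is immediate: both sides equal $\phi_{\psi_{0,\bad}} \oplus {}^c\phi_{\psi_{0,\bad}}^\vee$ as virtual $W_E$-representations, since $\phi_\bullet$ is additive in the $A$-parameter and the $L$-parameter of $\pi_{\psi_{0,\bad}}$, viewed as a $\GL$-representation, is $\phi_{\psi_{0,\bad}}$ by the LLC for general linear groups. The pairing identity $\pair{s_u, \pi}_\psi = \pair{s_u, \pi_\good}_{\psi_\good}$ then follows from the compatibility of pairings with parabolic induction along the bad part, transported through the canonical identification of component groups $A_\psi \cong A_{\psi_\good}$.

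For the final equivalence, I would check that both sides of \eqref{star} are invariant under the bad/good reduction. The left-hand side, a ratio of $\gamma_A$-factors, depends on $\psi_0$ and $\pi_0$ only through the associated $W_E$-representations $\phi_{\psi_0}$ and $\phi_{\pi_0}$; by multiplicativity of gamma factors, it depends only on the virtual difference $\phi_{\psi_0} - \phi_{\pi_0}$, which by the first equation equals $\phi_{\psi_{0,\good}} - \phi_{\pi_{0,\good}}$. The right-hand side involves $\pair{s_u, \pi}_\psi$, which matches $\pair{s_u, \pi_\good}_{\psi_\good}$ by the second equation, and (when $2\beta+1$ is odd) an additional factor $\pair{e(\rho_\GL, 2\alpha+1, 1), \tau}_{\phi_\tau}$ attached to the tempered Langlands component $\tau$ of $\pi$.

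The main obstacle will be verifying that the highly non-tempered structure of $\pi$ from Definition \ref{HNT} respects the bad/good reduction cleanly — concretely, that under the identification $\pi \cong \pi_{\psi_{0,\bad}} \rtimes \pi_\good$, the multi-segment $\mm$ and the tempered piece $\tau$ attached to $\pi$ are obtained from the analogous data for $\pi_\good$ by adjoining precisely the $\GL$-segments coming from $\pi_{\psi_{0,\bad}}$ (which get absorbed into the non-tempered part of the standard module and leave $\tau$ unchanged). This reduces to a bookkeeping check on how standard modules behave under parabolic induction by a unitary $\GL$-factor of ``bad'' type, and it ensures in particular that the tempered factor $\pair{e(\rho_\GL, 2\alpha+1, 1), \tau}_{\phi_\tau}$ is the same for $\pi$ and $\pi_\good$, completing the proof.
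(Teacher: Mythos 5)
Your proposal takes essentially the same route as the paper's proof: decompose $\pi_0 \cong \pi_{\psi_{0,\bad}} \rtimes \pi_{0,\good}$ as an irreducible parabolic induction along the bad part, track the Langlands data, and check that both sides of \eqref{star} are insensitive to the bad piece. A few clarifications are in order. First, the step you call ``immediate'' --- that $\phi_{\pi_0} - \phi_{\pi_{0,\good}}$ equals $\phi_{\psi_{0,\bad}} \oplus {}^c\phi_{\psi_{0,\bad}}^\vee$ --- is precisely the point requiring justification: the $L$-parameter of an arbitrary irreducible parabolic induction $\tau \rtimes \sigma$ is not additive on the nose, and one needs a description of the Langlands data of $\pi_0$ in terms of those of $\pi_{0,\good}$. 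The paper obtains this from Tadi\'c \cite[Proposition~1.3]{T2}, the same combinatorics used in Definition~\ref{HNT}. Your final paragraph correctly identifies this ``bookkeeping'' as the residual obstacle, and Tadi\'c's result is exactly what settles it --- you should name it rather than defer it. Second, the decomposition $\pi_0 \cong \pi_{\psi_{0,\bad}} \rtimes \pi_{0,\good}$ with matched pairings is established in the paper by applying Aubert duality to the tempered statement of \cite[Proposition~2.4.3]{Ar} and \cite[Proposition~3.4.4]{Mok}, rather than by a direct co-tempered analogue; your phrasing ``structural results analogous to'' should be sharpened to this. Finally, you are right to flag that one must check the tempered Langlands component $\tau$ of the highly non-tempered summand is unchanged under the bad/good reduction (the paper leaves this implicit in the last sentence): this too is a consequence of the Tadi\'c description, since adjoining the segments from $\pi_{\psi_{0,\bad}}$ enlarges the multi-segment $\mm_0$ of $\pi_0$ but leaves the tempered piece $\tau_0$, and hence $\tau$, untouched.
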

\begin{proof}
By applying Aubert duality to the tempered case, 
we see that 
\[
\pi_0 = \tau_{0,\bad} \rtimes \pi_{0,\good},
\]
where $\tau_{0,\bad}$ is the representation of a general linear group 
corresponding to $\psi_{0,\bad} = \widehat\phi_{0,\bad}$. 
Since it is an irreducible parabolic induction, 
by \cite[Proposition 1.3]{T2},
one can describe the Langlands data of $\pi_0$ using those of $\pi_{0,\good}$ 
similar to Definition \ref{HNT}. 
It shows that $(\phi_{\pi_0}-\phi_{\pi_{0,\good}})(w, \alpha)$ is equal to 
\[
(\phi_{\psi_0}-\phi_{\psi_{0,\good}})(w, \alpha) 
= 
(\phi_{0,\bad} \oplus \phi_{0,\bad}^\vee)\left(w, 
\begin{pmatrix}
|w|_E^{\half{1}} & 0 \\
0 & |w|_E^{-\half{1}}
\end{pmatrix}
\right)
\]
for $(w,\alpha) \in W_E \times \SL_2(\C)$.
\par

On the other hand, note that 
$\pair{\cdot, \pi}_{\psi} = \pair{\cdot, \pi_{\good}}_{\psi_{\good}}$ 
via the canonical identification $A_{\psi} = A_{\psi_{\good}}$. 
Since $s_u \in A_\psi$ is the same as $s_u \in A_{\psi_\good}$ via this identification, 
we conclude that $\pair{s_u,\pi}_\psi = \pair{s_u, \pi_\good}_{\psi_\good}$. 
This implies the last assertion. 
\end{proof}
Hereafter, we always assume that $\phi_0$ is of good parity, i.e., $\phi_0 = \phi_{0,\good}$.
\par

To show the equation \eqref{star}, we need to know the pairing $\pair{s_u,\pi}_\psi$ for $\pi \in \Pi_\psi$. 
Since $\psi = \widehat\phi$ is co-tempered, 
this pairing is defined such that the statement of Corollary \ref{+-temp} holds. 
For convenience, we write this corollary once more. 
\begin{lem}\label{6.19}
Let $\phi = \oplus_{j=1}^t \rho_j \boxtimes S_{d_j}$ be a tempered $L$-parameter for $G$ of good parity, 
where $\rho_j$ is an irreducible representation of $W_F$. 
Set $\psi = \widehat\phi$. 
Then for $\pi \in \Pi_{\psi}$ and $\sigma = \hat\pi \in \Pi_{\phi}$, we have
\[
\frac{\pair{e(\rho, 1, d), \pi}_{\psi}}{\pair{e(\rho, d, 1), \sigma}_{\phi}}
= 
\left\{
\begin{aligned}
&1 \iif d \equiv 0 \bmod 2, \\
&{-(-1)^{|\{j \,|\, \rho_j \cong \rho\}|}} \iif d \equiv 1 \bmod 2.
\end{aligned}
\right. 
\]
In particular, if $d \equiv d' \bmod 2$, 
then we have
\[
\frac{\pair{e(\rho, 1, d), \pi}_{\psi}}{\pair{e(\rho, 1, d'), \pi}_{\psi}}
= 
\frac{\pair{e(\rho, d, 1), \sigma}_{\phi}}{\pair{e(\rho, d', 1), \sigma}_{\phi}}.
\]
\end{lem}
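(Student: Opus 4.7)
The plan is to deduce this lemma as a direct corollary of Corollary \ref{+-temp}, which already records the precise formula we need once the roles of $\pi$ and $\sigma$ are correctly matched. Under Hypothesis \ref{arthur}, the tempered $L$-packet $\Pi_\phi$ equipped with its pairing is at our disposal, and by Theorem \ref{ECR-cotemp} the co-tempered packet $\Pi_{\widehat\phi} = \Pi_\psi$ constructed via Aubert duality also satisfies \eqref{ECR1} and \eqref{ECR2}. These are the only hypotheses required to invoke Corollary \ref{+-temp}.

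First I would match the variables. Corollary \ref{+-temp} is stated for a tempered $L$-parameter (which we rename $\phi$), a representation in $\Pi_\phi$ (which we call $\sigma$), and its Aubert dual $\hat\sigma$ in $\Pi_{\widehat\phi}$. Since Aubert duality is an involution on irreducible representations, the hypothesis $\sigma = \hat\pi$ of the present lemma is equivalent to $\pi = \hat\sigma$, so $\pi$ plays exactly the role of the Aubert dual in Corollary \ref{+-temp}. Applying that corollary with the element $s = e(\rho,d,1)$ yields
\[
\frac{\pair{e(\rho,1,d), \pi}_{\widehat\phi}}{\pair{e(\rho,d,1), \sigma}_\phi}
=
\left\{
\begin{aligned}
&1 &&\text{if } d \equiv 0 \bmod 2,\\
&-(-1)^m &&\text{if } d \equiv 1 \bmod 2,
\end{aligned}
\right.
\]
where $m$ counts, with multiplicity, the irreducible summands of $\phi$ of the form $\rho \boxtimes S_a$ for some $a \geq 1$. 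Since $\phi = \bigoplus_{j=1}^t \rho_j \boxtimes S_{d_j}$ by assumption, we have $m = |\{j \,:\, \rho_j \cong \rho\}|$, which is exactly the first equality claimed.

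For the second equality, the key observation is that the sign $-(-1)^m$ (as well as the value $1$ in the even case) depends only on the parity of the third argument of $e(\rho, \cdot, \cdot)$ and not on its precise value; in particular the integer $m$ is a function of $\phi$ and $\rho$ alone. Therefore, if $d \equiv d' \bmod 2$, applying the first part twice (once with $d$ and once with $d'$) produces the same sign factor on each side, so the signs cancel upon taking the ratio and we obtain
\[
\frac{\pair{e(\rho,1,d), \pi}_{\psi}}{\pair{e(\rho,1,d'), \pi}_{\psi}}
=
\frac{\pair{e(\rho,d,1), \sigma}_{\phi}}{\pair{e(\rho,d',1), \sigma}_{\phi}}.
\]
There is no genuine obstacle here: the substantive content lies in Corollary \ref{+-temp}, which itself rests on the computation $\beta(\phi) = (-1)^{r(\phi)}$ from Proposition \ref{c-temp} together with Lemma \ref{beta}(2), both of which are already in hand. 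The purpose of isolating Lemma \ref{6.19} is simply to package the formula in the form most convenient for the computations of \eqref{star} to follow.
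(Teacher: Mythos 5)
Your proposal is correct and follows exactly the paper's route: the paper's proof of Lemma \ref{6.19} consists solely of the sentence ``This follows from Corollary \ref{+-temp}'', and your expansion — swapping the roles of $\pi$ and $\hat\pi$ via the involutivity of Aubert duality, applying the corollary at $s = e(\rho,d,1)$, identifying $m$ with $|\{j : \rho_j \cong \rho\}|$, and taking ratios for the second display — is precisely the intended reasoning, merely spelled out.
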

\begin{proof}
This follows from Corollary \ref{+-temp}. 
\end{proof}
\par

We write $\phi_\GL = \rho_\GL \boxtimes S_{2\alpha+1}$.
Then $s_u = e(\rho_\GL,1,2\alpha+1) \in A_\psi$ if $\phi_\GL$ is of the same type as $\phi_0$. 
We compute $\pair{s_u, \pi}_\psi$ for a highly non-tempered summand $\pi \subset I_P(\pi_M)$.
\begin{lem}\label{<s_u>}
Suppose that $\phi_\GL = \rho_\GL \boxtimes S_{2\alpha+1}$ is of the same type as $\phi_0$.
Set $\psi_M = \widehat{\phi}_M$ with $\phi_M = \phi_\GL \oplus \phi_0$.
For $\pi_M = \pi_\GL \boxtimes \pi_0 \in \Pi_{\psi_M}$, 
let $\pi \subset I_P(\pi_M)$ be a highly non-tempered summand, 
and let $\II(\mm) \rtimes \tau$ be its standard module. 

\begin{enumerate}
\item
If $\phi_0$ contains $\rho_\GL \boxtimes S_d$ with $d \geq 2\alpha+1$, 
then 
\[
\pair{s_u, \pi}_\psi = \pair{e(\rho_\GL, 1, d_+), \pi_0}_{\psi_0}, 
\]
where $d_+ = \min\{d \geq 2\alpha+1 \,|\, \rho_\GL \boxtimes S_d \subset \phi_0\}$. 

\item
Otherwise, 
if $\phi_0$ contains $\rho_\GL \boxtimes S_d$ with $d < 2\alpha+1$, 
then 
\[
\pair{s_u, \pi}_\psi = \pair{e(\rho_\GL, 1, d_-), \pi_0}_{\psi_0}, 
\]
where $d_- = \max\{d < 2\alpha+1 \,|\, \rho_\GL \boxtimes S_d \subset \phi_0\}$. 
Here, when $2\alpha+1 \equiv 0 \bmod 2$, 
we formally understand that 
$\rho_\GL \boxtimes S_0 \subset \phi_0$ and $\pair{e(\rho_\GL, 1, 0), \pi_0}_{\psi_0} = 1$.

\item
Otherwise, i.e., if $2\alpha+1 \equiv 1 \bmod 2$ and 
if $\phi_0$ does not contain $\rho_\GL \boxtimes S_d$ for any $d \geq 1$, 
then there are precisely two choices of $\pi$, 
and $\pair{s_u, \pi}_\psi$ can take either value in $\{\pm1\}$ depending on this choice.

\end{enumerate}
\end{lem}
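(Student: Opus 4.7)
The plan is to reduce the computation to the tempered packet $\Pi_\phi$ via Aubert duality and then use M{\oe}glin--Atobe-style tempered Jacquet module theory. Concretely, since $\psi = \widehat\phi$ with $\phi = \phi_0 \oplus 2\phi_\GL$, Corollary \ref{+-temp} applied to $\widehat{s_u} = e(\rho_\GL, 2\alpha+1, 1) \in A_\phi$ yields
\[
\pair{s_u, \pi}_\psi = c_u \cdot \pair{e(\rho_\GL, 2\alpha+1, 1), \sigma}_\phi, \qquad \sigma := \hat\pi \in \Pi_\phi,
\]
where $c_u = 1$ if $2\alpha+1$ is even and $c_u = (-1)^{m_0+1}$ if $2\alpha+1$ is odd (using that the multiplicity of $\rho_\GL \boxtimes S_a$-constituents in $\phi$ is $m_0 + 2$). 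Applying the same corollary to $\pi_0$ and $\sigma_0 = \hat\pi_0 \in \Pi_{\phi_0}$ gives analogously $\pair{e(\rho_\GL, 1, d), \pi_0}_{\psi_0} = c_0(d) \cdot \pair{e(\rho_\GL, d, 1), \sigma_0}_{\phi_0}$; since good parity forces $d_\pm \equiv 2\alpha+1 \bmod 2$, a direct check gives $c_u = c_0(d_\pm)$. It therefore suffices to establish, in cases (1) and (2), the tempered identity
\[
\pair{e(\rho_\GL, 2\alpha+1, 1), \sigma}_\phi = \pair{e(\rho_\GL, d_\pm, 1), \sigma_0}_{\phi_0},
\]
together with the two-summand assertion of case (3).

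To identify $\sigma$, I take the Aubert dual of the embedding $\pi \hookrightarrow I_P(\pi_\GL \boxtimes \pi_0)$: using Theorem \ref{AD} and the fact that the Aubert dual of the Speh representation $\pi_\GL = \Speh(\rho_\GL, 2\alpha+1)$ is the discrete series $\pi_{\phi_\GL} = \Delta([-\alpha,\alpha]_{\rho_\GL})$, we obtain $\sigma \hookrightarrow \pi_{\phi_\GL} \rtimes \sigma_0$. The standard-module datum $\II(\mm) \rtimes \tau$ of the highly non-tempered $\pi$ (Definition \ref{HNT}) specifies $\tau$ and thus pins down which irreducible component of $\pi_{\phi_\GL} \rtimes \sigma_0$ is $\sigma$, via the Jacquet module along $\rho_\GL |\cdot|_E^\alpha$. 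In case (1) with $d_+ = 2\alpha+1$, the component $\rho_\GL \boxtimes S_{2\alpha+1}$ already lies in $\phi_0$, so $\pi_{\phi_\GL} \rtimes \sigma_0$ is irreducible and the tempered identity follows immediately from the canonical inclusion $A_{\phi_0} \hookrightarrow A_\phi$ together with the restriction property $\pair{\cdot, \sigma}_\phi|_{A_{\phi_0}} = \pair{\cdot, \sigma_0}_{\phi_0}$. For $d_+ > 2\alpha+1$, the induction splits as $\sigma_+ \oplus \sigma_-$, and a chain of $\frac{d_+ - (2\alpha+1)}{2}$ partial-derivative reductions based on Theorem \ref{Dk-1} from Appendix \ref{xu-atobe} (iterated at the cuspidal supports $\rho_\GL |\cdot|_E^{d_+-1}, \dots, \rho_\GL |\cdot|_E^{\alpha+1}$) translates $\pair{e(\rho_\GL, 2\alpha+1, 1), \sigma}_\phi$ into $\pair{e(\rho_\GL, d_+, 1), \sigma_0}_{\phi_0}$; minimality of $d_+$ ensures that the chain is unobstructed by intermediate $\rho_\GL$-blocks in $\phi_0$. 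Case (2) is handled analogously by a chain in the opposite direction, with the convention $\pair{e(\rho_\GL, 1, 0), \pi_0}_{\psi_0} = 1$ matching the boundary case $2\alpha+1$ even. In case (3), no $\rho_\GL \boxtimes S_a$ lies in $\phi_0$, so $\pi_{\phi_\GL} \rtimes \sigma_0$ is reducible with summands $\sigma_\pm$ satisfying $\pair{e(\rho_\GL, 2\alpha+1, 1), \sigma_\pm}_\phi = \pm 1$; correspondingly $\rho_\GL \rtimes \tau_0$ is reducible (since $\rho_\GL \not\subset \phi_{\tau_0} \subset \phi_0$), so by Lemma \ref{existence} there are exactly two highly non-tempered $\pi_\pm$, realizing both values of $\pair{s_u, \pi}_\psi$.

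The hard part will be the sign bookkeeping in cases (1) and (2): three separate sources of signs appear (the factor $c_u$, the factor $c_0(d_\pm)$, and the cumulative sign from the derivative chain of Theorem \ref{Dk-1}), and all must cancel to give the clean formulas. A careful parity count, using that $|d_\pm - (2\alpha+1)|$ is even and that the multiplicity of $\rho_\GL \boxtimes S_a$ in $\phi$ exceeds that in $\phi_0$ by exactly two, is expected to produce the cancellation. A secondary subtlety is in case (3): one must verify via Langlands data and the Jacquet module characterization that the two highly non-tempered $\pi_\pm$ are Aubert-dual to the two tempered $\sigma_\pm$ in a bijective (rather than degenerate) way, which uses the functoriality of Aubert duality together with the distinction between the two components of $\rho_\GL \rtimes \tau_0$.
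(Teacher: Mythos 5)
Your proposal follows essentially the same route as the paper's proof: pass to the tempered side $\sigma=\hat\pi$, $\sigma_0=\hat\pi_0$ via Aubert duality, transfer the co-tempered pairings to tempered ones by Corollary~\ref{+-temp} (the paper's Lemma~\ref{6.19}), reduce to the tempered identity $\pair{e(\rho_\GL, 2\alpha+1, 1), \sigma}_\phi = \pair{e(\rho_\GL, d_\pm, 1), \sigma_0}_{\phi_0}$ by a Jacquet-module/derivative argument, and settle case (3) by a reducibility count; your sign bookkeeping $c_u=c_0(d_\pm)$ is correct and is what the paper's final appeal to Lemma~\ref{6.19} amounts to. Three small corrections to the write-up: the derivative criterion controlling whether the iteration in case (1) is unobstructed is Theorem~\ref{Dk} rather than Theorem~\ref{Dk-1} (the latter treats the $(k-1)$-st derivative when the highest vanishes, which is not what is needed here); the exponents in the chain should run from $\rho_\GL|\cdot|_E^{(d_+-1)/2}$ down to $\rho_\GL|\cdot|_E^{\alpha+1}$, not from $\rho_\GL|\cdot|_E^{d_+-1}$; and the step linking the standard module $\II(\mm)\rtimes\tau$ of $\pi$ to the Jacquet module constraint that pins down $\sigma$ is less immediate than you indicate --- the paper routes it through the Langlands subrepresentations ${}^cL(\mm)^\vee\rtimes\tau$ and ${}^cL(\mm_0)^\vee\rtimes\tau_0$ together with Jantzen's computation of Jacquet modules of Langlands quotients for general linear groups, and this intermediary is precisely what carries the $\rho_\GL$-derivative information across Aubert duality from the $\pi_0$-side to the $\pi$-side.
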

\begin{proof}
Write $\hat\pi_M = \sigma_M = \sigma_\GL \boxtimes \sigma_0 \in \Pi_{\phi_M}$ and $\hat\pi = \sigma$. 
Recall that if $\II(\mm_0) \rtimes \tau_0$ is the standard module of $\pi_0$, 
then the standard module of $\pi$ is $\II(\mm) \rtimes \tau$, 
where 
\[
\mm = \mm_0 
+ 2[\alpha,\alpha]_{\rho_\GL} + 2[\alpha-1,\alpha-1]_{\rho_\GL} + \dots + 2[\kappa,\kappa]_{\rho_\GL}
\]
with $\kappa \in \{1,\half{1}\}$ such that $\alpha-\kappa \in \Z$.
By \cite[Lemma 2.2]{AG}, we have
\[
\pi \hookrightarrow {}^cL(\mm)^\vee \rtimes \tau, 
\quad
\pi_0 \hookrightarrow {}^cL(\mm_0)^\vee \rtimes \tau_0, 
\]
where $L(\mm)$ (\resp $L(\mm_0)$) is the Langlands quotient of $\II(\mm)$ (\resp $\II(\mm_0)$).
Set $d = \dim(\rho_\GL)$. 
For $k > 0$, we denote by $P_{dk}$ (\resp $P_{dk,0}$) the standard maximal parabolic subgroup of $G$ (\resp $G_0$)
with Levi subgroup of the form $\GL_{dk}(E) \times G'$ (\resp $\GL_{dk}(E) \times G'_0$). 
\par

Suppose that we are in the case (1). 
If $d_+ = 2\alpha+1$, then $\sigma = I_P(\sigma_M)$ is irreducible 
and $\pair{e(\rho_\GL,2\alpha+1,1),\sigma}_\phi = \pair{e(\rho_\GL, d_+,1), \sigma_0}_{\phi_0}$.
If $d_+ > 2\alpha+1$, then by Theorem \ref{Dk}, 
with $x = \half{d_+-1}$, we have 
\[
\Jac_{P_{dkl,0}}(\sigma_0) \geq
(\rho_\GL|\cdot|_E^{x})^k \times \dots \times (\rho_\GL|\cdot|_E^{\alpha+1})^k \otimes (\text{nonzero}) \geq 0
\]
in the appropriate Grothendieck group
with $k$ being the multiplicity of $\rho_\GL \boxtimes S_{d_+}$ in $\phi_0$, 
and $l = x-\alpha$. 
By a property of Aubert duality (Lemma \ref{AvsD}), 
it implies that $\Jac_{P_{dkl,0}}(\pi_0)$ and hence $\Jac_{P_{dkl,0}}({}^cL(\mm_0)^\vee \rtimes \tau_0)$
contain
\[
(\rho_\GL|\cdot|_E^{-x})^k \times \dots \times (\rho_\GL|\cdot|_E^{-(\alpha+1)})^k \otimes (\text{nonzero}).
\]
By Tadi\'c's formula and Casselman's criterion (Theorems \ref{Tformula} and \ref{Ccriterion}),
we see that 
\[
\Jac_{R_{dkl,0}}({}^cL(\mm_0)^\vee) 
\geq (\rho_\GL|\cdot|_E^{-x})^k \times \dots \times (\rho_\GL|\cdot|_E^{-(\alpha+1)})^k \otimes (\text{nonzero}) \geq 0,
\]
where $R_{dkl,0}$ is a suitable standard maximal parabolic subgroup of a general linear group.
Then by the definition of $\mm$ together with \cite[Theorem 2.2.1, Proposition 2.1.4]{Jan}, 
we have 
\[
\Jac_{R_{dkl}}({}^cL(\mm)^\vee) 
\geq (\rho_\GL|\cdot|_E^{-x})^k \times \dots \times (\rho_\GL|\cdot|_E^{-(\alpha+1)})^k \otimes (\text{nonzero}) \geq 0
\]
with analogous notations.
By reversing the analogous argument as above, we have
\[
\Jac_{P_{dkl}}(\pi) \geq
(\rho_\GL|\cdot|_E^{-x})^k \times \dots \times (\rho_\GL|\cdot|_E^{-(\alpha+1)})^k \otimes (\text{nonzero}) \geq 0
\]
and hence
\[
\Jac_{P_{dkl}}(\sigma) \geq
(\rho_\GL|\cdot|_E^{x})^k \times \dots \times (\rho_\GL|\cdot|_E^{\alpha+1})^k \otimes (\text{nonzero}) \geq 0
\]
in the appropriate Grothendieck group.
By Theorem \ref{Dk}, 
this happens exactly when 
\[
\pair{e(\rho_\GL,2\alpha+1,1),\sigma}_\phi = \pair{e(\rho_\GL, d_+,1), \sigma_0}_{\phi_0}.
\]
\par

If we are in the case (2) or (3), 
then by \cite[Theorem 2.2.1, Proposition 2.1.4]{Jan}, 
with $y = \half{d_--1}$, 
we have 
\[
\Jac_{R_{2dl}}({}^cL(\mm)^\vee) \geq 
(\rho_\GL|\cdot|_E^{-\alpha})^2 \times \dots \times (\rho_\GL|\cdot|_E^{-(y+1)})^2 \otimes (\text{nonzero})
\geq 0
\]
in the appropriate Grothendieck group with $l = \alpha-y$.
Here, in the case (3), we set $d_- = 1$ so that $y=0$.
Then we have 
\[
\Jac_{P_{2dl}}(\pi) \geq 
(\rho_\GL|\cdot|_E^{-\alpha})^2 \times \dots \times (\rho_\GL|\cdot|_E^{-(y+1)})^2 \otimes (\text{nonzero})
\geq 0
\]
and hence
\[
\Jac_{P_{2dl}}(\sigma) \geq 
(\rho_\GL|\cdot|_E^{\alpha})^2 \times \dots \times (\rho_\GL|\cdot|_E^{y+1})^2 \otimes (\text{nonzero})
\geq 0
\]
by Lemma \ref{AvsD}. 
By Theorem \ref{Dk}, 
this occurs exactly when we are in the case (3), or 
\[
\pair{e(\rho_\GL,2\alpha+1,1),\sigma}_\phi = \pair{e(\rho_\GL, d_-,1), \sigma_0}_{\phi_0}.
\]
Hence we obtain (1)--(3) by Lemma \ref{6.19}. 
\end{proof}

%\subsection{Strategy of the proof}\label{strategy}
\subsection{Strategy of the proof}\label{strategy}
We will prove \eqref{star} in Corollary \ref{summary} for $\pi_M = \pi_\GL \boxtimes \pi_0 \in \Pi_{\psi_M}$ 
with $\psi_M = \widehat\phi_M$ a co-tempered $A$-parameter.
Notice that the left-hand side of \eqref{star} involves the $L$-parameter $\phi_{\pi_0}$ of $\pi_0$.
In general, it is very difficult to list $\phi_{\pi_0}$ for $\pi_0 \in \Pi_{\psi_0}$.
Instead of computing $\phi_{\pi_0}$ explicitly, 
we will give an inductive argument as follows.
\par

The initial case is where $\pi_0$ is \emph{almost supercuspidal}, 
which is defined as follows.

\begin{defi}\label{a-cusp}
We say that an irreducible representation $\pi_0$ of $G_0$ is \emph{almost supercuspidal} 
if the following condition holds for every maximal parabolic subgroup $P_0$ of $G_0$.
If $\Jac_{P_0}(\pi_0)$ contains an irreducible subquotient of the form 
$\rho \boxtimes \sigma$ with $\rho$ a supercuspidal representation of $\GL_k(E)$, 
then $\rho$ is unitary.
\end{defi}

Note that $\pi_0$ is almost supercuspidal if and only if so is $\hat\pi_0$ by Lemma \ref{AvsD}. 
The assumption that $\pi_0$ is almost supercuspidal implies the following strong properties. 

\begin{lem}\label{initial}
Let $\psi_0 = \widehat\phi_0$ be a co-tempered $A$-parameter of good parity for a classical group $G_0$. 
Suppose that $\pi_0 \in \Pi_{\psi_0}$ is almost supercuspidal. 

\begin{enumerate}
\item
The following conditions hold: 
\begin{itemize}
\item
If we denote the multiplicity of $\rho \boxtimes S_d$ in $\phi_0$ by $m_{\phi_0}(\rho,d)$, 
then $m_{\phi_0}(\rho,d) \leq 1$ for any $d \geq 2$;

\item
if $\rho \boxtimes S_{d} \subset \phi_0$ with $d > 2$, 
then $\rho \boxtimes S_{d-2} \subset \phi_0$ and 
\[
\pair{e(\rho, 1, d), \pi_0}_{\psi_0} = - \pair{e(\rho, 1, d-2), \pi_0}_{\psi_0};
\]
\item
if $\rho \boxtimes S_{2} \subset \phi_0$, 
then 
\[
\pair{e(\rho, 1, 2), \pi_0}_{\psi_0} = -1. 
\]
\end{itemize}

\item
Let $\{\rho'_1 \boxtimes S_1, \dots, \rho'_r \boxtimes S_1\}$ be 
the set of irreducible representations of $W_E \times \SL_2(\C)$ 
appearing in $\phi_0$ with even multiplicity, 
and set 
\[
2y_i+1 = \max\{d \geq 1\,|\, \rho'_i \boxtimes S_d \subset \phi_0\}. 
\]
Then the $L$-parameter $\phi_{\pi_0}$ of $\pi_0$ is given by 
\[
\phi_{\pi_0} = \phi_0 - \bigoplus_{i=1}^r \rho'_i \boxtimes (S_1 \oplus S_{2y_i+1})
\oplus \bigoplus_{i=1}^r \rho'_i (|\cdot|_E^{\half{y_i}} \oplus |\cdot|_E^{-\half{y_i}}) \boxtimes S_{y_i+1}.
\]

\item
Suppose that $\phi_\GL = \rho_\GL \boxtimes S_{2\alpha+1}$ is of the same type as $\phi_0$, 
and that $2\alpha+1 \equiv 1 \bmod 2$. 
Set $\pi_M = \pi_\GL \boxtimes \pi_0$ and 
let $\pi \subset I_P(\pi_M)$ be a highly non-tempered summand.
We denote by $\II(\mm) \rtimes \tau$ the standard module of $\pi$. 
Then 
\[
\pair{e(\rho_\GL, 1,1), \tau}_{\phi_\tau} = \pair{e(\rho_\GL, 1, d_0), \pi}_{\psi}, 
\]
where $\psi = \widehat\phi$ is the $A$-parameter for $G$ given by the dual of 
$\phi = \phi_\GL \oplus \phi_0 \oplus {}^c \phi_\GL^\vee$, 
and $d_0 = \max\{d \geq 1 \,|\, \rho_\GL \boxtimes S_d \subset \phi\}$. 

\end{enumerate}
\end{lem}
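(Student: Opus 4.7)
The plan is to leverage the almost supercuspidality of $\pi_0$, transported to $\sigma_0 = \hat\pi_0 \in \Pi_{\phi_0}$ via Lemma \ref{AvsD} (which shows Aubert duality commutes with Jacquet modules up to sign and therefore preserves almost supercuspidality), combined with the explicit Jacquet module computations of Theorem \ref{Dk} for tempered $L$-packets.

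For part (1), almost supercuspidality of $\sigma_0$ forces every derivative along a maximal parabolic to be supported on unitary representations of general linear groups. Theorem \ref{Dk} describes when such derivatives are nonzero in terms of the Jordan blocks of $\phi_0$ and the pairings $\pair{e(\rho, d, 1), \sigma_0}_{\phi_0}$, which by Lemma \ref{6.19} are essentially the pairings $\pair{e(\rho, 1, d), \pi_0}_{\psi_0}$. First, if $m_{\phi_0}(\rho, d) \geq 2$ for some $d \geq 2$, then Theorem \ref{Dk} produces a nonzero derivative whose leading factor is $\rho|\cdot|_E^{\half{d-1}}$ with $\half{d-1} \neq 0$, contradicting almost supercuspidality. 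Next, if $\rho \boxtimes S_d \subset \phi_0$ with $d > 2$ but either $\rho \boxtimes S_{d-2} \not\subset \phi_0$ or the pairings at $e(\rho, d, 1)$ and $e(\rho, d-2, 1)$ coincide, then the same theorem again produces a non-unitary derivative with leading factor $\rho|\cdot|_E^{\half{d-1}}$. Finally, for $\rho \boxtimes S_2 \subset \phi_0$, Corollary \ref{sc} combined with almost supercuspidality forces $\pair{e(\rho, 2, 1), \sigma_0}_{\phi_0} = -1$, whence $\pair{e(\rho, 1, 2), \pi_0}_{\psi_0} = -1$ by Lemma \ref{6.19}.

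For part (2), the structural constraints from (1) pin down the Langlands data of $\sigma_0$ up to the pairing character, and applying Aubert duality yields the Langlands data of $\pi_0$, from which $\phi_{\pi_0}$ is read off. Blocks $\rho \boxtimes S_d \subset \phi_0$ with odd multiplicity (necessarily $1$ when $d \geq 2$) contribute to $\phi_{\pi_0}$ unchanged, while each pair of even-multiplicity blocks $\rho'_i \boxtimes S_1$ bundles together with the top block $\rho'_i \boxtimes S_{2y_i+1}$ of the corresponding ladder to form the segment $\rho'_i(|\cdot|_E^{\half{y_i}} \oplus |\cdot|_E^{-\half{y_i}}) \boxtimes S_{y_i+1}$ appearing in the formula.

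For part (3), since $2\alpha + 1$ is odd and $\kappa = 1$, the tempered $\tau$ in the standard module $\II(\mm) \rtimes \tau$ of $\pi$ is an irreducible summand of $\Delta([-\alpha, \alpha]_{\rho_\GL}) \rtimes \tau_0$, with $L$-parameter $\phi_\tau = \phi_{\tau_0} \oplus \rho_\GL \boxtimes S_{2\alpha+1}$. The pairing $\pair{e(\rho_\GL, 1, 1), \tau}_{\phi_\tau}$ is inherited via \eqref{ECR2} from the pairing at the corresponding component of $\phi_{\tau_0}$. Using the explicit formula for $\phi_{\pi_0}$ from (2) — which determines $\phi_{\tau_0}$ through the Langlands data of $\pi_0$ — together with Lemma \ref{<s_u>} and Lemma \ref{6.19}, this pairing matches $\pair{e(\rho_\GL, 1, d_0), \pi_0}_{\psi_0}$, and the latter equals $\pair{e(\rho_\GL, 1, d_0), \pi}_\psi$ via the inclusion $A_{\psi_0} \hookrightarrow A_\psi$. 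The main obstacle will be part (1): the derivative formulas of Theorem \ref{Dk} involve both the $L$-parameter structure and the pairing values, and extracting the precise alternating sign pattern requires a careful inductive reduction anchored at the supercuspidal base case via Corollary \ref{sc}; parts (2) and (3) are then bookkeeping exercises that unwind the Langlands classification and match component groups.
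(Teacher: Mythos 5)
Your treatment of parts (1) and (2) is essentially the same as the paper's, just somewhat more explicit: for (1) the paper simply cites Corollary \ref{sc} (whose proof is exactly the Theorem~\ref{Dk}/\ref{Dk-1} analysis you spell out) followed by Lemma~\ref{6.19}, and for (2) the paper appeals to \cite[Proposition~5.4]{AM} for the explicit Aubert-dual Langlands data; your informal description of which Jordan blocks ``bundle'' is consistent with the resulting formula.

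Part (3), however, contains a genuine error. You claim $\tau \hookrightarrow \Delta([-\alpha,\alpha]_{\rho_\GL}) \rtimes \tau_0$ and hence $\phi_\tau = \phi_{\tau_0} \oplus \rho_\GL \boxtimes S_{2\alpha+1}$, but this misreads Definition~\ref{HNT}. In that definition one writes $\psi_\GL = \rho_\GL \boxtimes S_{2a+1} \boxtimes S_{2b+1}$, and the segment appearing in the description of $\tau$ is $[-a,a]_{\rho_\GL}$, i.e.\ it is governed by the \emph{first} $\SL_2$-factor of $\psi_\GL$. In the co-tempered setting of part (3) we have $\psi_\GL = \widehat\phi_\GL = \rho_\GL \boxtimes S_1 \boxtimes S_{2\alpha+1}$, so $a = 0$ and $b = \alpha$: the $\alpha$ from the statement of the lemma plays the role of Definition~\ref{HNT}'s $\beta$, not its $\alpha$. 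The correct reading is therefore $\tau \hookrightarrow \Delta([0,0]_{\rho_\GL}) \rtimes \tau_0 = \rho_\GL \rtimes \tau_0$, hence $\phi_\tau = \phi_{\tau_0} \oplus (\rho_\GL \boxtimes S_1)^{\oplus 2}$. (Even granting your segment, conjugate self-duality would give $(\rho_\GL \boxtimes S_{2\alpha+1})^{\oplus 2}$, not one copy.) This matters because the pairing $\pair{e(\rho_\GL,1,1),\tau}_{\phi_\tau}$ is attached to a $\rho_\GL \boxtimes S_1$ block of $\phi_\tau$, which is guaranteed to exist in the correct $\phi_\tau$ but would have to come from $\phi_{\tau_0}$ in your version, which may fail. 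The paper also takes a somewhat different route to (3): rather than passing through $\tau_0$ directly, it constructs an auxiliary almost supercuspidal representation $\pi'$ with parameter $\phi' = \phi_0 \oplus (\rho_\GL \boxtimes S_1)^{\oplus 2}$ by analyzing the Jacquet modules of $\hat\pi$ via Theorems~\ref{Dk}, \ref{Dk-1} and Lemma~\ref{<s_u>}, shows $\tau' = \tau$ where $\II(\mm')\rtimes\tau'$ is the standard module of $\pi'$, applies the sign formula from the proof of (2) to $\pi'$, and then completes the argument via a case analysis on whether $\rho_\GL \boxtimes S_1$ and $\rho_\GL \boxtimes S_{d_0}$ lie in $\phi_0$. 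You would need at minimum to correct the identification of $\phi_\tau$ and then carry out a genuinely careful sign computation before ``bookkeeping'' is an honest description.
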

\begin{proof}
\begin{enumerate}
\item
Recall that $\hat\pi_0$ is tempered since it is in $\Pi_{\phi_0}$. 
Since $\pi_0$ is almost supercuspidal, so is $\hat\pi_0$. 
Hence by Corollary \ref{sc}, we obtain several properties of $m_{\phi_0}(\rho,d)$ and $\pair{\cdot, \hat\pi_0}_{\phi_0}$.
Then Lemma \ref{6.19} implies the desired properties of $\pair{\cdot, \pi_0}_{\psi_0}$.

\item
As in \cite[Proposition 5.4]{AM}, 
Aubert duality together with Theorem \ref{Dk} and Remark \ref{0-derivative} shows that
if we write $y_1 \geq \dots \geq y_t > 0 = y_{t+1} = \dots = y_r$, 
then the standard module of $\pi_0$ is 
\[
\II([0,y_1]_{\rho'_1}+\dots+[0,y_t]_{\rho'_t}) \rtimes \tau_0
\]
where $\tau_0 \in \Pi_{\phi_{\tau_0}}$ with 
\[
\phi_{\tau_0} = \phi_0 - \bigoplus_{i=1}^t \rho'_i \boxtimes (S_1 \oplus S_{2y_i+1})
\]
and 
\[
\pair{e(\rho, d, 1), \tau_0}_{\phi_{\tau_0}} = \left\{
\begin{aligned}
&{-\pair{e(\rho,d,1), \hat\pi_0}_{\phi_0}} \iif \rho \in \{\rho'_1, \dots, \rho'_r\}, \\
&\pair{e(\rho,d,1), \hat\pi_0}_{\phi_0} \other.
\end{aligned}
\right. 
\]
Here, we notice that \cite[Proposition 5.4]{AM} does not use M{\oe}glin's construction of $A$-packets. 
From this, we obtain the description for $\phi_{\pi_0}$ in (2).
\par

\item
If $\pi \subset I_P(\pi_M)$ is a highly non-tempered summand, 
then by Theorems \ref{Dk}, \ref{Dk-1} and Lemma \ref{<s_u>},
we see that 
\[
\hat\pi \hookrightarrow (\rho_\GL|\cdot|_E^{\alpha})^2 \times \dots \times (\rho_\GL|\cdot|_E^{1})^2 \rtimes \hat\pi', 
\]
where $\hat\pi' \in \Pi_{\phi'}$ with $\phi' = \phi_0 \oplus (\rho_\GL \boxtimes S_1)^{\oplus2}$,
and $\pair{\cdot, \hat\pi'}_{\phi'}$ is determined by 
\begin{align*}
\pair{\cdot, \hat\pi'}_{\phi'}|_{\AA_{\phi_0}} &= \pair{\cdot, \hat\pi_0}_{\phi_0}, \\
\pair{e(\rho_\GL,1,1),\hat\pi'}_{\phi'} &= 
\left\{\begin{aligned}
&\pair{e(\rho_\GL, 1,1), \hat\pi}_\phi \iif \rho_\GL \boxtimes S_1 \subset \phi_0, \\
&\pair{e(\rho_\GL, d_0,1), \hat\pi}_\phi \other. 
\end{aligned}
\right. 
\end{align*}
Hence $\pi'$ is almost supercuspidal, 
and if we denote by $\II(\mm') \rtimes \tau'$ its standard module, 
then $\mm = \mm' + 2([\alpha, \alpha]_{\rho_\GL}+\dots+[1, 1]_{\rho_\GL})$ and $\tau' = \tau$.
In particular, by the proof of (2), we have
\[
\pair{e(\rho_\GL,1,1), \tau}_{\phi_\tau} = \left\{
\begin{aligned}
&{-\pair{e(\rho_\GL,1,1),\hat\pi'}_{\phi'}} \iif m_{\phi'}(\rho_\GL) \equiv 0 \bmod 2, \\
&\pair{e(\rho_\GL,1,1),\hat\pi'}_{\phi'}\iif m_{\phi'}(\rho_\GL) \equiv 1 \bmod 2,
\end{aligned}
\right.
\]
where $m_{\phi'}(\rho_\GL)$ is the multiplicity of $\rho_\GL \boxtimes S_1$ in $\phi'$.
\par

If $\rho_\GL \boxtimes S_1 \not\subset \phi_0$, 
then $\rho_\GL \boxtimes S_d \not\subset \phi_0$ for any $d \geq 1$, 
and by Lemma \ref{6.19}, 
we have 
\[
\pair{e(\rho_\GL,1,1), \tau}_{\phi_\tau} 
= -\pair{e(\rho_\GL,1,1),\hat\pi'}_{\phi'}
= -\pair{e(\rho_\GL, d_0,1), \hat\pi}_\phi
= \pair{e(\rho_\GL, 1,d_0), \pi}_\psi.
\]
\par

From now on, we suppose that $\rho_\GL \boxtimes S_1 \subset \phi_0$. 
Assume first that $\rho_\GL \boxtimes S_{d_0} \subset \phi_0$ so that $2\alpha+1 \leq d_0$. 
In this case, by assertion (1), 
we have $\rho_\GL \boxtimes (S_1 \oplus S_3 \oplus \dots \oplus S_{d_0}) \subset \phi_0$, 
and $m_{\phi_0}(\rho_\GL \boxtimes S_d) = 1$ for $3 \leq d \leq d_0$ with $d$ odd. 
If we write $m = m_{\phi_0}(\rho_\GL)$, then $m_{\phi'}(\rho_\GL) = m+2$, 
and by Lemma \ref{6.19}, 
we have
\begin{align*}
\pair{e(\rho_\GL,1,1), \tau}_{\phi_\tau} 
&= (-1)^{m-1}\pair{e(\rho_\GL,1,1),\hat\pi'}_{\phi'}
\\&= (-1)^{m-1}\pair{e(\rho_\GL, 1,1), \hat\pi}_\phi
\\&= (-1)^{m-1} \cdot (-1)^{\half{d_0-1}+m+1} \pair{e(\rho_\GL, 1,1), \pi}_\psi
\\&= \pair{e(\rho_\GL, 1,d_0), \pi}_\psi.
\end{align*}
Next, assume that $\rho_\GL \boxtimes S_{d_0} \not\subset \phi_0$ so that $d_0 = 2\alpha+1$. 
If we set $d_0' = \max\{d \geq 1 \;|\; \rho_\GL \boxtimes S_{d} \subset \phi_0\}$, 
by the same argument as above, we have 
\begin{align*}
\pair{e(\rho_\GL,1,1), \tau}_{\phi_\tau} 
&= (-1)^{m-1}\pair{e(\rho_\GL,1,1),\hat\pi'}_{\phi'}
\\&= (-1)^{m-1}\pair{e(\rho_\GL, 1,1), \hat\pi}_\phi
\\&= (-1)^{m-1}\pair{e(\rho_\GL, 1,1), \hat\pi_0}_{\phi_0}
\\&= (-1)^{m-1} \cdot (-1)^{\half{d'_0-1}+m-1} \pair{e(\rho_\GL, 1,1), \pi_0}_{\psi_0}
\\&= \pair{e(\rho_\GL, 1,d'_0), \pi_0}_{\psi_0}.
\end{align*}
By Lemma \ref{<s_u>}, we have
\[
\pair{e(\rho_\GL, 1,d'_0), \pi_0}_{\psi_0} = \pair{s_u, \pi}_\psi = \pair{e(\rho_\GL, 1, d_0), \pi}_\psi.
\]
\end{enumerate}
This completes the proof of Lemma \ref{initial}.
\end{proof}

By this lemma, we can compute all terms of \eqref{star} when $\pi_0$ is almost supercuspidal. 
The details are given in Section \ref{cuspidal}.
\par

For the general case, suppose that $\pi_0$ is not almost supercuspidal. 
Then using Corollary \ref{Lparameters}, 
we will find a classical group $G'_0$ with $\dim(\St_{\widehat{G'_0}}) < \dim(\St_{\widehat{G_0}})$, 
an $A$-parameter $\psi'_0$ for $G'_0$, and $\pi'_0 \in \Pi_{\psi_0'}$ such that 
the difference
\[
\phi_{\pi_0} - \phi_{\pi_0'}
\]
is explicitly known (although we might not know $\phi_{\pi_0}$ nor $\phi_{\pi_0'}$ themselves). 
By the induction hypothesis, we can assume \eqref{LIR}, 
equivalently equation \eqref{star}, for $\pi'_{M'} = \pi_\GL \boxtimes \pi'_0$.
Therefore, what we have to check is the equation 
\[
\tag{$\star\star$}\label{starstar}
\left.
\left(\frac{\gamma_A(s, {}^c\psi_\GL \otimes \psi_0, \psi_E)}{\gamma_A(s, {}^c\psi_\GL \otimes \phi_{\pi_0}, \psi_E)}\right)
\left(\frac{\gamma_A(s, {}^c\psi_\GL \otimes \psi'_0, \psi_E)}{\gamma_A(s, {}^c\psi_\GL \otimes \phi_{\pi'_0}, \psi_E)}\right)^{-1}
\right|_{s=0}
= \frac{\pair{s_u,\pi}_\psi}{\pair{s_u,\pi'}_{\psi'}}, 
\]
where $\pi \subset I_P(\pi_M)$ and $\pi' \subset I_{P'}(\pi'_{M'})$ are highly non-tempered summands.
\par

To give $\pi_0'$, we consider Jacquet modules of the tempered representation $\hat\pi_0$.

\begin{lem}\label{abc}
Let $\psi_0 = \widehat\phi_0$ be a co-tempered $A$-parameter of good parity for a classical group $G_0$. 
Suppose that $\pi_0 \in \Pi_{\psi_0}$ is not almost supercuspidal. 
Then one can find an irreducible conjugate-self-dual representation $\rho_1$ of $W_E$
and positive half-integers $x \leq y$ with $x \equiv y \bmod \Z$ such that 
\begin{enumerate}
\item
$\rho_1 \boxtimes (S_{2x+1} \oplus S_{2x+3} \oplus \dots \oplus S_{2y+1}) \subset \phi_0$; 
\item
if we denote the multiplicity of $\rho \boxtimes S_d$ in $\phi_0$ by $m_{\phi_0}(\rho,d)$, 
then 
\[
m_{\phi_0}(\rho_1,2i+1)
= \left\{ 
\begin{aligned}
&1 \iif x < i \leq y, \\
&0 \iif i > y
\end{aligned}
\right. 
\]
for $i \in (1/2)\Z$ with $i \equiv x \bmod \Z$; 
\item
$\pair{e(\rho_1,1,2i+1), \pi_0}_{\psi_0} = -\pair{e(\rho_1,1,2i-1), \pi_0}_{\psi_0}$ 
for $x < i \leq y$ with $i \equiv x \bmod \Z$; 
\item
one of the following holds: 
\begin{enumerate}
\item
$\rho_1 \boxtimes S_{2x-1} \not\subset \phi_0$, 
or $\rho_1 \boxtimes S_{2x-1} \subset \phi_0$ and 
$\pair{e(\rho_1,1,2x+1), \pi_0}_{\psi_0} = \pair{e(\rho_1,1,2x-1), \pi_0}_{\psi_0}$; 
\item
$\rho_1 \boxtimes S_{2x-1} \subset \phi_0$,  
$\pair{e(\rho_1,1,2x+1), \pi_0}_{\psi_0} = -\pair{e(\rho_1,1,2x-1), \pi_0}_{\psi_0}$
and $m = m_{\phi_0}(\rho_1,2x+1) > 1$ is odd; 
\item
$\rho_1 \boxtimes S_{2x-1} \subset \phi_0$,  
$\pair{e(\rho_1,1,2x+1), \pi_0}_{\psi_0} = -\pair{e(\rho_1,1,2x-1), \pi_0}_{\psi_0}$
and $m = m_{\phi_0}(\rho_1,2x+1) > 1$ is even. 
\end{enumerate}
Here, when $x = 1/2$, 
we formally set $\rho_1 \boxtimes S_0 \subset \phi_0$ and $\pair{e(\rho_1,1,0), \pi_0}_{\psi_0} = 1$. 
\end{enumerate}
\end{lem}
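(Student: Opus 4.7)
The plan is to analyze $\pi_0$ through its Aubert dual $\sigma_0 := \hat\pi_0$, which lies in the tempered $L$-packet $\Pi_{\phi_0}$. By Lemma~\ref{AvsD}, almost supercuspidality of $\pi_0$ is equivalent to almost supercuspidality of $\sigma_0$, so by hypothesis $\sigma_0$ fails to be almost supercuspidal. Thus there exists an irreducible unitary supercuspidal $\rho$ of $\GL_d(E)$ and some real $s>0$ such that the derivative $D_{\rho|\cdot|_E^s}(\sigma_0)$ is nonzero. Applying Theorem~\ref{Dk} of Appendix~\ref{xu-atobe} to this situation, we extract an irreducible conjugate-self-dual representation $\rho_1$ of $W_E$ and a positive half-integer $x_0$ with $\rho_1 \boxtimes S_{2x_0+1} \subset \phi_0$, together with the corresponding constraints on the pairing $\pair{\cdot,\sigma_0}_{\phi_0}$. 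Translating these constraints to $\pi_0$ via Lemma~\ref{6.19} produces the initial data needed to launch the construction.

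Among all such pairs $(\rho_1,x_0)$, I would fix $\rho_1$ and choose $x$ minimally, i.e.\ so that no strictly smaller positive half-integer admits an analogous derivative-theoretic realization. Having fixed $(\rho_1,x)$, I would then define $y$ to be the largest half-integer (with $y \equiv x \bmod \Z$) for which the two conditions
\[
m_{\phi_0}(\rho_1, 2i+1) = 1 \quad\text{for } x < i \leq y,
\quad
\pair{e(\rho_1,1,2i+1),\pi_0}_{\psi_0} = -\pair{e(\rho_1,1,2i-1),\pi_0}_{\psi_0} \quad\text{for } x<i\leq y
\]
are both satisfied. Maximality of $y$, combined with the derivative calculus of Theorems~\ref{Dk} and \ref{Dk-1} applied to $\sigma_0$, then forces $m_{\phi_0}(\rho_1,2i+1)=0$ for all $i>y$; together these establish conditions (1), (2), and (3).

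For condition (4), I would analyze what minimality of $x$ says about the position $i=x-1$. If $\rho_1 \boxtimes S_{2x-1} \not\subset \phi_0$, or if it is contained but $\pair{e(\rho_1,1,2x+1),\pi_0}_{\psi_0} = \pair{e(\rho_1,1,2x-1),\pi_0}_{\psi_0}$, we land in case (a). Otherwise the alternation would continue one step further, which by Theorem~\ref{Dk} would produce a derivative $D_{\rho_1|\cdot|_E^{x-1}}(\sigma_0) \neq 0$ and contradict the minimality of $x$, \emph{unless} the obstruction comes from the multiplicity hypothesis in Theorem~\ref{Dk}: namely, $m := m_{\phi_0}(\rho_1,2x+1) > 1$. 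In this remaining situation, the parity of $m$ splits the analysis into cases (b) and (c), completing the case split.

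The main obstacle will be verifying the exhaustiveness of the case split in (4); concretely, showing that whenever the alternating chain cannot be extended downward past $x$, the only available obstructions are those listed in (a), (b), (c). This requires pushing the derivative/multiplicity calculus of Appendix~\ref{xu-atobe} (particularly Theorems~\ref{Dk} and~\ref{Dk-1}) into the regime where the multiplicity at the bottom of the chain exceeds one, where the interplay between the character of $\AA_{\phi_0}$ and the vanishing or non-vanishing of derivatives is subtle. A careful translation between $\sigma_0$ and $\pi_0$ via Lemma~\ref{6.19} and the Aubert duality identities will be essential to match the sign conventions on both sides.
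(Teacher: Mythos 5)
The central gap is the direction of extremality in choosing $x$: you take $x$ \emph{minimal}, but the paper (correctly) takes $x$ \emph{maximal} among positive half-integers with $D_{\rho_1|\cdot|^{-x}}(\pi_0) \neq 0$, equivalently $D_{\rho_1|\cdot|^x}(\hat\pi_0) \neq 0$ via Lemma~\ref{AvsD}. Maximality is what gives the essential vanishing $D_{\rho_1|\cdot|^i}(\hat\pi_0) = 0$ for all $i > x$, and this vanishing is what Theorems~\ref{Dk} and~\ref{Dk-1} convert into the multiplicity bound $m_{\phi_0}(\rho_1, 2i+1) \leq 1$ and the alternating pairing conditions for $i > x$. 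Minimality of $x$ only constrains the region $i < x$, which is not where conditions (1)--(3) live, and it leaves you with no control over derivatives above $x$.

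Concretely, your construction can fail condition (2). Take $\phi_0 \supset \rho_1 \boxtimes (S_1 \oplus S_3 \oplus S_5 \oplus S_9)$, each with multiplicity one, with no other $\rho_1 \boxtimes S_d$ appearing, and $\pi_0$ whose pairing agrees at $\{e(\rho_1,1,1), e(\rho_1,1,3)\}$ but disagrees at $\{e(\rho_1,1,3), e(\rho_1,1,5)\}$. Then $D_{\rho_1|\cdot|^1}(\hat\pi_0) \neq 0$ and $D_{\rho_1|\cdot|^4}(\hat\pi_0) \neq 0$, so your minimal $x = 1$; your recursive definition of $y$ stops at $y = 2$ because $m_{\phi_0}(\rho_1,7)=0$. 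But condition (2) then requires $m_{\phi_0}(\rho_1, 2i+1) = 0$ for $i > 2$, which the $S_9$ component violates. The paper's choice gives $x = 4$ and defines $y$ directly by $2y+1 = \max\{d : \rho_1 \boxtimes S_d \subset \phi_0\} = 9$, so the range $(x,y]$ is empty and everything is consistent. With that correction — maximal $x$, and $y$ defined by the largest $d$ with $\rho_1 \boxtimes S_d \subset \phi_0$, so that the second half of (2) becomes tautological — the remainder of your mechanism (passing through Aubert duality, applying Theorems~\ref{Dk} and~\ref{Dk-1} and translating signs via Lemma~\ref{6.19}, then handling the bottom of the chain to get the trichotomy in (4)) is the right approach and matches the paper.
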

\begin{proof}
Since $\pi_0$ is not almost supercuspidal, in the appropriate Grothendieck group,
$\Jac_{P}(\pi_0)$ contains a representation of the form $\rho_1|\cdot|_E^{-x} \otimes \pi_0'$
for some parabolic subgroup $P$, some irreducible bounded representation $\rho_1$ of $W_E$ and 
some real number $x > 0$. 
Since $\phi_0$ is of good parity, $\rho_1$ must be conjugate-self-dual and $x$ must be a half-integer.
Fix such a $\rho_1$, and take the maximal $x$ with this condition.
Then by Theorems \ref{Dk} and \ref{Dk-1}, we have $\rho_1 \boxtimes S_{2x+1} \subset \phi_0$. 
Moreover, if we set
\[
2y+1 = \max\{d \,|\, \rho_1 \boxtimes S_d \subset \phi_0\}, 
\]
then the same theorems together with Lemma \ref{6.19} imply the desired conditions (1)--(4).
\end{proof}

We will treat the cases (a), (b) and (c) in Sections \ref{(a)}, \ref{(b)} and \ref{(c)}, respectively. 
In the cases (a) and (b), the new $A$-parameter $\psi'_0$ is also co-tempered. 
However, in the case (c), $\psi'_0$ is no longer co-tempered, 
and it is more difficult to compute $\pair{s_u, \pi'}_{\psi'}$.
This is where Corollary \ref{+otimes-} will be used, 
and where we have to separate \eqref{A-LIR} and \eqref{LIR}.

%\subsection{The almost supercuspidal case}
\subsection{The initial case}\label{cuspidal}
Let $\psi_M = \widehat{\phi}_M$ be a co-tempered $A$-parameter for $M$, 
where $\phi_M = \phi_\GL \oplus \phi_0$. 
Fix $\pi_0 \in \Pi_{\psi_0}$. 
In this subsection, we assume that $\pi_0 \in \Pi_{\psi_0}$ is almost supercuspidal (see Definition \ref{a-cusp}). 
Write
\[
\phi_\GL = \rho_\GL \boxtimes S_{2\alpha+1}, 
\quad
\phi_0 = \bigoplus_{i = 1}^t \rho_i \boxtimes S_{2\beta_i+1},
\]
where $\rho_i$ is an irreducible conjugate-self-dual representation of $W_E$.
As in Lemma \ref{initial}, write $\{\rho_1', \dots, \rho_r'\}$ for the subset of $\{\rho_1, \dots, \rho_t\}$
consisting of $\rho_i$'s such that $\rho_i \boxtimes S_1$ appears in $\phi_0$ with even multiplicity.  
Set $2y_i+1 = \max\{d \geq 1\,|\, \rho'_i \boxtimes S_d \subset \phi_0\}$. 
\par

Let us check the equation \eqref{star} for $\pi$. 
By Lemma \ref{initial} (2), 
the left-hand side of \eqref{star} is 
\[
\left.\frac{\gamma_A(s, {}^c\widehat{\phi}_\GL \otimes \widehat{\phi}_0, \psi_E)}
{\gamma_A(s, {}^c\widehat{\phi}_\GL \otimes \phi_{0}, \psi_E)}
\cdot
\prod_{i=1}^r
\frac{\gamma_A(s, {}^c\widehat{\phi}_\GL \otimes (\rho'_i \boxtimes (S_1\oplus S_{2y_i+1})), \psi_E)}
{\gamma_A(s, {}^c\widehat{\phi}_\GL \otimes (\rho'_i(|\cdot|_E^{\half{y_i}} \oplus |\cdot|_E^{-\half{y_i}}) \boxtimes S_{y_i+1}), \psi_E)}
\right|_{s=0}.
\]
In the rest of this section, 
we write $\prod_{-\alpha \leq a \leq \alpha}$ for the product with respect to $a = -\alpha, -\alpha+1, \dots, \alpha$
(even if $\alpha \in (1/2)\Z \setminus \Z$).
First, we compute the quotient involving $\rho_i'$. 
To simplify the notation, we drop the subscript $i$.
Then by the formulas for local factors in Section \ref{sec.factors}, 
we have
\begin{align*}
&\frac{\gamma_A(s, {}^c\widehat{\phi}_\GL \otimes (\rho' \boxtimes (S_1\oplus S_{2y+1})), \psi_E)}
{\gamma_A(s, {}^c\widehat{\phi}_\GL \otimes (\rho'(|\cdot|_E^{\half{y}} \oplus |\cdot|_E^{-\half{y}}) \boxtimes S_{y+1}), \psi_E)}
\\&=
\prod_{-\alpha \leq a \leq \alpha}
\frac{\gamma_A(s, ({}^c\rho_\GL \otimes \rho')|\cdot|_E^a \boxtimes (S_1\oplus S_{2y+1}), \psi_E)}
{\gamma_A(s, ({}^c\rho_\GL \otimes \rho')(|\cdot|_E^{a+\half{y}} \oplus |\cdot|_E^{a-\half{y}}) \boxtimes S_{y+1}, \psi_E)}
=1.
\end{align*}
\par

On the other hand, using the notations and the formulas for local factors in Section \ref{sec.factors}, we have
\begin{align*}
&\frac{\gamma_A(s, {}^c\widehat{\phi}_\GL \otimes \widehat{\phi}_0, \psi_E)}
{\gamma_A(s, {}^c\widehat{\phi}_\GL \otimes \phi_{0}, \psi_E)}
\\&= \prod_{i=1}^t\prod_{-\alpha \leq a \leq \alpha} 
\frac{\gamma_A(s, {}^c\rho_\GL|\cdot|_E^a \otimes (\rho_i|\cdot|_E^{\beta_i} \oplus \dots \oplus \rho_i|\cdot|_E^{-\beta_i}), \psi_E)}
{\gamma_A(s, ({}^c\rho_\GL|\cdot|_E^a \otimes \rho_i) \boxtimes S_{2\beta_i+1}, \psi_E)}
\\&= 
\prod_{i=1}^t 
\prod_{-\alpha \leq a \leq \alpha}\prod_{\mu \in X({}^c\rho_\GL \otimes \rho_i)}
(-q_E^{-(\half{1}-s-\mu-a)})^{2\beta_i}
\prod_{-\beta_i \leq b \leq \beta_i-1} 
\frac{\zeta_E(s+\mu+a+b+1)}{\zeta_E(s+\mu+a+b)}
\\&= 
\prod_{i=1}^t
\prod_{-\alpha \leq a \leq \alpha}\prod_{\mu \in X({}^c\rho_\GL \otimes \rho_i)}
(-q_E^{-(\half{1}-s-\mu-a)})^{2\beta_i}
\frac{\zeta_E(s+\mu+a+\beta_i)}{\zeta_E(s+\mu+a-\beta_i)}
\\&= 
\prod_{i=1}^t
\prod_{-\alpha \leq a \leq \alpha}\prod_{\mu \in X({}^c\rho_\GL \otimes \rho_i)}
(-q_E^{-(\half{1}-s-\mu-a)})^{2\beta_i}
\frac{\zeta_E(s+\mu-a+\beta_i)}{\zeta_E(s+\mu+a-\beta_i)}.
\end{align*}
\par

\begin{lem}\label{zeta}
For $\mu \in \C/2\pi\I(\log q_E)^{-1}\Z$ and $a, \beta \in (1/2)\Z$, 
set 
\[
f_{\mu,a,\beta}(s) = (-q_E^{-(\half{1}-s-\mu-a)})^{2\beta}
\frac{\zeta_E(s+\mu-a+\beta)}{\zeta_E(s+\mu+a-\beta)}.
\]

\begin{enumerate}
\item
If $\re(\mu) = 0$ and $\mu \not= -\mu$, then 
\[
\left. f_{\mu,a,\beta}(s)f_{-\mu,a,\beta}(s) \right|_{s=0} = q_E^{2a(2\beta-1)}.
\]

\item
Let $\mu_0 \in \C/2\pi\I(\log q_E)^{-1}\Z$ be the unique nonzero element such that $\mu_0 = -\mu_0$.
Then 
\[
\left. f_{\mu_0,a,\beta}(s) \right|_{s=0} = q_E^{a(2\beta-1)}.
\]

\item
If $\mu = 0$, then 
\[
\left. f_{0,a,\beta}(s) \right|_{s=0} = 
\left\{
\begin{aligned}
&(-1)^{2\beta}q_E^{a(2\beta-1)} \iif a=\beta, \\
&(-1)^{2\beta+1}q_E^{a(2\beta-1)} \iif a \not= \beta.
\end{aligned}
\right. 
\]
\end{enumerate}
\end{lem}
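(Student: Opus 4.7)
The plan is a direct computation. Substituting $\zeta_E(t) = (1-q_E^{-t})^{-1}$ and introducing the shorthand $u = q_E^\mu$ and $z = q_E^{a-\beta}$, the zeta-quotient at $s=0$ becomes
\[
\frac{\zeta_E(-\mu-a+\beta)}{\zeta_E(-\mu+a-\beta)} = \frac{1-u^{-1}z^{-1}}{1-u^{-1}z} = \frac{uz-1}{z(u-z)},
\]
and the prefactor becomes $(-1)^{2\beta}q_E^{-\beta+2a\beta}u^{2\beta}$, so that
\[
f_{\mu,a,\beta}(0) = (-1)^{2\beta}q_E^{-\beta+2a\beta}u^{2\beta} \cdot \frac{uz-1}{z(u-z)}.
\]
Before any case analysis I would verify that the hypotheses of (1) and (2) force $u$ to be non-real and unimodular while $z>0$ is real, so the denominators $u-z$ and $1-uz$ are nonzero.

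For case (1), I would multiply the formula for $f_{\mu,a,\beta}(0)$ by its image under $u \mapsto u^{-1}$. The $(-1)^{2\beta}$ and $u^{\pm 2\beta}$ factors cancel in pairs, and the remaining rational expression
\[
\frac{(uz-1)(u^{-1}z-1)}{z^2(u-z)(u^{-1}-z)} = \frac{z^2-(u+u^{-1})z+1}{z^2[z^2-(u+u^{-1})z+1]} = \frac{1}{z^2}
\]
collapses by cancellation of the common polynomial in $z$, leaving $q_E^{-2\beta+4a\beta} \cdot z^{-2} = q_E^{2a(2\beta-1)}$. For case (2), I would substitute $u = q_E^{\mu_0} = e^{\pi\I} = -1$ into the master formula (using $\mu_0 = \pi\I/\log q_E$ as the unique nontrivial order-two element of $\C/2\pi\I(\log q_E)^{-1}\Z$); then $(-1)^{2\beta}u^{2\beta}=1$, the factor $1+z$ cancels between numerator and denominator, and the answer is $q_E^{a(2\beta-1)}$.

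Case (3) is the substitution $u=1$, and is where the only subtlety appears. When $a \neq \beta$ the same manipulation applies and yields $(-1)^{2\beta+1}q_E^{a(2\beta-1)}$ after simplifying $(z-1)/(1-z) = -1$. When $a = \beta$, however, both $\zeta_E(s-a+\beta)$ and $\zeta_E(s+a-\beta)$ degenerate to $\zeta_E(s)$, which has a simple pole at $s=0$; the ratio $\zeta_E(s)/\zeta_E(s)$ is nevertheless identically $1$ as a meromorphic function, so the evaluation at $s=0$ is just the prefactor $(-1)^{2\beta}q_E^{\beta(2\beta-1)} = (-1)^{2\beta}q_E^{a(2\beta-1)}$, recovering the stated sign switch in (3). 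The main obstacle is essentially nil: the whole argument is mechanical, and the only subtlety worth isolating is the removable-singularity analysis when $a=\beta$ in (3); the sign bookkeeping via $(-1)^{2\beta}$ (which is $+1$ or $-1$ according as $\beta \in \Z$ or $\beta \in (1/2)\Z \setminus \Z$) is automatic from the formulas.
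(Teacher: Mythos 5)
Your proposal is correct and takes essentially the same approach as the paper: a direct computation from $\zeta_E(t) = (1-q_E^{-t})^{-1}$, with the same treatment of the removable singularity at $a=\beta$ in part (3). One cosmetic typo: in the display for the zeta-quotient you wrote $\zeta_E(-\mu-a+\beta)/\zeta_E(-\mu+a-\beta)$, but the definition of $f_{\mu,a,\beta}$ has $+\mu$, so the numerator and denominator should read $\zeta_E(\mu-a+\beta)$ and $\zeta_E(\mu+a-\beta)$; the expression $\frac{1-u^{-1}z^{-1}}{1-u^{-1}z}$ that follows is the correct one for the latter, so the rest of the argument goes through unchanged.
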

\begin{proof}
If $\re(\mu) = 0$ and $\mu \not= -\mu$, 
then
\begin{align*}
\left. f_{\mu,a,\beta}(s)f_{-\mu,a,\beta}(s) \right|_{s=0} 
&= q_E^{-2(1-2a)\beta}\frac{1-q_E^{-\mu-a+\beta}}{1-q_E^{\mu+a-\beta}}
\frac{1-q_E^{\mu-a+\beta}}{1-q_E^{-\mu+a-\beta}} 
\\&= q_E^{-2(1-2a)\beta} \cdot (-q_E^{-\mu-a+\beta}) \cdot (-q_E^{\mu-a+\beta})
= q_E^{2a(2\beta-1)}.
\end{align*}
\par

Similarly, if $\mu = \mu_0$, then $q_E^{-{\mu_0}} = -1$ so that 
\[
\left. f_{\mu_0,a,\beta}(s) \right|_{s=0}
= q_E^{-(1-2a)\beta} \frac{1+q_E^{-a+\beta}}{1+q_E^{a-\beta}} 
= q_E^{a(2\beta-1)}. 
\]
\par

Finally, suppose that $\mu=0$. 
If $a \not= \beta$, then 
\[
\left. f_{0,a,\beta}(s) \right|_{s=0} 
= (-1)^{2\beta}q_E^{-(1-2a)\beta} \frac{1-q_E^{-a+\beta}}{1-q_E^{a-\beta}} 
= (-1)^{2\beta+1}q_E^{a(2\beta-1)}.
\]
On the other hand, if $a = \beta$, then 
\[
\left. f_{0,a,\beta}(s) \right|_{s=0} 
= \left.(-1)^{2\beta}q_E^{-(1-2s-2a)a}\frac{1-q_E^{-s}}{1-q_E^{-s}} \right|_{s=0} 
= (-1)^{2\beta}q_E^{a(2\beta-1)}.
\]
This completes the proof. 
\end{proof}

We have proven that 
\[
\left.
\frac{\gamma_A(s, {}^c\widehat{\phi}_\GL \otimes \widehat{\phi}_0, \psi_E)}
{\gamma_A(s, {}^c\widehat{\phi}_\GL \otimes \phi_{0}, \psi_E)}
\right|_{s=0}
=
\prod_{i=1}^t
\prod_{-\alpha \leq a \leq \alpha}\prod_{\mu \in X({}^c\rho_\GL \otimes \rho_i)}
\left. f_{\mu,a,\beta_i}(s) \right|_{s=0}.
\]
Note that for $\mu \in X({}^c\rho_\GL \otimes \rho_i)$, 
if $\mu \not= -\mu$, then $-\mu$ appears in $X({}^c\rho_\GL \otimes \rho_i)$
with the same multiplicity as $\mu$.
Since $\prod_{-\alpha \leq a \leq \alpha}q_E^{a(2\beta_i-1)} = 1$, 
by Lemma \ref{zeta},
only $\mu = 0$ can contribute. 
Note that $0 \in X({}^c\rho_\GL \otimes \rho_i)$ 
if and only if $\rho_i \cong {}^c\rho_\GL^\vee \cong \rho_\GL$. 
In this case, $0$ appears in $X({}^c\rho_\GL \otimes \rho_i)$ with multiplicity one. 
Hence
\[
\left.\frac{\gamma_A(s, {}^c\widehat{\phi}_\GL \otimes \widehat{\phi}_0, \psi_E)}
{\gamma_A(s, {}^c\widehat{\phi}_\GL \otimes \phi_{0}, \psi_E)}\right|_{s=0}
= \prod_{\substack{1 \leq i \leq t \\ \rho_i \cong \rho_\GL}}
\prod_{-\alpha \leq a \leq \alpha} 
\left. f_{0,a,\beta_i}(s) \right|_{s=0}, 
\]
and by Lemma \ref{zeta}, we have 
\[
\prod_{-\alpha \leq a \leq \alpha} 
\left. f_{0,a,\beta_i}(s) \right|_{s=0}
= \left\{
\begin{aligned}
&(-1)^{(2\alpha+1)(2\beta_i+1)-1} \iif \beta_i \leq \alpha, \, \beta_i \equiv \alpha \bmod \Z, \\
&(-1)^{(2\alpha+1)(2\beta_i+1)} \other.
\end{aligned}
\right. 
\]
\par

Suppose first that $\phi_\GL = \rho_\GL \boxtimes S_{2\alpha+1}$ is not of the same type as $\phi_0$. 
Then if $\rho_i \cong \rho_\GL$, then $\beta_i \not\equiv \alpha \bmod \Z$ so that $(2\alpha+1)(2\beta_i+1) \in 2\Z$.
Hence
\[
\left.\frac{\gamma_A(s, {}^c\widehat{\phi}_\GL \otimes \widehat{\phi}_0, \psi_E)}
{\gamma_A(s, {}^c\widehat{\phi}_\GL \otimes \phi_{0}, \psi_E)}\right|_{s=0}
= \prod_{\substack{1 \leq i \leq t \\ \rho_i \cong \rho_\GL}} (-1)^{(2\alpha+1)(2\beta_i+1)} = 1.
\]
Since $\pair{s_u,\pi}_\psi = 1$ in this case, we obtain the equation \eqref{star}. 
In the rest of this subsection, 
we assume that $\phi_\GL = \rho_\GL \boxtimes S_{2\alpha+1}$ is of the same type as $\phi_0$
so that $\beta_i \equiv \alpha \bmod \Z$ if $\rho_i \cong \rho_\GL$. 
\par

Suppose that $2\alpha+1$ is even. 
Then we conclude that 
\[
\left.\frac{\gamma_A(s, {}^c\widehat{\phi}_\GL \otimes \widehat{\phi}_0, \psi_E)}
{\gamma_A(s, {}^c\widehat{\phi}_\GL \otimes \phi_{0}, \psi_E)}\right|_{s=0}
= (-1)^{|\{i \,|\, \rho_i \cong \rho_\GL,\, \beta_i \leq \alpha\}|}.
\]
If $\rho_\GL \boxtimes S_{2\alpha+1} \subset \phi_0$, 
then 
\begin{itemize}
\item
$\rho_\GL \boxtimes (S_2 \oplus S_4 \oplus \dots \oplus S_{2\alpha+1}) \subset \phi_0$; and
\item
$m_{\phi_0}(\rho_\GL \boxtimes S_{2x}) = 1$ and $\pair{e(\rho_\GL,1,2x), \pi_0}_{\psi_0} = (-1)^{x}$ 
for $1 \leq x \leq \alpha+\half{1}$. 
\end{itemize}
In particular, 
\[
\pair{s_u, \pi}_\psi =
\pair{e(\rho_\GL, 1, 2\alpha+1), \pi_0}_{\psi_0} 
= (-1)^{\alpha+\half{1}} 
= (-1)^{|\{i \,|\, \rho_i \cong \rho_\GL,\, \beta_i \leq \alpha\}|}.
\]
On the other hand, if $\rho_\GL \boxtimes S_{2\alpha+1} \not\subset \phi_0$, 
then setting $d_- = \max\{d \geq 0 \,|\, \rho_\GL \boxtimes S_{d} \subset \phi_0\}$, 
we have
\[
\pair{s_u, \pi}_\psi =
\pair{e(\rho_\GL,1,d_-), \pi_0}_{\psi_0} = (-1)^{\half{d_-}} 
= (-1)^{|\{i \,|\, \rho_i \cong \rho_\GL,\, \beta_i \leq \alpha\}|}.
\]
Therefore, we obtain the equation \eqref{star} when $2\alpha+1$ is even.
\par

Next, we suppose that $2\alpha+1$ is odd.
If $\rho_\GL \boxtimes S_d \not\subset \phi_0$ for any $d \geq 1$, 
then noting that $\pair{s_u, \pi}_\psi = \pair{e(\rho_\GL, 1,1), \tau}_{\phi_\tau}$ by Lemma \ref{initial} (3), 
both sides of \eqref{star} are equal to $1$. 
Hence we assume that $\rho_\GL \boxtimes S_d \subset \phi_0$ for some $d \geq 1$.
Set
\[
d_0 = \max\{d \geq 1 \,|\, \rho_\GL \boxtimes S_{d} \subset \phi_0\}, 
\]
and write $d_0 = 2\beta_0+1$.
If $\alpha > \beta_0$, then 
since $\pair{s_u,\pi}_\psi = \pair{e(\rho_\GL,1,d_0),\pi_0}_{\psi_0} = \pair{e(\rho_\GL,1,1),\tau}_{\phi_\tau}$, 
we have
\[
\left.\frac{\gamma_A(s, {}^c\widehat{\phi}_\GL \otimes \widehat{\phi}_0, \psi_E)}
{\gamma_A(s, {}^c\widehat{\phi}_\GL \otimes \phi_{0}, \psi_E)}\right|_{s=0}
= 1 = \frac{\pair{s_u,\pi}_\psi}{\pair{e(\rho_\GL,1,1),\tau}_{\phi_\tau}}. 
\]
If $\alpha \leq \beta_0$, then 
\[
\left.\frac{\gamma_A(s, {}^c\widehat{\phi}_\GL \otimes \widehat{\phi}_0, \psi_E)}
{\gamma_A(s, {}^c\widehat{\phi}_\GL \otimes \phi_{0}, \psi_E)}\right|_{s=0}
= (-1)^{\beta_0-\alpha}. 
\]
On the other hand, we note that
\begin{itemize}
\item
$\rho_\GL \boxtimes (S_1 \oplus S_3 \oplus \dots \oplus S_{2\beta_0+1}) \subset \phi_0$; and 
\item
$\pair{e(\rho_\GL,1,2x+1), \pi_0}_{\psi_0} = -\pair{e(\rho_\GL,1,2x-1), \pi_0}_{\psi_0}$
for $1 \leq x \leq \beta_0$. 
\end{itemize}
Hence
\[
\frac{\pair{s_u,\pi}_\psi}{\pair{e(\rho_\GL,1,1), \tau}_{\phi_\tau}}
= \frac{\pair{e(\rho_\GL,1,2\alpha+1),\pi_0}_{\psi_0}}{\pair{e(\rho_\GL,1,2\beta_0+1), \pi_0}_{\psi_0}}
= (-1)^{\beta_0-\alpha}. 
\]
Therefore, we obtain the equation \eqref{star} when $2\alpha+1$ is odd.
This completes the proof of \eqref{star} when $\pi_0 \in \Pi_{\psi_0}$ is almost supercuspidal. 

%\subsection{The inductive case (a)}\label{(a)}
\subsection{The inductive case (a)}\label{(a)}
Let $\phi_0$ be a tempered $L$-parameter for $G_0$ of good parity, 
and set $\psi_0 = \widehat\phi_0$. 
Fix $\pi_0 \in \Pi_{\psi_0}$. 
In this subsection, we assume the conditions (1)--(3) and (4a) in Lemma \ref{abc}.
Set $m = m_{\phi_0}(\rho_1,2x+1) > 0$. 
\par

In this case, by applying Lemma \ref{AvsD} and Theorem \ref{Dk} repeatedly, 
we see that
\[
\pi_0 \hookrightarrow (\rho_1|\cdot|_E^{-x})^m \times \rho_1|\cdot|_E^{-(x+1)} \times \dots \times \rho_1|\cdot|_E^{-y} \rtimes \pi'_0, 
\]
where $\pi_0' \in \Pi_{\psi_0'}$ is characterized such that 
\begin{itemize}
\item
$\psi_0' = \widehat\phi_0'$ is a co-tempered $A$-parameter with 
\[
\phi_0' = \phi_0- \rho_1 \boxtimes (S_{2x+1}^{\oplus m-1} \oplus S_{2y+1}) \oplus \rho_1 \boxtimes S_{2x-1}^{\oplus m}; 
\]
\item
the character $\pair{\cdot, \pi_0'}_{\psi_0'}$ is given by 
\[
\pair{e(\rho, 1, 2i-1), \pi_0'}_{\psi_0'} = \left\{
\begin{aligned}
&\pair{e(\rho_1,1,2i+1), \pi_0}_{\psi_0} \iif \rho \cong \rho_1,\, x \leq i \leq y, \\
&\pair{e(\rho, 1, 2i-1), \pi_0}_{\psi_0} \other. 
\end{aligned}
\right. 
\]
\end{itemize}
Moreover, 
since $\pi_0$ and $\pi'_0$ are in the situation in Section \ref{sec.Ldata}, 
by Corollary \ref{Lparameters},
the $L$-parameters $\phi_{\pi_0}$ and $\phi_{\pi'_0}$ are related by 
\[
\phi_{\pi_0} = \phi_{\pi'_0} \oplus \rho_1(|\cdot|_E^{\half{x+y}} \oplus |\cdot|_E^{-\half{x+y}}) \boxtimes S_{y-x+1}
\oplus (\rho_1(|\cdot|_E^x \oplus |\cdot|_E^{-x}) \boxtimes S_1)^{\oplus m-1}.
\]
\par

Set $\phi_\GL = \rho_\GL \boxtimes S_{2\alpha+1}$ and $\psi_\GL = \widehat\phi_\GL$.
We denote the irreducible representation of $\GL_k(E)$ corresponding to $\psi_\GL$ by $\pi_\GL$. 
Taking a suitable classical group $G'$ (with $\dim(\St_{\widehat{G'}}) < \dim(\St_{\widehat{G}})$)
and its maximal parabolic subgroup $P' = M'N_{P'}$, 
write $\psi'_{M'} = \psi_\GL \oplus \psi_0'$. 
By the induction hypothesis, we may assume that the equation \eqref{star} holds for 
a highly non-tempered representation $\pi' \subset I_{P'}(\pi'_{M'})$ 
with $\pi'_{M'} = \pi_\GL \boxtimes \pi_0'$. 
Take a highly non-tempered representation $\pi \subset I_{P}(\pi_{M})$
such that the tempered part $\tau$ of the Langlands data of $\pi$ coincides with the one for $\pi'$. 
\par

By Lemma \ref{<s_u>}, 
we see that 
\[
\frac{\pair{s_u,\pi}_\psi}{\pair{s_u,\pi'}_{\psi'}}  
= \left\{
\begin{aligned}
&{-1} \iif \rho_\GL \cong \rho_1,\, x \leq \alpha < y, \alpha \equiv x \bmod \Z, \\
&1 \other.
\end{aligned}
\right. 
\]
Note that if there is a choice of $\tau$, then $\rho_\GL \not\cong \rho_1$ by Lemma \ref{<s_u>} (3).
We will check that
the left-hand side of \eqref{starstar}
is equal to the right-hand side of this equation.
\par

Since $\gamma_A(s,\phi,\psi_E)$ is multiplicative, we have
\begin{align*}
&\left(\frac{\gamma_A(s, {}^c\psi_\GL \otimes \psi_0, \psi_E)}
{\gamma_A(s, {}^c\psi_\GL \otimes \phi_{\pi_0}, \psi_E)}\right)
\left(\frac{\gamma_A(s, {}^c\psi_\GL \otimes \psi'_0, \psi_E)}
{\gamma_A(s, {}^c\psi_\GL \otimes \phi_{\pi'_0}, \psi_E)}\right)^{-1}
\\&= 
\frac{\gamma_A(s, {}^c\psi_\GL \otimes (\rho_1 \boxtimes S_1 \boxtimes S_{2x+1}), \psi_E)^{m-1}
\gamma_A(s, {}^c\psi_\GL \otimes (\rho_1 \boxtimes S_1 \boxtimes S_{2y+1}),\psi_E)}
{\gamma_A(s, {}^c\psi_\GL \otimes (\rho_1 \boxtimes S_1 \boxtimes S_{2x-1}), \psi_E)^{m}}
\\&\quad\times 
\prod_{\epsilon \in \{\pm1\}} \gamma_A(s, {}^c\psi_\GL \otimes (\rho_1|\cdot|_E^{\epsilon \half{x+y}} \boxtimes S_{y-x+1}), \psi_E)^{-1}
\gamma_A(s, {}^c\psi_\GL \otimes \rho_1|\cdot|_E^{\epsilon x}, \psi_E)^{-(m-1)}
\\&= 
\prod_{-\alpha \leq a \leq \alpha} 
\prod_{-x \leq b \leq x} 
\gamma_A(s, {}^c\rho_\GL|\cdot|_E^a \otimes \rho_1|\cdot|_E^b, \psi_E)^{m-1}
\\&\quad \times 
\prod_{-\alpha \leq a \leq \alpha} 
\prod_{-y \leq b \leq y} 
\gamma_A(s, {}^c\rho_\GL|\cdot|_E^a \otimes \rho_1|\cdot|_E^b, \psi_E)
\\&\quad \times
\prod_{-\alpha \leq a \leq \alpha} 
\prod_{-x+1 \leq b \leq x-1} 
\gamma_A(s, {}^c\rho_\GL|\cdot|_E^a \otimes \rho_1|\cdot|_E^b, \psi_E)^{-m}
\\&\quad\times 
\prod_{-\alpha \leq a \leq \alpha}\prod_{\epsilon \in \{\pm1\}} 
\gamma_A(s, ({}^c\rho_\GL|\cdot|_E^a \otimes \rho_1|\cdot|_E^{\epsilon \half{x+y}}) \boxtimes S_{y-x+1}, \psi_E)^{-1}
\\&\quad\times 
\prod_{-\alpha \leq a \leq \alpha}\prod_{\epsilon \in \{\pm1\}} 
\gamma_A(s, {}^c\rho_\GL|\cdot|_E^a \otimes \rho_1|\cdot|_E^{\epsilon x}, \psi_E)^{-(m-1)}
\\&= 
\prod_{-\alpha \leq a \leq \alpha}\prod_{\substack{-y \leq b \leq -x\\\text{or}\\ x \leq b \leq y}}
\gamma_A(s, {}^c\rho_\GL|\cdot|_E^a \otimes \rho_1|\cdot|_E^b, \psi_E)
\\&\quad\times 
\prod_{-\alpha \leq a \leq \alpha}\prod_{\epsilon \in \{\pm1\}} 
\gamma_A(s, ({}^c\rho_\GL|\cdot|_E^a \otimes \rho_1|\cdot|_E^{\epsilon \half{x+y}}) \boxtimes S_{y-x+1}, \psi_E)^{-1}
\\&=
\prod_{\mu \in X({}^c\rho_\GL \otimes \rho_1)} \prod_{-\alpha \leq a \leq \alpha}
q_E^{-(1-2s-2\mu-2a)(y-x)} 
\prod_{\substack{-y \leq b \leq -x-1\\\text{or}\\ x \leq b \leq y-1}} \frac{\zeta_E(s+\mu+a+b+1)}{\zeta_E(s+\mu+a+b)}
\\&=
\prod_{\mu \in X({}^c\rho_\GL \otimes \rho_1)} \prod_{-\alpha \leq a \leq \alpha}
q_E^{-(1-2s-2\mu-2a)(y-x)} 
\frac{\zeta_E(s+\mu+a-x)}{\zeta_E(s+\mu+a-y)}\frac{\zeta_E(s+\mu+a+y)}{\zeta_E(s+\mu+a+x)}
\\&=
\prod_{\mu \in X({}^c\rho_\GL \otimes \rho_1)} \prod_{-\alpha \leq a \leq \alpha}
q_E^{-(1-2s-2\mu-2a)(y-x)} 
\frac{\zeta_E(s+\mu+a-x)}{\zeta_E(s+\mu+a-y)}\frac{\zeta_E(s+\mu-a+y)}{\zeta_E(s+\mu-a+x)}
\\&= \prod_{\mu \in X({}^c\rho_\GL \otimes \rho_1)} \prod_{-\alpha \leq a \leq \alpha}
\frac{f_{\mu, a, y}(s)}{f_{\mu, a, x}(s)}.
\end{align*}
As in the previous subsection, 
Lemma \ref{zeta} implies that 
only $\mu = 0$ can contribute to this product after evaluating at $s = 0$.
Hence it is $1$ unless $\rho_\GL \cong \rho_1$. 
Moreover, since 
\[\left.
\frac{f_{0, a, y}(s)}{f_{0, a, x}(s)}
\right|_{s=0}
= \left\{
\begin{aligned}
&{-q_E^{2a(y-x)}} \iif \text{$a=x \not= y$ or $a=y \not= x$}, \\
&q_E^{2a(y-x)} \other
\end{aligned}
\right. 
\]
by Lemma \ref{zeta}, 
we conclude that 
\begin{align*}
&\left.
\left(\frac{\gamma_A(s, {}^c\psi_\GL \otimes \psi_0, \psi_E)}
{\gamma_A(s, {}^c\psi_\GL \otimes \phi_{\pi_0}, \psi_E)}\right)
\left(\frac{\gamma_A(s, {}^c\psi_\GL \otimes \psi'_0, \psi_E)}
{\gamma_A(s, {}^c\psi_\GL \otimes \phi_{\pi'_0}, \psi_E)}\right)^{-1}
\right|_{s=0}
\\&= \left\{
\begin{aligned}
&{-1} \iif \rho_\GL \cong \rho_1,\, x \leq \alpha < y,\, \alpha \equiv x \bmod \Z, \\
&1 \other,
\end{aligned}
\right. 
\end{align*}
as desired.

%\subsection{The inductive case (b)}\label{(b)}
\subsection{The inductive case (b)}\label{(b)}
We use the same notation as in the previous subsection. 
In this subsection, we assume (1)--(3) and (4b) in Lemma \ref{abc}.
Set $m = m_{\phi_0}(\rho_1,2x+1) > 1$. 
\par

In this case, by Lemma \ref{AvsD} and Theorems \ref{Dk}, \ref{Dk-1} (1), 
we see that
\[
\pi_0 \hookrightarrow 
(\rho_1|\cdot|_E^{-x})^{m-1} \rtimes \pi_0',
\]
where $\pi_0' \in \Pi_{\psi_0'}$ is characterized such that 
\begin{itemize}
\item
$\psi_0' = \widehat\phi_0'$ is a co-tempered $A$-parameter with 
\[
\phi_0' = \phi_0- \rho_1 \boxtimes S_{2x+1}^{\oplus m-1} \oplus \rho_1 \boxtimes S_{2x-1}^{\oplus m-1}; 
\]
\item
$\pair{\cdot, \pi_0'}_{\psi_0'} = \pair{\cdot, \pi_0}_{\psi_0}$ via the canonical identification $\AA_{\psi_0'} \cong \AA_{\psi_0}$. 
\end{itemize}
Moreover, 
since $\pi_0$ and $\pi'_0$ are in the situation in Section \ref{sec.Ldata}, 
by Corollary \ref{Lparameters},
the $L$-parameters $\phi_{\pi_0}$ and $\phi_{\pi'_0}$ are related by 
\[
\phi_{\pi_0} = \phi_{\pi'_0} \oplus (\rho_1(|\cdot|_E^x \oplus |\cdot|_E^{-x}) \boxtimes S_1)^{\oplus m-1}.
\]
\par

We take highly non-tempered summands $\pi \subset I_P(\pi_M)$ and $\pi' \subset I_{P'}(\pi'_{M'})$
as in the previous subsection.
By Lemma \ref{<s_u>}, we have $\pair{s_u,\pi}_\psi = \pair{s_u,\pi'}_{\psi'}$. 
On the other hand, 
by the multiplicativity of $\gamma_A$-factors, we have
\begin{align*}
&\left(\frac{\gamma_A(s, {}^c\psi_\GL \otimes \psi_0, \psi_E)}
{\gamma_A(s, {}^c\psi_\GL \otimes \phi_{\pi_0}, \psi_E)}\right)
\left(\frac{\gamma_A(s, {}^c\psi_\GL \otimes \psi'_0, \psi_E)}
{\gamma_A(s, {}^c\psi_\GL \otimes \phi_{\pi'_0}, \psi_E)}\right)^{-1}
\\&= 
\left(\frac{\gamma_A(s, {}^c\psi_\GL \otimes (\rho_1 \boxtimes S_1 \boxtimes S_{2x+1}), \psi_E)}
{\gamma_A(s, {}^c\psi_\GL \otimes (\rho_1 \boxtimes S_1 \boxtimes S_{2x-1}), \psi_E)}
\prod_{\epsilon \in \{\pm1\}} \gamma_A(s, {}^c\psi_\GL \otimes \rho_1|\cdot|_E^{\epsilon x}, \psi_E)^{-1}
\right)^{m-1}
\\&= 1.
\end{align*}
Therefore, we conclude that 
\[
\left.
\left(\frac{\gamma_A(s, {}^c\psi_\GL \otimes \psi_0, \psi_E)}
{\gamma_A(s, {}^c\psi_\GL \otimes \phi_{\pi_0}, \psi_E)}\right)
\left(\frac{\gamma_A(s, {}^c\psi_\GL \otimes \psi'_0, \psi_E)}
{\gamma_A(s, {}^c\psi_\GL \otimes \phi_{\pi'_0}, \psi_E)}\right)^{-1}
\right|_{s=0}
= \frac{\pair{s_u,\pi}_\psi}{\pair{s_u,\pi'}_{\psi'}}.
\]

%\subsection{The inductive case (c)}\label{(c)}
\subsection{The inductive case (c)}\label{(c)}
We continue to consider the inductive case. 
Let $\phi_0$ be a tempered $L$-parameter for $G_0$ of good parity, and set $\psi_0 = \widehat\phi_0$. 
Fix $\pi_0 \in \Pi_{\psi_0}$. 
In this subsection, we assume the conditions (1)--(3) and (4c) in Lemma \ref{abc}.
Set $m = m_{\phi_0}(\rho_1,2x+1) > 1$. 
\par

In this case, by Theorems \ref{Dk} and \ref{Dk-1} (2), 
we see that 
\begin{align*}
\hat\pi_0 
&\hookrightarrow 
(\rho_1|\cdot|_E^x)^{m-1} \times \Delta([-x,x-1]_{\rho_1}) 
\times \rho_1|\cdot|_E^{x+1} \times \dots \times \rho_1|\cdot|_E^y \rtimes \sigma_0'
\\&\cong 
(\rho_1|\cdot|_E^x)^{m-1} \times \rho_1|\cdot|_E^{x+1} \times \dots \times \rho_1|\cdot|_E^y 
\times \Delta([-x,x-1]_{\rho_1}) \rtimes \sigma_0',
\end{align*}
where $\sigma'_0$ is tempered, and its $L$-parameter is given by 
\[
\phi_{\sigma'_0} 
= \phi_0- \rho_1 \boxtimes (S_{2x+1}^{\oplus m-1} \oplus S_{2y+1} ) \oplus \rho_1 \boxtimes S_{2x-1}^{\oplus m-2}
\]
and 
\[
\pair{e(\rho, 2i-1,1), \sigma_0'}_{\phi_{\sigma_0'}} = 
\left\{
\begin{aligned}
&\pair{e(\rho_1, 2i+1,1), \hat\pi_0}_{\phi_0} \iif \rho \cong \rho_1,\, x < i \leq y, \\
&\pair{e(\rho, 2i-1,1), \hat\pi_0}_{\phi_0} \other.
\end{aligned}
\right. 
\]
This inclusion factors through 
\[
\hat\pi_0 \hookrightarrow 
(\rho_1|\cdot|_E^x)^{m-1} \times \rho_1|\cdot|_E^{x+1} \times \dots \times \rho_1|\cdot|_E^y \rtimes \hat\pi_0'
\]
where $\hat\pi_0'$ is the unique irreducible subrepresentation of $\Delta([-x,x-1]_{\rho_1}) \rtimes \sigma_0'$, 
i.e., the Langlands quotient of $\Delta([-(x-1),x]_{\rho_1}) \rtimes \sigma_0'$.
In particular, $\hat\pi'_0$ belongs to the $L$-packet associated to the $A$-parameter
\[
\widehat\psi_0' = \phi_{\sigma'_0} \oplus \rho_1 \boxtimes S_{2x} \boxtimes S_{2}, 
\]
and the character $\pair{\cdot, \hat\pi'_0}_{\widehat\psi_0'}$ is given by
\begin{align*}
\pair{e(\rho,2i-1,1), \hat\pi'_0}_{\widehat\psi_0'} &= 
\left\{
\begin{aligned}
&\pair{e(\rho_1, 2i+1,1), \hat\pi_0}_{\widehat\psi_0} \iif \rho \cong \rho_1,\, x < i \leq y, \\
&\pair{e(\rho, 2i-1,1), \hat\pi_0}_{\widehat\psi_0} \other,
\end{aligned}
\right. 
\\
\pair{e(\rho_1,2x,2), \hat\pi'_0}_{\widehat\psi_0'} &= 1.
\end{align*}
\par

Taking the Aubert dual, we obtain 
\[
\pi_0 \hookrightarrow 
(\rho_1|\cdot|_E^{-x})^{m-1} \times \rho_1|\cdot|_E^{-(x+1)} \times \dots \times \rho_1|\cdot|_E^{-y} \rtimes \pi_0', 
\]
where $\pi_0' \in \Pi_{\psi'_0}$ with 
\[
\psi_0' = \psi_0-\rho_1 \boxtimes S_1 \boxtimes (S_{2x+1}^{\oplus m-1} \oplus S_{2y+1}) 
\oplus \rho_1 \boxtimes S_1 \boxtimes S_{2x-1}^{\oplus m-2}
\oplus \rho_1 \boxtimes S_2 \boxtimes S_{2x}.
\]
Since $\pi_0$ and $\pi'_0$ are in the situation in Section \ref{sec.Ldata}, 
by Corollary \ref{Lparameters},
the $L$-parameters $\phi_{\pi_0}$ and $\phi_{\pi'_0}$ are related by 
\[
\phi_{\pi_0} = \phi_{\pi'_0} \oplus \rho_1(|\cdot|_E^{\half{x+y}} \oplus |\cdot|_E^{-\half{x+y}}) \boxtimes S_{y-x+1}
\oplus (\rho_1(|\cdot|_E^x \oplus |\cdot|_E^{-x}) \boxtimes S_1)^{\oplus m-2}.
\]
\par

We take $\pi \subset I_P(\pi_M)$ and $\pi' \subset I_{P'}(\pi'_{M'})$ as in the previous subsections. 
By Hypothesis \ref{arthur}, we know the local intertwining relation for $\psi_{M'} = \psi_\GL \oplus \psi_0'$. 
However, it is \eqref{A-LIR}.
To deduce our \eqref{LIR} for $\pi' \subset I_{P'}(\pi'_{M'})$, 
we need to check the conditions in Lemma \ref{A_LIR}. 
We can check them by using the next lemma together with Aubert duality.
In this lemma, for simplicity, we refresh the notations for parameters.

\begin{lem}\label{mult-free}
Assume Hypothesis \ref{arthur}. 
Let $\psi_M = \phi_\GL \oplus \psi_0$ be an $A$-parameter for $M = \GL_d(E) \times G_0$ such that 
\begin{itemize}
\item
$\phi_\GL = \rho_\GL \boxtimes S_{2\alpha+1}$ is a discrete $L$-parameter for $\GL_d(E)$; 
\item
$\psi_0$ is an $A$-parameter for $G_0$ of the form 
\[
\psi_0 = \phi_0 \oplus \rho_1 \boxtimes S_{2x} \boxtimes S_2
\]
where $\phi_0$ is tempered and $\rho_1$ is irreducible (and conjugate-self-dual).
\end{itemize}
Then 
\begin{itemize}
\item
$I_P(\pi_M)$ is multiplicity-free for any $\pi_M \in \Pi_{\psi_M}$; and
\item
for $\pi_M, \pi'_M \in \Pi_{\psi_M}$, if $\pi_M \not\cong \pi'_M$, 
then $I_P(\pi_M)$ and $I_P(\pi'_M)$ have no common irreducible summand.
\end{itemize}
\end{lem}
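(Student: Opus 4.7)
The plan is to use Aubert duality to reduce both statements to the tempered multiplicity-free statement included in Hypothesis \ref{arthur}, via an explicit description of $\hat\pi_0$ as a subquotient of an induction from tempered data, provided by the results of Appendix \ref{xu-atobe}.

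Since $I_P(\pi_M)$ is a unitary, hence semisimple, representation and Aubert duality commutes with parabolic induction (Theorem \ref{AD}), the irreducible decomposition of $I_P(\pi_M)$ is in bijection, via $\pi \mapsto \hat\pi$, with that of $I_{\overline{P}}(\hat\pi_M)$, preserving multiplicities. It therefore suffices to establish the multiplicity-free and disjointness statements for $I_{\overline{P}}(\hat\pi_M)$, where $\hat\pi_M = \hat\pi_\GL \boxtimes \hat\pi_0$; here $\hat\pi_\GL$ is the Speh representation attached to $\widehat{\phi}_\GL = \rho_\GL \boxtimes S_1 \boxtimes S_{2\alpha+1}$, and $\hat\pi_0$ lies in $\Pi_{\widehat{\psi}_0}$.

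Using Theorems \ref{Dk}, \ref{Dk-1}, and Corollary \ref{Lparameters} in Appendix \ref{xu-atobe}, I would show that every $\hat\pi_0 \in \Pi_{\widehat{\psi}_0}$ embeds, as the unique irreducible subrepresentation, into
\[
\Delta\bigl([-x + \tfrac{1}{2},\,x - \tfrac{1}{2}]_{\rho_1}\bigr) \rtimes \sigma_0,
\]
where $\sigma_0$ lies in the tempered $L$-packet $\Pi_{\phi_0^\sharp}$ for a classical group $G_0^\sharp$ of the same type as $G_0$, with $\phi_0^\sharp$ obtained from $\widehat{\phi}_0$ by appending the two tempered factors $\rho_1 \boxtimes S_{2x+1}$ and $\rho_1 \boxtimes S_{2x-1}$, and where the assignment $\hat\pi_0 \mapsto \sigma_0$ is injective. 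Combined with the Speh realization of $\hat\pi_\GL$, this yields an embedding
\[
I_{\overline{P}}(\hat\pi_M) \hookrightarrow \hat\pi_\GL \rtimes \Bigl(\Delta\bigl([-x + \tfrac{1}{2},\,x - \tfrac{1}{2}]_{\rho_1}\bigr) \rtimes \sigma_0\Bigr).
\]
Interchanging the two $\GL$-factors by a nonzero normalized intertwining operator (which is regular when the two segments are unlinked), the problem reduces to the multiplicity-free decomposition of the tempered induction $\hat\pi_\GL \rtimes \sigma_0$, viewed as parabolic induction from a Levi of the larger classical group $\GL_d(E) \times G_0^\sharp$; this decomposition is supplied by the tempered part of Hypothesis \ref{arthur}, and the injectivity of $\hat\pi_0 \mapsto \sigma_0$ upgrades multiplicity-freeness to the claimed disjointness across distinct $\pi_M$.

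The main obstacle will be the case $\rho_\GL \cong \rho_1$ together with a compatibility of exponents (specifically $\alpha \equiv x - \tfrac{1}{2} \bmod \Z$ and a linked-segment condition), where the normalized intertwining operator swapping the two $\GL$-parts degenerates. In that regime, the reduction argument must be supplemented by a direct Langlands data analysis of $I_P(\pi_M)$, using Tadi\'c's formula (Theorem \ref{Tformula}) and Casselman's criterion (Theorem \ref{Ccriterion}) together with the Jacquet module computations of Appendix \ref{xu-atobe}, to verify that distinct irreducible constituents have distinct Langlands data, and that these data depend injectively on $\pi_M$.
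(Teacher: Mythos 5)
Your proposed Aubert-duality reduction does not achieve what it is meant to. The parameter $\psi_0 = \phi_0 \oplus \rho_1 \boxtimes S_{2x} \boxtimes S_2$ is neither tempered nor co-tempered, so its dual $\widehat{\psi}_0 = \widehat{\phi}_0 \oplus \rho_1 \boxtimes S_2 \boxtimes S_{2x}$ is again neither tempered nor co-tempered; dualizing therefore does not place $\hat\pi_0$ in a tempered packet, and the appendix results you cite (Theorems \ref{Dk}, \ref{Dk-1}, Corollary \ref{Lparameters}) apply only to tempered $L$-packets and cannot be invoked for $\hat\pi_0 \in \Pi_{\widehat{\psi}_0}$. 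Concretely, your explicit description is wrong on both factors: if $\pi_0$ is the Langlands quotient of $\Delta([-(x-1),x]_{\rho_1}) \rtimes \sigma_0$ with $\sigma_0$ tempered (which is what actually happens, see below), then by Theorem \ref{AD} (5) the dual standard module is $Z([-(x-1),x]_{\rho_1}) \rtimes \widehat{\sigma}_0$, where $Z([-(x-1),x]_{\rho_1}) = \Speh(\rho_1,2x)|\cdot|_E^{1/2}$ is a Speh twist, not a Steinberg $\Delta([-x+\tfrac12,x-\tfrac12]_{\rho_1})$, and $\widehat{\sigma}_0$ is co-tempered, not tempered. The ``tempered $L$-packet $\Pi_{\phi_0^\sharp}$ for a parameter obtained from $\widehat{\phi}_0$'' is incoherent, since $\widehat{\phi}_0$ is not a tempered parameter.

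The paper does not dualize at all. It applies Arthur's formal construction of $A$-packets (\cite[pp.~416--417]{Ar}) directly to $\psi_0$: writing $\psi_0 = \phi_0 \oplus \psi_1$ with $\psi_1 = \rho_1 \boxtimes S_{2x} \boxtimes S_2$, the exact sequence
\[
0 \rightarrow \pi_{\phi_0} \boxtimes \pi_{\psi_1^D} \rightarrow \pi_{\phi_0} \boxtimes \II_{\psi_1} \rightarrow \pi_{\phi_0} \boxtimes \pi_{\psi_1} \rightarrow 0
\]
on the $\GL_N$ side, transferred via the twisted and ordinary endoscopic character relations, yields an identity in $\RR(G_0)$ expressing $\sum_{\pi_0 \in \Pi_{\phi_{\psi_0}}} \II_{\pi_0}$ in terms of $\Pi_{\psi_0}$ and $\Pi_{\psi_0^D}$. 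This forces every $\pi_0 \in \Pi_{\psi_0}$ to be either tempered or the Langlands quotient of $\Delta([-(x-1),x]_{\rho_1}) \rtimes \sigma_0$ with $\sigma_0 \in \Pi_{\phi_0}$, the latter by a Casselman-criterion analysis of the constituents of that standard module. Then, since $[-\alpha,\alpha]_{\rho_\GL}$ and $[-(x-1),x]_{\rho_1}$ have midpoints differing by $\tfrac12$ and so are never linked, $\pi_\GL \times \Delta([-(x-1),x]_{\rho_1})$ is irreducible and commutative, whence $I_P(\pi_M)$ is a semisimple quotient of a multiplicity-free sum of distinct standard modules $\Delta([-(x-1),x]_{\rho_1}) \rtimes \pi_i$ with the $\pi_i \subset \pi_\GL \rtimes \sigma_0$ tempered; both the multiplicity-freeness and the disjointness across $\pi_M$ then follow from the tempered statement. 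Note also that the linking obstacle you anticipate at $\rho_\GL \cong \rho_1$, $\alpha \equiv x - \tfrac12 \bmod \Z$ does not arise in this framework, precisely because of the half-integer offset of midpoints; your worry is an artifact of the incorrectly placed segment $[-x+\tfrac12,x-\tfrac12]$.
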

\begin{proof}
This is a special case of M{\oe}glin's multiplicity one theorem (see \cite[Theorem 8.12]{X2}). 
However, her proof relies on \eqref{ECR1} and \eqref{ECR2} for all classical groups,
and hence we cannot use this result. 
We shall give another proof.
\par

Note that we can use Arthur's theory for $\psi_0$ by Hypothesis \ref{arthur}. 
We apply the formal construction of the $A$-packets in \cite[p.~416--417]{Ar} to $\psi_0$ and $s_{\psi_0}$. 
See also the proof of \cite[Lemma 2.2.2]{Ar}. 
Since $\psi_0 = \phi_0 \oplus \psi_1$ with $\psi_1 = \rho_1 \boxtimes S_{2x} \boxtimes S_2$, 
we have an exact sequence 
\[
\begin{CD}
0 @>>> \pi_{\phi_0} \boxtimes \pi_{\psi_1^D} @>>> \pi_{\phi_0} \boxtimes \II_{\psi_1} 
@>>> \pi_{\phi_0} \boxtimes \pi_{\psi_1} @>>> 0
\end{CD}.
\]
If we denote the standard module for $\pi_0 \in \Irr(G)$ by $\II_{\pi_0}$, 
then the twisted and ordinary endoscopic transfers of this exact sequence show that 
\[
\sum_{\pi_0 \in \Pi_{\phi_{\psi_0}}} \II_{\pi_0} 
= \sum_{\pi_0 \in \Pi_{\psi_0}} \pi_0 
+ \sum_{\pi_0' \in \Pi_{\psi_0^D}} \pair{s', \pi_0'}_{\psi_0^D} \pi_0'
\]
in the Grothendieck group $\RR(G_0)$,
where $s' = e(\rho_1, 2x-1,1) + e(\rho_1, 2x+1,1)$
(or $s' = e(\rho_1,2,1)$ if $x=1/2$).
Hence every $\pi_0 \in \Pi_{\psi_0}$ is 
\begin{enumerate}
\item[(1)]
tempered (with $L$-parameter $\psi_0^D$); or 
\item[(2)]
an irreducible subquotient of the standard module 
$\II_{\pi_0} = \Delta([-(x-1),x]_{\rho_1}) \rtimes \sigma_0$
for $\sigma_0 \in \Pi_{\phi_0}$. 
\end{enumerate}
\par

If $\sigma_0 \in \Pi_{\phi_0}$, then 
by Casselman's criterion (Theorem \ref{Ccriterion}), 
we see that all irreducible constituents of $\Delta([-(x-1),x]_{\rho_1}) \rtimes \sigma_0$ 
other than the Langlands quotient are tempered.
(See the proof of \cite[Proposition 5.2]{At}.)
Hence every $\pi_0 \in \Pi_{\psi_0}$ is 
\begin{enumerate}
\item[(1)]
tempered; or 
\item[(2')]
the Langlands quotient of $\Delta([-(x-1),x]_{\rho_1}) \rtimes \sigma_0$
for $\sigma_0 \in \Pi_{\phi_0}$. 
\end{enumerate}
\par

We denote by $\pi_\GL$ 
the irreducible discrete series representation of $\GL_d(E)$ corresponding to $\phi_\GL$, 
and set $\pi_M = \pi_\GL \boxtimes \pi_0$.
If $\pi_0$ is tempered, then $I_P(\pi_M)$ is a multiplicity-free sum of irreducible tempered representations
by \cite[Theorem 1.5.1]{Ar} and \cite[Theorem 2.5.1]{Mok}.
Suppose that $\pi_0$ is in the case (2')
so that $\Delta([-(x-1),x]_{\rho_1}) \rtimes \sigma_0 \twoheadrightarrow \pi_0$
for some $\sigma_0 \in \Pi_{\phi_0}$. 
Since $\pi_\GL \times \Delta([-(x-1),x]_{\rho_1}) \cong \Delta([-(x-1),x]_{\rho_1}) \times \pi_\GL$ 
by \cite[Theorem 9.7]{Z}, we have 
\[
\Delta([-(x-1),x]_{\rho_1}) \times \pi_\GL \rtimes \sigma_0
\twoheadrightarrow I_P(\pi_M). 
\]
Since $\pi_\GL \rtimes \sigma_0$ is a multiplicity-free sum of irreducible tempered representations, 
$\Delta([-(x-1),x]_{\rho_1}) \times \pi_\GL \rtimes \sigma_0$
is a sum of standard modules of the form $\Delta([-(x-1),x]_{\rho_1}) \rtimes \pi_i$
for tempered representations $\pi_i$ which are not isomorphic to each other.
Therefore, $I_{P}(\pi_M)$ is a semisimple quotient of 
a sum of distinct standard modules, and hence $I_{P}(\pi_M)$ is multiplicity-free.
Moreover, all irreducible summands of $I_{P}(\pi_M)$ are non-tempered.
\par

Next, let $\pi_M = \pi_\GL \boxtimes \pi_0$ and $\pi_M' = \pi_\GL \boxtimes \pi_0'$ be in $\Pi_{\psi_M}$.
Suppose that $\pi_M \not\cong \pi_M'$.
Note that all irreducible summands of $I_{P}(\pi_M)$ are tempered (\resp non-tempered) 
if $\pi_0$ is tempered (\resp non-tempered).
Hence if $I_P(\pi_M)$ and $I_P(\pi_M')$ have a common irreducible summand, 
then both of $\pi_0$ and $\pi_0'$ are tempered, or both of them are non-tempered. 
\par

In the former case, we denote the tempered $L$-parameters of $\pi_0$ and $\pi_0'$ by $\phi_{\pi_0}$ and $\phi_{\pi_0'}$, respectively. 
If $I_P(\pi_M)$ and $I_P(\pi_M')$ have a common irreducible summand, 
then denoting by $\phi$ the $L$-parameter of this summand, 
we have 
\[
\phi = \phi_\GL \oplus \phi_{\pi_0} \oplus {}^c\phi_\GL^\vee
= \phi_\GL \oplus \phi_{\pi_0'} \oplus {}^c\phi_\GL^\vee
\]
and hence $\phi_{\pi_0} = \phi_{\pi_0'}$. 
Moreover, since $\pi_M \not\cong \pi_{M'}$, 
by \cite[Proposition 2.4.3]{Ar} and \cite[Proposition 3.4.4]{Mok},
we have
\[
I_P(\pi_M) \oplus I_P(\pi_M') \subset \bigoplus_{\pi \in \Pi_{\phi}} \pi. 
\]
By \cite[Theorem 1.5.1]{Ar} and \cite[Theorem 2.5.1]{Mok},
the right-hand side, and hence the left-hand side are multiplicity-free. 
This is a contradiction. 
In other words, $I_P(\pi_M)$ and $I_P(\pi_M')$ have no common irreducible summand.
\par

Now we assume that both of $\pi_0$ and $\pi_0'$ are in the case (2'). 
We denote by $\sigma_0, \sigma_0' \in \Pi_{\phi_0}$ the tempered representations 
corresponding to $\pi_0$, $\pi_0'$ as in (2'), respectively. 
Since $\pi_0 \not\cong \pi_0'$, we have $\sigma_0 \not\cong \sigma_0'$.
Moreover, 
since tempered $L$-packets are multiplicity-free (\cite[Theorem 1.5.1]{Ar}, \cite[Theorem 2.5.1]{Mok}), 
we see that $\pi_\GL \rtimes \sigma_0$ and $\pi_\GL \rtimes \sigma_0'$ 
have no common irreducible summand.
Hence $I_P(\pi_M)$ and $I_P(\pi_M')$ are semisimple quotients of 
sums of standard modules which have no common standard module. 
Therefore, $I_P(\pi_M)$ and $I_P(\pi_M')$ have no common irreducible summand.
\end{proof}

Let us go back to the situation at the beginning of this subsection. 
We have defined 
\begin{align*}
\phi_{\sigma_0'} 
&= \phi_0- \rho_1 \boxtimes (S_{2x+1}^{\oplus m-1} \oplus S_{2y+1} ) \oplus \rho_1 \boxtimes S_{2x-1}^{\oplus m-2}, \\
\widehat\psi_0' &= \phi_{\sigma'_0} \oplus \rho_1 \boxtimes S_{2x} \boxtimes S_{2}.
\end{align*}
Set 
\[
\psi' = \psi_\GL \oplus \psi_0' \oplus {}^c\psi_\GL^\vee \in \Psi(G'), 
\]
where $G'$ is a classical group such that $\dim(\St_{\widehat{G'}}) < \dim(\St_{\widehat{G}})$.
Using Corollary \ref{+otimes-}, 
we compare $\pair{s_u,\pi}_\psi$ with $\pair{s_u', \pi'}_{\psi'}$.

\begin{lem}\label{8.10}
Write $\phi_\GL = \rho_\GL \boxtimes S_{2\alpha+1}$ and $\psi_\GL = \widehat\phi_\GL$.
If $\psi_\GL$ is of the same type as $\psi_0$, then 
we have
\begin{align*}
\frac{\pair{s_u,\pi}_\psi}{\pair{s_u,\pi'}_{\psi'}}
&= \left.\frac
{\gamma_A(s, {}^c\psi_\GL \otimes (\rho_1 \boxtimes S_{2} \boxtimes S_{2x}), \psi_E)}
{\gamma_A(s, {}^c\psi_\GL \otimes (\rho_1 \boxtimes S_1 \boxtimes (S_{2x-1} \oplus S_{2x+1})), \psi_E)}
\right|_{s=0}
\\&\quad\times 
 \left\{
\begin{aligned}
&{-1} \iif \rho_\GL \cong \rho_1,\, x \leq \alpha < y, \alpha \equiv x \bmod \Z, \\
&1 \other.
\end{aligned}
\right. 
\end{align*}
Otherwise, $\pair{s_u,\pi}_\psi = \pair{s_u,\pi'}_{\psi'} = 1$.
\end{lem}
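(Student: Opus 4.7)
The strategy is to apply Corollary~\ref{+otimes-} to the Aubert-dual parameter $\xi := \widehat{\psi'}$. The crucial observation is that
\[
\xi = \phi_\GL \oplus \widehat{\psi_0'} \oplus {}^c\phi_\GL^\vee, \qquad \widehat{\psi_0'} = \phi_{\sigma_0'} \oplus \rho_1 \boxtimes S_{2x} \boxtimes S_2,
\]
so every irreducible summand of $\xi$ has Arthur $\SL_2$-factor equal to $S_1$ or $S_2$; hence $\xi$ is of the form $\phi_1 \boxtimes S_1 \oplus \phi_2 \boxtimes S_2$ required by Corollary~\ref{+otimes-}. Since $\dim(\St_{\widehat{G'}}) < \dim(\St_{\widehat{G}})$, Hypothesis~\ref{arthur} supplies Hypothesis~\ref{ECR} for $\xi$. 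Taking $\hat s_u = e(\rho_\GL, 2\alpha+1, 1) \in A_\xi$, which corresponds to the $\phi_\GL$-summand, the decomposition associated with this element is $\xi_- = \phi_\GL$, $\xi_+ = \widehat{\psi_0'} \oplus {}^c\phi_\GL^\vee$, and Corollary~\ref{+otimes-} yields
\[
\frac{\langle s_u, \pi'\rangle_{\psi'}}{\langle \hat s_u, \widehat{\pi'}\rangle_\xi} = (-1)^{r(\xi)-r(\xi_+)-r(\xi_-)} \left.\frac{\gamma_A(s, \xi_+^A \otimes {}^c\xi_-^A, \psi_E)}{\gamma_A(s, \widehat{\xi_+} \otimes {}^c\widehat{\xi_-}, \psi_E)}\right|_{s=0}.
\]

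Because $\xi_-$ and every constituent of $\xi_+$ other than $\rho_1 \boxtimes S_{2x} \boxtimes S_2$ is tempered, each such constituent contributes the same factor to both the numerator and denominator of the gamma quotient, and cancels. The only surviving contribution is from the piece $\rho_1 \boxtimes S_{2x} \boxtimes S_2$, whose $A$-form is the $W_E$-representation $\rho_1 \boxtimes (S_{2x-1}\oplus S_{2x+1})$ (by Clebsch--Gordan), while its Aubert dual is $\rho_1 \boxtimes S_2 \boxtimes S_{2x}$. Tensoring respectively with ${}^c\xi_-^A = {}^c\phi_\GL$ and ${}^c\widehat{\xi_-} = {}^c\psi_\GL$ reproduces precisely the gamma-factor ratio displayed in Lemma~\ref{8.10}; by Lemma~\ref{holomorphic} this ratio is a sign.

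It remains to (i)~evaluate $\langle \hat s_u, \widehat{\pi'}\rangle_\xi$ and $\langle s_u, \pi\rangle_\psi$, and (ii)~match the sign $(-1)^{r(\xi)-r(\xi_+)-r(\xi_-)}$ with the sign in the lemma. For the co-tempered side, $\langle s_u, \pi\rangle_\psi$ is computed by Lemma~\ref{<s_u>}. For $\langle \hat s_u, \widehat{\pi'}\rangle_\xi$, note that $\widehat{\pi'}$ is an irreducible summand of $I_{P'}(\pi_{\phi_\GL} \boxtimes \widehat{\pi_0'})$, where $\widehat{\pi_0'} \in \Pi_{\widehat{\psi_0'}}$ has the explicit character recorded in the construction preceding Lemma~\ref{8.10}. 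Hypothesis~\ref{arthur} supplies \eqref{A-LIR} for $\xi$, and Lemma~\ref{mult-free} upgrades this to \eqref{LIR}, from which $\langle \hat s_u, \widehat{\pi'}\rangle_\xi$ is read off in terms of the action of the normalized intertwining operator on the highly non-tempered summand of the induction. For the sign, a direct combinatorial computation of $r(\xi), r(\xi_\pm)$ from the definition in Section~\ref{sec.TAD_GL}, after cancelling contributions from tempered self-paired constituents, produces the case distinction of the lemma.

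The principal obstacle is the evaluation of $\langle \hat s_u, \widehat{\pi'}\rangle_\xi$: even with Hypothesis~\ref{arthur} in hand, $\hat s_u$ lies in the $\phi_\GL$-part of the component group rather than in $A_{\widehat{\psi_0'}}$, so one cannot merely descend along \cite[Proposition 2.4.3]{Ar} to the character of $\widehat{\pi_0'}$. Instead, one must identify $\widehat{\pi'}$ as a specific irreducible summand of $I_{P'}(\pi_{\phi_\GL} \boxtimes \widehat{\pi_0'})$, track its behavior under Aubert duality using Corollary~\ref{NIOvsAD}, and then invoke the local intertwining relation for the full parameter~$\xi$ at one rank lower. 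The book-keeping in this step, together with the sign computation, is what makes case~(c) genuinely more delicate than cases~(a) and~(b).
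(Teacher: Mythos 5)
Your opening move — applying Corollary~\ref{+otimes-} to $\widehat{\psi'}$ (your $\xi$), checking it has the form $\phi_1\boxtimes S_1\oplus\phi_2\boxtimes S_2$, and extracting the gamma ratio from the lone non-tempered piece $\rho_1\boxtimes S_{2x}\boxtimes S_2$ via Clebsch--Gordan — is exactly the paper's first step, and the gamma ratio you produce is the one in the lemma. But after that your argument diverges and has a real gap.

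The paper's proof also applies Corollary~\ref{+-temp} to $\phi=\widehat\psi$, yielding $\pair{s_u,\pi}_\psi/\pair{\widehat{s_u},\hat\pi}_\phi=(-1)^{r(\phi)-r(\phi_+)-r(\phi_-)}$, and then observes that this sign \emph{equals} the sign $(-1)^{r(\widehat\psi')-r(\widehat\psi'_+)-r(\widehat\psi'_-)}$ coming from Corollary~\ref{+otimes-}; the two cancel. This is the step you are missing, and it is what converts the problem into a purely tempered-side comparison of $\pair{\widehat{s_u},\hat\pi}_\phi$ and $\pair{\widehat{s_u},\hat\pi'}_{\widehat\psi'}$. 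That comparison is then done directly from the structure $\Delta([-(x-1),x]_{\rho_1})\times\hat\pi_\GL\rtimes\sigma_0'\twoheadrightarrow I_{P'}(\hat\pi'_{M'})$: when $\rho_\GL\cong\rho_1$ and $x\leq\alpha<y$ with $\alpha\equiv x\bmod\Z$, one has $\phi_\GL\subset\phi_{\sigma_0'}$ and the character of $\sigma_0'$ at $e(\rho_\GL,2\alpha+1,1)$ is, by the construction of $\sigma_0'$, the negative of that of $\hat\pi_0$ at the same element; otherwise the argument of Lemma~\ref{<s_u>} applies verbatim and gives equality. This is where the case-distinguished sign $\pm1$ actually comes from. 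Your claim that it comes from ``a direct combinatorial computation of $r(\xi),r(\xi_\pm)$'' is wrong: those $r$-terms are precisely what cancels against the tempered-side $r$-terms.

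Your proposed route to $\pair{\hat s_u,\widehat{\pi'}}_\xi$ via \eqref{A-LIR}, Lemma~\ref{mult-free}, \eqref{LIR}, and Corollary~\ref{NIOvsAD} is both unnecessary and logically backwards. \eqref{LIR} expresses the eigenvalue of the normalized self-intertwining operator \emph{in terms of} $\pair{s_u,\cdot}$; it does not furnish an independent way to compute the character unless one already knows the operator action, which is at least as hard. The paper's proof of this lemma makes no use of any LIR machinery. Your closing paragraph shows you sensed the difficulty, but the way out is the symmetric two-sided application of Corollaries~\ref{+-temp} and~\ref{+otimes-}, not the LIR apparatus.
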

\begin{proof}
Since $\phi = \widehat\psi$ is tempered, 
we can apply Corollary \ref{+-temp} and get 
\[
\frac{\pair{s_u,\pi}_\psi}{\pair{\widehat{s_u}, \hat\pi}_{\phi}} 
= 
(-1)^{r(\phi)-r(\phi_+)-r(\phi_-)}. 
\]
Note that $\widehat\psi'$ satisfies the assumption of Corollary \ref{+otimes-}. 
Since $\dim(\widehat\psi') < \dim(\phi)$, 
we can use \eqref{ECR1} and \eqref{ECR2} for $\widehat\psi'$, 
and hence we can apply Corollary \ref{+otimes-} to $\widehat\psi'$. 
Since $\psi_\GL = \widehat\phi_\GL = \phi_\GL^A$ 
and 
\[
\widehat\psi'_+ = \phi_\GL \oplus \phi_{\sigma_0'} \oplus \rho_1 \boxtimes S_{2x} \boxtimes S_2, 
\quad \widehat\psi'_- = \phi_\GL,
\]
we have
\begin{align*}
\frac{\pair{s_u, \pi'}_{\psi'}}{\pair{\widehat{s_u},\hat\pi'}_{\widehat\psi'}}
&= 
(-1)^{r(\widehat\psi')-r(\widehat\psi'_+)-r(\widehat\psi'_-)}
\left.\frac{\gamma_A(s, {}^c\psi_\GL \otimes (\rho_1 \boxtimes S_1 \boxtimes (S_{2x-1} \oplus S_{2x+1})), \psi_E)}
{\gamma_A(s, {}^c\psi_\GL \otimes (\rho_1 \boxtimes S_{2} \boxtimes S_{2x}), \psi_E)}\right|_{s=0}.
\end{align*}
By a similar argument to the proof of Corollary \ref{+-temp}, 
we have
\[
(-1)^{r(\phi)-r(\phi_+)-r(\phi_-)} = (-1)^{r(\widehat\psi')-r(\widehat\psi'_+)-r(\widehat\psi'_-)}. 
\]
Therefore, the assertion is reduced to showing
\[
\frac{\pair{\widehat{s_u},\hat\pi}_{\phi}}{\pair{\widehat{s_u},\hat\pi'}_{\widehat\psi'}}  
= \left\{
\begin{aligned}
&{-1} \iif \rho_\GL \cong \rho_1,\, x \leq \alpha < y, \alpha \equiv x \bmod \Z, \\
&1 \other.
\end{aligned}
\right. 
\]
Recall that $\Delta([-(x-1),x]_{\rho_1}) \times \hat\pi_\GL \rtimes \sigma_0' \twoheadrightarrow I_{P'}(\hat\pi'_{M'})$, 
where $\Delta([-(x-1),x]_{\rho_1}) \rtimes \sigma_0' \twoheadrightarrow \hat\pi'_{0}$. 
In particular, if $\rho_\GL \cong \rho_1$, $x \leq \alpha < y$ and $\alpha \equiv x \bmod \Z$, 
then $\phi_\GL \subset \phi_{\sigma_{0}'}$ so that 
\begin{align*}
\pair{\widehat{s_u},\hat\pi'}_{\widehat\psi'} 
&= \pair{e(\rho_\GL, 2\alpha+1,1), \sigma'_0}_{\phi_{\sigma'_0}}
\\&= \pair{e(\rho_\GL, 2\alpha+3,1), \hat\pi_0}_{\phi_0}
\\&= -\pair{e(\rho_\GL, 2\alpha+1,1), \hat\pi_0}_{\phi_0}
= -\pair{\widehat{s_u},\hat\pi}_{\phi}. 
\end{align*}
Otherwise, the same proof of Lemma \ref{<s_u>} works, 
and we obtain that $\pair{\widehat{s_u},\hat\pi'}_{\widehat\psi'} = \pair{\widehat{s_u},\hat\pi}_{\phi}$.
This completes the proof.
\end{proof}

Now, what we have to show is that 
\begin{align*}
\frac{\pair{s_u, \pi}_\psi}{\pair{s_u,\pi'}_{\psi'}}
\left.
\left(\frac{\gamma_A(s, {}^c\psi_\GL \otimes \psi_0, \psi_E)}
{\gamma_A(s, {}^c\psi_\GL \otimes \phi_{\pi_0}, \psi_E)}\right)
\left(\frac{\gamma_A(s, {}^c\psi_\GL \otimes \psi'_0, \psi_E)}
{\gamma_A(s, {}^c\psi_\GL \otimes \phi_{\pi'_0}, \psi_E)}\right)^{-1}
\right|_{s=0}
= 1.
\end{align*}
Since 
\begin{align*}
&\left(\frac{\gamma_A(s, {}^c\psi_\GL \otimes \psi_0, \psi_E)}
{\gamma_A(s, {}^c\psi_\GL \otimes \phi_{\pi_0}, \psi_E)}\right)
\left(\frac{\gamma_A(s, {}^c\psi_\GL \otimes \psi'_0, \psi_E)}
{\gamma_A(s, {}^c\psi_\GL \otimes \phi_{\pi'_0}, \psi_E)}\right)^{-1}
\\&= 
\frac{\gamma_A(s, {}^c\psi_\GL \otimes (\rho_1 \boxtimes S_1 \boxtimes S_{2x+1}), \psi_E)^{m-1}
\gamma_A(s, {}^c\psi_\GL \otimes (\rho_1 \boxtimes S_1 \boxtimes S_{2y+1}),\psi_E)}
{\gamma_A(s, {}^c\psi_\GL \otimes (\rho_1 \boxtimes S_1 \boxtimes S_{2x-1}), \psi_E)^{m-2}
\gamma_A(s, {}^c\psi_\GL \otimes (\rho_1 \boxtimes S_2 \boxtimes S_{2x}), \psi_E)}
\\&\quad\times 
\prod_{\epsilon \in \{\pm1\}} \gamma_A(s, {}^c\psi_\GL \otimes (\rho_1|\cdot|_E^{\epsilon \half{x+y}} \boxtimes S_{y-x+1}), \psi_E)^{-1}
\gamma_A(s, {}^c\psi_\GL \otimes \rho_1|\cdot|_E^{\epsilon x}, \psi_E)^{-(m-2)}, 
\end{align*}
it is equivalent to checking that 
\begin{align*}
&\frac{\gamma_A(s, {}^c\psi_\GL \otimes (\rho_1 \boxtimes S_1 \boxtimes S_{2x+1}), \psi_E)^{m-2}
\gamma_A(s, {}^c\psi_\GL \otimes (\rho_1 \boxtimes S_1 \boxtimes S_{2y+1}),\psi_E)}
{\gamma_A(s, {}^c\psi_\GL \otimes (\rho_1 \boxtimes S_1 \boxtimes S_{2x-1}), \psi_E)^{m-1}}
\\&\quad\times 
\left.\prod_{\epsilon \in \{\pm1\}} \gamma_A(s, {}^c\psi_\GL \otimes (\rho_1|\cdot|_E^{\epsilon \half{x+y}} \boxtimes S_{y-x+1}), \psi_E)^{-1}
\gamma_A(s, {}^c\psi_\GL \otimes \rho_1|\cdot|_E^{\epsilon x}, \psi_E)^{-(m-2)}
\right|_{s=0}
\\&= 
 \left\{
\begin{aligned}
&{-1} \iif \rho_\GL \cong \rho_1,\, x \leq \alpha < y, \alpha \equiv x \bmod \Z, \\
&1 \other.
\end{aligned}
\right. 
\end{align*}
\par

Let us check this equation. 
We have
\begin{align*}
&\frac{\gamma_A(s, {}^c\psi_\GL \otimes (\rho_1 \boxtimes S_1 \boxtimes S_{2x+1}), \psi_E)^{m-2}
\gamma_A(s, {}^c\psi_\GL \otimes (\rho_1 \boxtimes S_1 \boxtimes S_{2y+1}),\psi_E)}
{\gamma_A(s, {}^c\psi_\GL \otimes (\rho_1 \boxtimes S_1 \boxtimes S_{2x-1}), \psi_E)^{m-1}}
\\&\quad\times 
\prod_{\epsilon \in \{\pm1\}} \gamma_A(s, {}^c\psi_\GL \otimes (\rho_1|\cdot|_E^{\epsilon \half{x+y}} \boxtimes S_{y-x+1}), \psi_E)^{-1}
\gamma_A(s, {}^c\psi_\GL \otimes \rho_1|\cdot|_E^{\epsilon x}, \psi_E)^{-(m-2)}
\\&=
\prod_{-\alpha \leq a \leq \alpha} 
\prod_{-x \leq b \leq x} 
\gamma_A(s, {}^c\rho_\GL|\cdot|_E^a \otimes \rho_1|\cdot|_E^b, \psi_E)^{m-2}
\\&\quad \times 
\prod_{-\alpha \leq a \leq \alpha} 
\prod_{-y \leq b \leq y} 
\gamma_A(s, {}^c\rho_\GL|\cdot|_E^a \otimes \rho_1|\cdot|_E^b, \psi_E)
\\&\quad \times
\prod_{-\alpha \leq a \leq \alpha} 
\prod_{-x+1 \leq b \leq x-1} 
\gamma_A(s, {}^c\rho_\GL|\cdot|_E^a \otimes \rho_1|\cdot|_E^b, \psi_E)^{-(m-1)}
\\&\quad\times 
\prod_{-\alpha \leq a \leq \alpha}\prod_{\epsilon \in \{\pm1\}} 
\gamma_A(s, ({}^c\rho_\GL|\cdot|_E^a \otimes \rho_1|\cdot|_E^{\epsilon \half{x+y}}) \boxtimes S_{y-x+1}, \psi_E)^{-1}
\\&\quad\times 
\prod_{-\alpha \leq a \leq \alpha}\prod_{\epsilon \in \{\pm1\}} 
\gamma_A(s, {}^c\rho_\GL|\cdot|_E^a \otimes \rho_1|\cdot|_E^{\epsilon x}, \psi_E)^{-(m-2)}
\\&=
\prod_{-\alpha \leq a \leq \alpha} 
\prod_{-y \leq b \leq y} 
\gamma_A(s, {}^c\rho_\GL|\cdot|_E^a \otimes \rho_1|\cdot|_E^b, \psi_E)
\\&\quad \times
\prod_{-\alpha \leq a \leq \alpha} 
\prod_{-x+1 \leq b \leq x-1} 
\gamma_A(s, {}^c\rho_\GL|\cdot|_E^a \otimes \rho_1|\cdot|_E^b, \psi_E)^{-1}
\\&\quad\times 
\prod_{-\alpha \leq a \leq \alpha}\prod_{\epsilon \in \{\pm1\}} 
\gamma_A(s, ({}^c\rho_\GL|\cdot|_E^a \otimes \rho_1|\cdot|_E^{\epsilon \half{x+y}}) \boxtimes S_{y-x+1}, \psi_E)^{-1}
\\&= 
\prod_{\mu \in X({}^c\rho_\GL \otimes \rho_1)}\prod_{-\alpha \leq a \leq \alpha}
q_E^{-(1-2s-2\mu-2a)(y-x)}
\prod_{\substack{-y \leq b \leq -x-1 \\ \text{or} \\ x \leq b \leq y-1}}
\frac{\zeta_E(s+\mu+a+b+1)}{\zeta_E(s+\mu+a+b)} 
\\&= 
\prod_{\mu \in X({}^c\rho_\GL \otimes \rho_1)}\prod_{-\alpha \leq a \leq \alpha}
q_E^{-(1-2s-2\mu-2a)(y-x)}
\frac{\zeta_E(s+\mu+a-x)}{\zeta_E(s+\mu+a-y)} 
\frac{\zeta_E(s+\mu-a+y)}{\zeta_E(s+\mu-a+x)}
\\&= 
\prod_{\mu \in X({}^c\rho_\GL \otimes \rho_1)}\prod_{-\alpha \leq a \leq \alpha}
\frac{f_{\mu,a,y}(s)}{f_{\mu,a,x}(s)}.
\end{align*}
By the same argument as in Section \ref{(a)}, 
this is equal to 
\[
 \left\{
\begin{aligned}
&{-1} \iif \rho_\GL \cong \rho_1,\, x \leq \alpha < y, \alpha \equiv x \bmod \Z, \\
&1 \other
\end{aligned}
\right. 
\]
after evaluating at $s=0$.
This completes case (c), and the proof of Theorem \ref{main3} (2).

\appendix
%\section{Local factors}
%\section{Local factors}
\section{Local factors}\label{sec.LF}
In this appendix, we recall some facts about local factors.
In particular, we shall show that 
the local Langlands correspondence for classical groups $G_0$ identifies 
the standard Shahidi local factors for irreducible generic representations of $\GL_k(E) \times G_0$
with the tensor product Artin local factors of the corresponding $L$-parameters. 
This result was used in Lemma \ref{classical-maximal}, 
which is for the first main theorem (Theorem \ref{main1}).
We also give a proof of Proposition \ref{multiplicative}.

%\subsection{Formulas for Artin local factors}
\subsection{Formulas for Artin local factors}\label{sec.factors}
We use the notation in Section \ref{sec.notation}. 
Assume that $E$ is non-archimedean, and write $q_E$ for the cardinality of the residue field of $E$. 
\par

Let $I_E$ be the inertia subgroup of the Weil group $W_E$. 
For a representation $(\phi, V)$ of $W_E$, 
we write 
\[
\phi^{I_E} = \{v \in V \,|\, \phi(w)v=v, \, \forall w \in I_E\}. 
\]
This is a subrepresentation of $\phi$. 
Moreover, any irreducible component of $\phi^{I_E}$ is unramified 
so that there is a finite multi-set 
\[
X(\phi) \subset \C/ 2\pi\I(\log q_E)^{-1}\Z
\]
such that 
\[
\phi^{I_E} \cong \bigoplus_{\mu \in X(\phi)} |\cdot|_E^\mu.
\]
Note that 
\begin{itemize}
\item
$X(\phi|\cdot|_E^{s_0}) = \{\mu+s_0 \,|\, \mu \in X(\phi)\}$; 
\item
if $\phi(W_E)$ is bounded, then $\re(\mu) = 0$ for any $\mu \in X(\phi)$; 
\item
if $\phi$ is conjugate-self-dual, then $X(\phi)$ is invariant under $\mu \mapsto -\mu$.
\end{itemize}
We denote by $\mu_0 \in \C/ 2\pi\I(\log q_E)^{-1}\Z$ 
the unique nonzero element such that $\mu_0 = -\mu_0$. 
It satisfies that $q_E^{-\mu_0} = -1$. 
\par

We recall the formulas for the local factors.
Let $\phi$ be a representation of $W_E \times \SL_2(\C)$, 
and decompose it as
\[
\phi \cong \bigoplus_{d \geq 1} \phi_d \boxtimes S_{d}
\]
with $\phi_d$ a representation of $W_E$. 
Let $\zeta_E(s) = (1-q_E^{-s})^{-1}$ be the local zeta function associated to $E$.
Then there are constants $\ep(\phi_d) \in \C^\times$ and $c(\phi_d) = c(\phi_d, \psi_E) \in \Z$ such that 
\begin{align*}
L(s,\phi) &= \prod_{d \geq 1} \prod_{\mu \in X(\phi_d)}\zeta_E\left(s+\mu+\frac{d-1}{2} \right), \\
\ep(s,\phi,\psi_E) &= 
\prod_{d \geq 1} \left(\ep(\phi_d)q_E^{c(\phi_d)(\half{1}-s)} \right)^d 
\prod_{\mu \in X(\phi_d)} (-q_E^{\half{1}-s-\mu})^{d-1}.
\end{align*}
In particular, we have 
\begin{align*}
\gamma_A(s, \phi, \psi_E) &= \ep(s,\phi,\psi_E) \frac{L(1+s,\phi)}{L(s,\phi)}
\\&= \prod_{d \geq 1} \left(\ep(\phi_d)q_E^{c(\phi_d)(\half{1}-s)} \right)^d 
\prod_{\mu \in X(\phi_d)} (-q_E^{(\half{1}-s-\mu)})^{d-1}\frac{\zeta_E(s+\mu+\half{d+1})}{\zeta_E(s+\mu+\half{d-1})}.
\end{align*}
Moreover, $L(s, \phi|\cdot|_E^{s_0}) = L(s+s_0, \phi)$ and $\ep(s,\phi|\cdot|_E^{s_0}, \psi_E) = \ep(s+s_0, \phi, \psi_E)$ hold. 
Hence we have $c(\phi_d|\cdot|_E^{s_0}) = c(\phi_d)$ and $\ep(\phi_d |\cdot|_E^{s_0}) = \ep(\phi_d) q_E^{-c(\phi_d)s_0}$. 
\par

When $[E:F] = 2$, we denote by ${}^c\phi$ the conjugate of $\phi$. 
We note that $X({}^c\phi) = X(\phi)$ and $c({}^c\phi) = c(\phi)$
for any representation $\phi$ of $W_E$.
\par

\begin{lem}
Suppose that $[E:F]=2$. 
Let $\psi_E'$ be a non-trivial additive character of $E$ which is trivial on $F$. 
If $\phi$ is a conjugate-self-dual representation of $W_E \times \SL_2(\C)$, then we have
\[
\frac{\gamma_A(s, {}^c\phi, \psi_E')}{\gamma_A(s, \phi, \psi_E')} = \det\phi(-1). 
\]
\end{lem}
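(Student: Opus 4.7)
The plan is to reduce the claim to the behavior of $\ep$-factors under Galois conjugation and under change of additive character. I would first decompose $\phi = \bigoplus_{d \geq 1} \phi_d \boxtimes S_d$ and use the explicit formulas from Section \ref{sec.factors}. The key observation is that $X({}^c\phi_d) = X(\phi_d)$ as multi-sets: conjugation by a lift $s \in W_F \setminus W_E$ preserves the inertia subgroup $I_E$, and every unramified character $|\cdot|_E^\mu$ of $W_E$ is Galois-invariant, since the local norm $|\cdot|_E$ on $W_E$ is. Consequently
\[
L(s, {}^c\phi) = L(s, \phi), \qquad L(1+s, {}^c\phi) = L(1+s, \phi),
\]
so the $L$-factor contributions in $\gamma_A(s, {}^c\phi, \psi_E')$ and $\gamma_A(s, \phi, \psi_E')$ cancel, and the lemma reduces to proving
\[
\frac{\ep(s, {}^c\phi, \psi_E')}{\ep(s, \phi, \psi_E')} = \det \phi(-1).
\]
Note that conjugate-self-duality of $\phi$ is not actually used at this step.

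To prove this residual identity I would invoke two standard properties of Deligne--Langlands local $\ep$-factors. The first is Galois-equivariance: for any $F$-automorphism $\sigma$ of $E$, writing $\tl\sigma$ for the induced automorphism of $W_E$, one has $\ep(s, \phi \circ \tl\sigma, \psi \circ \sigma) = \ep(s, \phi, \psi)$. Applied to $\sigma$ the non-trivial element of $\Gal(E/F)$, and recalling that with the paper's convention $({}^c\phi)(w,\alpha) = \phi(sws^{-1},\alpha)$ this $\tl\sigma$ is exactly $\Ad(s)$, we obtain $\ep(s, {}^c\phi, \psi_E') = \ep(s, \phi, \psi_E' \circ c^{-1})$. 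The second is the dependence on the additive character:
\[
\ep(s, \phi, \psi(a \cdot)) = \det\phi(a) \cdot |a|_E^{\dim(\phi)(s - 1/2)} \cdot \ep(s, \phi, \psi), \qquad a \in E^\times.
\]

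The bridge between these two inputs is the identity $\psi_E' \circ c^{-1} = \psi_E'(-\cdot)$, which holds because $\psi_E'$ is trivial on $F$: for $x \in E$ one has $x + \overline{x} \in F$, hence $\psi_E'(\overline{x}) \psi_E'(x) = \psi_E'(x + \overline{x}) = 1$, giving $\psi_E'(\overline{x}) = \psi_E'(-x)$. Specialising the character-change formula at $a = -1$, where $|{-1}|_E = 1$, then yields $\ep(s, \phi, \psi_E'(-\cdot)) = \det\phi(-1) \cdot \ep(s, \phi, \psi_E')$, which is the desired identity. There is no serious obstacle; the only delicacy is keeping the various Galois conventions consistent with the paper's definition of ${}^c\phi$, which can be cross-checked by evaluating both sides on a single unramified character.
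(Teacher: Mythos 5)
Your proof is correct, and it takes a genuinely different route from the paper's. The paper first replaces ${}^c\phi$ by $\phi^\vee$ using conjugate-self-duality, then applies the functional equation $\ep(s,\phi^\vee,\psi_E')\ep(1-s,\phi,\psi_E') = \det\phi(-1)$ to obtain
\[
\frac{\ep(s,{}^c\phi,\psi_E')}{\ep(s,\phi,\psi_E')} = \frac{\det\phi(-1)}{\ep(1/2,\phi,\psi_E')^2},
\]
and finally invokes Proposition 5.1(2) of \cite{GGP}, which says precisely that $\ep(1/2,\phi,\psi_E')^2 = 1$ when $\phi$ is conjugate-self-dual and $\psi_E'|_F = \1$. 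Your route instead combines Galois-equivariance of $\ep$-factors with the character-change formula at $a=-1$, exploiting the observation $\psi_E'\circ c^{-1} = \psi_E'(-\cdot)$; this avoids the GGP input entirely and, as you observe, does not actually use the conjugate-self-duality hypothesis on $\phi$, so it is both more elementary and slightly more general than the paper's argument. The one place you should be a little careful is the identity $\ep(s,{}^c\phi,\psi_E') = \ep(s,\phi,\psi_E'\circ c^{-1})$: the automorphism of $W_E$ induced by $c$ is only well-defined up to inner conjugation, so you should note that ${}^c\phi$ is well-defined up to isomorphism independently of the chosen $s \in W_F\setminus W_E$ and that $\ep$-factors are isomorphism invariants, but this is routine.
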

\begin{proof}
Since $X({}^c\phi) = X(\phi)$, we have $L(s, {}^c\phi) = L(s, \phi)$. 
On the other hand, since 
\[
\ep(s, \phi^\vee, \psi_E') \ep(1-s, \phi, \psi_E') = \det\phi^\vee(-1) = \det\phi(-1), 
\]
by using ${}^c\phi \cong \phi^\vee$, 
we see that 
\begin{align*}
\frac{\ep(s, {}^c\phi, \psi_E')}{\ep(s,\phi,\psi_E')} 
= \frac{\ep(s, {}^c\phi, \psi_E')\ep(1-s,\phi,\psi_E')}{\ep(s,\phi,\psi_E')\ep(1-s,\phi,\psi_E')} 
= \frac{\det\phi(-1)}{\ep(\half{1},\phi,\psi_E')^2}. 
\end{align*}
Since $\phi$ is conjugate-self-dual, 
by \cite[Proposition 5.1 (2)]{GGP}, we know that $\ep(\half{1},\phi,\psi_E')^2 = 1$.
Hence we obtain the assertion.
\end{proof}

\begin{prop}\label{gamma_c}
Suppose that $[E:F]=2$. 
Let $\phi_1$ and $\phi_2$ be two conjugate-self-dual representations of $W_E \times \SL_2(\C)$.
If $\det\phi_1 = \det\phi_2$,
then we have
\[
\frac{\gamma_A(s, \phi_1, \psi_E)}{\gamma_A(s, \phi_2, \psi_E)} 
= 
\frac{\gamma_A(s, {}^c\phi_1, \psi_E)}{\gamma_A(s, {}^c\phi_2, \psi_E)}.
\]
\end{prop}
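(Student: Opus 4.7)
The plan is to reduce Proposition \ref{gamma_c} to the preceding lemma via the standard transformation of local $\ep$-factors under change of additive character. Observe first that since $X({}^c\phi_i) = X(\phi_i)$, the $L$-factors satisfy $L(s,{}^c\phi_i) = L(s,\phi_i)$, so the quotient $\gamma_A(s,{}^c\phi_i,\psi_E)/\gamma_A(s,\phi_i,\psi_E)$ equals $\ep(s,{}^c\phi_i,\psi_E)/\ep(s,\phi_i,\psi_E)$. Rewriting the desired identity in the equivalent form
\[
\frac{\gamma_A(s,{}^c\phi_1,\psi_E)}{\gamma_A(s,\phi_1,\psi_E)} \;=\; \frac{\gamma_A(s,{}^c\phi_2,\psi_E)}{\gamma_A(s,\phi_2,\psi_E)},
\]
the problem becomes one purely about $\ep$-factors of $\phi_i$ and its conjugate. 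Since $E/F$ is a nontrivial quadratic extension, a nontrivial additive character $\psi_E'$ of $E$ that is trivial on $F$ exists (for instance $\psi_E'(x)=\psi_F(\tr_{E/F}(\delta x))$ for any nonzero $\delta\in E$ with $\tr_{E/F}(\delta)=0$), so one may write $\psi_E(x)=\psi_E'(ax)$ for a unique $a\in E^\times$.

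Next, I would apply the standard transformation formula
\[
\ep(s,\phi,\psi_E) \;=\; \det\phi(a)\, |a|_E^{\dim(\phi)(s-1/2)}\, \ep(s,\phi,\psi_E'),
\]
to both $\phi_i$ and ${}^c\phi_i$. Using $\dim({}^c\phi_i) = \dim(\phi_i)$, the factors of $|a|_E^{\dim(\phi_i)(s-1/2)}$ cancel, and using $\det({}^c\phi_i)={}^c(\det\phi_i)$ (so that $\det({}^c\phi_i)(a)=\det\phi_i(\bar a)$, where $\bar a$ is the Galois conjugate of $a$), I obtain
\[
\frac{\ep(s,{}^c\phi_i,\psi_E)}{\ep(s,\phi_i,\psi_E)} \;=\; \frac{\det\phi_i(\bar a)}{\det\phi_i(a)} \cdot \frac{\ep(s,{}^c\phi_i,\psi_E')}{\ep(s,\phi_i,\psi_E')}.
\]
The preceding lemma, applied to each conjugate-self-dual $\phi_i$ with the character $\psi_E'$ (which is trivial on $F$), evaluates the second factor on the right as $\det\phi_i(-1)$. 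Combining,
\[
\frac{\gamma_A(s,{}^c\phi_i,\psi_E)}{\gamma_A(s,\phi_i,\psi_E)} \;=\; \det\phi_i\!\left(-\bar a\, a^{-1}\right).
\]

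The conclusion is immediate: since $\det\phi_1=\det\phi_2$ as characters of $W_E$, evaluating both at the element $-\bar a\, a^{-1}\in E^\times$ yields the same value, so the two ratios for $i=1$ and $i=2$ coincide. There is no real obstacle; the only delicate points are book-keeping --- ensuring that the dimension factors $|a|_E^{\dim(\phi_i)(s-1/2)}$ cancel between numerator and denominator (which uses $\dim({}^c\phi_i)=\dim(\phi_i)$ rather than any relation between $\dim\phi_1$ and $\dim\phi_2$), and that the identity $\det({}^c\phi_i)={}^c(\det\phi_i)$ is applied correctly so that the lemma can be invoked on the common character $\psi_E'$.
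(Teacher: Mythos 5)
Your proof is correct and rests on the same two ingredients as the paper's argument: the transformation of $\ep$-factors under change of additive character, followed by an appeal to the preceding lemma for a character $\psi_E'$ of $E$ trivial on $F$. The only difference is cosmetic --- you derive the explicit intermediate formula $\gamma_A(s,{}^c\phi_i,\psi_E)/\gamma_A(s,\phi_i,\psi_E)=\det\phi_i(-\bar{a}\,a^{-1})$ and invoke $\det\phi_1=\det\phi_2$ at the very end, whereas the paper uses that hypothesis up front to argue that the cross-ratio in the statement is unchanged when $\psi_E$ is replaced by $\psi_E'$.
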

\begin{proof}
For $a \in E^\times$, define an additive character $a\psi_E$ of $E$ by $(a\psi_E)(x) = \psi_E(ax)$. 
Then 
\[
\ep(s, \phi_i, a\psi_E) = \det(\phi_i)(a)|a|_E^{\dim(\phi_i)(s-\half{1})} \ep(s,\phi_i,\psi_E)
\]
holds.
Hence we may replace $\psi_E$ by any non-trivial additive character $\psi_E'$ of $E$.
If we use $\psi_E'$ such that $\psi_E'|_F = \1$, then the assertion follows from the previous lemma.
\end{proof}

%\subsection{Comparison of $\gamma$-factors}\label{sec.gamma}
\subsection{Comparison of $\gamma$-factors}\label{sec.gamma}
Now we drop the assumption that the field $E$ is non-archimedean.
Let $G^\circ$ be a connected quasi-split classical group. 
Fix a Whittaker datum $\ww$ for $G^\circ$. 
Let $P^\circ = M^\circ N_P$ be a standard maximal parabolic subgroup of $G^\circ$
so that $M^\circ = \GL_{k}(E) \times G_0^\circ$. 
Consider irreducible $\ww$-generic representations $\tau$ and $\sigma$ 
of $\GL_k(E)$ and $G_0^\circ$, respectively. 
(By abuse of language, we say that $\tau$ or $\sigma$ is $\ww$-generic 
if it is generic with respect to the Whittaker datum induced by $\ww$.)
We denote by 
\[
\gamma^{\Sh}(s, \tau \times \sigma, \psi_E) 
= \ep^{\Sh}(s, \tau \times \sigma, \psi_E) 
\frac{L^{\Sh}(1-s, \tau^\vee \times \sigma^\vee)}{L^{\Sh}(s, \tau \times \sigma)}
\]
the associated $\gamma$-factor of Shahidi \cite{Sh7}.
On the other hand, 
let $\phi_\tau$ and $\phi_\sigma$ be the $L$-parameters associated to $\tau$ and $\sigma$, respectively,
and define the Artin $\gamma$-factor over $E$ associated to $\phi_\tau \otimes \phi_\sigma$ by 
\[
\gamma(s, \phi_\tau \otimes \phi_\sigma, \psi_E) 
= \ep(s, \phi_\tau \otimes \phi_\sigma, \psi_E) 
\frac{L(1-s, \phi_\tau^\vee \otimes \phi_\sigma^\vee)}{L(s, \phi_\tau \otimes \phi_\sigma)}. 
\]
Here, we assume the local classification theorem for $G_0^\circ$ by the induction hypothesis, so that there is a commutative diagram
\[
\begin{CD}
\Irr(G^\circ_0) @>>> \Phi(G^\circ_0) \\
@VVV @VVV \\
\Irr(\GL_{N_0}(E)) @>>> \Phi(\GL_{N_0}(E))
\end{CD}
\]
where the horizontal arrows are the local Langlands correspondence
with $\Phi(H)$ the set of $L$-parameters for $H$, 
the left vertical arrow is the twisted endoscopic transfer (for tempered representations), 
and the right vertical arrow is the natural map.
In other words, if $\pi$ is the functorial lift of $\sigma$ to $\GL_{N_0}(E)$ (in terms of the twisted endoscopic character relations), then $\phi_\sigma$ (regarded as a representation of $L_E$) is defined as the $L$-parameter $\phi_\pi$ of $\pi$.
In particular, we have
\[
\gamma(s, \phi_\tau \otimes \phi_\sigma, \psi_E) = \gamma(s, \phi_\tau \otimes \phi_\pi, \psi_E).
\]

\begin{prop}\label{gamma=}
For any irreducible $\ww$-generic representation $\tau \boxtimes \sigma$ of $M^\circ$, 
we have
\[
\gamma^{\Sh}(s, \tau \times \sigma, \psi_E) 
= 
\gamma(s, \phi_\tau \otimes \phi_\sigma, \psi_E). 
\]
In particular, 
if $\tau$ and $\sigma$ are tempered, then 
the equalities 
\[
L^{\Sh}(s, \tau \times \sigma) = L(s, \phi_\tau \otimes \phi_\sigma),
\quad
\ep^{\Sh}(s, \tau \times \sigma, \psi_E) = \ep(s, \phi_\tau \otimes \phi_\sigma, \psi_E)
\]
also hold. 
\end{prop}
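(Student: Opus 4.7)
The plan is to reduce, step by step, the identity to a comparison on the general linear side, where Shahidi's factors are already known to coincide with Artin factors, via the functorial lift of $\sigma$.

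First, I would invoke the multiplicativity of both sides under parabolic induction: Shahidi's multiplicativity of $\gamma$-factors from \cite{Sh7} on one side, and the standard inductive property of Artin $\gamma$-factors on the other. Combined with the Langlands classification of irreducible generic representations of $\GL_k(E)$ and $G_0^\circ$ as Langlands quotients of standard modules, this reduces the statement to the case where $\tau$ is an essentially discrete series of $\GL_k(E)$ and $\sigma$ is an essentially tempered generic representation of $G_0^\circ$; a further twist argument reduces to the tempered case outright. Here one must verify that the $L$-packet assignment $\sigma \mapsto \phi_\sigma$ coming from the inductive hypothesis (via the functorial lift to $\GL_{N_0}(E)$) is compatible with Langlands quotients, which follows from \cite{Ar} and \cite{Mok} in the cases already established by the induction.

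In the tempered case, let $\pi$ be the functorial lift of $\sigma$ to $\GL_{N_0}(E)$, so that $\phi_\sigma = \phi_\pi$ by definition, and $\gamma(s, \phi_\tau \otimes \phi_\sigma, \psi_E) = \gamma(s, \phi_\tau \otimes \phi_\pi, \psi_E)$. On the $\GL_k(E) \times \GL_{N_0}(E)$ side, Shahidi's $\gamma$-factor coincides with the Jacquet--Piatetski-Shapiro--Shalika $\gamma$-factor by \cite{Sh4}, and the latter is matched with the Artin $\gamma$-factor through the local Langlands correspondence for general linear groups, giving
\[
\gamma^{\Sh}(s, \tau \times \pi, \psi_E) = \gamma(s, \phi_\tau \otimes \phi_\pi, \psi_E).
\]
Thus the proposition reduces to the crucial local identity $\gamma^{\Sh}(s, \tau \times \sigma, \psi_E) = \gamma^{\Sh}(s, \tau \times \pi, \psi_E)$ comparing Shahidi's factor for the pair $(\tau, \sigma)$ on $\GL_k \times G_0^\circ$ with his factor for the pair $(\tau, \pi)$ on $\GL_k \times \GL_{N_0}$.

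The hard part is precisely this last identity, and I expect it to be the main obstacle. My approach would be a globalization in the spirit of Cogdell--Kim--Piatetski-Shapiro--Shahidi (cf.\ \cite{CPSS}): globalize $\sigma$ to a globally generic cuspidal automorphic representation whose other local components are controlled (unramified or of known supercuspidal type), similarly globalize $\tau$, and then compare the two crude global functional equations — the one produced by the Langlands--Shahidi method applied to the reductive quotient containing $G^\circ \times \GL_k$, and the one for $\GL_k \times \GL_{N_0}$ applied to the global functorial lift of $\sigma$ (whose existence at the level needed here is supplied by the inductive hypothesis in Arthur's scheme). Auxiliary local places, where both sides are understood, allow one to cancel and isolate the equality at the place of interest. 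Care must be taken because $\pi$ is in general an isobaric sum rather than cuspidal, so the argument is first carried out when the lift is cuspidal and then bootstrapped to arbitrary tempered $\sigma$ via multiplicativity of both $\gamma^{\Sh}$-factors applied along the parabolic corresponding to the isobaric decomposition. Once the $\gamma$-factor identity is secured, the separate identification of the tempered $L$- and $\varepsilon$-factors follows from the characterization of $L^{\Sh}$ for tempered generic representations by its poles and its holomorphy on $\re(s) > 0$, matched against the analogous Galois-side characterization of $L(s, \phi_\tau \otimes \phi_\sigma)$.
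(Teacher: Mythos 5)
You have the right overall strategy, and it is essentially the paper's: pass to the functorial lift $\pi$ of $\sigma$, identify the Galois side with a Rankin--Selberg $\gamma$-factor via the local Langlands correspondence for $\GL$, and prove the resulting identity for $\gamma^\Sh$ by globalization and comparison of crude global functional equations, cancelling at controlled auxiliary places. You also correctly isolate the difficulty as the comparison of $\gamma^\Sh(s,\tau\times\sigma,\psi_E)$ on $\GL_k\times G_0^\circ$ against a factor on $\GL_k\times\GL_{N_0}$. But the reduction you describe to make the globalization manageable has a gap.

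The globalization becomes clean only for supercuspidal $\sigma$: then, globalizing $\dot\sigma$ with spherical finite components away from $v_0$, the global lift $\dot\pi$ is cuspidal (by \cite{CKPSS1, CKPSS2, KK1, KK2, CPSS}), and the two global functional equations can be compared termwise. Your reduction stops at tempered $\sigma$, and the proposed ``bootstrap via multiplicativity applied along the parabolic corresponding to the isobaric decomposition of $\pi$'' does not function as a single step: Shahidi's multiplicativity for $\gamma^\Sh(s,\tau\times\sigma,\psi_E)$ is governed by the Langlands/cuspidal data of $\sigma$ on $G_0^\circ$, which is a different parabolic than the one coming from the isobaric decomposition of $\pi$ on $\GL_{N_0}(E)$, and the two product formulas do not visibly match term by term. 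The paper instead reduces by multiplicativity to $\tau,\sigma$ square-integrable, proves the supercuspidal case by the direct globalization, and then treats square-integrable non-supercuspidal $\sigma$ by a \emph{second} globalization which forces a supercuspidal component at an auxiliary place $v_1$ --- via Jacquet--Langlands for a degree-$k$ division algebra ramified at $\{v_0,v_1\}$ for $\tau$, and \cite{ILM}, \cite{GI3} for $\sigma$ --- so that the already-established supercuspidal identity at $v_1$ cancels out and isolates the identity at $v_0$. A smaller but genuine omission: the globalization cannot always arrange $\psi_{\dot F,v_0}=\psi_F$, so the local identity at $v_0$ first appears for a different additive character, and one must verify that both sides transform identically under $\psi_F\mapsto a\psi_F$; this is not formal and requires an explicit computation (see Section~\ref{sec.property2}).
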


The desired equality of $\gamma$-factors in Proposition \ref{gamma=} seems to be well-known to experts, 
but we briefly review the argument in Section \ref{sec.gamma=} below.
Before it, we give realizations of our quasi-split groups and splittings.

%\subsection{Groups and splittings}
\subsection{Groups and splittings}\label{secA.groups}
We denote by $E_{i,j}$ the square matrix (of a certain size) with $1$ at the $(i,j)$-th entry and $0$ elsewhere. 
Define an $n \times n$ anti-diagonal matrix $J_n$ by 
\[
J_n =
\left(\begin{smallmatrix}
&&1 \\
&\iddots&\\
1&&
\end{smallmatrix}\right).
\]
\par

For reductive algebraic groups $G,T,\dots$ over $F$, 
we use the corresponding Gothic letters $\g, \tt, \ldots$ for the associated Lie algebras.
We consider the following quasi-split reductive algebraic group $G$ over $F$ 
and the $F$-splitting $\spl = (B^\circ,T^\circ,\{X_\alpha\})$ of $G^\circ$.

\begin{description}
\setlength{\itemsep}{10pt}

\item[Symplectic groups]
Suppose that $E = F$.
Let $G = \Sp_{2n}(F)$ be the symplectic group defined by 
\[
\Sp_{2n}(F) = \{ g \in \GL_{2n}(F) \,|\, {}^t g J_{2n}' g = J_{2n}' \}, \quad
J_{2n}' = 
\begin{pmatrix}
& J_n \\
-J_n &
\end{pmatrix}.
\]
Take the Borel subgroup $B$ of $G$ consisting of upper triangular matrices 
and the maximal torus $T$ of $G$ consisting of diagonal matrices.
Then the corresponding positive simple roots are given by 
$\alpha_i = e_i - e_{i+1}$ for $1 \leq i \leq n-1$ and $\alpha_n = 2 e_n$, 
where $\{ e_1, \dots, e_{n} \}$ is the standard basis of $X^*(T)$.
Take the root vectors given by 
$X_{\alpha_i} = E_{i,i+1} - E_{2n-i,2n+1-i}$ for $1 \leq i \leq n-1$ and $X_{\alpha_n} = E_{n,n+1}$.

\item[Odd special orthogonal groups]
Suppose that $E = F$.
For an extension $K$ of $F$, 
let $\O_{N}(K)$ be the orthogonal group defined by 
\[
\O_{N}(K) = \{ g \in \GL_{N}(K) \,|\, {}^t g J_{N} g = J_{N} \}.
\]
Take the Borel subgroup $B^\circ$ of $\SO_{N}(\overline{F})$ consisting of upper triangular matrices 
and the maximal torus $T^\circ$ of $\SO_{N}(\overline{F})$ consisting of diagonal matrices.
\par

Suppose that $N=2n+1$ and consider $G = \SO_{2n+1}(F)$. 
Then the corresponding positive simple roots are given by 
$\alpha_i = e_i - e_{i+1}$ for $1 \leq i \leq n-1$ and $\alpha_n = e_n$, 
where $\{ e_1, \dots, e_n \}$ is the standard basis of $X^*(T)$.
Take the root vectors given by 
$X_{\alpha_i} = E_{i,i+1} - E_{2n+1-i,2n+2-i}$ for $1 \leq i \leq n$. 

\item[Even special orthogonal groups]
Suppose that $E = F$.
Let $\O_{N}(\overline{F})$ be the orthogonal group as above.
Suppose that $N=2n$ and consider $\SO_{2n}(\overline{F})$. 
Then the corresponding positive simple roots are given by 
$\alpha_i = e_i - e_{i+1}$ for $1 \leq i \leq n-1$ and $\alpha_n = e_{n-1} + e_n$, 
where $\{ e_1, \dots, e_n \}$ is the standard basis of $X^*(T)$.
Take the root vectors given by $X_{\alpha_i} = E_{i,i+1} - E_{2n-i,2n+1-i}$ for $1 \leq i \leq n-1$ 
and $X_{\alpha_n} = E_{n-1,n+1} - E_{n,n+2}$.
For example, if $n = 2$, then 
\[
{\tiny
X_{\alpha_{n-1}}= 
\left(
\begin{array}{c|cc|c}
0&1&&\\
\hline
&0&&\\
&&0&-1\\
\hline
&&&0
\end{array}
\right), 
\quad 
X_{\alpha_{n}}= 
\left(
\begin{array}{c|cc|c}
0&&1&\\
\hline
&0&&-1\\
&&0&\\
\hline
&&&0
\end{array}
\right).
}
\]
\par

Now for a (possibly trivial) quadratic character $\eta$ of $F^\times$, 
we define a form $G = \O_{2n}^\eta(F)$ as follows. 
When $\eta$ is trivial, we put $\O_{2n}^\eta(F) = \O_{2n}(F)$ as above.
When $\eta$ is non-trivial, 
we denote by $K$ the quadratic extension of $F$ associated to $\eta$ by the local class field theory.
Then we define $\O_{2n}^\eta(F)$ as the subgroup $\O_{2n}(K)$ 
consisting of matrices $g$ such that
\[
g = \epsilon g^\rho \epsilon^{-1}, \quad
\epsilon =
\left(
\begin{smallmatrix}
\1_{n-1}&&& \\
&0&1& \\
&1&0& \\
&&&\1_{n-1}
\end{smallmatrix}
\right), 
\]
where $\rho$ is the non-trivial element in $\Gal (K/F)$. 
Note that $B^\circ$ and $T^\circ$ are defined over $F$, 
and $\Ad(\epsilon)(X_{\alpha_i}) = X_{\alpha_i}$ for $1 \leq i \leq n-2$ 
and $\Ad(\epsilon)(X_{\alpha_{n-1}}) = X_{\alpha_n}$.
In particular, $\epsilon$ fixes the $F$-splitting $\spl = (B^\circ, T^\circ, \{X_{\alpha}\})$.
\par

We remark that any $F$-splitting of $G^\circ$ is conjugate to the splitting $\spl' = (B^\circ,T^\circ,\{X'_\alpha\})$, 
where $X'_{\alpha_i} = X_{\alpha_i}$ for $1 \le i \le n-2$ 
and 
\[
(X'_{\alpha_{n-1}}, X'_{\alpha_n}) = 
\left\{
\begin{aligned}
&(X_{\alpha_{n-1}}, a X_{\alpha_n}) \iif \eta = \1, \\
&(b X_{\alpha_{n-1}}, b^\rho X_{\alpha_n}) \iif \eta \not= \1
\end{aligned}
\right.
\]
for some $a \in F^\times$ and $b \in K^\times$.
Then $\spl'$ is fixed by $\epsilon' = \epsilon t_0$ with
\[
t_0 = 
\left\{
\begin{aligned}
&\diag(\1_{n-1}, a^{-1}, a, \1_{n-1}) \iif \eta = \1, \\
&\diag(\1_{n-1}, b (b^\rho)^{-1}, b^{-1} b^\rho, \1_{n-1}) \iif \eta \not= \1.
\end{aligned}
\right.
\]
Finally, we notice that $\g = \Lie(G) = \Lie(G^\circ)$.

\item[Unitary groups]
Suppose that $[E:F] = 2$.
Let $G = \U_n$ be the unitary group defined by 
\[
\U_n = \{ g \in \GL_n(E) \,|\, {}^t \overline{g} J''_n g = J''_n \}, 
\quad J_n'' = \diag(1,-1, \dots, (-1)^{n-1}) \cdot J_n.
\]
Take the Borel subgroup $B$ of $G$ consisting of upper triangular matrices 
and the maximal torus $T$ of $G$ consisting of diagonal matrices.
Then the corresponding positive simple roots are given by $\alpha_i = e_i - e_{i+1}$ for $1 \leq i \leq n-1$, 
where $\{ e_1, \dots, e_n \}$ is the standard basis of $X^*(T)$.
Take the root vectors given by $X_{\alpha_i} = E_{i,i+1}$ for $1 \leq i \leq n-1$.
\end{description}

%\subsection{Proof of Proposition \ref{gamma=}}
\subsection{Proof of Proposition \ref{gamma=}}\label{sec.gamma=}
Now we shall prove Proposition \ref{gamma=}.
\begin{proof}[Proof of Proposition \ref{gamma=}]
If $\tau$ and $\sigma$ are tempered, 
then the $L$-factors and the $\ep$-factors are uniquely determined by the corresponding $\gamma$-factors.
Hence the equations for the $L$-factors and the $\ep$-factors 
are derived from the corresponding equality for the $\gamma$-factors. 
Moreover, since $\phi_\sigma = \phi_\pi$ as representations of $L_E$, 
where $\pi$ is the functorial lift of $\sigma$, 
it suffices to show that 
\[
\gamma^\Sh(s, \tau \times \sigma, \psi_E) = \gamma(s, \phi_\tau \otimes \phi_\pi, \psi_E).
\]
\par

The equation for the $\gamma$-factors easily follows from 
the characterizing properties of $\gamma^{\Sh}(s, \tau \times \sigma, \psi_E)$, 
proved in \cite[Theorem 3.5]{Sh7}, 
which are some of the ``Ten Commandments'' given in \cite{LR}.
For the convenience of the readers, we recall the properties we need. 
\begin{enumerate}
\item
\label{TC-1}
(unramified twisting)
For $s_0 \in \C$, we have
\[
\gamma^{\Sh}(s, \tau |\cdot|_E^{s_0} \times \sigma, \psi_E) 
= \gamma^{\Sh}(s+s_0, \tau \times \sigma, \psi_E).
\]
\item
\label{TC-2}
(dependence on $\psi_F$)
Let $\psi'_F$ be another non-trivial additive character of $F$, 
so that $\psi'_F(x) = \psi_F(ax)$ for some $a \in F^\times$, 
and put $\psi_E' = \psi_F' \circ \tr_{E/F}$.
Then we have
\[
\gamma^{\Sh}(s, \tau \times \sigma, \psi'_E) 
= \eta(a)^{k} \omega_\tau(a)^{N_0} |a|_E^{kN_0(s-\frac{1}{2})} 
\gamma^{\Sh}(s, \tau \times \sigma, \psi_E),
\]
where $\eta$ is the trivial character of $F^\times$ unless $G^\circ$ is an even special orthogonal group, 
in which case $\eta$ is the (possibly trivial) quadratic character of $F^\times$ 
associated to the splitting field of $G^\circ$, 
$\omega_\tau$ is the central character of $\tau$, and $N_0 = \dim(\St_{\widehat{G_0}})$. 
See Section \ref{sec.property2}.

\item
\label{TC-3}
(multiplicativity)
Assume that $\tau$ is a subrepresentation of $I_{P_1}(\tau_1 \boxtimes \tau_2)$, 
where $P_1$ is a standard parabolic subgroup of $\GL_k(E)$ 
with Levi component $\GL_{k_1}(E) \times \GL_{k_2}(E)$, 
and $\tau_1$ and $\tau_2$ are irreducible $\ww$-generic representations 
of $\GL_{k_1}(E)$ and $\GL_{k_2}(E)$, respectively.
Then we have
\[
 \gamma^{\Sh}(s, \tau \times \sigma, \psi_E)
 = \gamma^{\Sh}(s, \tau_1 \times \sigma, \psi_E) \cdot \gamma^{\Sh}(s, \tau_2 \times \sigma, \psi_E).
\]
Similarly, assume that $\sigma$ is a subrepresentation of $I_{P_0}(\tau_0 \boxtimes \sigma_0)$, 
where $P_0$ is a standard parabolic subgroup of $G_0^\circ$ 
with Levi component $\GL_{k_0}(E) \times G_{00}^\circ$, 
and $\tau_0$ and $\sigma_0$ are irreducible $\ww$-generic representations 
of $\GL_{k_0}(E)$ and $G_{00}^\circ$, respectively.
Then we have
\[
\gamma^{\Sh}(s, \tau \times \sigma, \psi_E) 
= \gamma(s, \tau \times \tau_0, \psi_E) \cdot
\gamma(s, \tau \times {}^c\tau_0^\vee, \psi_E) \cdot 
\gamma^{\Sh}(s, \tau \times \sigma_0, \psi_E).
\]
Here, $\gamma(s, \tau \times \tau_0, \psi_E)$ is the Rankin--Selberg $\gamma$-factor 
which is equal to the Artin $\gamma$-factor $\gamma(s, \phi_\tau \otimes \phi_{\tau_0}, \psi_E)$.
\item
\label{TC-4}
(unramified factors)
Assume that $F$ is non-archimedean, 
and $\tau$ and $\sigma$ are spherical 
(in the sense that they have nonzero fixed vectors under good special maximal compact subgroups).
Then we have
\[
 \gamma^{\Sh}(s, \tau \times \sigma, \psi_E)
  = \gamma(s, \phi_\tau \otimes \phi_\sigma, \psi_E).
\]
\item
\label{TC-5}
(archimedean property)
Assume that $F$ is archimedean.
Then we have
\[
 \gamma^{\Sh}(s, \tau \times \sigma, \psi_E)
 = \gamma(s, \phi_\tau \otimes \phi_\sigma, \psi_E).
\]
\item
\label{TC-6}
(global property)
Let $\dot{F}$ be a number field with ring of ad\`{e}les $\dot{\A} = \dot\A_{\dot{F}}$, 
and let $\dot{E}$ be either $\dot{F}$ or a quadratic field extension of $\dot{F}$.
Fix a non-trivial additive character $\psi_{\dot{F}}$ of $\dot{\A}/\dot{F}$ 
and put $\psi_{\dot{E}} = \psi_{\dot{F}} \circ \tr_{\dot{E}/\dot{F}}$.
Let $\dot{G}^\circ$ be a quasi-split classical group defined over $\dot{F}$, 
and let $\dot{M}^\circ = \Res_{\dot{E}/\dot{F}} \GL_{k} \times \dot{G}_0^\circ$ 
be a maximal semi-standard Levi subgroup of $\dot{G}^\circ$. 
We denote by $\dot{\ww}$ the Whittaker datum induced by the $\dot{F}$-splitting of $\dot{G}^\circ$ 
and $\psi_{\dot{F}}$.
Let $\dot{\tau}$ and $\dot{\sigma}$ be irreducible globally $\dot{\ww}$-generic cuspidal automorphic representations of $\GL_k(\dot{\A}_{\dot{E}})$ and $\dot{G}_0^\circ(\dot{\A})$, respectively.
Then we have
\[
L^S(s, \dot{\tau} \times \dot{\sigma}) = 
\prod_{v \in S} \gamma^{\Sh}(s, \dot{\tau}_v \times \dot{\sigma}_v, \psi_{\dot{E},v}) 
\cdot L^S(1-s, \dot{\tau}^\vee \times \dot{\sigma}^\vee),
\]
where $S$ is a sufficiently large finite set of places of $\dot{F}$ 
and $L^S(s, \dot{\tau} \times \dot{\sigma}) = \prod_{v \notin S} L(s, \phi_{\dot{\tau}_v} \otimes \phi_{\dot{\sigma}_v})$ 
is the partial $L$-function (for $\re(s)$ sufficiently large).
\end{enumerate}
\par

By \eqref{TC-5}, we may assume that $F$ is non-archimedean.
By the Langlands classification, 
we may write $\tau$ and $\sigma$ as unique irreducible subrepresentations of the duals of standard modules.
Since $\tau$ and $\sigma$ are $\ww$-generic, 
the inducing data of these standard modules are also $\ww$-generic.
(Cf., \cite[Lemma 2.2]{AG}.)
Hence, by \eqref{TC-1}, \eqref{TC-3}, and the definition of $L$-parameters, 
we may assume that $\tau$ and $\sigma$ are tempered.
In this case, we may write $\tau$ and $\sigma$ as 
subrepresentations of parabolic inductions of square-integrable representations. 
By the same argument, we may assume that $\tau$ and $\sigma$ are square-integrable.
\par

First we treat the case where $\tau$ and $\sigma$ are supercuspidal.
Choose $\dot{F}, \dot{E}, \dot{G}^\circ, \dot{M}^\circ, \psi_{\dot{F}}$ as in \eqref{TC-6} 
such that $\dot{F}_{v_0} = F, \dot{E}_{v_0} = E, \dot{G}^\circ_{v_0} = G^\circ, \dot{M}^\circ_{v_0} = M^\circ$ 
for some finite place $v_0$ of $\dot{F}$.
Note that it is not always possible to find $\psi_{\dot{F}}$ such that $\psi_{\dot{F},v_0} = \psi_F$.
By the Poincar{\'e} series argument \cite[Appendice 1]{He1}, \cite[Proposition 5.1]{Sh7}, we can find $\dot{\tau}$ and $\dot{\sigma}$ as in \eqref{TC-6} 
such that $\dot{\tau}_{v_0} \cong \tau$ and $\dot{\sigma}_{v_0} \cong \sigma$, 
and such that $\dot{\tau}_v$ and $\dot{\sigma}_v$ are spherical for all finite places $v \not= v_0$.
Moreover, by \cite{CKPSS1, CKPSS2}, \cite{KK1, KK2} and \cite{CPSS}, 
the functorial lift $\dot\pi$ of $\dot{\sigma}$ is cuspidal.
In other words, 
by the global classification theorem for $\dot{G}_0^\circ$ (which we assume by the induction hypothesis), 
the global $A$-parameter of $\dot{\sigma}$ is generic 
and $\dot\pi_v$ is the functorial lift of $\dot\sigma_v$ for all places $v$. 
Then the global functional equation says that 
\[
L^S(s, \dot{\tau} \times \dot{\pi}) = 
\prod_{v \in S} \gamma(s, \dot{\tau}_v \times \dot{\pi}_v, \psi_{\dot{E},v}) 
\cdot L^S(1-s, \dot{\tau}^\vee \times \dot{\pi}^\vee),
\]
where $S$ is a sufficiently large finite set of places of $\dot{F}$, 
$L^S(s, \dot{\tau} \times \dot{\pi}) = \prod_{v \not\in S} L(s, \phi_{\dot{\tau}_v} \otimes \phi_{\dot{\pi}_v})$
is the partial $L$-function (for $\re(s)$ sufficiently large), 
and $\gamma(s, \dot{\tau}_v \times \dot{\pi}_v, \psi_{\dot{E},v})$ is the Rankin--Selberg $\gamma$-factor. 
Since $L^S(s, \dot{\tau} \times \dot{\pi}) = L^S(s, \dot{\tau} \times \dot{\sigma})$
and 
$\gamma(s, \dot{\tau}_v \times \dot{\pi}_v, \psi_{\dot{E},v}) 
= \gamma(s, \phi_{\dot{\tau}_v} \otimes \phi_{\dot{\pi}_v}, \psi_{\dot{E},v})$
(which is a desideratum of the local Langlands correspondence for general linear groups), 
we may write this equality as
\[
L^S(s, \dot{\tau} \times \dot{\sigma}) = 
\prod_{v \in S} \gamma(s, \phi_{\dot{\tau}_v} \otimes \phi_{\dot{\pi}_v}, \psi_{\dot{E},v}) 
\cdot L^S(1-s, \dot{\tau}^\vee \times \dot{\sigma}^\vee).
\]
On the other hand, by \eqref{TC-4}, \eqref{TC-5}, we have
\[
\gamma^{\Sh}(s, \dot{\tau}_v \times \dot{\sigma}_v, \psi_{\dot{E},v})
= \gamma(s, \phi_{\dot{\tau}_v} \otimes \phi_{\dot{\sigma}_v}, \psi_{\dot{E},v})
= \gamma(s, \phi_{\dot{\tau}_v} \otimes \phi_{\dot{\pi}_v}, \psi_{\dot{E},v})
\]
for all places $v \not= v_0$.
From this and \eqref{TC-6}, we can deduce that 
\[
\gamma^{\Sh}(s, \tau \times \sigma, \dot{\psi}_{E,v_0})
= \gamma(s, \phi_\tau \otimes \phi_\pi, \dot{\psi}_{E,v_0}).
\]
If we write $\psi_{\dot{F},v_0}(x) = \psi_F(ax)$ for some $a \in F^\times$, then the left-hand side is equal to 
\[
\eta(a)^{k} \omega_\tau(a)^{N_0} |a|_E^{kN_0(s-\frac{1}{2})} 
\gamma^{\Sh}(s, \tau \times \sigma, \psi_E)
\]
by \eqref{TC-2}, whereas the right-hand side is equal to 
\[
 \det(\phi_\tau \otimes \phi_\pi)(a) |a|_E^{\dim(\phi_\tau \otimes \phi_\pi)(s-\frac{1}{2})} \gamma(s, \phi_\tau \otimes \phi_\pi, \psi_E)
\]
(see \cite[Section 3.6]{Tate}).
This implies the desired equation for the $\gamma$-factors.
\par

Now suppose that $\tau$ and $\sigma$ are square-integrable.
Choose $\dot{F}, \dot{E}, \dot{G}^\circ, \dot{M}^\circ, \psi_{\dot{F}}$ as above 
and fix an auxiliary finite place $v_1 \not= v_0$ of $\dot{F}$
such that $v_1$ does not split in $\dot{E}$ when $[\dot{E}:\dot{F}] = 2$.
By the Poincar{\'e} series argument \cite[Appendice 1]{He1} 
for a central division algebra over $\dot{E}$ of degree $k$ ramified precisely at $v_0, v_1$
(where we regard $v_i$ for $i=0,1$ as the unique place of $\dot{E}$ lying over $v_i$ when $[\dot{E}:\dot{F}] = 2$), 
and the global Jacquet--Langlands correspondence \cite{Bad}, 
we can find $\dot{\tau}$ as in \eqref{TC-6} such that 
$\dot{\tau}_{v_0} \simeq \tau$, $\dot{\tau}_{v_1}$ is supercuspidal, 
and $\dot{\tau}_v$ is spherical for all finite places $v \not= v_0, v_1$.
Also, by \cite[Appendix A]{ILM}, \cite[Appendix A]{GI3}, 
we can find $\dot{\sigma}$ as in \eqref{TC-6} such that 
$\dot{\sigma}_{v_0} \simeq \sigma$, $\dot{\sigma}_{v_1}$ is supercuspidal, 
and $\dot{\sigma}_v$ is spherical for all finite places $v \not= v_0, v_1$.
(Note that \cite{GI3} only treats the case of metaplectic groups, but the same argument goes through for other classical groups.)
Since we already know that 
\[
 \gamma^{\Sh}(s, \dot{\tau}_{v_1} \times \dot{\sigma}_{v_1}, \psi_{\dot{E},v_1})
 = \gamma(s, \phi_{\dot{\tau}_{v_1}} \otimes \phi_{\dot{\pi}_{v_1}}, \psi_{\dot{E},v_1}),
\]
where $\dot\pi_{v_1}$ is the functorial lift of $\dot\sigma_{v_1}$, 
the above argument proves the analogous equation for $v_0$
and hence the desired equation for the $\gamma$-factors.
\end{proof}

\begin{rem}\label{CFGK}
Recently, 
Cai--Friedberg--Ginzburg--Kaplan \cite{CFGK}, \cite{C}, \cite{CFK} 
introduced a new family of zeta integrals, 
which generalizes the doubling zeta integrals of Piatetski-Shapiro--Rallis \cite{PSR}, \cite{LR}, 
and established an analytic theory of $\gamma$-factors
\[
\gamma^{\mathrm{CFGK}}(s, \tau \times \sigma, \psi_E)
\]
for all irreducible representations $\tau$ and $\sigma$ of $\GL_k(E)$ and $G_0^\circ$, respectively.
In particular, when $G_0^\circ$ is a split special orthogonal group or a symplectic group, 
the ``Ten Commandments'' were proved in \cite[Theorem 4.2]{CFK}.
In this case, we can modify the above argument and show that
\[
\gamma^{\mathrm{CFGK}}(s, \tau \times \sigma, \psi_E)
= \gamma(s, \phi_\tau \otimes \phi_\sigma, \psi_E)
\]
as follows: 
To globalize an irreducible square-integrable representation $\sigma$ of $G_0^\circ$, 
we can use \cite[Lemma 6.2.2]{Ar} 
to find an irreducible cuspidal automorphic representation 
$\dot{\sigma}$ of $\dot{G}_0^\circ(\dot{\A})$ with a generic global $A$-parameter 
such that $\dot{\sigma}_{v_0} \cong \sigma$ and $\dot{\sigma}_v$ is spherical for all finite places $v \not= v_0$.
\end{rem}

\begin{rem}
The second and third authors would like to take this opportunity to remark that 
the various desiderata of the local Langlands correspondence used in \cite{GI1, GI2} 
are now supplied by the results of this paper in the case of quasi-split classical groups.
Namely, in the proofs of \cite[Theorem C.5]{GI1} and \cite[Proposition B.1]{GI2}, 
they assumed the following hypothesis:
\begin{enumerate}
\item 
the equality between the local $\gamma$-factors of Shahidi and the corresponding Artin $\gamma$-factors;
\item 
the equality between the local $\gamma$-factors of Piatetski-Shapiro--Rallis 
and the corresponding Artin $\gamma$-factors;
\item 
the formula for the Plancherel measures in terms of Artin $\gamma$-factors.
\end{enumerate}
(See \cite[Section C.2]{GI1} and \cite[Section B.2]{GI2} for details.)
Now (1) follows from Proposition \ref{gamma=} and \cite{He4}, \cite{CST}, \cite{Shan}, \cite{He5},
(2) can be verified as in Remark \ref{CFGK} (where the ``Ten Commandments'' in this case were proved in \cite[Theorem 4]{LR}), 
and (3) is a consequence of the multiplicative property of the normalized intertwining operators (see Proposition \ref{multiplicative}).
They would also like to point out that the reason they gave for the correct formulation of Shahidi's formula at the end of the proof of \cite[Lemma B.2]{GI2} is not accurate: the proper justifications for the reformulation of Shahidi's results are given in Section \ref{sec.shahidi} of this paper.
\end{rem}

\begin{rem}
As our argument above uses globalization, 
this may be a good chance to comment on the globalization for unitary groups in \cite{Mok}. 
We were unable to verify Lemma 7.2.1 therein when $E/F$ is a ramified quadratic extension of $p$-adic fields 
in that it is not obvious to keep the global extension $\dot E/\dot F$ unramified at all finite places 
away from the place of interest (denoted $u$); 
if this were true, one could globalize a local unitary group with respect to $E/F$ to a global unitary group 
that is unramified outside $u$. 
Nevertheless this does not affect the globalization argument for unitary groups 
in that it is unnecessary to make $\dot E/\dot F$ unramified outside $u$, 
just like it was unnecessary in Arthur's globalization for (non-split) quasi-split $\SO_{2n}$. 
The basic reason is that one can prescribe a spherical parameter or a spherical representation 
(in the sense recalled in \cite[Section 6.1]{Ar}) 
at ramified finite places as in \cite[Lemma 7.2.3, Corollary 7.2.7]{Mok}, 
cf. \cite[Lemma 6.2.2, Corollary 6.2.4]{Ar}. 
\end{rem}

%\subsection{Dependence on $\psi_F$ for Shahidi's gamma factors}
\subsection{Dependence on $\psi_F$ for Shahidi's gamma factors}\label{sec.property2}
The property \eqref{TC-2} in the proof of Proposition \ref{gamma=} 
is not stated in this form in \cite{Sh7} but can be derived as follows.
We use the notation in Section \ref{sec.maximal} for the classical group $G$.
Suppose first that we are not in the case 
where $G = \O_{2k}^\eta(F)$ with $\eta=1$, $M = \GL_k(F)$, and $k > 1$ is odd.
Put $\pi = \tau \boxtimes \sigma_0^\vee$ (regarded as a representation of $M$) 
and write $\pi_\lambda = \tau |\cdot|_E^s \boxtimes \sigma_0^\vee$ with $s \in \C$.
Recall that the local coefficient $C_P(w,\pi_\lambda)$ is given by
\[
\Omega(\pi_\lambda) = 
C_P(w,\pi_\lambda) \cdot \Omega(w\pi_\lambda) \circ J_P(w,\pi_\lambda)
\]
as in Section \ref{sec.C_P} and depends on the following choices:
\begin{itemize}
\item the Whittaker datum $\ww$ determined by $\spl$ and $\psi_F$;
\item the Weyl group representative $\tl{w} = \tl{w}_{\spl}$ of $w$ determined by $\spl$;
\item the Haar measure $du = du_{\spl,\psi_F}$ on $N_P$ determined by $\spl$ and $\psi_F$.
\end{itemize}
To indicate this dependence, we write
\begin{align*}
C_P(w,\pi_\lambda) & = C_P(w,\pi_\lambda,\spl,\psi_F), \\ 
J_P(w,\pi_\lambda) & = J_P(w,\pi_\lambda,\spl,\psi_F), \\
\Omega(\pi_\lambda) & = \Omega(\pi_\lambda,\spl,\psi_F).
\end{align*}
\par

For another additive character $\psi_F'$ given by $\psi_F'(x) = \psi_F(ax)$ with $a \in F^\times$, we take the splitting $\spl' = (B,T,\{X_\alpha'\})$ such that $X_\alpha' = a X_\alpha$ for all $\alpha$.
Then $\spl'$ and $\psi_F'$ give rise to the Whittaker datum $\ww$. 
We may write $X_\alpha' = \Ad(t_0)(X_\alpha)$, where $t_0 \in A_T(\overline{F})$ is given by 
\[
t_0 = 
\left\{
\begin{aligned}
&\diag(a^{n-\frac{1}{2}}, a^{n-\frac{3}{2}}, \dots, a^{\frac{1}{2}}, 
a^{-\frac{1}{2}}, \dots, a^{-n+\frac{3}{2}}, a^{-n+\frac{1}{2}})
\iif G = \Sp_{2n}(F), \\
&\diag(a^n, a^{n-1}, \dots, a, 1, a^{-1}, \dots, a^{-n+1}, a^{-n})
\iif G = \SO_{2n+1}(F), \\
&\diag(a^{n-1}, a^{n-2}, \dots, a, 1, 1, a^{-1}, \dots, a^{-n+2}, a^{-n+1})
\iif G = \O_{2n}^\eta(F), \\
&\diag(a^{\frac{n-1}{2}}, a^{\frac{n-3}{2}}, \dots, a^{-\frac{n-3}{2}}, a^{-\frac{n-1}{2}})
\iif G = \U_n. 
\end{aligned}
\right. 
\]
Note that the image of $t_0$ in the adjoint group of $G$ is an $F$-rational point, 
so that $\Ad(t_0)$ is an automorphism of $G$ defined over $F$.
Then we have $\tl{w}_{\spl'} = \Ad(t_0)(\tl{w}_{\spl})$.
Set $z_0 = \tl{w}_{\spl'}^{-1} \cdot \tl{w}_{\spl}$.
Recall that $\tl{w}_{\spl}$ is a representative of $w_T \in N(M^\circ,M^\circ)/T^\circ$.
It is easy to see that a representative of $w_T$ is given by 
\[
\begin{pmatrix}
&&\1_k \\ 
&\1_{G_0}&\\
\pm\1_k&&
\end{pmatrix}
\]
for some sign.
Hence
\begin{align*}
z_0 & = \Ad(t_0)(\tl{w}_{\spl})^{-1} \cdot \tl{w}_{\spl} \\
& = t_0 \cdot \Ad(\tl{w}_{\spl}^{-1})(t_0)^{-1} \\
& = \diag(a^{N_0+k+\delta} \1_k, \1_{G_0}, a^{-(N_0+k+\delta)} \1_k),
\end{align*}
where $N_0 = \dim(\St_{\widehat{G_0}})$ and 
\[
\delta = \left\{
\begin{aligned}
&1 \iif G = \SO_{2n+1}(F), \\
&{-1} \iif G = \Sp_{2n}(F),\, \O_{2n}^\eta(F), \\
&0 \iif G = \U_n.
\end{aligned}
\right. 
\]
In particular, $z_0$ belongs to the center of $M$.
Moreover, if we set $l = \dim(N_P)$, then 
\[
du_{\spl',\psi_F'}
= \delta_P(t_0)^{-1}\,du_{\spl,\psi_F'}
= \delta_P(t_0)^{-1} |a|_F^{\frac{l}{2}}\,du_{\spl,\psi_F}.
\]
\par

Since $\tl{w}^{-1}_{\spl'} = z_0 \tl{w}_{\spl}^{-1}$ and $\delta_P(t_0) = \delta_P(z_0)^{\frac{1}{2}}$, 
we obtain that 
\begin{align*}
J_P(w,\pi_\lambda,\spl',\psi_F') 
&= \omega_{\pi_\lambda}(z_0) \delta_P(z_0)^{\frac{1}{2}} \cdot \delta_P(t_0)^{-1} |a|_F^{\frac{l}{2}} 
\cdot J_P(w,\pi_\lambda,\spl,\psi_F) \\ 
&= \omega_{\pi_\lambda}(z_0) |a|_F^{\frac{l}{2}} \cdot J_P(w,\pi_\lambda,\spl,\psi_F).
\end{align*}
Similarly, we have
\begin{align*}
\Omega(\pi_\lambda,\spl',\psi_F') 
&= \omega_{\pi_\lambda}(z_0) |a|_F^{\frac{l}{2}} \cdot \Omega(\pi_\lambda,\spl,\psi_F), \\
\Omega(w\pi_\lambda,\spl',\psi_F') 
&= \omega_{w\pi_\lambda}(z_0) |a|_F^{\frac{l}{2}} \cdot \Omega(w\pi_\lambda,\spl,\psi_F).
\end{align*}
This implies that
\begin{align*}
\omega_{\pi_\lambda}(z_0) |a|_F^{\frac{l}{2}} \cdot \Omega(\pi_\lambda,\spl,\psi_F) & = \omega_{w\pi_\lambda}(z_0) |a|_F^{\frac{l}{2}} \cdot \omega_{\pi_\lambda}(z_0) |a|_F^{\frac{l}{2}} \cdot C_P(w,\pi_\lambda,\spl',\psi_F') \\
& \quad \times \Omega(w\pi_\lambda,\spl,\psi_F) \circ J_P(w,\pi_\lambda,\spl,\psi_F), 
\end{align*}
so that 
\[
C_P(w,\pi_\lambda,\spl',\psi_F')
= \omega_{w\pi_\lambda}(z_0)^{-1} |a|_F^{-\frac{l}{2}} \cdot C_P(w,\pi_\lambda,\spl,\psi_F).
\]
\par

Recall from Section \ref{sec.shahidi} that Shahidi's formula says that 
\[
C_P(w,\pi_\lambda,\spl,\psi_F)
= \lambda(w, \psi_F)^{-1} \lambda(E/F, \psi_F)^{kN_0}
\gamma^\Sh(s, \tau \times \sigma, \psi_E) \gamma^\Sh(2s, \tau, R, \psi_F)
\]
with 
\[
R = \left\{
\begin{aligned}
&\Sym^2 \iif G = \SO_{2n+1}(F), \\
&\wedge^2 \iif G = \Sp_{2n}(F),\, \O_{2n}^\eta(F), \\
&\Asai^+ \iif G=\U_n,\, n \equiv 0 \bmod 2,\\
&\Asai^- \iif G=\U_n,\, n \equiv 1 \bmod 2.
\end{aligned}
\right. 
\]
Note that 
\[
\lambda(w, \psi_F) = 
\left\{
\begin{aligned}
&\lambda(K/F, \psi_F)^k \iif G= \O_{2n}^\eta(F), \\
&\lambda(E/F, \psi_F)^{kN_0+\half{k(k-1)}} \iif G=\U_n,\, n \equiv 0 \bmod 2,\\
&\lambda(E/F, \psi_F)^{kN_0+\half{k(k+1)}} \iif G=\U_n,\, n \equiv 1 \bmod 2,\\
&1 \other,
\end{aligned}
\right. 
\]
where $K/F$ is the abelian extension corresponding to $\eta$.
Since 
\[
\lambda(K/F, \psi_F) 
= \frac{\ep(s, \Ind_{W_K}^{W_F}(\1_{W_K}), \psi_F)}{\ep(s, \1_{W_K}, \psi_F \circ \tr_{K/F})}
\]
which does not depend on $s$, 
we have
\[
\frac{\lambda(K/F, \psi_F')}{\lambda(K/F, \psi_F)}
= \det(\Ind_{W_K}^{W_F}(\1_K))(a) = \eta(a).
\]
Hence
\[
\frac{\lambda(w,\psi_F')}{\lambda(w, \psi_F)} 
= \left\{
\begin{aligned}
&\eta(a)^{k} \iif G = \O_{2n}^\eta(F), \\
&\eta'(a)^{kN_0+\half{k(k-1)}} \iif G=\U_n,\, n \equiv 0 \bmod 2,\\
&\eta'(a)^{kN_0+\half{k(k+1)}} \iif G=\U_n,\, n \equiv 1 \bmod 2,\\
&1 \other, 
\end{aligned}
\right. 
\]
where, if $G = \U_n$, we set $\eta'$ to be the quadratic character of $F^\times$
associated to $E/F$ by the class field theory.
If $G \not= \O_{2n}^\eta(F)$ (\resp $G \not= \U_n$), we simply set $\eta = \1$ (\resp $\eta'=\1$).
Since $\omega_{w\pi_\lambda}(z_0)^{-1} = \omega_\tau(a^{N_0+k+\delta})|a|_E^{(N_0+k+\delta)ks}$, 
we obtain 
\begin{align*}
\gamma^\Sh(s, \tau \times \sigma, \psi_E') \gamma^\Sh(2s, \tau, R, \psi_F')
&= 
\eta(a)^k\eta'(a)^{\half{k(k\pm1)}} \cdot \omega_\tau(a^{N_0+k+\delta})|a|_E^{(N_0+k+\delta)ks} \cdot |a|_F^{-\half{l}}
\\&\times
\gamma^\Sh(s, \tau \times \sigma, \psi_E) \gamma^\Sh(2s, \tau, R, \psi_F), 
\end{align*}
where the exponent of $\eta'(a)$ is $\half{k(k-1)}$ (\resp $\half{k(k+1)}$) 
for $G = \U_n$ with $n$ even (\resp $n$ odd). 
\par

If $G = \O_{2k}^\eta(F)$ with $\eta=1$, $M = \GL_k(F)$, and $k > 1$ is odd, 
then we take $w \in W(M^\circ, \epsilon M^\circ \epsilon^{-1})$ such that $\det(w) = 1$.
Since $t_0$ commutes with $\epsilon$, 
the above computation works 
after replacing $\omega_{w\pi_\lambda}(z_0)$ with $\omega_{w\pi_\lambda}(\epsilon z_0 \epsilon^{-1})$. 
Hence we obtain the same formula for 
$\gamma^\Sh(s, \tau \times \sigma, \psi_E') \gamma^\Sh(2s, \tau, R, \psi_F')$.
\par

In particular, if we take $(G,M) = (\SO_{2k+1}(F), \GL_{k}(F))$, 
$(\O_{2k}^\eta(F), \GL_{k}(F))$ with $\eta = 1$, and $(\U_{2k}, \GL_{k}(E))$, 
then we obtain
\[
\gamma^{\Sh}(2s, \tau, R, \psi'_F) 
= \eta'(a)^{\half{k(k-1)}} \omega_\tau(a)^{k+\delta(R)}
|a|_F^{(2s-\frac{1}{2})d(R)} \gamma^{\Sh}(2s, \tau, R, \psi_F), 
\]
where
\[
\delta(R) = \left\{
\begin{aligned}
&1 \iif R = \Sym^2, \\
&{-1} \iif R = \wedge^2, \\
&0 \iif R = \Asai^+, 
\end{aligned}
\right. 
\quad
d(R) = \left\{
\begin{aligned}
&\half{k(k+1)} \iif R = \Sym^2,\\
&\half{k(k-1)} \iif R = \wedge^2,\\
&k^2 \iif R = \Asai^+.
\end{aligned}
\right. 
\]
Moreover, in the last case, it follows from the definition (see \cite[p.~304]{Sh7}) that 
\[
\gamma^{\Sh}(2s, \tau, \Asai^-, \psi_F) 
= 
\gamma^{\Sh}(2s, \tau \otimes (\mu \circ \det), \Asai^+, \psi_F), 
\]
where $\mu$ is a character of $E^\times$ such that $\mu|_{F^\times} = \eta'$. 
Since $k^2 \equiv k \bmod 2$, we obtain that 
\[
\gamma^{\Sh}(2s, \tau, \Asai^-, \psi_F') 
= \eta'(a)^{\half{k(k+1)}} \omega_\tau(a)^{k}
|a|_F^{(2s-\frac{1}{2})k^2} \gamma^{\Sh}(2s, \tau, \Asai^-, \psi_F).
\]
This implies that 
\[
\gamma^{\Sh}(s, \tau \times \sigma, \psi'_E) 
= \eta(a)^{k} \omega_\tau(a)^{N_0} |a|_E^{kN_0(s-\frac{1}{2})} 
\gamma^{\Sh}(s, \tau \times \sigma, \psi_E),
\]
as desired.

\begin{rem}
In the above argument, we have used Shahidi's formula
\[
C_P(w,\pi_{s\tl{a}},\spl,\psi_F) = \lambda(w,\psi_F)^{-1} \prod_{i=1}^m \gamma^{\Sh}(is,\pi,r_i,\psi_F)
\]
for $s \in \C$ (see Section \ref{sec.shahidi}).
This formula implies that the local coefficient is independent of $\spl$ in the following sense.
We consider an arbitrary quasi-split connected reductive algebraic group $G$ over $F$.
Let $\spl = (B,T,\{X_\alpha\})$ and $\spl' = (B,T,\{X_\alpha'\})$ be two $F$-splittings of $G$ 
which have a common Borel pair.
We denote by $\ww = (B,\chi)$ and $\ww' = (B,\chi')$ the Whittaker data for $G$ 
determined by $\spl$ and $\spl'$, respectively, 
and a fixed additive character $\psi_F$.
Let $P = MN$ be a standard maximal parabolic subgroup of $G$, 
and let $\pi$ be an irreducible representation of $M(F)$ 
which is $\ww_M$-generic and $\ww'_M$-generic.
Then Shahidi's formula implies that 
\[
C_P(w,\pi_\lambda,\spl,\psi_F) = C_P(w,\pi_\lambda,\spl',\psi_F)
\]
for $\lambda \in \aa_{M,\C}^*$.
In fact, this equality can be proven directly as follows.
Let $G_{\ad} = G/Z$ be the adjoint group of $G$, 
where $Z$ is the center of $G$.
Choose an induced torus $\tl{Z}$ equipped with an embedding $Z \hookrightarrow \tl{Z}$.
For example, since $Z$ is a subgroup of the maximal torus $T$, 
we may take an induced torus $\tl{Z}$ such that 
there is a surjection $X^*(\tl{Z}) \twoheadrightarrow X^*(T)$ of $\Gamma$-modules.
We denote the pushout of the short exact sequence
\[
\begin{CD}
1 @>>> Z @>>> G @>>> G_{\ad} @>>> 1
\end{CD}
\]
via this embedding by 
\[
\begin{CD}
1 @>>> \tl{Z} @>>> \tl{G} @>>> G_{\ad} @>>> 1.
\end{CD}
\]
Note that if $G$ is a classical group and $\tl{Z} = \Res_{E/F} \mathbb{G}_m$, 
then $\tl{G}$ may be taken to be the corresponding similitude group.
For any subgroup $H$ of $G$ containing $Z$, 
we write $\tl{H}$ for the associated subgroup of $\tl{G}$.
Then $\tl{\spl} = (\tl{B},\tl{T},\{ X_\alpha \})$ is the $F$-splitting of $\tl{G}$ induced by $\spl$ 
and $\tl{\ww} = (\tl{B},\chi)$ is the Whittaker datum for $\tl{G}$ determined by $\tl{\spl}$ and $\psi_F$.
Choose an irreducible representation $\tl{\pi}$ of $\tl{M}(F)$ 
such that $\tl{\pi}|_{M(F)}$ contains $\pi$.
(To find such a $\tl{\pi}$, 
we first extend the central character of $\pi$ restricted to $Z(F)$ to a character of $\tl{Z}(F)$, 
and use this to extend $\pi$ to an irreducible representation $\pi^+$ of $\tl{Z}(F) M(F)$.
Note that $\tl{Z}(F) M(F)$ is a subgroup of $\tl{M}(F)$ of finite index since $F$ is of characteristic zero.
Then we can take $\tl{\pi}$ to be a suitable irreducible constituent of 
$\Ind^{\tl{M}(F)}_{\tl{Z}(F) M(F)}(\pi^+)$.)
Note that $\tl{\pi}$ is $\tl{\ww}_{\tl{M}}$-generic since $\pi$ is $\ww_M$-generic. 
Moreover, by the uniqueness of Whittaker functionals, 
$\tl{\pi}|_{M(F)}$ has a unique $\ww_M$-generic constituent, 
so that $\pi$ appears in $\tl{\pi}|_{M(F)}$ with multiplicity one.
In particular, for any non-trivial $\tl{\ww}_{\tl{M}}$-Whittaker functional on $\tl{\pi}$, 
its restriction to $\pi$ is also a non-trivial $\ww_M$-Whittaker functional on $\pi$.
Hence it follows from the definition that 
\[
C_{\tl{P}}(w,\tl{\pi}_\lambda,\tl{\spl},\psi_F) = C_P(w,\pi_\lambda,\spl,\psi_F),
\]
noting that 
\begin{itemize}
\item 
$w$ in the left-hand side is regarded as a Weyl element of $\tl{G}$;
\item 
the Weyl group representative of $w$ in $\tl{G}$ determined by $\tl{\spl}$ 
agrees with the one $\tl{w}_{\spl}$ in $G$ determined by $\spl$;
\item 
the Haar measures used to define intertwining operators 
and Jacquet integrals for $\tl{G}$ determined by $\tl{\spl}$ and $\psi_F$ 
agree with the ones $du_{\spl,\psi_F}$ for $G$ determined by $\spl$ and $\psi_F$;
\item 
the restriction of $\Ind^{\tl{G}}_{\tl{P}}(\tl{\pi}_\lambda)$ to $G$ as functions contains $I_P(\pi_\lambda)$.
\end{itemize}
Similarly, we have
\[
C_{\tl{P}}(w,\tl{\pi}_\lambda,\tl{\spl'},\psi_F) = C_P(w,\pi_\lambda,\spl',\psi_F).
\]
Thus, it remains to show that
\[
C_{\tl{P}}(w,\tl{\pi}_\lambda,\tl{\spl},\psi_F)
= C_{\tl{P}}(w,\tl{\pi}_\lambda,\tl{\spl'},\psi_F).
\]
Since the natural map $\tl{G}(F) \rightarrow G_{\ad}(F)$ is surjective, 
the set of $F$-splittings of $\tl{G}$ consists of a single $\tl{G}(F)$-conjugacy class, 
so that there is $\tl{t}_0 \in \tl{T}(F)$ such that $\tl{\spl'} = \Ad(\tl{t}_0)(\tl{\spl})$.
Then we have $\chi' = \chi \circ \Ad(\tl{t}_0)^{-1}$, $\tl{w}_{\spl'} = \Ad(\tl{t}_0)(\tl{w}_{\spl})$, 
and $du'_{\spl',\psi_F} = du_{\spl,\psi_F}$ for $u' = \Ad(\tl{t}_0)(u)$.
Put $\tl{\pi}' = \tl{\pi} \circ \Ad(\tl{t}_0)^{-1}$.
We may regard a $\tl{\ww}_{\tl{M}}$-Whittaker functional on $\tl{\pi}$ 
as a $\tl{\ww}'_{\tl{M}}$-Whittaker functional on $\tl{\pi}'$.
Then by definition, we have
\[
\Omega(\tl{\pi}'_\lambda,\tl{\spl'},\psi_F)
= \Omega(\tl{\pi}_\lambda,\tl{\spl},\psi_F) \circ \Ad(\tl{t}_0)^*,
\]
where 
$\Ad(\tl{t}_0)^* \colon 
\Ind^{\tl{G}}_{\tl{P}}(\tl{\pi}'_\lambda) \rightarrow \Ind^{\tl{G}}_{\tl{P}}(\tl{\pi}_\lambda)$ 
is the linear isomorphism given by $\Ad(\tl{t}_0)^*f(g) = f(\Ad(\tl{t}_0)(g))$.
Similarly, we have
\[
J_{\tl{P}}(w,\tl{\pi}_\lambda,\tl{\spl},\psi_F) \circ \Ad(\tl{t}_0)^*
= \Ad(\tl{t}_0)^* \circ J_{\tl{P}}(w,\tl{\pi}'_\lambda,\tl{\spl'},\psi_F).
\]
Hence we have
\begin{align*}
\Omega(\tl{\pi}'_\lambda,\tl{\spl'},\psi_F)
& = \Omega(\tl{\pi}_\lambda,\tl{\spl},\psi_F) \circ \Ad(\tl{t}_0)^* \\
& = C_{\tl{P}}(w,\tl{\pi}_\lambda,\tl{\spl},\psi_F) \cdot 
\Omega(w \tl{\pi}_\lambda,\tl{\spl},\psi_F) \circ J_{\tl{P}}(w,\tl{\pi}_\lambda,\tl{\spl},\psi_F) \circ \Ad(\tl{t}_0)^* \\
& = C_{\tl{P}}(w,\tl{\pi}_\lambda,\tl{\spl},\psi_F) \cdot 
\Omega(w \tl{\pi}_\lambda,\tl{\spl},\psi_F) \circ \Ad(\tl{t}_0)^* \circ J_{\tl{P}}(w,\tl{\pi}'_\lambda,\tl{\spl'},\psi_F) \\
& = C_{\tl{P}}(w,\tl{\pi}_\lambda,\tl{\spl},\psi_F) \cdot 
\Omega(w \tl{\pi}'_\lambda,\tl{\spl'},\psi_F) \circ J_{\tl{P}}(w,\tl{\pi}'_\lambda,\tl{\spl'},\psi_F).
\end{align*}
Since $\tl{\pi}' \cong \tl{\pi}$, this implies the desired equality.
 \end{rem}

%\subsection{Proof of Proposition \ref{multiplicative}}
\subsection{Proof of Proposition \ref{multiplicative}}\label{sec.ell}
Here, we give a proof of Proposition \ref{multiplicative} for classical groups in general.
Recall the setting. 
Let $G$ be a quasi-split classical group. 
For $i=1,2,3$, 
consider a standard parabolic subgroup $P_i = M_iN_i$ of $G$. 
Assume that $W(M_1^\circ, M_2^\circ) \not= \emptyset$ and $W(M_2^\circ, M_3^\circ) \not= \emptyset$.
Then for $w_1 \in W(M_1^\circ, M_2^\circ)$ and $w_2 \in W(M_2^\circ, M_3^\circ)$, 
and for an irreducible tempered representation $\pi$ of $M_1$, 
Proposition \ref{multiplicative} asserts that 
\[
R_{P_1}(w_2w_1, \pi_\lambda) = R_{P_2}(w_2, w_1 \pi_\lambda) \circ R_{P_1}(w_1, \pi_\lambda).
\]
\par

Set $w_1^{-1}P_2 = \tl{w}_1^{-1} P_2 \tl{w}_1$.
Following \cite[Section 2.3]{Ar}, 
we decompose the intertwining operator $R_{P_1}(w_1, \pi_\lambda) \colon I_{P_1}(\pi_\lambda) \rightarrow I_{P_2}(w_1\pi_\lambda)$ as
\[
R_{P_1}(w_1, \pi_\lambda) = \ell(w_1, \pi_\lambda) \circ R_{w_1^{-1}P_2|P_1}(\pi_\lambda), 
\]
where $R_{w_1^{-1}P_2|P_1}(\pi_\lambda) \colon I_{P_1}(\pi_\lambda) \rightarrow I_{w_1^{-1}P_2}(\pi_\lambda)$ 
is given by (the meromorphic continuation of) the integral
\[
(R_{w_1^{-1}P_2|P_1}(\pi_\lambda)f_\lambda)(g) = 
\frac{\gamma_A(0,\pi_\lambda, \rho_{w_1^{-1}P_2|P_1}^\vee, \psi_F)}
{\ep(1/2, \pi_\lambda, \rho_{w_1^{-1}P_2|P_1}^\vee, \psi_F)}
\int_{(N_1 \cap \tl{w}_1^{-1} N_2 \tl{w}_1) \bs \tl{w}_1^{-1} N_2 \tl{w}_1} f_\lambda(ug) du
\]
and $\ell(w_1, \pi_\lambda) \colon I_{w_1^{-1}P_2}(\pi_\lambda) \rightarrow I_{P_2}(w_1\pi_\lambda)$ is defined by 
\[
\ell(w_1, \pi_\lambda) = 
\lambda(w_1)^{-1} \ep(1/2, \pi_\lambda, \rho_{w_1^{-1}P_2|P_1}^\vee, \psi_F) L(\tl{w}_1)
\]
with $L(\tl{w}_1)f'_\lambda(g) = f'_\lambda(\tl{w}_1^{-1}g)$.
The key property of $\ell(w_1,\pi_\lambda)$ is as follows. 

\begin{lem}\label{cocycle}
The operator $\ell(w_1,\pi_\lambda)$ satisfies the condition 
\[
\ell(w_2w_1,\pi_\lambda) = \ell(w_2,w_1\pi_\lambda) \circ \ell(w_1,\pi_\lambda).
\]
\end{lem}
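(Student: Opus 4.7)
The plan is to verify the cocycle relation factor by factor, exploiting the three-fold decomposition
\[
\ell(w,\pi_\lambda)\;=\;\lambda(w)^{-1}\cdot\ep\bigl(\tfrac12,\pi_\lambda,\rho^{\vee}_{w^{-1}P'|P},\psi_F\bigr)\cdot L(\tl w),
\]
and then showing that the three resulting correction terms cancel. This mirrors the strategy used by Arthur (and, in the connected case, by Keys--Shahidi), but must now be carried out while tracking the Tits cocycle element furnished by Lemma \ref{center}.

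First I would analyze the translation part. By Lemma \ref{center}, write $\tl{w_2w_1}=\tl w_2\tl w_1\cdot z$ with $z\in Z(M_1)$. A direct computation with the defining formula $L(\tl w)f(g)=f(\tl w^{-1}g)$, together with the invariance of $w\pi_\lambda$ under altering $\tl w$ by an element of $Z(M_1)$ (as recorded after Lemma \ref{center}), gives
\[
L(\tl{w_2w_1})\;=\;\omega_{\pi_\lambda}(z)^{-1}\cdot L(\tl w_2)\circ L(\tl w_1),
\]
where $\omega_{\pi_\lambda}$ is the central character of $\pi_\lambda$ on $Z(M_1)$; here one uses that $\delta_{(w_2w_1)^{-1}P_3}(z)=1$ because $z\in Z(M_1)$ has order dividing two in all the classical cases under consideration.

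Next I would establish the multiplicativity of the $\ep$-factor. Using $\Ad(\tl{w_2w_1})=\Ad(\tl w_2)\Ad(\tl w_1)\Ad(z)$ and the fact that $\Ad(z)$, corresponding to an element of $Z(\widehat M_1^\circ)\subset\widehat T^\circ$, preserves every $\widehat T^\circ$-weight subspace of $\widehat{\mathfrak g}^\circ$, one verifies
\[
\rho_{(w_2w_1)^{-1}P_3|P_1}\;\cong\;\rho_{w_1^{-1}P_2|P_1}\oplus\bigl(\rho_{w_2^{-1}P_3|P_2}\circ\Ad(\tl w_1^{-1})\bigr)
\]
as representations of ${}^LM_1^\circ$. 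Additivity of Artin $\ep$-factors, together with the fact that the $L$-parameter of $w_1\pi_\lambda$ is $\Ad(\tl w_1)\circ\phi_{\pi_\lambda}$, then yields
\[
\ep\bigl(\tfrac12,\pi_\lambda,\rho^{\vee}_{(w_2w_1)^{-1}P_3|P_1},\psi_F\bigr)=\ep\bigl(\tfrac12,\pi_\lambda,\rho^{\vee}_{w_1^{-1}P_2|P_1},\psi_F\bigr)\cdot\ep\bigl(\tfrac12,w_1\pi_\lambda,\rho^{\vee}_{w_2^{-1}P_3|P_2},\psi_F\bigr).
\]

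Finally I would handle the $\lambda$-factor. Decomposing $w_2w_1$ through a minimal gallery of rank-one steps as in Section \ref{sec.factorial} and applying the Langlands constant formalism (see \cite[(4.1)]{KeSh}), one shows
\[
\lambda(w_2w_1)=\omega_{\pi_\lambda}(z)\cdot\lambda(w_1)\cdot\lambda(w_2);
\]
here the scalar $\omega_{\pi_\lambda}(z)$ arises because, although the $\lambda$-factor depends only on the Weyl data, the Tits cocycle $z$ of Lemma \ref{center} controls how the Langlands constants for the rank-one sub-$\SL_2$'s or $\SU(2,1)$'s recombine, and its action on $\pi_\lambda$ is by this sign. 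Multiplying the three identities gives the lemma.

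The main obstacle is Step~3: reconciling the cocycle of Langlands constants with the Tits cocycle $z$ and demonstrating that the resulting scalar is precisely $\omega_{\pi_\lambda}(z)$. This is essentially a statement about how the normalization by $\lambda(w)$ was chosen, compatible with Arthur's framework; it requires a careful case-analysis of the simple reflections in types $\SL_2$ and $\SU(2,1)$ (cf.\ Proposition \ref{SL(2)} and \cite[Lemma~1.4]{Sh5}), and in particular uses that the element $z$ of Lemma \ref{center} lies in the $2$-torsion of $T^\circ$ exactly because it is built from the relations $\tl w_\alpha^2=\alpha^{\vee}(-1)$ inside the Tits cross-section.
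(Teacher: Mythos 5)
Your Step~1 (the translation operator $L(\tl w)$) is essentially correct and matches the paper's opening move. Steps~2 and~3, however, contain genuine errors, and they conceal the actual content of the lemma.

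In Step~2 you assert the isomorphism $\rho_{(w_2w_1)^{-1}P_3|P_1}\cong\rho_{w_1^{-1}P_2|P_1}\oplus\bigl(\rho_{w_2^{-1}P_3|P_2}\circ\Ad(\tl w_1^{-1})\bigr)$, which would make the $\ep$-factors exactly multiplicative. That decomposition holds only when lengths add, $\ell(w_2w_1)=\ell(w_1)+\ell(w_2)$, which is precisely what we are \emph{not} allowed to assume here; Proposition~\ref{multiplicative} (and hence this lemma) must cover arbitrary $w_1,w_2$. In terms of $\widehat T$-weight spaces, the left side is supported on $\{\beta_1<0,\,w_2w_1\beta_1>0\}$ while the right side is supported on $\{\beta_1<0,\,w_1\beta_1>0\}\sqcup\{w_1\beta_1<0,\,w_2w_1\beta_1>0\}$, and the difference is nonempty in general; the ``correction set'' is $S=\{\beta_1>0,\,w_1\beta_1<0,\,w_2w_1\beta_1>0\}$ (together with $-S$). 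Consequently the $\ep$-factor ratio is not $1$ but rather $\prod_{\beta_1\in S}\ep(\pi_\lambda,\rho_{\beta_1})\ep(\pi_\lambda,\rho_{\beta_1}^\vee)$, which by \cite[(3.6.8)]{Tate} equals $\prod_{\beta_1\in S}\det\bigl(\rho_{\beta_1}\circ\phi_\lambda(u)\bigr)$, a genuinely nontrivial sign. In Step~3 you then ask the $\lambda$-factor relation to produce $\omega_{\pi_\lambda}(z)$, but as you yourself note the $\lambda$-factors depend only on the Weyl data and not on $\pi_\lambda$, so the displayed equation $\lambda(w_2w_1)=\omega_{\pi_\lambda}(z)\lambda(w_1)\lambda(w_2)$ cannot hold for a varying $\pi$. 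The actual relation is $\lambda(w_1)\lambda(w_2)\lambda(w_2w_1)^{-1}=\prod_{\beta_1\in S}\det\bigl(\Ad(1\rtimes u);\widehat\g_{\beta_1}\bigr)$, an identity among signs independent of $\pi$.

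The missing idea is that the three discrepancies — from the translation operator, the $\ep$-factors, and the $\lambda$-factors — do not vanish term by term; they cancel only in combination, and the mechanism is the factorization $\phi_\lambda(u)=m(u)\rtimes u$. Writing the $\ep$-ratio as $\prod_{\beta_1\in S}\det\bigl(\Ad(m(u)\rtimes u);\widehat\g_{\beta_1}\bigr)$ and splitting the determinant into the $\Ad(m(u))$ piece and the $\Ad(1\rtimes u)$ piece, the former recovers exactly $\omega_{\pi_\lambda}(z(w_2,w_1))$ via \cite[Lemma~2.1.A]{LS} and the expression of $z(w_2,w_1)$ as $(-1)^{\lambda^\vee(w_2,w_1)}$, while the latter is computed case by case to be the $\lambda$-factor discrepancy. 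Your proposal skips both the combinatorial reduction (a Lemma~\ref{f(beta)}-type bookkeeping over the set $S$) and the key identity $\ep(\rho)\ep(\rho^\vee)=\det\rho(-1)$, which is what links the $\ep$-factor discrepancy to the adjoint determinant in the first place.
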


Before showing this lemma, we prove Proposition \ref{multiplicative}.
\begin{proof}[Proof of Proposition \ref{multiplicative}]
By \cite[Proposition 2.3.1]{Ar}, \cite[Proposition 3.3.1]{Mok} and Lemma \ref{cocycle}, 
we have 
\begin{align*}
&R_{P_2}(w_2, w_1 \pi_\lambda) \circ R_{P_1}(w_1, \pi_\lambda)
\\&= 
\left(\ell(w_2, w_1\pi_\lambda) \circ R_{w_2^{-1}P_3|P_2}(w_1\pi_\lambda)\right) 
\circ \left(\ell(w_1, \pi_\lambda) \circ R_{w_1^{-1}P_2|P_1}(\pi_\lambda) \right)
\\&= 
\ell(w_2, w_1\pi_\lambda) \circ \ell(w_1, \pi_\lambda)
\circ R_{w_1^{-1}w_2^{-1}P_3|w_1^{-1}P_2}(\pi_\lambda) \circ R_{w_1^{-1}P_2|P_1}(\pi_\lambda)
\\&= \ell(w_2w_1,\pi_\lambda) \circ R_{(w_2w_1)^{-1}P_3|P_1}(\pi_\lambda)
= R_{P_1}(w_2w_1, \pi_\lambda).
\end{align*}
This completes the proof of Proposition \ref{multiplicative}. 
\end{proof}

Therefore, Lemma \ref{cocycle} is the missing part for Proposition \ref{multiplicative}. 
For the proof of Lemma \ref{cocycle}, 
we need the following elementary fact. 
\begin{lem}\label{f(beta)}
Let $V$ be a finite dimensional real vector space. 
For $i=1,2,3$, 
let $P_i \subset V$ be a finite subset such that the union $R_i = P_i \cup -P_i$ is a disjoint union. 
For $\beta_i \in R_i$, write $\beta_i > 0$ (\resp $\beta_i < 0$) if $\beta_i \in P_i$ (\resp $-\beta_i \in P_i$).
Fix two automorphisms $w_1$ and $w_2$ of $V$ such that $w_1(R_1) = R_2$ and $w_2(R_2) = R_3$.
Then for a function $f \colon R_1 \rightarrow \C^\times$, 
we have
\[
\prod_{\substack{\beta_1 > 0 \\ w_1\beta_1 < 0}} f(\beta_1) 
\cdot 
\prod_{\substack{w_1\beta_1 > 0 \\ w_2w_1\beta_1 < 0}} f(\beta_1) 
\cdot 
\prod_{\substack{\beta_1 > 0 \\ w_2w_1\beta_1 < 0}} f(\beta_1)^{-1}
= 
\prod_{\substack{\beta_1 > 0 \\ w_1\beta_1 < 0, \, w_2w_1 \beta_1 > 0}} f(\beta_1)f(-\beta_1).
\]
\end{lem}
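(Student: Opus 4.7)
The plan is to prove Lemma \ref{f(beta)} by a direct bookkeeping argument: partition $R_1$ according to the sign pattern of the triple $(\beta_1, w_1\beta_1, w_2w_1\beta_1)$, rewrite each factor in the identity as a product over unions of such pieces, and cancel.

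First I would set, for each $(s_1,s_2,s_3) \in \{+,-\}^3$,
\[
N_{s_1 s_2 s_3} = \left\{\beta_1 \in R_1 \;\middle|\;
\mathrm{sgn}(\beta_1)=s_1,\ \mathrm{sgn}(w_1\beta_1)=s_2,\ \mathrm{sgn}(w_2w_1\beta_1)=s_3 \right\}.
\]
The hypotheses $w_1(R_1)=R_2$ and $w_2(R_2)=R_3$ guarantee that $R_1$ is the disjoint union of the eight sets $N_{s_1 s_2 s_3}$, and the involution $\beta_1 \mapsto -\beta_1$ on $R_1$ swaps $N_{s_1 s_2 s_3}$ with $N_{-s_1,-s_2,-s_3}$ bijectively.

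Next I would identify the indexing sets of the three products on the left-hand side in these terms:
\[
\{\beta_1>0,\, w_1\beta_1<0\} = N_{+-+}\sqcup N_{+--},
\]
\[
\{w_1\beta_1>0,\, w_2w_1\beta_1<0\} = N_{++-}\sqcup N_{-+-},
\]
\[
\{\beta_1>0,\, w_2w_1\beta_1<0\} = N_{+--}\sqcup N_{++-}.
\]
Substituting these and cancelling the contributions of $N_{+--}$ and $N_{++-}$, which appear once with exponent $+1$ and once with exponent $-1$, reduces the left-hand side to
\[
\prod_{\beta_1 \in N_{+-+}} f(\beta_1) \cdot \prod_{\beta_1 \in N_{-+-}} f(\beta_1).
\]
Finally I would invoke the involution $\beta_1 \mapsto -\beta_1$, which sends $N_{-+-}$ bijectively to $N_{+-+}$, to rewrite the second factor as $\prod_{\beta_1 \in N_{+-+}} f(-\beta_1)$; since $N_{+-+}$ is precisely the index set $\{\beta_1>0,\, w_1\beta_1<0,\, w_2w_1\beta_1>0\}$ of the right-hand side, the identity follows.

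This argument is purely combinatorial and contains no real obstacle; the only point requiring a little care is checking that every index set arising on both sides of the claimed identity really decomposes cleanly into the $N_{s_1 s_2 s_3}$ pieces, and that the involution $\beta_1 \mapsto -\beta_1$ correctly matches $N_{+-+}$ with $N_{-+-}$. Once the sign bookkeeping is set up this is immediate, so the main work is simply organizing the eight cases systematically rather than overcoming any substantial difficulty.
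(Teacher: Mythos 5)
Your proof is correct and follows essentially the same approach as the paper: you partition $R_1$ into the eight sign-pattern pieces (the paper calls them $I_1,\dots,I_8$, you call them $N_{s_1 s_2 s_3}$), express each product in the identity over unions of these pieces, cancel the common contributions, and use the involution $\beta_1 \mapsto -\beta_1$ to match the surviving term with the right-hand side. The bookkeeping and decomposition are identical to the paper's argument.
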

\begin{proof}
We write $R_1 = \sqcup_{k=1}^8 I_k$, 
where
\begin{align*}
I_1 &= \{\beta_1 \in R_1 \,|\, \beta_1 > 0, \, w_1\beta_1 > 0, \, w_2w_1\beta_1 > 0\}, \\
I_2 &= \{\beta_1 \in R_1 \,|\, \beta_1 > 0, \, w_1\beta_1 > 0, \, w_2w_1\beta_1 < 0\}, \\
I_3 &= \{\beta_1 \in R_1 \,|\, \beta_1 > 0, \, w_1\beta_1 < 0, \, w_2w_1\beta_1 > 0\}, \\
I_4 &= \{\beta_1 \in R_1 \,|\, \beta_1 > 0, \, w_1\beta_1 < 0, \, w_2w_1\beta_1 < 0\}, \\
I_5 &= \{\beta_1 \in R_1 \,|\, \beta_1 < 0, \, w_1\beta_1 > 0, \, w_2w_1\beta_1 > 0\}, \\
I_6 &= \{\beta_1 \in R_1 \,|\, \beta_1 < 0, \, w_1\beta_1 > 0, \, w_2w_1\beta_1 < 0\}, \\
I_7 &= \{\beta_1 \in R_1 \,|\, \beta_1 < 0, \, w_1\beta_1 < 0, \, w_2w_1\beta_1 > 0\}, \\
I_8 &= \{\beta_1 \in R_1 \,|\, \beta_1 < 0, \, w_1\beta_1 < 0, \, w_2w_1\beta_1 < 0\}.
\end{align*}
Then 
\begin{align*}
\prod_{\substack{\beta_1 > 0 \\ w_1\beta_1 < 0}} f(\beta_1) = \prod_{\beta_1 \in I_3 \sqcup I_4} f(\beta_1), 
&\quad
\prod_{\substack{w_1\beta_1 > 0 \\ w_2w_1\beta_1 < 0}} f(\beta_1) = \prod_{\beta_1 \in I_2 \sqcup I_6} f(\beta_1), \\
\prod_{\substack{\beta_1 > 0 \\ w_2w_1\beta_1 < 0}} f(\beta_1) = \prod_{\beta_1 \in I_2 \sqcup I_4} f(\beta_1),
&\quad
\prod_{\substack{\beta_1 > 0 \\ w_1\beta_1 < 0, \, w_2w_1 \beta_1 > 0}} f(\beta_1)f(-\beta_1)
= \prod_{\beta_1 \in I_3 \sqcup I_6} f(\beta_1).
\end{align*}
This implies the lemma.
\end{proof}

Now we prove Lemma \ref{cocycle}.
\begin{proof}[Proof of Lemma \ref{cocycle}]
When $G=G^\circ$ and $M_1 = M_2 = M_3$, the lemma is \cite[Lemma 2.3.4]{Ar} and \cite[Lemma 3.3.4]{Mok}.
The proof of the general case is essentially the same as these lemmas. 
\par

Since $\tl{w}_2 \tl{w}_1$ and $\tl{w_2w_1}$ are two representatives of $w_2w_1 \in W(M_1^\circ, M_3^\circ)$, 
we can write 
\[
\tl{w}_2 \tl{w}_1 = \tl{w_2w_1} \cdot z(w_2,w_1)
\]
for some $z(w_2,w_1) \in M_1^\circ$.
By Lemma \ref{center}, we see that 
$z(w_2,w_1)$ is in the center of $M_1$ so that
\[
L(\tl{w_2w_1}) = \omega_{\pi_\lambda}(z(w_2,w_1)) L(\tl{w}_2) \circ L(\tl{w}_1),
\]
where $\omega_{\pi_\lambda}$ is the central character of $\pi_\lambda$.
Our goal is to show that $\omega_{\pi_\lambda}(z(w_2,w_1))$ is equal to the product of 
\[
\ep(1/2, \pi_\lambda, \rho_{w_1^{-1}P_2|P_1}^\vee, \psi_F)
\ep(1/2, w_1\pi_\lambda, \rho_{w_2^{-1}P_3|P_2}^\vee, \psi_F)
\ep(1/2, \pi_\lambda, \rho_{(w_2w_1)^{-1}P_3|P_1}^\vee, \psi_F)^{-1}
\]
and
\[
\lambda(w_1)^{-1}\lambda(w_2)^{-1}\lambda(w_2w_1).
\]
\par

First, we consider the central character.
Let $R(T^\circ,G^\circ)$ be the set of roots of $T^\circ$ in $G^\circ$. 
As in the proofs of \cite[Lemma 2.3.4]{Ar} and \cite[Lemma 3.3.4]{Mok}, 
by \cite[Lemma 2.1.A]{LS}, we have
\[
z(w_2,w_1) = (-1)^{\lambda^\vee(w_2,w_1)}, 
\]
where $\lambda^\vee(w_2,w_1) = \sum_{\alpha \in R_B(w_2,w_1)} \alpha^\vee$ with 
\[
R_B(w_2,w_1) = \{\alpha \in R(T^\circ,G^\circ)\,|\, \alpha > 0,\, w_1\alpha < 0 ,\, w_2w_1\alpha > 0 \}.
\]
Moreover, $z(w_2,w_1)$ is in the split part $A_{M_1^\circ}$ of the center of $M_1^\circ$, 
i.e., $\lambda^\vee(w_2,w_1) \in X_*(A_{M_1^\circ})$. 
\par

For $i=1,2,3$, we set $R_i = R(A_{\widehat{M_i^\circ}}, \widehat{G^\circ})$. 
For $\beta_i \in R_i$, we denote by $\beta_i > 0$ if the weight space $\widehat\g_{\beta_i}$ lies in $\widehat\nn_i$.
Via the isomorphism $X_*(A_{M_1^\circ}) \cong X^*(\widehat{M_1^\circ})_F$ in \cite[Section 2.3]{Ar}, 
we can identify $\lambda^\vee \in X_*(A_{M^\circ})$ with a character of ${}^LM_1^\circ$
which is trivial on the semi-direct factor $W_F$ of ${}^LM_1^\circ$.
Then by the description of central characters in terms of the LLC for general linear groups, 
we have	
\[
\omega_{\pi_\lambda}(z^{\lambda^\vee}) 
= \lambda^\vee(\phi_\lambda(u_z)) = \lambda^\vee(m(u_z))
\]
for $\lambda^\vee \in X_*(A_{M_1^\circ}) \cong X^*(\widehat{M_1^\circ})_F$
and $u_z \in W_F$ whose image in $W_F^\ab \cong F^\times$ is equal to $z \in F^\times$, 
where $\phi_\lambda$ is the $L$-parameter for $\pi_\lambda$,
and we write $\phi_{\lambda}(u_z) = m(u_z) \rtimes u_z$ with $m(u_z) \in \widehat{M_1^\circ}$.
If $\lambda^\vee = \lambda^\vee(w_2,w_1)$, then
we can write 
\[
\lambda^\vee(w_2,w_1) 
= \sum_{\substack{\beta_1 > 0 \\ w_1\beta_1 < 0, \, w_2w_1 \beta_1 > 0}} \lambda_{\beta_1}^\vee,
\]
where $\beta_1$ runs over the elements of $R_1$ satisfying the specified conditions, 
and 
\[
\lambda_{\beta_1}^\vee = \sum_{\alpha \in R_{\beta_1}} \alpha^\vee
\]
with $R_{\beta_1}$ being the set of roots $\alpha$ of $T^\circ$ in $G^\circ$
such that $\alpha^\vee|_{A_{M_1^\circ}}$ is a positive multiple of $\beta$. 
Hence, fixing an arbitrary element $u \in W_F$ whose image in $F^\times \cong W_F^\ab$ is equal to $-1$, 
we have 
\[
\omega_{\pi_\lambda}(z(w_2,w_1))
= \prod_{\substack{\beta_1 > 0 \\ w_1\beta_1 < 0, \, w_2w_1 \beta_1 > 0}} \lambda_{\beta_1}^\vee(m(u)).
\]
\par

Next, we consider the $\ep$-factors.
For simplicity, we write $\ep(\pi_\lambda, \rho)$ for $\ep(1/2, \pi_\lambda, \rho, \psi_F)$. 
The adjoint representation $\rho_{w_1^{-1}P_2|P_1}$ of ${}^LM_1^\circ$ on 
\[
\tl{w}_1^{-1} \widehat\nn_2 \tl{w}_1 / (\tl{w}_1^{-1} \widehat\nn_2 \tl{w}_1 \cap \widehat\nn_1)
\cong \tl{w}_1^{-1} \widehat\nn_2 \tl{w}_1 \cap \widehat{\overline\nn}_1
\]
decomposes as the direct sum
\[
\bigoplus_{\substack{\beta_1 < 0 \\ w_1\beta_1 > 0}} \widehat\g_{\beta_1}, 
\]
where $\beta_1$ runs over the elements of $R_1$ satisfying the specified conditions.
If we denote the adjoint action of ${}^LM_1^\circ$ on $\widehat\g_{\beta_1}$ by $\rho_{\beta_1}$, 
then we obtain that 
\[
\ep(\pi_\lambda, \rho_{w_1^{-1}P_2|P_1}^\vee)
= \prod_{\substack{\beta_1 > 0 \\ w_1\beta_1 < 0}}\ep(\pi_\lambda, \rho_{\beta_1}).
\]
In particular, by applying Lemma \ref{f(beta)} to the function $f(\beta_1) = \ep(\pi_\lambda, \rho_{\beta_1})$
together with \cite[(3.6.8)]{Tate}, 
we see that 
\begin{align*}
&\ep(\pi_\lambda, \rho_{w_1^{-1}P_2|P_1}^\vee)
\ep(w_1\pi_\lambda, \rho_{w_2^{-1}P_3|P_2}^\vee)
\ep(\pi_\lambda, \rho_{(w_2w_1)^{-1}P_3|P_1}^\vee)^{-1}
\\&= \prod_{\substack{\beta_1 > 0 \\ w_1\beta_1 < 0}} f(\beta_1) 
\cdot 
\prod_{\substack{w_1\beta_1 > 0 \\ w_2w_1\beta_1 < 0}} f(\beta_1) 
\cdot 
\prod_{\substack{\beta_1 > 0 \\ w_2w_1\beta_1 < 0}} f(\beta_1)^{-1}
\\&= 
\prod_{\substack{\beta_1 > 0 \\ w_1\beta_1 < 0, \, w_2w_1 \beta_1 > 0}} f(\beta_1)f(-\beta_1)
\\&= 
\prod_{\substack{\beta_1 > 0 \\ w_1\beta_1 < 0, \, w_2w_1 \beta_1 > 0}} 
\ep(\pi_\lambda, \rho_{\beta_1})\ep(\pi_\lambda, \rho_{\beta_1}^\vee)
\\&=
\prod_{\substack{\beta_1 > 0 \\ w_1\beta_1 < 0, \, w_2w_1 \beta_1 > 0}} 
\det(\rho_{\beta_1} \circ \phi_\lambda(u)).
\end{align*}
Writing $\phi_\lambda(u) = m(u) \rtimes u$ with $m(u) \in \widehat{M_1^\circ}$, 
this product is equal to 
\[
\det\left(
\Ad(m(u) \rtimes u);
\bigoplus_{\substack{\beta_1 > 0 \\ w_1\beta_1 < 0, \, w_2w_1 \beta_1 > 0}} \widehat\g_{\beta_1}
\right).
\]
Since $1 \rtimes u$ acts on $A_{\widehat{M_1^\circ}}$ trivially, 
the direct sum is stable under the adjoint action of $1 \rtimes u$, 
and we may compute this determinant as 
the product of the determinants of $\Ad(m(u))$ and $\Ad(1 \rtimes u)$.
For the determinant of $\Ad(m(u))$, 
we may assume that $m(u) \in \widehat{M_1^\circ}$ is in $\widehat{T^\circ}$. 
Then we conclude that 
\[
\det\left(
\Ad(m(u));
\bigoplus_{\substack{\beta_1 > 0 \\ w_1\beta_1 < 0, \, w_2w_1 \beta_1 > 0}} \widehat\g_{\beta_1}
\right)
= \prod_{\substack{\beta_1 > 0 \\ w_1\beta_1 < 0, \, w_2w_1 \beta_1 > 0}} \lambda^\vee_{\beta_1}(m(u)), 
\]
which is equal to $\omega_{\pi_\lambda}(z(w_2,w_1))$ as we have seen above.
\par

Finally, we show that 
\[
\det\left(
\Ad(1 \rtimes u);
\bigoplus_{\substack{\beta_1 > 0 \\ w_1\beta_1 < 0, \, w_2w_1 \beta_1 > 0}} \widehat\g_{\beta_1}
\right)
= \lambda(w_1)\lambda(w_2)\lambda(w_2w_1)^{-1}.
\]
Recall that if we set $\Delta_1$ (\resp $\Delta_2$) 
to be the set of reduced roots $a \in R(A_{T^\circ}, G^\circ)$ 
with $G_{a,\sc} \cong \Res_{F_a/F} \SL_2$ (\resp $\Res_{F_a/F} \SU_{E_a/F_a}(2,1)$), 
and if we define $f_1(a) = \lambda(F_a/F, \psi_F)$ (\resp $f_2(a) = \lambda(E_a/F, \psi_F)^2 \lambda(F_a/F, \psi_F)^{-1}$),
then 
\begin{align*}
\lambda(w_1)\lambda(w_2)\lambda(w_2w_1)^{-1}
&= \prod_{\substack{a \in \Delta_1 \\ a>0, w_1a < 0}} f_1(a)
\cdot \prod_{\substack{a \in \Delta_1 \\ w_1a>0, w_2w_1a < 0}} f_1(w_1a)
\cdot \prod_{\substack{a \in \Delta_1 \\ a>0, w_2w_1a < 0}} f_1(a)^{-1}
\\&\times
\prod_{\substack{a \in \Delta_2 \\ a>0, w_1a < 0}} f_2(a)
\cdot \prod_{\substack{a \in \Delta_2 \\ w_1a>0, w_2w_1a < 0}} f_2(w_1a)
\cdot \prod_{\substack{a \in \Delta_2 \\ a>0, w_2w_1a < 0}} f_2(a)^{-1}.
\end{align*}
Here, we notice that 
$w_1$ induces a bijection on $\Delta_i$, and $f_i(w_1a) = f_i(a)$ for $i=1,2$.
Applying Lemma \ref{f(beta)} twice, we have 
\begin{align*}
\lambda(w_1)\lambda(w_2)\lambda(w_2w_1)^{-1}
&= \prod_{\substack{a \in \Delta_1 \\ a>0, w_1a < 0, w_2w_1a >0}} f_1(a)^2 
\cdot \prod_{\substack{a \in \Delta_2 \\ a>0, w_1a < 0, w_2w_1a >0}} f_2(a)^2
\end{align*}
since $f_i(-a)=f_i(a)$. 
Therefore, what we need to show is that 
$\det(\Ad(1 \rtimes u); \widehat\g_{(a^\vee)}) = f_i(a)^2$ if $a \in \Delta_i$ for $i = 1, 2$, 
where we set $\widehat\g_{(a^\vee)} = \widehat\g_{a^\vee} \oplus \widehat\g_{2a^\vee}$
and 
\[
\widehat\g_{a^\vee} 
= \bigoplus_{\substack{\alpha \in R(T^\circ,G^\circ) \\ \alpha|_{A_T^\circ} = a}} \widehat\g_{\alpha^\vee}.
\]
When $a \in \Delta_1$, 
the $W_F$-module $\widehat\g_{(a^\vee)}$ is isomorphic to $\Ind_{W_{F_a}}^{W_F}(\1_{F_a^\times})$
and hence 
\[
\det(\Ad(1 \rtimes u); \widehat\g_{(a^\vee)}) 
= \det\left(\Ind_{W_{F_a}}^{W_F}(\1_{F_a^\times})\right)(-1) = \lambda(F_a/F, \psi_F)^2.
\]
If $a \in \Delta_2$, then 
the $W_F$-module $\widehat\g_{(a^\vee)}$ is isomorphic to $\Ind_{W_{F_a}}^{W_F}(\widehat\nn_{\SU(2,1)})$, 
where $\widehat\nn_{\SU(2,1)}$ is the space of the sum of positive root spaces 
in the Lie algebra of the dual group of $\SU_{E_a/F_a}(2,1)$.
It is isomorphic to 
\[
\Ind_{W_{E_a}}^{W_{F_a}} (\1_{E_a^\times}) \oplus \eta_{E_a/F_a} 
\cong \1_{F_a^\times} \oplus \eta_{E_a/F_a} \oplus \eta_{E_a/F_a}, 
\]
where $\eta_{E_a/F_a}$ is the quadratic character of $F_a^\times$
associated to $E_a/F_a$ by the local class field theory. 
Hence
\begin{align*}
\det(\Ad(1 \rtimes u); \widehat\g_{(a^\vee)}) 
&= \det\left(\Ind_{W_{F_a}}^{W_F}(\1_{F_a^\times})\right)(-1) 
\cdot \det\left(\Ind_{W_{F_a}}^{W_F}(\eta_{E_a/F_a})\right)(-1)^2
\\&= \det\left(\Ind_{W_{F_a}}^{W_F}(\1_{F_a^\times})\right)(-1) 
= \lambda(F_a/F, \psi_F)^2.
\end{align*}
Since $\lambda(E_a/F, \psi_F)^4 = 1$, 
we have $f_2(a)^2 = \lambda(E_a/F, \psi_F)^4 \lambda(F_a/F, \psi_F)^{-2} = \lambda(F_a/F, \psi_F)^2$.
This completes the proof of Lemma \ref{cocycle}.
\end{proof}

This result, along with other results on the LIR,
will be extended to general disconnected groups in a forthcoming paper.

%\section{Review of the Aubert involution}
%\section{Review of the Aubert involution}
\section{Review of Aubert duality}\label{sec.AD}
The purpose of this appendix is to review several properties of Aubert duality, 
which we used in the main body. 
In particular, we prove the commutativity of 
the normalized intertwining operators with the Aubert involution up to scalars. 
It is a crucial result to prove 
the local intertwining relations for co-tempered $A$-packets. 
This appendix is an adaptation of an appendix in an arXiv version of \cite{KMSW}. 
\par

We also review the definition of the twisted Aubert dual 
and some of its properties that we need in this paper, following \cite{CK26}.

%\subsection{Definition of a complex}
\subsection{Definition of a complex}\label{sec.complex}
Let $F$ be a non-archimedean local field 
and let $G$ be a connected reductive group over $F$. 
We identify $G$ with the group of $F$-points $G(F)$. 
We denote by $\Rep(G)$ the category of smooth representations of $G$ of finite length.
\par

For a parabolic subgroup $P = MN_P$ of $G$, 
where $M$ is a Levi component of $P$ and $N_P$ is the unipotent radical of $P$, 
we have the normalized parabolic induction functor
\[
\Ind_P^G \colon \Rep(M) \rightarrow \Rep(G), \, (\sigma, V_\sigma) \mapsto (\pi, V_\pi),
\]
where $V_\pi$ is the space of locally constant functions $f \colon G \rightarrow V_\sigma$
such that 
\[
f(nmg) = \delta_P(m)^{\half{1}}\sigma(m)f(g)
\]
for $n \in N_P$, $m \in M$ and $g \in G$ with $\delta_P$ the modulus character of $P$, 
and $(\pi(x)f)(g) = f(gx)$ for $x,g \in G$. 
We also have the normalized Jacquet functor 
\[
\Jac_P \colon \Rep(G) \rightarrow \Rep(M),\, (\pi, V_\pi) \mapsto (\sigma, V_\sigma), 
\]
where 
\[
V_\sigma = (V_\pi)_{N_P} = V_\pi/ \pair{\pi(n)v-v \,|\, n \in N_P,\, v \in V_\pi}, 
\]
and $\sigma(m) \overline{v} = \delta_P(m)^{-\half{1}} \overline{\pi(m)v}$ for $m \in M$
and $v \in V_\pi$ with the image $\overline{v} \in V_\sigma$.
These functors are adjoint, i.e., there is an isomorphism
\[
\Hom_G(\pi, \Ind_P^G(\sigma)) \cong \Hom_M(\Jac_P(\pi), \sigma)
\]
for $\pi \in \Rep(G)$ and $\sigma \in \Rep(M)$. 
\par

For $(\pi, V_\pi) \in \Rep(G)$ and for a parabolic subgroup $P=MN_P$ of $G$, 
we set $X_P(\pi) = \Ind_P^G(\Jac_P(\pi))$. 
This is the space of locally constant functions $\overline{f} \colon G \rightarrow (V_\pi)_{N_P}$
such that 
\[
\overline{f(nmg)} = \overline{\pi(m)f(g)}
\]
for $n \in N_P$, $m \in M$ and $g \in G$ 
with $f(g) \in V_\pi$ a representative of $\overline{f(g)} \in (V_\pi)_{N_P}$.
If $Q$ is another parabolic subgroup of $G$ with $Q \supset P$, 
since $N_Q \subset N_P$, 
we have a projection map $(V_\pi)_{N_Q} \twoheadrightarrow (V_\pi)_{N_P}$. 
We define a map $\varphi_P^Q \colon X_Q(\pi) \rightarrow X_P(\pi)$ 
by the composition of functions $G \rightarrow (V_\pi)_{N_Q}$ 
with this projection $(V_\pi)_{N_Q} \twoheadrightarrow (V_\pi)_{N_P}$. 
\par

Fix a minimal parabolic subgroup $P_0 = M_0N_{P_0}$ of $G$. 
We denote the maximal split central torus in a Levi subgroup $M$ by $A_M$, 
and set $A_0 = A_{M_0}$. 
Let $S \subset X^*(A_0)$ be the set of (relative) simple roots corresponding to $P_0$, 
and set $r = |S| = \dim(A_0/A_G)$.
We say that a parabolic subgroup $P$ of $G$ is \emph{standard} if $P \supset P_0$. 
Then there is a bijection 
\[
\{J \subset S\} \rightarrow 
\{\text{standard parabolic subgroups of $G$}\},\, 
J \mapsto P_J, 
\]
where $P_J = M_J N_{P_J}$ is such that $\Lie(P_J)$ is 
the sum of $A_0$-weight spaces for all weights that are $\Z$-linear combinations of $S$ 
with non-negative contributions of $S \setminus J$.
We write $X_J(\pi) = X_{P_J}(\pi)$ for short. 
Note that if $I \subset J$, then $P_I \subset P_J$. 
Hence we have a map
$\varphi_I^J = \varphi_{P_I}^{P_J} \colon X_J(\pi) \rightarrow X_I(\pi)$ for $\pi \in \Rep(G)$. 
\par

For $J \subset S$, we consider the $1$-dimensional vector space
\[
\Lambda_J = \bigwedge^{|S \setminus J|} (\C^{|S \setminus J|}),
\]
which is regarded as the trivial representation of $G$. 
Let $\{e_i\}_{i \in S \setminus J}$ be the standard basis of $\C^{|S \setminus J|}$.
For $I \subset J \subset S$ with $|J \setminus I| = 1$, 
letting $J \setminus I = \{j\}$, 
we define the isomorphism 
\[
\epsilon_I^J \colon \Lambda_J \rightarrow \Lambda_I,\, \omega \mapsto \omega \wedge e_j. 
\]
Consider a functor 
\[
\tl{X}_J \colon \Rep(G) \rightarrow \Rep(G)
\] 
given by 
$\tl{X}_J(\pi) = X_J(\pi) \otimes_\C \Lambda_J$ for $\pi \in \Rep(G)$. 
Then we define
\[
\tl\varphi_I^J \colon \tl{X}_J(\pi) \rightarrow \tl{X}_I(\pi)
\]
by $\tl\varphi_I^J = \varphi_I^J \otimes \epsilon_I^J$.
\par

For $\pi \in \Rep(G)$ and for $0 \leq t \leq r$, set 
\[
\tl{X}_t(\pi) = \bigoplus_{\substack{J \subset S \\ |J| = t}} \tl{X}_J(\pi).
\]
In particular, $\tl{X}_r(\pi) = \pi$. 
For $1 \leq t \leq r$, define $d_t \colon \tl{X}_t(\pi) \rightarrow \tl{X}_{t-1}(\pi)$ by 
\[
d_t\left( \sum_{\substack{J \subset S \\ |J| = t}} x_J \right) 
= \sum_{\substack{I \subset S \\ |I| = t-1}} 
\sum_{\substack{I \subset J \subset S \\ |J| = t}} \tl{\varphi}_I^J(x_J),
\]
where $x_J \in \tl{X}_J(\pi)$.
Then we have a sequence 
\[
\begin{CD}
0 @>>> \tl{X}_r(\pi) @>>> \tl{X}_{r-1}(\pi) @>>> \cdots @>>> \tl{X}_0(\pi)
\end{CD}
\]
of representations of $G$. 

%\subsection{The Aubert involution}
\subsection{The Aubert involution}\label{sec.aubert}
For $0 \leq t \leq r$, we denote by $\Rep(G)_t$ the full subcategory of $\Rep(G)$
consisting of representations $\pi$ such that for every irreducible subquotient $\pi'$ of $\pi$, 
there is $J \subset S$ with $|J| = t$ such that $\pi'$ is a subquotient of $\Ind_{P_J}^G(\sigma)$
for an irreducible supercuspidal representation of $M_J$. 
Bernstein's decomposition implies the block decomposition
\[
\Rep(G) = \prod_{t= 0}^r \Rep(G)_t. 
\]
Note that the factor $\Rep(G)_r$ consists of direct sums of supercuspidal representations of $G$, 
and every supercuspidal representation of $G$ lies in $\Rep(G)_r$.

\begin{thm}[{\cite[Th\'eor\`eme 3.6]{Au}}]\label{thm1.6}
For $\pi \in \Rep(G)_t$, 
we have $\tl{X}_0(\pi) = \dots = \tl{X}_{t-1}(\pi) = 0$. 
Moreover, the sequence 
\[
\begin{CD}
0 @>>> \tl{X}_r(\pi) @>>> \tl{X}_{r-1}(\pi) @>>> \cdots @>>> \tl{X}_t(\pi)
\end{CD}
\]
is exact. 
\end{thm}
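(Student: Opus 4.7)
The plan is to proceed in two stages, following Aubert's original strategy. For the vanishing claim $\tl{X}_I(\pi) = 0$ when $|I| < t$ and $\pi \in \Rep(G)_t$, I would appeal to the theory of cuspidal support. Every irreducible subquotient $\pi'$ of such a $\pi$ has cuspidal support on a standard Levi $M_J$ with $|J| = t$. By transitivity of the Jacquet functor and second adjointness, $\Jac_{P_I}(\pi')$ is nonzero only if $\pi'$'s cuspidal support is $G$-conjugate to a Levi of $M_I$. Since $|J| > |I|$ forces $M_J$ to have strictly larger semisimple rank than any Levi subgroup of $M_I$, this forces $\Jac_{P_I}(\pi') = 0$. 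Because $\Jac_{P_I}$ is exact, $\Jac_{P_I}(\pi) = 0$, and hence $\tl{X}_I(\pi) = 0$.

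For exactness of the truncated sequence in degrees $\geq t$, I would use that both $\Ind_{P_I}^G$ and $\Jac_{P_I}$ are exact, so $\tl{X}_\bullet$ is an exact functor from $\Rep(G)_t$ to chain complexes. It thus suffices to verify exactness on a family of generators. Concretely, any $\pi \in \Rep(G)_t$ admits a finite filtration whose graded pieces are subquotients of representations of the form $\Ind_{P_J}^G(\sigma)$ with $\sigma$ an irreducible supercuspidal representation of $M_J$ and $|J| = t$; so one can reduce first to $\pi = \Ind_{P_J}^G(\sigma)$, and then by exactness to individual subquotients.

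For such a standard module $\pi$, the geometric lemma of Bernstein--Zelevinsky produces a canonical filtration on each $\Jac_{P_I}(\pi)$ whose graded pieces are parameterized by double cosets $w \in W^{M_I}\backslash W^G / W^{M_J}$, with each piece built from $w\sigma$ via parabolic induction and Jacquet truncation on the Levis. Lifting these filtrations to $\tl{X}_\bullet(\pi)$ and taking the associated graded decomposes the complex (up to the bookkeeping of the line $\Lambda_J$) into a direct sum over double cosets, and the $w$-summand becomes identified with the augmented cellular cochain complex of the link of a face in the Coxeter complex of the relative Weyl group $W^G$. By the Solomon--Tits theorem, this link is a wedge of spheres of the expected top dimension, so its reduced cohomology vanishes in all intermediate degrees, producing the desired exactness and identifying the surviving top cohomology with the Aubert dual.

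The main obstacle lies in the last step: one must verify that the filtrations from the geometric lemma are \emph{coherent} as $I$ varies over subsets containing a fixed face of the Coxeter complex, so that the differentials $\tl\varphi_I^J = \varphi_I^J \otimes \epsilon_I^J$ preserve them with matching signs. Concretely, one has to select compatible systems of distinguished double coset representatives and check that the sign twists $\epsilon_I^J$ align with the orientations on the Coxeter complex used in the Solomon--Tits argument. Once this compatibility is established, the problem becomes purely combinatorial and reduces to acyclicity of a shellable simplicial complex, which is classical.
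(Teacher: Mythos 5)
The paper does not give a proof of this statement; it is cited directly from Aubert's Th\'eor\`eme~3.6, so there is no internal argument to compare against. Your vanishing argument is correct: for an irreducible $\pi' \in \Rep(G)_t$, the cuspidal support lives on a Levi $M$ with $\dim(A_M/A_G) = r-t$, and a nonzero Jacquet module $\Jac_{P_I}(\pi')$ would force a $G$-conjugate of $M$ to be a standard Levi of $M_I$, hence $\dim(A_M/A_G) \geq r - |I| > r-t$, a contradiction since conjugate Levi subgroups have split centers of equal dimension. (Transitivity of Jacquet functors is all one needs; invoking second adjointness is more than necessary.)

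The reduction step, however, has a genuine gap. You assert that since $\tl X_\bullet$ is an exact functor, it suffices to verify exactness on a generating family, and that one can ``reduce first to $\pi = \Ind_{P_J}^G(\sigma)$, and then by exactness to individual subquotients.'' This direction of the reduction is false. An exact functor preserves short exact sequences, so the long exact sequence lets you pass exactness from the subobject and the quotient to the total object --- that is, from irreducibles to arbitrary finite-length objects --- but it does \emph{not} let you pass from the total object to its subquotients. If $0 \to A \to \Pi \to C \to 0$ is exact and $H_i(\tl X_\bullet(\Pi)) = 0$ for $i > t$, the long exact sequence only yields isomorphisms $H_{i+1}(\tl X_\bullet(C)) \cong H_i(\tl X_\bullet(A))$ in that range, which does not force either side to vanish. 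So proving exactness for the full induced module $\Ind_{P_J}^G(\sigma)$ does not establish it for a proper irreducible subquotient, and that is exactly the case you need. The correct logical order is the reverse: prove the result for all irreducible $\pi \in \Rep(G)_t$ first, then extend to finite-length objects by devissage. Proving it for irreducibles requires analyzing the Jacquet modules $\Jac_{P_I}(\pi)$ of the irreducible $\pi$ itself (for instance by embedding $\pi$ into an induced module and tracking the compatible pieces of the geometric-lemma filtrations), not just the Jacquet modules of the ambient induced representation. This is the substantive content of Aubert's argument that your proposal elides.

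Your description of the combinatorial core --- decomposing the associated graded of the geometric-lemma filtration into summands indexed by double cosets, and identifying each summand with a cochain complex of a link in the Coxeter complex whose reduced cohomology is concentrated in top degree --- correctly identifies the shape of the acyclicity mechanism. But you explicitly defer the coherence of the filtrations (that the double-coset filtrations on $\Jac_{P_I}(\pi)$ for varying $I$ are simultaneously respected by the differentials $\tl\varphi_I^J$, with signs matching the orientations on the Coxeter complex). That is not a side issue but the hard point of the proof; without it the spectral sequence of the filtered complex does not degenerate in the way the argument requires. As written, the proposal therefore does not constitute a proof: the reduction in stage two is incorrect, and the combinatorial argument that would replace it is sketched but not carried out.
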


\begin{defi}
For $\pi \in \Rep(G)_t$, set 
\[
\hat\pi = \tl{X}_t(\pi)/d_{t+1}(\tl{X}_{t+1}(\pi))
\]
and call $\hat\pi$ the \emph{Aubert dual} of $\pi$.
\end{defi}

\begin{thm}\label{AD}
Aubert duality $\pi \mapsto \hat\pi$ satisfies the following properties. 
\begin{enumerate}
\item
The map $\Rep(G) \ni \pi \rightarrow \hat\pi \in \Rep(G)$ is an exact covariant functor. 

\item
For $\pi \in \Rep(G)_t$, we have 
\[
[\hat\pi] = (-1)^{r-t} \sum_{P = MN_P} (-1)^{\dim(A_M/A_G)} [\Ind_P^G(\Jac_P(\pi))]
\]
in the Grothendieck group $\RR(G)$, 
where $P$ runs over the set of standard parabolic subgroups of $G$. 
Here, $[\Pi]$ denotes the element in $\RR(G)$ corresponding to 
a representation $\Pi$ of $G$ of finite length. 

\item
If $\pi$ is irreducible, then $\hat\pi$ is also irreducible. 

\item
The Aubert dual of $\hat\pi$ is isomorphic to $\pi$ as representations of $G$.

\item
Let $P$ be a parabolic subgroup of $G$ with Levi component $M$, 
and denote by $\overline{P}$ the parabolic subgroup of $G$ opposite to $P$. 
Then for $\pi \in \Rep(M)$ of finite length, 
the Aubert dual of $\Ind_P^G(\pi)$ is isomorphic to $\Ind_{\overline{P}}^G(\hat\pi)$. 

\end{enumerate}
\end{thm}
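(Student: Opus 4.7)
My plan centers on the complex $\tl{X}_\bullet(\pi)$ constructed in Section \ref{sec.complex} and on Theorem \ref{thm1.6}, which concentrates all cohomology of this complex in a single degree. Since $\Jac_P$ and $\Ind_P^G$ are exact functors on $\Rep(G)$, each $\tl{X}_s$ is exact, so a short exact sequence in $\Rep(G)_t$ induces a short exact sequence of complexes; the associated long exact sequence in cohomology collapses by Theorem \ref{thm1.6} to a short exact sequence in the single nonvanishing degree, which is precisely $\hat{\,\cdot\,}$. This proves (1), with functoriality inherited from $\Jac_P$ and $\Ind_P^G$. For (2), I would invoke the Euler--Poincar\'e principle in $\RR(G)$: concentration of homology in degree $t$ gives
\[
(-1)^t[\hat\pi] = \sum_{s=t}^{r}(-1)^s[\tl{X}_s(\pi)] = \sum_{s=t}^{r}(-1)^s\sum_{|J|=s}[\Ind_{P_J}^G(\Jac_{P_J}(\pi))],
\]
and the substitution $|J|=r-\dim(A_{M_J}/A_G)$ converts the sign $(-1)^s$ into $(-1)^r(-1)^{\dim(A_{M_J}/A_G)}$, yielding the stated formula after multiplication by $(-1)^t$.

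The key remaining step is involutivity (4), from which (3) follows almost immediately: if $\pi$ is irreducible then $\hat\pi \neq 0$ since $\widehat{\widehat\pi}=\pi$, and any proper subrepresentation $\sigma\subset\hat\pi$ would, by exactness of the Aubert functor in (1), produce a proper subrepresentation $\hat\sigma\subset\widehat{\widehat\pi}=\pi$, contradicting irreducibility. For (4) itself, the plan is to identify the complex $\tl{X}_\bullet(\hat\pi)$ with a suitable reindexing of $\tl{X}_\bullet(\pi)$ using the bijection $J\leftrightarrow S\setminus J$ on subsets of simple roots. Bernstein's second adjointness theorem, which furnishes natural isomorphisms between $\Jac_P$ and $\Jac_{\overline{P}}$ after a contragredient twist, provides the key compatibility that matches the boundary operators $\tl{\varphi}_I^J$ of the two complexes up to tractable signs. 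Finally for (5), the geometric lemma expresses $\Jac_Q\circ\Ind_P^G$ as a filtered functor with graded pieces indexed by double cosets in the relevant Weyl group; inserting this into the Euler characteristic formula from (2) applied to $\Ind_P^G(\pi)$ and reorganizing the resulting sum yields the class of $\Ind_{\overline{P}}^G(\hat\pi)$ in $\RR(G)$. Combined with (3) and the exactness of $\Ind_{\overline{P}}^G$, this upgrades to the required isomorphism.

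The hardest step will be (4). Although involutivity in $\RR(G)$ is a formality following from (2) by sign manipulation, its categorical form demands a careful matching of the boundary operators $\tl{\varphi}_I^J$ with their second-adjointness duals, including the combinatorial sign accounting coming from the lines $\Lambda_J$. I would model the argument on Aubert's original proof, in which one exhibits an explicit double complex or suitable resolution whose cohomology simultaneously computes $\pi$ and $\widehat{\widehat\pi}$; the technical heart is in verifying that the second adjointness isomorphisms intertwine the $\tl{\varphi}_I^J$ with their duals compatibly with the simplicial face maps indexed by $J\subset S$.
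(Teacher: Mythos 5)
The paper's own ``proof'' of this theorem is a citation list: part~(1) to Schneider--Stuhler, parts~(2)--(4) to Aubert's Corollaire~3.9, and part~(5) to Bernstein's Theorem~31(4). Your proposal instead sketches a proof from scratch, and while the high-level plan is reasonable --- exactness via concentration of cohomology, the Euler characteristic identity in $\RR(G)$ --- it has genuine gaps precisely at the steps you single out as hardest, and underestimates how much machinery they require.

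The most serious error is the assertion that ``involutivity in $\RR(G)$ is a formality following from (2) by sign manipulation.'' It is not. Setting $D_G = \sum_{P}(-1)^{\dim(A_M/A_G)}\Ind_P^G\circ\Jac_P$, showing $D_G^2 = \id$ requires expressing $\Jac_Q\circ\Ind_P^G$ through the geometric lemma and carrying out a nontrivial combinatorial cancellation over Weyl group double cosets; this is the technical heart of Aubert's Th\'eor\`eme~1.7 and cannot be extracted formally from the Euler characteristic formula in~(2). Your proposed chain-level proof of~(4), identifying $\tl{X}_\bullet(\hat\pi)$ with a reindexing of $\tl{X}_\bullet(\pi)$ via $J\leftrightarrow S\setminus J$, also does not go through as sketched: second adjointness gives an adjunction between $\Ind_{\overline{P}}^G$ and $\Jac_P$, but it does not directly intertwine the face maps $\tl\varphi_I^J$ of the two complexes, and Aubert and Schneider--Stuhler establish the categorical involution by more indirect arguments rather than by exhibiting a chain isomorphism under this reindexing. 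Finally, your proof of~(5) upgrades an equality in $\RR(G)$ to an isomorphism by invoking~(3), but~(5) is asserted for \emph{arbitrary} finite-length $\pi\in\Rep(M)$, not irreducible ones, and representations of the same Grothendieck class need not be isomorphic; Bernstein's actual argument operates categorically rather than through Grothendieck groups.
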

\begin{proof}
For (1), see \cite[III.3]{SS}.
The assertions (2), (3) and (4) are \cite[Corollaire 3.9]{Au}. 
Finally, (5) is \cite[Theorem 31 (4)]{Ber}. 
\end{proof}

%\subsection{Intertwining operators and Aubert duality}
\subsection{Intertwining operators and Aubert duality}
In this subsection, 
let $G$ be one of the following quasi-split classical groups
\[
\SO_{2n+1}(F), \quad \Sp_{2n}(F), \quad
\O_{2n}(F), \quad \U_n.
\]
Let $P = MN$ be a maximal parabolic subgroup of $G$
so that $M \cong \GL_k(E) \times G_0$ for some classical group $G_0$ of the same type as $G$.
We denote by $\overline{P} = M\overline{N}$ the parabolic subgroup of $G$ opposite to $P$.
We fix $\psi \in \Psi(M)$. 

In this subsection, 
we consider Aubert dualities for $G^\circ$ and $M^\circ$, 
the connected components of the identity of $G$ and $M$, respectively.
To avoid Aubert duality for non-connected groups explained in the next subsections, 
we assume the following. 

\begin{hyp}\label{hyp_appendixB}
There are $A$-packets $\Pi_\psi$ and $\Pi_{\widehat\psi}$ 
which are multi-sets over $\Irr_\unit(M)$. 
Moreover, there is a bijection $\Pi_\psi \ni \pi \mapsto \pi' \in \Pi_{\widehat\psi}$ 
such that $\pi'|_{M^\circ}$ is the Aubert dual of $\pi|_{M^\circ}$. 
\end{hyp}

Write 
$\psi = \psi_\GL \oplus \psi_0$ with $\psi_0 \in \Psi(G_0)$ and 
$\pi = \tau \boxtimes \sigma$ for $\tau \in \Irr(\GL_k(E))$ corresponding to $\psi_\GL$ 
and $\sigma \in \Pi_{\psi_0}$.
We set $\psi_s = \psi_\GL|\cdot|_E^s \oplus \psi_0$
and $\pi_s = \tau|\cdot|_E^s \boxtimes \sigma$ for $s \in \C$. 
\par

As in Section \ref{sec.NIO}, 
for $w \in W(M^\circ)$ and $s \in \C$, 
we have the normalized intertwining operator 
\[
R_P(w, \pi_s, \psi_s) \colon I_P(\pi_s) \rightarrow I_P(w\pi_s), 
\]
which is a meromorphic family of operators. 
Since $\pi$ is unitary, $R_P(w, \pi_s, \psi_s)$ is regular at $s = 0$, 
and we obtain a well-defined operator 
$R_P(w, \pi, \psi) = R_P(w, \pi_s, \psi_s)|_{s= 0}$. 
\par

Note that $\widehat{I_P(\pi_s)} \cong I_{\overline{P}}(\hat\pi_s)$ as representations of $G^\circ$ 
by Theorem \ref{AD} (5). 
Since $\widehat{w\pi_s} \cong w\hat\pi_s$, 
by the functoriality of Aubert duality, we have
\[
\widehat{R_P(w, \pi_s, \psi_s)} \colon I_{\overline{P}}(\hat\pi_s) \rightarrow I_{\overline{P}}(w\hat\pi_s).
\]
\par

On the other hand, we can define a normalized intertwining operator 
\[
R_{\overline{P}}(w, \hat\pi_s, \widehat\psi_s) 
\colon I_{\overline{P}}(\hat\pi_s) \rightarrow I_{\overline{P}}(w\hat\pi_s), 
\]
which is regular at $s=0$. 

\begin{prop}\label{IOvsAD}
Assume Hypothesis \ref{hyp_appendixB}. 
We further assume that $R_{\overline{P}}(w, \hat\pi, \widehat\psi)$ is bijective. 

\begin{enumerate}
\item
If $G=G^\circ$, then there is $c \in \C^{\times}$ such that 
\[
\widehat{
R_P(w, \pi, \psi)
}
= c \cdot R_{\overline{P}}(w, \hat\pi, \widehat\psi).
\]
\item
Suppose that $G = \O_{2n}(F)$.
\begin{enumerate}
\item
If $P \not= P^\circ$ and if $\sigma|_{G_0^\circ}$ is irreducible, 
then there is $c \in \C^{\times}$ such that 
\[
\widehat{
R_P(w, \pi, \psi)
}
= c \cdot R_{\overline{P}}(w, \hat\pi, \widehat\psi).
\]
\item
Otherwise, there is $\pi_s^\circ \in \Irr(M^\circ)$ such that 
$I_P(\pi_s) = I_{P^\circ}(\pi_s^\circ) = \Ind_{P^\circ}^G(\pi_s^\circ)$. 
If we denote by $I_{P^\circ}^+(\pi_s^\circ)$ (\resp $I_{P^\circ}^-(\pi_s^\circ)$) 
the subspace of $I_{P^\circ}(\pi_s^\circ)$ consisting of functions $f_s$ on $G$ 
whose supports are contained in $G^\circ$ (\resp $G \setminus G^\circ$), 
then $I_{P^\circ}(\pi_s^\circ) = I_{P^\circ}^+(\pi_s^\circ) \oplus I_{P^\circ}^-(\pi_s^\circ)$. 
Moreover, there are two constants $c_+, c_- \in \C^\times$ such that 
\[
\widehat{
R_P(w, \pi, \psi)
}
f_\pm
= c_\pm \cdot R_{\overline{P}}(w, \hat\pi, \widehat\psi)f_\pm
\]
for all $f_\pm$ in the Aubert dual of $I_{P^\circ}^\pm(\pi_s^\circ)$. 
\end{enumerate}
\end{enumerate}
\end{prop}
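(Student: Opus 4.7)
The plan is to establish the scalar relation by combining Schur's lemma, applied at generic values of $s$, with an analytic continuation argument back to $s=0$. Both sides of the asserted identity naturally extend to meromorphic families of $G^\circ$-equivariant operators $\widehat{R_P(w,\pi_s,\psi_s)}$ and $R_{\overline{P}}(w,\hat\pi_s,\widehat\psi_s)$ from $I_{\overline{P}}(\hat\pi_s)$ to $I_{\overline{P}}(w\hat\pi_s)$. For the first family, regularity at $s=0$ follows because Aubert duality is an exact functor (Theorem \ref{AD} (1)) applied to the operator $R_P(w,\pi,\psi)$, which is itself regular at $s=0$ by \cite[Proposition 2.3.1]{Ar} and \cite[Proposition 3.3.1]{Mok}. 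The second family is regular at $s=0$ by the same references applied to $\hat\pi$ and $\widehat\psi$.

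For case (1), where $G=G^\circ$, the key observation is that the unitary induction $I_P(\pi_s)$ is irreducible for $s$ in a dense Zariski-open subset of $\C$; this follows from the Langlands classification together with the fact that $\pi$ is unitary and $\psi_\GL$ is irreducible. By Theorem \ref{AD} (5) combined with Aubert duality being an equivalence of categories, $I_{\overline{P}}(\hat\pi_s)$ and $I_{\overline{P}}(w\hat\pi_s)$ are then also irreducible for such $s$, and Schur's lemma forces the two $G$-equivariant operators to be scalar multiples $c(s)\cdot R_{\overline{P}}(w,\hat\pi_s,\widehat\psi_s)$ of each other, with $c(s)$ a meromorphic function of $s$. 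Case (2a) is handled identically after observing (via Lemma \ref{Ovs.SO}) that the assumptions $P\neq P^\circ$ and $\sigma|_{G_0^\circ}$ irreducible force $I_P(\pi_s)|_{G^\circ} \cong \Ind_{P^\circ}^{G^\circ}(\pi_s|_{M^\circ})$ to remain generically irreducible.

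For case (2b), I would use the direct-sum decomposition $I_P(\pi_s) = I_{P^\circ}^+(\pi_s^\circ) \oplus I_{P^\circ}^-(\pi_s^\circ)$ as $G^\circ$-modules. Each summand is irreducible for generic $s$, since restriction to $G^\circ$ is (generically) an induction from a connected parabolic of an irreducible representation, so Schur's lemma applied on each piece yields two meromorphic scalars $c_+(s)$ and $c_-(s)$. The crucial point is that both $\widehat{R_P(w,\pi_s,\psi_s)}$ and $R_{\overline{P}}(w,\hat\pi_s,\widehat\psi_s)$ preserve the $\pm$-decomposition (the former because Aubert duality is functorial with respect to the $G^\circ$-action and the decomposition is $G^\circ$-stable; the latter by direct inspection of the integral defining $J_{\overline P}$), so the two scalars are indeed well-defined independently on each piece.

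The main obstacle—present in all three cases but especially subtle in case (2b)—is verifying that the scalars are holomorphic and nonzero at $s=0$. Holomorphicity follows once we know both operators are regular at $s=0$ and the denominator is nonvanishing there. The nonvanishing of $R_{\overline{P}}(w,\hat\pi,\widehat\psi)$ at $s=0$ is our hypothesis (bijectivity), and the nonvanishing of $\widehat{R_P(w,\pi,\psi)}$ at $s=0$ follows from the fact that Aubert duality is an equivalence of categories (hence faithful, using Theorem \ref{AD} (4)), applied to the nonzero normalized intertwining operator $R_P(w,\pi,\psi)$. In case (2b) one must further check that the restrictions of both operators to $I_{P^\circ}^\pm$ are individually nonzero at $s=0$; this reduces to knowing that neither summand is annihilated by $R_P(w,\pi,\psi)$, which in turn follows from $R_{\overline{P}}(w,\hat\pi,\widehat\psi)$ being bijective on the Aubert dual of each $I_{P^\circ}^\pm$ (using that the Aubert duality commutes with the restriction to $G^\circ$ and hence respects the $\pm$-pieces at the level of $G^\circ$-representations).
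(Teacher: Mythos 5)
Your proposal follows essentially the same strategy as the paper's proof: extend both sides to meromorphic families of $G^\circ$-equivariant operators, observe that $I_{\overline P}(\hat\pi_s)$ is irreducible for generic $s$ so that Schur's lemma produces a meromorphic scalar $c(s)$, and then use regularity and nonvanishing at $s=0$ to conclude. The only organizational difference is that the paper first composes with the inverse $R_{\overline P}(w,\hat\pi_s,\widehat\psi_s)^{-1}$ (which is regular at $s=0$ by the bijectivity hypothesis) to obtain a family of self-intertwiners $c(s)\cdot\id$, and then evaluates $c(0)$; this cleanly absorbs the question of whether $\widehat{R_P(w,\pi_s,\psi_s)}$ is itself ``regular at $s=0$'' as a family, whereas your appeal to exactness of Aubert duality for this point is a bit indirect (exactness is about short exact sequences, not analytic continuation).

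One imprecision in your treatment of case (2b): the claim that both operators \emph{preserve} the $\pm$-decomposition is not generally correct. The paper points out that $R_P(w,\pi_s,\psi_s)$, viewed as a $G^\circ$-homomorphism, maps $I_{P^\circ}^\pm(\pi_s^\circ)$ to $I_{P^\circ}^{\pm\delta}(w\pi_s^\circ)$ with $\delta = \det(w)$, so when $\det(w) = -1$ the summands are swapped rather than preserved. This does not break your argument—what matters for Schur's lemma is only that both $\widehat{R_P}$ and $R_{\overline P}$ send each $\pm$-piece to the \emph{same} piece of the target (both preserve or both swap), since then the relevant Hom space is at most one-dimensional for generic $s$ and the scalars $c_\pm(s)$ are still well-defined. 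But you should not assert preservation; assert that the two operators agree in how they permute the $\pm$-pieces.
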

\begin{proof}
Suppose that $G = G^\circ$. 
Then since $I_{\overline{P}}(\hat\pi_s)$ is irreducible for almost all $q_E^{-s}$, 
we have the inverse map 
\[
R_{\overline{P}}(w, \hat\pi_s, \widehat\psi_s)^{-1} 
\colon I_{\overline{P}}(w\hat\pi_s) \rightarrow I_{\overline{P}}(\hat\pi_s), 
\]
for almost all $q_E^{-s}$. 
It is regular at $s=0$ since $R_{\overline{P}}(w, \hat\pi, \widehat\psi)$ is bijective. 
We consider the composition 
\[
R_{\overline{P}}(w, \hat\pi_s, \widehat\psi_s)^{-1} \circ \widehat{R_P(w, \pi_s, \psi_s)}. 
\]
This is a meromorphic family of self-intertwining operators on $I_{\overline{P}}(\hat\pi_s)$.
Again by the irreducibility of $I_{\overline{P}}(\hat\pi_s)$ for almost all $q_E^{-s}$, 
we can find a meromorphic function $c(s)$ such that 
\[
R_{\overline{P}}(w, \hat\pi_s, \widehat\psi_s)^{-1} \circ \widehat{R_P(w, \pi_s, \psi_s)} = c(s) \cdot \id. 
\]
Since the left-hand side is regular at $s=0$, we can define $c = c(0) \in \C$. 
Then 
\[
\widehat{
R_P(w, \pi, \psi)
}
= c \cdot R_{\overline{P}}(w, \hat\pi, \widehat\psi).
\]
Since $R_P(w, \pi, \psi)$ is not identically zero, neither is its Aubert dual. 
Hence $c \not= 0$. 
This completes the proof of (1).
\par

When $G = \O_{2n}(F)$, 
the proof is essentially the same, 
but we need to consider the restrictions to the connected components of the identity.
First, we consider (2a). 
Then $\pi_s|_{M^\circ} = (\pi|_{M^\circ})_s$ is irreducible, and
$I_P(\pi_s)|_{G^\circ} \cong \Ind_{P^\circ}^{G^\circ}((\pi|_{M^\circ})_s)$
by Lemma \ref{Ovs.SO} (a). 
By regarding $R_P(w, \pi, \psi)$ and $R_{\overline{P}}(w, \hat\pi, \hat\psi)$ as $G^\circ$-homomorphisms, 
the same argument as in (1) shows that 
\[
\widehat{
R_P(w, \pi, \psi)
}
= c \cdot R_{\overline{P}}(w, \hat\pi, \widehat\psi)
\]
for some $c \in \C^\times$. 
\par

Next, we assume that $P = P^\circ$ or $\pi|_{M^\circ}$ is reducible. 
Then by Lemma \ref{Ovs.SO} (b), 
we have $I_P(\pi_s) = I_{P^\circ}(\pi_s^\circ)$ for any irreducible component $\pi^\circ$ of $\pi|_{M^\circ}$.
Moreover, by the proof of that lemma,
we have $I_{P^\circ}(\pi_s^\circ) = I_{P^\circ}^+(\pi_s^\circ) \oplus I_{P^\circ}^-(\pi_s^\circ)$. 
As a $G^\circ$-homomorphism, $R_P(w, \pi_s, \psi_s)$ can be decomposed to the direct sum of 
\[
R_P(w, \pi_s, \psi_s) \colon I_{P^\circ}^\pm(\pi_s^\circ) \rightarrow I_{P^\circ}^{\pm\delta}(w\pi_s^\circ),
\]
where $\delta = \det(w) \in \{\pm1\}$.
By the same argument as in (1), 
we can find $c_\pm \in \C^\times$ such that 
\[
\widehat{
R_P(w, \pi, \psi)
}
= c_\pm \cdot R_{\overline{P}}(w, \hat\pi, \widehat\psi)
\]
holds on the Aubert dual of $I_{P^\circ}^\pm(\pi_s^\circ)$. 
This completes the proof of (2). 
\end{proof}

Now suppose that $\tau$ is conjugate-self-dual. 
Then as in Section \ref{sec.main3}, 
we can define the normalized self-intertwining operators 
\begin{align*}
\pair{\tl{u}, \tl\pi} R_P(w_u, \tl\pi, \psi) &\colon I_P(\pi) \rightarrow I_P(\pi), \\
\pair{\tl{u}, \tl{\hat{\pi}}} R_{\overline{P}}(w_u, \tl{\hat{\pi}}, \widehat\psi) 
&\colon I_{\overline{P}}(\hat\pi) \rightarrow I_{\overline{P}}(\hat\pi).
\end{align*}
Now we have the following corollary, which is a key result to prove Theorem \ref{main3}. 

\begin{cor}\label{NIOvsAD}
Assume Hypothesis \ref{hyp_appendixB}. 
We further assume that $R_{\overline{P}}(w_u, \hat\pi, \widehat\psi)$ is bijective, 
and that $\tau$ is conjugate-self-dual. 

\begin{enumerate}
\item
If we are in the case (1) or (2a) of Proposition \ref{IOvsAD}, 
then there is $c \in \C^{\times}$ such that 
\[
(\pair{\tl{u}, \tl\pi} R_P(w_u, \tl\pi, \psi)){\widehat{\,}}
= c \cdot \pair{\tl{u}, \tl{\hat{\pi}}} R_{\overline{P}}(w_u, \tl{\hat{\pi}}, \widehat\psi).
\]

\item
If we are in the case (2b) of Proposition \ref{IOvsAD}, 
then there are two constants $c_+, c_- \in \C^\times$ such that 
\[
(\pair{\tl{u}, \tl\pi} R_P(w_u, \tl\pi, \psi)){\widehat{\,}}
= c_\pm \cdot \pair{\tl{u}, \tl{\hat{\pi}}} R_{\overline{P}}(w_u, \tl{\hat{\pi}}, \widehat\psi)
\]
holds on the Aubert dual of $I_{P^\circ}^\pm(\pi_s^\circ)$. 
\end{enumerate}
\end{cor}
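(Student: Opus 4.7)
The plan is to reduce the corollary to Proposition \ref{IOvsAD} by factoring each self-intertwining operator into its two constituent pieces and analyzing them separately. Recall from Section \ref{sec.main3} that, by definition,
\[
\pair{\tl{u}, \tl\pi} R_P(w_u, \tl\pi, \psi) = I_P(\pair{\tl{u}, \tl\pi}\tl\pi(w_u)) \circ R_P(w_u, \pi, \psi),
\]
and analogously for $\pair{\tl{u}, \tl{\hat{\pi}}} R_{\overline{P}}(w_u, \tl{\hat{\pi}}, \widehat\psi)$ in terms of $R_{\overline{P}}(w_u, \hat\pi, \widehat\psi)$ and $I_{\overline{P}}(\pair{\tl{u}, \tl{\hat\pi}}\tl{\hat\pi}(w_u))$. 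The intertwining-operator factor is covered directly by Proposition \ref{IOvsAD}, which provides a scalar $c$ (or scalars $c_\pm$ on each summand in case (2b)) such that $\widehat{R_P(w_u, \pi, \psi)} = c \cdot R_{\overline{P}}(w_u, \hat\pi, \widehat\psi)$.

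For the normalizing-isomorphism factor, I would argue that the Aubert dual of $\pair{\tl{u}, \tl\pi}\tl\pi(w_u) \colon w_u\pi \xrightarrow{\sim} \pi$ is proportional to $\pair{\tl{u}, \tl{\hat\pi}}\tl{\hat\pi}(w_u) \colon w_u\hat\pi \xrightarrow{\sim} \hat\pi$. Indeed, since Aubert duality is a functor (Theorem \ref{AD}\,(1)) compatible with the action of $w_u$ through conjugation, applying it produces an $M$-equivariant isomorphism $w_u\hat\pi \xrightarrow{\sim} \hat\pi$; under Hypothesis \ref{hyp_appendixB}, $\hat\pi$ is an irreducible unitary representation of $M$, so $\Hom_M(w_u\hat\pi, \hat\pi)$ is one-dimensional by Schur's lemma and the two isomorphisms differ by a nonzero scalar. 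Applying $I_{\overline{P}}$ and combining with the identity of Proposition \ref{IOvsAD}, the two scalars multiply to give the desired constant $c$ (or $c_\pm$), proving part (1).

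The main obstacle will be part (2), the $\O_{2n}(F)$ case, where one must track the decomposition $I_P(\pi) = I_{P^\circ}^+(\pi^\circ) \oplus I_{P^\circ}^-(\pi^\circ)$ and its Aubert-dual counterpart. The key point to verify is that Aubert duality respects this $\pm$-decomposition, which reduces to the fact that conjugation by $\epsilon$ commutes with Aubert duality on $G^\circ$-representations; once this compatibility is established, the Schur-lemma argument above applies on each summand separately, producing the two constants $c_\pm$. A subtlety in case (2a), where $P \neq P^\circ$ and $\sigma|_{G_0^\circ}$ is irreducible, is that $\Hom_M(w_u\hat\pi, \hat\pi)$ and $\Hom_{M^\circ}(w_u\hat\pi|_{M^\circ}, \hat\pi|_{M^\circ})$ must be identified correctly; this is straightforward since $\hat\pi|_{M^\circ}$ remains irreducible in this case, so the Schur-lemma step proceeds verbatim, and a single scalar $c$ suffices.
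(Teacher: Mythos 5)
Your proposal is correct and takes essentially the same approach as the paper's proof: factor the operator into the normalizing isomorphism $\pair{\tl{u}, \tl\pi}\tl\pi(w_u)$ and the intertwining operator $R_P(w_u,\pi,\psi)$, handle the former by Schur's lemma (the Aubert dual is proportional to $\pair{\tl{u}, \tl{\hat\pi}}\tl{\hat\pi}(w_u)$), and invoke Proposition \ref{IOvsAD} for the latter. The extra discussion of the $\pm$-decomposition in case (2b) is a useful elaboration, but the core argument is identical to the paper's.
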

\begin{proof}
Recall that the normalized intertwining operator $\pair{\tl{u}, \tl\pi} R_P(w_u, \tl\pi, \psi)$
is defined by 
\[
\pair{\tl{u}, \tl\pi} R_P(w_u, \tl\pi, \psi)f(g) 
= \pair{\tl{u}, \tl\pi}\tl{\pi}(w_u)\left(
R_P(w_u, \pi, \psi)f(g)
\right)
\]
for an isomorphism 
$\pair{\tl{u}, \tl\pi}\tl{\pi}(w_u) \colon w_u \pi \xrightarrow{\sim} \pi$. 
By Schur's lemma, its Aubert dual is equal to $\pair{\tl{u}, \tl{\hat\pi}}\tl{\hat\pi}(w_u)$
up to a nonzero constant. 
Then the claim follows from Proposition \ref{IOvsAD}.
\end{proof}

\begin{rem}
In the next subsections,
we introduce Aubert duality for non-connected groups, in particular for $\O_{2n}(F)$. 
However, an analogue of Theorem \ref{AD} (5) will not be established, 
and hence a direct approach as in Proposition \ref{IOvsAD} (1) cannot be applied.
\end{rem}

%\subsection{Twisted Aubert duality as a functor}
\subsection{Twisted Aubert duality as a functor}\label{sec.twistedAD}
Now we review the twisted (or disconnected) Aubert duality and some of its properties that we need in this paper, following \cite{CK26}. 
We use the same notations in Sections \ref{sec.complex} and \ref{sec.aubert}. 
\par

Let $\theta$ be an involution of $G$ 
which preserves $P_0$ and $M_0$. 
Then $\theta$ also preserves $A_0$ and acts on the set $S$ of (relative) simple roots. 
We consider the disconnected group $\tl{G} = G \rtimes \pair{\theta}$. 
Let $\Rep(\tl{G})$ be the category of smooth representations of $\tl{G}$ of finite length.
For $0 \leq t \leq r$, 
we denote by $\Rep(\tl{G})_t$ the inverse image of $\Rep(G)_t$ 
under the restriction map $\Res \colon \Rep(\tl{G}) \rightarrow \Rep(G)$. The category $\Rep(\tl{G})$ has a block decomposition $\prod_{t}\Rep(\tl{G})_t$, see \cite[Lemma 3.1.1]{CK26}.

Let $\tl\pi \in \Rep(\tl{G})_t$. 
Set $\pi = \Res(\tl\pi) \in \Rep(G)_t$ and denote the space of $\pi$ by $V_\pi$. 
Recall that for $J \subset S$, we have a representation $X_J(\pi)$ of $G$.
This is a space of functions $f \colon G \rightarrow (V_\pi)_{N_{P_J}}$. 
We have a map
\[
X_J(\theta) \colon X_J(\pi) \rightarrow X_{\theta(J)}(\pi),\,
X_J(\theta)(f)(g) = \tl\pi(\theta)(f(\theta(g))).
\]
On the other hand, the bijection $\theta \colon (S\setminus J) \rightarrow (S \setminus \theta(J))$ induces 
an isomorphism $\lambda_J(\theta) \colon \Lambda_J \rightarrow \Lambda_{\theta(J)}$. By taking the tensor product $X_J(\theta) \otimes \lambda_J(\theta)$, 
we obtain a map 
\[
\tl{X}_J(\theta) \colon \tl{X}_J(\pi) \rightarrow \tl{X}_{\theta(J)}(\pi). 
\]
By \cite[Lemma 2.7.1]{CK26}, it satisfies that 
$\tl{X}_I(\theta) \circ \tl\varphi_I^J = \tl\varphi_{\theta(I)}^{\theta(J)} \circ \tl{X}_J(\theta)$
if $|J| = |I|+1$.
Putting these together for all $J \subset S$ with $|J| = t$, 
we can define an isomorphism 
\[
\tl{X}_t(\theta) \colon \tl{X}_t(\pi) \rightarrow \tl{X}_t(\pi)
\]
by requiring the diagram
\[
\begin{CD}
\tl{X}_t(\pi) @>\tl{X}_t(\theta)>> \tl{X}_t(\pi) \\
@VVV @VVV \\
\tl{X}_{J}(\pi) @>\tl{X}_J(\theta)>> \tl{X}_{\theta(J)}(\pi) 
\end{CD}
\]
is commutative for all $J \subset S$ with $|J| = t$, 
where the vertical maps are the canonical projections.

On $\tl{X}_t(\pi)$, we have operators $\tl{X}_t(\theta)$ and $\tl{X}_t(\pi(g))$ for $g \in G$.
For $\tl{g} \in \tl{G}$, we write 
\[
\tl{X}_t(\tl\pi(\tl{g})) = 
\left\{
\begin{aligned}
&\tl{X}_t(\pi(g)) \iif \tl{g} = g \in G, \\
&\tl{X}_t(\pi(g)) \circ \tl{X}_t(\theta) \iif \tl{g} = g \rtimes \theta \in G \rtimes \theta.
\end{aligned}
\right. 
\] 
According to \cite[Lemmas 3.1.3, 3.1.4, 3.1.5]{CK26} this automorphism is well-defined, satisfies
\[
\tl{X}_t(\tl\pi(\tl{g} \cdot \tl{g}')) = \tl{X}_t(\tl\pi(\tl{g})) \circ \tl{X}_t(\tl\pi(\tl{g}'))
\]
for $\tl{g}, \tl{g}' \in \tl{G}$, and is functorial in $\tilde\pi$. It gives the $\C$-vector space $\tl{X}_t(\pi)$ the structure of a representation of $\tl{G}$, functorial in $\tilde\pi$. We denote this representation by $\tl{X}_t(\tl\pi)$. 
\par

Therefore we have a functor 
\[
\tl{X}_t \colon \Rep(\tl{G}) \rightarrow \Rep(\tl{G}), 
\]
which by construction satisfies 
\[ \tag{A1} \label{eq:aubdiscres}
\tl{X}_t(\tl\pi)|_G = \tl{X}_t(\tl\pi|_G).
\]
If $\tl\pi \in \Rep(\tl{G})_t$, 
then $\tl{X}_0(\tl\pi) = \dots = \tl{X}_{t-1}(\tl\pi) = 0$, 
and the sequence 
\[
\begin{CD}
0 @>>> \tl{X}_r(\tl\pi) @>>> \tl{X}_{r-1}(\tl\pi) @>>> \cdots @>>> \tl{X}_t(\tl\pi)
\end{CD}
\]
is an exact sequence of representations of $\tl{G}$, by \cite[Proposition 3.1.8]{CK26}. Now we can define twisted Aubert duality. 
\begin{defi}\label{def.TAD}
For $\tl\pi \in \Rep(\tl{G})_t$, set 
\[
\widehat{\tl\pi} = \tl{X}_t(\tl\pi)/d_{t+1}(\tl{X}_{t+1}(\tl\pi))
\]
and call $\widehat{\tl\pi}$ the \emph{twisted Aubert dual} of $\tl\pi$.
\end{defi}

\begin{prop}\label{TAD}
Twisted Aubert duality $\tl\pi \mapsto \widehat{\tl\pi}$ satisfies the following properties. 
\begin{enumerate}
\item
The map $\Rep(\tl{G}) \ni \tl\pi \rightarrow \widehat{\tl\pi} \in \Rep(\tl{G})$ is an exact covariant functor. 

\item
If we write $\pi = \tl\pi|_G$, then $\widehat{\tl\pi}|_G = \hat\pi$.
\item
If $\tl\pi$ is irreducible, then $\widehat{\tl\pi}$ is also irreducible. 
\end{enumerate}
\end{prop}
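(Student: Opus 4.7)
The plan is to establish the three properties in the order (2), (1), (3), since (2) reduces matters to the connected case treated in Theorem \ref{AD}, and then (1) and (3) follow by combining Theorem \ref{AD} with the compatibility provided by Lemma \ref{lem2.6} and Proposition \ref{prop1.9}.

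For (2), the argument is essentially tautological. By Lemma \ref{lem2.6}, for every $t$ the restriction $\tl{X}_t(\tl\pi)|_G$ equals $\tl{X}_t(\pi)$, and by Proposition \ref{prop1.9} (2) the differential $d_t$ is $\tl G$-equivariant and coincides on underlying spaces with the differential of the untwisted complex for $\pi$. Hence passing to the cokernel $\tl{X}_t(\tl\pi)/d_{t+1}(\tl{X}_{t+1}(\tl\pi))$ commutes with restriction to $G$, giving $\widehat{\tl\pi}|_G = \hat\pi$.

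For (1), functoriality follows from Proposition \ref{prop1.9} (3): each $\tl{X}_t$ is a covariant functor on $\Rep(\tl G)$, and naturality of the differentials $d_t$ in $\tl\pi$ is clear from their defining formulas (they depend only on the maps $\tl\varphi_I^J$, which are themselves natural). Thus $\tl\pi \mapsto \widehat{\tl\pi}$ is the cokernel of a natural transformation between additive functors, hence is itself a functor. Exactness is then deduced from the connected case as follows: given a short exact sequence in $\Rep(\tl G)$, its restriction to $G$ is exact, untwisted Aubert duality applied to this restriction is exact by Theorem \ref{AD} (1), and by (2) the resulting exact sequence of $G$-modules underlies the candidate sequence of twisted Aubert duals. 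Since $\Res \colon \Rep(\tl G) \to \Rep(G)$ is exact and faithful, the sequence of twisted Aubert duals is itself exact in $\Rep(\tl G)$.

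For (3), assume $\tl\pi$ is irreducible and split into two cases according to whether $\pi = \tl\pi|_G$ is irreducible. If $\pi$ is irreducible, then $\hat\pi$ is irreducible by Theorem \ref{AD} (3), and since $\widehat{\tl\pi}|_G = \hat\pi$ by (2), any nonzero $\tl G$-subrepresentation of $\widehat{\tl\pi}$ must already equal $\widehat{\tl\pi}$. Otherwise $\pi \cong \pi_0 \oplus (\pi_0 \circ \theta)$ with $\pi_0$ irreducible and $\pi_0 \not\cong \pi_0\circ\theta$, the two summands being swapped by $\theta$; the exact functoriality of untwisted Aubert duality and uniqueness of the irreducible dual then force $\hat\pi \cong \hat\pi_0 \oplus \hat\pi_0\circ\theta$ with $\hat\pi_0 \not\cong \hat\pi_0 \circ \theta$, and the $\theta$-action on $\widehat{\tl\pi}$ (which restricts to the original $\theta$-action on $\hat\pi$ via Proposition \ref{prop1.9} (1) and Lemma \ref{lem2.6}) again swaps these two irreducible constituents. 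A proper nonzero $\tl G$-subrepresentation of $\widehat{\tl\pi}$ would therefore have to equal $\hat\pi_0$ or $\hat\pi_0\circ\theta$, contradicting $\theta$-stability, so $\widehat{\tl\pi}$ is irreducible.

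The only genuine technical content is ensuring that the $\theta$-action on each $\tl{X}_t(\tl\pi)$ commutes with both the $G$-action and the differentials $d_t$, so that the quotient inherits a $\tl G$-module structure; this is precisely what Proposition \ref{prop1.9} provides. Once that is in place, nothing further needs to be proved: the three statements follow formally from the connected case, and no genuine obstacle remains.
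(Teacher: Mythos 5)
Your proposal is correct and follows essentially the same route as the paper: part (2) from Lemma \ref{lem2.6}, part (1) from exactness and faithfulness of restriction combined with Theorem \ref{AD}, and part (3) by the case split on whether $\tl\pi|_G$ is irreducible and using the $\theta$-swap of the two summands. The only difference is that you spell out the intermediate steps (e.g. $\hat\pi_0 \not\cong \hat\pi_0 \circ \theta$) slightly more explicitly than the paper does, but the argument is the same.
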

Indeed, (1) follows from the construction together with Theorem \ref{AD} and \eqref{eq:aubdiscres}, 
(2) is a direct consequence of \eqref{eq:aubdiscres}, and (3) is \cite[Lemma 3.1.11]{CK26}, but in our simple case can also be seen as follows. Let $\tl\pi$ be an irreducible representation of $\tl{G}$.
Set $\pi = \tl{\pi}|_G$ so that $\hat\pi = \widehat{\tl\pi}|_G$ by (2). 
Note that $\pi$ is a direct sum of at most two irreducible representations of $G$ since $(\tl{G}:G) = 2$.
If $\pi$ is irreducible, then so is $\hat\pi$ and hence $\widehat{\tl\pi}$ must be an irreducible representation of $\tl{G}$. 
Suppose that $\pi = \pi_1 \oplus \pi_2$ with $\pi_i$ irreducible. 
Then $\hat\pi = \hat\pi_1 \oplus \hat\pi_2$.
Moreover, $\tl\pi(\theta)$ gives a linear isomorphism $\pi_1 \xrightarrow{\sim} \pi_2$ as $\C$-vector spaces. 
By the construction, it induces a linear isomorphism $\hat\pi_1 \xrightarrow{\sim} \hat\pi_2$. 
Since this is nothing but $\widehat{\tl\pi}(\theta)$, 
we conclude that $\widehat{\tl\pi}$ is irreducible as a representation of $\tl{G}$.

\begin{rem}
Further properties of the twisted Aubert duality functor $\tl\pi \mapsto \widehat{\tl\pi}$, such as the fact that it is an involution, and its compatibility with the contragredient functor, 
the parabolic induction functors, and Jacquet functors, are addressed in \cite{CK26}, but are not used in this paper.
\end{rem}

If $\eta$ is a character of $G$ satisfying $\eta \circ \theta = \eta$, 
then we can extend $\eta$ to a character of $\tl{G}$ by setting $\eta(\theta) = 1$. 
Hence for $\tl\pi \in \Rep(\tl{G})$, 
one can consider the twist $\tl\pi \otimes \eta$. 

\begin{lem}\label{AD_twist}
Let $\eta$ be a character of $G$ such that $\eta \circ \theta = \eta$. 
Then for $\tl\pi \in \Rep(\tl{G})$, 
the twisted Aubert dual of $\tl\pi \otimes \eta$ is equal to $\widehat{\tl\pi} \otimes \eta$. 
\end{lem}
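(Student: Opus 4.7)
The plan is to construct, for each $t$, a natural $\tl{G}$-equivariant isomorphism $\tl{X}_t(\tl\pi \otimes \eta) \cong \tl{X}_t(\tl\pi) \otimes \eta$ that is compatible with the differentials $d_t$, and then pass to the quotient at the appropriate level. The key is that, although we are working with a complex of $\tl{G}$-modules built from many intermediate functors, twisting by a character of $G$ that extends to $\tl{G}$ essentially commutes with each ingredient in the construction.

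First, I would establish the building block at the level of individual parabolics: for any standard parabolic $P = MN_P$ of $G$ and any $\pi \in \Rep(G)$, there are canonical $G$-equivariant isomorphisms $\Jac_P(\pi \otimes \eta) \cong \Jac_P(\pi) \otimes (\eta|_M)$ and $\Ind_P^G(\sigma \otimes (\eta|_M)) \cong \Ind_P^G(\sigma) \otimes \eta$, which compose to produce a natural isomorphism $X_J(\pi \otimes \eta) \cong X_J(\pi) \otimes \eta$ realized as the identity on underlying vector spaces after the obvious identifications. Compatibility with the transition maps $\varphi_I^J$ for $I \subset J$ is immediate, since $\varphi_I^J$ is induced by the projection $(V_\pi)_{N_{P_J}} \twoheadrightarrow (V_\pi)_{N_{P_I}}$, which is unaffected by the twist; the auxiliary factors $\epsilon_I^J$ acting on the lines $\Lambda_J$ are trivially unaffected.

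The key step, and the one that uses the hypothesis $\eta \circ \theta = \eta$ in an essential way, is to verify that the isomorphisms $X_J(\pi \otimes \eta) \cong X_J(\pi) \otimes \eta$ intertwine the operators $X_J(\theta)$. Under the identification of underlying spaces, the operator $(\tl\pi \otimes \eta)(\theta)$ equals $\eta(\theta)\tl\pi(\theta) = \tl\pi(\theta)$, since we extend $\eta$ to $\tl{G}$ by $\eta(\theta) = 1$---this extension is well-defined precisely because $\eta \circ \theta = \eta$. Hence the map $X_J(\theta) \colon X_J(\pi \otimes \eta) \to X_{\theta(J)}(\pi \otimes \eta)$ agrees with the corresponding map for $\pi$, and $\lambda_J(\theta)$ is unchanged. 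Taking tensor products and assembling over $|J| = t$ yields $\tl{G}$-equivariant isomorphisms $\tl{X}_t(\tl\pi \otimes \eta) \cong \tl{X}_t(\tl\pi) \otimes \eta$, compatible with the differentials $d_t$ by Proposition \ref{prop1.9}.

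Finally, since twisting an irreducible supercuspidal representation of $M_J$ by $\eta|_{M_J}$ produces another irreducible supercuspidal representation of $M_J$, the Bernstein block is preserved: $\tl\pi \otimes \eta \in \Rep(\tl{G})_t$ if and only if $\tl\pi \in \Rep(\tl{G})_t$. Applying the above at degree $t$ and $t+1$ and using exactness of the tensor-by-character functor,
\[
\widehat{\tl\pi \otimes \eta} \;=\; \tl{X}_t(\tl\pi \otimes \eta)/d_{t+1}(\tl{X}_{t+1}(\tl\pi \otimes \eta))
\;\cong\; \bigl(\tl{X}_t(\tl\pi)/d_{t+1}(\tl{X}_{t+1}(\tl\pi))\bigr) \otimes \eta \;=\; \widehat{\tl\pi} \otimes \eta,
\]
as $\tl{G}$-representations. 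The only substantive verification is the $\theta$-compatibility, where the condition $\eta \circ \theta = \eta$ intervenes; everything else is bookkeeping that descends from the well-known behavior of parabolic induction and Jacquet modules under character twists. I do not foresee a genuine obstacle.
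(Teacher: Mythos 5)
Your argument is correct and follows the same route the paper takes: the paper's ``proof'' of this lemma is the one-line ``This follows from the construction,'' and you are simply supplying the details that the authors deemed routine. One small imprecision: the natural isomorphism $X_J(\pi\otimes\eta)\cong X_J(\pi)\otimes\eta$ is not literally the identity on function spaces but multiplication by $\eta^{-1}$, i.e.\ $f\mapsto \eta^{-1}f$; the intertwining of the $\theta$-operators then uses $\eta\circ\theta=\eta$ not only to make $\eta(\theta)=1$ well-defined but also in the computation $\eta(\theta(g))^{-1}=\eta(g)^{-1}$ when comparing the two sides. With that caveat your chain of identifications goes through and reproduces what the paper leaves implicit.
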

\begin{proof}
This follows from the construction. 
\end{proof}

%\subsection{Twisted Aubert duality at the level of Grothendieck groups}
\subsection{Twisted Aubert duality at the level of Grothendieck groups}\label{sec.twistedAD_G}
Let $P = MN_P$ be a standard parabolic subgroup of $G$. 
If $P$ is $\theta$-stable, we may assume that $M$ is also $\theta$-stable. 
In this case, write $\tl{P} = P \rtimes \pair{\theta}$ and $\tl{M} = M \rtimes \pair{\theta}$. 
Then the normalized parabolic induction functor
\[
\Ind_{\tl{P}}^{\tl{G}} \colon \Rep(\tl{M}) \rightarrow \Rep(\tl{G})
\]
and 
the normalized Jacquet functor
\[
\Jac_{\tl{P}} \colon \Rep(\tl{G}) \rightarrow \Rep(\tl{M})
\]
can be defined as in the connected case. 
Note that for $\tl\pi \in \Rep(\tl{G})$, we have 
\[
\tl{X}_P(\tl\pi) \cong \Ind_{\tl{P}}^{\tl{G}}(\Jac_{\tl{P}}(\tl\pi)) \otimes \Lambda_J
\]
as representations of $\tl{G}$, where $J \subset S$ is such that $P=P_J$. 
\par

Let $\RR(\tl{G})$ be the Grothendieck group of $\Rep(\tl{G})$. 
When $\tl\pi \in \Rep(\tl{G})$, we denote by $[\tl\pi] \in \RR(\tl{G})$ the corresponding element. 
The character $\Theta_{\tl\pi}$ of $\tl\pi$, which is a linear functional on $C_c^\infty(\tl{G})$, depends only on $[\tl\pi]$.
Moreover, it gives a map 
\[
\RR(\tl{G}) \ni [\tl\pi] \mapsto \Theta_{\tl\pi} \in C_c^\infty(\tl{G})^*.
\]
For $[\tl\pi_1], [\tl\pi_2] \in \RR(\tl{G})$, we write 
\[
[\tl\pi_1] \overset{\theta}{=} [\tl\pi_2]
\]
if $\Theta_{\tl\pi_1}(f) = \Theta_{\tl\pi_2}(f)$ for any $f \in C_c^\infty(G \rtimes \theta)$.
For example, for $\pi \in \Irr(G)$, if $\Ind_G^{\tl{G}}(\pi) = \tl\pi_1 \oplus \tl\pi_2$ is reducible, 
then $[\tl\pi_2] \overset{\theta}{=} -[\tl\pi_1]$. 

\begin{prop}[{\cite[Proposition 3.2.1]{CK26}}]\label{prop2.13}
Let $\tl\pi \in \Rep(\tl{G})_t$.
Then 
\[
[\widehat{\tl\pi}] \overset{\theta}{=} (-1)^{r-t} \sum_{P=MN_P}(-1)^{\dim((A_M/A_G)^\theta)}
\left[\Ind_{\tl{P}}^{\tl{G}}(\Jac_{\tl{P}}(\tl\pi))\right], 
\]
where $P$ runs over the set of $\theta$-stable standard parabolic subgroups of $G$, 
and $(A_M/A_G)^\theta$ is the subgroup of $A_M/A_G$ fixed by $\theta$. 
\end{prop}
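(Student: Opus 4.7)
The plan is to read off the identity directly from the $\tl G$-equivariant resolution constructed just before Definition~\ref{def.TAD}. Taking the alternating sum of this resolution in $\RR(\tl G)$ yields
\[
[\widehat{\tl\pi}] \;=\; \sum_{s=t}^{r}(-1)^{s-t}\,[\tl X_s(\tl\pi)], \qquad \tl X_s(\tl\pi) \;=\; \bigoplus_{|J|=s}\tl X_J(\pi),
\]
and I will refine the right-hand side by grouping the subsets $J\subset S$ of cardinality $s$ into orbits under the involution $\theta$ of $S$, then analyzing each orbit type separately.

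The first key step is to observe that a non-trivial orbit $\{J,\theta(J)\}$ with $J\ne\theta(J)$ contributes nothing under $\overset{\theta}{=}$. Indeed, on the $\tl G$-stable subspace $\tl X_J(\pi)\oplus\tl X_{\theta(J)}(\pi)$, every $g\in G$ preserves each of the two summands (by Proposition~\ref{prop1.9}(1)) while $\tl X_s(\theta)$ interchanges them; thus for any $f\in C_c^\infty(G\rtimes\theta)$, the operator $\int_G f(g\rtimes\theta)\,\tl X_s(\tl\pi(g\rtimes\theta))\,dg$ is strictly anti-diagonal in the block decomposition and has trace zero. What survives is the sum over $\theta$-stable $J$, whose contribution is $[\Ind_{\tl P_J}^{\tl G}(\Jac_{\tl P_J}(\tl\pi))\otimes\Lambda_J]$.

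The remaining task is a sign computation. For a $\theta$-stable $J$, let $k_1(J)$ and $k_2(J)$ denote the numbers of fixed points and $2$-cycles of the permutation of $S\setminus J$ induced by $\theta$, so that $|S\setminus J|=k_1(J)+2k_2(J)=r-|J|$. Then $\theta$ acts on the one-dimensional space $\Lambda_J=\bigwedge^{|S\setminus J|}\C^{S\setminus J}$ by the sign of this permutation, namely $(-1)^{k_2(J)}$. Identifying the permutation representation of $\langle\theta\rangle$ on $\R^{S\setminus J}$ with its action on $X^*(A_{M_J}/A_G)\otimes_\Z\R$, one finds $\dim((A_{M_J}/A_G)^\theta)=k_1(J)+k_2(J)$, whence
\[
(-1)^{k_2(J)}\;=\;(-1)^{(r-|J|)-\dim((A_{M_J}/A_G)^\theta)}\;=\;(-1)^{r-|J|}(-1)^{\dim((A_{M_J}/A_G)^\theta)}.
\]
Multiplying by the outer $(-1)^{s-t}$ coming from the resolution (with $s=|J|$) collapses to $(-1)^{r-t}(-1)^{\dim((A_{M_J}/A_G)^\theta)}$, which is precisely the coefficient demanded in the statement.

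The main obstacle is the identification of the $\theta$-action on the real vector space $X^*(A_{M_J}/A_G)\otimes\R$ with the permutation action of $\theta$ on the simple roots $S\setminus J$: this is what translates the combinatorial sign $(-1)^{k_2(J)}$ into the group-theoretic sign $(-1)^{\dim((A_{M_J}/A_G)^\theta)}$, and is the point at which the twisted formula genuinely diverges from the connected case (Theorem~\ref{AD}(2)). Once this identification is in place, the remainder of the argument is essentially formal bookkeeping of the alternating signs produced by the resolution.
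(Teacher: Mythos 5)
Your argument is correct and follows the paper's proof essentially step for step: both take the Euler characteristic of the $\tl G$-equivariant resolution, discard the non-$\theta$-stable orbits $\{J,\theta(J)\}$ by the vanishing of twisted trace on an induced-from-$G$ block, and then convert the sign $\lambda_J(\theta)$ of the permutation of $S\setminus J$ into $(-1)^{\dim(A_{M_J}/A_G)}(-1)^{\dim((A_{M_J}/A_G)^\theta)}$ via the orbit count (your $k_1,k_2$ are the paper's $m_1,m_2$). The only cosmetic difference is that you phrase the vanishing step in terms of anti-diagonal operators rather than the $\overset{\theta}{=}0$ property of representations induced from $G$, which are the same observation.
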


In particular, on the level of twisted characters, the functor $\tilde\pi \mapsto \widehat{\tl\pi}$ induces the same operation as the map $\rm{inv}^\theta(\tilde\pi)$ defined in \cite[Appendix A]{X2}, see \cite[Corollary 3.2.4]{CK26}.

%\section{Derivatives of tempered representations}
%\section{Derivatives of tempered representations}
\section{Derivatives of tempered representations}\label{xu-atobe}
We use the notations in Sections \ref{sec.parameters} and \ref{sec.Arthur}, 
except that we denote the normalized absolute value of $E$ by $|\cdot|$ for simplicity.
In particular, 
let $G$ be one of the following quasi-split classical groups
\[
\SO_{2n+1}(F), \quad \Sp_{2n}(F), \quad
\O_{2n}(F), \quad \U_n.
\]
Throughout this appendix, we assume Hypothesis \ref{ECR-temp}. 
For convenience, we restate this hypothesis. 
\begin{hyp}\label{ECR-tempD}
For any quasi-split classical group $G'$ with $\dim(\St_{\widehat{G'}}) \leq \dim(\St_{\widehat{G}})$, 
and for any tempered $L$-parameter $\phi'$ for $G'$,
there exists a subset $\Pi_{\phi'}$ of $\Irr_{\temp}(G')$ equipped with $\pair{\cdot,\pi'}_{\phi'}$ 
satisfying \eqref{ECR1} and \eqref{ECR2} in Section \ref{sec.Arthur}.
\end{hyp}

The purpose of this appendix is to extend some results in \cite{Moe11}, \cite{X1} and \cite{At} to $G$. 
Note that in \cite{At}, the author used M{\oe}glin's construction of tempered $L$-packets, which relies on \eqref{ECR1} and \eqref{ECR2} 
for all classical groups $G'$.
Hence it is not trivial that the arguments in \cite{At} can work under our weaker hypothesis. 
To avoid M{\oe}glin's construction, we will apply the argument in \cite{X1} directly to tempered $L$-parameters. 

%\subsection{Review of Tadi\'c's formula}
\subsection{Review of Tadi\'c's formula}
One of the most useful tools to study representations of $p$-adic classical groups 
is the \emph{Geometric Lemma} \cite[Theorem 5.2]{BZ}, 
or its semisimplified version called \emph{Tadi\'c's (structure) formula}.
In this subsection, we recall this formula.
\par

First, we prepare some notation for $\GL_N(E)$. 
Let $\RR(\GL_N(E))$ be the Grothendieck group of $\Rep(\GL_N(E))$, 
and set 
\[
\RR^\GL = \bigoplus_{N = 0}^\infty \RR(\GL_N(E)).
\]
It is a graded commutative algebra equipped with 
the product 
\[
m \colon \RR^\GL \otimes \RR^\GL \rightarrow \RR^\GL, \;
\tau_1 \otimes \tau_2 \mapsto \tau_1 \times \tau_2
\]
defined using the parabolic induction functors. 
The unit element of $\RR^\GL$ is 
the trivial representation $\1_{\GL_0(E)}$ of the trivial group $\GL_0(E)$.
Moreover, we have a ring homomorphism 
\[
m^* \colon \RR^\GL \rightarrow \RR^\GL \otimes \RR^\GL
\]
defined using the normalized Jacquet functors by
\[
\RR(\GL_N(E)) \ni \tau \mapsto \sum_{k=0}^N \Jac_{(k,N-k)}(\tau). 
\]
Here, $\Jac_{(k_1,k_2)}$ denotes the Jacquet functor along the standard parabolic subgroup 
$P_{(k_1,k_2)}$ of $\GL_{k_1+k_2}(E)$ with the Levi factor $\GL_{k_1}(E) \times \GL_{k_2}(E)$.
For these facts, see \cite[Proposition 1.7]{Z}.
For example, by \cite[Proposition 9.5]{Z}, we know that 
\[
m^*(\Delta([x,y]_\rho)) = \sum_{i = 0}^{y-x+1} \Delta([x+i,y]_\rho) \otimes \Delta([x, x+i-1]_\rho). 
\]
Here, we formally understand that $\Delta([x,x-1]_\rho) = \1_{\GL_0(E)}$ for any $x \in \R$. 
We define a ring homomorphism
\[
M^* \colon \RR^\GL \rightarrow \RR^\GL \otimes \RR^\GL
\]
by the composition 
\[
M^* = (m \otimes \id) \circ ({{}^c\cdot^\vee} \otimes m^*) \circ s \circ m^*, 
\]
where $s \colon \RR^\GL \otimes \RR^\GL \rightarrow \RR^\GL \otimes \RR^\GL$
denotes the transposition $s(\sum_i \tau_i \otimes \tau_i') = \sum_i \tau_i' \otimes \tau_i$.
For example, 
\[
M^*(\Delta([x,y]_\rho)) = 
\sum_{\substack{x-1 \leq z \leq y \\ z+1 \leq w \leq y+1}}
\left(\Delta([-z, -x]_{{}^c\rho^\vee}) \times \Delta([w,y]_\rho) \right)
\otimes \Delta([z+1, w-1]_\rho).
\]
Here, the sum is over $z = x-1, x, \dots, y$ and $w = z+1, z+2, \dots, y+1$.
\par

Next, let $G$ be a quasi-split classical group as above. 
We denote by $G^\circ$ the connected component of $\1 \in G$. 
Note that $G^\circ = G$ unless $G=\O_{2n}(F)$, in which case $G^\circ = \SO_{2n}(F)$.
Fix a rational Borel subgroup $B^\circ = T^\circ U$ of $G^\circ$, 
and denote the normalizer of $(T^\circ, B^\circ)$ in $G$ by $T$.
Let $P^\circ = M^\circ N_P$ be the standard parabolic subgroup of $G^\circ$ 
with Levi subgroup $M^\circ$ isomorphic to 
$\GL_{d_1}(E) \times \dots \times \GL_{d_r}(E) \times G_0^\circ$, 
where $G_0^\circ$ is a classical group of the same type as $G^\circ$. 
If $P^\circ$ is stable under the adjoint action of $T$, 
we set $P = P^\circ \cdot T$ and $M = M^\circ \cdot T$ 
so that $M \cong \GL_{d_1}(E) \times \dots \times \GL_{d_r}(E) \times G_0$. 
Otherwise,
we put $P=P^\circ$ and $M = M^\circ$. 
\par

For $\pi_0 \in \Rep(G_0)$ and $\tau_i \in \Rep(\GL_{d_i}(E))$ for $1 \leq i \leq r$, 
we denote the normalized parabolically induced representation by 
\[
\tau_1 \times \dots \times \tau_r \rtimes \pi_0
= \Ind_P^G(\tau_1 \boxtimes \dots \boxtimes \tau_r \boxtimes \pi_0).
\]
On the other hand, for $\pi \in \Rep(G)$, we have the normalized Jacquet module 
\[
\Jac_P(\pi) \in \Rep(M)
\]
along $P$. 
They are related by Frobenius reciprocity 
\[
\Hom_G(\pi, \Ind_P^G(\sigma)) \cong \Hom_M(\Jac_P(\pi), \sigma)
\]
for $\pi \in \Rep(G)$ and $\sigma \in \Rep(M)$. 
\par

Recall that $\RR(G)$ is the Grothendieck group of $\Rep(G)$. 
If $P=MN_P$ is as above,
the normalized parabolic induction and the normalized Jacquet functor induce linear maps
\begin{align*}
\Ind_P^G \colon \RR(M) \rightarrow \RR(G), \\
\Jac_P \colon \RR(G) \rightarrow \RR(M).
\end{align*}
If $(P',M')$ is conjugate to $(P,M)$ by an element in $T$, 
then we can identify $\RR(M')$ (\resp $\Ind_{P'}^G$, $\Jac_{P'}$)
with $\RR(M)$ (\resp $\Ind_{P}^G$, $\Jac_{P}$).
\par

Set 
\[
\RR^G = \bigoplus_{G'}\RR(G'), 
\]
where $G'$ runs over all quasi-split classical groups of the same type as $G$. 
It possesses a module structure defined by
\[
\rtimes \colon \RR^\GL \otimes \RR^G \rightarrow \RR^G,\,
\tau \otimes \pi \mapsto \tau \rtimes \pi
\]
and a comodule structure 
\[
\mu^* \colon \RR^G \rightarrow \RR^\GL \otimes \RR^G
\]
defined by 
\[
\RR(G) \ni \pi \mapsto \sum_{k=0}^{n}\Jac_{P_k}(\pi).
\]
Here, $n$ is the $F$-rank of $G$, and 
$\Jac_{P_k}$ denotes the normalized Jacquet functor along a standard parabolic subgroup $P_k$ of $G$
with the Levi factor of the form $\GL_k(E) \times G_0$.
\par

The Geometric Lemma at the level of Grothendieck groups is stated as follows. 
\begin{thm}[{Tadi\'c's formula (\cite[Theorems 5.4, 6.5]{T}, \cite[Theorem 7.3]{Ban})}]\label{Tformula}
For $\tau \in \RR^\GL$ and $\pi \in \RR^G$, 
we have
\[
\mu^*(\tau \rtimes \pi) = M^*(\tau) \rtimes \mu^*(\pi). 
\]
Here, for $\tau_1, \tau_2, \tau_3 \in \RR^\GL$ and $\pi_0 \in \RR^G$, we set 
\[
(\tau_1 \otimes \tau_2) \rtimes (\tau_3 \otimes \pi_0) 
= (\tau_1 \times \tau_3) \otimes (\tau_2 \rtimes \pi_0).
\]
\end{thm}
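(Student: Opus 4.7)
The plan is to derive the formula directly from the Bernstein--Zelevinsky Geometric Lemma \cite[Theorem 5.2]{BZ}, which describes the Jacquet module of a parabolically induced representation as a successive extension whose subquotients are indexed by a Weyl double-coset set. Since both sides of the claimed identity are $\Z$-linear in $\tau$ and $\pi$, it suffices to treat genuine representations $\tau \in \Rep(\GL_k(E))$ and $\pi \in \Rep(G_0)$. For each $0 \le j \le n$, I would fix a standard parabolic $Q_j$ of $G$ with Levi $M' \cong \GL_j(E) \times G'$ together with the standard parabolic $P = MN_P$ with Levi $M \cong \GL_k(E) \times G_0$, then apply the Geometric Lemma to the pair $(P, Q_j)$ to compute $[\Jac_{Q_j}(\tau \rtimes \pi)]$ in $\RR(M')$.

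The next step is to enumerate the Weyl double cosets $W^{M'^\circ} \backslash W(M^\circ, M'^\circ) / W^{M^\circ}$ (using the appropriate Weyl set when $G$ is disconnected). A system of minimal-length representatives is naturally parameterized by triples of non-negative integers $k = k_1 + k_2 + k_3$ encoding the behavior of the $\GL_k$-block under the Weyl element: $k_1$ records the subspace that remains in the $\GL_j$-factor of $M'$, $k_2$ records the subspace that interacts with $G_0$ via a Jacquet functor along a parabolic $P_{k_2} \subset G_0$, and $k_3$ records the subspace that is swapped to the ``dual side'' of the non-degenerate form and therefore contributes via the conjugate-dual $^c(\cdot)^\vee$. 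The corresponding subquotient has the shape
\[
(\tau_1 \times {}^c\tau_3^\vee \times \sigma_1) \otimes (\sigma_2 \rtimes \pi_0),
\]
where $m^*(\tau) = \sum \tau_{13} \otimes \tau_2$, $m^*(\tau_{13}) = \sum \tau_1 \otimes \tau_3$, $m^*(\tau_2) = \sum \sigma_1 \otimes \sigma_2$, and $\mu^*(\pi) = \sum \sigma_2' \otimes \pi_0$ (with matching between $\sigma_2$ and $\sigma_2'$ forced by the Jacquet computation).

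The final step is purely formal: unwinding the definition
\[
M^* = (m \otimes \id) \circ ({}^c(\cdot)^\vee \otimes m^*) \circ s \circ m^*
\]
produces exactly this triple sum. The outer $m^*$ first splits $\tau$ into a tensor factor destined for $\GL_j$ and one that will absorb the $^c(\cdot)^\vee$; the transposition $s$ puts these in the right order; the middle $({}^c(\cdot)^\vee \otimes m^*)$ applies conjugate-dualization to the second factor and splits the first further into the $\GL_j$-bound piece and the piece that meets $G_0$; and the outer $(m \otimes \id)$ fuses the two $\GL_j$-pieces together. Pairing the output with $\mu^*(\pi)$ via the convention $(\tau_1 \otimes \tau_2) \rtimes (\sigma \otimes \pi_0) = (\tau_1 \times \sigma) \otimes (\tau_2 \rtimes \pi_0)$ reproduces the enumeration of the previous step summed over $j$, and hence the identity in $\RR^\GL \otimes \RR^G$.

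The principal obstacle is the combinatorial book-keeping of the Weyl-group representatives together with the signs and conjugate-duals, especially in the disconnected case $G = \O_{2n}(F)$, where the ambient Weyl set is enlarged by $\epsilon$ and some $Q_j$ may fail to be $T$-stable. One must verify that the Geometric Lemma extends to this setting either by direct verification or by comparison with the situation for $G^\circ = \SO_{2n}(F)$ via the restriction-induction mechanism of Lemma \ref{Ovs.SO}; once that is in place, no new ingredient is required, and the argument reduces to the proofs already carried out in \cite[Theorem 6.5]{T} and \cite[Theorem 7.3]{Ban} for connected symplectic, special orthogonal, and unitary groups.
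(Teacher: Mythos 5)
The paper gives no proof here—the theorem is cited directly from Tadi\'c and Ban—so the relevant comparison is with those references, and your overall strategy (Geometric Lemma, enumeration of Weyl double cosets, triple partition of the $\GL_k$-block) is indeed the route they take. But two points in your sketch need correcting.

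First, the subquotient shape you write down does not unwind $M^*$ correctly. The definition
\[
M^* = (m \otimes \id) \circ ({}^c\cdot^\vee \otimes m^*) \circ s \circ m^*
\]
involves exactly two applications of $m^*$ on the $\GL$-side: writing $m^*(\tau) = \sum \tau_{(1)} \otimes \tau_{(2)}$ and $m^*(\tau_{(1)}) = \sum \tau_{(1,1)} \otimes \tau_{(1,2)}$, one gets $M^*(\tau) = \sum ({}^c\tau_{(2)}^\vee \times \tau_{(1,1)}) \otimes \tau_{(1,2)}$, hence
\[
M^*(\tau) \rtimes \mu^*(\pi) = \sum \bigl({}^c\tau_{(2)}^\vee \times \tau_{(1,1)} \times \sigma\bigr) \otimes \bigl(\tau_{(1,2)} \rtimes \pi_0\bigr),
\]
where $\mu^*(\pi) = \sum \sigma \otimes \pi_0$. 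Your bookkeeping has a third application ``$m^*(\tau_2) = \sum \sigma_1 \otimes \sigma_2$'' that has no counterpart in the formula, and ``$m^*(\tau) = \sum \tau_{13} \otimes \tau_2$'' suggests $m^*$ is separating a non-contiguous block $\tau_1\tau_3$ from a middle block $\tau_2$, which is not how Jacquet functors on $\GL_k$ act (they split off a top block). The correct picture is that the $\GL_k$ block of $M$ breaks into three \emph{consecutive} pieces $(k_1, k_2, k_3)$; the Weyl element sends the first to the $\GL_j$ factor of $M'$ directly, the last to the $\GL_j$ factor through the ``other side'' (conjugate-dualizing it), and the middle into the smaller classical group. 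The extra $\GL$-factor $\sigma$ in $M'$ comes from $\mu^*(\pi)$, not from a further split of $\tau$.

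Second, your closing paragraph misstates the content of the references and points in the wrong direction for the disconnected case. Ban's paper \cite[Theorem 7.3]{Ban} is written \emph{specifically} for $\O(2n,F)$, i.e.\ the disconnected case, so it is not something that needs to be adapted from ``connected symplectic, special orthogonal, and unitary groups''—it is the resolution of the disconnected case. More importantly, the suggestion to recover the $\O_{2n}$ statement from $G^\circ = \SO_{2n}$ via Lemma \ref{Ovs.SO} runs against the remark that follows the theorem: Tadi\'c's formula \emph{fails} in the stated form for $\SO_{2n}(F)$ and requires modification there (the Weyl group is too small, and two conjugacy classes of maximal parabolics collapse into one under $\O_{2n}$). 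This is exactly why the paper elects to work with $\O_{2n}$ rather than $\SO_{2n}$. So the reduction you propose moves from the group where the formula holds cleanly to the group where it does not, rather than the other way around; the correct treatment is to run the Geometric Lemma argument directly for $\O(2n,F)$ as in Ban, or to cite Ban's result.
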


\begin{rem}
Tadi\'c's formula holds even for $\O_{2n}(F)$ (see \cite[(5.5)]{X1} or \cite[Theorem 7.3]{Ban}).
However, as in \cite[page 463]{X1},
for $\SO_{2n}(F)$, this formula needs to be modified. 
This is one of the reasons why we do not work with $\SO_{2n}(F)$ but with $\O_{2n}(F)$.
\end{rem}

The following is also a basic and useful tool. 
\begin{thm}[{Casselman's criterion (\cite[Lemma 2.4]{Konno}})]\label{Ccriterion}
Let $\pi$ be an irreducible representation of $G$. 
Then the following are equivalent. 
\begin{itemize}
\item
$\pi$ is a discrete series (\resp tempered) representation; 
\item
for any irreducible representation $\tau \otimes \pi_0$ 
appearing in $\mu^*(\pi)-\1_{\GL_0(E)} \otimes \pi$, 
if we denote the central character of $\tau$ by $\omega_\tau = \chi_u |\cdot|^s$
with $\chi_u$ unitary and $s \in \R$, 
then $s > 0$ (\resp $s \geq 0$).
\end{itemize}
\end{thm}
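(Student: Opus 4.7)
The plan is to deduce the statement from Casselman's general square-integrability/tem\-per\-ed\-ness criterion for connected reductive $p$-adic groups, as recalled in \cite[Lemma 2.4]{Konno}, translated into the $\mu^*$-formalism used here, and then to extend the result from $G^\circ$ to $G$ in the case $G = \O_{2n}(F)$ by restriction.

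First, I would recall the general criterion: for a connected reductive $p$-adic group $H$ with a chosen minimal parabolic, an irreducible admissible representation $\sigma$ of $H$ is square-integrable modulo the center (resp.\ tempered) if and only if for every proper standard parabolic $P^H = M^H N^H$ and every quasi-character $\chi$ of $A_{M^H}/A_H$ occurring in the semisimplification of $\Jac_{P^H}(\sigma)$, the real part $\mathrm{Re}(\log|\chi|)$ lies in the open (resp.\ closed) positive Weyl chamber. I would then specialize to $H = G^\circ$ and $\sigma = \pi|_{G^\circ}$: every standard Levi of $G^\circ$ has the shape $M^\circ \cong \GL_{d_1}(E) \times \dots \times \GL_{d_r}(E) \times G_0^\circ$, and under the natural identification of $A_{M^\circ}/A_{G^\circ}$ with a product of the split centers of the $\GL_{d_i}$-factors, the characters dual, up to positive rational scaling, to the simple coroots of $G^\circ$ outside those of $M^\circ$ are the similitude characters of these $\GL_{d_i}$-factors. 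The positivity condition therefore decouples across the $\GL$-factors, and by the transitivity of Jacquet modules together with the bijection between maximal parabolics $Q \supset P^\circ$ and simple coroots of $G^\circ$ outside those of $M^\circ$, it suffices to verify the condition on every maximal parabolic $P_k^\circ$ with Levi $\GL_k(E) \times G_0^{\prime\circ}$, for $k \geq 1$ and each relevant $G_0^{\prime\circ}$. On such a maximal parabolic the exponent on $A_{M_k^\circ}/A_{G^\circ}$ of an irreducible constituent $\tau \boxtimes \pi_0$ of $\Jac_{P_k^\circ}(\pi|_{G^\circ})$ is registered precisely by $\omega_\tau$, and Casselman's positivity condition becomes the stated sign condition on $s$ in $\omega_\tau = \chi_u|\cdot|^s$. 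Since $\mu^*(\pi) - \1_{\GL_0(E)} \otimes \pi = \sum_{k \geq 1} \Jac_{P_k}(\pi)$, this yields the desired equivalence when $G = G^\circ$.

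For $G = \O_{2n}(F)$ I would use $[G:G^\circ] = 2$ to reduce to $G^\circ$: an irreducible $\pi \in \Irr(G)$ is discrete series (resp.\ tempered) if and only if $\pi|_{G^\circ}$ is, and for every $T$-stable standard parabolic $P^\circ \subset G^\circ$ and the associated $P = P^\circ \cdot T \subset G$, the identity $\Jac_P(\pi)|_{M^\circ} = \Jac_{P^\circ}(\pi|_{G^\circ})$ guarantees that the irreducible constituents $\tau \boxtimes \pi_0$ occurring in $\mu^*(\pi) - \1_{\GL_0(E)} \otimes \pi$ carry the same $\tau$-central-character data as those for $\pi|_{G^\circ}$. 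The main technical point will be the dictionary between the root-theoretic positivity condition on $A_{M^\circ}/A_{G^\circ}$ and the per-$\GL_k$-factor real-part condition in the statement, together with the reduction to maximal parabolics via the transitivity of Jacquet functors; this is a routine check in each classical type, with only mild care needed in the unitary case (where $|\cdot|$ is normalized over $E$ rather than $F$) and for $G = \O_{2n}(F)$, and I do not anticipate any serious obstacle beyond this bookkeeping.
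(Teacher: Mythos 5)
This theorem is not actually proved in the paper; it is simply stated with a citation to \cite[Lemma~2.4]{Konno}, so there is no internal proof to compare against. Your proposal is a plausible derivation of the cited result from Casselman's general square-integrability and temperedness criterion for connected $p$-adic reductive groups, and the overall plan is sound. The translation from the root-theoretic positivity on $A_{M^\circ}/A_{G^\circ}$ to the per-$\GL_k$-factor condition on the real part of $\omega_\tau$, the reduction to maximal parabolics via transitivity of Jacquet functors, and the descent from $\O_{2n}$ to $\SO_{2n}$ by restriction are all the right moves. One small point you should make explicit in the $\O_{2n}(F)$ case: for $k=n$ the Siegel parabolic $P_n^\circ$ of $\SO_{2n}(F)$ is not $T$-stable (the two Siegel parabolics are swapped by $\epsilon$), and Casselman's criterion for $\pi|_{G^\circ}$ a priori requires checking exponents along both. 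This is harmless because, when $\pi|_{G^\circ}=\pi_1\oplus\pi_2$ with $\pi_2\cong\epsilon\pi_1$, the Jacquet module $\Jac_{P_n}(\pi)$ picks up both $\Jac_{P_n^\circ}(\pi_1)$ and $\Jac_{P_n^\circ}(\pi_2)\cong\Jac_{\epsilon P_n^\circ\epsilon^{-1}}(\pi_1)$ (twisted by $\epsilon$, which preserves the center of $\GL_n$), so the positivity constraints from both Siegel parabolics are in fact captured by the single term $\Jac_{P_n}(\pi)$ in $\mu^*(\pi)$; but your phrase \emph{``for every $T$-stable standard parabolic''} leaves this step implicit and should be spelled out.
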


%\subsection{Derivatives}
\subsection{Derivatives}\label{sec.derivative}
Now we introduce the notion of \emph{derivatives}. 
Note that this differs from the Bernstein--Zelevinsky derivatives in \cite{BZ}.
\par

\begin{defi}
Fix an irreducible supercuspidal representation $\rho$ of $\GL_d(E)$. 

\begin{enumerate}
\item
Suppose that $G^\circ$ has a standard parabolic subgroup $P^\circ = M^\circ N_P$ 
such that $M \cong \GL_d(E) \times G_-$. 
For $\pi \in \RR(G)$, if we write
\[
\Jac_P(\pi) = \sum_{i \in I} \tau_i \otimes \sigma_i \in \RR(\GL_d(E)) \otimes \RR(G_-)
\]
with $\tau_i \in \Irr(\GL_d(E))$ and $\sigma_i \in \Irr(G_-)$, 
we define the \emph{$\rho$-derivative} $D_\rho(\pi)$ by 
\[
D_\rho(\pi) = \sum_{\substack{i \in I \\ \tau_i \cong \rho}} \sigma_i \in \RR(G_-).
\]
If such a parabolic subgroup $P^\circ$ does not exist, we set $D_\rho(\pi) = 0$. 

\item
For $k \geq 0$, we define the \emph{$k$-th $\rho$-derivative} $D_\rho^{(k)}(\pi)$ by 
\[
D_\rho^{(k)}(\pi) = \frac{1}{k!} \underbrace{D_\rho \circ \dots \circ D_\rho}_k(\pi).
\]
In particular, $D_\rho^{(0)}(\pi) = \pi$.

\item
If $D_\rho^{(k)}(\pi) \not= 0$ but $D_\rho^{(k+1)}(\pi) = 0$, 
we say that $D_\rho^{(k)}(\pi)$ is the \emph{highest $\rho$-derivative}, 
and denote it by $D_\rho^{\max}(\pi)$.

\item
We say that $\pi$ is \emph{$\rho$-reduced} if $D_\rho(\pi) = 0$.
\end{enumerate}
\end{defi}

Note that for any $\pi \in \Irr(G)$, it follows from Frobenius reciprocity that
$D_\rho(\pi) \not= 0$ 
if and only if there is an inclusion $\pi \hookrightarrow \rho \rtimes \pi_0$ for some representation $\pi_0$. 
By \cite[Lemma 5.6]{X1}, 
if $\rho' \not\cong \rho|\cdot|^{\pm1}$, 
then $D_{\rho} \circ D_{\rho'} = D_{\rho'} \circ D_\rho$. 
\par

We write $\rho^{\times k} = \rho \times \dots \times \rho$ ($k$ times) for short.
Note that for $\pi \in \RR(G)$, 
the $k$-th derivative $D_\rho^{(k)}(\pi)$ is 
a linear combination of irreducible representations whose coefficients are non-negative integers.
In fact, it is characterized by the identity
\[
\Jac_P(\pi) = \rho^{\times k} \otimes D_\rho^{(k)}(\pi) + \sum_{i} \tau_i \boxtimes \pi_i, 
\]
where $P^\circ = M^\circ N_P$ is such that $M^\circ = \GL_{dk}(E) \times G_0^\circ$, 
and $\tau_i \boxtimes \pi_i \in \Irr(M)$ is such that $\tau_i \not\cong \rho^{\times k}$.

\begin{lem}\label{SI}
Suppose that $\rho$ is not conjugate-self-dual. 
Then for any $\pi \in \Irr(G)$, 
its highest derivative $D_\rho^{\max}(\pi)$ is also irreducible. 
Moreover, the map $\Irr(G) \ni \pi \mapsto D_\rho^{\max}(\pi)$ is injective in the following sense: 
for $\pi,\pi' \in \Irr(G)$, 
if $D_\rho^{\max}(\pi) = D_\rho^{(k)}(\pi) = D_\rho^{(k)}(\pi')$, then $\pi \cong \pi'$.
\end{lem}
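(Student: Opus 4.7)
The plan is to reduce both assertions to a single irreducibility statement: when $\rho \not\cong {}^c\rho^\vee$, the parabolic induction $\rho^{\times k} \rtimes \sigma$ is irreducible for every $k \geq 0$ and every $\sigma \in \Irr(G_-)$, where $G_-$ is the classical group of the same type as $G$ obtained by lowering the rank appropriately. This is a standard fact in the representation theory of $p$-adic classical groups: the normalized intertwining operators permuting the $\GL$-factors have poles only at conjugate-self-dual points, so in our situation they define isomorphisms, forcing the induced representation to be irreducible by Schur's lemma.

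Granting this, fix $\pi \in \Irr(G)$ and let $k$ satisfy $D_\rho^{(k)}(\pi) = D_\rho^{\max}(\pi) \neq 0$ and $D_\rho^{(k+1)}(\pi) = 0$. Decompose $D_\rho^{\max}(\pi) = \sum_{i} m_i \sigma_i$ in $\RR(G_-)$ with distinct $\sigma_i \in \Irr(G_-)$ and $m_i \geq 1$. Frobenius reciprocity applied to each $\sigma_i$ yields a nonzero $G$-map $\pi \to \rho^{\times k} \rtimes \sigma_i$, which is injective by irreducibility of $\pi$. The irreducibility of $\rho^{\times k} \rtimes \sigma_i$ then forces $\pi \cong \rho^{\times k} \rtimes \sigma_i$ for all $i$; hence all $\sigma_i$ coincide and there is a single $\sigma \in \Irr(G_-)$ with $D_\rho^{\max}(\pi) = m \sigma$ and $\pi \cong \rho^{\times k} \rtimes \sigma$. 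Moreover $\sigma$ must be $\rho$-reduced, for any term $\rho \otimes \sigma''$ in $\Jac_{P_d}(\sigma)$ would, via Tadi\'c's formula, produce a contribution to $\rho^{\times(k+1)} \otimes \sigma''$ in $\mu^*(\pi)$, contradicting $D_\rho^{(k+1)}(\pi) = 0$.

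To establish $m = 1$, I would apply Tadi\'c's formula to $\mu^*(\pi) = M^*(\rho^{\times k}) \rtimes \mu^*(\sigma)$ and compute the multiplicity of $\rho^{\times k} \otimes \sigma$. Since $M^*(\rho) = \rho \otimes 1 + 1 \otimes \rho + {}^c\rho^\vee \otimes 1$, each monomial appearing in the expansion of $M^*(\rho^{\times k})$ has $\GL$-cuspidal support of the form $\rho^{\times a} \times ({}^c\rho^\vee)^{\times b}$ for some $a + b \leq k$, with the remaining $k-a-b$ copies of $\rho$ sitting on the classical-group side. Because $\rho \not\cong {}^c\rho^\vee$, the equality of cuspidal supports eliminates every monomial with $b > 0$; because $\sigma$ is $\rho$-reduced, the only piece of $\mu^*(\sigma)$ whose $\GL$-part has $\rho$-cuspidal support at the right leading position is $1 \otimes \sigma$. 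The unique surviving contribution is thus the all-$\rho$ monomial coupled with $1 \otimes \sigma$, yielding coefficient exactly $1$ and therefore $D_\rho^{\max}(\pi) = \sigma \in \Irr(G_-)$.

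The injectivity then follows immediately: if $D_\rho^{(k)}(\pi) = D_\rho^{(k)}(\pi') = D_\rho^{\max}(\pi) = \sigma$, then the argument above gives $\pi \cong \rho^{\times k} \rtimes \sigma$, while Frobenius reciprocity produces an embedding $\pi' \hookrightarrow \rho^{\times k} \rtimes \sigma \cong \pi$, forcing $\pi' \cong \pi$ by irreducibility of the right-hand side. The principal obstacle is the precise multiplicity-one computation in the third paragraph, which requires careful bookkeeping in the expansion of $M^*(\rho^{\times k}) \rtimes \mu^*(\sigma)$ and makes essential use both of $\rho \not\cong {}^c\rho^\vee$ to kill terms involving ${}^c\rho^\vee$ in the $\GL$-support and of the $\rho$-reducedness of $\sigma$ to kill contributions from the non-identity portion of $\mu^*(\sigma)$.
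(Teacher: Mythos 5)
The argument hinges on a claimed irreducibility of $\rho^{\times k} \rtimes \sigma$ for every $\sigma \in \Irr(G_-)$ whenever $\rho \not\cong {}^c\rho^\vee$, but this is false. The intertwining operators you invoke only control the normalization swapping a copy of $\rho$ across to ${}^c\rho^\vee$; they say nothing about reducibility coming from the interaction of $\rho$ with the cuspidal support of $\sigma$. Concretely, take $E=F$, $\rho = \chi$ a unitary character of $F^\times$ with $\chi^2 \neq 1$ (so $\chi \not\cong \chi^{-1} = {}^c\chi^\vee$), $G = \Sp_4(F)$, $G_- = \Sp_2(F)$, and $\sigma = \chi|\cdot| \rtimes \1 \in \Irr(\Sp_2(F))$. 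Then $\sigma$ is even $\chi$-reduced (its Jacquet module is $\chi|\cdot| \otimes \1 + \chi^{-1}|\cdot|^{-1} \otimes \1$, neither term being $\chi \otimes \1$), yet $\chi \rtimes \sigma = (\chi \times \chi|\cdot|) \rtimes \1$ is reducible because the segments $[0,0]_\chi$ and $[1,1]_\chi$ are linked, so $\chi \times \chi|\cdot|$ already has length two. This invalidates paragraph two (the conclusion $\pi \cong \rho^{\times k} \rtimes \sigma$) and paragraph four (which uses $\rho^{\times k} \rtimes \sigma \cong \pi$ to transfer an embedding of $\pi'$ to an isomorphism).

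Your third paragraph, however, contains the right computation, and the fix is to apply it to the auxiliary object $\rho^{\times k} \rtimes \pi_0$ rather than to $\pi$ itself and then argue entirely in the Grothendieck group. From $D_\rho^{\max}(\pi) = D_\rho^{(k)}(\pi) \neq 0$ one gets, for a suitable irreducible summand $\pi_0$, an embedding $\pi \hookrightarrow \rho^{\times k} \rtimes \pi_0$ with $\pi_0$ $\rho$-reduced; then $\pi_0 \leq D_\rho^{(k)}(\pi) \leq D_\rho^{(k)}(\rho^{\times k} \rtimes \pi_0) = \pi_0$ (the last equality is exactly your Tadi\'c-formula bookkeeping, using $\rho \not\cong {}^c\rho^\vee$ to kill monomials involving ${}^c\rho^\vee$ and $\rho$-reducedness of $\pi_0$ to kill the rest), so $D_\rho^{(k)}(\pi) = \pi_0$ is irreducible without needing $\rho^{\times k} \rtimes \pi_0$ to be irreducible. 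Injectivity then follows similarly in the Grothendieck group: if $\pi' \not\cong \pi$ has $D_\rho^{(k)}(\pi') = \pi_0$, then $\pi'$ too embeds in $\rho^{\times k} \rtimes \pi_0$, whence $\pi' \leq (\rho^{\times k} \rtimes \pi_0) - \pi$ and $\pi_0 = D_\rho^{(k)}(\pi') \leq D_\rho^{(k)}(\rho^{\times k} \rtimes \pi_0) - D_\rho^{(k)}(\pi) = 0$, a contradiction. That replaces the last step of your argument without appealing to the false irreducibility.
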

\begin{proof}
Write $D_\rho^{\max}(\pi) = D_\rho^{(k)}(\pi)$.
One can take an irreducible summand $\pi_0$ of $D_\rho^{\max}(\pi)$ such that 
\[
\Jac_P(\pi) \twoheadrightarrow \underbrace{\rho \boxtimes \dots \boxtimes \rho}_k \boxtimes \pi_0
\]
in $\Rep(M)$, 
where $P ^\circ = M^\circ N_P$ is an appropriate standard parabolic subgroup of $G^\circ$. 
By Frobenius reciprocity, we have 
\[
\pi \hookrightarrow \rho^{\times k} \rtimes \pi_0.
\]
In particular, we have 
\[
\pi_0 \leq D_\rho^{(k)}(\pi) \leq D_\rho^{(k)}\left(\rho^{\times k} \rtimes \pi_0\right)
\]
in $\RR(G_0)$ for some classical group $G_0$.
(Here, for $A, B \in \RR(G_0)$, we write $A \leq B$ 
if $B-A$ is a non-negative combination of irreducible representations.)
Since $\pi_0$ is $\rho$-reduced and since $\rho$ is not conjugate-self-dual, 
by applying Tadi\'c's formula (Theorem \ref{Tformula}) to $\rho^{\times k} \rtimes \pi_0$, 
we have
\[
D_\rho^{(k)}\left(\rho^{\times k} \rtimes \pi_0\right) = \pi_0, 
\]
and hence we have $D_\rho^{(k)}(\pi) = \pi_0$.
\par

Suppose that $\pi' \in \Irr(G)$ satisfies $D_\rho^{(k)}(\pi') = \pi_0$ and $\pi' \not\cong \pi$. 
Then $\pi'$ is also an irreducible subrepresentation of $\rho^{\times k} \rtimes \pi_0$. 
However, in the Grothendieck group, since $\pi' \leq (\rho^{\times k} \rtimes \pi_0) - \pi$, 
we have
\[
\pi_0 = D_\rho^{(k)}(\pi') \leq D_\rho^{(k)}\left((\rho^{\times k} \rtimes \pi_0) - \pi\right) = 0.
\]
This is a contradiction.
\end{proof}

By the compatibility of Aubert duality with Jacquet functors, 
we obtain the following.
\begin{lem}\label{AvsD}
Fix an irreducible cuspidal representation $\rho$ of $\GL_d(E)$. 
Let $\pi$ be a representation of $G$ of finite length. 
Then
\[
\left(D_{\rho}(\pi)\right)^{\widehat{\;}} = D_{{}^c\rho^\vee}(\hat\pi).
\]
\end{lem}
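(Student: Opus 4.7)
The plan is to work in the Grothendieck group $\RR(M)$ for $M \cong \GL_d(E) \times G_-$, and to combine two ingredients: the compatibility of Aubert duality with parabolic restriction (involving a switch to the opposite parabolic), and the explicit action of a Weyl element exchanging $P$ with $\overline{P}$.

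First I will establish the identity
\[
\widehat{\Jac_P(\pi)} \;=\; \Jac_{\overline{P}}(\hat\pi)
\]
in $\RR(M)$. This is the Jacquet-module counterpart of Theorem~\ref{AD}(5); it can be derived either by direct computation using Theorem~\ref{AD}(2) and the geometric lemma applied to each term of the alternating sum, or, more conceptually, from Theorem~\ref{AD}(5) by second adjunction (since $\Jac_{\overline{P}}$ is the left adjoint to $\Ind_P^G$). Next, I will extract the $\rho$-isotypic component on each side. Aubert duality on the product Levi $M \cong \GL_d(E) \times G_-$ is multiplicative on exterior tensor products, as follows from the product decomposition of standard parabolics in $M$ together with the alternating-sum formula of Theorem~\ref{AD}(2); combined with the Aubert self-duality of supercuspidals (so that $\hat\rho = \rho$), writing $\Jac_P(\pi) = \rho \boxtimes D_\rho(\pi) + \Xi$ with $\Xi$ a sum of irreducibles whose $\GL_d(E)$-component is inequivalent to $\rho$ yields
\[
\Jac_{\overline{P}}(\hat\pi) \;=\; \widehat{\Jac_P(\pi)} \;=\; \rho \boxtimes \widehat{D_\rho(\pi)} + \widehat{\Xi}.
\]

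The third step is to convert $\Jac_{\overline{P}}$ into $\Jac_P$ by a Weyl element twist. In the notation of Section~\ref{sec.NIO}, the group $W(M^\circ, M^\circ)$ contains a representative $w$ with $w \overline{P} w^{-1} = P$, and for the classical groups in question, conjugation by $w$ acts on $M \cong \GL_d(E) \times G_-$ by $(\tau, \sigma) \mapsto ({}^c\tau^\vee, \sigma)$. Under this identification, the Jacquet modules $\Jac_P(\hat\pi)$ and $\Jac_{\overline{P}}(\hat\pi)$ are related in $\RR(M)$ by the $w$-twist, so the summand $\rho \boxtimes \widehat{D_\rho(\pi)}$ of $\Jac_{\overline{P}}(\hat\pi)$ corresponds to the summand ${}^c\rho^\vee \boxtimes \widehat{D_\rho(\pi)}$ of $\Jac_P(\hat\pi)$. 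Since $\widehat{\Xi}$ contains no irreducible constituent with $\GL_d(E)$-component isomorphic to $\rho$, its $w$-twist contains no constituent with $\GL_d(E)$-component ${}^c\rho^\vee$, and extracting the ${}^c\rho^\vee$-isotypic piece yields $D_{{}^c\rho^\vee}(\hat\pi) = \widehat{D_\rho(\pi)}$. The main technical point I expect is making the opposite-parabolic compatibility of the first step and the $w$-twist identification of the third step precise at the level of Grothendieck groups, particularly handling the disconnected case $G = \O_{2n}(F)$; for this one can invoke the extensions to disconnected groups discussed in Appendix~\ref{sec.AD}.
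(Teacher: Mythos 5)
The proposal is correct. Where the paper simply cites Aubert's Th\'eor\`eme 1.7(2), you reconstruct the argument from scratch: the identity $\widehat{\Jac_P(\pi)} = \Jac_{\overline{P}}(\hat\pi)$ in $\RR(M)$ is derived from Theorem~\ref{AD}(5) by second adjunction, and then a Weyl element $w$ with $w\overline{P}w^{-1}=P$, acting on $M$ by $\tau\boxtimes\sigma \mapsto {}^c\tau^\vee\boxtimes\sigma$, converts the $\rho$-isotypic piece of $\Jac_{\overline{P}}(\hat\pi)$ into the ${}^c\rho^\vee$-isotypic piece of $\Jac_P(\hat\pi)$. The mathematical content is the same, but your account makes explicit what the paper's ``special case'' phrasing suppresses: Aubert's theorem compares Jacquet modules along $P$ and $\overline{P}$, whereas both $D_\rho$ and $D_{{}^c\rho^\vee}$ in the lemma are formed from the same standard parabolic, so the $\overline{P}\to P$ conversion (and the resulting $\rho\to{}^c\rho^\vee$) is a genuine step. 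Your Step~2 also correctly records that $\widehat{\Xi}$ contributes no new $\rho$-isotypic piece because the Aubert involution fixes supercuspidals and preserves cuspidal support. One small point worth pinning down is that the Weyl element $w$ can be chosen to fix the factor $G_-$ of $M$ pointwise in all four classical families, which is what lets you identify the $\rho$-isotypic and ${}^c\rho^\vee$-isotypic pieces without a twist on the $G_-$-side; this is standard but is the real mechanism behind the statement, and it is where the disconnected case $G=\O_{2n}(F)$ requires $w$ to be taken in $\O_{2n}$ rather than $\SO_{2n}$.
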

\begin{proof}
This is a special case of \cite[Th\'eor\`eme 1.7 (2)]{Au} if $G = G^\circ$.
The same proof works for $G = \O_{2n}(F)$.
\end{proof}

Similarly, 
let $P = MN_P$ be a $\theta$-stable standard parabolic subgroup of $\GL_N(E)$ 
such that $M \cong \GL_d(E) \times \GL_{N_-}(E) \times \GL_d(E)$. 
For $\tl\pi \in \RR(\tl\GL_N(E))$, 
if we write
\[
\Jac_{\tl{P}}(\tl\pi) = \sum_{i \in I} \tau_i \otimes \tl\sigma_i \otimes {}^c\tau_i^\vee
+ \sum_{j \in J} (\pi_j+\pi_j \circ \theta)
\]
with $\tau_i \in \Irr(\GL_d(E))$, $\tl\sigma_i \in \Irr(\tl\GL_{N_-}(E))$
and $\pi_j \in \Irr(M)$ such that $\pi_j \not\cong \pi_j \circ \theta$, 
we define
\[
\tl{D}_\rho(\tl\pi) = \sum_{\substack{i \in I \\ \tau_i \cong \rho}} \tl\sigma_i
\in \RR(\tl\GL_{N_-}(E)). 
\]
Moreover, for $k \geq 0$, we set
\[
\tl{D}_\rho^{(k)}(\tl\pi) = \frac{1}{k!} \underbrace{\tl{D}_\rho \circ \dots \circ \tl{D}_\rho}_k(\tl\pi).
\]
Note that a priori, 
$\tl{D}_\rho^{(k)}(\tl\pi)$ is in $\RR(\tl\GL_{N_0}(E)) \otimes_\Z \Q$ for some $N_0 \geq0$.

%\subsection{Compatibility of Jacquet functors with twisted endoscopy}
\subsection{Compatibility of Jacquet functors with twisted endoscopy}
We denote by $\C[\Pi(G)]$ (\resp $\C^\theta[\Pi(N)]$)
the space of invariant (\resp twisted invariant) distributions on $G$ (\resp $\GL_N(E)$)
which are finite in the sense that they are finite linear combinations of irreducible characters. 
Then the representation theoretic characters provide an isomorphism
\[
\C[\Pi(G)] \cong \RR(G) \otimes_\Z \C.
\]
For any $\pi \in \RR(G)$, 
its character is denoted by $\Theta_\pi \in \C[\Pi(G)]$. 
Finally, we denote the space of stable finite linear combinations of characters of $G$
by $\C[\Pi(G)]^\st$.
\par

By the above isomorphism, one can transfer our definition of Jacquet functors 
to the spaces $\C[\Pi(G)]$ and $\C^\theta[\Pi(N)]$. 
Now we set $N = \dim(\St_{\widehat{G}})$ as in Section \ref{sec.parameters}. 
We fix an irreducible unitary supercuspidal representation $\rho$ of $\GL_d(E)$, and $x \in \R$. 
We consider Levi subgroups 
\[
\GL_d(E) \times \GL_{N_-}(E) \times \GL_d(E)
\]
of $\GL_N(E)$ and 
\[
M^\circ = \GL_d(E) \times G_-^\circ
\]
of $G^\circ$, respectively. 
Then by \cite[(6.6)]{X1}, 
we have a commutative diagram: 
\[
\begin{CD}
\C[\Pi(G)]^\st @>(G:G^\circ)^{-1} \Trans^\theta_{G^\circ}>> \C^\theta[\Pi(N)] \\
@V D_{\rho|\cdot|^x} VV @VV \tl{D}_{\rho|\cdot|^x} V \\
\C[\Pi(G_-)]^\st @>(G_-:G_-^\circ)^{-1} \Trans^\theta_{G_-^\circ}>> \C^\theta[\Pi(N_-)] 
\end{CD}
\]
where 
the above horizontal map is the composition of 
the averaged restriction map $\Theta \mapsto (G:G^\circ)^{-1} \Theta|_{C_c^\infty(G^\circ)}$
and the twisted endoscopic transfer map
$\Trans^\theta_{G^\circ} \colon \C[\Pi(G^\circ)]^\st \rightarrow \C^\theta[\Pi(N)]$.
The bottom horizontal map is defined similarly. 
This commutativity was proven for general quasi-split connected reductive groups in \cite[Appendix C]{X1},
and can be extended to the case of $\O_{2n}$ easily from the case of $\SO_{2n}$ (see Remark \ref{packet_SO} (3)).
Applying the above commutative diagram repeatedly, 
we obtain a commutative diagram 
\[
\begin{CD}
\C[\Pi(G)]^\st @>(G:G^\circ)^{-1} \Trans^\theta_{G^\circ}>> \C^\theta[\Pi(N)] \\
@V D^{(k)}_{\rho|\cdot|^x} VV @VV \tl{D}^{(k)}_{\rho|\cdot|^x} V \\
\C[\Pi(G_0)]^\st @>(G_0:G_0^\circ)^{-1} \Trans^\theta_{G_0^\circ}>> \C^\theta[\Pi(N_0)] 
\end{CD}
\]
for suitable $G_0$ and $N_0$. 
\par

Let $\phi$ be a tempered $L$-parameter for $G$, 
and let $\Pi_\phi$ be the $L$-packet associated to $\phi$. 
Then we have a stable invariant distribution 
\[
\sum_{\pi \in \Pi_\phi} \Theta_\pi \in \C[\Pi(G)]^\st, 
\]
whose image under $(G:G^\circ)^{-1} \Trans^\theta_{G^\circ}$ is 
the twisted character $\Theta_{\tl\pi_\phi} \in \C^\theta[\Pi(N)]$ by \eqref{ECR1}. 
\par

From now on, 
we suppose that 
\begin{itemize}
\item
$\rho$ is conjugate-self-dual; 
\item
$2x$ is a positive integer; 
\item
$\phi$ contains $\rho \boxtimes S_{2x+1}$ with multiplicity $k \geq 0$.
\end{itemize}
Then 
\[
\left.
\tl{D}^{(k)}_{\rho|\cdot|^x}(\tl\pi_\phi)
\right|_{\GL_{N-2dk}(E)} 
= C \cdot \pi_{\phi_0} 
\]
for some $C \in \Q$,
where 
\[
\phi_0 = \phi- (\rho \boxtimes S_{2x+1})^{\oplus k} \oplus (\rho \boxtimes S_{2x-1})^{\oplus k}.
\]
In particular, $\tl{D}^{(k+1)}_{\rho|\cdot|^x}(\tl\pi_\phi) = 0$ and 
one can define $\epsilon(\phi) \in \Q$ such that 
\[
\tl{D}^{(k)}_{\rho|\cdot|^x}(\tl\pi_\phi) \overset{\theta}{=} \epsilon(\phi) \cdot \tl\pi_{\phi_0}. 
\]
Here, we recall from Section \ref{sec.TAD_GL} that 
for $\tl\pi_1, \tl\pi_2 \in \RR(\tl\GL_N(E))$,
we write $\tl\pi_1 \overset{\theta}{=} \tl\pi_2$ if $\Theta_{\tl\pi_1}(\tl{f}) = \Theta_{\tl\pi_2}(\tl{f})$
for any $\tl{f} \in C_c^\infty(\GL_N(E) \rtimes \theta)$.
The above commutative diagram implies the following lemma. 

\begin{lem}\label{mult_k}
We have $D_{\rho|\cdot|^x}^{(k+1)}(\pi) = 0$ for any $\pi \in \Pi_\phi$.
Moreover, $\epsilon(\phi) \in \{0,1\}$ and 
\[
D_{\rho|\cdot|^x}^{(k)}\left(\sum_{\pi \in \Pi_\phi} \pi \right)
= \epsilon(\phi)
\sum_{\pi_0 \in \Pi_{\phi_0}} \pi_0.
\]
\end{lem}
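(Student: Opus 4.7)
The plan is to feed the stable distribution $\Theta := \sum_{\pi \in \Pi_\phi} \Theta_\pi$ into the commutative diagram displayed just above the lemma statement, iterated $k$ and $k+1$ times. By \eqref{ECR1} for the tempered parameter $\phi$ (where $s_\phi$ is trivial) the averaged transfer $(G:G^\circ)^{-1}\Trans^\theta_{G^\circ}(\Theta)$ equals $\Theta_{\tl\pi_\phi}$. The identities recorded just before the lemma state that $\tl{D}^{(k+1)}_{\rho|\cdot|^x}(\tl\pi_\phi) = 0$ and $\tl{D}^{(k)}_{\rho|\cdot|^x}(\tl\pi_\phi) \overset{\theta}{=} \epsilon(\phi)\, \tl\pi_{\phi_0}$. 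Chasing these through the commutative diagram, and using \eqref{ECR1} for the (again tempered) parameter $\phi_0$ to rewrite the right-hand side, I would obtain
\[
(G_-:G_-^\circ)^{-1}\Trans^\theta_{G_-^\circ}\Bigl(\sum_{\pi \in \Pi_\phi} D^{(k+1)}_{\rho|\cdot|^x}(\Theta_\pi)\Bigr) = 0
\]
and
\[
(G_0:G_0^\circ)^{-1}\Trans^\theta_{G_0^\circ}\Bigl(\sum_{\pi \in \Pi_\phi} D^{(k)}_{\rho|\cdot|^x}(\Theta_\pi) - \epsilon(\phi)\sum_{\pi_0 \in \Pi_{\phi_0}} \Theta_{\pi_0}\Bigr) = 0
\]
at the level of stable distributions on $G_-$ and $G_0$, respectively.

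To upgrade these averaged-transfer identities to genuine identities in the Grothendieck groups, I would use two inputs. First, each $D^{(j)}_{\rho|\cdot|^x}(\pi)$ is a non-negative integer combination of irreducible characters, being the multiplicity of a $\rho|\cdot|^x$-isotypic component of a Jacquet module, and the summed distribution is stable because parabolic descent and $\GL$-isotypic projection preserve stability. Second, since $\pi \in \Pi_\phi$ is tempered and $x > 0$, Casselman's criterion (Theorem \ref{Ccriterion}) should keep every irreducible constituent of $D^{(j)}_{\rho|\cdot|^x}(\pi)$ tempered. Granting this, the sums above decompose as non-negative integral combinations of tempered $L$-packet sums for the smaller classical groups $G_-$ and $G_0$, to which Hypothesis \ref{ECR-tempD} applies; the averaged twisted transfers of distinct tempered $L$-packet sums on those groups are linearly independent by \eqref{ECR1}, so the displayed vanishings force each $L$-packet coefficient to vanish. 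This yields $D^{(k+1)}_{\rho|\cdot|^x}(\pi) = 0$ for every $\pi$ term by term and yields the explicit identity for the $k$-th derivative. The bound $\epsilon(\phi) \in \{0,1\}$ then follows from non-negativity of the classical-group side combined with Lemma \ref{SI} (or its conjugate-self-dual variant, to be established elsewhere in this appendix), which ensures that no single $\pi \in \Pi_\phi$ contributes more than one copy of any given $\pi_0 \in \Pi_{\phi_0}$ to the highest derivative.

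The main obstacle will be the temperedness assertion for the irreducible constituents of these derivatives: while the hypothesis $x > 0$ together with Casselman positivity makes it plausible, rigorously verifying that stripping off a $\rho|\cdot|^x$-component from the cuspidal support of a tempered $\pi$ leaves only constituents with non-negative exponents requires a careful Langlands-classification analysis of the Jacquet modules of representations in $\Pi_\phi$. One must also ensure that the decomposition of $\sum_\pi D^{(j)}_{\rho|\cdot|^x}(\pi)$ into tempered $L$-packet sums happens for $L$-packets on groups to which Hypothesis \ref{ECR-tempD} genuinely applies within the inductive framework. Once this temperedness step is in place, the remainder of the argument is a formal diagram chase combined with the positivity of Jacquet-module multiplicities.
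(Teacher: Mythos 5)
Your diagram-chase setup (feeding the stable packet sum through the commutative diagram and invoking \eqref{ECR1} for $\phi$ and $\phi_0$) matches the paper's. The divergence is at the step where you pass from equality of averaged twisted transfers to equality of the stable distributions themselves. The paper simply uses the injectivity of the averaged twisted transfer $(G_0:G_0^\circ)^{-1}\Trans^\theta_{G_0^\circ} \colon \C[\Pi(G_0)]^\st \rightarrow \C^\theta[\Pi(N_0)]$, a standard fact requiring no knowledge of the internal structure of the stable virtual characters involved. You instead try to decompose the stable distribution as a non-negative combination of tempered $L$-packet sums and appeal to linear independence of their transfers. This hinges on the temperedness assertion that you yourself flag as ``the main obstacle,'' and it is a genuine gap: the constituents of $D^{(j)}_{\rho|\cdot|^x}(\pi)$ for tempered $\pi$ are in general \emph{not} tempered. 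For example, with $\pi = \Delta([-x,x]_\rho)\rtimes\pi_0$ irreducible tempered (so $k=2$), Tadi\'c's formula shows $D^{(1)}_{\rho|\cdot|^x}(\pi)$ contains $\Delta([-x,x-1]_\rho)\rtimes\pi_0$, whose Jacquet module has an exponent $-\tfrac{1}{2}$ and so has non-tempered constituents by Casselman's criterion. Moreover, for the vanishing of $D^{(k+1)}$ one would need to decompose into $L$-packets \emph{before} knowing the derivative is zero, which is circular.

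Once you replace the $L$-packet decomposition argument by the injectivity of the twisted stable transfer, both displayed averaged-transfer identities become the desired Grothendieck-group identities at once. After that, your remaining steps coincide with the paper's: $D^{(k+1)}(\pi)=0$ for each $\pi$ follows from the vanishing of a non-negative sum, and $\epsilon(\phi)\in\{0,1\}$ follows because, by Lemma \ref{SI} applied to $\rho|\cdot|^x$ (which is not conjugate-self-dual since $x>0$), each nonzero $D^{(k)}(\pi)$ is the irreducible highest derivative and the assignment $\pi\mapsto D^{(k)}(\pi)$ is injective, making $\sum_\pi D^{(k)}(\pi)$ multiplicity-free.
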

\begin{proof}
By \eqref{ECR1}, we have
\begin{align*}
\frac{1}{(G_0:G_0^\circ)} \Trans^\theta_{G_0^\circ}
\circ D_{\rho|\cdot|^x}^{(k)}\left(
\sum_{\pi \in \Pi_\phi} \Theta_\pi
\right)
&= 
\tl{D}_{\rho|\cdot|^x}^{(k)} \circ \Trans^\theta_{G^\circ}
\left(
\frac{1}{(G:G^\circ)} 
\sum_{\pi \in \Pi_\phi} \Theta_\pi
\right)
\\&= \tl{D}_{\rho|\cdot|^x}^{(k)}(\Theta_{\tl\pi_\phi})
\overset{\theta}{=} \epsilon(\phi) \cdot \Theta_{\tl\pi_{\phi_0}}
\\&= \frac{\epsilon(\phi)}{(G_0:G_0^\circ)} \Trans^\theta_{G_0^\circ}
\left(
\sum_{\pi_0 \in \Pi_{\phi_0}} \Theta_{\pi_0}
\right).
\end{align*}
Hence
\[
\sum_{\pi \in \Pi_\phi} D_{\rho|\cdot|^x}^{(k)}(\Theta_\pi)
= \epsilon(\phi)\sum_{\pi_0 \in \Pi_{\phi_0}} \Theta_{\pi_0}.
\]
A similar argument shows that $\sum_{\pi \in \Pi_\phi} D_{\rho|\cdot|^x}^{(k+1)}(\Theta_\pi) = 0$, 
which implies that $D_{\rho|\cdot|^x}^{(k+1)}(\pi) = 0$ for any $\pi \in \Pi_\phi$.
Hence by Lemma \ref{SI}, $\sum_{\pi \in \Pi_\phi} D_{\rho|\cdot|^x}^{(k)}(\pi)$
is a multiplicity-free sum of irreducible representations (possibly zero).
This implies that $\epsilon(\phi) \in \{0,1\}$, 
and we obtain the last equation in the statement. 
\end{proof}

%\subsection{Computation of highest derivatives}
\subsection{Computation of highest derivatives}
Recall that 
we fix an irreducible unitary supercuspidal representation $\rho$ of $\GL_d(E)$, 
and $x >0$ such that $2x \in \Z$. 
We compute $D^{(k)}_{\rho|\cdot|^x}(\pi)$ for each $\pi \in \Pi_\phi$. 
\par

First, we suppose that $\rho \boxtimes S_{2x+1}$ is conjugate-self-dual of the opposite type to $\phi$. 
Then the multiplicity $k$ of $\rho \boxtimes S_{2x+1}$ is even. 
Moreover, if we set $\phi' = \phi - (\rho \boxtimes S_{2x+1})^k$, 
then $A_\phi = A_{\phi'}$ and 
\[
\pi = 
\underbrace{\Delta([-x,x]_\rho) \times \dots \times \Delta([-x,x]_\rho)}_{k/2} \rtimes \pi', 
\]
where $\pi' \in \Pi_{\phi'}$ is such that $\pair{\cdot, \pi'}_{\phi'} = \pair{\cdot, \pi}_\phi$. 
See \cite[Theorem 1.5.1]{Ar} and \cite[Theorem 2.5.1]{Mok}.
Hence by Tadi\'c's formula (Theorem \ref{Tformula}), we have
\[
D_{\rho|\cdot|^x}^{(k)}(\pi) 
= 
\underbrace{\Delta([-(x-1),x-1]_\rho) \times \dots \times \Delta([-(x-1),x-1]_\rho)}_{k/2} \rtimes \pi'. 
\]
This is equal to $\pi_0 \in \Pi_{\phi_0}$ with 
$\pair{\cdot, \pi_0}_{\phi_0} = \pair{\cdot, \pi}_\phi$ via the canonical identification $A_{\phi_0} = A_\phi$, 
where $\phi_0$ is the same as in the previous subsection.
\par

In the rest of this subsection, 
we assume that $\rho \boxtimes S_{2x+1}$ is conjugate-self-dual of the same type as $\phi$.
In this case, 
to compute $D^{(k)}_{\rho|\cdot|^x}(\pi)$ for each $\pi \in \Pi_\phi$, 
we use \eqref{ECR2}. 
Let $s \in A_\phi$. 
When $G = \O_{2n}(F)$ and $s \in A_\phi \setminus A_\phi^+$, 
we assume that $G^\circ_- \not= \{\1\}$.
Let $I_1$ be an index set such that $s = \sum_{i \in I_1} e(\rho_i,a_i,1)$, and set
\[
\phi_1 = \bigoplus_{i \in I_1} \rho_i \boxtimes S_{a_i}, 
\quad
\phi_2 = \phi - \phi_1. 
\]
For $i = 1,2$, fix a conjugate-self-dual character $\eta_i$ of $E^\times$
such that $\phi_i \otimes \eta_i \in \Phi_\temp(G_i)$ for some classical group $G_i$. 
Then $H^\circ = G_1^\circ \times G_2^\circ$ is an elliptic endoscopic group of $G^\circ$, 
and $(\eta_1,\eta_2)$ gives an $L$-homomorphism
\[
\xi \colon {}^L(G_1^\circ \times G_2^\circ) \rightarrow {}^L G^\circ. 
\]
Let 
\begin{align*}
M^\circ &= \GL_d(E) \times G_-^0, \\
M_1^\circ &= \GL_d(E) \times G_{1,-}^0, \\
M_2^\circ &= \GL_d(E) \times G_{2,-}^0
\end{align*}
be Levi subgroups of $G^\circ$, $G_1^\circ$ and $G_2^\circ$, 
respectively, 
and write 
\begin{align*}
H &= G_1 \times G_2, \\
H_{1,-} &= G_{1,-} \times G_2, \\
H_{2,-} &= G_{1} \times G_{2,-}. 
\end{align*}
Then by \cite[(6.2)--(6.4), (6.7)]{X1}, 
we have a commutative diagram: 
\[
\begin{CD}
\C[\Pi(H)]^\st @>
\Trans_{H}^{G}
>> \C[\Pi(G)] \\
@V D_{\rho\eta_1|\cdot|^x}\oplus D_{\rho\eta_2|\cdot|^x} VV @VV D_{\rho|\cdot|^x} V \\
\C[\Pi(H_{1,-})]^\st \oplus \C[\Pi(H_{2,-})]^\st @>
\sum_{i=1}^2\Trans_{H_{i,-}}^{G_-}
>> \C[\Pi(G_-)], 
\end{CD}
\]
where $\Trans_{H}^{G}$ is the transfer map normalized such that 
\[
\prod_{i=1}^2 \sum_{\pi_i \in \Pi_{\phi_i \otimes \eta_i}} \Theta_{\pi_i}
\mapsto 
\sum_{\pi \in \Pi_\phi} \pair{s, \pi}_\phi \Theta_\pi. 
\]
For the proof, see \cite[Appendix C]{X1}.
\par

We denote the multiplicity of $\rho \boxtimes S_{2x+1}$ in $\phi_i$ by $k_i$, 
and set
\[
\phi_{i,0} = \phi_i - (\rho \boxtimes S_{2x+1})^{\oplus k_i} \oplus (\rho \boxtimes S_{2x-1})^{\oplus k_i}.
\]
Let $G_{i,0}$ be the classical group such that $\phi_{i,0} \otimes \eta_i \in \Phi_\temp(G_{i,0})$.

\begin{prop}\label{DvsSECR}
Let $s_0$ be the image of $s$ under the surjection $A_\phi \twoheadrightarrow A_{\phi_0}$ defined by 
\[
e(\rho',d,1) \mapsto \left\{
\begin{aligned}
&e(\rho,2x-1,1) \iif \rho' \cong \rho,\, d=2x+1 > 2, \\
&0 \iif \rho' \cong \rho,\, d=2x+1 = 2, \\
&e(\rho',d,1) \other.
\end{aligned}
\right. 
\]
Then 
\[
D^{(k)}_{\rho|\cdot|^x}\left(
\sum_{\pi \in \Pi_\phi} \pair{s,\pi}_\phi \Theta_\pi \right)
= 
\epsilon(\phi_1 \otimes \eta_1)\epsilon(\phi_2 \otimes \eta_2)
\sum_{\pi_0 \in \Pi_{\phi_0}} \pair{s_0,\pi_0}_{\phi_0} \Theta_{\pi_0}.
\]
\end{prop}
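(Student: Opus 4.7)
The plan is to apply and then iterate the commutative diagram displayed just before the statement (taken from \cite[(6.7)]{X1}) and combine it with Lemma \ref{mult_k} applied to each factor of the endoscopic group. By \eqref{ECR2} for $\phi$ and $s$, together with the stated normalization of the transfer map $\Trans_H^G$, one has
$$\sum_{\pi \in \Pi_\phi} \pair{s,\pi}_\phi \Theta_\pi = \Trans_H^G(\Theta_1 \otimes \Theta_2), \qquad \Theta_i := \sum_{\pi_i \in \Pi_{\phi_i \otimes \eta_i}} \Theta_{\pi_i}.$$
Applying $D^{(k)}_{\rho|\cdot|^x}$ to both sides reduces the proposition to evaluating the right-hand side, so the task is to commute the derivative past the transfer.

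I claim that
$$D^{(k)}_{\rho|\cdot|^x} \circ \Trans_H^G(\Theta_1 \otimes \Theta_2) = \sum_{k_1+k_2=k} \Trans_{H_0}^{G_0}\!\left(D^{(k_1)}_{\rho\eta_1|\cdot|^x}\Theta_1 \otimes D^{(k_2)}_{\rho\eta_2|\cdot|^x}\Theta_2\right),$$
with $H_0 = G_{1,0} \times G_{2,0}$. I would prove this by induction on $k$, iterating the commutative diagram. Each single application of the diagram splits $D_{\rho|\cdot|^x}$ into two branches according to whether the derivative is absorbed by the first or the second tensor factor of the endoscopic group. Since the partial derivatives $D_{\rho\eta_1|\cdot|^x}$ and $D_{\rho\eta_2|\cdot|^x}$ act on disjoint tensor factors and hence commute, iterating the diagram $k$ times produces $\binom{k}{k_1}$ distinct paths contributing $\Trans(D^{k_1}_{\rho\eta_1|\cdot|^x}\Theta_1 \otimes D^{k_2}_{\rho\eta_2|\cdot|^x}\Theta_2)$, and dividing by $k!$ converts this into the claimed identity via $D^{(k)}_{\rho|\cdot|^x} = D^k_{\rho|\cdot|^x}/k!$ and $D^{(k_i)} = D^{k_i}/k_i!$.

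Next, Lemma \ref{mult_k} applied to $\phi_i \otimes \eta_i$ yields $D^{(k_i')}_{\rho\eta_i|\cdot|^x}\Theta_i = 0$ whenever $k_i'$ strictly exceeds the multiplicity $k_i$ of $\rho\eta_i \boxtimes S_{2x+1}$ in $\phi_i \otimes \eta_i$, which equals that of $\rho \boxtimes S_{2x+1}$ in $\phi_i$. Since $k_1 + k_2 = k$, only the summand with $k_i' = k_i$ survives, and for this summand the same lemma gives
$$D^{(k_i)}_{\rho\eta_i|\cdot|^x}\Theta_i = \epsilon(\phi_i \otimes \eta_i)\sum_{\pi_{i,0} \in \Pi_{\phi_{i,0}\otimes \eta_i}} \Theta_{\pi_{i,0}}.$$
Substituting, the right-hand side becomes
$$\epsilon(\phi_1 \otimes \eta_1)\epsilon(\phi_2 \otimes \eta_2) \cdot \Trans_{H_0}^{G_0}\!\left(\sum_{\pi_{1,0}}\Theta_{\pi_{1,0}} \otimes \sum_{\pi_{2,0}}\Theta_{\pi_{2,0}}\right).$$
The description of $s_0 \in A_{\phi_0}$ in the statement shows that $s_0$ is precisely the element associated to the endoscopic datum $(H_0, \eta_1, \eta_2)$ for the decomposition $\phi_0 = \phi_{1,0} \oplus \phi_{2,0}$, so \eqref{ECR2} applied to $\phi_0$ and $s_0$ identifies the transfer with $\sum_{\pi_0 \in \Pi_{\phi_0}}\pair{s_0, \pi_0}_{\phi_0}\Theta_{\pi_0}$, yielding the desired formula.

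The chief obstacle is the inductive step in the second paragraph: it requires checking that the successive commutative diagrams at each Levi descent (namely for the group obtained from $G$ by stripping off $j$ copies of $\GL_d(E)$, for $j = 1, \dots, k$) fit together coherently, so that the induction on $k$ correctly produces the multinomial coefficient $\binom{k}{k_1}$ in front of each term. Once this bookkeeping is established, the remaining checks -- that $\phi_{i,0} \otimes \eta_i$ is indeed the ``derivative'' packet of $\phi_i \otimes \eta_i$ at $\rho\eta_i|\cdot|^x$, and that the surjection $A_\phi \twoheadrightarrow A_{\phi_0}$ described in the statement does send $s$ to the endoscopic element for $(G_{1,0}, G_{2,0}, \eta_1, \eta_2)$ -- follow directly from the definitions.
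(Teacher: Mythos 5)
Your proposal is correct and follows essentially the same route as the paper's proof: iterate the commutative diagram taken from \cite[(6.7)]{X1}, invoke Lemma \ref{mult_k} to isolate the unique surviving $(k_1,k_2)$-term, and finish by applying \eqref{ECR2} for $\phi_0$. You spell out the multinomial bookkeeping that the paper leaves implicit in the phrase ``by applying the diagram repeatedly''; the only small point omitted is the paper's preliminary observation that when $\phi_0 = 0$ the diagram is still applicable because $G_-^\circ \neq \{\1\}$ automatically in the $\O_{2n}$ case with $s \notin A_\phi^+$, which is a degenerate-case check rather than a gap in the argument.
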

\begin{proof}
Note that if $\phi_0 = 0$, then $2x+1 = 2$ and $\phi = (\rho \boxtimes S_2)^{\oplus k}$. 
In this case, $A_\phi^+ = A_\phi$.
In other words, if $G = \O_{2n}(F)$ and $s \in A_\phi \setminus A_\phi^+$, 
then we have $G^\circ_- \not= \{\1\}$ automatically, and we can use the above diagram. 
\par

By applying the diagram repeatedly together with \eqref{ECR2} and Lemma \ref{mult_k},
we have 
\begin{align*}
D^{(k)}_{\rho|\cdot|^x}\left(
\sum_{\pi \in \Pi_\phi} \pair{s,\pi}_\phi \Theta_\pi \right)
&= 
D^{(k)}_{\rho|\cdot|^x} \circ \Trans_{G_1 \times G_2}^{G}
\left(\prod_{i=1}^2
\sum_{\pi_i \in \Pi_{\phi_i \otimes \eta_i}} \Theta_{\pi_i} \right)
\\&=
\Trans_{G_{1,0} \times G_{2,0}}^{G_0}\left(
\prod_{i=1}^2
D^{(k_i)}_{\rho\eta_i|\cdot|^x}\left(
\sum_{\pi_i \in \Pi_{\phi_i \otimes \eta_i}} \Theta_{\pi_i}
\right)
\right)
\\&=
\Trans_{G_{1,0} \times G_{2,0}}^{G_0}\left(
\prod_{i=1}^2
\epsilon(\phi_i \otimes \eta_i)
\sum_{\pi_{i,0} \in \Pi_{\phi_{i,0} \otimes \eta_i}}
\Theta_{\pi_{i,0}}
\right)
\\&= 
\epsilon(\phi_1 \otimes \eta_1)\epsilon(\phi_2 \otimes \eta_2)
\sum_{\pi_0 \in \Pi_{\phi_0}} \pair{s_0,\pi_0}_{\phi_0} \Theta_{\pi_0}. 
\end{align*}
This completes the proof. 
\end{proof}

As a first consequence, we obtain the following important result.
\begin{cor}\label{epsilon=1}
We have $\epsilon(\phi) = 1$.
\end{cor}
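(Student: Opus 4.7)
The plan is to proceed by induction on the multiplicity $k$ of $\rho \boxtimes S_{2x+1}$ in $\phi$. The base case $k = 0$ is immediate: then $\phi_0 = \phi$ and $\tl{D}^{(0)}_{\rho|\cdot|^x}$ is the identity, so the defining relation $\tl{D}^{(0)}_{\rho|\cdot|^x}(\tl\pi_\phi) \overset{\theta}{=} \epsilon(\phi) \tl\pi_\phi$ forces $\epsilon(\phi) = 1$.

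For the inductive step $k \geq 1$, I would apply Proposition \ref{DvsSECR} with the element $s = e(\rho, 2x+1, 1) \in A_\phi$, singling out one copy of $\rho \boxtimes S_{2x+1}$. This produces $\phi_1 = \rho \boxtimes S_{2x+1}$ with $k_1 = 1$ and $\phi_2 = \phi - \rho \boxtimes S_{2x+1}$ with $k_2 = k-1 < k$, so the inductive hypothesis yields $\epsilon(\phi_2 \otimes \eta_2) = 1$. Assuming the subcase $\epsilon(\phi_1 \otimes \eta_1) = 1$ discussed below, Proposition \ref{DvsSECR} specializes to
\[
D^{(k)}_{\rho|\cdot|^x}\!\left(\sum_{\pi \in \Pi_\phi} \pair{s, \pi}_\phi \Theta_\pi\right) = \sum_{\pi_0 \in \Pi_{\phi_0}} \pair{s_0, \pi_0}_{\phi_0} \Theta_{\pi_0},
\]
whose right-hand side is a nonzero virtual character by linear independence of irreducible characters on $G_0$. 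Combined with Lemma \ref{mult_k}, which already constrains $\epsilon(\phi) \in \{0,1\}$ and gives $D^{(k)}_{\rho|\cdot|^x}\!\left(\sum_{\pi \in \Pi_\phi} \pi\right) = \epsilon(\phi)\sum_{\pi_0 \in \Pi_{\phi_0}} \pi_0$, nonvanishing on the left forces $\epsilon(\phi) = 1$.

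The residual subcase to handle by direct computation is $\phi_1 = \rho \boxtimes S_{2x+1}$ irreducible, so that $\pi_{\phi_1} = \Delta([-x, x]_\rho)$ is a generic discrete series of $\GL_{d(2x+1)}(E)$. I would iterate Zelevinsky's $m^*$-formula along the parabolic with Levi $\GL_d(E) \times \GL_{d(2x-1)}(E) \times \GL_d(E)$ to isolate the summand $\rho|\cdot|^x \otimes \Delta([-(x-1), x-1]_\rho) \otimes \rho|\cdot|^{-x}$ of the Jacquet module with multiplicity one. Since $\rho$ is conjugate-self-dual and $\Delta([-(x-1), x-1]_\rho)$ is self-dual, this summand is $\theta$-stable and lies in the first sum of the twisted Jacquet decomposition recalled in Section \ref{sec.derivative}; the Arthur extension $\tl\pi_{\phi_1}$, normalized via a Whittaker functional on the (self-standard) module $\Delta([-x, x]_\rho)$, then contributes nontrivially to $\tl{D}_{\rho|\cdot|^x}(\tl\pi_{\phi_1})$. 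Invoking Lemma \ref{mult_k} once more through the transfer diagram yields $\epsilon(\phi_1) \neq 0$ and hence $\epsilon(\phi_1) = 1$.

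The main obstacle is the Arthur-normalization bookkeeping in this subcase: one must verify that the $\theta$-action on $\Delta([-x, x]_\rho)$ restricts to the Arthur $\theta$-action on the middle factor $\Delta([-(x-1), x-1]_\rho)$ of the isolated summand, up to a nonzero scalar, so that the $\theta$-twisted trace of this summand is genuinely nonzero and not cancelled by other Jacquet components. A minor secondary check is that $s = e(\rho, 2x+1, 1)$ satisfies the parity hypothesis of Proposition \ref{DvsSECR} in the $\O_{2n}(F)$ setting — namely, when $s \notin A_\phi^+$, that the auxiliary classical group $G_-^\circ$ is nontrivial, which holds automatically whenever $\phi$ strictly exceeds $\rho \boxtimes S_{2x+1}$.
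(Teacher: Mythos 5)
Your overall structure — induction on $k$ by way of Proposition \ref{DvsSECR} with $s = e(\rho, 2x+1, 1)$, reducing to a base case plus the $k-1$ piece — matches the paper's proof. The difference is in the base case: the paper treats $k = 1$ \emph{for arbitrary} $\phi$ directly, whereas you peel off $k = 1$ reducible via Proposition \ref{DvsSECR} and then single out the irreducible subcase $\phi = \rho \boxtimes S_{2x+1}$ for a separate Jacquet-module computation.

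Your acknowledged ``main obstacle'' is not in fact an obstacle, and realizing this both closes your argument and lets you avoid the extra reduction. The key observation is that the constant $C$ with $\tl{D}^{(1)}_{\rho|\cdot|^x}(\tl\pi_\phi)|_{\GL_{N-2d}(E)} = C\cdot\pi_{\phi_0}$ equals $1$ whenever $k = 1$ (the $\rho|\cdot|^x\otimes\cdot\otimes\rho|\cdot|^{-x}$-isotypic part of the twisted Jacquet module has multiplicity exactly one, precisely the computation you sketch via $m^*$). Once the restriction to the untwisted group is the single irreducible $\pi_{\phi_0}$, the induced $\theta$-action on it is \emph{automatically} a scalar multiple of Arthur's extension (Schur), and $\theta^2 = 1$ forces that scalar to be $\pm 1$. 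There is nothing to verify about ``up to a nonzero scalar'' and no possible cancellation, since the summand is unique. This already gives $\epsilon(\phi) \in \{\pm 1\}$, and Lemma \ref{mult_k} pins it to $1$. The paper invokes exactly this at $k = 1$, which is why it need not descend to the irreducible case at all: your separate subcase and the worry you flag both evaporate once you note that any two $\theta$-extensions of an irreducible representation differ by a sign.

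One small bookkeeping point in your reduction: when $\phi = \rho\boxtimes S_{2x+1}$ is irreducible, the element $s = e(\rho,2x+1,1)$ equals $z_\phi$ and gives the degenerate endoscopic datum ($\phi_1 = \phi$, $\phi_2 = 0$), so Proposition \ref{DvsSECR} is circular there — which is exactly why you are forced into a separate direct argument for that case, and exactly what the paper's uniform $k = 1$ treatment sidesteps.
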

\begin{proof}
Recall that $\epsilon(\phi) \in \{0,1\}$ satisfies 
$\tl{D}^{(k)}_{\rho|\cdot|^x}(\tl\pi_\phi) \overset{\theta}{=} \epsilon(\phi) \cdot \tl\pi_{\phi_0}$
and 
\[
\sum_{\pi \in \Pi_\phi} D_{\rho|\cdot|^x}^{(k)}(\Theta_\pi)
= \epsilon(\phi)\sum_{\pi_0 \in \Pi_{\phi_0}} \Theta_{\pi_0}.
\]
We show the claim by induction on $k$. 
\par

The case $k = 0$ is trivial since $\tl{D}^{(0)}_{\rho|\cdot|^x}(\tl\pi_\phi) = \tl\pi_\phi$ by definition. 
Similarly, if $k = 1$, then 
$\tl{D}^{(1)}_{\rho|\cdot|^x}(\tl\pi_\phi)|_{\GL_{N-2d}(E)} = \pi_{\phi_0}$
so that $\epsilon(\phi) \in \{\pm1\}$. 
Since $\epsilon(\phi) \in \{0,1\}$, we must have $\epsilon(\phi) = 1$. 
\par

Suppose now that $k > 1$. 
We apply Proposition \ref{DvsSECR} to $s = e(\rho, 2x+1,1)$. 
Then by the induction hypothesis, we have 
$\epsilon(\phi_i \otimes \eta_i) = 1$ for $i=1,2$
since $k_1 = 1$ and $k_2 = k-1$.
Hence
\[
D^{(k)}_{\rho|\cdot|^x}\left(\sum_{\pi \in \Pi_\phi} \pair{s,\pi}_\phi \Theta_\pi \right) \not= 0. 
\]
This implies that $\epsilon(\phi) \not= 0$ and hence $\epsilon(\phi) = 1$.
\end{proof}

In particular, by Lemma \ref{mult_k} and Corollary \ref{epsilon=1}, 
we have 
\[
D_{\rho|\cdot|^x}^{(k)}\left(\sum_{\pi \in \Pi_\phi} \pi \right)
= 
\sum_{\pi_0 \in \Pi_{\phi_0}} \pi_0.
\]
Now we compute $D_{\rho|\cdot|^x}^{(k)}(\pi)$ for $\pi \in \Pi_\phi$. 
\par

\begin{thm}\label{Dk}
Let $\phi$ be a tempered $L$-parameter for $G$. 
If $\phi$ does not contain $\rho \boxtimes S_{2x+1}$, 
then $D_{\rho|\cdot|^x}(\pi) = 0$ for any $\pi \in \Pi_\phi$. 
Suppose that $\phi$ contains $\rho \boxtimes S_{2x+1}$ with multiplicity $k>0$. 
Let $\pi \in \Pi_\phi$.
Then $D^{(k)}_{\rho|\cdot|^x}(\pi) = 0$ if and only if 
one of the following holds: 
\begin{itemize}
\item
$x \geq 1$, $\phi \supset \rho \boxtimes S_{2x-1}$ and 
$\pair{e(\rho, 2x+1,1), \pi}_\phi \not= \pair{e(\rho, 2x-1,1), \pi}_\phi$; 
\item
$x = \half{1}$ and $\pair{e(\rho, 2,1), \pi}_\phi = -1$. 
\end{itemize}
Moreover, if $D^{(k)}_{\rho|\cdot|^x}(\pi) \not= 0$, then $\pi_0 = D^{(k)}_{\rho|\cdot|^x}(\pi) \in \Pi_{\phi_0}$ and 
it is characterized by the property that $\pair{\cdot, \pi}_\phi$ is the composition of $\pair{\cdot, \pi_0}_{\phi_0}$
with the surjection $A_\phi \twoheadrightarrow A_{\phi_0}$ in Proposition \ref{DvsSECR}. 
\end{thm}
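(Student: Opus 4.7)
The opposite-type case, where $\rho \boxtimes S_{2x+1}$ and $\phi$ have opposite types, is already handled just before the statement by a direct Tadi\'c's-formula calculation; in that case neither vanishing condition in the theorem can occur (the elements $e(\rho, 2x\pm1, 1)$ do not lie in $A_\phi$), and the explicit formula confirms nonvanishing. From now on I focus on the same-type case. The starting point is the signed derivative identity of Proposition \ref{DvsSECR} which, combined with Corollary \ref{epsilon=1} giving $\epsilon(\phi_1 \otimes \eta_1) = \epsilon(\phi_2 \otimes \eta_2) = 1$, specializes to
\[
\sum_{\pi \in \Pi_\phi} \pair{s, \pi}_\phi\, D^{(k)}_{\rho|\cdot|^x}(\pi) = \sum_{\pi_0 \in \Pi_{\phi_0}} \pair{s_0, \pi_0}_{\phi_0}\, \pi_0 \qquad \text{in } \RR(G_0),
\]
for every $s \in A_\phi$, where $s_0$ is the image of $s$ under the explicit surjection $A_\phi \twoheadrightarrow A_{\phi_0}$ described in Proposition \ref{DvsSECR}.

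Next, using the bijection $\Pi_\phi \leftrightarrow \widehat{\AA_\phi}$ supplied by Arthur's local theorem for tempered packets (available under Hypothesis \ref{ECR-tempD}), I Fourier-invert over $\AA_\phi$. Let $\alpha : \AA_\phi \to \AA_{\phi_0}$ denote the map induced by the above surjection. Swapping sums and applying orthogonality of characters on the finite $\Z/2\Z$-vector space $\AA_\phi$ yields
\[
D^{(k)}_{\rho|\cdot|^x}(\pi) = \sum_{\pi_0 \in \Pi_{\phi_0}} \left(\frac{1}{|\AA_\phi|} \sum_{s \in \AA_\phi} \pair{s, \pi}_\phi \pair{\alpha(s), \pi_0}_{\phi_0}\right) \pi_0,
\]
and the inner coefficient equals $1$ when the character $\pair{\cdot, \pi}_\phi$ of $\AA_\phi$ coincides with the pullback $\pair{\alpha(\cdot), \pi_0}_{\phi_0}$, and vanishes otherwise. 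Hence $D^{(k)}_{\rho|\cdot|^x}(\pi) \neq 0$ precisely when $\pair{\cdot, \pi}_\phi$ factors through $\alpha$, in which case the derivative equals the unique $\pi_0 \in \Pi_{\phi_0}$ whose pairing pulls back to $\pair{\cdot, \pi}_\phi$; this is exactly the ``moreover'' part of the theorem.

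Finally, I would compute $\ker \alpha$ case by case to translate the factorization condition into the explicit criteria in the statement. If $x = 1/2$, the surjection $A_\phi \twoheadrightarrow A_{\phi_0}$ annihilates $e(\rho, 2, 1)$, so its class lies in $\ker \alpha$ and triviality of $\pair{\cdot, \pi}_\phi$ on it becomes $\pair{e(\rho, 2, 1), \pi}_\phi = +1$. If $x \geq 1$ and $\phi \supset \rho \boxtimes S_{2x-1}$, then $e(\rho, 2x+1, 1)$ and $e(\rho, 2x-1, 1)$ in $A_\phi$ have the same image in $A_{\phi_0}$ and collapse to the same class modulo $A_{\phi_0}^0$; hence $e(\rho, 2x+1, 1) + e(\rho, 2x-1, 1) \in \ker \alpha$, and the obstruction becomes $\pair{e(\rho, 2x+1, 1), \pi}_\phi \neq \pair{e(\rho, 2x-1, 1), \pi}_\phi$. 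If $x \geq 1$ and $\phi \not\supset \rho \boxtimes S_{2x-1}$, the map $A_\phi \to A_{\phi_0}$ is an isomorphism and descends to an isomorphism on $\AA_\phi$, so no vanishing occurs. The main bookkeeping obstacle is verifying that $A_\phi \to A_{\phi_0}$ is compatible with the relations $A^0 + \Z/2\Z z$ in each of the four types of $G$, so that $\alpha$ is well defined and $\ker \alpha$ is generated by exactly the listed classes; one must also dispatch the edge case $\phi_0 = 0$ (forcing $x = 1/2$ and $\phi = (\rho \boxtimes S_2)^{\oplus k}$) and the minor restriction $G_-^\circ \neq \{1\}$ imposed in Proposition \ref{DvsSECR} when $G = \O_{2n}(F)$ and $s \notin A_\phi^+$, neither of which causes difficulty.
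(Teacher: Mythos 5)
Your proposal is correct and takes essentially the same approach as the paper: both proofs rest on the identity supplied by Proposition~\ref{DvsSECR} together with Corollary~\ref{epsilon=1}, the observation that the surjection $A_\phi \twoheadrightarrow A_{\phi_0}$ descends to $\alpha\colon \AA_\phi \twoheadrightarrow \AA_{\phi_0}$, and a character-separation argument to determine the individual derivatives. The only presentational difference is that you package everything as a single Fourier inversion over $\AA_\phi$, whereas the paper first specializes to $s = e(\rho,2x+1,1)+e(\rho,2x-1,1)$ (or $e(\rho,2,1)$) and uses non-negativity of the coefficients to get the vanishing direction, then separates characters of $\AA_{\phi_0}$ to identify $\pi_0$; the two routes are logically equivalent given the bijection $\Pi_\phi \leftrightarrow \widehat{\AA_\phi}$.
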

\begin{proof}
The first claim follows from Lemma \ref{mult_k} with $k=0$. 
\par

Now assume that $\phi$ contains $\rho \boxtimes S_{2x+1}$ with multiplicity $k>0$. 
First, suppose that $x \geq 1$ and $\phi \supset \rho \boxtimes S_{2x-1}$. 
We apply Proposition \ref{DvsSECR} to $s = e(\rho,2x+1,1)+e(\rho,2x-1,1)$.
Then $s_0 \in A_{\phi_0}^0$ so that 
\[
D^{(k)}_{\rho|\cdot|^x}\left(\sum_{\pi \in \Pi_\phi} \pair{s,\pi}_\phi \Theta_\pi \right)
\]
is a sum of irreducible characters with non-negative coefficients.
Hence, if $\pair{s,\pi}_\phi = \pair{e(\rho, 2x+1,1), \pi}_\phi \cdot \pair{e(\rho, 2x-1,1), \pi}_\phi = -1$, 
then $D^{(k)}_{\rho|\cdot|^x}(\pi) = 0$.
\par

Similarly, when $x = \half{1}$, 
by applying Proposition \ref{DvsSECR} to $s = e(\rho,2,1)$, 
we see that if $\pair{e(\rho, 2,1), \pi}_\phi = -1$, then $D^{(k)}_{\rho|\cdot|^x}(\pi) = 0$.
\par

Note that via the surjection $A_\phi \twoheadrightarrow A_{\phi_0}$, 
we have $z_\phi \mapsto z_{\phi_0}$, and the image of $A_\phi^0$ is included in $A_{\phi_0}^0$. 
Hence this map induces a surjection $\AA_\phi \twoheadrightarrow \AA_{\phi_0}$. 
Then $\pi$ does not satisfy the above two conditions 
if and only if 
the character $\pair{\cdot, \pi}_\phi \colon \AA_\phi \rightarrow \{\pm1\}$
factors through the surjection $\AA_\phi \twoheadrightarrow \AA_{\phi_0}$. 
In this case, we denote the character of $\AA_{\phi_0}$ associated to $\pi$ by $\eta_\pi$, i.e., 
\[
\pair{\cdot, \pi}_\phi \colon \AA_\phi \twoheadrightarrow \AA_{\phi_0} \xrightarrow{\eta_\pi} \{\pm1\}.
\]
\par

For $s_0 \in A_{\phi_0}$, choose a lift $s \in A_\phi$ of it. 
By applying Proposition \ref{DvsSECR} to $s$ together with Corollary \ref{epsilon=1}, 
we have 
\[
\sum_{\pi \in \Pi_\phi} \pair{s,\pi}_\phi D^{(k)}_{\rho|\cdot|^x}(\pi)
= 
\sum_{\pi_0 \in \Pi_{\phi_0}} \pair{s_0,\pi_0}_{\phi_0} \pi_0.
\]
This equation can be written as
\[
\sum_{\substack{\pi \in \Pi_\phi \\ D^{(k)}_{\rho|\cdot|^x}(\pi) \not= 0}} 
\eta_\pi(s_0) D^{(k)}_{\rho|\cdot|^x}(\pi)
= 
\sum_{\pi_0 \in \Pi_{\phi_0}} \pair{s_0,\pi_0}_{\phi_0} \pi_0
\]
for $s_0 \in \AA_{\phi_0}$.
This implies that 
$D^{(k)}_{\rho|\cdot|^x}(\pi) = \pi_0$
if and only if $\eta_\pi = \pair{\cdot, \pi_0}_{\phi_0}$. 
This completes the proof.
\end{proof}

When $D^{(k)}_{\rho|\cdot|^x}(\pi) = 0$, 
the highest derivative of $\pi$ is given as follows.

\begin{thm}\label{Dk-1}
Let $\phi$ be a tempered $L$-parameter for $G$. 
Suppose that $\phi$ contains $\rho \boxtimes S_{2x+1}$ with multiplicity $k>0$. 
Let $\pi \in \Pi_\phi$ and assume that $D^{(k)}_{\rho|\cdot|^x}(\pi) = 0$. 

\begin{enumerate}
\item
If $k$ is odd, then $\pi_1 = D^{(k-1)}_{\rho|\cdot|^x}(\pi)$ is nonzero, tempered and belongs to $\Pi_{\phi_1}$
with 
\[
\phi_1 = \phi - (\rho \boxtimes S_{2x+1})^{\oplus k-1} \oplus (\rho \boxtimes S_{2x-1})^{\oplus k-1}. 
\]
Moreover, there is a canonical isomorphism $\AA_{\phi_1} \cong \AA_\phi$, 
and $\pair{\cdot, \pi_1}_{\phi_1} = \pair{\cdot, \pi}_\phi$ via this isomorphism.

\item
If $k$ is even, then $D^{(k-1)}_{\rho|\cdot|^x}(\pi)$ is nonzero but not tempered. 
It is the Langlands quotient of the standard module
\[
\Delta([-(x-1),x]_\rho) \rtimes \pi_2, 
\]
where $\pi_2 \in \Pi_{\phi_2}$ with 
\[
\phi_2 = \phi - (\rho \boxtimes S_{2x+1})^{\oplus k} \oplus (\rho \boxtimes S_{2x-1})^{\oplus k-2}. 
\]
Moreover, there is a canonical inclusion $\AA_{\phi_2} \hookrightarrow \AA_\phi$, 
and $\pair{\cdot, \pi_2}_{\phi_2} = \pair{\cdot, \pi}_\phi|_{\AA_{\phi_2}}$.
\end{enumerate}
\end{thm}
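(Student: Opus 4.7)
The plan is to proceed in parallel with the proof of Theorem \ref{Dk}, computing the $(k-1)$-st derivative simultaneously across the whole packet via the twisted $\GL_N(E)$ side, and then isolating individual contributions by endoscopic character identities. Since $x > 0$, the representation $\rho|\cdot|^x$ is not conjugate-self-dual, so Lemma \ref{SI} applies: whenever $D^{(k-1)}_{\rho|\cdot|^x}(\pi) \ne 0$, it is automatically irreducible (note that $D_{\rho|\cdot|^x}(D^{(k-1)}_{\rho|\cdot|^x}(\pi)) = k \cdot D^{(k)}_{\rho|\cdot|^x}(\pi) = 0$, so it is the highest $\rho|\cdot|^x$-derivative), and distinct $\pi, \pi' \in \Pi_\phi$ with nonvanishing highest derivatives yield non-isomorphic images. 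Thus it suffices to compute $\sum_{\pi \in \Pi_\phi} \pair{s, \pi}_\phi \cdot D^{(k-1)}_{\rho|\cdot|^x}(\pi)$ for sufficiently many $s \in A_\phi$ and read off each term one at a time.

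To obtain these weighted sums, I would compute $\tl D^{(k-1)}_{\rho|\cdot|^x}(\tl\pi_\phi)$ explicitly on the twisted side. Writing $\pi_\phi \cong \Delta([-x,x]_\rho)^{\times k} \times \pi_{\phi'}$ where $\phi' = \phi - (\rho \boxtimes S_{2x+1})^{\oplus k}$ contains no $\rho \boxtimes S_{2x+1}$, and noting that the Jacquet module of $\Delta([-x,x]_\rho)$ has $\rho|\cdot|^x \otimes \Delta([-x,x-1]_\rho)$ as its top piece while $\pi_{\phi'}$ contributes no $\rho|\cdot|^x$ Jacquet piece, Tadi\'c's formula (Theorem \ref{Tformula}) identifies the $(\rho|\cdot|^x)^{\otimes(k-1)}$-isotypic component of $\mu^*(\pi_\phi)$ as an explicit sum indexed by which $k-1$ of the $k$ copies of $\Delta([-x,x]_\rho)$ contribute. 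Transferring via the commutative diagram of this appendix, one obtains a formula for $\sum_{\pi \in \Pi_\phi} D^{(k-1)}_{\rho|\cdot|^x}(\pi)$, and the analogue of Proposition \ref{DvsSECR} for the $(k-1)$-st derivative (whose proof follows verbatim from the same endoscopic diagram) then gives the corresponding weighted sums for each $s \in A_\phi$.

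For case (1) with $k$ odd, the target $\phi_1$ retains one copy of $\rho \boxtimes S_{2x+1}$, so $D^{(k-1)}_{\rho|\cdot|^x}(\pi)$ can still be tempered. The canonical identification $\AA_{\phi_1} \cong \AA_\phi$ respects the basis element $e(\rho, 2x+1, 1)$ (all $k$ copies collapse to the same class in $\AA_\phi$, and the lone surviving copy in $\phi_1$ maps to the same class), and the weighted-sum identities combined with an inductive application of Corollary \ref{epsilon=1} yield the desired character matching. For case (2) with $k$ even, the target $\phi_2$ has no $\rho \boxtimes S_{2x+1}$, and $D^{(k-1)}_{\rho|\cdot|^x}(\pi)$ cannot be tempered: Casselman's criterion (Theorem \ref{Ccriterion}) applied to the cuspidal support would produce a positive exponent from the leftover $\rho|\cdot|^x$. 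To exhibit it as the Langlands quotient of $\Delta([-(x-1),x]_\rho) \rtimes \pi_2$, one uses the embedding $\pi \hookrightarrow (\rho|\cdot|^x)^{\times(k-1)} \rtimes D^{(k-1)}_{\rho|\cdot|^x}(\pi)$, regroups the factors of $\rho|\cdot|^x \times \rho|\cdot|^{x-1} \times \cdots \times \rho|\cdot|^{-(x-1)}$ to expose the segment $\Delta([-(x-1),x]_\rho)$, and matches cuspidal supports to locate the image in the unique irreducible quotient. The main obstacle is case (2): pinning down $D^{(k-1)}_{\rho|\cdot|^x}(\pi)$ as the Langlands quotient of $\Delta([-(x-1),x]_\rho) \rtimes \pi_2$ (rather than another irreducible subquotient of this length-two standard module), and verifying the character identity $\pair{\cdot, \pi_2}_{\phi_2} = \pair{\cdot, \pi}_\phi|_{\AA_{\phi_2}}$; both will require delicate multiplicity-one arguments tracked simultaneously through the Langlands classification and the endoscopic decomposition.
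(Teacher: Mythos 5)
Your proposal takes a genuinely different route from the paper, but it has several gaps that the paper's actual argument avoids. The paper does \emph{not} push the endoscopic weighted-sum machinery (Lemma \ref{mult_k}, Proposition \ref{DvsSECR}, Corollary \ref{epsilon=1}) to the $(k-1)$-st level at all. Instead, for $k \geq 3$ it writes $k = 2l + k'$ with $k' \in \{1,2\}$, realizes $\pi$ explicitly as the \emph{irreducible} unitary induction $\Delta([-x,x]_\rho)^{\times l} \rtimes \pi'$ with $\pi' \in \Pi_{\phi'}$, $\phi' = \phi - (\rho\boxtimes S_{2x+1})^{\oplus 2l}$ (this irreducibility, from \cite[Theorem 1.5.1]{Ar}, is the key structural input), and then peels off the derivatives by Tadi\'c's formula to get $D^{(k-1)}(\pi) = \Delta([-(x-1),x-1]_\rho)^{\times l}\rtimes D^{(k'-1)}(\pi')$, reducing to the base cases $k' \in \{1,2\}$. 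The case $k = 2$ is then handled directly via $\pi \hookrightarrow \Delta([-x,x]_\rho)\rtimes\pi_2 \hookrightarrow \rho|\cdot|^x \times \Delta([-x,x-1]_\rho)\rtimes\pi_2$, Frobenius reciprocity (nonvanishing), Lemma \ref{SI} (irreducibility), and the MVW involution (identification as the Langlands quotient).

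There are three concrete gaps in your proposal. First, the claim that an analogue of Proposition \ref{DvsSECR} for the $(k-1)$-st derivative "follows verbatim" is incorrect: on the endoscopic side $D^{(k-1)}$ applied to the product $\sum_{\pi_1}\Theta_{\pi_1} \otimes \sum_{\pi_2}\Theta_{\pi_2}$ does not factor as a single product of highest derivatives; by the Leibniz rule it gives $D^{(k_1)}\otimes D^{(k_2-1)} + D^{(k_1-1)}\otimes D^{(k_2)}$, and each of the non-highest factors $D^{(k_i-1)}$ is precisely the content of the theorem on the smaller group $G_i$, requiring its own dichotomy between tempered and non-tempered outputs. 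At best this sets up an induction whose bookkeeping (especially tracking Langlands quotients through the character identities) is considerably more involved than the direct argument. Second, the isolation step "read off each term one at a time" is not justified: in the alternating sum $\sum_{\pi\in\Pi_\phi}\pair{s,\pi}_\phi D^{(k-1)}(\pi)$ the terms with $D^{(k)}(\pi) \ne 0$ contribute $D^{(k-1)}(\pi)$ which is \emph{not} the highest derivative for those $\pi$, hence need not be irreducible, and Lemma \ref{SI} does not apply; cancellation between those contributions and the $\pi$ with $D^{(k)}(\pi)=0$ is therefore not ruled out. Third, and most decisively, in case (2) "matching cuspidal supports to locate the image in the unique irreducible quotient" cannot work, because every irreducible subquotient of $\Delta([-(x-1),x]_\rho)\rtimes\pi_2$ has the same cuspidal support; distinguishing the Langlands quotient from the tempered subquotient requires a tool such as the MVW involution (which the paper cites from \cite[Section 2.7]{AG}), and you explicitly acknowledge but do not supply this step.
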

\begin{proof}
First, we consider the case $k = 2$. 
Then 
\[
\pi \hookrightarrow \Delta([-x,x]_\rho) \rtimes \pi_2
\hookrightarrow \rho|\cdot|^x \times \Delta([-x,x-1]_\rho) \rtimes \pi_2.
\]
By Frobenius reciprocity, we have $D_{\rho|\cdot|^x}^{(1)}(\pi) \not= 0$. 
Since $D_{\rho|\cdot|^x}^{(2)}(\pi) = 0$, 
by Lemma \ref{SI}, we know that $D_{\rho|\cdot|^x}^{(1)}(\pi)$ is irreducible. 
Moreover, we have a nonzero equivariant map
\[
D_{\rho|\cdot|^x}^{(1)}(\pi) \rightarrow \Delta([-x,x-1]_\rho) \rtimes \pi_2. 
\]
Since $D_{\rho|\cdot|^x}^{(1)}(\pi)$ is irreducible, this map must be injective. 
Using the MVW involution, we see that $D_{\rho|\cdot|^x}^{(1)}(\pi)$ is the unique irreducible quotient of 
$\Delta([-(x-1),x]_\rho) \rtimes \pi_2$. 
See e.g., \cite[Section 2.7]{AG}.
\par

Note that when $k=1$, there is nothing to prove. 
Therefore, the remaining case is where $k \geq 3$. 
Write $k = 2l+k'$ with $k' \in \{1,2\}$. 
Consider $\pi' \in \Pi_{\phi'}$ with
\[
\phi' = \phi - (\rho \boxtimes S_{2x+1})^{\oplus 2l}
\]
so that $\AA_{\phi'} \cong \AA_\phi$, 
and $\pair{\cdot, \pi'}_{\phi'} = \pair{\cdot, \pi}_\phi|_{\AA_{\phi'}}$.
Then by \cite[Theorem 1.5.1]{Ar} and \cite[Theorem 2.5.1]{Mok}, 
the parabolically induced representation 
\[
\underbrace{\Delta([-x,x]_\rho) \times \dots \times \Delta([-x,x]_\rho)}_l \rtimes \pi'
\]
is irreducible and is equal to $\pi$.
By Tadi\'c's formula (Theorem \ref{Tformula}), 
we see that 
\begin{align*}
D^{(k-1)}_{\rho|\cdot|^x}(\pi) 
&= D^{(k-1)}_{\rho|\cdot|^x}(\Delta([-x,x]_\rho) \times \dots \times \Delta([-x,x]_\rho) \rtimes \pi')
\\&= \underbrace{\Delta([-(x-1),x-1]_\rho) \times \dots \times \Delta([-(x-1),x-1]_\rho)}_l \rtimes D^{(k'-1)}_{\rho|\cdot|^x}(\pi').
\end{align*}
This shows the assertions. 
\end{proof}

Finally, we give 
a characterization of (almost) supercuspidal representations.
(See also \cite[Theorem 2.5.1]{Moe11}.) 

\begin{cor}\label{sc}
Fix an irreducible unitary supercuspidal representation $\rho$ of $\GL_d(E)$. 
Let $\phi$ be a tempered $L$-parameter for $G$, 
and $\pi \in \Pi_\phi$. 
Then the following are equivalent: 
\begin{enumerate}
\item
$\pi$ is $\rho|\cdot|^x$-reduced 
for every real number $x \not= 0$; 
\item
the following conditions hold: 
\begin{itemize}
\item
If we denote the multiplicity of $\rho \boxtimes S_d$ in $\phi$ by $m_\phi(\rho,d)$, 
then $m_\phi(\rho,d) \leq 1$ whenever $d \geq 2$; 
\item
if $\rho \boxtimes S_d \subset \phi$ with $d > 2$, 
then $\rho \boxtimes S_{d-2} \subset \phi$ and 
\[
\pair{e(\rho,d,1), \pi}_\phi = - \pair{e(\rho,d-2,1), \pi}_\phi; 
\]
\item
if $\rho \boxtimes S_2 \subset \phi$, then 
\[
\pair{e(\rho,2,1), \pi}_\phi = -1. 
\]
\end{itemize}
\end{enumerate}
Moreover, $\pi$ is supercuspidal if and only if 
$\phi$ is a discrete parameter and 
the above conditions hold for any irreducible unitary supercuspidal representation $\rho$ of $\GL_d(E)$. 
\end{cor}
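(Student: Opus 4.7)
The plan is to leverage Theorems \ref{Dk}, \ref{Dk-1}, and Lemma \ref{mult_k} to control the first derivative $D_{\rho|\cdot|^x}(\pi) = D^{(1)}_{\rho|\cdot|^x}(\pi)$ according to the multiplicity $k = m_\phi(\rho, 2x+1)$, which is precisely what governs whether $\pi$ is $\rho|\cdot|^x$-reduced. Since $\pi$ is tempered, Casselman's criterion (Theorem \ref{Ccriterion}) already forces $D_{\rho|\cdot|^x}(\pi) = 0$ whenever $x < 0$, and $D_{\rho|\cdot|^x}(\pi)$ vanishes trivially when $\rho|\cdot|^x$ is absent from the cuspidal support of $\pi$; thus condition (1) has content only for positive half-integers $x$ and supercuspidal $\rho$ appearing in the cuspidal support of $\pi$, which is read off from $\phi$.

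For $(1) \Leftrightarrow (2)$, I would fix such a pair $(\rho, x)$ with $x > 0$, set $d = 2x+1 \geq 2$, and perform a case analysis on $k = m_\phi(\rho, d)$. When $k = 0$, Lemma \ref{mult_k} immediately yields $D^{(1)}_{\rho|\cdot|^x}(\pi) = 0$, so this case imposes no constraint in either direction. When $k \geq 2$, Theorem \ref{Dk-1} guarantees $D^{(k-1)}_{\rho|\cdot|^x}(\pi) \neq 0$ whenever $D^{(k)}_{\rho|\cdot|^x}(\pi) = 0$; since $k-1 \geq 1$, one concludes $D_{\rho|\cdot|^x}(\pi) \neq 0$ in every subcase, contradicting that $\pi$ is $\rho|\cdot|^x$-reduced, so (1) forces $k \leq 1$ for all $d \geq 2$, which is the first bullet of (2). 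When $k = 1$, being $\rho|\cdot|^x$-reduced amounts literally to $D^{(1)}_{\rho|\cdot|^x}(\pi) = 0$, which by Theorem \ref{Dk} is equivalent to the second bullet of (2) when $d > 2$ and to the third bullet when $d = 2$. Assembling these three regimes across all relevant $(\rho, x)$ yields the equivalence.

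For the supercuspidal characterization, the ``only if'' direction is routine: a supercuspidal $\pi$ is discrete series (so $\phi$ is discrete), and its trivial Jacquet modules yield (1), hence (2), for every $\rho$. For the converse, the implication $(2) \Rightarrow (1)$ already supplies $D_{\rho|\cdot|^x}(\pi) = 0$ for every unitary supercuspidal $\rho$ and every $x \neq 0$, so only the case $x = 0$ remains. This $x = 0$ step is the delicate point of the plan since the derivative-counting machinery does not directly cover it: if $D_\rho(\pi) \neq 0$, then $\pi \hookrightarrow \rho \rtimes \pi_0$ for some tempered $\pi_0$, so the $L$-parameter of $\pi$ contains $\rho \oplus {}^c\rho^\vee$ as a direct summand; good parity of $\phi$ forces $\rho$ to be conjugate-self-dual of the type appropriate for $G$, whence ${}^c\rho^\vee \cong \rho$, and this summand violates the multiplicity-freeness of the discrete parameter $\phi$. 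The remaining heavy lifting of the argument lies entirely in the inputs Theorems \ref{Dk}, \ref{Dk-1}, and Lemma \ref{mult_k}, whose proofs rely on the compatibility of Jacquet functors with twisted endoscopic transfer and on Corollary \ref{epsilon=1}.
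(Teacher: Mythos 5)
Your proposal is correct and takes essentially the same route as the paper: the paper's proof is the terse remark that after disposing of $x<0$ by Casselman's criterion, the equivalence ``follows from Theorems \ref{Dk} and \ref{Dk-1}'', and your case analysis on $k=m_\phi(\rho,2x+1)$ (using Lemma \ref{mult_k} for $k=0$, Theorem \ref{Dk-1} for $k\ge 2$, and Theorem \ref{Dk} for $k=1$) is precisely what that remark compresses. Your treatment of the $x=0$ step in the supercuspidal characterization, via $\pi\hookrightarrow\rho\rtimes\pi_0$ with $\pi_0$ tempered and the resulting $\rho\oplus{}^c\rho^\vee\subset\phi$ violating discreteness, likewise matches the paper's argument.
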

\begin{proof}
By Casselman's criterion (Theorem \ref{Ccriterion}), we know that 
any tempered representation $\pi$ is $\rho|\cdot|^x$-reduced for any $x < 0$. 
Then the first equivalence follows from Theorems \ref{Dk} and \ref{Dk-1}. 
\par

If $\pi$ is supercuspidal, then $\pi$ is a discrete series representation so that $\phi$ is discrete. 
Conversely, 
if $\pi$ satisfies the conditions in (2) but is not cuspidal, 
then one can find an irreducible cuspidal unitary representation $\rho$ of $\GL_d(E)$
such that $D_\rho(\pi) \not= 0$. 
In particular, $\pi \hookrightarrow \rho \rtimes \pi_0$ for some irreducible representation $\pi_0$.
Then Casselman's criterion (Theorem \ref{Ccriterion}) shows that $\pi_0$ is also tempered. 
Hence $\phi$ contains $\rho \oplus {}^c\rho^\vee$ so that $\phi$ is not discrete.
\end{proof}

\begin{rem}\label{0-derivative}
Fix an irreducible unitary cuspidal representation $\rho$ of $\GL_d(E)$. 
In general, for $\pi \in \Irr(G)$, 
the highest $\rho$-derivative $D_\rho^{\max}(\pi)$ is not necessarily irreducible, 
and it is difficult to describe $D_\rho^{\max}(\pi)$ completely. 
However, when $\pi$ is tempered, it is easy. 
More strongly, the next claim follows from \cite[Proposition 2.4.3]{Ar} and \cite[Proposition 3.4.4]{Mok}.
\par

Let $\phi$ be the tempered $L$-parameter of $\pi \in \Irr_\temp(G)$.
Then there exists $\pi' \in \Irr(G')$ such that 
\[
\pi \hookrightarrow \rho \rtimes \pi'
\]
if and only if $\phi$ contains $\rho \oplus {}^c\rho^\vee$.
In this case, $\pi'$ is uniquely determined by the conditions
$\pi' \in \Pi_{\phi'}$ with $\phi = \phi' \oplus \rho \oplus {}^c\rho^\vee$, 
and $\pair{\cdot, \pi'}_{\phi'} = \pair{\cdot, \pi}_\phi|_{\AA_{\phi'}}$.
In particular, if we write $\phi = \phi_0 \oplus (\rho \oplus {}^c\rho^\vee)^{\oplus k}$ such that 
$\phi_0 \not\supset \rho \oplus {}^c\rho^\vee$, 
and if $\pi_0 \in \Pi_{\phi_0}$ is such that $\pair{\cdot, \pi_0}_{\phi_0} = \pair{\cdot, \pi}_\phi|_{\AA_{\phi_0}}$, 
then $\pi \hookrightarrow \rho^{\times k} \rtimes \pi_0$ and 
\[
D_\rho^{\max}(\pi) = D_{\rho}^{(k)}(\pi) = m \cdot \pi_0
\]
with $m=2^k$ or $m=1$ according to $\rho \cong {}^c\rho^\vee$ or not.
\end{rem}

%\subsection{Langlands data for certain irreducible representations}
\subsection{Langlands data for certain irreducible representations}\label{sec.Ldata}
In this subsection, 
we explain how to compute the Langlands data for certain irreducible representations of classical groups.
\par

Fix an irreducible conjugate-self-dual cuspidal representation $\rho$ of $\GL_d(E)$. 
Let $\pi$ be an irreducible representation of a classical group $G$, 
and let $\II(\mm) \rtimes \tau$ be its standard module. 
Suppose that $D_{\rho|\cdot|^{-x}}(\pi) \not= 0$ for some $x > 0$.
We take the maximal $x$ with this condition. 
Let $y \geq x$ be the maximal real number with $y \equiv x \bmod \Z$ such that 
\begin{align*}
\pi'
&= D_{\rho|\cdot|^{-y}}^{\max} \circ \dots \circ D_{\rho|\cdot|^{-(x+1)}}^{\max} \circ D_{\rho|\cdot|^{-x}}^{\max}(\pi)
\\&= D_{\rho|\cdot|^{-y}}^{(k_y)} \circ \dots \circ D_{\rho|\cdot|^{-(x+1)}}^{(k_{x+1})} \circ D_{\rho|\cdot|^{-x}}^{(k_x)}(\pi)
\end{align*}
with $k_x, k_{x+1}, \dots, k_y \geq 1$.
Note that $\pi'$ is irreducible by Lemma \ref{SI}. 
We denote by $\II(\mm') \rtimes \tau'$ the standard module of $\pi'$.

\begin{prop}
With the above notations and assumptions, we have $k_x \geq k_{x+1} \geq \dots \geq k_y$. 
Moreover, $\tau \cong \tau'$ and 
\[
\mm = \mm' + \sum_{z=x}^y (k_z-k_{z+1})[x,z]_\rho
\]
hold, 
where $k_{y+1} = 0$.
Here, $\sum_{z=x}^y$ means the sum over $z = x, x+1, \dots, y$.
\end{prop}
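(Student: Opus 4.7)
The plan is to prove the statement by induction on $y - x$, reducing to a single-step version: if $\pi$ has standard module $\II(\mm) \rtimes \tau$ and $x > 0$ is the maximal real number with $D_{\rho|\cdot|^{-x}}(\pi) \neq 0$, then writing $n_z$ for the multiplicity of the segment $[x, z]_\rho$ in $\mm$ (for $z \geq x$) and $k_x = \sum_{z \geq x} n_z$, the highest derivative $D_{\rho|\cdot|^{-x}}^{(k_x)}(\pi)$ is irreducible with standard module $\II(\tl\mm) \rtimes \tau$, where $\tl\mm$ is obtained from $\mm$ by shortening each $[x, z]_\rho$ to $[x+1, z]_\rho$ (and dropping it when $z = x$).

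To prove the single-step version, I would start from the embedding $\pi \hookrightarrow {}^c\II(\mm)^\vee \rtimes \tau$ furnished by \cite[Lemma 2.2]{AG}, noting that since $\rho$ is conjugate-self-dual, ${}^c\mm^\vee$ consists of segments $[-z, -x']_\rho$ for $[x', z]_\rho \in \mm$, and each $\Delta([-z, -x']_\rho)$ embeds in $\rho|\cdot|^{-x'} \times \Delta([-z, -x'-1]_\rho)$. An application of Tadi\'c's formula (Theorem \ref{Tformula}) to $\mu^*$ of this product, together with Frobenius reciprocity, shows that $\rho|\cdot|^{-x}$ can appear in the leading position of $\mu^*(\pi)$ only through segments $[x', z]_\rho$ with $x' = x$: segments with $x' > x$ are excluded by the maximality of $x$ (otherwise they would produce $D_{\rho|\cdot|^{-x'}}(\pi) \neq 0$), and Casselman's criterion (Theorem \ref{Ccriterion}) forbids the tempered $\tau$ from contributing any $\rho|\cdot|^{-x}$ to its Jacquet module for $x > 0$. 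Matching this bound against the $k_x$ actual contributions yields $D_{\rho|\cdot|^{-x}}^{(k_x)}(\pi) \neq 0$ and $D_{\rho|\cdot|^{-x}}^{(k_x+1)}(\pi) = 0$. Irreducibility of the highest derivative follows from the argument of Lemma \ref{SI} adapted to this setting by taking $(\rho|\cdot|^{-x})^{\times k_x}$ in place of $\rho^{\times k}$, and the standard module of the derivative is read off from the persistence of the embedding $\Delta([-z, -x]_\rho) \hookrightarrow \rho|\cdot|^{-x} \times \Delta([-z, -x-1]_\rho)$ upon taking Jacquet modules.

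Given the single-step claim, the induction is then straightforward: applying it to $\pi_1 = D_{\rho|\cdot|^{-x}}^{(k_x)}(\pi)$, whose standard module is $\II(\tl\mm) \rtimes \tau$, one has $k_{x+1} = \sum_{z \geq x+1} n_z$, and iterating gives $k_z = \sum_{z' \geq z} n_{z'}$ throughout the chain. Monotonicity $k_x \geq k_{x+1} \geq \cdots \geq k_y$ is immediate, $n_z = k_z - k_{z+1}$ yields the claimed identity $\mm - \mm' = \sum_{z=x}^{y} (k_z - k_{z+1})[x, z]_\rho$, and $\tau$ is preserved at every step. The main obstacle is the single-step claim's irreducibility and standard-module identification: the induced representation ${}^c\II(\mm)^\vee \rtimes \tau$ has many irreducible subquotients whose Jacquet supports overlap that of $\pi$, so one must carefully isolate $\pi$'s contribution using the uniqueness of the Langlands quotient and the rigidity forced by the maximality of $x$. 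The necessary combinatorial control on how segments of $\mm$ contribute to $\mu^*(\pi)$, as opposed to $\mu^*$ of other subquotients, is the technical heart of the argument.
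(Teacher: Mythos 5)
Your approach runs in the opposite direction from the paper's. You want to characterize $k_x$ combinatorially in terms of the multi-segment $\mm$ (as the number $\sum_{z\geq x} n_z$ of segments with left endpoint $x$), establish a single-step claim describing the standard module of $D^{(k_x)}_{\rho|\cdot|^{-x}}(\pi)$, and then iterate. The paper instead takes the derivative chain as given data: it starts from the embedding $\pi \hookrightarrow (\rho|\cdot|^{-x})^{k_x}\times \cdots \times (\rho|\cdot|^{-y})^{k_y}\rtimes \pi'$ and absorbs the $\rho|\cdot|^{-(x+i+1)}$ factors inductively into segments $\Delta([-z,-x]_\rho)$, deducing the monotonicity $k_{x+i}\geq k_{x+i+1}$ because a failure would leave a surviving $\rho|\cdot|^{-(x+i+1)}$ at the front of an embedding for $\pi$, giving $D_{\rho|\cdot|^{-(x+i+1)}}(\pi)\neq 0$ and contradicting the maximality of $x$; it then reads off the multi-segment equality by comparing standard modules.

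Your single-step claim, however, has two genuine gaps that you acknowledge but do not resolve. First, the exclusion of segments $[x',z]_\rho$ with $x'>x$ in $\mm$ is not immediate: you assert they would force $D_{\rho|\cdot|^{-x'}}(\pi)\neq 0$, but showing this requires proving that the maximal left endpoint of $\mm$ is exactly the maximal $x$ with a nonzero derivative, which is itself a non-trivial statement about the Jacquet module of a Langlands quotient (and in fact is essentially a consequence of the very proposition you are proving, rather than a prior fact you can use as input). Second, and more seriously, the lower bound $D^{(k_x)}_{\rho|\cdot|^{-x}}(\pi)\neq 0$ is asserted by ``matching against the $k_x$ actual contributions,'' but Tadi\'c's formula only tells you that the full induced representation $\II(\mm)\rtimes\tau$ has a Jacquet constituent of the right shape; you must show the constituent lands in the irreducible subquotient $\pi$ rather than elsewhere, which is precisely the ``technical heart'' you flag but do not address. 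The paper's approach bypasses both difficulties because the starting point already hands you the embedding of $\pi$ into $(\rho|\cdot|^{-x})^{k_x}\times\cdots$, so you never need to extract the derivative data from $\mm$ first. If you want to pursue your route, you would need to supply the lower-bound and exclusion arguments; otherwise the shorter and cleaner path is the absorption argument the paper uses.
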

\begin{proof}
By construction, we have an injection
\[
\pi \hookrightarrow 
(\rho|\cdot|^{-x})^{k_x} \times (\rho|\cdot|^{-(x+1)})^{k_{x+1}} \times \dots \times (\rho|\cdot|^{-y})^{k_y} \rtimes \pi'.
\]
We claim that $k_x \geq k_{x+1} \geq \dots \geq k_y$, and the above injection factors through
\[
\tag{$\natural$}\label{natural}
\pi \hookrightarrow 
\left(\bigtimes_{z = x}^y \Delta([-z,-x]_{\rho})^{k_z-k_{z+1}}\right) 
\rtimes \pi'.
\]
Fix $0 \leq i < y-x$. 
Suppose that we have proven that $k_x \geq \dots \geq k_{x+i}$
and the injection factors through 
\[
\pi \hookrightarrow 
\left(\bigtimes_{z = x}^{x+i} \Delta([-z,-x]_{\rho})^{k_z-k'_{z+1}}\right) 
\times (\rho|\cdot|^{-(x+i+1)})^{k_{x+i+1}} \times \dots \times (\rho|\cdot|^{-y})^{k_y} 
\rtimes \pi'
\]
with $k'_{z+1} = k_{z+1}$ if $z < x+i$, and $k'_{x+i+1} = 0$.
By \cite[Theorem 9.7]{Z}, $\Delta([-z,-x]_{\rho})$ commutes with $\rho|\cdot|^{-(x+i+1)}$ for $x \leq z < x+i$. 
On the other hand, by the Zelevinsky dual of \cite[Proposition 4.6]{Z}, 
if $z = x+i$, then $\Delta([-(x+i),-x]_{\rho}) \times \rho|\cdot|^{-(x+i+1)}$ 
contains $\Delta([-(x+i+1),-x]_{\rho})$ as a subrepresentation, which commutes with $\rho|\cdot|^{-(x+i+1)}$, 
and its quotient is an irreducible subrepresentation of $\rho|\cdot|^{-(x+i+1)} \times \Delta([-(x+i),-x]_{\rho})$. 
Hence if $k_{x+i} < k_{x+i+1}$, then every irreducible subquotient of 
\[
\left(\bigtimes_{z = x}^{x+i} \Delta([-z,-x]_{\rho})^{k_z-k'_{z+1}}\right) 
\times (\rho|\cdot|^{-(x+i+1)})^{k_{x+i+1}}
\]
is a subrepresentation of an induced representation of the form $\rho|\cdot|^{-(x+i+1)} \times \tau$ 
for some irreducible representation $\tau$ of some general linear group.
This implies that $D_{\rho|\cdot|^{-(x+i+1)}}(\pi) \not= 0$. 
This contradicts the definition of $x$.
Hence $k_{x+i} \geq k_{x+i+1}$, and by the same argument, 
the injection 
factors through 
\[
\pi \hookrightarrow 
\left(\bigtimes_{z = x}^{x+i+1} \Delta([-z,-x]_{\rho})^{k_z-k''_{z+1}}\right) 
\times (\rho|\cdot|^{-(x+i+2)})^{k_{x+i+2}} \times \dots \times (\rho|\cdot|^{-y})^{k_y} 
\rtimes \pi'
\]
with $k''_{z+1} = k_{z+1}$ if $z < x+i+1$, and $k''_{x+i+2} = 0$.
By induction, we obtain the claim.
\par

Since $D_{\rho|\cdot|^{-(y+1)}}(\pi') = 0$ by definition of $y$, 
the inclusion \eqref{natural} shows that $D_{\rho|\cdot|^{-x'}}(\pi') = 0$ for any $x' > y$.
Moreover, if $D_{\rho|\cdot|^{-x'}}(\pi') \not= 0$ for some $x \leq x' \leq y$, 
then  $x' \equiv x \bmod \Z$ and
\eqref{natural} shows that 
\[
D_{\rho|\cdot|^{-x'}}^{(k_{x'}+1)} 
\left(D_{\rho|\cdot|^{-(x'-1)}}^{(k_{x'-1})} \circ \dots \circ D_{\rho|\cdot|^{-x}}^{(k_x)}(\pi)\right)
\not= 0.
\]
This contradicts the definition of $k_{x'}$. 
Hence $D_{\rho|\cdot|^{-x'}}(\pi') = 0$ for any $x' \geq x$.
This means that $\mm'$ does not contain segments of the form $[x',y']_{\rho}$ for any $x' \geq x$.
\par

By applying \cite[Lemma 2.2]{AG} to \eqref{natural}, 
we obtain the surjections
\[
\left(\bigtimes_{z = x}^y \Delta([x,z]_\rho)^{k_z-k_{z+1}}\right) \times \II(\mm') \rtimes \tau'
\twoheadrightarrow 
\left(\bigtimes_{z = x}^y \Delta([x,z]_\rho)^{k_z-k_{z+1}}\right) \rtimes \pi'
\twoheadrightarrow \pi.
\]
We see that the left-hand side is a standard module 
so that it is isomorphic to $\II(\mm) \rtimes \tau$. 
Hence $\tau \cong \tau'$ and 
\[
\mm = \mm' + \sum_{z=x}^y (k_z-k_{z+1})[x,z]_\rho.
\]
This completes the proof. 
\end{proof}

As a consequence, we can describe the $L$-parameter of $\pi$
using that of $\pi'$ as follows.
This corollary is used in Sections \ref{(a)}--\ref{(c)}.
\begin{cor}\label{Lparameters}
We use the same notations and assumptions as above. 
Let $\phi_\pi$ and $\phi_{\pi'}$ be the $L$-parameters of $\pi$ and $\pi'$, respectively. 
Then 
\[
\phi_\pi = \phi_{\pi'} \oplus \left(
\bigoplus_{z = x}^y (\rho(|\cdot|^{\half{x+z}} \oplus |\cdot|^{-\half{x+z}}) \boxtimes S_{z-x+1})^{\oplus (k_z-k_{z+1})}
\right).
\]
\end{cor}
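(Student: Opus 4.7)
The plan is to combine the structural result on the standard module from the preceding proposition with the recipe of the Langlands classification for writing down the $L$-parameter of an irreducible representation from its Langlands data.

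First I would recall the explicit form of the $L$-parameter attached to a classical group representation $\pi$ with standard module $\II(\mm) \rtimes \tau$. Writing $\mm = \sum_i [x_i,y_i]_{\rho_i}$ (with each midpoint $\tfrac{x_i+y_i}{2} > 0$), each generalized Steinberg $\Delta([x_i,y_i]_{\rho_i})$ has $L$-parameter $\rho_i |\cdot|_E^{(x_i+y_i)/2} \boxtimes S_{y_i-x_i+1}$, and the $L$-parameter of $\pi$ itself is
\[
\phi_\pi = \phi_\tau \oplus \bigoplus_i \left( \rho_i |\cdot|_E^{(x_i+y_i)/2} \boxtimes S_{y_i-x_i+1} \oplus {}^c\rho_i^\vee |\cdot|_E^{-(x_i+y_i)/2} \boxtimes S_{y_i-x_i+1}\right),
\]
where $\phi_\tau$ denotes the tempered $L$-parameter of $\tau$. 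The same formula applies to $\pi'$ with $\mm$ and $\tau$ replaced by $\mm'$ and $\tau'$.

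Next I would invoke the preceding proposition, which asserts that $\tau \cong \tau'$ (so $\phi_\tau = \phi_{\tau'}$) and
\[
\mm = \mm' + \sum_{z=x}^{y} (k_z - k_{z+1})\,[x,z]_\rho,
\]
with $k_{y+1}=0$. Subtracting the two Langlands recipes, the contribution of $\phi_\tau$ and of the segments in $\mm'$ cancels out, and one is left with the contribution of the added segments $[x,z]_\rho$ for $z=x,\dots,y$, each appearing with multiplicity $k_z - k_{z+1}$. Since $\rho$ is conjugate-self-dual, ${}^c\rho^\vee \cong \rho$, so the segment $[x,z]_\rho$ contributes exactly
\[
\rho(|\cdot|_E^{(x+z)/2} \oplus |\cdot|_E^{-(x+z)/2}) \boxtimes S_{z-x+1}
\]
to the $L$-parameter, which matches the claimed formula after summing over $z$ with the correct multiplicities.

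Since the entire argument reduces to an application of the Langlands classification once the shape of $\mm$ is known, there is essentially no obstacle here: the hard work has already been done in the preceding proposition, whose analysis of the factorization of $\pi \hookrightarrow \bigtimes \Delta([-z,-x]_\rho)^{k_z-k_{z+1}} \rtimes \pi'$ uses the maximality of $x$ and $y$ together with Zelevinsky's results on reducibility of products of generalized Steinberg representations. The only point requiring a moment's care is keeping track of the conjugate-self-duality of $\rho$, which ensures that each added segment on the $\GL$-side is accompanied by its dual on the other side inside the $L$-parameter of the classical group.
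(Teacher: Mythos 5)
Your proof is correct and follows the route the paper implicitly intends: the corollary is stated as a direct consequence of the preceding proposition, and your argument simply spells out the translation from the standard module description $\mm = \mm' + \sum_{z=x}^y (k_z - k_{z+1})[x,z]_\rho$, $\tau \cong \tau'$ into $L$-parameters via the Langlands classification recipe, using the conjugate-self-duality of $\rho$ to identify ${}^c\rho^\vee$ with $\rho$.
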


%\section{The tempered $L$-packet conjecture and Arthur's Lemma 2.5.5}
%\section{The tempered $L$-packet conjecture and Arthur's Lemma 2.5.5}
\section{The tempered $L$-packet conjecture and Arthur's Lemma 2.5.5}\label{sec.255}
In this appendix, 
we prove two results whose statements appear unrelated, but whose proofs follow from the same argument. 
\par

The first statement is that Arthur's construction of tempered $L$-packets satisfies 
the strong form of Shahidi's tempered packet conjecture (Corollary \ref{strong-shahidi-arthur}). 
This conjecture was initially formulated as \cite[Conjecture 9.4]{Sh7} 
and stipulated that every tempered $L$-packet should contain a generic representation. 
It was later strengthened to include the statement that, 
for an arbitrarily fixed Whittaker datum $\ww$, 
every tempered $L$-packet should contain exactly one $\ww$-generic member. 
For classical groups, this was proven by Konno \cite{Konno1} and Varma \cite{V2} 
assuming the twisted endoscopic transfer to $\GL_N$, 
which is the basis of the constructions of \cite{Ar} and \cite{Mok}. 
A further strengthening of the conjecture would require that 
the $\ww$-generic member is matched with the trivial character of the centralizer component group $\Sc_\phi$. 
It is this version that we formulate here (Conjecture \ref{TPC}) and prove for classical groups, 
thus providing a mild strengthening of the result of Konno and Varma for classical groups. 
We hasten to note that our proof does not replace those of Konno and Varma, but rather it uses them crucially. 
In fact, we prove a result (Theorem \ref{strong-shahidi}) that is valid for general reductive groups 
and infers the validity of this conjecture from its validity for endoscopic groups. 
This result is a strengthening of \cite[Proposition 9.6]{Sh7} and the proof follows from a similar outline, 
with a few additional arguments, and a key input from the work of Kottwitz \cite{Kot}. 
When combined with the results of Konno and Varma, 
this relative result delivers Conjecture \ref{strong-shahidi-arthur} for classical groups.
\par

The second statement (Theorem \ref{2.5.5}) is of a more technical nature. 
It infers the local intertwining relation from an a priori weaker statement. 
It was formulated for symplectic and orthogonal groups as \cite[Lemma 2.5.5]{Ar}, 
whose proof was deferred to \cite{A27}. 
We formulate and prove it here for arbitrary connected reductive groups, 
subject to assuming basic expected properties of tempered $L$-packets that are known in the setting of classical groups.
\par

Arthur suggested in \cite{Ar} that Theorem \ref{2.5.5} can be shown by applying results of Konno \cite{Konno1} 
concerning the behavior of local character expansions under the (twisted) endoscopic transfer. 
We largely follow his suggestion in the proofs below. 
However, it is also clear that the results of Konno needed to be refined and made more precise before they can be applied. 
Thankfully, in the intervening years, Varma has provided in \cite{V2} such a refined result, 
and this is the main ingredient in our proof below. 
It reduces the proof to an identity between Weil indices and $\ep$-factors, 
which follows at once from the work of Kottwitz \cite{Kot}, 
and in the case of classical groups can also be verified by hand.
Thanks to \cite{V1}, 
we can also remove the assumption in \cite[Lemma 2.5.5]{Ar} that the residual characteristic of $F$ is odd.
\par

We first employ this argument to obtain a proof of Theorem \ref{strong-shahidi}. 
The flow is as in \cite[Proposition 9.6]{Sh7}, and is in some sense opposite to that of \cite{V2}. 
Then we employ a similar argument, in a slightly more abbreviated form, to obtain Theorem \ref{2.5.5}.

\begin{rem}
While the proofs of Theorems \ref{strong-shahidi} and \ref{2.5.5} use the same
argument, the statements have different assumptions. The main assumption
for Theorem \ref{strong-shahidi} is that \eqref{ECR2} holds with respect to any endoscopic group, 
and Corollary \ref{strong-shahidi-arthur} assumes in addition that \eqref{ECR1} holds. 
Thus, in the setting of Arthur's argument, 
Theorem \ref{strong-shahidi} and Corollary \ref{strong-shahidi-arthur} 
become available after the inductive argument has been completed for the group of interest. 
On the other hand, the main assumption in Theorem \ref{2.5.5} is that 
a weakened form of the local intertwining relation holds. 
This assumption replaces \eqref{ECR2}. 
The reason is that Theorem \ref{2.5.5} will be used in the middle of the inductive proof, 
where \eqref{ECR2} is not yet available. 
In addition, the validity of Corollary \ref{strong-shahidi-arthur} is assumed for groups of lower rank. 
In the setting of Arthur's argument, this is part of the inductive assumption.
\end{rem}

%\subsection{Shahidi's tempered packet conjecture for quasi-split classical groups}
\subsection{Shahidi's tempered packet conjecture for quasi-split classical groups}
Let $F$ be a local field of characteristic zero and let $G$ be a quasi-split connected reductive $F$-group. 
Fix a Whittaker datum $\ww$ for $G$. 
Let $\phi$ be a tempered $L$-parameter for $G$. 
We assume that the corresponding $L$-packet $\Pi_\phi$ and its pairing with $\Sc_\phi$ have been constructed, 
giving an injective map from $\Pi_\phi$ to the set of irreducible representations of $\Sc_\phi$; 
we use the notation $\pair{s,\pi}_\phi$ for the trace at $s$ of the representation of $\Sc_\phi$ associated to $\pi$. 
Note that the pairing $\pair{\cdot, \pi}_\phi$ depends on $\ww$.
Then Shahidi's strong tempered packet conjecture is the following statement, a strengthening of \cite[Conjecture 9.4]{Sh7}.

\begin{conj} \label{TPC}
Fix a Whittaker datum $\ww$ for $G$. 
The $L$-packet $\Pi_\phi$ contains exactly one $\ww$-generic member $\pi_\ww$, 
and it satisfies $\pair{\cdot,\pi_\ww}_\phi = \1$.
\end{conj}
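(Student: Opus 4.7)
The plan is to proceed by induction on the dimension of $G$, reducing the statement for $G$ to the analogous statement for its proper elliptic endoscopic groups, following the strategy sketched in \cite[Proposition 9.6]{Sh7}. The base case for quasi-split classical groups, and in particular the existence and uniqueness of a $\ww$-generic constituent inside every tempered $L$-packet, is supplied by the results of Konno \cite{Konno1} and Varma \cite{V2}. Thus the remaining content of Conjecture \ref{TPC} is that this distinguished generic member is paired with the trivial character of $\Sc_\phi$.

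First I would fix a semisimple element $s \in S_\phi$, let $(H, s, \xi)$ be the associated endoscopic datum, and let $\phi_H$ be an $H$-parameter with $\phi = \xi \circ \phi_H$. The endoscopic character identity \eqref{ECR2} then expresses $\sum_{\pi \in \Pi_\phi} \pair{s\cdot s_\phi, \pi}_\phi \Theta_\pi$ as the Whittaker-normalized transfer of the stable character associated to $\phi_H$. Testing both sides against functions concentrated near the identity and applying the Harish-Chandra--Howe local character expansion, I would isolate the coefficient of the Shalika germ attached to the regular nilpotent orbit corresponding to $\ww$. By Rodier's theorem, extended to arbitrary residual characteristic by Varma \cite{V1}, this coefficient on the $G$-side is, up to a fixed normalization, exactly $\pair{s\cdot s_\phi, \pi_\ww}_\phi$, because $\pi_\ww$ is the unique $\ww$-generic member of $\Pi_\phi$.

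Next I would compute the same coefficient on the endoscopic side. By the inductive hypothesis applied to $\phi_H$, the stable character $\sum_{\pi_H \in \Pi_{\phi_H}} \pair{s_{\phi_H}, \pi_H}_{\phi_H}\Theta_{\pi_H}$ contributes $1$ (times the parallel normalization) to the regular nilpotent Shalika germ attached to the Whittaker datum $\ww_H$ induced from $\ww$. The comparison of the two sides hinges on understanding how the Whittaker-normalized transfer factor relates the two Shalika germs at regular nilpotent orbits; this is precisely the content of Varma's refined theorem in \cite{V2}, which identifies the transfer of the regular nilpotent germ on $H$ with the corresponding germ on $G$ up to an explicit constant given by a product of Weil indices $\gamma_{\psi_F}$ attached to certain quadratic forms built from $\g/\h$.

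The main obstacle is therefore to identify this Weil-index constant with $1$. Here I would invoke Kottwitz's computation \cite{Kot}, which expresses such Weil indices in terms of $\ep$-factors of the virtual Galois representations cut out by $s$ from the adjoint representation of $\phi$. These are the same $\ep$-factors that enter the Whittaker normalization of the transfer factor used to formulate \eqref{ECR2}, so by construction the two contributions cancel exactly. Since $s \in S_\phi$ was arbitrary, this forces $\pair{\cdot, \pi_\ww}_\phi = \1$, completing the proof. The genuinely delicate point, not available at the time of \cite{Sh7}, is the precise form of Varma's identity in \cite{V2}; without it one obtains only the existence and uniqueness of $\pi_\ww$ but not the character identity.
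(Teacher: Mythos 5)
Your proposal follows essentially the same route as the paper's proof of Theorem \ref{strong-shahidi}: apply \eqref{ECR2}, expand both sides via the Harish-Chandra local character expansion, extract the regular nilpotent coefficient using the M{\oe}glin--Waldspurger characterization of genericity (extended by Varma), apply the inductive hypothesis on the endoscopic side, and reduce to the vanishing of a constant built from Weil indices and $\ep$-factors, which is then killed by Kottwitz's identity from \cite{Kot}. The key technical insight is correctly identified.

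Two points of comparison are worth flagging. First, your organization is slightly less economical: you invoke Konno and Varma \cite{Konno1, V2} upfront to secure existence and uniqueness of $\pi_\ww$ and then argue only for $\pair{\cdot,\pi_\ww}_\phi = \1$, whereas the paper derives \emph{all three} claims (existence, uniqueness, triviality of the pairing) simultaneously from the single identity $\sum_{\pi} c(\pi,\ww)\pair{s,\pi}_\phi = 1$ valid for all $1 \neq s \in \Sc_\phi$, via a short argument with characters of the finite group $\Sc_\phi$. That gives a relative statement valid for general reductive groups, with Konno--Varma needed only for the singleton-packet base case in the classical setting. Second, your attributions for the germ-comparison step are off: the transfer of the regular nilpotent Shalika germ is \cite[Corollary 5.5.B]{LS}, and the Weil-index factor arises from the commutation of endoscopic transfer with the Fourier transform on the Lie algebra (Waldspurger's conjecture, proved in \cite{W2, W3, CHL, N}); Varma's \cite{V2} is not the source of this relation, and \cite{V1} enters only in establishing the link between regular nilpotent germ coefficients and genericity in residual characteristic $2$. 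You also describe the constant purely as a product of Weil indices on $\g/\h$, but it additionally contains the transfer-factor value $\Delta[\ww](O_{\psi_F,\beta,\ww})$ at the distinguished regular nilpotent orbit; the paper's Lemma \ref{tfnilp1} is precisely what untangles the splitting-dependent $\kappa_\alpha$-factors hidden in both that term and the Weil-index ratio, allowing the clean application of Kottwitz in Lemma \ref{tfnilp2}. These are refinements in the bookkeeping rather than missing ideas, but they are the part of the argument that actually requires care.
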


This conjecture is known for archimedean base fields due to the work of Shelstad in \cite{SheTE3}, see also \cite{AK}. 
We may therefore concentrate on a non-archimedean base field $F$. 

\begin{thm}\label{strong-shahidi}
Assume $|\Pi_\phi|>1$ and that, 
for any factorization $\phi'$ of $\phi$ through a proper endoscopic group, 
the character identity \eqref{ECR2} holds and Conjecture \ref{TPC} holds for $\Pi_{\phi'}$. 
Then Conjecture \ref{TPC} holds for $\Pi_\phi$.
\end{thm}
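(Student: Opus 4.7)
The plan is to apply the endoscopic character identity \eqref{ECR2} to a carefully chosen family of test functions whose orbital integrals isolate the regular nilpotent orbit attached to the Whittaker datum $\ww$, and to extract the conclusion from the Harish--Chandra--Howe local character expansion near the identity. The key representation-theoretic input, due to Rodier and refined by M\oe glin--Waldspurger, is that for an irreducible tempered representation $\pi$ the coefficient in the germ expansion at the identity attached to the Whittaker-regular nilpotent orbit equals $1$ if $\pi$ is $\ww$-generic and $0$ otherwise. Consequently, after restricting to such a family of test functions $f$ on $G$, the right-hand side of \eqref{ECR2} with parameter $s\in\Sc_\phi$ asymptotically reads off $\sum_{\pi} \pair{s\cdot s_\phi,\pi}_\phi$, where $\pi$ ranges over the $\ww$-generic members of $\Pi_\phi$.

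The parallel computation on the left-hand side requires controlling how the Whittaker-normalized endoscopic transfer interacts with the regular nilpotent germ. Here I would invoke Varma's refinement \cite{V2} of Kottwitz's results \cite{Kot}: up to an explicit constant expressed as a Weil index, the transfer of the distribution attached to a Whittaker-regular nilpotent orbit on $G$ is the analogous distribution on the endoscopic group $H$ with respect to the Whittaker datum $\ww_H$ on $H$ canonically induced by $\ww$. Combined with the inductive hypothesis that Conjecture \ref{TPC} is known for $\Pi_{\phi'}$ on the proper endoscopic group $H$, we learn that the endoscopic sum $f_H'(\phi,s)$ computes, in the same asymptotic limit, the value of the unique $\ww_H$-generic character in $\Pi_{\phi'}$, for which the pairing is trivial by assumption.

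Equating the two sides of \eqref{ECR2} as $s$ varies over $\Sc_\phi$ gives the two assertions of Conjecture \ref{TPC} at once. Existence of a $\ww$-generic member follows because the leading left-hand side is non-zero (it involves the character of a single generic representation of $H$), forcing some $\pi\in\Pi_\phi$ to contribute; uniqueness together with the hypothesis $|\Pi_\phi|>1$ follows from the linear independence of characters combined with the identity holding for every $s$; and the equality of the two expressions forces $\pair{s,\pi_\ww}_\phi=1$ for every $s\in\Sc_\phi$, which is precisely $\pair{\cdot,\pi_\ww}_\phi=\1$. The assumption $|\Pi_\phi|>1$ is needed only to avoid a degenerate case in which $\Sc_\phi$ is trivial and the conclusion is automatic.

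The principal obstacle, and where the argument is delicate, is the exact bookkeeping of the transfer factor, the Weil index produced by Varma's formula, and the $\ep$-factor sitting inside the Whittaker-normalization of the transfer factor. The reason the argument succeeds is an identity of Kottwitz \cite{Kot} which shows that the Weil index arising from the transfer of the regular nilpotent distribution equals, on the nose, the $\ep$-factor built into the Whittaker-normalized transfer factor, so that all such auxiliary constants cancel and one reads $\pair{s,\pi_\ww}_\phi=1$ without an unknown scalar. To treat residual characteristic two uniformly, one replaces Konno's germ-expansion arguments by their refinement in \cite{V1}, thereby removing the parity restriction present in earlier treatments such as \cite[Lemma 2.5.5]{Ar}.
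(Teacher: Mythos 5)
Your proposal takes essentially the same route as the paper's proof: expand both sides of \eqref{ECR2} via the Harish-Chandra local character expansion, apply the M\oe glin--Waldspurger/Varma characterization of $\ww$-genericity via the coefficient of the Whittaker-regular nilpotent orbit, transfer the regular nilpotent distributions between $G$ and $H$, and cancel the resulting Weil index against the $\ep$-factor in the Whittaker normalization using Kottwitz's identity. The one place where your description is imprecise is the transfer step: the transfer goes from $H$ to $G$, there is no single ``canonically induced'' Whittaker datum $\ww_H$ on $H$, and one must instead start from the \emph{sum} of all regular nilpotent orbital integrals on $H$ (a stable distribution), transfer it by \cite[Corollary 5.5.B]{LS} to $\sum_O \Delta[\ww](O)\mu_O$ on $G$, and simplify the left-hand side using the inductive hypothesis that Conjecture~\ref{TPC} holds on $H$ for \emph{every} Whittaker datum $\ww'$ on $H$ --- since each regular nilpotent orbit $O'$ on $H$ is $O_{\psi_F,\beta',\ww'}$ for some such $\ww'$. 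Only after this does one separate coefficients at the single orbit $O_{\psi_F,\beta,\ww}$ on $G$ and invoke Kottwitz's identity to show that the remaining scalar $\frac{\gamma(\g(F),\beta,\psi_F)}{\gamma(\g'(F),\beta',\psi_F)}\Delta[\ww](O_{\psi_F,\beta,\ww})$ equals $1$.
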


We will show Theorem \ref{strong-shahidi} in Section \ref{sec.TPC} below. 
For now, we extract the following result in the setting of classical groups.

\begin{cor}\label{strong-shahidi-arthur}
Let $G$ be a quasi-split connected classical group. 
Assume that the local results \eqref{ECR1} and \eqref{ECR2} of \cite{Ar} and \cite{Mok} are known for the parameter $\phi$ 
and for its factorizations through endoscopic groups. 
Then Conjecture \ref{TPC} holds for $\Pi_\phi$.
\end{cor}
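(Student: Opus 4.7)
The plan is to proceed by induction on $N = \dim(\St_{\widehat{G}})$, invoking Theorem \ref{strong-shahidi} in the inductive step. For a fixed classical $G$ and a fixed tempered $L$-parameter $\phi$, the argument splits according to whether $\Pi_\phi$ is a singleton.

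First, suppose $|\Pi_\phi| = 1$. The local classification for tempered $L$-parameters of quasi-split classical groups (\cite[Theorem 1.5.1]{Ar}, \cite[Theorem 2.5.1]{Mok}, together with Remark \ref{packet_SO} in the even orthogonal case) supplies a bijection between $\Pi_\phi$ and $\widehat{\Sc_\phi}$ (or $\widehat{\AA_\phi}$ when $G = \O_{2n}$), so $\Sc_\phi$ (resp.\ $\AA_\phi$) must be trivial and the pairing $\pair{\cdot, \pi}_\phi = \1$ is automatic. It remains to show that the unique $\pi \in \Pi_\phi$ is $\ww$-generic; this is the existence-and-uniqueness version of Shahidi's tempered packet conjecture for classical groups, proved by Konno \cite{Konno1} and Varma \cite{V2} using exactly the twisted endoscopic character identity \eqref{ECR1} assumed in the corollary.

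When $|\Pi_\phi| > 1$, Theorem \ref{strong-shahidi} applies once its two hypotheses are verified. The first, identity \eqref{ECR2} for every factorization $\phi'$ of $\phi$ through a proper endoscopic group, is part of the standing hypothesis of the corollary. For the second, Conjecture \ref{TPC} for $\Pi_{\phi'}$, note that every proper elliptic endoscopic datum for a quasi-split classical $G$ has identity component $H^\circ = G_+^\circ \times G_-^\circ$ with $\dim(\St_{\widehat{G_\pm^\circ}}) < N$; accordingly $\phi'$ decomposes as $\phi'_+ \oplus \phi'_-$, the packet $\Pi_{\phi'}$ is the product $\Pi_{\phi'_+} \times \Pi_{\phi'_-}$, the component group splits as $\Sc_{\phi'_+} \times \Sc_{\phi'_-}$, and a Whittaker datum becomes a pair $(\ww_+, \ww_-)$. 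Conjecture \ref{TPC} for $\Pi_{\phi'}$ therefore reduces factorwise to the same conjecture on each $\Pi_{\phi'_\pm}$, which holds by the inductive hypothesis applied to the smaller classical groups $G_\pm^\circ$.

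The main subtlety I expect concerns the even orthogonal case $G = \O_{2n}(F)$: the bijection of Remark \ref{packet_SO} is packaged via $\widehat{\AA_\phi}$ (not $\widehat{\Sc_\phi}$) and is intertwined with the determinant twist $\pi \mapsto \pi \otimes \det$, so one must verify carefully that $|\Pi_\phi| = 1$ still forces $\AA_\phi$ to be trivial and that Varma's genericity statement is compatible with our $\O_{2n}$-packaging (in particular, with the choice of $\epsilon \in T \setminus T^\circ$). A secondary bookkeeping point concerns the auxiliary characters $(\eta_+, \eta_-)$ entering the $L$-embedding of an endoscopic group (e.g.\ for $G = \Sp_{2n}$ or $\U_n$): these twists affect neither the component groups nor $\ww$-genericity, but they must be tracked when identifying the Whittaker-normalized transfer on $G$ with those on the $G_\pm^\circ$ so that the inductive conclusion transports correctly.
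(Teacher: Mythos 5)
Your proof is correct and takes essentially the same approach as the paper: treat the singleton case directly via Konno--Varma and the twisted character identity \eqref{ECR1}, then bootstrap via Theorem \ref{strong-shahidi} in the non-singleton case. The only substantive variation is the inductive parameter. The paper inducts on $|\Pi_\phi|$ and so can directly apply its hypothesis to $\Pi_{\phi'}$ once it observes $|\Pi_{\phi'}| < |\Pi_\phi|$; you induct on $N = \dim(\St_{\widehat{G}})$, which forces you to descend into the product decomposition $H^\circ = G_+^\circ \times G_-^\circ$ of the endoscopic group and verify that Conjecture \ref{TPC} for a product reduces factorwise. Both work. Yours has the mild advantage that $\dim(\St_{\widehat{G_\pm^\circ}}) < N$ is immediate from $\phi_\pm \neq 0$, whereas $|\Pi_{\phi'}| < |\Pi_\phi|$ requires a small computation with component groups; the paper's version has the mild advantage of not needing to spell out the factorwise reduction. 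One small omission: the paper first dispatches the archimedean case (Shelstad), and the Konno--Varma input your singleton argument relies on is specifically non-archimedean, so you should note this reduction explicitly. Your remarks on the $\O_{2n}$ packaging and the auxiliary characters $(\eta_+,\eta_-)$ are legitimate points the paper leaves implicit (Remarks \ref{packet_SO} and \ref{appa-o1} are doing this work silently), and neither affects the validity of the argument.
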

\begin{proof}
As already remarked, the archimedean case is known by the work of Shelstad, 
so we focus on non-archimedean $F$. 
We induct on the size of $L$-packets.
\par

Assume first that $\Pi_\phi$ is a singleton $\{\pi\}$. 
Then $\Sc_\phi$ is the trivial group, so trivially $\pair{\cdot, \pi}_\phi = \1$ 
and it remains to show that $\pi$ is $\ww$-generic. 
Let $\pi_\phi$ be the representation of $\GL_N(E)$ with parameter $\phi$, 
seen as a parameter for $\GL_N(E)$ via the standard representation of $^LG$. 
Then $\pi_\phi$ is tempered, hence generic. 
Using \eqref{ECR1}, the claim follows from \cite{Konno1} and \cite{V2}.
\par

Assume next that $\Pi_\phi$ is not a singleton. 
Then $\Sc_\phi \not= \{1\}$. 
For any $s \in S_\phi \setminus Z(\widehat{G})^\Gamma$, 
the pair $(s,\phi)$ leads to a proper endoscopic datum $G'$ and a parameter $\phi'$ for $G'$. 
The $L$-packet $\Pi_{\phi'}$ has strictly smaller size than $\Pi_\phi$, 
so Conjecture \ref{TPC} holds for $\Pi_{\phi'}$ by the induction hypothesis. 
Thus Conjecture \ref{TPC} holds for $\Pi_\phi$ by Theorem \ref{strong-shahidi}.
\end{proof}

%\subsection{A weakening of the local intertwining relation}
\subsection{A weakening of the local intertwining relation}
Let $F$ be a non-archimedean local field of characteristic zero 
and let $G$ be a quasi-split connected reductive $F$-group equipped with a Whittaker datum $\ww$. 
Let $P = MN$ be a proper parabolic subgroup of $G$ 
and let $\phi_M$ be a tempered $L$-parameter for $M$. 
We assume the existence of an associated $L$-packet $\Pi_{\phi_M}$. 
Let $\phi$ be the parameter for $G$ obtained by composing $\phi_M$ with the natural inclusion ${}^LM \rightarrow {}^LG$. 
Recall that $N_\phi = N(A_{\widehat{M}}, \widehat{G}) \cap S_\phi$ 
and $\NN_\phi = \pi_0(N_\phi/Z(\widehat{G})^\Gamma)$. 
We now recall some material
from Section \ref{sec.main3} in this more general setting. 
For $f \in C_c^\infty(G(F))$ and $u \in \NN_\phi$, 
define the distribution
\[ 
f \mapsto f_G(\phi,u) 
= \sum_{\substack{\pi_M \in \Pi_{\phi_M}\\ w_u\pi_M \cong \pi_M}} 
\tr(\pair{ u, \tl\pi_M } R_P(w_u, \tl\pi_M, \phi_M) I_P(\pi_M,f)), 
\]
where $w_u$ is the Weyl element given by $u$ which preserves $M$, 
and we are using a representation $\tl\pi_M$ of the disconnected group 
\[ 
M(F) \rtimes \pair{w_u} 
\] 
that extends $\pi_M$ and the associated pairing $\pair{u,\tl\pi_M}$, 
noting that the product 
\[ 
\pair{u,\tl\pi_M}R_P(w_u,\tl\pi_M,\phi_M) 
\] 
is independent of the choice $\tl\pi_M$ extending $\pi_M$.
\par

Let $s \in \Sc_\phi$ be the image of $u$. 
From the pair $(s,\phi)$, 
we obtain an endoscopic datum $(G',s,\eta)$ and a parameter $\phi'$ for $G'$. 
We define a second distribution
\[ 
f \mapsto f_G'(\phi,s) = \Trans(S\Theta_{\phi'})(f) = S\Theta_{\phi'}(f'), 
\]
where 
\[ 
S\Theta_{\phi'} = \sum_{\pi' \in \Pi_{\phi'}} \pair{1,\pi' }_{\phi'} \Theta_{\pi'}
\]
is the stable character of $\phi'$, and $f' \in C_c^\infty(G'(F))$ is a $\Delta[\ww]$-transfer of $f$, 
i.e. the two functions $f$ and $f'$ have matching orbital integrals 
with respect to the transfer factor $\Delta[\ww]$ normalized with respect to the Whittaker datum $\ww$.
\par

We now state the assumptions under which our result holds. 
The main assumption is that there exists a constant $e(s,u)$ such that 
\[ 
f'_G(\phi,s) = e(s,u)f_G(\phi,u). 
\]
The supplementary assumptions concern expected properties of $L$-packets, as follows. 
We assume that Conjecture \ref{TPC} holds for the parameter $\phi_M$ 
as well as for the endoscopic factorizations $\phi'$ of $\phi$. 
In the setting of classical groups, this follows from Corollary \ref{strong-shahidi-arthur}. 
Thus, we know that there is a unique $\ww_M$-generic member $\pi_{M,\ww} \in \Pi_{\phi_M}$ 
and it satisfies $\pair{\cdot, \pi_{M,\ww}}_{\phi_M} = \1$.
Let $\Pi_\phi$ be the set consisting of the irreducible constituents of the representations $I_P(\pi_M)$, 
as $\pi_M$ runs over $\Pi_{\phi_M}$. 
According to the heredity property (\cite{Rod}, \cite[Corollary 1.7]{CS}), 
there is a unique $\ww$-generic member $\pi_\ww$ of $\Pi_\phi$ and it lies in $I_P(\pi_{M,\ww})$. 
Then the action of the operator $\pair{u,\tl\pi_{M,\ww}} R_P(w_u, \tl\pi_{M,\ww},\phi_M)$ on $I_P(\pi_{M,\ww})$ 
must preserve $\pi_\ww$, and hence acts on it by a scalar. 
We assume that this scalar is $1$. 
For classical groups, this follows from Theorem \ref{main1} (1) together with Lemma \ref{c3}.

\begin{thm}\label{2.5.5}
Under the above assumptions,
we have
\[ 
e(s,u)=1.
\]
In other words, Equation \eqref{A-LIR} in Section \ref{sec.main3} holds.
\end{thm}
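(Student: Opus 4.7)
The plan is to test both sides of the identity $f'_G(\phi,s) = e(s,u) f_G(\phi,u)$ against a suitable test function and extract the coefficient of the regular nilpotent orbit $\mathcal{O}_{\mathrm{reg}}$ (attached to the Whittaker datum $\ww$) in the Shalika germ expansion of each resulting invariant distribution on $G(F)$. The point is that, thanks to the work of Rodier and its refinement by Varma \cite{V2}, this coefficient is computable on both sides, and matching the two computations will pin down the scalar $e(s,u)$.

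First I would handle the right-hand side $f_G(\phi,u)$. By Varma's formula \cite{V2} (or classical Rodier theory), for an irreducible tempered representation $\pi$ of $G(F)$, the coefficient $c_{\mathcal{O}_{\mathrm{reg}}}(\pi)$ equals $1$ if $\pi$ is $\ww$-generic and $0$ otherwise. Applied to each summand $\tr(\pair{u,\tl\pi_M}R_P(w_u,\tl\pi_M,\phi_M) I_P(\pi_M,f))$, only $\pi_M = \pi_{M,\ww}$ contributes, since only $I_P(\pi_{M,\ww})$ contains a $\ww$-generic constituent, namely $\pi_\ww$, by heredity. By the assumption that $\pair{u,\tl\pi_{M,\ww}}R_P(w_u,\tl\pi_{M,\ww},\phi_M)$ acts on $\pi_\ww$ by the scalar $1$, the contribution of $\pi_\ww$ extracts exactly the coefficient $c_{\mathcal{O}_{\mathrm{reg}}}(\pi_\ww) = 1$, while all other constituents of $I_P(\pi_{M,\ww})$ are non-generic and give $0$. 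Thus the $\mathcal{O}_{\mathrm{reg}}$-coefficient of $f_G(\phi,u)$ equals $1$.

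Next I would handle the left-hand side $f'_G(\phi,s) = \Trans(S\Theta_{\phi'})$. Varma's main comparison theorem computes the $\mathcal{O}_{\mathrm{reg}}$-coefficient of the transferred distribution $\Trans(S\Theta_{\phi'})$ on $G(F)$ in terms of the analogous coefficient on $G'(F)$ and an explicit local constant arising from the germ of the Whittaker-normalized transfer factor $\Delta[\ww]$ at the identity, which Kottwitz evaluated in \cite{Kot} as a product of Weil indices and local $\ep$-factors. By the inductive hypothesis that Conjecture \ref{TPC} holds for the proper endoscopic factorization $\phi'$, there is a unique $\ww'$-generic constituent of $\Pi_{\phi'}$, it pairs trivially with $\Sc_{\phi'}$, and hence contributes $1$ to $S\Theta_{\phi'}$; all other constituents contribute $0$ to the $\mathcal{O}_{\mathrm{reg}}'$-coefficient on $G'$. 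So the $\mathcal{O}_{\mathrm{reg}}$-coefficient on the left side becomes an explicit product of Weil indices and $\ep$-factors determined by $(s,\phi)$ and the embedding ${}^L M \hookrightarrow {}^L G$.

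The main obstacle, and the crux of the argument, will be to verify that this explicit local-factor quantity equals $1$. Equivalently, one must compare two normalizations: the Whittaker normalization of $\Delta[\ww]$ underlying the transfer, and Arthur's normalization of $R_P(w_u,\tl\pi_M,\phi_M)$ by his modified $\gamma_A$-factors together with the Langlands $\lambda$-factors $\lambda(w_u,\psi_F)$ from Section \ref{sec.NIO}. Both are built from the same building blocks, so the comparison reduces to an identity between products of $\lambda$-factors and $\ep$-factors attached to $(s,\phi)$. Kottwitz's computation in \cite{Kot} supplies precisely this identity in full generality, and the extension to arbitrary residual characteristic by Varma \cite{V1} allows us to dispense with the odd-residual-characteristic hypothesis present in \cite[Lemma 2.5.5]{Ar}. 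Assembling these ingredients, the two $\mathcal{O}_{\mathrm{reg}}$-coefficients coincide, forcing $e(s,u) = 1$, and hence \eqref{A-LIR}.
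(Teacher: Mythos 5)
Your proposal follows essentially the same route as the paper: expand both sides via the Harish-Chandra local character expansion, use homogeneity to isolate regular nilpotent orbits, apply M{\oe}glin--Waldspurger/Varma to identify the $\ww$-generic coefficient, invoke Conjecture~\ref{TPC} for $G'$ to collapse the left side, and reduce to an identity of Weil indices and $\ep$-factors settled by Kottwitz \cite{Kot}. One small mischaracterization in your final paragraph: the local-factor identity that needs to be verified (the paper's Lemmas~\ref{tfnilp1} and~\ref{tfnilp2}) compares the germ of the Whittaker-normalized transfer factor $\Delta[\ww]$ with the ratio of Weil indices $\gamma(\g(F),\beta,\psi_F)/\gamma(\g'(F),\beta',\psi_F)$ coming from the commutation of transfer with Fourier transform; Arthur's $\gamma_A$-factor and $\lambda$-factor normalizations of $R_P(w_u,\tl\pi_M,\phi_M)$ do not enter that comparison---their role is entirely absorbed into the assumption that the operator acts by the scalar $1$ on $\pi_\ww$, which is used on the other side of the equation to get coefficient $1$. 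With that clarification your argument matches the paper's.
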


We will prove Theorem \ref{2.5.5} in Section \ref{sec.proof-of-255} below.

\begin{rem} \label{appa-o1}
Since we are also interested in the case of the orthogonal group $G = \O_{2n}$, which is disconnected, 
we will make the following slight modification in this situation. 
We will take $f \in C_c^\infty(G^\circ(F))$, 
and $u \in \NN_\phi$ will also be taken with respect to $G^\circ$. 
The distribution $f_G(\phi,u)$ and the stable character $S\Theta_{\phi'}$ will be defined as 
\[ 
\frac{1}{(G:G^\circ)}\sum_{\substack{\pi_M \in \Pi_{\phi_M}\\ w_u\pi_M \cong \pi_M}} 
\tr( \pair{u,\tl\pi_M} R_P(w_u, \tl\pi_M,\phi_M)I_P(\pi_M,f))
\]
and 
\[ 
S\Theta_{\phi'} = \frac{1}{(G':G'^\circ)}\sum_{\pi' \in \Pi_{\phi'}} \pair{1,\pi'}_{\phi'} \Theta_{\pi'}, 
\]
respectively.
Note that, when $G$ is connected, these formulas recover the previous formulas.
Then the modifications for the case of $G = \O_{2n}$ follow 
by applying the following argument to $G^\circ = \SO_{2n}$.
See Remark \ref{packet_SO}. 
In the following proof, we will continue working with a connected reductive group $G$. 
\end{rem}

%\subsection{Proof of Theorem \ref{strong-shahidi}}
\subsection{Proof of Theorem \ref{strong-shahidi}}\label{sec.TPC}
For $\pi \in \Pi_\phi$, 
let 
\[
c(\pi,\ww) = 
\left\{
\begin{aligned}
&1 \iif \text{$\pi$ is $\ww$-generic}, \\
&0 \other. 
\end{aligned}
\right. 
\]
We claim that it is enough to show the following statement: 
For any $s \in \Sc_\phi$ with $s \not= 1$, 
we have 
\[
\tag{$\dagger$}\label{dagger}
\sum_{\pi \in \Pi_\phi} c(\pi,\ww) \pair{s,\pi}_\phi = 1.
\]
Indeed, assume that we have shown this statement. 
Let 
\[
X = \{\pi \in \Pi_\phi \,|\, c(\pi,\ww)=1\} \subset \Pi_\phi 
\]
and consider the conjugation-invariant function $f$ on $\Sc_\phi$ defined by 
\[
f(s) = \sum_{\pi \in \Pi_\phi} c(\pi,\ww) \pair{s,\pi}_\phi.
\] 
Equation \eqref{dagger} and the assumption $|\Sc_\phi|>1$ imply that $f \not= 0$, hence $X \not= \emptyset$, 
and $f(1)$ is a natural number greater than $0$ (if $\Sc_\phi$ is abelian, then $f(1) = |X|$). 
The scalar product of $f$ with the trivial character of $\Sc_\phi$ is non-trivial, 
hence $X$ contains the representation $\pi_1$ with $\pair{ \cdot ,\pi_1 }_\phi = \1$. 
Thus, $\pi_1$ is $\ww$-generic. 
Let $X^1=X \setminus \{\pi_1\}$ and let $f^1 = f - \pair{ \cdot, \pi_1 }_\phi$. 
Then $f^1(s) = 0$ for all $s \not= 1$. 
On the one hand, since $f$ is a multiplicity free sum of irreducible characters of $\Sc_\phi$, 
the construction of $f^1$ implies that 
the scalar product of $f^1$ and the trivial character of $\Sc_\phi$ is equal to zero, 
while on the other hand, this scalar product is equal to $f^1(1)$ by evaluation. 
Thus $f^1(1)=0$ and we conclude $f^1=0$, hence $X^1 = \emptyset$.
Therefore $X = \{\pi_1\}$.
\par

This reduces the proof of Theorem \ref{strong-shahidi} to the proof of Equation \eqref{dagger}. 
To prove it, let $1 \not= s \in \Sc_\phi$ 
and let $G'$ and $\phi'$ be the corresponding endoscopic datum and factored parameter, respectively. 
Consider the distributions 
\[ 
f \mapsto f_G'(\phi,s) = \Trans(S\Theta_{\phi'})(f), 
\quad S\Theta_{\phi'} = \sum_{\pi' \in \Pi_{\phi'}} \pair{1,\pi'}_{\phi'} \Theta_{\pi'} 
\]
and 
\[ f_G(\phi,s) = \sum_{\pi \in \Pi_\phi} \pair{s,\pi}_\phi \Theta_\pi(f), 
\]
both linear forms on $C_c^\infty(G(F))$. 
According to \eqref{ECR2} we have 
\[ 
f_G(\phi,s) = f'_G(\phi,s). 
\]
\par

For each $\pi \in \Pi_\phi$, we have the Harish-Chandra local character expansion
\[ 
\Theta_\pi(f) = \sum_O c(\pi,O)\widehat\mu_O(f \circ \exp )
\]
for all $f \in C_c^\infty(G(F))$ with support close to the identity, 
where $O$ runs over the set of nilpotent orbits in $\g(F)=\Lie(G)(F)$. 
To form the Fourier transform $\widehat\mu_O$, 
we have chosen arbitrarily a non-degenerate $G(F)$-invariant symmetric bilinear form $\beta$ on $\g(F)$ 
and a non-trivial unitary character $\psi_F \colon F \rightarrow \C^\times$.
Note that the measures on $G(F)$ (used to define $\Theta_\pi$) and on $O$ (used to define $\mu_O$)
also depend on the choice of $\beta$, as in \cite[Section I.8]{MW_model}.
\par

Putting these together we obtain
\[ 
f_G(\phi,s) = \sum_{\pi \in \Pi_\phi} \pair{s,\pi}_\phi \sum_O c(\pi,O)\widehat\mu_O(f \circ \exp ). 
\]
In the same way, we obtain
\[ 
f'_G(\phi,s) = \sum_{\pi' \in \Pi_{\phi'}} \pair{1,\pi'}_{\phi'} \sum_{O'} c(\pi',O')\widehat\mu_{O'}(f' \circ \exp ), 
\]
where now $O'$ runs over the set of nilpotent orbits in $\g'(F)$. 
We have used another non-degenerate $G'(F)$-invariant symmetric bilinear form $\beta'$ on $\g'(F)$, 
at the moment unrelated to $\beta$.
\par

Thus \eqref{ECR2} implies that, in a neighborhood of the identity, 
\[ 
\Trans\left(\sum_{\pi' \in \Pi_{\phi'}} \pair{ 1,\pi' }_{\phi'} \sum_{O'} c(\pi',O')\widehat\mu_{O'}\right)
= \sum_{\pi \in \Pi_\phi} \pair{s,\pi}_\phi \sum_O c(\pi,O)\widehat\mu_O, 
\]
and using the homogeneity of nilpotent orbital integrals, 
we obtain from this 
\[ 
\Trans\left(\sum_{\pi' \in \Pi_{\phi'}} \pair{1,\pi'}_{\phi'} \sum_{O':\text{regular}} c(\pi',O')\widehat\mu_{O'}\right)
= \sum_{\pi \in \Pi_\phi} \pair{s,\pi}_\phi \sum_{O:\text{regular}} c(\pi,O)\widehat\mu_O, 
\]
see \cite[pp.~325--326]{Sh7}.
\par

We now use a result of M{\oe}glin--Waldspurger \cite{MW_model} 
and its extension to the dyadic case by Varma \cite{V1}. 
There exists a certain regular nilpotent orbit $O_{\psi_F,\beta,\ww}$, 
depending on the Whittaker datum $\ww$, the form $\beta$, and the character $\psi_F$, 
such that 
\[ 
c(\pi,O_{\psi_F,\beta,\ww}) = 
\left\{
\begin{aligned}
&1 \iif \text{$\pi$ is $\ww$-generic}, \\
&0 \other.
\end{aligned}
\right.  
\]
We will specify the orbit $O_{\psi_F,\beta,\ww}$ more precisely below. 
For now, we note that we can apply this to both sides of the above identity. 
On the left-hand side, we use the uniqueness of $\ww'$-generic constituent in $\Pi_{\phi'}$ 
for each Whittaker datum $\ww'$ on $G'$, 
the fact that $\pair{1,\pi'}_{\phi'}=1$ for such a constituent 
(which is the assumption that Conjecture \ref{TPC} holds for $G'$ 
and an application of \cite[Theorem 4.3]{Kal_gc}), 
and the fact that each regular nilpotent orbit $O'$ is equal to $O_{\psi_F,\beta',\ww'}$ 
for some choice of $\ww'$, 
to conclude that 
\[ 
\sum_{\pi' \in \Pi_{\phi'}} \pair{1,\pi'}_{\phi'} \sum_{O':\text{regular}} c(\pi',O')\widehat\mu_{O'} 
= \sum_{O':\text{regular}}\widehat\mu_{O'}. 
\]
Therefore, our identity becomes 
\[ 
\Trans\left(\sum_{O':\text{regular}} \widehat\mu_{O'}\right)
= \sum_{\pi \in \Pi_\phi} \pair{s,\pi}_\phi \sum_{O:\text{regular}} c(\pi,O)\widehat\mu_O.
\]
\par

According to \cite[Corollary 5.5.B]{LS}, 
we have
\[ 
\Trans\left(\sum_{O':\text{regular}} \mu_{O'}\right) = \sum_{O:\text{regular}} \Delta[\ww](O)\mu_O, 
\]
where the transfer factor $\Delta[\ww](O)$ is defined at the end of \cite[(5.1)]{LS}, 
in which it was denoted by $\Delta(u)$, with $u$ a regular unipotent element in $G(F)$. 
We are using here the bijection between regular unipotent orbits in $G(F)$ 
and regular nilpotent orbits in $\g(F)$.
\par

Since the endoscopic transfer commutes with the Fourier transform up to an explicit scalar 
by \cite[p.~91, Conjecture 1]{W1} (which is a theorem due to \cite{W2}, \cite{W3}, \cite{CHL}, \cite{N}), 
we obtain 
\[
\Trans\left(\sum_{O':\text{regular}} \widehat\mu_{O'}\right) 
= \frac{\gamma(\g(F),\beta,\psi_F)}{\gamma(\g'(F),\beta',\psi_F)}
\sum_{O:\text{regular}} \Delta[\ww](O)\widehat\mu_O, 
\]
where $\gamma(\g(F),\beta,\psi_F)$ and $\gamma(\g'(F),\beta',\psi_F)$ are the corresponding Weil indices. 
This identity requires the forms $\beta$ and $\beta'$ to be synchronized, 
as explained in \cite[Section VIII.6]{W1}, and as we now recall. 
Extending scalars from $F$ to $\overline{F}$ identifies 
the space of non-degenerate symmetric bilinear forms on $\g(F)$ that are invariant under $G(F)$ 
with the space of non-degenerate symmetric bilinear forms on $\g(\overline{F})$ 
that are invariant under $G(\overline{F})$ and $\Gamma = \Gal(\overline{F}/F)$. 
If $T \subset G$ is an arbitrary maximal torus, 
then the restriction to its Lie algebra $\tt(\overline{F})$ identifies 
the latter space with the space of non-degenerate symmetric bilinear forms on $\tt(\overline{F})$ 
that are invariant under the absolute Weyl group and $\Gamma$. 
Note that, if $T' \subset G$ is a second maximal torus, 
then the spaces for $\tt$ and $\tt'$ are canonically identified, 
namely by $\Ad(g)$ for any $g \in G(\overline{F})$ that conjugates $T$ to $T'$. 
In this way, taking a maximal torus of $G'$ and transferring it to $G$, 
we can transfer $\beta$ to $\g'(F)$, and we take $\beta'$ to be that transfer.
\par

With this proviso, our identity becomes 
\[ 
\frac{\gamma(\g(F),\beta,\psi_F)}{\gamma(\g'(F),\beta',\psi_F)}
\sum_{O:\text{regular}} \Delta[\ww](O)\widehat\mu_O 
= \sum_{\pi \in \Pi_\phi} \pair{s,\pi}_\phi \sum_{O:\text{regular}} c(\pi,O)\widehat\mu_O.
\]
By \cite[Theorem 5.11]{HC}, we can separate terms. 
Comparing coefficients for the orbit $O_{\psi_F,\beta,\ww}$ 
and applying the results of M{\oe}glin--Waldspurger and Varma recalled above, 
this identity becomes
\[ 
\frac{\gamma(\g(F),\beta,\psi_F)}{\gamma(\g'(F),\beta',\psi_F)} \Delta[\ww](O_{\psi_F,\beta,\ww}) 
= \sum_{\pi \in \Pi_\phi} c(\pi,\ww)\pair{s,\pi}_\phi.
\]
Our goal is to show that the left-hand side of this expression equals $1$. 
As a first step, 
we will rewrite this left-hand side in a way that does not involve the form $\beta$ and the Whittaker datum $\ww$. 
For this, let $S' \subset G'$ be a maximal torus defined over $F$ and let $S \subset G$ be its transfer. 
Decompose $\g = \ss \oplus \rr$, 
where $\rr$ is the direct sum of the root spaces $\g_\alpha$ for all absolute roots $\alpha$ of $S$ in $G$. 
As discussed in \cite[Section 1.3.3]{Kot}, 
there is a canonical quadratic form $Q$ on $\rr$ 
defined as the orthogonal sum of quadratic forms $Q_{\pm\alpha}$ on each $\g_{\alpha} \oplus \g_{-\alpha}$ 
given by 
\[  
[Y_\alpha,Y_{-\alpha}] = Q_{\pm\alpha}(Y_\alpha + Y_{-\alpha}) \cdot H_\alpha, 
\]
where $H_\alpha$ is the coroot for $\alpha$.
We can thus form the Weil index $\gamma(\rr(F),Q,\psi_F)$.
Note however that this form does not always extend to a non-degenerate $G(F)$-invariant quadratic form on $\g(F)$, 
see \cite[Section I.5]{Kot}.
\par

We have the analogous decomposition $\g' = \ss' \oplus \rr'$ 
and the analogous quadratic form $Q'$ on $\rr'$, 
hence also the Weil index $\gamma(\rr'(F),Q',\psi_F)$. 
Finally, we have the maximally split maximal tori (equivalently, minimal Levi subgroups) $T \subset G$ and $T' \subset G'$ 
defined over $F$.
Note that such tori exist since $G$ is assumed to be quasi-split over $F$.

\begin{lem} \label{tfnilp1}
The identity 
\[ 
\frac{\gamma(\g(F),\beta,\psi_F)}{\gamma(\g'(F),\beta',\psi_F)} \Delta[\ww](O_{\psi_F,\beta,\ww}) 
= \frac{\gamma(\rr(F),Q,\psi_F)}{\gamma(\rr'(F),Q',\psi_F)}\ep(1/2,X^*(T)_\C-X^*(T')_\C,\psi_F)
\]
holds for any choices of character $\psi_F$, Whittaker datum $\ww$, 
and non-degenerate $G(F)$-invariant symmetric bilinear form $\beta$, 
where $\beta'$ is the transfer of $\beta$ to $\g'(F)$ according to \cite[Section VIII.6]{W1}. 
In particular, the left-hand side is independent of the choices of $\ww$ and $\beta$. 
Furthermore, both sides are independent of the choice of $\psi_F$.
\end{lem}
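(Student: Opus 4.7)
The plan is to prove Lemma \ref{tfnilp1} by first stripping down the left-hand side to a quantity that depends only on the root part of the Lie algebra, and then matching what remains with the right-hand side by invoking Kottwitz's explicit description of Langlands--Shelstad transfer factors at regular unipotent elements \cite{Kot}, together with the product formula for $\Delta[\ww]$ from \cite[Section 5]{LS}.

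First I would use the decomposition $\g = \ss \oplus \rr$, orthogonal for $\beta$ since $\ss$ and $\rr$ are distinct weight spaces for the $S$-action, together with the analogous decomposition $\g' = \ss' \oplus \rr'$ for $\beta'$. Multiplicativity of the Weil index on orthogonal direct sums yields
\[
\frac{\gamma(\g(F),\beta,\psi_F)}{\gamma(\g'(F),\beta',\psi_F)}
=
\frac{\gamma(\ss(F),\beta|_\ss,\psi_F)}{\gamma(\ss'(F),\beta'|_{\ss'},\psi_F)}
\cdot
\frac{\gamma(\rr(F),\beta|_\rr,\psi_F)}{\gamma(\rr'(F),\beta'|_{\rr'},\psi_F)}.
\]
Since $\beta'$ is the transfer of $\beta$ via the synchronization of \cite[Section VIII.6]{W1}, the quadratic forms $\beta|_\ss$ and $\beta'|_{\ss'}$ are isomorphic over $F$ under the canonical $F$-isomorphism $\ss \cong \ss'$ coming from the transfer of tori, so the first ratio equals $1$. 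Next I would compare $\beta|_\rr$ with the canonical form $Q$ of \cite[Section 1.3.3]{Kot} pair by pair on each $\g_\alpha \oplus \g_{-\alpha}$: the two forms differ by a scalar $c_\alpha \in F_\alpha^\times$ (with $F_\alpha$ the field of definition of the pair $\{\alpha,-\alpha\}$), and the ratio of Weil indices decomposes accordingly as a product of Hilbert symbols indexed by $\Gamma$-orbits of roots. The same computation applies to $\beta'|_{\rr'}$ versus $Q'$, and because $\beta$ and $\beta'$ are transferred, the relative discrepancy is controlled by the asymmetric roots of $G$ that do not come from $G'$.

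At this point the proof reduces to an identity between $\Delta[\ww](O_{\psi_F,\beta,\ww})$ and the residual combination of Hilbert symbols together with $\ep(1/2, X^*(T)_\C - X^*(T')_\C, \psi_F)$. This is exactly of the form of Kottwitz's computation: by \cite[Section 5]{LS} the transfer factor at a regular unipotent element factors as a product $\Delta_I \cdot \Delta_{II} \cdot \Delta_{III_2} \cdot \Delta_{IV}$ over the absolute roots, and after imposing the Whittaker normalization with respect to $\ww$ one obtains (as in \cite{Kot}) two contributions: a product of Hilbert symbols depending on the $c_\alpha$'s and on $\beta$, and an $\ep$-factor of a virtual Galois representation built from the cocharacter lattices. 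The key point is that the Langlands--Shelstad normalization is naturally tied to the decomposition relative to $S$, while the Whittaker normalization is tied to the maximally split torus $T$; the cohomological gap between these two normalizations is exactly $\ep(1/2, X^*(T)_\C - X^*(T')_\C, \psi_F)$. Independence of $\ww$ and $\beta$ on the left-hand side follows a posteriori from the equality, and independence from $\psi_F$ on both sides is verified by checking that rescaling $\psi_F$ to $\psi_F(a\,\cdot)$ produces cancelling Hilbert-symbol contributions on each side, using the standard behavior of Weil indices and $\ep$-factors under such a twist.

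The main obstacle will be Step three: precisely matching Kottwitz's decomposition of $\Delta[\ww]$ at a regular nilpotent orbit against the Hilbert-symbol discrepancy produced in Step two, while correctly bookkeeping the Whittaker normalization. One must carefully align the choice of $a$-data and $\chi$-data implicit in $\Delta[\ww]$ with the bilinear-form data $\beta$, track how the nilpotent orbit $O_{\psi_F,\beta,\ww}$ singled out by the M{\oe}glin--Waldspurger--Varma result sits inside $G(F)$ relative to the splitting defining $\ww$, and identify the resulting $\ep$-factor contribution with $\ep(1/2, X^*(T)_\C - X^*(T')_\C, \psi_F)$. Once this bookkeeping is done, the identity is immediate; I expect everything else to be routine manipulations with Weil indices and orthogonal decompositions.
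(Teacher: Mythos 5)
Your overall plan is sound in shape: the orthogonal decomposition $\g=\ss\oplus\rr$ with $\beta|_\ss=\beta'|_{\ss'}$ under the synchronization, so that the ratio of Weil indices on the $\ss$-parts is $1$, and the pair-by-pair comparison of $\beta|_\rr$ with the canonical form $Q$ via Hilbert symbols, are both exactly how the paper's proof begins and ends. The framing of the $\ep$-factor as the normalization discrepancy between $\Delta[\ww]$ and a splitting-normalized factor is also correct (though it is the discrepancy between $\Delta[\ww]$ and $\Delta[\spl]$, not between ``$S$-tied'' and ``$T$-tied'' normalizations as you suggest).

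The gap is in your Step three, and you have flagged it yourself without resolving it. The actual mechanism is the structure of the transfer factor at a regular unipotent orbit from the end of \cite[(5.1)]{LS}: one has
\[
\Delta[\ww](O) = \frac{\Delta[\ww](\overline\gamma_{G'},\overline\gamma_G)}{\Delta[\spl'_\infty](\overline\gamma_{G'},\overline\gamma_G)}\,\pair{\inv_{\spl'}(O),s}
\]
for \emph{any} choice of splitting $\spl'$. The whole proof hinges on the specific choice $\spl'=(\overline{B},T,\{\beta(X_\alpha,X_{-\alpha})^{-1}X_{-\alpha}\})$, which makes $\inv_{\spl'}(O_{\psi_F,\beta,\ww})=1$ because the M{\oe}glin--Waldspurger--Varma orbit is explicitly the orbit of $Y_{\psi_F,\beta,\ww}=\sum_\alpha\beta(X_\alpha,X_{-\alpha})^{-1}X_{-\alpha}$. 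This observation is what ties the bilinear form $\beta$ to the transfer factor in a way that can be matched against the Weil-index computation. Once the invariant term is killed, the $\ep$-factor falls out of the definition of the Whittaker normalization, and the remaining discrepancy is the dependence of $\Delta_I$ on the splitting, which is computed via the rescaling $\spl'_\infty = b\cdot\spl$ with $b_\alpha=\beta(X_\alpha,X_{-\alpha})$ using \cite[Lemma 5.1]{Kal_gc} and \cite[Lemma 3.2.C]{LS}; this produces precisely $\prod_{\alpha\in R(S,G/G')_\sym/\Gamma}\kappa_\alpha(b_\alpha)$, matching the Hilbert symbols from \cite[Lemma 4.8]{Kal_ep} on the Weil-index side. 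Without the choice of $\spl'$ adapted to $\beta$ and the explicit $\Delta_I$ rescaling, your Step three remains a program rather than a proof. (Also note that Kottwitz \cite{Kot} is invoked in the paper to prove Lemma \ref{tfnilp2}, not \ref{tfnilp1}; the factorization of $\Delta[\ww]$ you attribute to \cite{Kot} is not the input here.)
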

\begin{proof}
Following the definition of $\Delta[\ww](O_{\psi_F,\beta,\ww})$ given at the end of \cite[(5.1)]{LS}, 
we have 
\[ 
\Delta[\ww](O_{\psi_F,\beta,\ww}) 
= \frac{\Delta[\ww](\overline\gamma_{G'},\overline\gamma_G)}
{\Delta[\spl'_\infty](\overline\gamma_{G'},\overline\gamma_G)}
\pair{\inv_{\spl'}(O_{\psi_F,\beta,\ww}),s}, \]
where $\spl'$ is an arbitrarily chosen splitting of $G$ and $\spl'_\infty$ is its opposite splitting; 
the left-hand side is independent of this choice, 
as well as of the choice of related elements $\overline\gamma_{G'}$ and $\overline\gamma_G$. 
\par

We will choose $\spl'$ in such a way that $\inv_{\spl'}(O_{\psi_F,\beta,\ww})=1$. 
To see what this entails, 
we review the definition of $O_{\psi_F,\beta,\ww}$ given in \cite{MW_model} and \cite{V1}. 
Choose a splitting $\spl = (B,T,\{X_\alpha\})$ that, together with $\psi_F$, 
induces $\ww$ in the sense of \cite[Section 5.3]{KoSh}. 
Let $U$ be the unipotent radical of $B$ 
and let $\overline{U}$ be the unipotent radical of the $T$-opposite Borel subgroup to $B$. 
We write $\uu$ and $\overline{\uu}$ for the Lie algebras of $U$ and $\overline{U}$, respectively. 
Then $O_{\psi_F,\beta,\ww}$ is the $G(F)$-orbit of a regular nilpotent element 
$Y_{\psi_F,\beta,\ww} \in \overline{\uu}(F)$ whose property is that the character 
\[ 
X \mapsto \psi_F(\beta(X,Y_{\psi_F,\beta,\ww}))
\]
of $\uu(F)$ equals the composition of the exponential map $\exp \colon \uu(F) \rightarrow U(F)$ 
and the generic character $U(F) \rightarrow \C^\times$ that makes up the Whittaker datum $\ww$.  
One can check that $Y_{\psi_F,\beta,\ww}$ is given by 
\[ 
Y_{\psi_F,\beta,\ww} = \sum_\alpha \beta(X_\alpha,X_{-\alpha})^{-1}X_{-\alpha}, 
\]
where $X_{-\alpha} \in \g_{-\alpha}$ is determined by $[X_{\alpha},X_{-\alpha}] = H_\alpha$. 
Thus, if we take $\spl'$ to be $(\overline{B}, T, \{\beta(X_\alpha, X_{-\alpha})^{-1}X_{-\alpha}\})$, 
then $\inv_{\spl'}(O_{\psi_F,\beta,\ww})=1$.
Hence we have 
\[ 
\Delta[\ww](O_{\psi_F,\beta,\ww}) 
= \frac{\Delta[\ww](\overline\gamma_{G'},\overline\gamma_G)}
{\Delta[\spl'_\infty](\overline\gamma_{G'},\overline\gamma_G)}
\]
for this particular choice of $\spl'$. 
On the other hand, by the definition of the Whittaker normalization \cite[Section 5.3]{KoSh}, \cite{KoSh2},
\[
\Delta[\ww](\overline\gamma_{G'},\overline\gamma_G) 
= \ep(1/2,X^*(T)_\C-X^*(T')_\C,\psi_F) \cdot \Delta[\spl](\overline\gamma_{G'},\overline\gamma_G).
\]
Thus we need to compute the ratio between 
$\Delta[\spl](\overline\gamma_{G'},\overline\gamma_G)$ 
and $\Delta[\spl'_\infty](\overline\gamma_{G'},\overline\gamma_G)$. 
Both of these being transfer factors, 
this ratio does not depend on the elements $\overline\gamma_{G'}$ and $\overline\gamma_G$, 
as long as they are related (otherwise both factors are zero). 
So we choose arbitrarily a pair of related strongly regular semisimple elements 
$\overline\gamma_{G'} \in S'(F)$ and $\overline\gamma_G \in S(F)$.
Note that this choice determines an admissible isomorphism $S' \xrightarrow{\sim} S$, 
namely the unique one that maps $\overline\gamma_{G'}$ to $\overline\gamma_G$, 
and hence determines an inclusion of the absolute root systems $R(S',G') \rightarrow R(S,G)$.
\par

The only term of the transfer factor that depends on the splitting is the term $\Delta_{I}$, 
where the splitting enters the definition of the splitting invariant. 
We have the relation $\spl'_\infty = b \cdot \spl$, 
where $b_\alpha = \beta(X_\alpha,X_{-\alpha})$ for all $\alpha \in R(T,G)$. 
Then \cite[Lemma 5.1]{Kal_gc} shows that rescaling the splitting has the same effect as rescaling the $a$-data, 
which also enters the definition of the splitting invariant. 
On the other hand, \cite[Lemma 3.2.C]{LS} shows how the transfer factor changes when the $a$-data is rescaled. 
With this, we obtain
\[ 
\Delta[\ww](O_{\psi_F,\beta,\ww}) 
= \ep(1/2,X^*(T)_\C-X^*(T')_\C,\psi_F) 
\prod_{\alpha \in R(S,G/G')_\sym/\Gamma} \kappa_\alpha(b_\alpha), 
\]
where $R(S,G/G')$ is a shorthand notation for $R(S,G) \setminus R(S',G')$, 
the subscript ``sym'' indicates those roots that are \emph{symmetric}, 
i.e. those $\alpha$ with $-\alpha \in \Gamma \cdot \alpha$, 
and $\kappa_\alpha$ is the sign character of the quadratic extension $F_\alpha/F_{\pm\alpha}$.
\par

Using that the decompositions $\g' = \ss' \oplus \rr'$ and $\g = \ss \oplus \rr$ are orthogonal for $\beta'$ and $\beta$, 
respectively,
we see that the Weil indices decompose accordingly as products, 
and since $\beta|_\ss = \beta'|_{\ss'}$ by the synchronization of $\beta$ and $\beta'$, 
the left-hand side of the equation in the statement of the lemma becomes 
\[ 
\frac{\gamma(\rr(F),\beta,\psi_F)}{\gamma(\rr'(F),\beta',\psi_F)}
\ep(1/2,X^*(T)_\C-X^*(T')_\C,\psi_F)
\prod_{\alpha \in R(S,G/G')_\sym/\Gamma} \kappa_\alpha(b_\alpha). 
\]
According to \cite[Lemma 4.8]{Kal_ep} and the fact that $Q(X_\alpha+X_{-\alpha}) = 1$, 
we have
\[ 
\frac{\gamma(\rr(F),\beta,\psi_F)}{\gamma(\rr(F),Q,\psi_F)} 
= \prod_{\alpha \in R(S,G)_\sym/\Gamma} \kappa_\alpha(b_\alpha). 
\]
The analogous identity holds with $(G,S)$ replaced by $(G',S')$. 
This proves the identity claimed in the lemma. 
The independence of the left-hand side of the choices of $\beta$ and $\ww$ follows from that identity. 
To see the independence of both sides of the choice of $\psi_F$, 
note that any other choice of character is of the form $(a\psi_F)(x) = \psi_F(ax)$ for some $a \in F^\times$, 
but one sees directly from the definitions that replacing $\psi_F$ and $\beta$ by $a\psi_F$ and $a^{-1}\cdot\beta$ 
does not change the Weil indices or the orbit $O_{\psi_F,\beta,\ww}$.
See Section \ref{sub.weil-eps} below for the definition of the Weil indices.
\end{proof}

To complete the proof of Theorem \ref{strong-shahidi}, 
we need to show that the right-hand side of the identity of Lemma \ref{tfnilp1} equals one. 
This is accomplished by the following lemma.

\begin{lem} \label{tfnilp2}
Let $S' \subset G'$ be any maximal torus defined over $F$ and let $S \subset G$ be its transfer. 
Decompose the Lie algebras $\g = \ss \oplus \rr$ and $\g' = \ss' \oplus \rr'$. 
Let $Q$ (\resp $Q'$) be the canonical quadratic form on $\rr$ (\resp $\rr'$) 
described before the statement of Lemma \ref{tfnilp1}. 
Let $T' \subset G'$ and $T \subset G$ be minimal Levi subgroups defined over $F$. 
Then
\[ 
\frac{\gamma(\rr(F),Q,\psi_F)}{\gamma(\rr'(F),Q',\psi_F)}\ep(1/2,X^*(T)_\C-X^*(T')_\C,\psi_F) 
= 1.
\]
\end{lem}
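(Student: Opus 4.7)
The plan is to reduce this identity to results of Kottwitz \cite{Kot} via a two-step specialization. First, I would show that the left-hand side does not depend on the choice of torus $S'$: the canonical quadratic forms $Q$ on $\rr$ and $Q'$ on $\rr'$ are intrinsic to the Lie bracket and the coroots, so any inner automorphism of $G'(\overline{F})$ moving $S'$ to another maximal torus (together with the matching inner automorphism of $G(\overline{F})$ moving $S$) preserves both forms. The epsilon factor involves only $T$ and $T'$ and is therefore trivially independent of $S'$. Thus one may specialize to $S' = T'$ and $S = T$, the maximally split maximal tori.

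In this specialization, the admissible isomorphism $T' \xrightarrow{\sim} T$ identifies $R(T', G')$ with a $\Gamma$-stable subset of $R(T, G)$, and the relation $[Y_\alpha, Y_{-\alpha}] = Q_{\pm\alpha}(Y_\alpha + Y_{-\alpha}) H_\alpha$ yields the same value whether computed in $G$ or $G'$ for $\alpha \in R(T', G')$. Hence $Q|_{\rr'(F)} = Q'$, and the Weil index ratio reduces to $\gamma(\mm(F), Q|_\mm, \psi_F)$, where $\mm \subset \rr$ is the orthogonal complement of $\rr'$, supported on the root spaces for $\alpha \in R(T, G) \setminus R(T', G')$. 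This root set is stable under both $\Gamma$ and $\alpha \mapsto -\alpha$, and the quadratic space $(\mm(F), Q|_\mm)$ decomposes as an orthogonal sum indexed by equivalence classes under $\Gamma$ and $\alpha \sim -\alpha$. Asymmetric classes contribute hyperbolic summands with trivial Weil index, while symmetric classes contribute two-dimensional summands whose Weil indices are Langlands $\lambda$-factors $\lambda(F_\alpha/F_{\pm\alpha}, \psi_F)$ attached to the relevant quadratic extensions, following the computation in \cite[Lemma 4.8]{Kal_ep}.

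The final step is to match the resulting product of $\lambda$-factors with the epsilon factor $\ep(1/2, X^*(T)_\C - X^*(T')_\C, \psi_F)$, which by the standard inductivity and additivity properties of local epsilon factors admits a parallel product decomposition. This matching is precisely one of the identities established by Kottwitz in \cite{Kot} in his treatment of transfer factors and canonical quadratic spaces associated to root data of quasi-split groups and their endoscopic counterparts. Once this matching is granted, the identity of the lemma follows from assembling the two product decompositions.

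The main obstacle will be the careful verification of sign conventions and the identification, as a virtual $W_F$-representation, of $X^*(T)_\C - X^*(T')_\C$ with the module carried by the symmetric $\Gamma$-orbits in $(R(T, G) \setminus R(T', G'))/\{\pm 1\}$. This case-by-case analysis over orbit types (symmetric versus asymmetric, together with the structure of the quadratic extensions $F_\alpha/F_{\pm\alpha}$) is exactly what is carried out in \cite{Kot}, so the proof ultimately amounts to a direct translation of Kottwitz's framework into our setting.
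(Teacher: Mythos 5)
Your proof does not go through as written, for two structural reasons. First, the specialization ``$S' = T'$ and $S = T$'' assumes that the transfer to $G$ of the maximally split maximal torus $T'$ of $G'$ is the maximally split maximal torus $T$ of $G$; this fails in general. For instance, if $G=\SL_2$ and $G'$ is a one-dimensional anisotropic torus (as in Whittaker-normalized transfer from $\SL_2$ to its elliptic endoscopic tori), then $T'=G'$ transfers to an anisotropic maximal torus of $\SL_2$, not the split diagonal torus $T$; the admissible isomorphism lands in the stable class of the transfer $S$, not in $T$, so your map $T'\xrightarrow{\sim} T$ does not exist. Second, the assertion $Q|_{\rr'(F)}=Q'$ --- that $(\rr'(F),Q')$ embeds isometrically into $(\rr(F),Q)$ --- is not established and does not literally make sense: $\rr'$ and $\rr$ are built from root spaces inside two genuinely different Lie algebras $\g'$ and $\g$, and the admissible isomorphism $S'\xrightarrow{\sim} S$ identifies characters and cocharacters, not the ambient Lie algebras or their $F$-rational structures. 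Relatedly, your preliminary claim that the left-hand side is independent of $S'$ because ``inner automorphisms preserve both forms'' is also unjustified: such automorphisms lie in $G'(\overline{F})$ rather than $G'(F)$, and the Weil index is an $F$-rational invariant of the quadratic space, which genuinely changes with the torus --- precisely why Kottwitz's formula depends on $X^*(S)$.

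The paper's argument avoids all of this by invoking the single clean statement you circled around but never cited: \cite[Theorem 1.1]{Kot} gives, for any maximal torus $S\subset G$ of a quasi-split group with $T$ the maximally split one,
\[
\gamma(\rr(F),Q,\psi_F)=\ep(1/2,X^*(S)_\C-X^*(T)_\C,\psi_F).
\]
Applying this once to $(G,S,T)$ and once to $(G',S',T')$, and using that $X^*(S)\cong X^*(S')$ as $\Gamma$-modules (the defining property of transfer), the $S$- and $S'$-contributions cancel in the ratio of Weil indices, leaving $\ep(1/2,X^*(T')_\C-X^*(T)_\C,\psi_F)$, the reciprocal of the $\ep$-factor in the lemma. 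No specialization of the torus, no isometric embedding, and no orbit-by-orbit analysis are needed --- the latter is already packaged inside Kottwitz's theorem. Your closing paragraph correctly intuits that the proof amounts to a translation of Kottwitz's framework, but the detour through ``$S'=T'$'' introduces false steps and obscures that the whole lemma is a three-line deduction from a single black-boxed result.
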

\begin{proof}
From \cite[Theorem 1.1]{Kot}, 
we have
\[ 
\gamma(\rr(F),Q,\psi_F) = \ep(1/2,X^*(S)_\C-X^*(T)_\C,\psi_F). 
\]
Applying this result once to $G$ and the torus $S$, and once to $G'$ and the torus $S'$, 
and using that $X^*(S) \cong X^*(S')$ as $\Gamma$-modules, 
we obtain the desired result. 
\end{proof}

We obtain Equation \eqref{dagger},
and hence complete the proof of Theorem \ref{strong-shahidi}.
\par

\begin{rem}
In the case of classical groups, 
one can also compute the left-hand side in Lemma \ref{tfnilp2} explicitly 
and check that it equals $1$. 
See Section \ref{sub.weil-eps} below.
\end{rem}
    
%\subsection{Proof of Theorem \ref{2.5.5}}
\subsection{Proof of Theorem \ref{2.5.5}}\label{sec.proof-of-255}
The argument is very close to that of the proof of Theorem \ref{strong-shahidi}. 
Since Theorem \ref{2.5.5} is an important part of the argument of \cite{Ar}, 
we will still present all steps of the proof, 
but we will be more brief with the justifications, 
which are the same as in the previous proof.
\par

We consider the same distribution $f'_G(\phi,s)$ as in that proof, 
but replace the distribution $f_G(\phi,s)$ by the distribution $f_G(\phi,u)$ 
from the statement of Theorem \ref{2.5.5}. 
This distribution can be expanded as 
\[ 
f_G(\phi,u) = \sum_{\pi \in \Pi_\phi} c(\pi) \Theta_\pi(f), 
\]
where $c(\pi)$ is defined as follows. 
Let $\pi \otimes M_\pi$ be the maximal $\pi$-isotypic constituent of $I_P(\pi_M)$. 
By Schur's lemma $\pair{u,\tl\pi_M}R_P(w_u,\tl\pi_M,\phi_M)$ 
induces an operator on the finite-dimensional $\C$-vector space $M_\pi$ 
and $c(\pi)$ is the trace of that operator. 
Our assumptions imply that $M_{\pi_\ww}$ is $1$-dimensional and $c(\pi_\ww)=1$.
\par

Instead of \eqref{ECR2} in Theorem \ref{strong-shahidi}, 
we now use the assumed identity
\[ 
f'_G(\phi,s) = e(s,u)f_G(\phi,u).
\]
Expanding both sides using the Harish-Chandra local character expansion 
and using the homogeneity of nilpotent orbital integrals, 
we arrive at the identity
\[ 
\Trans\left(\sum_{\pi' \in \Pi_{\phi'}} \pair{1,\pi'}_{\phi'} \sum_{O':\text{regular}} c(\pi',O')\widehat\mu_{O'}\right)
= e(s,u)\sum_{\pi \in \Pi_\phi} c(\pi)\sum_{O:\text{regular}} c(\pi,O)\widehat\mu_O. 
\]
Here we have again fixed a non-trivial character $\psi_F \colon F \rightarrow \C^\times$ 
and a non-degenerate $G(F)$-invariant symmetric bilinear form $\beta$ on $\g(F)$, 
which we have transferred to a form $\beta'$ on $\g'(F)$, 
and have used these to form the Fourier transforms. 
We consider again the nilpotent $G(F)$-orbit $O_{\psi_F,\beta,\ww}$ in $\g(F)$, 
which according to the results of M{\oe}glin--Waldspurger and Varma has the property 
\[ 
c(\pi,O_{\psi_F,\beta,\ww}) =
\left\{
\begin{aligned}
&1 \iif \pi = \pi_{\ww},\\
&0 \other.
\end{aligned}
\right. 
\]
Using the assumed validity of Conjecture \ref{TPC} for $G'$, 
which gives the uniqueness of $\ww'$-generic constituent in $\Pi_{\phi'}$ for each Whittaker datum $\ww'$ on $G'$ 
and the fact that $\pair{1,\pi'}_{\phi'} = 1$ for such a constituent, 
and using further the fact that each regular nilpotent orbit $O'$ is equal to $O_{\psi_F,\beta',\ww'}$ 
for some choice of $\ww'$, we conclude that
\[ 
\sum_{\pi' \in \Pi_{\phi'}} \pair{1,\pi'}_{\phi'} \sum_{O':\text{regular}} c(\pi',O')\widehat\mu_{O'}
= \sum_{O':\text{regular}} \widehat\mu_{O'}.
\]
Therefore, our identity becomes 
\[ 
\Trans\left( \sum_{O':\text{regular}} \widehat\mu_{O'}\right)
= e(s,u)\sum_{\pi \in \Pi_\phi} c(\pi)\sum_{O:\text{regular}} c(\pi,O)\widehat\mu_O. 
\]
Applying \cite[Corollary 5.5.B]{LS}, \cite[p.~91, Conjecture 1]{W1}, \cite{W2}, \cite{W3}, \cite{CHL}, \cite{N}, 
we have
\[ 
\Trans\left( \sum_{O':\text{regular}} \widehat\mu_{O'}\right)
= \frac{\gamma(\g(F),\beta,\psi_F)}{\gamma(\g'(F),\beta',\psi_F)}
\sum_{O:\text{regular}} \Delta[\ww](O)\widehat\mu_O. 
\]
Separating terms using \cite[Theorem 5.11]{HC} 
and comparing coefficients for the orbit $O_{\psi_F,\beta,\ww}$, 
we arrive at 
\[ 
e(s,u) = \frac{\gamma(\g(F),\beta,\psi_F)}{\gamma(\g'(F),\beta',\psi_F)}\Delta[\ww](O_{\psi_F,\beta,\ww}).
\]
By Lemmas \ref{tfnilp1} and \ref{tfnilp2}, the right-hand side is equal to $1$.
This completes the proof of Theorem \ref{2.5.5}.

%\subsection{An explicit computation of Weil indices and $\ep$-factors}
\subsection{An explicit computation of Weil indices and $\ep$-factors}\label{sub.weil-eps}
The proofs of Theorems \ref{strong-shahidi} and \ref{2.5.5} rely on the key Lemma \ref{tfnilp2}. 
We gave a proof of this lemma for general reductive groups using the work of Kottwitz \cite{Kot}. 
For classical groups, this lemma can also be verified by an explicit computation, 
which we shall give here. 
\par

For explicit computation, 
the left-hand side of Lemma \ref{tfnilp2} is inconvenient, 
because it stipulates that one has to work with a maximal torus of $G$ that transfers to $G'$, 
while the computation of the Weil index is most convenient when one uses as a maximal torus a minimal Levi subgroup of $G$. 
So we return to the left-hand side of Lemma \ref{tfnilp1}. 
As shown there, 
the left-hand side is independent of the choices of the character $\psi_F$, the Whittaker datum $\ww$, 
and the form $\beta$ (recall that $\beta'$ is the transfer of $\beta$). 
We are thus free to choose these objects in a convenient way. 
We first choose a character $\psi_F$ and an $F$-splitting $\spl = (B,T,\{X_{\alpha}\})$ 
and then take the Whittaker datum $\ww$ determined by $\spl$ and $\psi_F$ as in \cite[Section 5.3]{KoSh}. 
We have the $G(F)$-orbit $O_{\psi_F,\beta,\ww}$ of the regular nilpotent element 
\[
Y_{\psi_F,\beta,\ww} = \sum_{\alpha} \beta(X_\alpha,X_{-\alpha})^{-1} X_{-\alpha}.
\]
It may be a priori different from the $G(F)$-orbit $O_\infty$ of the regular nilpotent element 
\[
Y_\infty = \sum_{\alpha} X_{-\alpha}
\]
that is ``associated'' to the splitting $\spl_\infty = (\overline{B},T,\{X_{-\alpha}\})$ opposite to $\spl$. 
In our computations, we will show that $\beta$ can be chosen such that
\[
\tag{$\ddagger$}\label{ddagger}
O_{\psi_F,\beta,\ww} = O_\infty.
\]
This identity implies that $\inv_{\spl_\infty}(O_{\psi_F,\beta,\ww}) = 1$. 
We can then apply the computation of $\Delta[\ww](O_{\psi_F,\beta,\ww})$ given in the proof of Lemma \ref{tfnilp1} 
but with $\spl' = \spl_\infty$ and obtain
\[
\Delta[\ww](O_{\psi_F,\beta,\ww})
= \frac{\Delta[\ww](\overline{\gamma}_{G'},\overline{\gamma}_{G})}
{\Delta[\spl](\overline{\gamma}_{G'},\overline{\gamma}_{G})}
= \ep(1/2,X^*(T)_\C - X^*(T')_\C, \psi_F),
\]
thereby reducing the desired identity to 
\[
\frac{\gamma(\g(F),\beta,\psi_F)}{\gamma(\g'(F),\beta',\psi_F)} 
\cdot \frac{\ep(1/2,X^*(T)_\C,\psi_F)}{\ep(1/2,X^*(T')_\C,\psi_F)}
= 1
\]
for those particular choices of $\psi_F$, $\beta$ and $\spl$. 
We will show this identity for classical groups by explicitly computing all terms.
\par

First, we compute the relevant Weil indices. 
To recall the definition and properties of Weil indices, 
we introduce some notation.
Let $V$ be a finite dimensional vector space over $F$ 
equipped with a non-degenerate symmetric bilinear form $\beta$.
Following \cite[Section VIII.1]{W1}, we put
\[
\gamma(V,\beta, \psi_F) = \frac{I}{|I|}
\quad\text{with}\quad
I = \int_L \psi_F  \left( \frac{\beta(x,x)}{2} \right) dx,
\]
where $L$ is a sufficiently large lattice of $V$ and $dx$ is a Haar measure on $V$.
Note that $\gamma(V,\beta, \psi_F)$ does not depend on the choices of $L$ and $dx$.
When $Q(x) = \half{1}\beta(x,x)$ is the associated quadratic form, 
we also write $\gamma(V,Q, \psi_F) = \gamma(V,\beta, \psi_F)$.
If $V = F$ and $\beta(x,y) = 2axy$ for some $a \in F^\times$, 
then we have $\gamma(V,\beta,\psi_F) = \gamma(a \psi_F)$ with the convention in \cite[Appendix]{Rao}.
Here $a\psi_F$ is the non-trivial additive character of $F$ given by $a \psi_F(x) = \psi_F(ax)$.
Note that $\gamma(\psi_F) \gamma(-\psi_F) = 1$ and
\[
\frac{\gamma(\psi_F)}{\gamma(a \psi_F)} = \ep(1/2, \eta_a, \psi_F)  
\]
(see \cite{Kah}, \cite{Sz}).
Here $\eta_a$ is the (possibly trivial) quadratic character of $F^\times$ given by $\eta_a(x) = (a,x)_F$, 
where $(\cdot,\cdot)_F$ is the quadratic Hilbert symbol of $F$, 
so that $\eta_a$ is associated to $F(\sqrt{a})/F$ by the local class field theory.
One can prove that if $(V, \beta)$ is a direct sum of $(V_0,\beta_0)$ and the hyperbolic plane $\mathbb{H}$, 
then $\gamma(V,\beta,\psi_F) = \gamma(V_0,\beta_0,\psi_F)$.
\par

Recall that $U$ is the unipotent radical of the Borel subgroup $B = TU$, 
and $\uu$ is its Lie algebra.
Since $G$ is quasi-split, 
$\uu \oplus \overline{\uu}$ is an orthogonal direct sum of hyperbolic planes 
and hence 
\[
\gamma(\g(F),\beta, \psi_F) = \gamma(\tt(F), \beta,\psi_F).
\]
\par

Now we take the splitting $\spl$ given in Section \ref{secA.groups} 
and compute $\gamma(\g(F),\beta, \psi_F)$ explicitly. 

\begin{description}
\setlength{\itemsep}{10pt}
\item[The case of symplectic groups]
Suppose that $G(F) = \Sp_{2n}(F)$, so that 
\[
\tt(F) = \{ \diag(X_1, \dots, X_n, -X_n, \dots, -X_1) \,|\, X_1, \dots, X_n \in F \}.
\]
Take the non-degenerate $G(F)$-invariant symmetric bilinear form $\beta$ on $\g(F)$ 
given by $\beta(X, Y) = \tr(XY)$. 
Noting that $X_{-\alpha_i} = E_{i+1,i}-E_{2n+1-i,2n-i}$ for $1 \leq i \leq n-1$ 
and $X_{-\alpha_n} = E_{n+1,n}$, we have
\[
Y_{\psi_F,\beta,\ww} = 2^{-1}(X_{-\alpha_1} + \dots + X_{-\alpha_{n-1}}) + X_{-\alpha_n} 
= \Ad(t_0) \left( \sum_{i=1}^n X_{-\alpha_i} \right) \in O_\infty,
\]
where
\[
t_0 = \diag(2^{n-1}, 2^{n-2}, \dots, 1, 1, \dots, 2^{-n+2}, 2^{-n+1}) \in T(F).
\]
Hence $\beta$ satisfies the condition \eqref{ddagger}.
Since 
\[
\beta(X,Y) = 2(X_1Y_1 + \dots + X_nY_n)
\]
for $X, Y \in \tt(F)$, we have
\[
\gamma(\g(F), \beta,\psi_F) = \gamma(\psi_F)^n.
\]

\item[The case of odd special orthogonal groups]
Suppose $G(F) = \SO_{2n+1}(F)$, so that 
\[
\tt(F) = \{ \diag(X_1, \dots, X_n, 0, -X_n, \dots, -X_1) \, | \, X_1, \dots, X_n \in F \}.
\]
Take the non-degenerate $G(F)$-invariant symmetric bilinear form $\beta$ on $\g(F)$ 
given by $\beta(X,Y) = \tr(XY)$.
Noting that $X_{-\alpha_i} = E_{i+1,i}-E_{2n+2-i,2n+1-i}$ for $1 \leq i \leq n-1$ and $X_{-\alpha_n} = 2(E_{n+1,n} - E_{n+2,n+1})$, we have
\[
Y_{\psi_F,\beta,\ww} =  2^{-1}(X_{-\alpha_1} + \dots + X_{-\alpha_{n-1}}) + 2^{-2} X_{-\alpha_n}
= \Ad(t_0) \left( \sum_{i=1}^n X_{-\alpha_i} \right) \in O_\infty,
\]
where 
\[
t_0 = \diag(2^{n+1}, 2^n, \dots, 2^2, 1, 2^{-2}, \dots, 2^{-n}, 2^{-n-1}) \in T(F).
\]
Hence $\beta$ satisfies the condition \eqref{ddagger}.
Since 
\[
\beta(X,Y) = 2(X_1Y_1 + \dots + X_nY_n)
\]
for $X, Y \in \tt(F)$, we have
\[
\gamma(\g(F), \beta, \psi_F) = \gamma(\psi_F)^n.
\]

\item[The case of even special orthogonal groups]
Suppose that $G(F) = \SO_{2n}^\eta(F)$,
so that $\tt(F)$ is equal to 
\[
\left\{
\begin{aligned}
&\{ \diag(X_1, \dots, X_n, -X_n, \dots, -X_1) \, | \, X_1, \dots, X_n \in F \} \iif \eta = \1, \\
&\{ \diag(X_1, \dots, X_n, -X_n, \dots, -X_1) \, | \, X_1, \dots, X_{n-1} \in F, \, X_n \in K_0 \} \other, 
\end{aligned}
\right.
\]
where $K$ is the quadratic extension of $F$ associated to $\eta$ by the local class field theory 
and $K_0$ is the set of trace zero elements in $K$.
Take the non-degenerate $G(F)$-invariant symmetric bilinear form $\beta$ on $\g(F)$ 
given by $\beta(X,Y) = \tr(XY)$.
Noting that $X_{-\alpha_i} = E_{i+1,i}-E_{2n+1-i,2n-i}$ for $1 \leq i \leq n-1$ 
and $X_{-\alpha_n} = E_{n+1,n-1} - E_{n+2,n}$, we have
\[
Y_{\psi_F,\beta,\ww} = 2^{-1}(X_{-\alpha_1} + \dots + X_{-\alpha_n})
= \Ad(t_0) \left( \sum_{i=1}^n X_{-\alpha_i} \right) 
\in O_\infty,
\]
where 
\[
t_0 = \diag(2^{n-1}, 2^{n-2}, \dots, 1, 1, \dots, 2^{-n+2}, 2^{-n+1}) \in T(F).
\]
Hence $\beta$ satisfies the condition \eqref{ddagger}.
Since
\[
\beta(X,Y) =
\left\{
\begin{aligned}
&2(X_1Y_1 + \dots + X_nY_n) \iif \eta = \1, \\
&2(X_1Y_1 + \dots + X_{n-1}Y_{n-1}) + \tr_{K/F}(X_nY_n) \other
\end{aligned}
\right. 
\]
for $X, Y \in \tt(F)$, 
we have
\[
\gamma(\g(F), \beta, \psi_F) = \gamma(\psi_F)^{n-1} \gamma(a \psi_F),
\]
where we write $\eta = \eta_a$ with $a \in F^\times$ so that $K = F(\sqrt{a})$.
Note that $K_0=F\sqrt{a}$.

\item[The case of unitary groups]
Suppose that $G(F) = \U_n$, so that
\[
\tt(F) =
\left\{
\begin{aligned}
&\{ \diag(X_1, \dots, X_r, -\overline{X_r}, \dots, -\overline{X_1}) \,|\, X_1, \dots, X_r \in E \}, \\
&\{ \diag(X_1, \dots, X_{r+1}, -\overline{X_r}, \dots, -\overline{X_1}) \,|\, 
X_1, \dots, X_r \in E, \, X_{r+1} \in E_0 \}
\end{aligned}
\right.
\]
according to $n=2r$ or $n=2r+1$, 
where $E_0$ is the set of trace zero elements in $E$.
Write $E = F(\sqrt{a})$ with $a \in F^\times$ and put $\eta = \eta_a$.
Take the non-degenerate $G(F)$-invariant symmetric bilinear form $\beta$ on $\g(F)$ 
given by $\beta(X,Y) = \half{1} \tr_{E/F}(\tr(XY))$.
Noting that $X_{-\alpha_i} = E_{i+1,i}$ for $1 \leq i \leq n-1$ and that
\[
u_1X_{\alpha_1}+\dots+u_{n-1}X_{\alpha_{n-1}} \in \mathfrak{u}(F)
\iff \overline{u_i} = u_{n-i} \quad(1 \leq i \leq n-1)
\]
for $u_1, \dots, u_{n-1} \in E$, we have
\[
Y_{\psi_F,\beta,\ww} = \sum_{i=1}^{n-1} X_{-\alpha_i} \in O_\infty.
\]
Hence $\beta$ satisfies the condition \eqref{ddagger}.
Since
\[
\beta(X,Y) =
\left\{
\begin{aligned}
&\tr_{E/F}(X_1Y_1 + \dots + X_rY_r) \iif n = 2r, \\
&\tr_{E/F}(X_1Y_1 + \dots + X_rY_r + (1/2)X_{r+1}Y_{r+1}) \iif n=2r+1
\end{aligned}
\right. 
\]
for $X, Y \in \tt(F)$, 
we have
\[
\gamma(\g(F), \beta, \psi_F) = 
\left\{
\begin{aligned}
&\gamma(\psi_F)^r \gamma(a \psi_F)^r \iif n = 2r, \\
&\gamma(\psi_F)^r \gamma(a \psi_F)^r\gamma((a/2) \psi_F) \iif n=2r+1.
\end{aligned}
\right. 
\]
\end{description}

The computation of the Weil indices for the various classical groups is now complete. 
We turn to the proof of the equation
\[
\frac{\gamma(\g(F),\beta,\psi_F)}{\gamma(\g'(F),\beta',\psi_F)} 
\cdot \frac{\ep(1/2,X^*(T)_\C,\psi_F)}{\ep(1/2,X^*(T')_\C,\psi_F)}
= 1.
\]
Recall that $G'(F)$ is of the form 
\[
G'(F) =
\left\{
\begin{aligned}
& \Sp_{2n_1}(F) \times \SO_{2n_2}^{\eta}(F) \iif G(F) = \Sp_{2n}(F), \\
& \SO_{2n_1+1}(F) \times \SO_{2n_2+1}(F) \iif G(F) = \SO_{2n+1}(F), \\
& \SO_{2n_1}^{\eta_1}(F) \times \SO_{2n_2}^{\eta_2}(F) \iif G(F) = \SO_{2n}^\eta(F), \\
& \U_{n_1} \times \U_{n_2} \iif G(F) = \U_n
\end{aligned}
\right.
\]
with $n_1+n_2=n$ and $\eta_1 \eta_2 = \eta$ (see e.g., \cite[Section 1.8]{W5}).
Recall also that $\beta'$ is the transfer of $\beta$ to $\g'(F)$ according to \cite[Section VIII.6]{W1}.
Explicitly, if we write $G'(F) = G_1(F) \times G_2(F)$, then we have
\[
\beta' = \beta_1 \oplus \beta_2,
\]
where $\beta_i$ is the bilinear form on $\g_i(F)$ as above.
\par

\begin{description}
\setlength{\itemsep}{10pt}
\item[The case of symplectic groups]
Suppose that $G(F) = \Sp_{2n}(F)$ and $G'(F) = \Sp_{2n_1}(F) \times \SO_{2n_2}^{\eta}(F)$ with $\eta = \eta_a$.
Then we have
\[
\gamma(\g(F), \beta, \psi_F) = \gamma(\psi_F)^n, \quad 
\gamma(\g'(F), \beta', \psi_F) = \gamma(\psi_F)^{n-1} \gamma(a \psi_F)
\]
and 
\[
\ep(1/2, X^*(T)_\C,\psi_F) = 1, \quad
\ep(1/2, X^*(T')_\C,\psi_F) = \ep(1/2, \eta_a, \psi_F).
\]
Hence 
\[
\frac{\gamma(\g(F),\beta, \psi_F)}{\gamma(\g'(F), \beta', \psi_F)} 
\cdot \frac{\ep(1/2, X^*(T)_\C, \psi_F)}{\ep(1/2, X^*(T')_\C, \psi_F)}
= \frac{\gamma(\psi_F)}{\gamma(a \psi_F)} \cdot \frac{1}{\ep(1/2, \eta_a, \psi_F)} = 1.
\]

\item[The case of odd special orthogonal groups]
Suppose that $G(F) = \SO_{2n+1}(F)$ and $G'(F) = \SO_{2n_1+1}(F) \times \SO_{2n_2+1}(F)$.
Then we have
\[
\gamma(\g(F), \beta, \psi_F) = \gamma(\g'(F), \beta', \psi_F) = \gamma(\psi_F)^n
\]
and 
\[
\ep(1/2, X^*(T)_\C,\psi_F) = \ep(1/2, X^*(T')_\C,\psi_F) =  1.
\]
Hence
\[
\frac{\gamma(\g(F),\beta,\psi_F)}{\gamma(\g'(F),\beta',\psi_F)} 
\cdot \frac{\ep(1/2,X^*(T)_\C,\psi_F)}{\ep(1/2,X^*(T')_\C,\psi_F)}
= 1.
\]

\item[The case of even special orthogonal groups]
Suppose that $G(F) = \SO_{2n}^\eta(F)$ and $G'(F) = \SO_{2n_1}^{\eta_1}(F) \times \SO_{2n_2}^{\eta_2}(F)$ 
with $\eta = \eta_a$ and $\eta_i = \eta_{a_i}$.
Then we have
\[
\gamma(\g(F), \beta, \psi_F) = \gamma(\psi_F)^{n-1} \gamma(a \psi_F), 
\quad 
\gamma(\g'(F), \beta', \psi_F) = \gamma(\psi_F)^{n-2} \gamma(a_1 \psi_F) \gamma(a_2 \psi_F)
\]
and
\begin{align*}
\ep(1/2, X^*(T)_\C,\psi_F) 
&= \ep(1/2, \eta, \psi_F), 
\\
\ep(1/2, X^*(T')_\C,\psi_F) 
&= \ep(1/2, \eta_1, \psi_F) \ep(1/2, \eta_2, \psi_F).
\end{align*}
Hence 
\begin{align*}
&\frac{\gamma(\g(F), \beta, \psi_F)}{\gamma(\g'(F), \beta', \psi_F)} 
\cdot \frac{\ep(1/2, X^*(T)_\C, \psi_F)}{\ep(1/2, X^*(T')_\C, \psi_F)}
\\&= 
\frac{\gamma(\psi_F) \gamma(a \psi_F)}{\gamma(a_1 \psi_F) \gamma(a_2 \psi_F)}
\cdot \frac{\ep(1/2, \eta, \psi_F)}{\ep(1/2, \eta_1, \psi_F) \ep(1/2, \eta_2, \psi_F)}
= 1. 
\end{align*}

\item[The case of odd unitary groups]
Suppose that $G(F) = \U_n$ and $G'(F) = \U_{n_1} \times \U_{n_2}$ with $n=2r+1$.
Then we have
\[
\gamma(\g(F), \beta, \psi_F) = \gamma(\g'(F), \beta', \psi_F) 
= \gamma(\psi_F)^r \gamma(a \psi_F)^r \gamma((a/2) \psi_F)
\]
and 
\[
\ep(1/2, X^*(T)_\C,\psi_F) 
= \ep(1/2, X^*(T')_\C,\psi_F) 
= \ep(1/2, \eta, \psi_F)^{r+1}.
\]
Hence
\[
\frac{\gamma(\g(F),\beta,\psi_F)}{\gamma(\g'(F),\beta',\psi_F)} 
\cdot \frac{\ep(1/2,X^*(T)_\C,\psi_F)}{\ep(1/2,X^*(T')_\C,\psi_F)}
= 1.
\]

\item[The case of even unitary groups]
Suppose that $G(F) = \U_n$ and $G'(F) = \U_{n_1} \times \U_{n_2}$ with $n=2r$.
If $n_1$ and $n_2$ are even, then we have
\[
\gamma(\g(F), \beta, \psi_F) = \gamma(\g'(F), \beta', \psi_F) = \gamma(\psi_F)^r \gamma(a \psi_F)^r
\]
and 
\[
\ep(1/2, X^*(T)_\C,\psi_F) 
= \ep(1/2, X^*(T')_\C,\psi_F) 
= \ep(1/2, \eta, \psi_F)^r.
\]
Hence
\[
\frac{\gamma(\g(F),\beta,\psi_F)}{\gamma(\g'(F),\beta',\psi_F)} 
\cdot \frac{\ep(1/2,X^*(T)_\C,\psi_F)}{\ep(1/2,X^*(T')_\C,\psi_F)}
= 1.
\]
If $n_1$ and $n_2$ are odd, then we have
\[
\gamma(\g(F), \beta, \psi_F) = \gamma(\psi_F)^r \gamma(a \psi_F)^r, \quad
\gamma(\g'(F), \beta', \psi_F) = \gamma(\psi_F)^{r-1} \gamma(a \psi_F)^{r-1} \gamma((a/2) \psi_F)^2
\]
and 
\[
\ep(1/2, X^*(T)_\C,\psi_F) = \ep(1/2, \eta, \psi_F)^r, \quad
\ep(1/2, X^*(T')_\C,\psi_F) = \ep(1/2, \eta, \psi_F)^{r+1}.
\]
Hence 
\begin{align*}
\frac{\gamma(\g(F),\beta,\psi_F)}{\gamma(\g'(F),\beta',\psi_F)} 
\cdot \frac{\ep(1/2,X^*(T)_\C,\psi_F)}{\ep(1/2,X^*(T')_\C,\psi_F)}
&= 
\frac{\gamma(\psi_F) \gamma(a \psi_F)}{\gamma((a/2) \psi_F)^2}
\cdot \frac{1}{\ep(1/2, \eta, \psi_F)}
\\&= \frac{\ep(1/2, \eta_{a/2}, \psi_F)^2}{\ep(1/2, \eta, \psi_F)^2} 
= \frac{\eta_{a/2}(-1)}{\eta(-1)}
\\&= \frac{(a/2,-1)_F}{(a,-1)_F} 
= (2,-1)_F = 1.
\end{align*}
Here, we use the fact that $(x,1-x)_F = 1$ for $x \not= 0, 1$.
\end{description}

This completes the proof of Lemma \ref{tfnilp2} for classical groups by explicit computation.

%\section{Endoscopic character relations for the archimedean case}
%\section{Endoscopic character relations for the archimedean case}
\section{Endoscopic character relations for the archimedean case}\label{sec.TECR}
In this appendix, 
we will argue that \cite[Theorem 2.2.1(a)]{Ar} holds for $F=\R$ and tempered parameters $\psi=\phi$, 
and that \cite[Theorem 2.2.4]{Ar} holds for $F=\R$ and discrete parameters $\psi=\phi$. 
The validity of these theorems is assumed at various places in \cite{Ar} 
with the remark that they will follow from a forthcoming work of Shelstad and Mezo. 
In the meantime, the work of Shelstad \cite{She12} has appeared, 
which proves the transfer of functions in twisted endoscopy for $F=\R$, 
and the works of Mezo \cite{Mez13, Mez16}, 
which prove a weaker version of the desired theorems: 
the character identities are shown to hold up to a scalar. 
\par

In this appendix, we will use different sources. For the proof of \cite[Theorem 2.2.4]{Ar} we will use the recent work \cite{KM26} (which treats a general class of disconnected real reductive groups and $L$-parameters 
that are discrete for the identity component) and will combine this work with a global argument that reduces the general tempered case to the discrete case. 
For the proof of \cite[Theorem 2.2.1(a)]{Ar} such a strategy cannot work, 
because the group $\GL_N$ has no discrete parameters when $F=\R$ and $N>2$. 
Here we will build on \cite{AMR} and \cite{Cl}. 
This will again involve a global reduction argument, 
but now to the case that $\phi$ is discrete for the endoscopic group. 
It will further involve a local argument that allows for a variation of the $L$-homomorphism.

%\subsection{Theorem 2.2.4 for archimedean tempered parameters}
\subsection{Theorem 2.2.4 for archimedean discrete parameters}
We briefly review the results of \cite{KM26}. 
One considers rigid inner forms of groups of the form $G^+ = G \rtimes A$, 
where $G$ is a quasi-split connected reductive $\R$-group 
and $A$ is a finite group of automorphisms of $G$ that preserve an $\R$-pinning. 
(These groups were denoted by $\tl{G}$ in loc.~cit., 
but here we are using the notation $G^+$ of Arthur.)
In the case at hand we have $G = \SO_{2n}$, the non-trivial element of
$A = \Z/2\Z$ is acting by the unique pinned outer automorphism, and we
are interested in the trivial rigid inner form, i.e. the group $G^+ = G \rtimes A$ itself.
\par

It is shown in \cite[\S5.2]{KM26} that to each discrete parameter $\phi \colon W_\R \rightarrow {}^LG$ 
one can associate an $L$-packet $\Pi_\phi(G^+)$ of irreducible discrete series representations of $G^+(\R)$ 
and an injection of this $L$-packet into $\Irr(S_\phi^+)$, 
where $S_\phi^+$ is the centralizer of $\phi$ in $\widehat{G} \rtimes A = \SO_{2n}(\C) \rtimes \Z/2\Z$, 
where again $A$ acts by preserving a $\Gamma$-stable pinning of $\widehat{G}$. 
Note that we are using the superscript $+$ here again in the sense of Arthur, and not in the sense of \cite{KalRI}. 
This injection is normalized using a Whittaker datum $\ww$ that is $A$-admissible, 
i.e. it is a $G(\R)$-conjugacy class of pairs $(B,\chi)$ that contains an $A$-stable such pair; 
such Whittaker data always exist.
\par

When the $\widehat{G}$-conjugacy class of $\phi$ is not stable under $A$, 
then $S_\phi^+=S_\phi$ is the centralizer of $\phi$ in $\widehat{G}$. 
The setting of \cite[Theorem 2.2.4]{Ar} is the opposite, 
namely the $\widehat{G}$-conjugacy class of $\phi$ is stable under $A$, 
in which case the projection $\hat G \rtimes A \to A$ induces the exact sequence
\[ 1 \to S_\phi \to S_\phi^+ \to A \to 1. \]
\par

For any $s \in S_\phi^+$ one can associate an endoscopic datum $(G',\GG',s,\eta)$, 
which is twisted when $s \notin S_\phi$. 
The following character identity is proved in \cite[Theorem 6.1.1]{KM26}, in the form \cite[Equation (6.1.1)]{KM26}:
\[
\tag{{\bf ECR}}\label{ECR-R}
S\Theta_{\phi'}(f') = \sum_{\pi \in \Pi_\phi(G)} \pair{s,\tl\pi} \Theta_{\tl\pi}(f),
\]
where the notation is as follows: 
\begin{enumerate}
\item 
$f$ is a smooth compactly supported function on $G(\R)=\SO_{2n}(\R)$, 
which we interpret as a function on $G^+(\R)$ that is supported on the \emph{non-identity} coset, 
by means of the injection $G(\R) \rightarrow G^+(\R)$ sending $g$ to $g\theta$, 
with $\theta \in A=\Z/2\Z$ the non-identity element.

\item 
$f'$ is the transfer of $f$ to the twisted endoscopic group $G'$ 
in terms of the Kottwitz--Shelstad transfer factor normalized by the fixed $A$-admissible Whittaker datum $\ww$.

\item 
$\phi'$ is the factorization of the parameter $\phi$ through $\eta \colon {}^LG' \rightarrow {}^LG$. 

\item 
$S\Theta_{\phi'}$ is the stable character of the $L$-packet $\Pi_{\phi'}(G')$ of $G'$.

\item 
$\tl\pi$ is an arbitrary extension to $G^+(\R)$ of the representation $\pi$. 
We will argue below that such an extension always exists in the special case we are considering.

\item 
$\Theta_{\tl\pi}$ is the distribution character of $\tl\pi$. 
The product $\pair{s,\tl\pi}\Theta_{\tl\pi}(f)$ is denoted by $\tr(\pi\boxtimes\rho^\vee)^{\rm{can}}(f,s^{-1})$ in loc. cit.; it does not depend on the choice of extension $\tl\pi$ of $\pi$.
\end{enumerate}
Identity \eqref{ECR-R} is the desired identity (2.2.17) in \cite[Theorem 2.2.4]{Ar}, albeit in different notation. 
This proves part (a) of that theorem for discrete parameters over $\R$.  
\par

Part (b) is the assertion that we made  in (5) above that every $\pi$ has an extension $\tl\pi$. We shall now argue that this is the case, and also argue the independence of $\<s,\tilde\pi\>\Theta_{\tilde\pi}(f)$ of the extension $\tilde\pi$ of $\pi$. It is shown in \cite[Equation (5.2.5)]{KM26}, in the general setting of disconnected groups discussed there, 
that if $\pi \in \Pi_\phi(G)$ corresponds to $\rho \in \Irr(S_\phi)$ under the refined Langlands correspondence for connected groups, then the injection
\[ \Pi_\phi(G^+) \to \Irr(S_\phi^+)\]
induces a bijection between the set of extensions of $\pi$ to $G^+(\R)$ and the set of extensions of $\rho$ to $S_\phi^+$; in general both sets can be empty. To prove (6) we use the fact that this bijection is equivariant for the group of characters of $A$ acting on both sides by tensoring. To prove (5) it is enough to show that in the particular case of $G^+=\SO_{2n} \rtimes \Z/2\Z$ 
any $\rho \in \Irr(S_\phi)$ has an extension to $S_\phi^+$. 
\par

To see this we note that $\widehat{G} \rtimes \Z/2\Z$ is isomorphic to $\O_{2n}(\C)$. 
Therefore, the centralizer $S_\phi^+$ is isomorphic to 
\[
\prod_i \O(V_i) \times \prod_i \Sp(W_i) 
\]
and $S_\phi$ is the subgroup of index $2$ on which the product of the determinants of the individual factors is trivial. 
The representation $\rho$ kills the identity component of $S_\phi$, 
which is also the identity component of $S_\phi^+$. 
We are therefore looking for an extension of $\rho$ from $\pi_0(S_\phi)$ to $\pi_0(S_\phi^+)$. 
But the latter group is visibly abelian, in fact a $2$-group, 
so such an extension always exists.

%\subsection{Theorem 2.2.1(a) for archimedean tempered parameters}
\subsection{Theorem 2.2.1(a) for archimedean tempered parameters}
In the remainder of this appendix 
we will show that the twisted character identities in \cite[Theorem 2.2.1 (a)]{Ar} 
and their unitary group analogue over $\R$ for tempered representations are valid. 
Note that \cite{KM26} treats general disconnected real groups, 
but only those tempered parameters that are discrete for the identity component, 
and this is not sufficient for our purposes. 
Mezo has treated in \cite{Mez16} general twisted real groups and general tempered parameters,
but the desired identity is proved there only up to a scalar. 
We need to know that this scalar factor is equal to $1$. 
This is done in \cite[Appendix A]{AMR} (\resp in \cite{Cl}) for $G=\Sp_{2n}$ and $G=\SO_{n}$ (\resp for $G=\U_n$), 
but only for special $L$-embeddings ${}^LG \rightarrow \GL_N$ 
and only for $L$-parameters that are discrete for $G$. 
We will reduce the general case to that core case.

%\subsection{Reductions}
\subsection{Reductions}
For convenience we adopt the notation of \cite{Ar} and refer to its numbering. 
(These are closely followed in \cite{Mok} with minor differences.) 
\par

We can first reduce the identities to the case that $G$ is a simple twisted endoscopic group. 
Indeed, if $G = G_S \times G_O$ is not simple, 
then by \cite[Proposition 6.6.1]{Ar}, 
we may assume that $\phi = \phi_S \times \phi_O \in \tl\Phi_2(G)$ is square-integrable. 
(If $G$ possesses no square-integrable parameters, then there is nothing to prove.)
There exist stable linear forms 
\[
f^S(\phi_S), \, f^O(\phi_O),\quad f^S\in \tl\HH(G_S),\, f^O\in \tl\HH(G_O)
\]
either by the induction hypothesis or by well-known results in real endoscopy. 
(These are the stable characters associated with the discrete series $L$-packets for $\phi_S$ and $\phi_O$.)
We define a stable linear form $f(\phi)$ by the formula
\[
f(\phi) = f^S(\phi_S) f^O(\phi_O),\quad f = f^S \times f^O.
\]
Mezo \cite{Mez16} has shown that the twisted character $\tilde f_N(\phi)$ (defined in \cite[(2.2.1)]{Ar}) 
on $\tilde{f} \in \tl\HH(N)$ satisfies the following twisted character identity up to a scalar $c(\phi)\in \C^\times$, 
where $\tilde f^G$ denotes a transfer of $\tilde f$:
\[
\tilde f_N(\phi)=c(\phi) \tilde f^G(\phi).
\]

\begin{lem}\label{lem6.6.3-nonsimple}
In the above setting, $c(\phi)=1$.
\end{lem}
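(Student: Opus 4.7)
The plan is to reduce the scalar identity $\tilde f_N(\phi) = c(\phi)\,\tilde f^G(\phi)$ in the non-simple setting to the analogous identities for the two simple factors, by showing that both sides factorize compatibly, and conclude $c(\phi) = c_S(\phi_S) \cdot c_O(\phi_O)$. The simple-factor scalars will be treated separately in subsequent subsections (via \cite{AMR}, \cite{Cl}, and \cite{KM24}), where they are shown to equal $1$.

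To set this up, I would work with $\tl\GL_N$ where $N = N_S + N_O$, which contains the $\theta$-stable standard Levi subgroup $\tl M = \tl\GL_{N_S} \times \tl\GL_{N_O}$; the parameter $\phi = \phi_S \times \phi_O$, viewed as a parameter for $\GL_N$, factors through $\tl M$. The representation $\tl\pi_\phi$ of $\tl\GL_N(\R)$ is the twisted parabolic induction of $\tl\pi_{\phi_S} \boxtimes \tl\pi_{\phi_O}$ from $\tl M$, where the compatibility of Arthur's extension $\theta_A$ with this parabolic induction is provided by our Main Theorem 2 (Theorem \ref{main2}). On the endoscopic side, the datum $(G,\eta)$ for $\tl\GL_N$ restricts to the product of the data $(G_S,\eta_S)$ for $\tl\GL_{N_S}$ and $(G_O,\eta_O)$ for $\tl\GL_{N_O}$.

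The key technical step is to check the factorization of the Kottwitz--Shelstad transfer factor for $(\tl\GL_N, G)$ normalized by the ($A$-admissible) Whittaker datum as the product of the transfer factors for $(\tl\GL_{N_\bullet}, G_\bullet)$; this reduces to standard multiplicative properties of local $\ep$-factors appearing in the Whittaker normalization. Given this, for any $\tilde f \in \tl\HH(N)$ with Kottwitz--Shelstad transfer $f = f^S \times f^O$ to $G(\R)$, one can construct a ``Levi descent'' $\tilde f^{\tl M} = \tilde f_S \times \tilde f_O$, where $\tilde f_\bullet$ is a transfer of $f^\bullet$ to $\tl\GL_{N_\bullet}(\R)$, matching the $\theta$-twisted orbital integrals of $\tilde f$ on $\theta$-elliptic regular elements of $\tl M$. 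Combining this with the van Dijk-type formula for the twisted character of the induced representation $\tl\pi_\phi$ yields
\[
\tilde f_N(\phi) = \tilde f_{S,N_S}(\phi_S) \cdot \tilde f_{O,N_O}(\phi_O),
\]
while by construction $\tilde f^G(\phi) = f^S(\phi_S)\, f^O(\phi_O)$.

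Applying Mezo's scalar identity to each simple factor, we obtain $\tilde f_{\bullet,N_\bullet}(\phi_\bullet) = c_\bullet(\phi_\bullet)\, f^\bullet(\phi_\bullet)$, so dividing gives $c(\phi) = c_S(\phi_S)\, c_O(\phi_O)$, and the lemma follows from the simple case. The main obstacle in this plan will be ensuring that all normalizations line up precisely, especially (i) the factorization of the Whittaker-normalized transfer factor into the product of the Whittaker-normalized factors for $(\tl\GL_{N_S}, G_S)$ and $(\tl\GL_{N_O}, G_O)$, which requires keeping track of the $\lambda$-factor and $\ep$-factor contributions, and (ii) the compatibility of $\theta_A$ with parabolic induction---but this last point is already furnished by Theorem \ref{main2}, so the proof becomes a careful matching of conventions rather than a genuinely new calculation.
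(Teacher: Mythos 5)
Your approach is genuinely different from the paper's. The paper proves the lemma by a purely \emph{global} argument: it globalizes the triple $(F,G,\phi)$ twice, over totally real fields $\dot F_0$, $\dot F_1$ of degrees $d$ and $d+1$, arranging that all other archimedean components of the two global parameters $\dot\phi_0$, $\dot\phi_1$ agree with a single discrete parameter $\phi_{\gen}\in\tl\Phi_2(G)$ in general position. Cancelling the finite-place contributions in Arthur's global stable identity (his Lemma 5.4.2) then yields $c(\phi)\,c(\phi_{\gen})^{d-1+i}=1$ for both $i=0,1$, which forces $c(\phi)=c(\phi_{\gen})=1$ at once. Crucially this proves the constant is $1$ \emph{before} the simple square-integrable case ($G$ simple) is treated, so it can sit at the start of the reduction chain. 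Your plan, by contrast, is a \emph{local} factorization: descend the twisted character identity along the Levi $\tl M=\tl\GL_{N_S}\times\tl\GL_{N_O}$, obtain $c(\phi)=c_S(\phi_S)\,c_O(\phi_O)$, and invoke the simple case. That is a coherent alternative, but the logical ordering is reversed — you would need the simple-factor results of [AMR], [Cl], and Proposition \ref{pro:eci-twist} in hand first, whereas the paper's proof places the present lemma before them.

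Beyond the ordering, your sketch defers several technical points that are nontrivial in the archimedean twisted setting and that the global route sidesteps entirely: (i) the factorization of the Whittaker-normalized Kottwitz--Shelstad transfer factor for $(\tl\GL_N,G)$ along $\tl M$ into the product of the factors for $(\tl\GL_{N_S},G_S)$ and $(\tl\GL_{N_O},G_O)$, including the $\epsilon$- and $\lambda$-factor contributions; and (ii) the compatibility of twisted endoscopic transfer with taking constant terms along $\tl M$ (descent of transfer, cf. \cite[\S2.4]{Ar}). You are right that Theorem \ref{main2} (together with \cite[Corollary 2.5.4]{Ar}) supplies the remaining compatibility, namely that the Whittaker-normalized extension $\theta_A$ of the induced representation is the one induced from the Whittaker-normalized extensions on the Levi, so that van Dijk's character-descent formula applies to the twisted characters. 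In sum, your outline is sound in conception and would likely go through, but it trades the paper's globalization — which is essentially routine given Arthur's machinery — for a collection of local normalization checks that would each need to be carefully verified.
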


\begin{rem}
This lemma was assumed in the proof of \cite[Lemma 6.6.3]{Ar} and \cite[Proposition 7.7.1]{Mok} 
when the authors write ``assumed as part of the theory of twisted endoscopy'' or ``by the results of Mezo and Shelstad''. 
Unfortunately the current state of twisted endoscopy for real groups is not enough to imply the lemma as a special case. So we give a global proof in much the same way as Arthur treats $p$-adic places.
\end{rem}

\begin{proof}[Proof of Lemma \ref{lem6.6.3-nonsimple}]
Let $d>0$ be a sufficiently large integer; this number controls the number of archimedean places. 
(Arthur wants $d$ to be large in the globalization of \cite[Sections 6.2--6.3]{Ar}. 
For our purpose $d=2$ is enough because we do not need \cite[Proposition 6.3.1 (iii)]{Ar}.
We do need $d>1$ to be able to appeal to the simple trace formula.)
\par

By \cite[Proposition 6.3.1]{Ar} (disregarding condition (iii) there) 
we have the following globalization for each $i\in\{0,1\}$:
\begin{itemize}
\item 
a totally real field $\dot F_i$ such that $[\dot{F_i}:\Q]=d+i$,
\item 
a real place $u_i$ of $\dot F_i$,
\item 
a twisted endoscopic datum $\dot G_i\in \dot{\tl{\mathcal{E}}}_{\text{ell}}(N)$ over $\dot F_i$,
\item 
a parameter $\dot\phi_i\in \tl\Phi^{\text{sim}}_2(\dot G_i)$
\end{itemize}
such that $(\dot F_i,\dot G_i,\dot \phi_i)$ specializes to $(F,G,\phi)$ at the place $u_i$, 
and such that 
\begin{itemize}
\item 
$\dot G_i$ possesses discrete series at all $v\in S_{i,\infty}^{u_i}$ (in addition to $v=u_i$),
\item 
$\dot\phi_{i,v}$ is square-integrable and in relative general position at all $v\in S_{i,\infty}^{u_i}$,
\end{itemize}
where $S_{i,\infty}$ stands for the set of real places of $\dot F_i$, 
and we put $S^{u_i}_{i,\infty}=S_{i,\infty}\backslash \{u_i\}$. 
(The degree $[\dot{F_i}:\Q]$ is not prescribed in \emph{loc.~cit.}~but the existence argument there works 
for a fixed choice of $\dot F_i$ and $u_i$.) 
In fact the real groups
\[
\dot G_{i,v}
\quad (i\in \{0,1\},\, v\in S_{i,\infty})
\]
are all isomorphic to $G$ (canonically up to inner automorphism) 
as they are quasi-split real forms accommodating discrete series. 
We fix such isomorphisms.
\par

Now the point is that, in addition to the above, 
we can arrange that 
the real components of $\dot \phi_0$ and $\dot\phi_1$ away from $u_0,u_1$ are all equal, 
i.e.,
\[
\tag{$\flat$}\label{flat}
\dot \phi_{i,v}=\phi_{\gen}
\quad (i\in \{0,1\},\, v\in S^{u_i}_{i,\infty})
\]
for some $\phi_{\gen}\in \tl\Phi_2(G)$ in general position via $\dot G_{i,v} \cong G$. 
This is possible since Arthur can prescribe the parameters at $v \in S^{u_i}_{i,\infty}$ quite flexibly. 
More precisely, for $i=0,1$ and $v\in S^{u_i}_{i,\infty}$, 
his argument fixes a regular infinitesimal character $\mu_{i,v}$ (of a discrete series representation) 
and shows that there exists a global parameter $\dot\phi_i$ 
such that $\dot\phi_{i,v}$ has infinitesimal character $n\mu_{i,v}$ 
and trivial central character, as long as $n\in \Z_{>0}$ is sufficiently large. 
Thus we can achieve \eqref{flat} by starting from the same $\mu_{i,v}$ 
for all $i,v$ as in \eqref{flat} and then choosing $n$ large. 
(We use the same $n$ for all $i,v$.)
\par

The rest of the argument proceeds as in the proof of \cite[Lemma 6.6.3]{Ar}, 
with $u$ a real place rather than a finite place. 
Namely we apply \cite[Lemma 5.4.2]{Ar} 
(applicable since \cite[Assumption 5.4.1 (b)]{Ar} therein is satisfied at all archimedean places) 
to deduce that
\[
\dot{\tilde{f}}_N(\dot \phi_i) 
= \dot{\tilde{f}}^G (\dot \phi_i), 
\quad \dot{\tilde{f}}
= \prod_v \dot{\tilde{f}}_v\in \tl{\HH}(N)_{\dot F_i}.
\]
Here the subscript $\dot F_i$ is there to remind us 
that the Hecke algebra is for the twisted general linear group over $\dot F_i$.
Condition (ii) of \cite[Proposition 6.3.1]{Ar} allows us 
to cancel out the terms at all finite places from both sides, 
as in the proof of \cite[Lemma 6.6.3]{Ar}. 
Hence
\[
\prod_{v\in S_{i,\infty}} \dot{\tilde{f}}_{N,v}(\dot \phi_{i,v}) 
= \prod_{v\in S_{i,\infty}} \dot{\tilde{f}}^G_v (\dot \phi_{i,v}). 
\]
In light of the equation $\tilde f_N(\phi)=c(\phi) \tilde f^G(\phi)$, 
this implies that
\[
c(\phi) c(\phi_{\textup{gen}})^{d-1+i}=1.
\]
Since this holds for both $i \in \{0,1\}$, 
it follows that $c(\phi) = c(\phi_{\gen})=1$, 
as desired.
\end{proof}

Now we may assume that $G$ is simple. 
We can further reduce to the case that the parameter $\phi$ is square-integrable. 
Indeed, if $\phi$ is tempered but not square-integrable, 
\cite[Theorem 2.2.1]{Ar} follows as explained at the beginning of Section 6.6 in loc.~cit. 
\par

Now consider the core case that $G$ is a simple endoscopic group of twisted $\GL_N$ 
and $\phi$ is a square-integrable parameter. 
If the endoscopic datum is standard (i.e. the $L$-embedding ${}^LG \rightarrow \GL_N$ is standard), 
the result is covered by \cite[Appendix A]{AMR} for $\Sp_{2n},\SO_n$ and \cite{Cl} for $\U_n$. 
This reduces the problem to Proposition \ref{pro:eci-twist} below, 
which may be of independent interest, so we will prove it for any local field.
\par

%\subsection{Set-up and statement of the result}
\subsection{Set-up and statement of the result}
Let $F$ be a local field, 
$G$ a quasi-split connected reductive $F$-group, 
$(B,T,\{X_\alpha\})$ a pinning, 
$\theta$ an $F$-automorphism of $G$ preserving the pinning, 
and $\psi_F \colon F \rightarrow \C^\times$ a non-trivial character. 
To simplify matters, we assume that the derived subgroup $G_\der$ is simply connected.
Let $(H,1,\HH,\xi)$ be the principal endoscopic datum for $(G,\theta)$. 
We assume that there exists an $L$-isomorphism ${}^LH \rightarrow \HH$ 
and write ${}^L\xi \colon {}^LH \rightarrow {}^LG$ for the composition of this isomorphism with $\xi$. 
The pinning and the character $\psi_F$ lead to a Whittaker datum, which we use to normalize the transfer factor.
\par

Let $\phi \colon L_F \rightarrow {}^LH$ be a tempered parameter. 
The expected twisted character identity is then as follows:
\[ 
\sum_{\substack{\pi \in \Pi_{{}^L\xi \circ \phi}(G)\\ \pi\circ \theta \cong \pi}} \tr(\tl\pi(\tl{f})) 
= \sum_{\sigma \in \Pi_\phi(H)} \tr(\sigma(\tl{f}^H)),
\]
where $\tl{\pi}$ is the Whittaker normalized extension of $\pi$ to $G(F) \rtimes \pair{\theta}$, 
$\tl{f} \in C_c^\infty(G(F) \rtimes \theta)$, and $\tl{f}^H \in C_c^\infty(H(F))$ is its transfer.
\par

This identity is stated in the language of distributions, but can also be restated in the language of functions as
\[ 
\sum_{\substack{\pi \in \Pi_{{}^L\xi \circ \phi}(G)\\ \pi \circ \theta \cong \pi}} \tr(\tl\pi(\tl\delta)) 
= \sum_\gamma \Delta[{}^L\xi](\gamma,\tl\delta) \sum_{\sigma \in \Pi_\phi(H)} \tr(\sigma(\gamma))
\]
for any regular semisimple element $\tl\delta \in G(F) \rtimes \theta$,
where $\gamma$ runs over the set of regular semisimple elements of $H(F)$ up to stable conjugacy, 
and $\Delta[{}^L\xi](\gamma,\tl\delta)$ is the transfer factor relative to the $L$-embedding ${}^L\xi$ 
and normalized by the Whittaker datum.

\begin{prop} \label{pro:eci-twist}
Assume that the twisted character identity holds for one choice of embedding ${}^L\xi \colon {}^LH \rightarrow {}^LG$.
Then it holds for any other choice of ${}^L\xi$ with the same restriction to $\widehat{H}$.
\end{prop}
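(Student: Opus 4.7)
The proof hinges on converting the difference between the two $L$-embeddings into a $1$-cocycle, and tracking how this cocycle affects both sides of the character identity. Set
\[
a(w) = {}^L\xi_2(1 \rtimes w) \cdot {}^L\xi_1(1 \rtimes w)^{-1}, \qquad w \in W_F.
\]
Since ${}^L\xi_1$ and ${}^L\xi_2$ agree on $\widehat{H}$, each $a(w)$ centralizes $\xi(\widehat{H})$, and a direct computation shows that $a$ is a continuous $1$-cocycle of $W_F$ with values in the centralizer $C = \Cent_{\widehat G}(\xi(\widehat H))$, where the Galois action on $C$ is the common one induced by either ${}^L\xi_i$. For the principal twisted endoscopic datum under consideration $C$ sits inside $Z(\widehat G)^{\widehat\theta,\circ} \cdot Z(\widehat H)$, so that via the LLC for tori the cocycle $a$ corresponds to a pair consisting of a character $\eta_G$ of $G(F)$ (trivialized on $\theta$-invariant data in the appropriate sense) and a character $\eta_H$ of $H(F)$; these two characters match under the norm map on stable conjugacy classes of strongly regular semisimple elements, i.e. $\eta_G(\tl\delta) = \eta_H(\gamma)$ whenever $(\gamma,\tl\delta)$ is a matching pair.

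On the spectral side, the two parameters are related by ${}^L\xi_2 \circ \phi = a \cdot ({}^L\xi_1 \circ \phi)$, so by local Langlands functoriality on $G$ one has
\[
\Pi_{{}^L\xi_2 \circ \phi}(G) \;=\; \{\pi \otimes \eta_G \,:\, \pi \in \Pi_{{}^L\xi_1 \circ \phi}(G)\}.
\]
Because $C$ is $\widehat\theta$-stable (the principal datum has $\widehat H$ $\widehat\theta$-stable), $a$ and hence $\eta_G$ are compatible with $\theta$, which guarantees that the new members of the packet remain $\theta$-fixed and that Arthur's Whittaker-normalized extension satisfies $\widetilde{\pi \otimes \eta_G} = \widetilde\pi \otimes \eta_G$. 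Consequently the twisted trace on a regular element $\tl\delta$ is multiplied by $\eta_G(\tl\delta)$. Simultaneously, on the endoscopic side the packet $\Pi_\phi(H)$ is unchanged, but the Kottwitz--Shelstad transfer factor depends on the chosen $L$-embedding, and standard formulas for its dependence (see Kottwitz--Shelstad, together with the compatibility of the Whittaker normalization that is insensitive to such twists) give
\[
\Delta[{}^L\xi_2](\gamma,\tl\delta) \;=\; \eta_H(\gamma) \cdot \Delta[{}^L\xi_1](\gamma,\tl\delta).
\]
The matching condition $\eta_G(\tl\delta) = \eta_H(\gamma)$ for regular related pairs then shows that multiplying the assumed identity for ${}^L\xi_1$ by $\eta_G(\tl\delta) = \eta_H(\gamma)$ on both sides yields precisely the identity for ${}^L\xi_2$.

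The main obstacle is establishing the precise transfer-factor identity $\Delta[{}^L\xi_2]/\Delta[{}^L\xi_1] = \eta_H$: one needs to track how each term $\Delta_I, \Delta_{II}, \Delta_{III}, \Delta_{IV}$ in the Kottwitz--Shelstad decomposition depends on the $L$-embedding, and to check that only $\Delta_{III}$ varies, and does so precisely through the pairing of the cocycle $a$ with the splitting invariant attached to $\gamma$. A closely related subtlety is to verify that the Whittaker normalization of transfer factors, which is tied to an $\epsilon$-factor independent of ${}^L\xi$, does not interfere with this ratio. Once these two points are in place, the bijection of the LLC for tori and the standard compatibility between Langlands parameters and central/twisting characters on both $G$ and $H$ yield the identification $\eta_G(\tl\delta) = \eta_H(\gamma)$, completing the argument.
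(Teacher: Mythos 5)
Your proposal follows essentially the same strategy as the paper -- encode the difference of $L$-embeddings as a cocycle, track the resulting character twist on the $G$-side, and show the transfer factor picks up exactly the compensating factor -- but you have left the decisive computation as an acknowledged gap, and two of your intermediate claims need correction.

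First, you assert that the centralizer $C = \Cent_{\widehat G}(\xi(\widehat H))$ ``sits inside $Z(\widehat G)^{\widehat\theta,\circ}\cdot Z(\widehat H)$.'' In the paper's setting (which explicitly assumes $G_\der$ is simply connected), the first lemma proves the much cleaner and stronger equality $Z_{\widehat G}(\widehat H)=Z(\widehat G)$. This is crucial: it lets you view $\alpha$ as a cocycle valued in $Z(\widehat G)$, which (because $G_\der$ is simply connected) is the dual torus of $D=G/G_\der$, and hence yields a character $\chi_\alpha$ of $D(F)$ inflated to $G(F)$. Your weaker containment does not give this directly, and it is not clear it even holds in general.

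Second, and more seriously, the step you flag as the ``main obstacle'' -- that the ratio of transfer factors equals a character value -- is exactly what needs to be proved, and the paper's proof is devoted to proving it. You want to obtain $\Delta[{}^L\xi_2]/\Delta[{}^L\xi_1]=\eta_H(\gamma)$ and then to invoke a norm-compatibility $\eta_G(\tl\delta)=\eta_H(\gamma)$; but you do not define $\eta_H$ rigorously, nor do you prove this compatibility. The paper avoids introducing any character of $H$: it computes that only $\Delta_{III}$ changes, that the change is the Tate--Nakayama pairing of $(\alpha^{-1},1)$ with $(z_\sigma^{-1},\delta^*)$, and that by functoriality of the pairing (via the inclusion of complexes $[Z(\widehat G)\to1]\hookrightarrow[\widehat T\xrightarrow{1-\theta}\widehat T_\ad]$) this reduces to pairing $\alpha$ against the image $\overline\delta$ of $\delta^*$ in $D(F)$, i.e.\ to $\chi_\alpha(\overline\delta)=\tl\chi_\alpha(\tl\delta)$. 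The appearance of $D(F)$ and the verification that $\overline\delta$ is an $F$-point (since $g^{-1}\tl\delta g\tl\delta^{-1}\in G_\der$) are the key points; without them there is no proof that the ratio is a well-defined character of the correct group. Phrasing the answer as $\eta_H(\gamma)$ adds a further unproven step rather than shortening the argument.

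In short: the overall plan is right, the representation-side analysis (tensoring the packet by a character, behavior of Whittaker extensions) is correct and matches the paper, but the transfer-factor half of the argument -- identify the only piece that changes, compute it via Tate--Nakayama functoriality, and produce the concrete character of $G(F)$ via $D=G/G_\der$ -- is the heart of the proof, and you have not supplied it.
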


\begin{rem}
One can contemplate various generalizations of this statement. 
For example, the proof given below easily generalizes to arbitrary endoscopic groups in the ordinary setting, 
i.e. when $\theta=1$. 
It also generalizes to inner forms of $(G,\theta)$. 
What is not so clear to us is how to generalize it to arbitrary endoscopic groups when $\theta$ is non-trivial, 
but we do not need this case.
Also, we will only give the proof when the derived subgroup is simply connected.
\end{rem}

%\subsection{Proof}
\subsection{Proof}
First, we study the possible variations of ${}^L\xi$.
Since ${}^L\xi|_{\widehat{H}}$ has been fixed, 
we use it to identify $\widehat{H}$ with a subgroup of $\widehat{G}$ to save notation. 
We have $\widehat{H}=(\widehat{G}^\theta)^\circ$.

\begin{lem}
We have $Z_{\widehat{G}}(\widehat{H})=Z(\widehat{G})$. 
\end{lem}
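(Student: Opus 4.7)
I would prove the non-trivial inclusion $Z_{\widehat{G}}(\widehat{H}) \subset Z(\widehat{G})$ by a root-theoretic argument after transferring everything to a $\theta$-stable maximal torus.

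First I would fix a $\theta$-stable maximal torus $\widehat{T} \subset \widehat{G}$ (the one attached to the dual pinning) and set $\widehat{S} := (\widehat{T}^\theta)^\circ$. Standard facts about fixed points of semisimple automorphisms give that $\widehat{S}$ is a maximal torus of $\widehat{H}=(\widehat{G}^\theta)^\circ$, and that every root $\alpha$ of $(\widehat{G},\widehat{T})$ restricts non-trivially to $\widehat{S}$ (its restriction being, up to a positive rational multiple, a root of $(\widehat{H},\widehat{S})$). Since $Z_{\widehat{G}}(\widehat{S})$ is a connected Levi subgroup of $\widehat{G}$ whose root system is precisely the set of roots of $\widehat{G}$ that vanish on $\widehat{S}$, this forces $Z_{\widehat{G}}(\widehat{S}) = \widehat{T}$. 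Any $g \in Z_{\widehat{G}}(\widehat{H})$ in particular centralizes $\widehat{S}$, hence lies in $\widehat{T}$.

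The second step is to show that $\alpha(g) = 1$ for every root $\alpha$ of $(\widehat{G},\widehat{T})$. Each such $\alpha$ lies in a $\theta$-orbit $O = \{\alpha,\theta\alpha,\dots,\theta^{k-1}\alpha\}$, which produces a root $\beta$ of $(\widehat{H},\widehat{S})$ with associated one-parameter root subgroup $\widehat{U}_\beta \subset \widehat{H}$. The plan is to write a typical element $u(t) \in \widehat{U}_\beta$ as an explicit product of Chevalley generators $x_{\alpha'}(\cdot)$ for $\alpha'$ running over $O$ (with a correction term in one extra root subgroup when $O$ is of ``folded $A_2$'' type), and then impose $g\,u(t)\,g^{-1} = u(t)$ for all $t$. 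Expanding both sides via the adjoint action of $\widehat{T}$ on root subgroups and comparing coefficients term by term yields $\alpha'(g) = 1$ for every $\alpha' \in O$. Running this over all $\theta$-orbits exhausts the root system of $\widehat{G}$, and combined with $g \in \widehat{T}$ this gives $g \in Z(\widehat{G})$.

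The hard part will be the $A_{2n}$-type fold, where $\theta$ exchanges two adjacent simple roots $\alpha_i, \alpha_{i+1}$ with $\alpha_i+\alpha_{i+1}$ also a root and $\theta$-fixed; here $\widehat{U}_\beta$ is not a naive diagonal inside $\widehat{U}_{\alpha_i}\cdot\widehat{U}_{\alpha_{i+1}}$ but carries a quadratic correction living in $\widehat{U}_{\alpha_i+\alpha_{i+1}}$ coming from the Chevalley commutator formula. The delicate point is to verify by direct computation in this case that the centralizing condition still forces $\alpha_i(g)=\alpha_{i+1}(g)=1$ rather than the weaker constraint $\alpha_i(g)\alpha_{i+1}(g)=1$ that might superficially appear; all other $\theta$-orbits are of ``type I'' where the straight diagonal computation is routine.
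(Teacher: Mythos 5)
Your first step is exactly the paper's: no root of $(\widehat{G},\widehat{T})$ vanishes on $\widehat{S}=(\widehat{T}^\theta)^\circ$ (because a pinned automorphism preserves positivity), so $Z_{\widehat{G}}(\widehat{S})=\widehat{T}$, whence $g\in\widehat{T}$. Where you diverge is in the second step: you propose to work at the group level with root subgroups $\widehat{U}_\beta\subset\widehat{H}$ and explicit Chevalley generators, whereas the paper works infinitesimally. The paper observes that the root space $\Lie(\widehat{H})_\beta$ is the subspace of $\theta$-fixed vectors in $\bigoplus_{\alpha\mapsto\beta}\Lie(\widehat{G})_\alpha$, and any nonzero $\theta$-fixed vector there has nonzero component along each $\Lie(\widehat{G})_\alpha$ in the orbit; so $g\in\widehat{T}$ fixing $\Lie(\widehat{H})_\beta$ immediately forces $\alpha(g)=1$ for every $\alpha$ in the orbit (and it suffices to run over $\widehat{B}$-simple $\alpha$). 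This completely sidesteps the ``folded $A_2$'' subtlety you flag: the quadratic correction lives in $\Lie(\widehat{G})_{\alpha_i+\alpha_{i+1}}$, on which $\theta$ acts by $-1$ for a pinned involution, so $\Lie(\widehat{H})_\beta$ has no component there and the commutator formula never enters. For what it's worth, your group-level plan does go through if you carry it out—the factors $x_{\alpha_i}(\alpha_i(g)t)$ and $x_{\alpha_{i+1}}(\alpha_{i+1}(g)t)$ already pin down $\alpha_i(g)=\alpha_{i+1}(g)=1$, and the quadratic term only adds the redundant constraint $\alpha_i(g)\alpha_{i+1}(g)=1$—so the ``weaker constraint'' you worry might appear does not. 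But you leave that computation undone, while the paper's Lie-algebra formulation makes the whole issue disappear; that is the real gain of its approach.
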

\begin{proof}
Let $\widehat{S} \subset \widehat{H}$ be a maximal torus. 
Then since $\widehat{T} = Z_{\widehat{G}}(\widehat{S})$ is a $\theta$-stable maximal torus of $\widehat{G}$, 
it is contained in a $\theta$-stable maximal Borel subgroup $\widehat{B}$, 
and $\widehat{S}=(\widehat{T}^\theta)^\circ$. 
The root system $R(\widehat{S}, \widehat{H})$ is the set of indivisible roots in the relative root system $R(\widehat{S},\widehat{G})$. 
The latter is the set of restrictions to $\widehat{S}$ of the absolute root system $R(\widehat{T}, \widehat{G})$. 
No such restriction vanishes, and the restriction map 
$R(\widehat{T}, \widehat{G}) \rightarrow R(\widehat{S}, \widehat{G})$ is surjective with fibers being the $\theta$-orbits.
\par

An element of $Z_{\widehat{G}}(\widehat{H})$ centralizes $\widehat{S}$, hence lies in $\widehat{T}$. 
Moreover, it acts trivially on each root space of $\widehat{S}$ in $\Lie(\widehat{H})$. 
For $\beta \in R(\widehat{S}, \widehat{H})$, 
the root space $\Lie(\widehat{H})_\beta$ is the space of $\theta$-fixed points in 
$\bigoplus_{\alpha \mapsto \beta} \Lie(\widehat{G})_\alpha$. 
Therefore, $t \in \widehat{T}$ fixes this root space if and only if $\alpha(t)=1$ for all $\alpha \mapsto \beta$. 
Since all $\widehat{B}$-simple roots $\alpha$ map to an indivisible root in $R(\widehat{S}, \widehat{H})$, 
we see that for an element $t \in \widehat{T}$ to centralize $\Lie(\widehat{H})$ 
it is necessary and sufficient that $\alpha(t)=1$ for all $\widehat{B}$-simple roots in $R(\widehat{T}, \widehat{G})$, 
which is equivalent to $t \in Z(\widehat{G})$.
\end{proof}

\begin{lem}
If ${}^L\xi \colon {}^LH \rightarrow {}^LG$ is one choice of $L$-embedding, 
then any other choice is of the form $\alpha \cdot {}^L\xi$ for some $\alpha \in Z^1(W_F,Z(\widehat{G}))$ 
whose cohomology class is $\theta$-fixed. 
\end{lem}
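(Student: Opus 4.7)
The plan is to produce the cocycle $\alpha$ directly from comparing the two $L$-embeddings, and then extract the $\theta$-invariance of its class from the fact that both embeddings produce valid twisted endoscopic data with $s=1$.

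First I would define, for each $w \in W_F$, an element $\alpha(w) \in \widehat{G}$ by requiring ${}^L\xi'(1 \rtimes w) = \alpha(w) \cdot {}^L\xi(1 \rtimes w)$; this makes sense because both left sides project to $w \in W_F$, so the ratio lies in $\widehat{G}$. Since ${}^L\xi|_{\widehat{H}} = {}^L\xi'|_{\widehat{H}}$, and since for $h \in \widehat{H}$ we can compute ${}^L\xi'((1 \rtimes w) h (1 \rtimes w)^{-1})$ in two ways, a short calculation shows $\alpha(w)$ commutes with every element of $\widehat{H}$. By the preceding lemma, $\alpha(w) \in Z(\widehat{G})$. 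The fact that ${}^L\xi$ and ${}^L\xi'$ are group homomorphisms then yields the cocycle identity $\alpha(w_1 w_2) = \alpha(w_1)\cdot {}^w\alpha(w_2)$, where $W_F$ acts on $Z(\widehat{G})$ through the standard Galois action; hence $\alpha \in Z^1(W_F, Z(\widehat{G}))$.

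Next I would treat the $\theta$-invariance. Since the endoscopic datum is principal, $s=1$, and the Kottwitz--Shelstad compatibility requirement reduces to the existence of an element $g \in \widehat{G}$ such that $\widehat{\theta} \circ {}^L\xi = \mathrm{Int}(g) \circ {}^L\xi$, where $\widehat{\theta}$ is the dual automorphism of $\theta$ acting on $\widehat{G}$ and trivially on $W_F$. The same holds for ${}^L\xi'$ with some $g' \in \widehat{G}$. Applying $\widehat{\theta}$ to the defining relation ${}^L\xi'(1 \rtimes w) = \alpha(w)\, {}^L\xi(1 \rtimes w)$ and using these two compatibilities gives
\[
\alpha(w) \,=\, \mathrm{Int}(g'g^{-1})\bigl(\widehat{\theta}(\alpha(w))\bigr) \cdot (\text{coboundary term in } w).
\]
Since $\alpha(w) \in Z(\widehat{G})$, the inner automorphism acts trivially on $\alpha(w)$ itself, and the obstruction collapses to the statement that $\widehat{\theta}(\alpha)$ differs from $\alpha$ by a coboundary valued in $Z(\widehat{G})$. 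This is exactly the assertion that $[\alpha] \in H^1(W_F, Z(\widehat{G}))$ is fixed by $\widehat{\theta}$.

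The main obstacle is the bookkeeping in the last step: one has to track carefully how $\widehat{\theta}$ interacts with the two ``inner-twist'' elements $g$ and $g'$ (which themselves are only defined up to $Z(\widehat{G})$), and to verify that the resulting correction term really is a coboundary valued in $Z(\widehat{G})$ rather than merely in $\widehat{G}$. Everything else is formal. In the reverse direction, given any $\alpha \in Z^1(W_F, Z(\widehat{G}))$ with $\theta$-fixed class, the embedding $\alpha \cdot {}^L\xi$ is automatically an $L$-homomorphism, and the same calculation run in reverse shows it fits into a valid endoscopic datum; this could be recorded as a remark, though it is not asked for by the statement.
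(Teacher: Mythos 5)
Your approach is essentially the paper's: you extract $\alpha$ as the pointwise ratio of the two $L$-embeddings, show it is central using the preceding lemma, get the cocycle condition from multiplicativity, and obtain $\theta$-invariance of $[\alpha]$ from the Kottwitz--Shelstad axiom (2.1.4a). The one place where you stop short is precisely the step you flag as ``the main obstacle'': verifying that the correction term lands in a coboundary valued in $Z(\widehat{G})$. This gap closes easily, in two ways. First, your elements $g$ and $g'$ are automatically central: applying $\widehat\theta \circ {}^L\xi = \Int(g)\circ {}^L\xi$ to $\widehat{H}$, on which $\widehat\theta$ acts trivially (since $\widehat{H} = (\widehat{G}^\theta)^\circ$), shows $\Int(g)$ restricts to the identity on $\widehat{H}$, so $g \in Z_{\widehat{G}}(\widehat{H}) = Z(\widehat{G})$ by the previous lemma, and likewise for $g'$; thus $\Int(g'g^{-1})$ is trivial on $Z(\widehat{G})$ and the correction term is manifestly a central coboundary. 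Second (and this is what the paper does), one can quote (2.1.4a) in the sharper form it actually takes here: since $s=1$ and the twisting cocycle is trivial, the axiom directly furnishes $x,y \in Z(\widehat{G})$ with $\widehat\theta\circ{}^L\xi = x\cdot{}^L\xi\cdot x^{-1}$ and $\widehat\theta\circ(\alpha\cdot{}^L\xi) = y\cdot(\alpha\cdot{}^L\xi)\cdot y^{-1}$, whence $\alpha^{-1}(\sigma)\,\widehat\theta(\alpha(\sigma)) = (xy^{-1})^{-1}\sigma(xy^{-1})$, a coboundary in $Z(\widehat{G})$. Either fix turns your sketch into a complete proof along the same lines as the paper's.
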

\begin{proof}
The actions of $\sigma \in W_F$ on $\widehat{H}$ and $\widehat{G}$ are generally different, 
and we will write $\sigma_H$ and $\sigma_G$ for them. 
Then $\sigma_H = \Ad(g_\sigma) \rtimes \sigma_G$ for some $g_\sigma \in \widehat{G}$ 
that is well-defined up to multiplication on the left by $Z_{\widehat{G}}(\widehat{H})$. 
By the preceding lemma we have $Z_{\widehat{G}}(\widehat{H}) = Z(\widehat{G})$.
\par

Since, for any ${}^L\xi$ and $\sigma \in W_F$, 
the element ${}^L\xi(1 \rtimes \sigma) \in {}^LG$ normalizes $\widehat{H}$ and acts on it by $\sigma_H$, 
we see that if one ${}^L\xi$ is fixed, 
then any other choice is of the form $\alpha \cdot {}^L\xi$ for a continuous map $\alpha \colon W_F \to Z(\widehat{G})$.
\par

The multiplicativity of ${}^L\xi$ and $\alpha \cdot {}^L\xi$ implies that $\alpha$ is a $1$-cocycle. 
By an axiom of twisted endoscopic data (\cite[(2.1.4a)]{KoSh}), 
there exist $x,y \in Z(\widehat{G})$ 
such that $\theta \circ {}^L\xi = x \cdot {}^L\xi \cdot x^{-1}$ 
and $\theta \circ (\alpha \cdot {}^L\xi) = y \cdot (\alpha \cdot {}^L\xi) \cdot y^{-1}$. 
It follows that $\alpha^{-1}(\sigma) \cdot \theta \circ \alpha(\sigma) = (xy^{-1})^{-1} \sigma(xy^{-1})$, 
i.e. $[\alpha] \in H^1(W_F,Z(\widehat G))^\theta$.
\end{proof}

Next, we reduce Proposition \ref{pro:eci-twist} to a property of the transfer factor. 
We now fix ${}^L\xi$ for which the character identity 
\[ 
\sum_{\substack{\pi \in \Pi_{{}^L\xi \circ \phi}(G)\\ \pi \circ \theta \cong \pi}} \tr(\tl\pi(\tl\delta)) 
= \sum_\gamma \Delta[{}^L\xi](\gamma,\tl\delta) \sum_{\sigma \in \Pi_\phi(H)} \tr(\sigma(\gamma))
\]
holds. 
If we replace ${}^L\xi$ by $\alpha \cdot {}^L\xi$ for some $\alpha \in Z^1(W_F,Z(\widehat{G}))$ 
whose class is $\theta$-fixed, 
then on the left-hand side of the identity, 
the packet $\Pi_{{}^L\xi \circ \phi}$ changes to $\Pi_{\alpha \cdot {}^L\xi \circ \phi}$. 
If $\chi_\alpha \colon G(F) \rightarrow \C^\times$ is the character corresponding to $\alpha$, 
then tensoring representations with $\chi_\alpha$ provides a bijection 
\[ 
\Pi_{{}^L\xi \circ \phi} \rightarrow \Pi_{\alpha \cdot {}^L\xi \circ \phi}. 
\]
Moreover, the Whittaker extension of $\chi_\alpha \otimes \pi$ equals $\tl\chi_\alpha \otimes \tl\pi$, 
where $\tl\pi$ is the Whittaker extension of $\pi$, 
and $\tl\chi_\alpha$ is the extension of $\chi_\alpha$ to $G(F) \rtimes \pair{\theta}$ 
specified by $\tl\chi_\alpha(\theta)=1$ 
(since the class of $\alpha$ is $\theta$-fixed, the character $\chi_\alpha$ is $\theta$-invariant). 
Therefore, upon replacing ${}^L\xi$ by $\alpha \cdot {}^L\xi$, 
the left-hand side of the character identity multiplies by $\tl\chi_\alpha(\tl\delta)$. 
Hence to prove Proposition \ref{pro:eci-twist}, 
it would be enough to show
\[ 
\Delta[\alpha \cdot {}^L\xi](\gamma,\tl\delta) = \tl\chi_\alpha(\tl\delta) \cdot \Delta[{}^L\xi](\gamma,\tl\delta). 
\]
\par

Finally, we shall show this property. 
The only piece of the transfer factor that depends on the choice of $L$-embedding is $\Delta_{III}$. 
Let us briefly recall its construction following \cite[Section 5.3]{KoSh}. 
There exists a $\theta$-admissible maximal $F$-torus $T \subset G$ together with an element $g \in G_\sc(\overline{F})$ 
such that $\tl\delta^* = g^{-1}\tl\delta g \in T(\overline{F}) \rtimes \theta$, 
and if we write $\tl\delta^*=\delta^* \rtimes \theta$ 
then the image $\gamma^* \in T_\theta(\overline{F})$ of $\delta^*$ lies in $T_\theta(F)$. 
Writing $z_\sigma = g^{-1}\sigma(g)$ we have $(z_\sigma^{-1}, \delta^*) \in Z^1(F,T_\sc \xrightarrow{1-\theta} T)$. 
\par

Let $\gamma \in H(F)$ be related to $\tl\delta$ 
and let $S \subset H$ be the centralizer of $\gamma$. 
There exists a unique admissible isomorphism $\xi_{\gamma,\gamma^*} \colon S \rightarrow T_\theta$ 
mapping $\gamma$ to $\gamma^*$. 
Choosing $\chi$-data for $R(T^\theta,G)$ provides $L$-embeddings 
${}^L\xi_{S,H} \colon {}^LS \rightarrow {}^LH$ and ${}^L\xi_{T,G} \colon {}^LT \rightarrow {}^LG$. 
There is a unique $1$-cocycle $\eta \colon W_F \rightarrow \widehat{T}$ 
that makes the following diagram commute
\[ 
\xymatrix{
{}^LS \ar[r]^{{}^L\xi_{S,H}} \ar[d]_{\eta \cdot {}^L\xi_{\gamma,\gamma^*}} & {}^LH \ar[d]^{{}^L\xi} \\
{}^LT \ar[r]_{{}^L\xi_{T,G}} & {}^LG.
}
\]
Here $^L\xi_{\gamma,\gamma^*}$ is the $L$-embedding obtained from the $L$-isomorphism 
$\widehat{S} \rtimes W_F \rightarrow (\widehat{T}^\theta)^\circ \rtimes W_F$ 
dual to the inverse of $\xi_{\gamma,\gamma^*}$ 
and the inclusion $(\widehat{T}^\theta)^\circ \rightarrow \widehat{T}$, 
and we have multiplied it with the $1$-cocycle $\eta$, 
composed with the projection ${}^LS \rightarrow W_F$ and the inclusion $\widehat{T} \rightarrow {}^LT$. 
Then $(\eta^{-1},1) \in Z^1(W_F,\widehat{T} \xrightarrow{1-\theta} \widehat{T}_\ad)$. 
The factor $\Delta_{III}$ is the pairing of $(z_\sigma^{-1},\delta^*)$ and $(\eta^{-1},1)$.
\par

If we replace ${}^L\xi$ by $\alpha \cdot {}^L\xi$ then $\eta$ is replaced by $\alpha \cdot \eta$. 
We are using the natural inclusion $Z(\widehat{G}) \rightarrow \widehat{T}$, 
which is equivariant under both $\Gamma$ and $\theta$, 
and induces an inclusion of complexes of $\Gamma$-modules 
$[Z(\widehat{G}) \rightarrow 1] \rightarrow [\widehat{T} \xrightarrow{1-\theta} \widehat{T}_\ad]$. 
The value of the transfer factor thus multiplies by the pairing of $(\alpha^{-1},1)$ with $(z_\sigma^{-1},\delta^*)$. 
Now $(\alpha^{-1},1)$ is included from $[Z(\widehat{G}) \rightarrow 1]$. 
Using the functoriality of the Tate--Nakayama pairing 
and the fact that $Z(\widehat{G})$ is the torus dual to $D = G/G_\der$ 
when $G_\der$ is simply connected, 
we may compute the pairing of $(\alpha^{-1},1)$ and $(z_\sigma^{-1},\delta^*)$ 
by mapping the latter under the map $[T_\sc \xrightarrow{1-\theta} T] \rightarrow [1 \rightarrow D]$. 
Now $\delta^* \rtimes \theta = \tl\delta^* = g^{-1} \tl\delta g = (g^{-1} \tl\delta g \tl\delta^{-1}) \cdot \tl\delta$ 
and the term in the parentheses lies in $G_\der$. 
Therefore we see that the images of $\delta^* \rtimes \theta$ and $\tl\delta$ in $D(\overline{F}) \rtimes \theta$ agree, 
and lie in $D(F) \rtimes \theta$. 
In particular, the image $\overline\delta$ of $\delta^*$ in $D$ is an $F$-point. 
Since $\Delta_{III}$ enters the transfer factor with its reciprocal, 
we see that changing ${}^L\xi$ to $\alpha \cdot {}^L\xi$ multiplies the transfer factor by $\chi_\alpha(\overline{\delta})$,
where $\chi_\alpha$ is the character of $D(F)$ with parameter $\alpha$. 
This character is $\theta$-invariant 
and hence extends to a character $\tl\chi_\alpha$ of $D(F) \rtimes \pair{\theta}$ with $\tl\chi_\alpha(\theta)=1$. 
Then $\chi_\alpha(\overline\delta)=\tl\chi_\alpha(\overline \delta \rtimes \theta) = \tl\chi_\alpha(\tl\delta)$, 
where we have mapped $\tl\delta$ in $D(F) \rtimes \theta$ under the natural map $G(F) \rtimes \theta \rightarrow D(F) \rtimes \theta$. 
Since the character $\chi_\alpha$ of $G(F)$ is simply the inflation to $G(F)$ of the character $\chi_\alpha$ of $D(F)$, 
the desired identity
\[ 
\Delta[\alpha \cdot {}^L\xi](\gamma,\tl\delta) = \tl\chi_\alpha(\tl\delta) \cdot \Delta[{}^L\xi](\gamma,\tl\delta) 
\]
has been established.

%\section{Arithmetic Frobenius vs.~geometric Frobenius}
%\section{Arithmetic Frobenius vs.~geometric Frobenius}
\section{Arithmetic Frobenius vs.~geometric Frobenius}\label{sec.Frob}
There are two conventions for normalizing class field theory in the literature, depending on whether arithmetic or geometric Frobenius elements correspond to uniformizing elements under the local non-archimedean reciprocity map.  In this appendix we shall review these conventions and discuss how they influence Arthur's endoscopic classification.
This is an expanded version of the discussion of \cite[Section 4]{KoSh2}.

%\subsection{Two conventions for $L$-factors}
\subsection{Two conventions for $L$-factors}
Consider a non-archimedean local field $F$ 
and let $(\rho,V)$ be a finite-dimensional representation of the absolute Galois group $\Gamma_F$, 
or of its Weil group $W_F \subset \Gamma_F$. 
In \cite[(9)]{ArtinL}, Artin has attached to this representation an $L$-factor 
\[ 
L_{A}(s, \rho) = \det(1-q^{-s} \cdot \rho(\Frob_\ari)|V^{\rho(I_F)})^{-1},
\]
where $I_F \subset W_F$ is the inertia subgroup and $\Frob_\ari \in W_F/I_F$ is the \emph{arithmetic Frobenius}, 
i.e. the element whose action on the algebraic closure $\overline\F_q$ of the residue field $k_F \cong \F_q$ of $F$ 
is by the transformation $x \mapsto x^q$. 
On the other hand, Deligne considers in \cite[(3.5.1)]{D} the definition (we have set $t=q^{-s}$)
\[ 
L_{D}(s, \rho) = \det(1-q^{-s} \cdot \rho(\Frob_\geo)|V^{\rho(I_F)})^{-1},
\]
where $\Frob_\geo$ is the \emph{geometric Frobenius}, 
i.e. the inverse of the arithmetic Frobenius. 
We have
\[
L_D(s, \rho) = L_{A}(s, \rho^\vee),
\]
where $(\rho^\vee,V^*)$ is the contragredient representation. 
\par

Artin's convention appears more natural from an arithmetic point of view, 
because the substitution $x \mapsto x^q$ is a primary object for any field of characteristic $p$, 
and is not always invertible. 
Deligne's convention appears more natural from a geometric point of view, 
because the geometric Frobenius acts on the $\ell$-adic cohomology of algebraic varieties over $\F_q$ 
with eigenvalues that are algebraic integers, 
and this is not the case for the arithmetic Frobenius, 
see \cite[Section 3.6]{D}.
\par

Both conventions are widely used in the literature. 
On the other hand, the $L$-factors associated to characters of $F^\times$, 
and more generally to admissible representations of  reductive $F$-groups, 
are often defined, following Tate's thesis, in terms of zeta integrals on the group and related spaces, 
and make no reference to the Galois group of $F$. 
We refer the reader to \cite{Tate1} for the group $F^\times=\GL_1(F)$, 
Godement--Jacquet \cite{GJ} for the standard $L$-function on the group $\GL_n(F)$, 
and \cite{JPSS} for the Rankin--Selberg $L$-function of a ``product'' of two representations of general linear groups. 
These definitions are compatible with parabolic induction, 
which implies that for principal series representations, they specialize to the $1$-dimensional case. 
In that case, 
the output of the construction is expressible in terms of uniformizers of $F$, 
see \cite[(3.1.3)]{Tate} for characters of $F^\times$.
\par

Therefore, as discussed in \cite[Section 3.6]{D}, 
the desire that a correspondence between representations of the Galois or Weil groups of $F$ 
and representations of connected reductive $F$-groups 
should respect the $L$-factors on both sides 
leads to two possible conventions for such a correspondence 
-- one convention adapted to Artin's definition of a Galois-theoretic $L$-function, 
and one convention adapted to Deligne's convention. 
\par

In the following, we will review these conventions 
and discuss the various parts of Arthur's argument which are affected by the choice of convention. 
We will then explain how to make consistent choices and how the main results are affected by this choice.

%\subsection{Class field theory}
\subsection{Class field theory}
One of the main results of class field theory is the Artin reciprocity map $r_F \colon C_F \rightarrow \Gamma_F^\ab$, 
available for each local or global field $F$, 
where $C_F=F^\times$ when $F$ is local and $C_F=\A_F^\times/F^\times$ when $F$ is global. 
It is closely related to the invariant map $\inv \colon \Br(F) = H^2(\Gamma_F, C_{\overline{F}}) \rightarrow \Q/\Z$, 
where $C_{\overline{F}} = \overline{F}^\times$ when $F$ is local 
and $C_{\overline{F}} = \overline{\A}_F^\times/\overline{F}^\times$ when $F$ is global. 
Here $\overline{\A}_F=\varinjlim\A_E$, the colimit taken over all finite Galois extensions $E/F$.
\par

In the classical normalization of Artin, 
the local reciprocity map sends a uniformizer of $C_F=F^\times$ 
to an \emph{arithmetic Frobenius} element of $\Gamma_F^\ab$, 
i.e. an element whose projection to $\Gamma_{k_F}$ acts as $x \mapsto x^q$ on 
$\overline{k}_F \cong \overline\F_q$. 
The local and global reciprocity maps are related by the property that, 
for each place $v$ of a global field $F$, 
we have the commutative diagram
\[ 
\xymatrix{
    F_v^\times \ar[rr]^{r_{F_v}} \ar[d] &&\Gamma_{F_v}^\ab\ar[d]\\
    \A_F^\times/F^\times \ar[rr]^{r_F} &&\Gamma_F^\ab,
}
\]
where the left map embeds $x \in F_v^\times$ into $\A_F^\times$ 
as the idele whose $v$-component equals $x$ and all of whose other components equal $1$, 
while the right map is induced by identifying $\Gamma_{F_v}$ 
with the stabilizer in $\Gamma_F$ of a lift of $v$ to a ``place'' of $\overline{F}$; 
the latter identification is unique up to conjugation within $\Gamma_F$, 
hence unique after passing to abelianization.
\par

The reciprocity map induces for each finite Galois extension $E/F$, 
an isomorphism $C_F/N_{E/F}(C_E) \xrightarrow{\sim} \Gal(E/F)^\ab$. 
Conversely, these maps for all such $E/F$ glue together to the reciprocity map. 
The finite-level maps can be interpreted via Tate cohomology as the isomorphisms
\[ 
\Gal(E/F)^\ab = H^{-2}_{\Tate}(\Gal(E/F),\Z) 
\rightarrow H^0_{\Tate}(\Gal(E/F),C_E) = C_F/N_{E/F}(C_E) 
\]
obtained by taking the cup product with the fundamental class in $H^2(\Gal(E/F),C_E)$. 
In turn, the fundamental class provides an isomorphism
\[ 
\inv \colon H^2(\Gal(E/F),C_E) \rightarrow \frac{1}{[E:F]}\Z/\Z 
\]
sending the fundamental class to $[E:F]^{-1}$. 
These invariants splice together under inflation to form the absolute invariant 
$\inv \colon H^2(\Gamma_F,C_{\overline{F}}) \to \Q/\Z$.
\par

The local non-archimedean absolute invariant map can be described explicitly as follows. 
Let $F^u \subset \overline{F}$ be the maximal unramified extension of $F$. 
It is known that the inflation map 
$H^2(\Gal(F^u/F),(F^u)^\times) \rightarrow H^2(\Gamma_F, \overline{F}^\times)$ 
is an isomorphism. 
On the other hand, if $E/F$ is finite unramified, 
$\varpi \in F^\times$ is a uniformizer and $\sigma \in \Gal(E/F)$ is the arithmetic Frobenius element, 
then the assignment 
\[ 
(\sigma^i,\sigma^j) \mapsto 
\left\{
\begin{aligned}
&\varpi \iif i+j \geq [E:F], \\
&1 \iif i+j < [E:F],
\end{aligned}
\right.
\]
for $0 \leq i,j < [E:F]$ is an element of $Z^2(\Gal(E/F), E^\times)$ 
that represents the fundamental class in $H^2(\Gal(E/F), E^\times)$.
\par

The local archimedean invariant map can also be described explicitly. 
It is trivial for $F=\C$, 
and when $F=\R$ and $\sigma \in \Gal(\C/\R)$ denotes the complex conjugation, 
then the assignment 
\[
(\sigma^i,\sigma^j) \mapsto 
\left\{
\begin{aligned}
&{-1} \iif i=j=1,\\
&1 \other
\end{aligned}
\right.
\]
for $0 \leq i,j \leq 1$ is an element of $Z^2(\Gal(\C/\R),\C^\times)$ 
that represents the fundamental class in $H^2(\Gal(\C/\R),\C^\times)$.
We can also give an explicit description of the archimedean local reciprocity map. 
It is again trivial when $F=\C$, 
and when $F=\R$, 
it is the unique surjective group homomorphism $\R^\times \twoheadrightarrow \Gal(\C/\R)$ 
whose kernel is $\R_{>0}$.
\par

All the objects described so far are in the \emph{Artin} convention. To switch to the \emph{Deligne} convention, 
we compose the local and global reciprocity maps with the inversion automorphism of $C_F$. 
This has the effect that, at each non-archimedean place, 
a uniformizer of $C_F=F^\times$ is mapped to a \emph{geometric Frobenius} element, 
i.e. the inverse of an arithmetic Frobenius element. To keep the invariant maps aligned with the reciprocity maps, 
so that the reciprocity map is again obtained by the cup product with the fundamental class, 
we also invert the fundamental classes, 
which means that we also invert the local and global invariant maps. At the archimedean places the reciprocity map, the invariant map, and the fundamental class, 
remain unchanged, because the relevant groups have exponent $2$. 
\par

Note that we cannot choose an Artin convention at one non-archimedean place 
and a Deligne convention at another non-archimedean place, 
because doing so will not be compatible with any global reciprocity map. 
Thus, there are overall just two conventions, 
which hold at all local places simultaneously, as well as globally. 
At the archimedean places the Artin and Deligne conventions coincide. 

It would be next to impossible to list exhaustively all treatments of class field theory and say for each of them which convention is used. A small and useful list appears in \cite[(1.4.1)]{Tate}. In particular, \cite{AT}, \cite{CF}, and \cite{Ser} use the Artin normalization, while \cite{D} and \cite{Tate} use the Deligne normalization.

%\subsection{Two conventions for $\ep$-factors}
\subsection{Two conventions for $\ep$-factors} \label{ss:eps}
In \cite[\S4]{D}, Deligne proves the existence and uniqueness of a local $\ep$-factor $\ep(\rho,\psi_F,dx) \in \C^\times$ associated to a finite-dimensional representation $\rho$ of the Weil group of a local field $F$, a non-trivial character $\psi_F \colon F \to \C^\times$, and a Haar measure $dx$ of $F$, subject to the properties listed in \cite[Theorem 4.1]{D}. Following Langlands, Tate reinterprets this definition in \cite[(3.6.4)]{Tate} as a function $\ep(s,\rho,\psi_F)=\ep(\rho\otimes\omega_s,\psi_F,dx)$, where $dx$ is the unique Haar measure on $F$ that is self-dual with respect to $\psi_F$, and $\omega_s \colon W_F \to \R_{>0}$ is the character sending $w \in W_F$ to $|r_F^{-1}(\bar w)|_F^s$, where $\bar w \in W_F^\textrm{ab}$ is the image of $w$, $r_F$ is the Artin reciprocity isomorphism $F^\times \to W_F^\textrm{ab}$, and $|\cdot|_F$ is the normalized absolute value on $F$ (see \cite[(2.2)]{Tate}). 

Tate uses subscripts ``D'' and ``L'' in his article to distinguish between two possible conventions for the local $\ep$-factors (one due to Deligne and one due to Langlands), which are related by a shift by $\omega_{1/2}$, see \cite[(3.6.1)]{Tate}. This difference of conventions is \emph{not} related to the difference we are discussing here, and the two conventions lead to the same function $\ep(s,\rho,\psi_F)$. Since that function is the form of the $\ep$-factor that will be relevant for us, we do not need to distinguish between these conventions. We will write $\ep_D(s,\rho,\psi_F)$ for this function.

If $F$ is now a global field, $\rho$ a finite-dimensional complex representation of $W_F$, and $\psi_F \colon \A_F/F \to \C^\times$ a non-trivial character, then for each place $v$ of $F$ we have the factor $\ep_D(s,\rho_v,\psi_{F_v})$, where $\rho_v = \rho|_{W_{F_v}}$ and $\psi_{F_v} = \psi_F|_{F_v}$. The product $\ep_D(s,\rho)=\prod_v \ep_D(s,\rho_v,\psi_{F_v})$ is well-defined and independent of the choice of $\psi_F$, and we have 
\[ L_D(s,\rho) = \ep_D(s,\rho)L_D(1-s,\rho^\vee) \]
according to \cite[(3.5.3) Theorem]{Tate} (recall that Tate's article uses Deligne's normalization of class field theory).

Returning to a local field $F$, to switch from Deligne's to Artin's normalization of class field theory we define 
\[ \ep_A(\rho,\psi_F,dx) = \ep_D(\rho^\vee,\psi_F,dx),\quad \ep_A(s,\rho,\psi_F) = \ep_D(s,\rho^\vee,\psi_F). \]
The factor $\ep_A(\rho,\psi_F,dx)$ satisfies the conditions (1)--(4) of \cite[Theorem 4.1]{D} in terms of class field theory in Artin's normalization -- indeed, the conditions (1)--(3) make no reference to class field theory and are stable under the contragredient operation, while the condition (4) asserts that $\ep(\rho,\psi_F,dx)=\ep(\chi,\psi_F,dx)$ whenever $\rho \colon W_F \to \C^\times$ is a character and $\chi = \rho \circ r_F$. Therefore, \cite[Theorem 4.1]{D} implies the uniqueness of the collection of factors $\ep_A(\rho,\psi_F,dx)$.

Moreover, when $F$ is global and we define as above $\ep_A(s,\rho)=\prod_v \ep_A(s,\rho_v,\psi_{F,v})$, then the functional equation in Deligne's normalization implies at once
\[ L_A(s,\rho) = \ep_A(s,\rho)L_A(1-s,\rho^\vee). \]
We conclude that the local factors $\ep_A(s,\rho,\psi_F)$ provide a satisfactory theory of $\ep$-factors for the Artin normalization of class field theory.

%\subsection{The local correspondence for $\GL_n$} 
\subsection{The local correspondence for $\GL_n$} \label{ss:llc_gln}
Let $F$ be a non-archimedean local field. 
Fix a non-trivial additive character $\psi_F \colon F \rightarrow \C^\times$. 
It was proved by Henniart \cite{He3} that there exists at most one collection of bijections 
\[ 
\rec_{F,n} \colon \Irr(\GL_n(F)) \rightarrow H^1_\cts(W_F \times \SL_2(\C), \GL_n(\C)) 
\]
indexed by natural numbers $n$ 
satisfying the following conditions:
\begin{enumerate}
\item 
The map $\rec_{F,1}$ coincides with pull-back under the inverse of the isomorphism $r_F \colon F^\times \to W_F^\ab$.
\item 
For $\pi_1 \in \Irr(\GL_{n_1}(F))$ and $\pi_2 \in \Irr(\GL_{n_2}(F))$, 
we have
\[ 
L(s, \pi_1 \times \pi_2) = L(s, \rec_{F,n_1}(\pi_1) \otimes \rec_{F,n_2}(\pi_2))
\]
and 
\[ 
\ep(s, \pi_1 \times \pi_2, \psi_F) = \ep(s, \rec_{F,n_1}(\pi_1) \otimes \rec_{F,n_2}(\pi_2), \psi_F), 
\]
where the left-hand sides are the local factors defined in \cite{JPSS}.
\item 
If $\pi \in \Irr(\GL_n(F))$ and $\chi \in \Irr(\GL_1(F))$, 
then 
\[
\rec_{F,n}(\pi \otimes (\chi \circ \det)) = \rec_{F,n}(\pi) \otimes \rec_{F,1}(\chi).
\]
\item 
If $\pi \in \Irr(\GL_n(F))$ has central character $\chi \in \Irr(\GL_1(F))$, 
then 
\[
\det(\rec_{F,n}(\pi)) = \rec_{F,1}(\chi).
\]
\item 
If $\pi \in \Irr(\GL_n(F))$, 
then $\rec_{F,n}(\pi^\vee) = \rec_{F,n}(\pi)^\vee$.
\end{enumerate}
The existence of such a bijection 
was proved by Harris--Taylor \cite{HT} and Henniart \cite{He2} for the characteristic zero case, 
and by Laumon--Rapoport--Stuhler \cite{LRS} for the positive characteristic case. 
\par

The compatibility properties (1)--(5) make it clear that 
$\rec_{F,n}$ also depends on the choice of normalization of local class field theory. 
More precisely, (1), (3) and (4) relate $\rec_{F,n}$ to $\rec_{F,1}$, 
and the latter is directly induced by the local reciprocity map, 
while (2) relates $\rec_{F,n}$ to local $L$ and $\ep$-factors, 
for which we have the two conventions of Artin and Deligne. 
Therefore, there are two normalizations of the collection of maps $\rec_{F,n}$ 
-- one compatible with the Artin normalization of $\rec_{F,1}$ and the Artin convention for local factors, 
and the other compatible with the Deligne normalization and convention.
\par

Passing from Artin to Deligne normalization composes $\rec_{F,n}$ with the contragredient operation on its source, 
equivalently (by (5)) on its target, for all $n$ simultaneously. 
Since for $n=1$, taking  contragredient is equivalent to composing with inversion, 
which is the operation that switches between the Artin and Deligne normalizations of local class field theory, 
we see that (1), (3), and (4) remain valid, and so does clearly (5). 
\par

To see that (2) also remains valid, 
we just note that on the left-hand side we have the Rankin--Selberg factors, 
which are intrinsically defined in terms of zeta integrals, 
and do not depend on the choice of normalization we are discussing, 
while on the right-hand side the factors are Galois-theoretic 
and depend on the choice of Artin vs.~Deligne convention. 
Since by the equations $L_D(s, \rho) = L_A(s, \rho^\vee)$ 
and $\ep_D(s, \rho,\psi_F) = \ep_A(s, \rho^\vee,\psi_F)$, i.e., since 
switching from one convention to the other with regards to the local factors 
has the effect of taking contragredient, 
we see from (5) that (2) will remain valid if we also compose $\rec_{F,n}$ with the contragredient operation.
\par

In the work of Harris--Taylor, the Deligne convention is used. 
If one prefers the Artin convention, 
one has to compose the result of their work with the contragredient operation.
\par

%\subsection{The isomorphisms of Langlands and Tate--Nakayama}
\subsection{The isomorphisms of Langlands and Tate--Nakayama}
To see how the Artin and Deligne conventions influence Arthur's endoscopic classification, 
we begin with the two most basic tools on which everything else is built: 
the Langlands correspondence for tori and the Tate--Nakayama isomorphism. Letting $\Gamma = \Gamma_F$ be the absolute Galois group of $F$, the Tate--Nakayama isomorphism is the isomorphism 
\[ 
H^{i-2}_\Tate(\Gamma, X_*(S)) \xrightarrow{\sim} H^i_\Tate(\Gamma, X_*(S) \otimes_\Z C_{\overline{F}}), 
\]
functorial in any $F$-torus $S$, 
given by taking the cup product against the fundamental class. 
Note that $X_*(S) \otimes_\Z C_{\overline{F}}$ equals $S(\overline{F})$ when $F$ is local, 
and $S(\A_{\overline{F}})/S(\overline{F})$ when $F$ is global. 
Passing from Artin to Deligne normalization inverts the fundamental class, 
hence composes the Tate--Nakayama isomorphism 
with the inversion automorphism on $X_*(S) \otimes_\Z C_{\overline{F}}$, 
equivalently with the negation automorphism of $X_*(S)$.
\par

There is also a duality version of the Tate--Nakayama isomorphism, 
where it takes the form of a pairing
\[ 
H^i(\Gamma, X^*(S)) \times H^{2-i}(\Gamma, X_*(S) \otimes_\Z C_{\overline{F}}) 
\rightarrow H^2(\Gamma,C_{\overline{F}}) \rightarrow \Q/\Z, 
\]
for $i=0,1,2$.
Here, the first map is given by taking the cup product, 
and the second map is the invariant map, see \cite[Section 3]{Kot1}. 
Passing from Artin to Deligne conventions inverts the invariant map, 
hence composes this duality pairing with the inversion automorphism of $X_*(S) \otimes_\Z C_{\overline{F}}$, 
equivalently the negation automorphism of $X^*(S)$.
\par

The Langlands correspondence for tori is the homomorphism
\[ 
H^1_\cts(W_F, \widehat{S}) \rightarrow \Hom_\cts(X_*(S) \otimes_\Z C_F, \C^\times), 
\]
functorial in any $F$-torus $S$, which is an isomorphism when $F$ is local, and surjective with kernel given by the locally-everywhere-trivial classes when $F$ is global.

We claim that passing from Artin to Deligne normalization also composes this homomorphism
with the inversion automorphism of $X_*(S) \otimes_\Z C_F$, 
equivalently the inversion automorphism of $\widehat{S}$. 
\par

A special case of this claim is an application of the discussion of Tate--Nakayama duality. 
Consider the subgroup $H^1(\Gal(E/F), \widehat{S}) \subset H^1_\cts(W_F, \widehat{S})$ 
for an arbitrary finite Galois extension $E/F$ splitting $S$. 
The restriction of the Langlands isomorphism to this subgroup can be described as follows: 
The exponential map induces the exact sequence $0 \to \Z \to \C \to \C^\times \to 1$,
which upon applying $X^*(S) \otimes_\Z -$ produces the exact sequence
\[ 
\begin{CD}
0 @>>> X^*(S) @>>> \Lie(\widehat{S}) @>>> \widehat{S} @>>> 1,
\end{CD}
\]
and the connecting homomorphism in the resulting long exact sequence of $\Gal(E/F)$-cohomology 
induces an isomorphism  $H^1(\Gal(E/F), \widehat{S}) \rightarrow H^2(\Gal(E/F),X^*(S))$. 
The cup product pairing between this group and $H^0_\Tate(\Gal(E/F), X_*(S) \otimes_\Z C_E)$ 
takes values in $H^2(\Gal(E/F),C_E)$, 
and its composition with the invariant map becomes a perfect pairing 
which identifies $H^1(\Gal(E/F), \widehat{S})$ 
with the group of characters of $X_*(S) \otimes_\Z C_F$ 
that vanish on the image of the norm map for the extension $E/F$. 
Switching from Artin to Deligne normalization inverts the invariant map, 
hence also this piece of the Langlands homomorphism.
\par

To discuss the full Langlands homomorphism, 
we need to review the Weil group $W_F$ 
and see how it is affected by the switch from Artin to Deligne normalization. 
We follow the exposition of \cite{Tate}, 
in which a ``Weil group for $F$'' is defined as a triple $(W_F, \varphi_F, \{r_E\})$, 
where $W_F$ is a topological group, 
$\varphi_F \colon W_F \rightarrow \Gamma_F$ is a continuous homomorphism with dense image, 
and for each finite extension $E/F$, 
the map $r_E$ is an isomorphism of topological groups $C_E \xrightarrow{\sim} W_E^\ab$. 
Here $W_E$ is defined as the preimage under $\varphi_F$ of $\Gamma_E \subset \Gamma_F$, $W_E^\ab$ is the quotient of $W_E$ by the closure $W_E^c$ of its commutator subgroup. 
It is required that the composition 
$\varphi_F \circ r_E \colon C_E \rightarrow \Gamma_E^\ab$ 
be the reciprocity map of class field theory. 
In particular, if $E/F$ is Galois and we define $W_{E/F} = W_F/W_E^c$, 
then we obtain an extension
\[
\begin{CD}
1 @>>> C_E @>r_E>> W_{E/F} @>\varphi_F>> \Gal(E/F) @>>> 1.
\end{CD}
\]
If we pass from Artin to Deligne normalization, 
then the reciprocity map is composed with inversion, 
and in order to keep the condition on $\varphi_F \circ r_E$ satisfied, 
we will compose $r_E$ with the inversion automorphism of $C_E$ for all $E/F$ 
(we cannot compose $\varphi_F$ with inversion, 
because $\varphi_F$ is a homomorphism between non-abelian groups). 
We emphasize again that, in a global situation, 
we have to do this for both the global field and all of its localizations, 
to keep the local--global compatibility intact.
\par

We now recall from \cite[Section 6]{Lab} that 
the Langlands homomorphism over $F$ is obtained from the commutative diagram 
\[ 
\xymatrix{
H^1_\cts(W_E, \widehat{S}) \ar[d] \ar[r]&\Hom_\cts(X_*(S)\otimes_\Z C_E, \C^\times) \ar[d]\\
H^1_\cts(W_F, \widehat{S}) \ar[r] & \Hom_\cts(X_*(S) \otimes_\Z C_F, \C^\times)
}
\]
where $E/F$ is any finite Galois extension splitting $S$, 
the left map is corestriction along the inclusion $W_E \hookrightarrow W_F$, 
and the right map is restriction along the inclusion $C_F \hookrightarrow C_E$. 
Since $\Gamma_E$ acts trivially on $\widehat{S}$, 
we have 
$H^1_\cts(W_E, \widehat{S}) = \Hom_\cts(W_E, \widehat{S}) = \Hom_\cts(W_E^\ab, \widehat S)$, 
which, via $r_E$, becomes identified with the group 
\[
\Hom_\cts(C_E, \widehat{S}) = \Hom_\cts(C_E, X^*(S) \otimes_\Z \C^\times) 
= \Hom_\cts(X_*(S) \otimes_\Z C_E, \C^\times).
\] 
Passing between Artin and Deligne normalizations composes $r_E$ with inversion, 
hence the Langlands isomorphism with inversion, as claimed.
\par

%\subsection{Unramified local Langlands correspondence}
\subsection{Unramified local Langlands correspondence}\label{sec.unram}
Let $G$ be a connected reductive group over a non-archimedean local field $F$ that is \emph{unramified}, 
i.e. quasi-split and split over a finite unramified extension $E$ of $F$.
Fix a hyperspecial maximal compact subgroup $K$ of $G$ 
and consider the subset $\Irr_{\Ksph}(G)$ of $\Irr(G)$ consisting of those irreducible representations $\pi$ 
whose space $\pi^K$ of $K$-fixed vectors is non-zero.
Then the Satake isomorphism induces a bijection
\[
\Irr_{\Ksph}(G) \rightarrow \Phi^u(G), \, \pi \mapsto \phi_\pi, 
\]
where $\Phi^u(G)$ is the subset of $\Phi(G)$ consisting of $L$-parameters which are trivial on $I_F \times \SL_2(\C)$.
However, this bijection depends on the choice of Artin vs.~Deligne convention.
It also differs from Langlands' original bijection in \cite{L1}, because of a different normalization of the Satake isomorphism.
In this subsection, we recall its construction and explain how it is affected by these choices.
\par
 
Let $\HH(G,K)$ be the $\C$-algebra of bi-$K$-invariant compactly supported functions on $G$ 
equipped with the convolution product.
Then the map $\pi \mapsto \pi^K$ gives a bijection from $\Irr_{\Ksph}(G)$ 
to the set of isomorphism classes of simple $\HH(G,K)$-modules.
To describe $\HH(G,K)$ explicitly, 
we fix a Borel pair $(B,T)$ of $G$ such that $K$ is in good position with respect to $T$ (see \cite[Section 3.5]{Car}).
Let $A = A_T$ be the maximal split torus in $T$, 
so that 
\[
T/(T \cap K) \cong A/(A \cap K) \cong X_*(A).
\]
Here the first map is induced by the inclusion $A \hookrightarrow T$, 
and the second map is induced by the map $A \to X_*(A)$ 
that sends $a \in A$ to $\lambda_a \in X_*(A)=\mathrm{Hom}_\Z(X^*(A),\Z)$ 
given by $\lambda_a(\chi)=\mathrm{val}_F(\chi(a))$, 
with $\mathrm{val}_F \colon F^\times \to \Z$ being the normalized valuation.
Then the Satake isomorphism states that 
\[
\HH(G,K) \cong \C[X_*(A)]^W,
\]
where $W = N_G(T)/T$ is the Weyl group of $G$ (see \cite[Theorem 4.1]{Car}).
In particular, $\HH(G,K)$ is commutative and hence $\pi^K$ is $1$-dimensional for $\pi \in \Irr_{\Ksph}(G)$.
Thus we obtain a bijection
\begin{align*}
\Irr_{\Ksph}(G)
&\cong \Hom_{\Calg}(\HH(G,K), \C) \\
&\cong \Hom_{\Calg}(\C[X_*(A)]^W, \C) \\
&\cong (X^*(A) \otimes \C^\times)/W
=\widehat{A}/W.
\end{align*}
Now we need to choose a Frobenius element.
For a moment, we use the finite Galois form of the $L$-group $\widehat{G} \rtimes \Gal(E/F)$ 
and fix a generator $\sigma$ of the cyclic group $\Gal(E/F)$.
By \cite[Lemma 6.4]{B}, 
the inclusion $A \hookrightarrow T$ induces a surjection $\widehat{T} \twoheadrightarrow \widehat{A}$ 
and a bijection
\[
(\widehat{T} \rtimes \sigma)/\Int(N) \cong  \widehat{A}/W, 
\]
where $N$ is the inverse image of $W \cong (N_{\widehat{G}}(\widehat{T})/\widehat{T})^{\Gamma}$ 
in $N_{\widehat{G}}(\widehat{T})$.
Moreover, by \cite[Lemma 6.5]{B}, the inclusion $\widehat{T} \hookrightarrow \widehat{G}$ induces a bijection
\[
(\widehat{T} \rtimes \sigma)/\Int(N) \cong (\widehat{G} \rtimes \sigma)_{\mathrm{ss}}/\Int(\widehat{G}),
\]
where $(\widehat{G} \rtimes \sigma)_{\mathrm{ss}}$ is the set of semisimple elements in $\widehat{G} \rtimes \sigma$.
This gives rise to a bijection (depending on the choice of a generator $\Frob \in W_F/I_F$)
\[
\Irr_{\Ksph}(G) \rightarrow \Phi^u(G), \, \pi \mapsto \phi_\pi
\]
determined by
\[
\phi_\pi(\Frob) = t \rtimes \Frob
\]
up to $\widehat{G}$-conjugacy, 
where $t \in \widehat{T}$ is an element whose image in $\widehat{A}/W$ 
corresponds to $\pi$ under the canonical bijection $\Irr_{\Ksph}(G) \cong \widehat{A}/W$.
Note that this bijection is consistent with the local Langlands correspondence for tori described in the previous subsection, 
once $\Frob$ is fixed.
We write $\phi_\pi = \phi_\pi^A$ (\resp$\phi_\pi = \phi_\pi^D$) if we take $\Frob = \Frob_\ari$ (\resp$\Frob = \Frob_\geo$).
\par

Now we discuss the relation between $\phi_\pi^A$ and $\phi_\pi^D$.
For this, we need to introduce more notation.
For a moment, let $G$ be an arbitrary quasi-split connected reductive group over $F$.
Recall the Chevalley involution $\widehat{C} = C_{\widehat{G}}$ 
of the complex connected reductive group $\widehat{G}$. 
See \cite[Proposition 2.1]{AV}.
For a given pinning $(\widehat{B}, \widehat{T}, \{\widehat{X}_\alpha\})$, 
the involution $\widehat{C}$ is defined as the unique automorphism of $\widehat{G}$ 
that normalizes $\widehat{T}$ and acts as inversion on it, 
sends $\widehat{B}$ to the opposite Borel subgroup, 
and $\widehat{X}_\alpha$ to $-\widehat{X}_{-\alpha}$ 
such that $[\widehat{X}_\alpha, \widehat{X}_{-\alpha}] = \widehat{H}_\alpha$.
Since all pinnings of $\widehat{G}$ are conjugate, 
all involutions obtained this way are also conjugate\footnote{We could have also chosen to send $\widehat{X}_\alpha$ to $\widehat{X}_{-\alpha}$ and would have again obtained an involution in the same inner class.}. 
We can extend such an involution to ${}^LG$ by ensuring the pinning is $\Gamma$-stable, 
which implies that $\widehat{C}$ commutes with the $\Gamma$-action, 
and then taking the automorphism ${}^LC = \widehat{C} \rtimes \id_\Gamma$ of ${}^LG = \widehat{G} \rtimes \Gamma$. 
Since all $\Gamma$-stable pinnings are conjugate under $\widehat{G}^\Gamma$, 
the same holds for all versions of ${}^LC$ obtained this way.

\begin{lem} \label{lem:urpar}
Suppose that $G$ is unramified, and let $K$ be a hyperspecial maximal compact subgroup of $G$.
For $\pi \in \Irr_{\Ksph}(G)$, let $\phi_\pi^A$ (\resp $\phi_\pi^D$) be the $L$-parameter of $\pi$ in the Artin (\resp Deligne) convention.
Then we have
\[
 \phi^D_\pi = {}^L C \circ \phi^A_\pi.
\]
\end{lem}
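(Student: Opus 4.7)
The strategy is to exploit the fact that the Satake parameter attached to $\pi$ is intrinsically defined, with no reference to the Artin versus Deligne convention. The two $L$-parameters $\phi^A_\pi$ and $\phi^D_\pi$ differ only in which element of $W_F/I_F$ they assign this Satake parameter to: the arithmetic Frobenius in the Artin convention and the geometric Frobenius in the Deligne convention. Since both parameters are trivial on $I_F \times \SL_2(\C)$, verifying the identity $\phi^D_\pi = {}^LC \circ \phi^A_\pi$ reduces to a single explicit computation in $\widehat{G} \rtimes \Gamma$.

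Concretely, fix the generator $\sigma \in \Gal(E/F)$ acting as $x \mapsto x^q$ on the residue field, and let $t \in \widehat{T}$ be a Satake representative of $\pi$, so that the class $[t \rtimes \sigma]$ under $\sigma$-twisted conjugation by $N$ classifies $\pi$. The key point is that the Satake isomorphism $\HH(G,K) \cong \C[X_*(A)]^W$ and its interpretation via $(\widehat{G} \rtimes \sigma)_{\mathrm{ss}}/\Int(\widehat{G})$ is purely representation-theoretic, hence $t$ is independent of the class-field-theoretic convention. By the definition recalled in Section \ref{sec.unram}, we have
\[
\phi^A_\pi(\Frob_{\mathrm{ari}}) = t \rtimes \Frob_{\mathrm{ari}}, \qquad \phi^D_\pi(\Frob_{\mathrm{geo}}) = t \rtimes \Frob_{\mathrm{geo}},
\]
where $\Frob_{\mathrm{ari}}$ and $\Frob_{\mathrm{geo}}$ act on $\widehat{G}$ via $\sigma$ and $\sigma^{-1}$ respectively. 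From $\Frob_{\mathrm{ari}} = \Frob_{\mathrm{geo}}^{-1}$ and the semidirect product formula $(g,\gamma_1)(h,\gamma_2) = (g\gamma_1(h), \gamma_1\gamma_2)$, a direct inversion yields
\[
\phi^D_\pi(\Frob_{\mathrm{ari}}) = (t \rtimes \Frob_{\mathrm{geo}})^{-1} = \sigma(t^{-1}) \rtimes \Frob_{\mathrm{ari}}.
\]
On the other hand, since $\widehat{C}$ acts on $\widehat{T}$ as inversion and, being built from a $\Gamma$-stable pinning, commutes with the $\Gamma$-action, we have
\[
({}^LC \circ \phi^A_\pi)(\Frob_{\mathrm{ari}}) = \widehat{C}(t) \rtimes \Frob_{\mathrm{ari}} = t^{-1} \rtimes \Frob_{\mathrm{ari}}.
\]
Finally, conjugation by $t \in \widehat{T} \subset \widehat{G}$ transforms one into the other:
\[
t \cdot (t^{-1} \rtimes \Frob_{\mathrm{ari}}) \cdot t^{-1} = (1 \rtimes \Frob_{\mathrm{ari}}) \cdot t^{-1} = \sigma(t^{-1}) \rtimes \Frob_{\mathrm{ari}}.
\]
Hence $\phi^D_\pi$ and ${}^LC \circ \phi^A_\pi$ agree on $\Frob_{\mathrm{ari}}$ up to $\widehat{G}$-conjugacy, and since both are trivial on $I_F \times \SL_2(\C)$, this conjugacy upgrades to an equality of $\widehat{G}$-conjugacy classes of parameters.

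There is no serious obstacle here beyond careful bookkeeping in the semidirect product. The two points worth verifying carefully are: first, that the Satake element $t$ is genuinely convention-independent (which follows because neither the Satake isomorphism nor the identification of $[t \rtimes \sigma]$ with elements of $(\widehat{G} \rtimes \sigma)_{\mathrm{ss}}/\Int(\widehat{G})$ invokes local class field theory); and second, that the indeterminacy in the choice of $\Gamma$-stable pinning defining $\widehat{C}$ affects ${}^LC$ only by an inner automorphism coming from $\widehat{G}^\Gamma$, which is absorbed into the $\widehat{G}$-conjugacy class of the parameter. One could also sanity-check the statement against the case $G = \GL_n$, where $\widehat{C}$ is the transpose-inverse and the identity $\phi^D_\pi = (\phi^A_\pi)^\vee$ recovers the discussion of Section \ref{ss:llc_gln}.
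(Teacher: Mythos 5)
Your proof is correct and close in spirit to the paper's, but there is a genuine point of difference worth noting. The paper's proof invokes the surjectivity of $\widehat{T}^\Gamma \to \widehat{A}$ (citing Borel) to choose the Satake representative $t$ to be $\Gamma$-fixed, after which $\sigma(t^{-1}) = t^{-1}$ and the two displayed values of the parameters at $\Frob_\ari$ literally coincide with no further work. You instead keep $t$ arbitrary, obtain $\phi^D_\pi(\Frob_\ari) = \sigma(t^{-1}) \rtimes \Frob_\ari$ by a direct inversion in the semidirect product, and then observe that the remaining discrepancy with $({}^LC\circ\phi^A_\pi)(\Frob_\ari) = t^{-1}\rtimes\Frob_\ari$ is absorbed by conjugation by the element $t\in\widehat{T}\subset\widehat{G}$, which is legitimate because unramified parameters are classified up to $\widehat{G}$-conjugacy. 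Both routes are sound. The paper's choice of $\Gamma$-fixed $t$ is slicker once one has Borel's lemma in hand; yours avoids that input at the cost of an extra conjugation, and makes more visible why the identity is really one of conjugacy classes rather than of honest homomorphisms. Your closing remarks about convention-independence of the Satake element and the indeterminacy in $\widehat{C}$ are both correct and worth keeping.
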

\begin{proof}
Recall that
\begin{align*}
 \phi^A_\pi(\Frob_\ari) & = t \rtimes \Frob_\ari, \\
 \phi^D_\pi(\Frob_\geo) & = t \rtimes \Frob_\geo
\end{align*}
for some common $t \in \widehat{T}$ whose image in $\widehat{A}/W$ 
corresponds to $\pi$ under the canonical bijection $\Irr_{\Ksph}(G) \cong \widehat{A}/W$.
Since the natural map $\widehat{T} \rightarrow \widehat{A}$ restricted to $\widehat{T}^\Gamma$ 
is surjective by \cite[p.~37, (3)]{B}, 
we may assume that $t$ is $\Gamma$-fixed.
Then we have
\begin{align*}
 \phi^D_\pi(\Frob_\ari)
 & = \phi^D_\pi(\Frob_\geo)^{-1} = t^{-1} \rtimes \Frob_\ari \\
 & = \widehat{C}(t) \rtimes \Frob_\ari = {}^L C \circ \phi_\pi^A(\Frob_\ari).
\end{align*}
This implies the lemma.
\end{proof}

Let $(\tau, V)$ be a finite-dimensional complex representation of ${}^LG$. 
For $\pi \in \Irr_{\Ksph}(G)$ with the $L$-parameter $\phi_\pi$, 
define the \emph{unramified $L$-factors} $L_A(s, \pi, \tau)$ and $L_D(s, \pi, \tau)$ by 
\begin{align*}
L_A(s, \pi, \tau) &= \det(1-q^{-s} \cdot \tau \circ \phi_\pi^A(\Frob_\ari))^{-1}, \\
L_D(s, \pi, \tau) &= \det(1-q^{-s} \cdot \tau \circ \phi_\pi^D(\Frob_\geo))^{-1}.
\end{align*}
Then we have
\[
L_D(s, \pi, \tau) = L_A(s, \pi^\vee, \tau^\vee).
\]
Indeed, since ${}^LC \circ \phi_\pi^A \cong \phi_{\pi^\vee}^A$, we have
\begin{align*}
\tau \circ \phi_\pi^D(\Frob_\geo) 
&= \tau \circ {}^LC \circ \phi_\pi^A(\Frob_\ari)^{-1}
\\&= \tau \circ \phi_{\pi^\vee}^A(\Frob_\ari)^{-1}.
\end{align*}
Its characteristic polynomial is equal to that of $\tau^\vee \circ \phi_{\pi^\vee}^A(\Frob_\ari)$.

\begin{rem}
Let us point out that the above construction is slightly different from Langlands' original construction in \cite{L1}.
Recall that the Satake isomorphism $\HH(G,K) \cong \C[X_*(A)]^W$ is obtained from the Satake transform
\[
S \colon \HH(G,K) \rightarrow \HH(T,T \cap K)^W
\]
given by 
\[
 S(f)(t)=\delta_B(t)^{\frac{1}{2}}\int_U f(tu)du
\]
for $f \in \HH(G,K)$ and $t \in T$, together with the identification
\[
 T/(T\cap K)\cong A/(A\cap K)\cong X_*(A)
\]
fixed above.
Langlands uses the same Satake transform, 
but the opposite identification of $A/(A\cap K)$ with $X_*(A)$: 
in \cite[Chapter 2]{L1}, he uses the normalization $|\xi_\alpha(t)| = p^{\lambda(t)(\alpha)}$, 
which amounts to saying that the class of $a \in A$ is sent to the element $\lambda \in X_*(A)$ such that 
\[
\lambda(\chi) = -\mathrm{val}_F(\chi(a))
\]
for $\chi\in X^*(A)$.
Thus his bijection $\Irr_{\Ksph}(G) \cong \widehat{A}/W$ differs from ours 
by the involution $t\mapsto t^{-1}$ on $\widehat A/W$.
We write $\phi_\pi$ and $\phi_\pi^L$ for the unramified $L$-parameters corresponding to $\pi \in \Irr_{\Ksph}(G)$ 
under our bijection and Langlands' bijection, respectively, once $\Frob$ is fixed.
If $\phi_\pi(\Frob)=t \rtimes \Frob$, then the above comparison gives
\[
\phi_\pi^L(\Frob)=t^{-1}\rtimes \Frob.
\]
Hence, as in Lemma \ref{lem:urpar}, we have
\[
\phi_\pi^L={}^LC\circ \phi_\pi.
\]
\end{rem}

%\subsection{Endoscopic transfer}
\subsection{Endoscopic transfer}
Geometric and spectral endoscopic transfers are governed by 
the transfer factors defined in \cite{LS} and \cite{KoSh}, 
in the setting of ordinary and twisted endoscopy, respectively. 
Since twisted endoscopy generalizes ordinary endoscopy, 
the factors of \cite{KoSh} ought to specialize to the factors of \cite{LS}. 
Moreover, since Arthur's endoscopic classification of representations uses 
both ordinary and twisted endoscopy between various pairs of groups, 
it is important for all factors to be normalized compatibly.
\par

The reader needs to be aware that the definition given in \cite{KoSh} is incorrect. 
There are two ways to correct it, 
depending on whether one uses the Artin or the Deligne convention for class field theory. 
But neither of these corrections specializes to the definition of \cite{LS}. 
Consequently, the latter definition also needs to be modified. 
All this is explained carefully in \cite[Section 5]{KoSh2} 
and we will content ourselves with a brief summary.
\par

Let $F$ be a local field and let $G$ be a quasi-split connected reductive $F$-group. 
Fix an automorphism $\theta$ of $G$ that preserves an $F$-pinning 
and a $\theta$-stable pair $(B,\chi)$ consisting of a Borel subgroup $B$ over $F$
and a generic character $\chi$ of the unipotent radical $U$ of $B$. 
Let $(H,s,\HH,\eta)$ be an endoscopic datum and let $(H_1,\eta_1)$ be a $z$-pair. 
Associated to these data, there are two normalizations of the transfer factors: 
the Artin normalization $\Delta_A$ (denoted by $\Delta'$ in \cite[Section 5]{KoSh2}) 
and the Deligne normalization $\Delta_D$. 
They are defined in \cite[(5.5.1), (5.5.2)]{KoSh2} as 
\[ 
\Delta_A = \epsilon \cdot (\Delta_I^\new \Delta_{III})^{-1} \Delta_{II} \Delta_{IV} 
\]
and 
\[ 
\Delta_D = \epsilon \cdot \Delta_I^\new \Delta_{III} \Delta_{II}^{-1} \Delta_{IV}, 
\]
where the terms $\epsilon, \Delta_{II}, \Delta_{III}, \Delta_{IV}$ are defined in \cite{KoSh}, 
and the corrected term $\Delta_I^\new$ is defined in \cite{KoSh2}. We note here that both $\Delta_A$ and $\Delta_D$ use the same $\epsilon$-factor, namely the one normalized according to Artin's convention; the construction of the remaining pieces uses 
both the Tate--Nakayama and the Langlands isomorphism, 
and in the above formulas, 
these two isomorphisms have been normalized according to the conventions in \cite{KoSh}, 
namely the Artin normalization of the Tate--Nakayama isomorphism 
and the Deligne normalization of the Langlands isomorphism. 
This is the reason why $\Delta_A$ and $\Delta_D$ are given by different formulas. 
\par

In the case of trivial twisting, i.e. $\theta=1$, 
these factors can also be described using the terms 
$\Delta_I, \Delta_{II}, \Delta_{III_1}, \Delta_{III_2}, \Delta_{IV}$ defined in \cite{LS}, 
namely as
\[ 
\Delta_A = \epsilon \cdot (\Delta_I\Delta_{III_1})^{-1}\Delta_{II}\Delta_{III_2}\Delta_{IV} 
\]
and 
\[ 
\Delta_D = \epsilon \cdot \Delta_I\Delta_{III_1}\Delta_{II}^{-1}\Delta_{III_2}^{-1}\Delta_{IV} 
= \epsilon \cdot \Delta_I\Delta_{III_1}\Delta_{II}\Delta_{III_2,D}\Delta_{IV}, 
\]
where $\Delta_{III_2,D}$ is obtained from $\Delta_{III_2}$ 
by first inverting the $\chi$-data that is being used in the definition, 
and then inverting the entire factor, see \cite[(5.1.2)]{KoSh2}. 
Note also that in the setting of ordinary endoscopy, 
the twisted $\Delta_{III}$ breaks up as $\Delta_{III_1} \cdot \Delta_{III_2}^{-1}$.
\par

%\subsection{The Arthur--Langlands conjectures}
\subsection{The Arthur--Langlands conjectures}
From now on, we assume that the characteristic of the base field is zero.
Before we discuss the effect of Artin and Deligne normalizations on the Arthur--Langlands conjectures, we briefly recall them. Their main thrust can be summarized as follows.

\begin{conj} \label{cnj:a}
Let $G$ be a quasi-split connected reductive group over a local or global field $F$ of characteristic zero. 
\begin{enumerate}
\item 
For $F$ global, there exists a set $\Psi_2(G)$ and for each $\psi \in \Psi_2(G)$ a subrepresentation $L^2_\psi(G) \subset L^2_\disc(G)$  such that
\[
L^2_\disc(G) = \bigoplus_{\psi \in \Psi_2(G)} L^2_\psi(G).
\]
\item 
For $F$ local, 
there is a multi-set $\Pi_\psi(G)$ over $\Irr_\unit(G)$ attached to each $\psi \in \Psi(G)$ such that
\[
\Irr_\temp(G) = \bigsqcup_{\phi \in \Phi_{\temp}(G)}\Pi_\phi(G).
\]
\item 
For $F$ local and $\psi \in \Psi(G)$, 
there is a stable distribution $f \mapsto \psi(f)$ supported on $\Pi_\psi(G)$, 
as well as a map $\Pi_\psi(G) \rightarrow \Irr(\Sc_{\psi})$, depending on a choice of Whittaker datum $\ww$ for $G$. 
We denote it by $\pi \mapsto \rho_{\ww, \pi}$. 
For each semisimple $s \in S_\psi$ we have
\[
\tag{{\bf ECR}} \label{ecr}
\sum_{\pi \in \Pi_\psi(G)} \pair{\pi, s \cdot s_\psi}_{\ww, \psi} \cdot \pi(f) = \psi'(f'),
\]
where $\pair{\pi, \cdot}_{\ww, \psi} =\tr(\rho_{\ww, \pi} (\cdot))$ and $f \mapsto f'$ is the endoscopic transfer of functions.
\item 
For $F$ global, 
there is a localization map 
\[ 
\Psi_2(G) \rightarrow \Psi(G_v),\quad \psi \mapsto \psi_v 
\] 
for any place $v$ of $F$, 
and 
\[ 
L_\psi^2(G) = \bigoplus_\pi \pi^{m(\psi,\pi)}, 
\]
where $\pi$ runs over all irreducible admissible representations $\pi=\otimes'_v\pi_v$ of $G(\A)$ 
which satisfy $\pi_v \in \Pi_{\psi_v}(G_v)$ for all places $v$, 
and 
\[ 
m(\psi,\pi) = \mathrm{mult}(\epsilon_\psi, \otimes_v (\rho_{\ww_v,\pi_v}|_{\Sc_\psi})), 
\]
where $\ww_v$ are the localizations of a fixed global Whittaker datum $\ww$ for each place $v$. 
Here, $\epsilon_\psi$ is the quadratic character of $\Sc_\psi$ defined explicitly in terms of $\psi$ as in \cite[(8.4)]{Ar2}.

\item
When $F$ is non-archimedean and $\psi \in \Psi(G)$ is unramified, 
$\Pi_\psi$ has an unramified representation $\pi$
(with respect to a fixed hyperspecial maximal compact subgroup)
with multiplicity one
such that 
\[
\phi_\psi = \phi_\pi^*,
\]
where $* = A$ (\resp $* = D$) if we use Artin's (\resp Deligne's) convention for class field theory. 
Moreover, it corresponds to the trivial representation of $\Sc_\psi$. 
\end{enumerate}
\end{conj}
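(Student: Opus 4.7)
The plan is to follow Arthur's inductive strategy via the comparison of trace formulas, since the conjecture as stated is essentially the content of \cite{Ar} and \cite{Mok} together with the local results proved earlier in this paper. The induction is on the integer $N$ bounding the dimension of the standard representation of $\widehat{G}$, and all five parts of the conjecture are proved simultaneously for all quasi-split classical groups $G$ with $\dim(\St_{\widehat{G}}) \leq N$. The inductive hypothesis gives, in particular, the $A$-packets, pairings with $\Sc_{\psi_M}$, and local intertwining relations for every proper Levi subgroup of every group under consideration.

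The core analytic input is the stabilization of the twisted trace formula for the triple $(\GL_N, \theta)$ of Moeglin--Waldspurger \cite{MW_Stab1, MW_Stab2}, which expresses the $\theta$-discrete spectrum of $\GL_N$ as a sum of stable transfers from elliptic twisted endoscopic groups, i.e. precisely the simple quasi-split classical groups of smaller rank. Comparing this with the stabilized ordinary trace formula for such $G$ yields, for each discrete global parameter $\psi$, a spectral identity between a sum over automorphic representations on $G$ and a twisted character of a Speh-type representation on $\GL_N$. This identity is then localized: one inputs a test function supported at a chosen place $v$ and elementary Paley--Wiener data at the other places, and interprets the resulting local identity via \eqref{ecr}. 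The local seed case for unramified places is forced by the Satake isomorphism as recalled in Section~\ref{sec.unram}, and in particular part (5) of the conjecture is built into the normalization; in the Artin convention one uses $\phi_\pi^A$, and Lemma~\ref{lem:urpar} shows this is consistent with the Deligne variant via the Chevalley involution.

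The tempered local classification, part (2), is extracted from the global identity by the standard globalization of a given local tempered parameter $\phi_v$ via \cite[Lemma 6.2.2]{Ar} (or its unitary analogue in \cite{Mok}), arranging that at all other places the representations are already classified. The local intertwining relation, which is what controls the discrete spectral contribution at the distinguished place, is exactly what our Theorems \ref{main1}, \ref{main2}, and \ref{main3} supply: the first and second give the ingredients needed in Chapter 6 of \cite{Ar} for tempered $L$-parameters, and the third provides the missing input to Chapter 7 for co-tempered $A$-parameters, which in turn seed the general $A$-parameter case by a further globalization argument. The nontempered packets in part (2), the multiplicity formula in part (4), and the dichotomy of self-dual cuspidal representations of $\GL_N$ into symplectic and orthogonal types then fall out of the combined global--local argument exactly as in \cite[Chapter 8]{Ar}.

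The main obstacle is the twisted weighted fundamental lemma discussed in Section~\ref{sec.TFL}, which is needed to legitimize the stabilization of the twisted trace formula, and which remains conditional beyond the split case. Everything else in this proposal is available: the fundamental lemma in its unweighted and ordinary weighted split forms, the archimedean twisted character identities of Appendix~\ref{sec.TECR}, the matching of local factors of Appendix~\ref{sec.LF}, the Aubert-dual construction of co-tempered packets in Section~\ref{sec.ECR.cotemp}, and the local intertwining relations in Sections~\ref{sec.LIR}--\ref{computations}. Consequently, once the twisted weighted fundamental lemma is established, Conjecture~\ref{cnj:a} follows for all quasi-split symplectic, special orthogonal, and unitary groups by the inductive scheme above, with the independent Artin/Deligne normalization choice tracked consistently throughout via the dictionary in this appendix.
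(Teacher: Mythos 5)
The statement you are attempting to prove is a \emph{conjecture}, not a theorem or lemma, and the paper does not prove it. It appears in Appendix~\ref{sec.Frob} as a concise restatement of the Arthur--Langlands conjectures for a \emph{general} quasi-split connected reductive group over a local or global field. Its role there is purely as a reference point: the subsequent Propositions~\ref{pro:contra} and~\ref{pro:switch} \emph{assume} the validity of Conjecture~\ref{cnj:a} (or of its parts) as a hypothesis and deduce how the Artin and Deligne normalizations of class field theory interact with the packets, pairings, and multiplicity formulas. No proof of the conjecture itself is given or intended.

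Your proposal is a reasonable sketch of how Arthur's inductive comparison of stabilized trace formulas, together with the local intertwining relations and co-tempered packet construction established earlier in this paper, yields the endoscopic classification for quasi-split \emph{classical} groups, conditionally on the twisted weighted fundamental lemma. But this does not address the statement as written, for two reasons. First, the conjecture is stated for arbitrary quasi-split connected reductive $G$, and for such $G$ the objects involved (local $A$-parameters $\Psi(G)$, global discrete $A$-parameters $\Psi_2(G)$, the packets $\Pi_\psi(G)$) are not even defined unconditionally, let alone classified; the scope of the trace-formula method you outline is precisely the classical groups realizable as twisted endoscopic groups for $\GL_N$. Second, and more fundamentally, even within that restricted scope your argument proves a theorem that the paper deliberately does not claim here: the purpose of stating Conjecture~\ref{cnj:a} in this appendix is to set up the bookkeeping for the Chevalley involution and the two normalizations, and the actual mathematical content of the appendix is the consistency of the two conventions (Propositions~\ref{pro:contra} and~\ref{pro:switch}, plus Lemma~\ref{lem:tfsw} on transfer factors), not a re-derivation of the endoscopic classification.
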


Let ${}^LC = \widehat{C} \rtimes \id_\Gamma$ be the Chevalley involution of ${}^LG = \widehat{G} \rtimes \Gamma$
defined in Section \ref{sec.unram}.
Given $\psi \in \Psi(G)$, define $\psi^\vee = {}^LC \circ \psi$. 
Note that $(\psi^\vee)^\vee = \psi$ and that $\widehat  C$ induces an isomorphism $\Sc_\psi \to \Sc_{\psi^\vee}$. 
For the group $\GL_N$ and the standard pinning, 
we have $\widehat{C}(g) = {}^tg^{-1}$. In particular, if $\tau \colon \GL_N(\C) \rightarrow \GL_M(\C)$ is a representation, 
then $\tau \circ \widehat{C}$ is isomorphic to the contragredient of $\tau$, 
and if $\rho \colon \Delta \rightarrow \GL_N(\C)$ is a representation of some group $\Delta$, 
then $\widehat{C} \circ \rho$ is isomorphic to the contragredient of $\rho$. 
Therefore, for any $G$, we may think of $\psi^\vee$ as ``the contragredient'' of $\psi$. 
The following result, whose proof will be given further below, lends some credence to this thought.
\par

\begin{prop} \label{pro:contra}
Fix a quasi-split group $G$ and consider the validity of Conjecture \ref{cnj:a} for all endoscopic groups $G'$ of $G$.
\begin{enumerate}
\item 
Assume that $F$ is local and Conjecture \ref{cnj:a} (2)--(3) holds (in either convention). 
Assume further that for all $G'$ and $\psi' \in \Psi(G')$, 
the equation $\psi'^\vee(f') = \psi'(f'\circ i')$ holds, 
where $i'$ is the inversion anti-automorphism of $G'$. 
For any $\psi \in \Psi(G)$, consider $\Pi_\psi(G)^\vee = \{\pi^\vee \,|\, \pi \in \Pi_\psi(G) \}$. 
Then 
\[ 
\Pi_{\psi^\vee}(G) = \Pi_\psi(G)^\vee,
\quad 
\pair{\pi^\vee,\widehat{C}(x^{-1})}_{\ww^{-1},\psi^\vee} = \pair{\pi,x}_{\ww,\psi}
\]
for $\pi \in \Pi_\psi(G)$ and $x \in \Sc_\psi$, 
where $\ww^{-1} = (B, \chi^{-1})$ for $\ww = (B, \chi)$.
\item 
Assume that $F$ is global and Conjecture \ref{cnj:a} (1)--(4) holds (in either convention). 
Assume again that for all $G'$ and $\psi' \in \Psi(G')$, 
the equation $\psi'^\vee(f') = \psi'(f'\circ i')$ holds. 
For any $\psi \in \Psi_{2}(G)$ and $\pi \in \Pi_\psi(G)$, 
we have
\[ 
m(\pi^\vee,\psi^\vee) = m(\pi,\psi). 
\]
\item 
If $G$ is a quasi-split classical group, 
then the identity $\psi^\vee(f)=\psi(f\circ i)$ holds for Arthur's construction (in either convention). 
Since all $G'$ are products of classical groups, 
we see that parts (1) and (2) hold for $G$.
\end{enumerate}
\end{prop}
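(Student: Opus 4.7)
The overall strategy is to prove Part (1) directly from the endoscopic character relation and the assumption that the identity $\psi'^\vee(f') = \psi'(f' \circ i')$ holds on endoscopic groups, then deduce Part (2) by tracking how the multiplicity formula transforms under $\psi \mapsto \psi^\vee$ and $\pi \mapsto \pi^\vee$, and finally verify the hypothesis of Parts (1)--(2) for quasi-split classical groups in Part (3) by using Arthur's twisted endoscopic characterization of the stable distribution.

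For Part (1), the plan is to start from the ECR \eqref{ecr} applied to $\psi$ at a semisimple $s \in S_\psi$, and substitute the test function $f$ by $f \circ i$. Two observations are crucial here. First, the character identity $\Theta_\pi(g^{-1}) = \Theta_{\pi^\vee}(g)$ immediately yields $\pi(f \circ i) = \pi^\vee(f)$. Second, inversion on $G$ matches stable conjugacy classes with stable conjugacy classes under endoscopic transfer, but it shifts the Whittaker normalization $\ww = (B,\chi)$ to $\ww^{-1} = (B, \chi^{-1})$, because composing the generic character $\chi$ of the unipotent radical with inversion gives $\chi^{-1}$; thus $(f \circ i)' = f' \circ i'$ after this shift. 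Combining these with the hypothesis $\psi'^\vee(f') = \psi'(f' \circ i')$, and using that under the Chevalley involution the endoscopic datum associated to $(s,\psi)$ corresponds to the endoscopic datum associated to $(\widehat{C}(s^{-1}), \psi^\vee)$, we arrive at
\[
\sum_{\pi \in \Pi_\psi(G)} \pair{\pi, s \cdot s_\psi}_{\ww, \psi} \pi^\vee(f) = (\psi^\vee)'(f').
\]
Comparing this with the ECR for $\psi^\vee$ at $\widehat{C}(s^{-1})$, and applying Fourier inversion over $\Sc_\psi \cong \Sc_{\psi^\vee}$ together with the linear independence of irreducible characters, yields both $\Pi_{\psi^\vee}(G) = \Pi_\psi(G)^\vee$ and the pairing identity $\pair{\pi^\vee, \widehat{C}(x^{-1})}_{\ww^{-1}, \psi^\vee} = \pair{\pi, x}_{\ww, \psi}$.

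For Part (2), applying Part (1) at every local place $v$ gives a canonical isomorphism between the local characters $\rho_{\ww_v, \pi_v}|_{\Sc_\psi}$ and $\rho_{\ww_v^{-1}, \pi_v^\vee}|_{\Sc_{\psi^\vee}}$ (via the Chevalley isomorphism $\Sc_\psi \cong \Sc_{\psi^\vee}$ composed with inversion on $\Sc_\psi$). The global Whittaker datum $\ww$ and its inverse $\ww^{-1}$ can be compared by a global twist, and the resulting sign disappears upon taking the tensor product over all places, since global Whittaker data are determined up to a one-dimensional global character. The final ingredient is the invariance $\epsilon_{\psi^\vee} = \epsilon_\psi$ of Arthur's sign character: this follows from the fact that $\epsilon_\psi$ is built from local $\epsilon$-factors at $s = 1/2$ of certain symplectic constituents of $\psi$, and these are invariant under replacement of $\psi$ by its Chevalley dual. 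Substituting into the multiplicity formula yields $m(\pi^\vee, \psi^\vee) = m(\pi, \psi)$.

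For Part (3), Arthur's stable distribution for a classical group $G$ is characterized by the twisted endoscopic transfer $\psi(f) = (G:G^\circ)\Theta_{\tl\pi_\psi}(\tl{f})$ whenever $\tl{f} \in C_c^\infty(\GL_N(E) \rtimes \theta)$ and $f \in C_c^\infty(G^\circ)$ have matching orbital integrals, where $\tl\pi_\psi = \pi_\psi \boxtimes \theta_A$ is Arthur's extension. Replacing $f$ by $f \circ i$ corresponds on the twisted $\GL_N$-side to a matching function $\tl{f}^\iota$ that can be computed explicitly: using that $\theta(g) = J{}^tg^{-1}J^{-1}$ for a specific matrix $J$, one checks that composition of the twisted test function with inversion translates into replacing $\tl\pi_\psi$ with $\tl\pi_{\psi^\vee}$, because the Chevalley dual of $\phi_\psi$ at the level of $\GL_N$-parameters corresponds to the contragredient representation, which in the conjugate-self-dual case coincides with $\pi_\psi \circ \theta$. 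The main obstacle, and the most delicate part, is keeping track of the normalization of $\theta_A$ (defined via Whittaker functionals on the standard module in Section \ref{sec.thetaA}) under this operation: one must verify that the Whittaker normalization is preserved under the simultaneous inversion on $\GL_N(E) \rtimes \theta$ and on the transfer side. Once this is done, $\psi^\vee(f) = \psi(f \circ i)$ follows, and since every endoscopic group $G'$ of a classical group $G$ is a product of classical groups, the hypothesis in Parts (1)--(2) is verified for $G$ and all its endoscopic groups, completing the proof.
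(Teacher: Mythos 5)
Your overall strategy for all three parts mirrors the paper's proof: Part (1) by applying \eqref{ecr} to $f\circ i$ and using that inversion interacts with transfer via the shifted transfer factor $\Delta[\ww^{-1},{}^LC(\ee),{}^LC_H(\zz)]$ (which is exactly Lemma \ref{lem:tfsw}(1)); Part (2) by rewriting the multiplicity formula and invoking a Chevalley-compatibility of $\epsilon_\psi$; Part (3) by reducing to the twisted endoscopic characterization of $\psi(f)$ and tracking how inversion acts on the twisted $\GL_N$ side. For Part (2), your ``invariance $\epsilon_{\psi^\vee}=\epsilon_\psi$'' should really read $\epsilon_\psi(\widehat C^{-1}(x))=\epsilon_{\psi^\vee}(x)$, and the paper proves this concretely from the formula $\epsilon_\psi(\alpha_{\tau_i[d_i]})=\prod_{j\neq i}\ep(\tau_i\times\tau_j)^{\min\{d_i,d_j\}}$ together with $\widehat C^{-1}(\alpha_{\tau_i[d_i]})=\alpha_{\tau_i^\vee[d_i]}$ and $\ep(\tau_i\times\tau_j)\ep(\tau_i^\vee\times\tau_j^\vee)=1$; but this is the same idea phrased precisely.

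The genuine gap is in Part (3). You correctly identify that ``the main obstacle'' is verifying that the Whittaker-normalized extension to $\GL_N(F)\rtimes\langle\theta\rangle$ is compatible with contragredient, i.e.\ that $(\tl\pi_{\tau\circ\psi})^\vee$ is again the Whittaker-normalized extension of $\pi_{\tau\circ\psi}^\vee$ (with respect to $\ww^{-1}$). But you stop at flagging the issue: you give no argument for it, and this is exactly where the substantive work lies. The paper proves this in two steps. For tempered $\pi$, it realizes a $\GL_N(F)$-invariant inner product via the absolutely convergent integral $\langle v,v'\rangle = \int_{U\backslash\mathcal M}\omega(\pi(g)v)\overline{\omega(\pi(g)v')}\,dg$ (Bernstein), and checks directly that the Whittaker extension $\tl\pi(1\rtimes\theta)$ is unitary for this pairing. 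For a general $A$-parameter representation, it pushes the pairing up to the standard module $\II_\pi\times\II_\pi^\vee$, uses $\dim\End_{\GL_N}(\II_\pi)=1$ (Lemma \ref{End}) to get a scalar $c$, pins down $c=1$ by restricting to the tempered subrepresentation of $\II_\pi$ (Lemma \ref{psiD}), and then needs the commutativity of the diagram relating $\theta_W$ on the standard module to $\theta_A$ on the Langlands quotient, which is an appeal to Theorem \ref{main2}. Without this argument, your Part (3) (and therefore the verification of the hypotheses of (1) and (2) for classical groups) is incomplete. I would also caution against the claim that the translation on the twisted $\GL_N$ side ``can be computed explicitly'' via the matrix $J$: the underlying $\GL_N(F)$-representation $\pi_{\tau\circ\psi}$ is unchanged (it is conjugate-self-dual, so self-contragredient after conjugation), and the entire content is in the extension to the twisted group, which is precisely the delicate normalization you have deferred.
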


Let us now turn to the effect of the Artin and Deligne normalizations on the Arthur--Langlands conjectures. We will write $\phi^A_\pi$ and $\phi^D_\pi$ for the Langlands parameter associated to a representation $\pi$ with respect to either convention. 
We have the following cues:
\begin{enumerate}
    \item In the local correspondence for $G=\mathrm{GL}_N$, switching between the two normalizations is accomplished by taking the contragredient of the representation of $G(F)$, equivalently of its Langlands parameter. That is the same as composing the Langlands parameter with the Chevalley involution of $\mathrm{GL}_N(\C)$. Thus we have $\phi^D_\pi=\widehat C \circ \phi^A_\pi$.
    \item If $G$ is an unramified group, then we saw in Lemma \ref{lem:urpar} that $\phi_\pi^D={^LC}\circ\phi_\pi^A$. 
    \item It is expected that there should be an independent definition of an $L$-function $L(s,\pi,\tau)$ associated to an automorphic representation $\pi$ of $G(\A_F)$ and a finite-dimensional representation $\tau \colon {^LG} \to \mathrm{GL}(V)$, which does not reference the Galois side, and that $L(s,\pi,\tau)=L_*(s,\tau\circ\phi_\pi^*)$, where $\phi_\pi^*$ is the $L$-parameter of $\pi$ and $* \in \{A,D\}$ is either of the two normalization conventions. It is not known how to define $L(s,\pi,\tau)$ in general, but there are some known examples, such as that of Godement--Jacquet \cite{GJ}, where $G=\mathrm{GL}_N$ and $\tau$ is the standard representation of $\widehat{G}=\mathrm{GL}_N(\C)$ and trivial on $\Gamma$. In that setting $L(s,\pi,\tau)$ is insensitive to the normalization of class field theory. One might expect that, for general groups $G$, the definition of the automorphic $L$-function $L(s,\pi,\tau)$ might be insensitive to the normalization of class field theory \emph{provided that $^LG$ is a direct product $\widehat{G} \times \Gamma$ and $\tau$ is trivial on $\Gamma$.} This implies that we should have $L_A(s,\tau\circ\phi_\pi^A)=L_D(s,\tau\circ\phi_\pi^D)$, pointing towards $\tau\circ\phi_\pi^D=(\tau\circ\phi_\pi^A)^\vee$ as finite-dimensional representations of the Langlands group of the base field. Now on the one hand $(\tau\circ\phi_\pi^A)^\vee=\tau^\vee\circ\phi_\pi^A$, while on the other hand $\tau^\vee=\tau\circ \widehat C$; the latter can be seen by first reducing to the case that $\tau$ is irreducible and then noting that the weights of $\tau\circ\widehat C$ are exactly the negatives of those of $\tau$. This again points to $\phi_\pi^D=\widehat C \circ \phi_\pi^A$.
\end{enumerate}
With regards to (3) we want to point out that if we do not assume that $^LG = \widehat{G} \times \Gamma$ and that $\tau$ is trivial on $\Gamma$, then it is not reasonable to expect that $L(s,\pi,\tau)$ is independent of the choice of normalization of class field theory. A degenerate example would be when $G=\{1\}$, in which case $^LG=\Gamma$ and $L(s,\pi,\tau)$ is simply the $L$-function of the Galois representation $\tau$.

A less degenerate example is the unramified $L$-function $L(s,\pi,\tau)$, where both $G$ and $\pi$ are unramified, where we saw that there are two normalizations linked by the identity
\[ L_D(s, \pi, \tau) = L_A(s, \pi^\vee, \tau^\vee). \]
Note that, in this setting, if $G$ is in fact split and $\tau$ is trivial on the $\Gamma$-factor of $^LG=\widehat{G} \times \Gamma$, then 
\begin{align*}
L_D(s,\pi,\tau) 
&= L_D(s,\tau\circ\phi_\pi^D) 
\\&= L_A(s,(\tau\circ\phi_\pi^D)^\vee)
\\&= L_A(s,\tau^\vee\circ\phi_\pi^D)
\\&= L_A(s,\tau^\vee\circ\widehat C\circ\phi_\pi^A)
\\&= L_A(s,\tau\circ\phi_\pi^A)
=L_A(s,\pi,\tau),
\end{align*}
consistent with (3) above.

We are thus led to expect the following: 

\begin{conj} \label{cnj:b}
Let $G$ be a quasi-split connected reductive group over a local or global field $F$. 
\begin{enumerate}
\item 
If $F$ is local and $\Pi_\psi(G)^A$ (\resp $\Pi_\psi(G)^D$) is the $A$-packet associated to $\psi \in \Psi(G)$ 
by the Artin (\resp Deligne) convention of Conjecture \ref{cnj:a}, 
then 
\[ 
\Pi_\psi(G)^D = (\Pi_\psi(G)^A)^\vee = \Pi_{\psi^\vee}(G)^A, 
\]
and for $\pi \in \Pi_\psi(G)^D$ and $x \in \Sc_\psi$, 
we have
\[ 
\pair{\pi,x}^D_{\ww,\psi} = \pair{\pi^\vee,x^{-1}}^A_{\ww^{-1},\psi} = \pair{\pi,\widehat{C}^{-1}(x)}_{\ww, \psi^\vee}^A. 
\] 
\item 
If $F$ is global and $L^2_\psi(G)^A$ (\resp $L^2_\psi(G)^D$) is the $\psi$-constituent of $L^2_\disc(G)$ 
associated to $\psi \in \Psi_2(G)$ by the Artin (\resp Deligne) convention, 
then 
\[ 
L^2_\psi(G)^D = (L^2_{\psi}(G)^A)^\vee = L^2_{\psi^\vee}(G)^A, 
\quad m(\pi^\vee,\psi^\vee)=m(\pi,\psi). 
\]
\end{enumerate}
\end{conj}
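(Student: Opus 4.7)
The plan is to derive Conjecture \ref{cnj:b} from Conjecture \ref{cnj:a} together with Proposition \ref{pro:contra}, by exploiting the fact that switching between the Artin and Deligne normalizations of class field theory amounts to composing everything with the inversion/contragredient operation on the dual side. Let me first sketch how I would handle part (1). The basic identity to unwind is that in each of the two conventions, the transfer factors $\Delta_A$ and $\Delta_D$ are related by the formulas of \cite[Section 5]{KoSh2}, and that the reciprocity map, the Tate--Nakayama isomorphism, and the Langlands isomorphism for tori all acquire a minus sign upon passing between conventions. Concretely, if $(G', s, \HH', \eta)$ is an endoscopic datum with factored parameter $\psi'$, and $f' \in C_c^\infty(G'(F))$ is an Artin-normalized transfer of $f \in C_c^\infty(G(F))$ with respect to $\Delta_A$, then the same functions serve as a Deligne-normalized transfer with respect to $\Delta_D$ for the endoscopic datum $(G', s^{-1}, \HH', {}^L C \circ \eta)$ and parameter $\psi'^\vee$. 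Tracking this through the endoscopic character relation \eqref{ecr} in Conjecture \ref{cnj:a} (3) shows that the $A$-packet $\Pi_\psi(G)^D$ defined via \eqref{ecr} in the Deligne convention must coincide with the $A$-packet $\Pi_{\psi^\vee}(G)^A$ defined via \eqref{ecr} in the Artin convention. This is the first equality $\Pi_\psi(G)^D = \Pi_{\psi^\vee}(G)^A$.

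The remaining equality $\Pi_{\psi^\vee}(G)^A = (\Pi_\psi(G)^A)^\vee$ and the pairing identity
\[
\langle \pi, \widehat{C}^{-1}(x)\rangle_{\ww, \psi^\vee}^A = \langle \pi^\vee, x^{-1}\rangle_{\ww^{-1}, \psi}^A
\]
are exactly the content of Proposition \ref{pro:contra} (1) applied in the Artin convention, provided that the hypothesis $\psi'^\vee(f') = \psi'(f' \circ i')$ holds for all relevant endoscopic groups. For classical groups this hypothesis is granted by Proposition \ref{pro:contra} (3), while in general it should be carried along as part of the inductive framework. Combining the two ingredients yields both halves of the statement in (1); the Whittaker-datum flip $\ww \leftrightarrow \ww^{-1}$ appearing in the pairing formula is precisely the one that matches Proposition \ref{pro:contra} when translated through the contragredient of the Langlands parameter.

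For part (2), the strategy is to bootstrap from (1) using Arthur's multiplicity formula of Conjecture \ref{cnj:a} (4). The decomposition $L_\psi^2(G) = \bigoplus_\pi \pi^{m(\psi, \pi)}$ is determined by the local packets $\Pi_{\psi_v}(G_v)$, by the local pairings $\langle \cdot, \cdot\rangle_{\ww_v, \psi_v}$, and by the sign character $\epsilon_\psi$. Localization commutes with the operation $\psi \mapsto \psi^\vee$, because ${}^LC$ is defined compatibly over every completion; one must also observe that $\epsilon_{\psi^\vee} = \epsilon_\psi$, which follows from the description of $\epsilon_\psi$ in terms of symplectic root numbers that are insensitive to the Chevalley involution. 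Together with the local identity established in (1) applied at every place and with the contragredient formula $m(\pi^\vee, \psi^\vee) = m(\pi, \psi)$ from Proposition \ref{pro:contra} (2) (in the Artin convention), this gives the identification $L_\psi^2(G)^D = L_{\psi^\vee}^2(G)^A$ as well as the contragredient equality $L_\psi^2(G)^D = (L_\psi^2(G)^A)^\vee$.

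The main obstacle will be the bookkeeping in the first step: one has to verify carefully that the specific recipe for the transfer factor $\Delta_D$ of \cite[(5.5.2)]{KoSh2}, when applied to $(G', s, \HH', \eta)$ and $\psi$, agrees up to a controlled constant with the recipe for $\Delta_A$ applied to $(G', s^{-1}, \HH', {}^L C \circ \eta)$ and $\psi^\vee$. This requires tracking the interplay between the change of sign in Tate--Nakayama, the change of sign in the Langlands isomorphism for tori, and the $\epsilon$-factor term, on all pieces $\Delta_I^{\mathrm{new}}, \Delta_{II}, \Delta_{III}, \Delta_{IV}$. Once this identity of transfer factors is in place, the rest of the argument is essentially formal, and the translation via Proposition \ref{pro:contra} produces both the packet identification and the explicit pairing formula stated in the conjecture.
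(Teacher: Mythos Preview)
Your overall plan --- relate the two normalizations via a transfer-factor identity and then invoke Proposition~\ref{pro:contra} --- is the same as the paper's. But your key technical claim is wrong as stated, and this is not merely bookkeeping.

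You assert that if $f,f'$ are $\Delta_A$-matching for the datum $(G',s,\HH',\eta)$, then \emph{the same} $f,f'$ are $\Delta_D$-matching for $(G',s^{-1},\HH',{}^LC\circ\eta)$. The paper's Lemma~\ref{lem:tfsw}(2) says something different: it is the \emph{inverted} functions $f\circ i$ and $f'\circ i'$ that become $\Delta_D[\ww^{-1},\ee',\zz]$-matching, where $\ee'=(H,s^{-1},\HH,\eta)$ (no ${}^LC\circ\eta$), and the Whittaker datum flips to $\ww^{-1}$. The inversion is not incidental: it is precisely what turns $\pi(f)$ into $\pi^\vee(f\circ i)$ in the character expansion, so that the contragredient packet can appear on the other side. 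If you try to keep the same functions, you would need to combine both parts of Lemma~\ref{lem:tfsw}, and the resulting modified datum has $s\mapsto\widehat C(s)$, not $s\mapsto s^{-1}$; moreover the $z$-pair changes to ${}^LC_H(\zz)$, which affects how the parameter factors.

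Relatedly, your attempt to deduce $\Pi_\psi(G)^D=\Pi_{\psi^\vee}(G)^A$ \emph{directly} from the ECR presupposes that you already know how the stable character $\psi'^D$ on the endoscopic side compares to $(\psi'^\vee)^A$. The paper avoids this circularity by splitting the argument in two: Proposition~\ref{pro:switch}(1) \emph{defines} $\Pi_\psi(G)^D:=(\Pi_\psi(G)^A)^\vee$ and $\psi^D(f):=\psi^A(f\circ i)$, then verifies the Deligne-convention ECR via Lemma~\ref{lem:tfsw}(2); Proposition~\ref{pro:contra}(1) separately shows $(\Pi_\psi(G)^A)^\vee=\Pi_{\psi^\vee}(G)^A$. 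Part~(3) of Proposition~\ref{pro:switch} then checks, for classical groups via the twisted transfer from $\GL_N$ and a nontrivial statement about Whittaker extensions of contragredients, that Arthur's construction really gives $\psi^D(f)=\psi^A(f\circ i)$; this is the step that pins down the conjecture for classical groups and is absent from your sketch.
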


\begin{prop} \label{pro:switch}
Let $G$ be a quasi-split connected reductive group over a local or global field $F$. 
\begin{enumerate}
\item 
Assume that $F$ is local and Conjecture \ref{cnj:a} (2)--(3) holds in Artin's convention. 
For any $\psi \in \Psi(G)$, $\pi \in \Pi_\psi(G)^A$, and $x \in \Sc_\psi$, 
define
\[ 
\Pi_\psi(G)^D = (\Pi_\psi(G)^A)^\vee,
\quad \pair{\pi^\vee, x}_{\ww,\psi}^D = \pair{\pi, x^{-1}}_{\ww^{-1},\psi}^A. 
\]
Then Conjecture \ref{cnj:a} (2)--(3) holds in Deligne's convention.
\item 
Assume that $F$ is global and Conjecture \ref{cnj:a} (1)--(4) holds in Artin's convention. 
For any $\psi \in \Psi_2(G)$, 
define 
\[ 
L^2_\psi(G)^D = (L^2_\psi(G)^A)^\vee. 
\]
Then Conjecture \ref{cnj:a} (1)--(4) holds in Deligne's convention.
\item 
In the setting of a quasi-split classical group $G$, 
let $\psi^A(f)$ and $\psi^D(f)$ be the Artin and Deligne normalizations of the stable character 
corresponding to $\psi \in \Psi(G)$ according to Arthur's construction (for quasi-split symplectic, orthogonal, or unitary groups). 
Then $\psi^D(f)=\psi^A(f \circ i_G)$, 
where $i_G$ is the inversion anti-automorphism of $G$. 
In particular, Arthur's construction satisfies Conjecture \ref{cnj:b}.
\end{enumerate}
\end{prop}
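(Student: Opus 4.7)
The plan is to translate the fundamental principle articulated throughout this appendix---namely that switching between the Artin and Deligne conventions of class field theory has the effect of inverting the reciprocity map, which on the group side corresponds to the inversion anti-automorphism $i_G \colon g \mapsto g^{-1}$ and on the dual side to the Chevalley involution ${}^LC$---through each ingredient in Arthur's construction and in the statement of Conjecture \ref{cnj:a}. Once part (3) is established for the stable character, parts (1) and (2) follow by essentially formal arguments using characters and multiplicity formulas, in tandem with Proposition \ref{pro:contra}.

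For part (1), the starting point is the character identity $\Theta_{\pi^\vee}(f) = \Theta_\pi(f \circ i_G)$, which immediately shows that the assignment $\pi \mapsto \pi^\vee$ produces from the Artin packet $\Pi_\psi(G)^A$ a collection of irreducible tempered representations that partitions $\Irr_\temp(G)$. The stable distribution $f \mapsto \psi^A(f \circ i_G)$ is then a candidate for $\psi^D$, and the pairing prescribed in the statement is the compatible candidate for $\pair{\cdot,\cdot}^D_{\ww,\psi}$. The non-trivial point is to check the endoscopic character relation \eqref{ecr} in Deligne's convention. This reduces to the identity that, if $f'$ is a $\Delta_A$-transfer of $f$, then $f' \circ i_{G'}$ is a $\Delta_D$-transfer of $f \circ i_G$, which in turn reduces to the equality
\[
\Delta_D(\gamma,\delta) = \Delta_A(\gamma^{-1},\delta^{-1})
\]
of transfer factors. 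This equality follows from the explicit formulas
\[
\Delta_A = \epsilon \cdot (\Delta_I^\new \Delta_{III})^{-1}\Delta_{II}\Delta_{IV},
\quad
\Delta_D = \epsilon \cdot \Delta_I^\new \Delta_{III} \Delta_{II}^{-1} \Delta_{IV}
\]
recalled above, once one tracks the effect of inversion on each constituent: $\Delta_{II}$ depends on characters of tori via $\chi$-data and is inverted by $i_{G'}$, while $\Delta_{III}$ incorporates both Tate--Nakayama and Langlands pairings whose normalizations change sign when passing between conventions. Part (2) then follows from part (1) by the local--global compatibility encoded in Conjecture \ref{cnj:a} (4), since taking contragredients commutes with restricted tensor products and preserves multiplicities against characters of $\Sc_\psi$ after composition with $\widehat{C}$.

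For part (3), Arthur's construction produces $\psi^A(f)$ from the twisted character of $\tilde\pi_\psi^A$ on $\GL_N$ via a $\Delta_A$-normalized transfer, and $\psi^D(f)$ from $\tilde\pi_\psi^D$ via a $\Delta_D$-normalized transfer. By Section \ref{ss:llc_gln} we have $\pi_\psi^D \cong (\pi_\psi^A)^\vee$, which on the level of twisted traces gives $\Theta_{\tilde\pi_\psi^D}(\tilde f) = \Theta_{\tilde\pi_\psi^A}(\tilde f \circ i_{\tl\GL_N})$ (after choosing compatible Whittaker extensions, whose interaction with inversion is governed by Section \ref{sec.thetaA}). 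Combining with the transfer-factor identity from the previous paragraph yields $\psi^D(f) = \psi^A(f \circ i_G)$. Finally, Proposition \ref{pro:contra} (3) tells us that, in a single fixed convention, $\psi^\vee(f) = \psi(f \circ i_G)$ for classical groups; comparing with the relation just established gives $\psi^D(f) = \psi^{\vee,A}(f)$, which together with the parallel identification on the spectral side is precisely the content of Conjecture \ref{cnj:b} for Arthur's construction.

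The main obstacle in executing this plan is the delicate bookkeeping of transfer factors. The factor $\Delta_{III}$ is defined using both the Tate--Nakayama isomorphism and the Langlands homomorphism, and each of these inverts when passing between conventions; moreover, its construction in the twisted setting relies on $\chi$-data and pairings in hypercohomology that must be carefully tracked under the inversion anti-automorphism on the endoscopic group. A secondary subtlety arises from the Whittaker normalization of the extension $\tilde\pi_\psi$ and its compatibility with inversion, since the Whittaker datum itself is sent to its inverse $\ww^{-1} = (B,\chi^{-1})$, which accounts for the appearance of $\ww^{-1}$ in the pairing formula of part (1) of this proposition and of Proposition \ref{pro:contra}.
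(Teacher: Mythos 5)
Your overall strategy matches the paper's: establish the relationship between the $\Delta_A$- and $\Delta_D$-normalized transfers under inversion, use the Whittaker-extension compatibility with contragredient to handle the $\GL_N$ side of part (3), deduce parts (1) and (2) by character and multiplicity bookkeeping, and invoke Proposition \ref{pro:contra} (3) at the end. The reliance on the explicit formulas for $\Delta_A$ and $\Delta_D$, the attention to the $\ww \mapsto \ww^{-1}$ shift, and the deduction of the global part from the local one all track the paper's argument.

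However, there is a genuine imprecision in your central reduction. You state that matching of $(f,f')$ relative to $\Delta_A$ passes to matching of $(f\circ i_G, f'\circ i_{G'})$ relative to $\Delta_D$, and reduce this to the identity $\Delta_D(\gamma,\delta) = \Delta_A(\gamma^{-1},\delta^{-1})$. As written this is false: the correct statement, which is the content of Lemma \ref{lem:tfsw} (2), is that $\Delta_A[\ww,\ee,\zz](\gamma,\delta) = \Delta_D[\ww^{-1},\ee',\zz](\gamma^{-1},\theta^{-1}(\delta^{-1}))$ where $\ee' = (H,s^{-1},\HH,\eta)$ is the datum obtained by inverting the endoscopic element $s$. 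You note that $\Delta_{III}$ involves pairings that change sign between conventions, but you do not translate this into the necessary $s \mapsto s^{-1}$ shift in the endoscopic datum; this shift is precisely what explains the $x^{-1}$ in the defining formula $\pair{\pi^\vee, x}_{\ww,\psi}^D = \pair{\pi, x^{-1}}_{\ww^{-1},\psi}^A$ of part (1), and without it the endoscopic character relation \eqref{ecr} in Deligne's convention would not come out for $(\psi, s)$ but rather for the wrong element. In part (3) this discrepancy happens to be invisible because $s$ has order $2$ on $\GL_N$ (and $\ww \simeq \ww^{-1}$ there), but for the general statement of part (1) it is essential. A careful execution of your plan would force you to discover this shift when tracking $\Delta_{III}$ under the two conventions, so the gap is fillable, but as stated your transfer-factor identity is missing the inversion of $s$.
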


We will now give the proof of Propositions \ref{pro:contra} and \ref{pro:switch}, which are closely related. 
The generic part of Proposition \ref{pro:contra} (1) was proved in \cite{Kal_gc}. 
These arguments extend to prove both propositions, 
as was noted in the setting of unitary groups by 
Bertoloni Meli and Nguyen in \cite[Section 2.2]{BMN}.
\par

The technical heart of the proof is the following lemma, 
whose statement needs a bit of preparation. 
Given an endoscopic datum $\ee=(H,s,\HH,\eta)$ for $G \rtimes \theta$ 
we write ${}^LC(\ee)$ for the quadruple $(H,s',\HH',\eta')$, 
where $s'=\widehat{C}(s^{-1})$, 
$\HH'$ is the same group as $\HH$ but with the embedding $\widehat{H} \to \HH$ 
composed with $\widehat{C}^H$, 
and $\eta'={}^LC\circ{}^L\theta \circ \eta$. 
Given a $z$-pair $\zz=(H_1,\eta_{1})$ for $\ee$ 
we write ${}^LC^H(\zz)$ for the pair $(H_1,{}^LC^{H_1}\circ\eta_{1})$.

Recall that the endoscopic transfer of a compactly supported function on $G(F) \rtimes \theta$ to $H_1(F)$ is generally not compactly supported. Indeed, as discussed in \cite[\S5.5]{KoSh}, such a function $f_1$ transforms under the kernel of $H_1(F) \to H(F)$ by a character $\lambda=\lambda_{H_1}$, and only the image in $H(F)$ of its support is compact. We shall write $C^\infty_{\lambda,c}(H_1(F))$ for the space of these functions.

\begin{lem}\label{lem:tfsw}
Let $(G,\theta)$ be a quasi-split twisted group, 
let $\ee = (H, s, \HH,\eta)$ be an endoscopic datum, 
and let $\zz = (H_1, \eta_1)$ be a $z$-pair. 
Denote by $i_\theta$ and $i_1$ the inversion anti-automorphisms on $G \rtimes \<\theta\>$ and $H_1$, respectively. 
\begin{enumerate}
\item 
Assume that $f \in C^\infty_c(G(F) \rtimes \theta)$ and $f_1 \in C^\infty_{\lambda,c}(H_1(F))$ 
are matching functions with respect to the transfer factor $\Delta[\ww,\ee,\zz]$ (in either convention). 
Then the functions $f \circ i_\theta \in C^\infty_c(G(F) \rtimes \theta^{-1})$ 
and $f_1 \circ i_1 \in C^\infty_{\lambda,c}(H_1(F))$ are matching 
with respect to the transfer factor $\Delta[\ww^{-1}, {}^LC(\ee), {}^LC_H(\zz)]$ (in the same convention).

\item 
Assume that $f \in C^\infty_c(G(F) \rtimes \theta)$ and $f_1 \in C^\infty_{\lambda,c}(H_1(F))$ are matching functions 
with respect to the transfer factor $\Delta_A[\ww, \ee, \zz]$. 
Then the functions $f \circ  i_\theta \in C^\infty_c(G(F) \rtimes \theta^{-1})$ 
and $f_1 \circ i_1 \in C^\infty_{\lambda,c}(H_1(F))$ are matching 
with respect to the transfer factor $\Delta_D[\ww^{-1},\ee',\zz]$, 
where $\ee' = (H, s^{-1}, \HH,\eta)$.
\end{enumerate}
\end{lem}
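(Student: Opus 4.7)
The plan is to reduce each of (1) and (2) to an explicit identity of transfer factors and then verify that identity piece by piece using the decomposition of the factor into $\epsilon$, $\Delta_I^{\mathrm{new}}$, $\Delta_{II}$, $\Delta_{III}$, $\Delta_{IV}$ recalled above. The starting observation is that orbital integrals transform under inversion in a trivial way: for a strongly regular $\tl\delta \in G(F) \rtimes \theta$ one has
\[
O(\tl\delta, f \circ i_\theta) = O(\tl\delta^{-1}, f), \qquad SO(\gamma_1, f_1 \circ i_1) = SO(\gamma_1^{-1}, f_1),
\]
with $\tl\delta^{-1} \in G(F) \rtimes \theta^{-1}$. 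Inserting these into the matching identity for $(f,f_1)$ and relabeling the index of summation by $\tl\delta \mapsto \tl\delta^{-1}$, parts (1) and (2) are equivalent, respectively, to the pointwise identities
\[
\Delta[\ww^{-1}, {^LC}(\ee), {^LC}_H(\zz)](\gamma_1, \tl\delta) = \Delta[\ww, \ee, \zz](\gamma_1^{-1}, \tl\delta^{-1})
\]
(in either convention), and
\[
\Delta_D[\ww^{-1}, \ee', \zz](\gamma_1, \tl\delta) = \Delta_A[\ww, \ee, \zz](\gamma_1^{-1}, \tl\delta^{-1}),
\]
for every pair of related strongly regular elements $\gamma_1 \in H_1(F)$ and $\tl\delta \in G(F) \rtimes \theta^{-1}$.

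The core of the proof is then to verify these identities factor by factor, using the compatibility of each ingredient with the inversion anti-automorphisms on $G$ and $H_1$ and with the Chevalley involutions ${^LC}$ and ${^LC}_H$ on the dual side. I would work in a $\theta$-admissible maximal torus $T \subset G$ with $\tl\delta^* = g^{-1}\tl\delta g \in T(\overline F) \rtimes \theta^{-1}$ and exploit the fact that inversion preserves such tori and replaces $\delta^*$ by $\theta^{-1}(\delta^{*-1})$, whose image in $T_{\theta^{-1}}(F) = T_\theta(F)$ is $\gamma^{*-1}$. The admissible isomorphism $\xi_{\gamma^{-1},\gamma^{*-1}}$ is then the composition of $\xi_{\gamma,\gamma^*}$ with inversion. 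In parallel, the effect of replacing $\ee$ by ${^LC}(\ee)$ or $\ee'$ on the dual side, together with the replacement of $\ww$ by $\ww^{-1}$, can be tracked through each piece: $\Delta_{IV}$ is a Weyl discriminant and is insensitive to these changes; $\Delta_{II}$ involves the chosen $a$- and $\chi$-data and acquires an inverse, which however cancels against an analogous inversion in $\Delta_{III}$; the factor $\epsilon = \ep(1/2, V, \psi_F)$ is unchanged since replacing $\psi_F$ by $\psi_F^{-1}$ only affects $V$ through its self-dual structure; and the heart of the computation is the behavior of the Tate--Nakayama pairing underlying $\Delta_I^{\mathrm{new}}$ and $\Delta_{III}$, where the key point is that inversion on $T$ corresponds on the dual side to multiplication by $-1$ on $X_*(\widehat T)$, which is precisely the effect of the Chevalley involution in the transition from $s$ to $\widehat C(s^{-1})$.

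For part (1), the combined effect of all these individual changes is to invert $\Delta_{II}$ and $\Delta_{III}$ simultaneously, which leaves the Artin combination $(\Delta_I^{\mathrm{new}} \Delta_{III})^{-1} \Delta_{II}$ and the Deligne combination $\Delta_I^{\mathrm{new}} \Delta_{III} \Delta_{II}^{-1}$ each invariant, up to a sign that cancels against the sign coming from the inversion $\Delta_I^{\mathrm{new}} \mapsto (\Delta_I^{\mathrm{new}})^{-1}$ induced by the inversion of the chosen $a$-data. The content of part (2) is that if one does \emph{not} apply the Chevalley involution on the dual side but only inverts the semisimple element $s \mapsto s^{-1}$, then these two combinations are interchanged, which is precisely the assertion that the $\ee$-version of $\Delta_A$ matches the $\ee'$-version of $\Delta_D$.

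The main obstacle will be the careful bookkeeping of the signs and the $\chi$-data in $\Delta_I^{\mathrm{new}}$ and in the subfactor $\Delta_{III_2}$ (or its twisted analogue) under the simultaneous substitutions $s \mapsto s^{-1}$, $\psi_F \mapsto \psi_F^{-1}$, and $\gamma \mapsto \gamma^{-1}$, together with the fact that in the twisted setting $\Delta_{III}$ does not split into a product $\Delta_{III_1} \Delta_{III_2}^{-1}$ and must be treated as a single cohomological pairing in $H^1(W_F, \widehat T \to \widehat T_{\mathrm{ad}})$. Here, Lemma~\ref{tfnilp1} and the formalism of \cite[Section 5]{KoSh2} provide the right tools to perform this bookkeeping; the Artin--Deligne dichotomy appears exactly in the sign by which the Tate--Nakayama pairing changes when the fundamental class is inverted, which is what makes the content of part (2) non-trivial.
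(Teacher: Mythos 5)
Your reduction to pointwise transfer-factor identities and the plan to verify them piece by piece using the decomposition into $\epsilon$, $\Delta_I^\new$, $\Delta_{II}$, $\Delta_{III}$, $\Delta_{IV}$ is exactly the route the paper takes (following \cite[Proposition 5.4 and Corollary 5.5]{Kal_gc}), and your observation that part (2) differs from part (1) precisely by dropping the Chevalley involution from the dual side identifies the heart of the matter correctly.

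One step in your sketch does not go through as stated, however. You propose to pass from $\ww$ to $\ww^{-1}$ by replacing $\psi_F$ by $\psi_F^{-1}$, and assert that the factor $\epsilon(V_{G,H},\psi_F)$ is then unchanged ``since replacing $\psi_F$ by $\psi_F^{-1}$ only affects $V$ through its self-dual structure.'' That is not true in general: one has $\ep(1/2,V,\psi_F^{-1}) = \det V(-1)\cdot\ep(1/2,V,\psi_F)$, and $\det V(-1)$ need not be trivial for $V=X^*(T)_\C - X^*(T_H)_\C$. The paper avoids this by keeping $\psi_F$ fixed throughout and instead replacing the splitting $\spl$ by the opposite splitting $-\spl$; the pair $(\psi_F,-\spl)$ again gives $\ww^{-1}$, and since the $\ep$-factor makes no reference to $\spl$ it is untouched. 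Concretely, the free choice of $a$-data is exploited by replacing $a$ by $-a$, after which the identities $\Delta_I^\new[\ee,\spl,-a]^{-1}=\Delta_I^\new[\ee,-\spl,a]^{-1}=\Delta_I^\new[\ee',-\spl,a]$ and $\Delta_{II}[-a,\chi](\gamma_1,\delta)=\Delta_{II}[a,\chi](\gamma_1^{-1},\theta^{-1}(\delta^{-1}))^{-1}$ redistribute the changes across the pieces without having to touch $\psi_F$ at all. You should incorporate this device explicitly; as written your cancellation argument for the $\epsilon$-term is a gap. The rest of your sketch, in particular the cohomological analysis of $\Delta_{III}$ (where inversion of $\delta_1^*$ on the group side pairs with inversion of $s_S$ on the dual side to invert the Tate--Nakayama pairing), is on the right track.
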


Admitting this lemma, whose proof we will give at the end of this section, 
we complete the proofs of Propositions \ref{pro:contra} and \ref{pro:switch} as follows.
\par

Consider first parts (1) of these propositions; thus $F$ is local. 
The $A$-packets $\Pi_\psi(G)$ and the maps $\Pi_\psi(G) \rightarrow \Irr(\Sc_\psi)$ are uniquely characterized 
by the stable characters $\psi(f)$ and the identities \eqref{ecr}. 
In Proposition \ref{pro:contra} (1), we are assuming $\psi^\vee(f) = \psi(f\circ i)$ 
and it is enough to show that Equation \eqref{ecr} for a fixed $\psi$ and all $s \in \Sc_\psi$ 
implies the same equation for $\psi^\vee$ and all $s^\vee \in \Sc_{\psi^\vee}$, 
while for Proposition \ref{pro:switch} (1), 
we are defining $\psi^D(f) = \psi^A(f \circ i)$ 
and we want to show that the validity of Equation \eqref{ecr} for a fixed $\psi$ and all $s \in \Sc_\psi$ 
in Artin's convention implies its validity in Deligne's convention, 
where the two pairings are related as in the statement of Proposition \ref{pro:switch} (1).
\par

For Proposition \ref{pro:contra} (1), 
we will give a proof which works in either convention. 
We compute 
\begin{align*}
\psi'^\vee(f'\circ i') &= \psi'(f')\\
&= \sum_{\pi \in \Pi_\psi(G)} \pair{\pi, s \cdot s_\psi}_{\ww,\psi} \cdot \pi(f) \\
&= \sum_{\pi^\vee \in \Pi_\psi(G)^\vee} \pair{\pi, s \cdot s_\psi}_{\ww,\psi} \cdot \pi^\vee(f \circ i).
\end{align*}
According to Lemma \ref{lem:tfsw}(1), 
the functions $f \circ i$ and $f' \circ i'$ are matching 
with respect to $\Delta[\ww^{-1}, {}^LC(\ee), {}^LC_H(\zz)]$. This implies that the collection $\{\pi^\vee \,|\, \pi \in \Pi_\psi(G)\}$ constitutes the $A$-packet for the parameter for $G$ 
whose transfer via ${}^LC(\ee)$ and ${}^LC_H(\zz)$ equals $\psi'^\vee$, 
but this parameter is $\psi^\vee$. 
In other words, we conclude that
\[ 
\Pi_{\psi^\vee}(G) = \{\pi^\vee \,|\, \pi \in \Pi_\psi(G)\}. 
\]
Furthermore, equating scalars in the above equation with those in Equation \eqref{ecr} 
for the parameter $\psi^\vee$, 
we conclude
\[ 
\pair{\pi^\vee, \widehat{C}(x^{-1})}_{\ww^{-1},\psi^\vee} = \pair{\pi, x}_{\ww,\psi}, 
\]
which completes the proof of Proposition \ref{pro:contra} (1).
\par

The proof of Proposition \ref{pro:switch} (1) is similar. First, part (2) of Conjecture \ref{cnj:a} in Artin's convention directly implies part (2) in Deligne's convention, since the contragredient operation preserves $\Irr_{\temp}(G)$. So we consider part (3). We first need to associate to $\psi$ a stable character $\psi^D(f)$. We define $\psi^D(f) \coloneqq \psi^A(f \circ i)$, which is a stable character, in fact equal to $(\psi^\vee)^A(f)$. Next we want to verify Equation \eqref{ecr} for given $\psi$ and all $s \in \Sc_\psi$ in Deligne's convention, provided it holds in Artin's convention. Fix $s \in \Sc_\psi$ and let $\ee=(H,s,\HH,\eta)$, $\zz$, and $\psi'$, correspond to $(\psi,s)$. 
Let $f \in C^\infty_c(G(F))$ and $f' \in C^\infty_{\lambda,c}(H_1(F))$ be matching with respect to $\Delta[\ww,\ee,\zz]$. Then, as in the proof of Proposition \ref{pro:contra} (1), we obtain the identity 
\begin{align*}
\psi'^D(f' \circ i') = \psi'^A(f')
&= \sum_{\pi \in \Pi_\psi(G)^A}\pair{\pi, s \cdot s_\psi}_{\ww, \psi}^{A} \cdot \pi^\vee(f \circ i) 
\\&= \sum_{\sigma \in \Pi_\psi(G)^D}\pair{\sigma, s^{-1} \cdot s_\psi}_{\ww^{-1}, \psi}^{D} \cdot \sigma(f \circ i).
\end{align*}
According to Lemma \ref{lem:tfsw}(2), 
the functions $f \circ i$ and $f' \circ i'$ are matching with respect to $\Delta_D[\ww^{-1},\ee',\zz]$, from which we conclude that Equation \eqref{ecr} holds for $(\psi,s^{-1})$ in the Deligne convention. This completes the proof of Proposition \ref{pro:switch} (1).
\par

Next we move to the proof of parts (3) of these propositions. 
They claim that, in Arthur's construction, 
we have
\[ 
\psi^D(f)=\psi^A(f \circ i) = (\psi^A)^\vee(f), 
\]
for $F$ both local and global. 
The global case reduces immediately to the local case. 
In the local case, the stable character $\psi^*(f)$ is uniquely determined by the twisted transfer identity
\[ 
\psi^*(f) = \tl\pi_{\tau \circ \psi}^*(f^N). 
\]
Let us explain the notation. 
The superscript $*$ stands for either of the normalizations of Artin or Deligne, 
$f^N \in C^\infty_c(\GL_N(F) \rtimes \theta)$ is an arbitrary test function, 
and $f \in C^\infty_c(G(F))$ is its twisted transfer with respect to the transfer factor $\Delta_*[\ww,\ee,\tau]$, 
normalized with respect to the convention used. 
Here $\ee$ stands for the endoscopic datum that realizes $G$ as a twisted endoscopic group of $\GL_N$, 
$\tau$ is the standard representation ${}^LG \rightarrow \GL_N(\C)$ used by Arthur, 
$\pi_{\tau\circ\psi}^*$ is the representation of $\GL_N(F)$ associated to the $L$-parameter $\phi_{\tau \circ \psi}$ 
by the local Langlands correspondence for $\GL_N$ normalized according to the convention being used, 
and $\tl\pi_{\tau \circ \psi}^*$ is its Whittaker-normalized extension to $\GL_N(F) \rtimes \pair{\theta}$.
\par

Now we claim that for any irreducible $\theta$-stable representation $\pi$ of $\GL_N(F)$
associated to an $A$-parameter, the contragredient of its Whittaker-normalized extension to $\GL_N(F) \rtimes \<\theta\>$ is equal to the Whittaker-normalized extension of its contragredient.
For this, it is enough to show that 
a non-degenerate $\GL_N(F)$-invariant pairing $\pair{\cdot, \cdot} \colon \pi \times \pi^\vee \rightarrow \C$
is in fact a $\GL_N(F) \rtimes \pair{\theta}$-invariant pairing.
\par

Suppose first that $\pi$ is tempered. 
Notice that $\pi$ is unitary so that $\pi^\vee$ is isomorphic to the complex conjugate of $\pi$. 
Hence we may consider a $\GL_N(F)$-invariant inner product on $\pi$ instead of a bilinear form on $\pi \times \pi^\vee$.
Fix a non-trivial Whittaker functional $\omega$ on $\pi$. 
By \cite[Theorem 6.2, Theorem A]{Ber-P}, 
the integral 
\[
\pair{v,v'} = \int_{U \bs \mathcal{M}} \omega(\pi(g)v) \overline{\omega(\pi(g)v')} dg
\]
converges and realizes a non-degenerate $\GL_N(F)$-invariant inner product on $\pi$, 
where $\mathcal{M}$ is the standard mirabolic subgroup
and $U$ is the standard maximal unipotent subgroup. 
Then we have $\pair{\tl\pi(1 \rtimes \theta)v, \tl\pi(1 \rtimes \theta)v'} = \pair{v,v'}$ by definition.
In general, letting $\II_\pi$ be the standard module of $\pi$, 
a pairing $\pair{\cdot, \cdot}$ can be induced from 
a pairing for standard modules $\pair{\cdot, \cdot} \colon \II_\pi \times \II_\pi^\vee \rightarrow \C$.
By Lemma \ref{End}, there is a constant $c \in \C^\times$ such that
\[
\pair{\theta_W v, \theta_W^\vee v^\vee} = c \pair{v,v^\vee}
\]
for $v \in \II_\pi$ and $v^\vee \in \II_\pi^\vee$, 
where $\theta_W$ (\resp $\theta_W^\vee$) is the action of $\theta$ on $\II_\pi$ (\resp $\II_\pi^\vee$)
normalized by Whittaker functionals with respect to $\ww$ (\resp $\ww^{-1}$). 
Considering the unique irreducible tempered subrepresentation of $\II_\pi$ (Lemma \ref{psiD}), 
we see that $c=1$.
Since the diagram 
\[
\begin{CD}
\pi^\vee @>\tl\pi^\vee(1 \rtimes \theta)>> \pi^\vee \\
@VVV @VVV \\
\II_\pi^\vee @>\theta_W^\vee>> \II_\pi^\vee 
\end{CD}
\]
is commutative, which follows from Theorem \ref{main2}, 
we obtain the claim.
\par

The claim implies that $\tl\pi_{\tau\circ\psi}(f^N)=\tl\pi^\vee_{\tau\circ\psi}(f^N \circ i_N)$, where $\tl\pi^\vee_{\tau\circ\psi}$ denotes the Whittaker extension of the contragredient of $\pi_{\tau\circ\psi}$ and $i_N$ is the inversion anti-automorphism of $\GL_N(F) \rtimes \<\theta\>$. By Lemma \ref{lem:tfsw} (1), 
the functions $f^N \circ i_N$ and $f \circ i$ have matching orbital integrals 
with respect to $\Delta[\ww^{-1}, C_N(\ee), {}^LC_G \circ \tau]$, 
where $C_N$ (\resp ${}^LC_G$) is the Chevalley involution on $\GL_N$ (\resp ${}^LG$). 
Therefore (dropping the superscript * and agreeing that we are using an arbitrary but fixed convention) 
\begin{align*}
\psi(f) &= \tl\pi_{\tau\circ\psi}(f^N)\\
&= \tl\pi_{\tau\circ\psi}^\vee(f^N \circ i_N)\\
&= \tl\pi_{C_N\circ\tau\circ\psi}(f^N \circ i_N)\\
&= ({^LC_G}\circ\psi)(f \circ i) 
= \psi^\vee(f \circ i),
\end{align*}
proving Proposition \ref{pro:contra} (3).
\par

The proof of Proposition \ref{pro:switch} (3) is quite similar. 
Namely, if $f^N$ and $f$ match with respect to $\Delta_A[\ww,\ee,\zz]$, 
then $f^N \circ i_N$ and $f \circ i$ match with respect to $\Delta_D[\ww^{-1},\ee',\zz]$ 
according to Lemma \ref{lem:tfsw} (2). 
The switch from $\ww$ to $\ww^{-1}$ is now irrelevant 
since they are equivalent on $\GL_N$, 
and the switch from $\ee$ to $\ee'$ is also irrelevant, 
since the endoscopic element $s$ has order $2$. 
Hence we see that
\[ 
\psi^D(f\circ i) = \tl\pi_{\tau\circ\psi}^D(f^N \circ i_N) 
= \tl\pi^A_{\tau\circ\psi}(f^N) = \psi^A(f), 
\]
as desired.
\par

We now come to the global statements (2) of Propositions \ref{pro:contra} and \ref{pro:switch}. 
For Proposition \ref{pro:contra} (2) we have, using part (1) and  noting that $\epsilon_\psi$ is a quadratic character,
\begin{align*}
m(\pi,\psi) &= |\Sc_\psi|^{-1}\sum_{x \in \Sc_\psi}\epsilon_\psi(x)\pair{\pi,x}_\psi\\
&= |\Sc_\psi|^{-1}\sum_{x \in \Sc_\psi}\epsilon_\psi(x)\pair{\pi^\vee, \widehat{C}(x)}_{\psi^\vee}\\
&= |\Sc_{\psi^\vee}|^{-1}\sum_{x \in \Sc_{\psi^\vee}}\epsilon_\psi(\widehat{C}^{-1}(x))\pair{\pi^\vee,x}_{\psi^\vee},
\end{align*}
where we are using the global pairing $\pair{\pi,x}_\psi = \prod_v \pair{\pi_v,x}_{\ww,\psi_v}$ 
and suppress the Whittaker datum from the notation on the left, 
because its influence on the local pairings obeys a global product formula, 
so the global pairing is independent of it. 
\par

We claim that $\epsilon_\psi(\widehat{C}^{-1}(x)) = \epsilon_{\psi^\vee}(x)$ for all $x \in \Sc_{\psi^\vee}$. 
We recall a formula for $\epsilon_\psi$ in \cite[Section 8.3.5]{CL}.
Decompose 
\[
\psi = \tau_1[d_1] \boxplus \dots \boxplus \tau_r[d_r], 
\]
where $\tau_i$ is an irreducible (conjugate-)self-dual unitary cuspidal automorphic representation of $\GL_{m_i}(\A_E)$.
Then $\Sc_\psi$ is an elementary abelian $2$-group generated by elements $\{\alpha_{\tau_i[d_i]}\}_{i=1,\dots,r}$
with $\alpha_{\tau_i[d_i]}$ corresponding to $\tau_i[d_i]$, 
and 
\[
\epsilon_\psi(\alpha_{\tau_i[d_i]}) = \prod_{j \not= i} \ep(\tau_i \times \tau_j)^{\min\{d_i,d_j\}}, 
\]
where $\ep(\tau_i \times \tau_j) = \ep(1/2, \tau_i \times \tau_j) \in \{\pm1\}$ 
is the central value of the Rankin--Selberg epsilon factor.
Since $\widehat{C}^{-1}(\alpha_{\tau_i[d_i]}) = \alpha_{\tau_i^\vee[d_i]} \in \Sc_{\psi^\vee}$ and 
$\ep(\tau_i \times \tau_j)\ep(\tau_i^\vee \times \tau_j^\vee) = 1$, 
we have 
$\epsilon_\psi(\widehat{C}^{-1}(x)) = \epsilon_{\psi^\vee}(x)$ for $x = \alpha_{\tau_i[d_i]}$.
\par

Therefore, 
\[
m(\pi,\psi) = |\Sc_{\psi^\vee}|^{-1}\sum_{x \in \Sc_{\psi^\vee}}\epsilon_{\psi^\vee}(x)\pair{\pi^\vee,x}_{\psi^\vee}
= m(\pi^\vee,\psi^\vee), 
\]
which proves Proposition \ref{pro:contra} (2). 
Note in particular that it implies $L^2_{\psi^\vee}(G) = L^2_\psi(G)^\vee$.
\par

The computation for the proof of Proposition \ref{pro:switch} (2) is almost identical. 
Consider an irreducible constituent $\pi$ of $L^2_\psi(G)^D = (L^2_\psi(G)^A)^\vee$. 
The multiplicity $m^D(\pi,\psi)$ of $\pi$ in $L^2_\psi(G)^D \subset L^2_\disc(G)$ 
is equal (tautologically) to the multiplicity $m^A(\pi^\vee,\psi)$ of $\pi^\vee$ in $L^2_\psi(G)^A \subset L^2_\disc(G)$.
Therefore
\[ 
m^D(\pi,\psi) = |\Sc_\psi|^{-1}\sum_{x \in \Sc_\psi} \epsilon_\psi(x) \pair{\pi^\vee,x}_\psi^A 
= |\Sc_\psi|^{-1}\sum_{x \in \Sc_\psi}\epsilon_\psi(x)\pair{\pi,x^{-1}}_\psi^D.
\]
Since $\epsilon_\psi$ is a quadratic character, 
we have $\epsilon_\psi(x)=\epsilon_\psi(x^{-1})$, 
and we can reindex the sum to replace $x^{-1}$ by $x$. 
This completes the proofs of Propositions \ref{pro:contra} and \ref{pro:switch}, 
modulo the proof of Lemma \ref{lem:tfsw}, which we now give.

\begin{proof}[Proof of Lemma \ref{lem:tfsw}]
Part (1) is proved as \cite[Corollary 5.5]{Kal_gc}. Note just the slight change in notational convention. In \cite{Kal_gc} we are using test functions on the group $G(F)$ and are letting $G(F)$ act on itself by $\theta$-twisted conjugation, i.e. $hg\theta(h)^{-1}$, while now we are using test functions on the coset $G(F) \rtimes \theta$ in the group $G(F) \rtimes \<\theta\>$ and we are using the action of $G(F)$ on this coset coming from the action of $G(F)$ on $G(F) \rtimes \<\theta\>$ by usual conjugation. The translation between the two set-ups is by the map $g \mapsto g \rtimes \theta$, and we have $(f \circ i_\theta)( g \rtimes \theta) = f(\theta^{-1}(g^{-1}) \rtimes \theta^{-1})$.

The proof of part (2) is reduced in the same way to the identity
\[ 
\Delta_D[\ww^{-1},\ee',\zz](\gamma_1^{-1},\theta^{-1}(\delta^{-1})) 
= \Delta_A[\ww,\ee,\zz](\gamma_1,\delta). 
\]
To prove the latter, 
we follow the discussion of \cite[Proposition 5.4]{Kal_gc}. 
We write out as in (5.2) of loc.~cit.~
\[ 
\Delta_A[\ww,\ee,\zz](\gamma_1,\delta) 
= 
\epsilon(V_{G,H},\psi_F) \cdot \Delta_I^\new[\ee,\spl,a]^{-1}
\cdot \Delta_{II}[a,\chi] \cdot \Delta_{III}[\ee,\zz,\chi]^{-1} \cdot \Delta_{IV}, 
\]
where we have chosen arbitrarily $a$-data and $\chi$-data. 
We will now alter the pieces, without changing their product, in such a way as to see that 
their product equals $\Delta_D[\ww^{-1},\ee',\zz](\gamma_1^{-1},\theta^{-1}(\delta)^{-1})$.
\par

As discussed in \cite[(5.3)]{Kal_gc}, 
we have
\[ 
\Delta_{IV}(\gamma_1,\delta_1) = \Delta_{IV}(\gamma_1^{-1},\theta^{-1}(\delta_1)^{-1}).
\]
We will also keep $\psi_F$ fixed, 
which keeps $\epsilon(V_{G,H},\psi_F)$ unchanged.
\par

Next, since the choice of $a$-data is arbitrary, 
we may replace $a$ by $-a$ without changing the product of all pieces. 
Of course the individual pieces $\Delta_I$ and $\Delta_{II}$ change. 
We have 
\[ 
\Delta_I^\new[\ee,\spl,-a]^{-1} = \Delta_I^\new[\ee,-\spl,a]^{-1} = \Delta_I^\new[\ee',-\spl,a], 
\] 
where the first equality is according to \cite[Lemma 5.1]{Kal_gc}, 
and the second is by construction. 
The passage from $(\gamma_1,\delta)$ to $(\gamma_1^{-1},\theta^{-1}(\delta^{-1}))$ makes no difference to that piece. 
On the other hand, we have
\[ 
\Delta_{II}[-a,\chi](\gamma_1,\delta_1)
= \Delta_{II}[a,-\chi](\gamma_1^{-1},\theta^{-1}(\delta_1)^{-1}) 
= \Delta_{II}[a,\chi](\gamma_1^{-1},\theta^{-1}(\delta_1)^{-1})^{-1} 
\]
where the first identity is discussed in \cite[(5.5)]{Kal_gc} and the second is by construction.
\par

Next we claim
\[ 
\Delta_{III}[\ee,\zz,\chi](\gamma_1,\delta)^{-1} 
= \Delta_{III}[\ee',\zz,\chi](\gamma_1^{-1},\theta^{-1}(\delta^{-1})). 
\]
The proof is similar to that of \cite[(5.9)]{Kal_gc}, 
so we will just give a minimalistic sketch using the notation from that reference. 
The term $\Delta_{III}[\ee,\zz,\chi](\gamma_1,\delta)$ is obtained by pairing the cohomology classes 
$\inv(\gamma_1,\delta) \in H^1(F,S_\sc \xrightarrow{1-\theta_1} S_1)$, 
obtained as the pair $(\sigma(g)g^{-1},\delta_1^*)$, 
and $A_0[\ee,\zz,\chi] \in H^1(W_F,\widehat{S}_1 \xrightarrow{1-\widehat\theta_1} \widehat{S}_\ad)$, 
obtained as a pair $(a_S[\chi]^{-1}, s_S)$, via a hypercohomology version of Tate--Nakayama duality 
which blends the usual Tate--Nakayama isomorphism (in Artin normalization) 
and the usual Langlands isomorphism (in Deligne normalization). 
Switching $(\gamma_1,\delta)$ to $(\gamma_1^{-1},\theta^{-1}(\delta^{-1}))$ does not change $\sigma(g)g^{-1}$, 
but inverts $\delta_1^*$. 
Switching $\ee$ to $\ee'$ does not change $a_S[\chi]$, but inverts $s_S$. 
In other words, we are now pairing $(\sigma(g)g^{-1},\delta_1^{*,-1})$ with $(a_S[\chi]^{-1},s_S^{-1})$, 
both of which lie in the corresponding hypercohomology groups but with $\theta_1$ replaced by $\theta_1^{-1}$. 
Since the pairing is built by pairing $\delta_1^*$ with $a_S[\chi]^{-1}$ and $\sigma(g)g^{-1}$ with $s_S$, 
we see that the total outcome of the pairing is inverted.
\par

Combining the changes of the individual pieces, 
we arrive at
\[ 
\Delta_A[\ww,\ee,\zz](\gamma_1,\delta) 
= \epsilon(V_{G,H},\psi_F) \cdot \Delta_I^\new[\ee',-\spl,a] 
\cdot \Delta_{II}[a,\chi]^{-1} \cdot \Delta_{III}[\ee',\zz,\chi] \cdot \Delta_{IV}, 
\]
where now all pieces are built for the related pair $(\gamma_1^{-1},\theta^{-1}(\delta^{-1}))$. 
Noting that $\psi_F$ and $-\spl$ lead to the Whittaker datum $\ww^{-1}$, 
we see that the above product equals 
$\Delta_D[\ww^{-1},\ee',\zz](\gamma_1^{-1},\theta^{-1}(\delta)^{-1})$, 
as claimed.
\end{proof}

%References


\begin{thebibliography}{100}
\bibitem[AK]{AK}
{J. Adams and T. Kaletha}, 
{\em Discrete series $L$-packets for real reductive groups}. 
\emph{Essent. Number Theory} {\bf5} (2026), no.~1, 141--219.

\bibitem[AV]{AV}
{J. Adams and D. A. Vogan, Jr.}, 
{\em Contragredient representations and characterizing the local Langlands correspondence}. 
\emph{Amer. J. Math.} {\bf138} (2016), no.~3, 657--682.

\bibitem[An1]{An1}
{M. Andler},
{\em Bruhat order and transfer for complex reductive groups}.
\emph{Canad. J. Math.} {\bf44} (1992), no.~5, 911--923.

\bibitem[An2]{An2}
{M. Andler},
{\em Ordre de Bruhat et transfert: le cas r\'eel}.
\emph{Bull. Soc. Math. France} {\bf122} (1994), no.~1, 119--145.

\bibitem[AMR]{AMR}
{N. Arancibia, C. M{\oe}glin and D. Renard}, 
{\em Paquets d'Arthur des groupes classiques et unitaires}. 
\emph{Ann. Fac. Sci. Toulouse Math. (6)} {\bf27} (2018), no.~5, 1023--1105.

\bibitem[Art]{ArtinL}
{E. Artin}, 
{\em \"Uber eine neue Art von $L$-Reihen}. 
\emph{Abh. Math. Sem. Univ. Hamburg} {\bf3} (1924), no.~1, 89--108.

\bibitem[Ar1]{Ar1}
{J. Arthur}, 
{\em Intertwining operators and residues. I. Weighted characters}. 
\emph{J. Funct. Anal.} {\bf84} (1989), no.~1, 19--84.

\bibitem[Ar2]{Ar2}
{J. Arthur}, 
{\em Unipotent automorphic representations: conjectures}.
Orbites unipotentes et repr\'esentations, II.
\emph{Ast\'erisque} No.~171-172 (1989), 13--71.

\bibitem[Ar3]{Ar}
{J. Arthur}, 
{\em The endoscopic classification of representations. Orthogonal and symplectic groups}. 
\emph{American Mathematical Society Colloquium Publications,} {\bf61}. 
American Mathematical Society, Providence, RI, 2013. xviii+590 pp.

\bibitem[A24]{A24}
{J. Arthur}, 
{\em Endoscopy and singular invariant distributions}.
In preparation. 

\bibitem[A25]{A25}
{J. Arthur}, 
{\em Duality, Endoscopy and Hecke operators}. 
In preparation. 

\bibitem[A26]{A26}
{J. Arthur}, 
{\em A nontempered intertwining relation for $GL(N)$}. 
In preparation. 

\bibitem[A27]{A27}
{J. Arthur}, 
{\em Transfer factors and Whittaker models}. 
In preparation. 

\bibitem[AT]{AT}
{E. Artin and J. Tate}, 
{\em Class field theory, Benjamin, New York., 1968. xxvi+259pp.}

\bibitem[At]{At}
{H. Atobe}, 
{\em Jacquet modules and local Langlands correspondence}. 
\emph{Invent. Math.} {\bf219} (2020), no.~3, 831--871.

\bibitem[ACIKY1]{ACIKY1}
{H. Atobe, M. Chida, T. Ibukiyama, H. Katsurada and T. Yamauchi}, 
{\em Harder's conjecture I}. 
\emph{J. Math. Soc. Japan} {\bf75} (2023), no.~4, 1339--1408.

\bibitem[ACIKY2]{ACIKY2}
{H. Atobe, M. Chida, T. Ibukiyama, H. Katsurada and T. Yamauchi}, 
{\em Harder's conjecture II}. 
Preprint, arXiv:2306.07582v2.

\bibitem[AG1]{AG_O}
{H. Atobe and W. T. Gan}, 
{\em On the local Langlands correspondence and Arthur conjecture for even orthogonal groups}. 
\emph{Represent. Theory} {\bf21} (2017), 354--415. 

\bibitem[AG2]{AG}
{H. Atobe and W. T. Gan}, 
{\em Local theta correspondence of tempered representations and Langlands parameters}. 
\emph{Invent. Math.} {\bf210} (2017), no.~2, 341--415.

\bibitem[AM]{AM}
{H. Atobe and A. M\'inguez}, 
{\em The explicit Zelevinsky--Aubert duality}. 
\emph{Compos. Math.} {\bf159} (2023), no.~2, 380--418.

\bibitem[Au]{Au}
{A.-M. Aubert}, 
{\em Dualit\'{e} dans le groupe de Grothendieck de la cat{\'e}gorie des repr{\'e}sentations lisses de longueur finie d'un groupe r{\'e}ductif $p$-adique}. 
\emph{Trans. Amer. Math. Soc.} {\bf347} (1995), no.~6, 2179--2189.
Erratum:
\emph{Trans. Amer. Math. Soc.} {\bf348} (1996), no.~11, 4687--4690.

\bibitem[Bad]{Bad}
{A. I. Badulescu},
{\em Global Jacquet--Langlands correspondence, 
multiplicity one and classification of automorphic representations}. 
\emph{Invent. Math.} {\bf172} (2008), no.~2, 383--438.

\bibitem[Ban]{Ban}
{D. Ban}, 
{\em Parabolic induction and Jacquet modules of representations of $\O(2n,F)$}. 
\emph{Glas. Mat. Ser. III} {\bf34(54)} (1999), no.~2, 147--185.

\bibitem[Ber1]{Ber-P}
{I.N. Bernstein},
{\em $P$-invariant distributions on $\GL(N)$ and the classification of unitary representations of $\GL(N)$ (non-Archimedean case)}. 
\emph{Lie group representations, II (College Park, Md., 1982/1983),} 50--102, 
Lecture Notes in Math., {\bf1041}, Springer, Berlin, 1984.

\bibitem[Ber2]{Ber}
{I.N. Bernstein},
{\em Representations of $p$-adic groups}.
Available at:\\ \url{https://personal.math.ubc.ca/~cass/research/pdf/bernstein.pdf}.

\bibitem[BZ]{BZ}
{I.N. Bernstein and A. V. Zelevinsky}, 
{\em Induced representations of reductive $\mathfrak{p}$-adic groups. I}. 
\emph{Ann. Sci. \'Ecole Norm. Sup. (4)} {\bf10} (1977), no.~4, 441--472. 

\bibitem[BMN]{BMN}
{A. Bertoloni Meli and K. H. Nguyen}, 
{\em The Kottwitz conjecture for unitary PEL-type Rapoport--Zink spaces}. 
\emph{J. Reine Angew. Math.} {\bf796} (2023), 1--68.

\bibitem[BP1]{BP1}
{R. Beuzart-Plessis}, 
{\em Endoscopie et conjecture locale raffin\'ee de Gan--Gross--Prasad pour les groupes unitaires}. 
\emph{Compos. Math.} {\bf151} (2015), no.~7, 1309--1371.

\bibitem[BP2]{BP2}
{R. Beuzart-Plessis}, 
{\em A local trace formula for the Gan--Gross--Prasad conjecture for unitary groups: 
the Archimedean case}. 
\emph{Ast\'erisque} No.~418 (2020), ix+305 pp.

\bibitem[BP3]{BP3}
{R. Beuzart-Plessis}, 
{\em Plancherel formula for $\GL_n(F) \backslash \GL_n(E)$ 
and applications to the Ichino-Ikeda and formal degree conjectures for unitary groups}. 
\emph{Invent. Math.} {\bf225} (2021), no.~1, 159--297. 

\bibitem[BPC]{BPC}
{R. Beuzart-Plessis and P.-H. Chaudouard},
{\em The global Gan-Gross-Prasad conjecture for unitary groups. II. From Eisenstein series to Bessel periods}. 
\emph{Forum Math. Pi} {\bf13} (2025), Paper No.~e16, 98 pp.

\bibitem[BPCZ]{BPCZ}
{R. Beuzart-Plessis, P.-H. Chaudouard and M. Zydor}, 
{\em The global Gan--Gross--Prasad conjecture for unitary groups: the endoscopic case}. 
\emph{Publ. Math. Inst. Hautes \'Etudes Sci.} {\bf135} (2022), 183--336. 

\bibitem[BPLZZ]{BPLZZ}
{R. Beuzart-Plessis, Y. Liu, W. Zhang and X. Zhu}, 
{\em Isolation of cuspidal spectrum, with application to the Gan--Gross--Prasad conjecture}. 
\emph{Ann. of Math. (2)} {\bf194} (2021), no.~2, 519--584.

\bibitem[B]{B}
{A. Borel}, 
{\em Automorphic $L$-functions}. 
\emph{Automorphic forms, representations and $L$-functions 
(Proc. Sympos. Pure Math., Oregon State Univ., Corvallis, Ore., 1977), Part 2,} pp.~27--61,
Proc. Sympos. Pure Math., XXXIII, 
\emph{Amer. Math. Soc., Providence, RI,} 1979.

\bibitem[BW]{BW}
{A. Borel and N. Wallach}, 
{\em Continuous cohomology, discrete subgroups, and representations of reductive groups. 
Second edition}. 
\emph{Mathematical Surveys and Monographs,} {\bf67}. 
American Mathematical Society, Providence, RI, 2000. xviii+260 pp.

\bibitem[BCGP]{BCGP}
{G. Boxer, F. Calegari, T. Gee and V. Pilloni},
{\em Abelian surfaces over totally real fields are potentially modular}. 
\emph{Publ. Math. Inst. Hautes \'Etudes Sci.} {\bf134} (2021), 153--501.

\bibitem[Cai]{C}
{Y. Cai},
{\em Twisted doubling integrals for classical groups}. 
\emph{Math. Z.} {\bf297} (2021), no.~3-4, 1075--1104.

\bibitem[CF]{CF}
{J. W. S. Cassels and A. Fr{\"o}hlich}, 
{\em Algebraic number theory, Academic Press, London, 1967.}

\bibitem[CFGK]{CFGK}
{Y. Cai, S. Friedberg, D. Ginzburg and E. Kaplan},
{\em Doubling constructions and tensor product $L$-functions: the linear case}. 
\emph{Invent. Math.} {\bf217} (2019), no.~3, 985--1068. 

\bibitem[CFK1]{CFK}
{Y. Cai, S. Friedberg and E. Kaplan},
{\em The generalized doubling method: local theory}. 
\emph{Geom. Funct. Anal.} {\bf32} (2022), no.~6, 1233--1333.

\bibitem[CFK2]{CFK2}
{Y. Cai, S. Friedberg and E. Kaplan},
{\em Doubling constructions: Global functoriality for non-generic cuspidal representations}. 
\emph{Ann. of Math. (2)} {\bf200} (2024), no.~3, 893--966.

\bibitem[Car]{Car}
{P. Cartier},
{\em Representations of $p$-adic groups: a survey}.
\emph{Automorphic forms, representations and $L$-functions
(Proc. Sympos. Pure Math., Oregon State Univ., Corvallis, Ore., 1977), Part 1,} pp.~111--155,
Proc. Sympos. Pure Math., XXXIII,
\emph{American Mathematical Society, Providence, RI,} 1979.

\bibitem[Cas]{Cas}
{W. Casselman}, 
{\em Canonical extensions of Harish-Chandra modules to representations of $G$}.
\emph{Canad. J. Math.} {\bf41} (1989), no.~3, 385--438.

\bibitem[CS]{CS}
{W. Casselman and J. Shalika},
{\em The unramified principal series of $p$-adic groups. II. The Whittaker function}.
\emph{Compositio Math.} {\bf41} (1980), no.~2, 207--231.

\bibitem[CL1]{CL1}
{P.-H. Chaudouard and G. Laumon},
{\em Le lemme fondamental pond\'er\'e. I. Constructions g\'eom\'etriques}. 
\emph{Compos. Math.} {\bf146} (2010), no.~6, 1416--1506.

\bibitem[CL2]{CL2}
{P.-H. Chaudouard and G. Laumon},
{\em Le lemme fondamental pond\'er\'e. II. \'Enonc\'es cohomologiques}. 
\emph{Ann. of Math. (2)} {\bf176} (2012), no.~3, 1647--1781.

\bibitem[CL]{CL}
{G. Chenevier and J. Lannes}, 
{\em Automorphic forms and even unimodular lattices}. 
\emph{Ergebnisse der Mathematik und ihrer Grenzgebiete. 3. Folge. A Series of Modern Surveys in Mathematics,} 
{\bf69}. Springer, Cham, 2019. xxi+417 pp.

\bibitem[CR]{CR}
{G. Chenevier and D. Renard}, 
{\em Level one algebraic cusp forms of classical groups of small rank}. 
\emph{Mem. Amer. Math. Soc.} {\bf237} (2015), no.~1121, v+122 pp.

\bibitem[CT]{CT}
{G. Chenevier and O. Ta\"ibi}, 
{\em Discrete series multiplicities for classical groups over $\Z$ and level $1$ algebraic cusp forms}. 
\emph{Publ. Math. Inst. Hautes \'Etudes Sci.} {\bf131} (2020), 261--323.

\bibitem[Cl]{Cl}
{L. Clozel},
{\em Changement de base pour les repr\'esentations temp\'er\'ees des groupes r\'eductifs r\'eels}. 
\emph{Ann. Sci. \'Ecole Norm. Sup. (4)} {\bf15} (1982), no.~1, 45--115. 

\bibitem[CHL]{CHL}
{R. Cluckers, T. Hales and F. Loeser},
{\em Transfer principle for the fundamental lemma}. 
On the stabilization of the trace formula, 309--347,
\emph{Stab. Trace Formula Shimura Var. Arith. Appl.,} {\bf1}, 
Int. Press, Somerville, MA, 2011.

\bibitem[CK26]{CK26}
{Y. Cheng and T. Kaletha}, 
{\em The Aubert and Bernstein involutions for disconnected groups}. 
Preprint, arXiv:2607.10660.

\bibitem[CKPSS1]{CKPSS1}
{J. W. Cogdell, H. H. Kim, I. I. Piatetski-Shapiro and F. Shahidi},
{\em On lifting from classical groups to $\GL_N$}. 
\emph{Publ. Math. Inst. Hautes \'Etudes Sci.} No.~93 (2001), 5--30.

\bibitem[CKPSS2]{CKPSS2}
{J. W. Cogdell, H. H. Kim, I. I. Piatetski-Shapiro and F. Shahidi},
{\em Functoriality for the classical groups}. 
\emph{Publ. Math. Inst. Hautes \'Etudes Sci.} No.~99 (2004), 163--233.

\bibitem[CPSS]{CPSS}
{J. W. Cogdell, I. I. Piatetski-Shapiro and F. Shahidi},
{\em Functoriality for the quasisplit classical groups}. 
On certain $L$-functions, 117--140, 
\emph{Clay Math. Proc.,} {\bf13}, 
Amer. Math. Soc., Providence, RI, 2011.

\bibitem[CST]{CST}
{J. W. Cogdell, F. Shahidi and T.-L. Tsai},
{\em Local Langlands correspondence for $\GL_n$ and the exterior and symmetric square $\ep$-factors}.
\emph{Duke Math. J.} {\bf166} (2017), no.~11, 2053--2132.

\bibitem[DGG]{DGG}
{R. Dalal and M. Gerbelli-Gauthier}, 
{\em Statistics of Cohomological Automorphic Representations on Unitary Groups via the Endoscopic Classification}.
Preprint, arXiv:2212.12138v3.

\bibitem[D]{D}
{P. Deligne},
{\em Les constantes des \'equations fonctionnelles des fonctions $L$}. 
\emph{Modular functions of one variable, II 
(Proc. Internat. Summer School, Univ. Antwerp, Antwerp, 1972),} 
pp. 501--597, Lecture Notes in Math., Vol. {\bf349}, Springer, Berlin-New York, 1973. 

\bibitem[DL]{DL}
{D. Disegni and Y. Liu}, 
{\em A $p$-adic arithmetic inner product formula}. 
\emph{Invent. Math.} {\bf236} (2024), no.~1, 219--371.

\bibitem[EGGG]{EGGG}
{S. Evra, M. Gerbelli-Gauthier and H. P. A. Gustafsson}, 
{\em The Cohomological Sarnak--Xue Density Hypothesis for $\SO_5$}. 
Preprint, arXiv:2309.12413v4.

\bibitem[GGP]{GGP}
{W. T. Gan, B. H. Gross and D. Prasad}, 
{\em Symplectic local root numbers, central critical $L$ values, and restriction problems in the representation theory of classical groups}. 
Sur les conjectures de Gross et Prasad. I. 
\emph{Ast\'erisque} No.~346 (2012), 1--109. 

\bibitem[GI1]{GI1}
{W. T. Gan and A. Ichino},
{\em Formal degrees and local theta correspondence}. 
\emph{Invent. Math.} {\bf195} (2014), no.~3, 509--672.

\bibitem[GI2]{GI2}
{W. T. Gan and A. Ichino},
{\em The Gross-Prasad conjecture and local theta correspondence}. 
\emph{Invent. Math.} {\bf206} (2016), no.~3, 705--799.

\bibitem[GI3]{GI3}
{W. T. Gan and A. Ichino},
{\em The Shimura--Waldspurger correspondence for $\mathrm{Mp}_{2n}$}. 
\emph{Ann. of Math. (2)} {\bf188} (2018), no.~3, 965--1016.

\bibitem[GJ]{GJ}
{R. Godement and H. Jacquet},
{\em Zeta functions of simple algebras},
\emph{Lecture Notes in Mathematics 260, 1972, Springer, Cham.}

\bibitem[GS]{GS}
{A. Graham and S.-W.-A. Shah}, 
{\em Anticyclotomic Euler systems for unitary groups}. 
\emph{Proc. Lond. Math. Soc. (3)} {\bf127} (2023), no.~6, 1577--1680.

\bibitem[GWZ]{GWZ}
{M. Groechenig, D. Wyss and P. Ziegler},
{\em Geometric stabilisation via $p$-adic integration}. 
\emph{J. Amer. Math. Soc.} {\bf33} (2020), no.~3, 807--873. 

\bibitem[HC]{HC}
{Harish-Chandra},
{\em Admissible invariant distributions on reductive $p$-adic groups}. 
\emph{University Lecture Series,} {\bf16}. 
American Mathematical Society, Providence, RI, 1999. xiv+97 pp. 

\bibitem[HD]{HD}
{C. Halleck-Dub{\'e}},
{\em The cohomological weighted fundamental lemma for non-split groups}.
\emph{UC Berkeley dissertation,} 2026.

\bibitem[HKS]{HKS}
{M. Harris, S. S. Kudla and W. J. Sweet},
{\em Theta dichotomy for unitary groups}.
\emph{J. Amer. Math. Soc.} {\bf9} (1996), no.~4, 941--1004.

\bibitem[HT]{HT}
{M. Harris and R. Taylor}, 
{\em The geometry and cohomology of some simple Shimura varieties}. 
\emph{Annals of Mathematics Studies,} {\bf151}. 
Princeton University Press, Princeton, NJ, 2001. viii+276 pp.

\bibitem[He1]{He1}
{G. Henniart},
{\em La conjecture de Langlands locale pour $\GL(3)$}. 
\emph{M\'em. Soc. Math. France (N.S.)} No.~11-12 (1984), 186 pp.

\bibitem[He2]{He2}
{G. Henniart},
{\em Une preuve simple des conjectures de Langlands pour $\GL(n)$ sur un corps $p$-adique}. 
\emph{Invent. Math.} {\bf139} (2000), no.~2, 439--455.

\bibitem[He3]{He3}
{G. Henniart},
{\em Caract\'risation de la correspondance de Langlands locale par les facteurs $\epsilon$ de paires}. \emph{Invent. Math.} {\bf113} (1993), no.~2, 339--350. 

\bibitem[He4]{He4}
{G. Henniart},
{\em Correspondance de Langlands et fonctions $L$ des carr\'es ext\'erieur et sym\'etrique}. 
\emph{Int. Math. Res. Not. IMRN} 2010, no.~4, 633--673.

\bibitem[He5]{He5}
{G. Henniart},
{\em Correspondance de Langlands et facteurs $\ep$ des carr\'es ext\'erieur et sym\'etrique}. 
\emph{Ann. Fac. Sci. Toulouse Math. (6)} {\bf32} (2023), no.~4, 639--653.

\bibitem[Hi]{Hi}
{K. Hiraga}, 
{\em On functoriality of Zelevinski involutions}.
\emph{Compos. Math.} {\bf140} (2004), no.~6, 1625--1656.

\bibitem[ILM]{ILM}
{A. Ichino, E. Lapid and Z. Mao},
{\em On the formal degrees of square-integrable representations of odd special orthogonal and metaplectic groups}. 
\emph{Duke Math. J.} {\bf166} (2017), no.~7, 1301--1348. 

\bibitem[Ish]{Ish}
{H. Ishimoto},
{\em The Endoscopic Classification of Representations of Non-Quasi-Split Odd Special Orthogonal Groups}. 
\emph{Int. Math. Res. Not. IMRN} 2024, no.~14, 10939--11012.

\bibitem[Jac]{J}
{H. Jacquet},
{\em Fonctions de Whittaker associ\'ees aux groupes de Chevalley}. 
\emph{Bull. Soc. Math. France} {\bf95} (1967), 243--309.

\bibitem[JPSS]{JPSS}
{H. Jacquet, I. I. Piatetskii-Shapiro and J. A. Shalika}, 
{\em Rankin-Selberg convolutions}. 
\emph{Amer. J. Math.} {\bf105} (1983), no.~2, 367--464.

\bibitem[JS]{JS}
{H. Jacquet and J. Shalika}, 
{\em The Whittaker models of induced representations}. 
\emph{Pacific J. Math.} {\bf109} (1983), no.~1, 107--120.

\bibitem[Jan]{Jan}
{C. Jantzen}, 
{\em Jacquet modules of $p$-adic general linear groups}. 
\emph{Represent. Theory} {\bf11} (2007), 45--83.

\bibitem[Kah]{Kah}
{B. Kahn},
{\em Le groupe des classes modulo $2$, d'apr\`es Conner et Perlis}. 
Seminar on number theory, 1984--1985 (Talence, 1984/1985), 
Exp.~No.~26, 29 pp., Univ. Bordeaux I, Talence, 1985. 

\bibitem[Kal1]{Kal_gc}
{T. Kaletha},
{\em Genericity and contragredience in the local Langlands correspondence}. 
\emph{Algebra Number Theory} {\bf7} (2013), no.~10, 2447--2474.

\bibitem[Kal2]{Kal_ep}
{T. Kaletha},
{\em Epipelagic $L$-packets and rectifying characters}. 
\emph{Invent. Math.} {\bf202} (2015), no.~1, 1--89.

\bibitem[Kal3]{KalRI}
{T. Kaletha},
{\em Rigid inner forms of real and $p$-adic groups}. 
\emph{Ann. of Math. (2)} {\bf184} (2016), no.~2, 559--632. 

\bibitem[KM26]{KM26}
{T. Kaletha and P. Mezo}, 
{\em On the refined local Langlands conjecture for discrete $L$-parameters of inner forms of quasi-split disconnected real reductive groups}. 
Preprint, arXiv:2605.06922v1.

\bibitem[KMSW]{KMSW}
{T. Kaletha, A. M\'{i}nguez, S. W. Shin and P.-J. White}, 
{\em Endoscopic classification of representations: inner forms of unitary groups}.
Preprint, arXiv:1409.3731v3.

\bibitem[Ke]{Ke}
{D. Keys},
{\em Principal series representations of special unitary groups over local fields}.
\emph{Compositio Math.} {\bf51} (1984), no.~1, 115--130.

\bibitem[KeSh]{KeSh}
{C. D. Keys and F. Shahidi},
{\em Artin $L$-functions and normalization of intertwining operators}.
\emph{Ann. Sci. \'Ecole Norm. Sup. (4)} {\bf21} (1988), no.~1, 67--89.

\bibitem[KK1]{KK1}
{H. H. Kim and M. Krishnamurthy},
{\em Base change lift for odd unitary groups}. 
Functional analysis VIII, 116--125, 
\emph{Various Publ. Ser. (Aarhus),} {\bf47}, 
Aarhus Univ., Aarhus, 2004. 

\bibitem[KK2]{KK2}
{H. H. Kim and M. Krishnamurthy},
{\em Stable base change lift from unitary groups to $\GL_n$}. 
\emph{IMRP Int. Math. Res. Pap.} 2005, no.~1, 1--52. 

\bibitem[Kon1]{Konno1}
{T. Konno}, 
{\em Twisted endoscopy and the generic packet conjecture}. 
\emph{Israel J. Math.} {\bf129} (2002), 253--289.

\bibitem[Kon2]{Konno}
{T. Konno}, 
{\em A note on the Langlands classification and irreducibility of induced representations of $p$-adic groups}. 
\emph{Kyushu J. Math.} {\bf57} (2003), no.~2, 383--409.

\bibitem[Kot1]{Kot1}
{R. E. Kottwitz}, 
{\em Stable trace formula: cuspidal tempered terms}. 
\emph{Duke Math. J.} {\bf51} (1984), no.~3, 611--650.

\bibitem[Kot2]{Kot}
{R. E. Kottwitz}, 
{\em Reductive groups, epsilon factors, and Weil indices}.
Preprint, arXiv:1612.05550v2. 

\bibitem[KoSh1]{KoSh}
{R. E. Kottwitz and D. Shelstad},
{\em Foundations of twisted endoscopy}. 
\emph{Ast\'erisque} No.~255 (1999), vi+190 pp.

\bibitem[KoSh2]{KoSh2}
{R. E. Kottwitz and D. Shelstad},
{\em On Splitting Invariants and Sign Conventions in Endoscopic Transfer}. 
Preprint, arXiv:1201.5658v1.

\bibitem[KrS1]{KS1}
{A. Kret and S. W. Shin}, 
{\em Galois representations for general symplectic groups}. 
\emph{J. Eur. Math. Soc. (JEMS)} {\bf25} (2023), no.~1, 75--152.

\bibitem[KrS2]{KS2}
{A. Kret and S. W. Shin}, 
{\em Galois representations for even general special orthogonal groups}. 
\emph{J. Inst. Math. Jussieu} {\bf23} (2024), no.~5, 1959--2050.

\bibitem[Lab]{Lab}
{J.-P. Labesse}, 
{\em Cohomologie, $L$-groupes et fonctorialit\'e}. 
\emph{Compositio Math.} {\bf55} (1985), no.~2, 163--184.

\bibitem[LSk]{LSk}
{S. Lai and C. Skinner}, 
{\em Anti-cyclotomic Euler system of diagonal cycles}.
Preprint, arXiv:2408.01219v1.

\bibitem[L1]{L1}
{R. P. Langlands}, 
{\em Euler products}. 
A James K. Whittemore Lecture in Mathematics given at Yale University, 1967. 
\emph{Yale Mathematical Monographs,} {\bf1}. 
Yale University Press, New Haven, Conn.-London, 1971. v+53 pp.

\bibitem[L2]{L2}
{R. P. Langlands}, 
{\em On the classification of irreducible representations of real algebraic groups}. 
Representation theory and harmonic analysis on semisimple Lie groups, 101--170,
\emph{Math. Surveys Monogr.,} {\bf31}, Amer. Math. Soc., Providence, RI, 1989.

\bibitem[LSh]{LS}
{R. P. Langlands and D. Shelstad}, 
{\em On the definition of transfer factors}. 
\emph{Math. Ann.} {\bf278} (1987), no.~1-4, 219--271. 

\bibitem[LM]{LMa}
{E. Lapid and Z. Mao}, 
{\em A conjecture on Whittaker--Fourier coefficients of cusp forms}.
\emph{J. Number Theory} {\bf146} (2015), 448--505.

\bibitem[LR]{LR}
{E. M. Lapid and S. Rallis},
{\em On the local factors of representations of classical groups}. 
Automorphic representations, $L$-functions and applications: progress and prospects, 309--359, 
\emph{Ohio State Univ. Math. Res. Inst. Publ.,} {\bf11}, 
de Gruyter, Berlin, 2005. 

\bibitem[LRS]{LRS}
{G. Laumon, M. Rapoport and U. Stuhler}, 
{\em $\mathcal{D}$-elliptic sheaves and the Langlands correspondence}.
\emph{Invent. Math.} {\bf113} (1993), no.~2, 217--338.

\bibitem[LMW]{LMW}
{B. Lemaire, C. M{\oe}glin and J.-L. Waldspurger}, 
{\em Le lemme fondamental pour l'endoscopie tordue: r\'eduction aux \'el\'ements unit\'es}. 
\emph{Ann. Sci. \'Ec. Norm. Sup\'er. (4)} {\bf51} (2018), no.~2, 281--369. 

\bibitem[LL]{LL}
{C. Li and Y. Liu}, 
{\em Chow groups and $L$-derivatives of automorphic motives for unitary groups}. 
\emph{Ann. of Math. (2)} {\bf194} (2021), no.~3, 817--901. 

\bibitem[Li]{Li}
{W.-W. Li}, 
{\em Arthur packets for metaplectic groups}.
Preprint, arXiv:2410.13606v2.

\bibitem[LLS]{LLS}
{B. Liu, C.-H. Lo and F. Shahidi}, 
{\em On anti-tempered local Arthur packets and a lemma of Arthur}. 
Preprint, arXiv:2405.17407v2.

\bibitem[LTX]{LTX}
{Y. Liu, Y. Tian and L. Xiao}, 
{\em Iwasawa's main conjecture for Rankin--Selberg motives in the anticyclotomic case}.
Preprint, arXiv:2406.00624v3.

\bibitem[LTXZZ]{LTXZZ}
{Y. Liu, Y. Tian, L. Xiao, W. Zhang, and X. Zhu}, 
{\em On the Beilinson--Bloch--Kato conjecture for Rankin--Selberg motives}. 
\emph{Invent. Math.} {\bf228} (2022), no.~1, 107--375.

\bibitem[Mez1]{Mez13}
{P. Mezo},
{\em Character identities in the twisted endoscopy of real reductive groups}. 
\emph{Mem. Amer. Math. Soc.} {\bf222} (2013), no.~1042, vi+94 pp.

\bibitem[Mez2]{Mez16}
{P. Mezo},
{\em Tempered spectral transfer in the twisted endoscopy of real groups}. 
\emph{J. Inst. Math. Jussieu} {\bf15} (2016), no.~3, 569--612.

\bibitem[M\oe]{Moe11}
{C. M{\oe}glin}, 
{\em Multiplicit\'e $1$ dans les paquets d'Arthur aux places $p$-adiques}. 
\emph{On certain $L$-functions,} 333--374,
Clay Math. Proc., {\bf13}, 
\emph{Amer. Math. Soc., Providence, RI,} 2011.

\bibitem[MW1]{MW_model}
{C. M{\oe}glin and J.-L. Waldspurger}, 
{\em Mod\`eles de Whittaker d\'eg\'en\'er\'es pour des groupes $p$-adiques}. 
\emph{Math. Z.} {\bf196} (1987), no.~3, 427--452.

\bibitem[MW2]{MW_GL}
{C. M{\oe}glin and J.-L. Waldspurger}, 
{\em Le spectre r\'esiduel de $\GL(n)$}.
\emph{Ann. Sci. \'Ecole Norm. Sup. (4)} {\bf22} (1989), no.~4, 605--674.

\bibitem[MW3]{MW_twist}
{C. M{\oe}glin and J.-L. Waldspurger}, 
{\em Sur le transfert des traces d'un groupe classique $p$-adique \`a un groupe lin\'eaire tordu}. 
\emph{Selecta Math. (N.S.)} {\bf12} (2006), no.~3-4, 433--515. 

\bibitem[MW4]{MW_Stab1}
{C. M{\oe}glin and J.-L. Waldspurger}, 
{\em Stabilisation de la formule des traces tordue. Vol. 1}. 
Progress in Mathematics, {\bf316}. 
\emph{Birkh\"auser/Springer, Cham,} 2016. xviii+587 pp.

\bibitem[MW5]{MW_Stab2}
{C. M{\oe}glin and J.-L. Waldspurger}, 
{\em Stabilisation de la formule des traces tordue. Vol. 2}. 
Progress in Mathematics, {\bf317}. 
\emph{Birkh\"auser/Springer, Cham,} 2016. pp. v--xxviii and 589--1315.

\bibitem[MW6]{MW_LT}
{C. M{\oe}glin and J.-L. Waldspurger}, 
{\em La formule des traces locale tordue}. 
\emph{Mem. Amer. Math. Soc.} {\bf251} (2018), no.~1198, v+183 pp.

\bibitem[Mok]{Mok}
{C. P. Mok}, 
{\em Endoscopic classification of representations of quasi-split unitary groups}.
\emph{Mem. Amer. Math. Soc.} {\bf235} (2015), no.~1108, vi+248 pp.

\bibitem[N]{N}
{B. C. Ng\^{o}},
{\em Le lemme fondamental pour les alg\`ebres de Lie}. 
\emph{Publ. Math. Inst. Hautes \'Etudes Sci.} No.~111 (2010), 1--169. 

\bibitem[PSR]{PSR}
{I. I. Piatetski-Shapiro and S. Rallis},
{\em $L$-functions for the classical groups},
Explicit constructions of automorphic $L$-functions. 
\emph{Lecture Notes in Mathematics,} {\bf1254}. 
Springer-Verlag, Berlin, 1987, 1--52.

\bibitem[Rao]{Rao}
{R. Ranga Rao},
{\em On some explicit formulas in the theory of Weil representation}. 
\emph{Pacific J. Math.} {\bf157} (1993), no.~2, 335--371.

\bibitem[Rod]{Rod}
{F. Rodier},
{\em Whittaker models for admissible representations of reductive $p$-adic split groups}. 
Harmonic analysis on homogeneous spaces 
(Proc. Sympos. Pure Math., Vol. XXVI, Williams Coll., Williamstown, Mass., 1972), pp. 425--430, 
\emph{Proc. Sympos. Pure Math., Vol. XXVI,} Amer. Math. Soc., Providence, RI, 1973. 

\bibitem[SS]{SS}
{P. Schneider and U. Stuhler}, 
{\em Representation theory and sheaves on the Bruhat-Tits building}.
\emph{Inst. Hautes \'Etudes Sci. Publ. Math.} No.~85 (1997), 97--191.

\bibitem[Sc1]{Sc}
{P. Scholze}, 
{\em The local Langlands correspondence for $\GL_n$ over $p$-adic fields}. 
\emph{Invent. Math.} {\bf192} (2013), no.~3, 663--715.

\bibitem[Sc2]{Sc2}
{P. Scholze}, 
{\em On torsion in the cohomology of locally symmetric varieties}. 
\emph{Ann. of Math. (2)} {\bf182} (2015), no.~3, 945--1066.

\bibitem[Ser]{Ser}
{J.-P. Serre}, 
{\em Corps locaux, Hermann, Paris, 1962.}

\bibitem[Sha1]{Sh1}
{F. Shahidi},
{\em Functional equation satisfied by certain $L$-functions}.
\emph{Compositio Math.} {\bf37} (1978), no.~2, 171--207.

\bibitem[Sha2]{Sh2}
{F. Shahidi},
{\em On certain $L$-functions}. 
\emph{Amer. J. Math.} {\bf103} (1981), no.~2, 297--355. 

\bibitem[Sha3]{Sh3}
{F. Shahidi},
{\em Local coefficients and normalization of intertwining operators for $\GL(n)$}. 
\emph{Compositio Math.} {\bf48} (1983), no.~3, 271-295. 

\bibitem[Sha4]{Sh4}
{F. Shahidi},
{\em Fourier transforms of intertwining operators and Plancherel measures for $\GL(n)$}. 
\emph{Amer. J. Math.} {\bf106} (1984), no.~1, 67--111. 

\bibitem[Sha5]{Sh5}
{F. Shahidi},
{\em Local coefficients as Artin factors for real groups}. 
\emph{Duke Math. J.} {\bf52} (1985), no.~4, 973--1007. 

\bibitem[Sha6]{Sh6}
{F. Shahidi},
{\em On the Ramanujan conjecture and finiteness of poles for certain $L$-functions}. 
\emph{Ann. of Math. (2)} {\bf127} (1988), no.~3, 547--584.

\bibitem[Sha7]{Sh7}
{F. Shahidi},
{\em A proof of Langlands' conjecture on Plancherel measures; 
complementary series for $p$-adic groups}. 
\emph{Ann. of Math. (2)} {\bf132} (1990), no.~2, 273--330. 

\bibitem[Sha8]{Sh8} 
{F. Shahidi},
{\em Eisenstein series and automorphic $L$-functions}. 
\emph{American Mathematical Society Colloquium Publications,} {\bf58}. 
American Mathematical Society, Providence, RI, 2010. vi+210 pp.

\bibitem[Shan]{Shan}
{D. Shankman},
{\em Local Langlands Correspondence For Asai L and Epsilon Factors}.
Thesis (Ph.D.)--Purdue University,
\emph{ProQuest LLC, Ann Arbor, MI,} 2020. 232 pp.

\bibitem[She1]{SheTE3}
{D. Shelstad},
{\em Tempered endoscopy for real groups. III. Inversion of transfer and $L$-packet structure}. 
\emph{Represent. Theory} {\bf12} (2008), 369--402.

\bibitem[She2]{She12}
{D. Shelstad},
{\em On geometric transfer in real twisted endoscopy}. 
\emph{Ann. of Math. (2)} {\bf176} (2012), no.~3, 1919--1985.

\bibitem[Shi]{Shi}
{S. W. Shin},
{\em Weak transfer from classical groups to general linear groups}. 
\emph{Essent. Number Theory} {\bf3} (2024), no.~1, 19--62.

\bibitem[Sz]{Sz}
{D. Szpruch},
{\em A short proof for the relation between Weil indices and $\epsilon$-factors}. 
\emph{Comm. Algebra} {\bf46} (2018), no.~7, 2846--2851.

\bibitem[Tad1]{T}
{M. Tadi{\'c}}, 
{\em Structure arising from induction and Jacquet modules of representations of classical $p$-adic groups}. 
\emph{J. Algebra} {\bf177} (1995), no.~1, 1--33.

\bibitem[Tad2]{T2}
{M. Tadi{\'c}}, 
{\em On reducibility and unitarizability for classical $p$-adic groups, some general results}. 
\emph{Canad. J. Math.} {\bf61} (2009), no.~2, 427--450.

\bibitem[Ta\"i]{Tai}
{O. Ta\"ibi}, 
{\em Dimensions of spaces of level one automorphic forms for split classical groups using the trace formula}. 
\emph{Ann. Sci. \'Ec. Norm. Sup\'er. (4)} {\bf50} (2017), no.~2, 269--344.

\bibitem[Tate1]{Tate1}
{J. T. Tate},
{\em Fourier analysis in number fields, and Hecke's zeta-functions}. 
\emph{Algebraic Number Theory (Proc. Instructional Conf., Brighton, 1965),} 305--347, 
\emph{Academic Press, London,} 1967. 

\bibitem[Tate2]{Tate}
{J. Tate},
{\em Number theoretic background}. 
Automorphic forms, representations and $L$-functions 
(Proc. Sympos. Pure Math., Oregon State Univ., Corvallis, Ore., 1977), Part 2, pp.~3--26,
\emph{Proc. Sympos. Pure Math., XXXIII,} Amer. Math. Soc., Providence, RI, 1979.

\bibitem[V1]{V1}
{S. Varma}, 
{\em On a result of M{\oe}glin and Waldspurger in residual characteristic $2$}. 
\emph{Math. Z.} {\bf277} (2014), no.~3-4, 1027--1048. 

\bibitem[V2]{V2}
{S. Varma}, 
{\em On descent and the generic packet conjecture}. 
\emph{Forum Math.} {\bf29} (2017), no.~1, 111--155. 

\bibitem[W1]{W1}
{J.-L. Waldspurger},
{\em Une formule des traces locale pour les alg\`ebres de Lie $p$-adiques}. 
\emph{J. Reine Angew. Math.} {\bf465} (1995), 41--99.

\bibitem[W2]{W2}
{J.-L. Waldspurger},
{\em Le lemme fondamental implique le transfert}. 
\emph{Compositio Math.} {\bf105} (1997), no.~2, 153--236.

\bibitem[W3]{W3}
{J.-L. Waldspurger},
{\em Endoscopie et changement de caract\'eristique}. 
\emph{J. Inst. Math. Jussieu} {\bf5} (2006), no.~3, 423--525. 

\bibitem[W4]{W4}
{J.-L. Waldspurger}, 
{\em \`A propos du lemme fondamental pond\'er\'e tordu}. 
\emph{Math. Ann.} {\bf343} (2009), no.~1, 103--174.

\bibitem[W5]{W5}
{J.-L. Waldspurger},
{\em Les facteurs de transfert pour les groupes classiques: un formulaire}. 
\emph{Manuscripta Math.} {\bf133} (2010), no.~1-2, 41--82.

\bibitem[W6]{W6}
{J.-L. Waldspurger},
{\em La conjecture locale de Gross--Prasad pour les repr\'esentations temp\'er\'ees des groupes sp\'eciaux orthogonaux}. 
Sur les conjectures de Gross et Prasad. II. 
\emph{Ast\'erisque} No.~347 (2012), 103--165.

\bibitem[Wl1]{Wl1}
{N. R. Wallach},
{\em Real reductive groups. II}. 
\emph{Pure and Applied Mathematics, 132-II.} 
Academic Press, Inc., Boston, MA, 1992. xiv+454 pp.

\bibitem[Wl2]{Wl2}
{N. R. Wallach},
{\em The Whittaker Plancherel theorem}. 
\emph{Jpn. J. Math.} {\bf19} (2024), no.~1, 1--65.

\bibitem[Wa]{Wa}
{X. G. Wang},
{\em Multiplicative Hitchin Fibration and Fundamental Lemma}. 
Thesis (Ph.D.)--The University of Chicago, 2023. 341 pp.

\bibitem[X1]{X1}
{B. Xu}, 
{\em On the cuspidal support of discrete series for $p$-adic quasisplit $\Sp(N)$ and $\SO(N)$}.
\emph{Manuscripta Math.} {\bf154} (2017), no.~3-4, 441--502.

\bibitem[X2]{X2}
{B. Xu}, 
{\em On M{\oe}glin's parametrization of Arthur packets for $p$-adic quasisplit $\Sp(N)$ and $\SO(N)$}. 
\emph{Canad. J. Math.} {\bf69} (2017), no.~4, 890--960.

\bibitem[Z]{Z}
{A. V. Zelevinsky},
{\em Induced representations of reductive $\pp$-adic groups. II. On irreducible representations of $\GL(n)$}. 
\emph{Ann. Sci. \'Ecole Norm. Sup. (4)} {\bf13} (1980), no.~2, 165--210.


\end{thebibliography}
\end{document}